\documentclass[oneside,a4paper]{amsart}

\usepackage[T1]{fontenc}
\usepackage[utf8]{inputenc}
\usepackage{amsmath, amsthm, amsfonts, amssymb}
\usepackage{nicefrac}

\usepackage[colorlinks=true,hyperindex, linkcolor=magenta, pagebackref=false, citecolor=cyan,pdfpagelabels]{hyperref}
\usepackage{url}
\usepackage[all]{xy}
\usepackage{mathabx}
\usepackage{colonequals}
\usepackage{placeins} 
\usepackage[usenames, dvipsnames]{color} 
\usepackage{euscript} 
\usepackage{bbm} 
\usepackage{enumitem} 
\usepackage[nameinlink,capitalise,noabbrev]{cleveref} 
\usepackage{todonotes}

\usepackage{tikz-cd}
\usepackage{tikz}
\usetikzlibrary{arrows,backgrounds,
    decorations.pathreplacing,
    decorations.pathmorphing
}
\usetikzlibrary{matrix,arrows}

\newcommand{\euscr}[1]{\EuScript{#1}} 
\newcommand{\acat}{\euscr{A}} 
\newcommand{\bcat}{\euscr{B}} 
\newcommand{\ccat}{\euscr{C}} 
\newcommand{\dcat}{\euscr{D}} 
\newcommand{\ecat}{\euscr{E}} 
\newcommand{\hcat}{\euscr{H}} 
\newcommand{\Fun}{\textnormal{Fun}} 
\newcommand{\Hom}{\textnormal{Hom}} 
\newcommand{\Ext}{\textnormal{Ext}} 
\newcommand{\Ind}{\textnormal{Ind}} 
\newcommand{\map}{\textnormal{map}} 
\newcommand{\Map}{\textnormal{Map}} 
\newcommand{\Ev}{\textnormal{Ev}} 
\newcommand{\spaces}{\euscr{S}} 
\newcommand{\abeliangroups}{\euscr{A}b} 
\newcommand{\Tot}{\textnormal{Tot}} 
\newcommand{\spectra}{\euscr{S}p} 
\newcommand{\sets}{\euscr{S}et} 

\newcommand{\synspectra}{\euscr{S}yn} 
\newcommand{\ComodE}{\euscr{C}omod_{E_{*}E}} 
\newcommand{\Comod}{\euscr{C}omod} 
\newcommand{\Mod}{\euscr{M}od} 
\newcommand{\monunit}{\mathbbm{1}} 


\newcommand{\Fil}{\mathrm{Fil}}


\newcommand{\triplerightarrow}{%
\tikz[minimum height=0ex]
  \path[->]
   node (a)            {}
   node (b) at (1em,0) {}
  (a.north)  edge (b.north)
  (a.center) edge (b.center)
  (a.south)  edge (b.south);%
}

\theoremstyle{plain}

\newtheorem{theorem}{Theorem}[section]
\newtheorem{lemma}[theorem]{Lemma}
\newtheorem{proposition}[theorem]{Proposition}
\newtheorem{corollary}[theorem]{Corollary}

\newtheorem*{theorem*}{Theorem}

\theoremstyle{definition}
\newtheorem{example}[theorem]{Example}
\newtheorem{warning}[theorem]{Warning}

\newtheorem{definition}[theorem]{Definition}
\newtheorem{remark}[theorem]{Remark}
\newtheorem{notation}[theorem]{Notation}
\newtheorem{construction}[theorem]{Construction}

\newtheorem*{remark*}{Remark}
\newtheorem*{interpretation*}{Interpretation}
\newtheorem*{definition*}{Definition}
\newtheorem*{conjecture*}{Conjecture}
\newtheorem*{notation*}{Notation}
\newtheorem*{convention*}{Convention}

\theoremstyle{remark}

\numberwithin{equation}{section}

\makeatletter
  \def\subsection{\@startsection{subsection}{1}%
  \z@{.7\linespacing\@plus\linespacing}{.5\linespacing}%
  {\normalfont\bfseries\centering}}
\makeatother

\setcounter{tocdepth}{3}
\let\oldtocsection=\tocsection
\let\oldtocsubsection=\tocsubsection
\let\oldtocsubsubsection=\tocsubsubsection
\renewcommand{\tocsection}[2]{\hspace{0em}\oldtocsection{#1}{#2}}
\renewcommand{\tocsubsection}[2]{\hspace{1em}\oldtocsubsection{#1}{#2}}
\renewcommand{\tocsubsubsection}[2]{\hspace{2em}\oldtocsubsubsection{#1}{#2}}


\setlength{\textwidth}{\paperwidth}
\addtolength{\textwidth}{-2.5in}
\calclayout


\title{Adams spectral sequences and Franke's algebraicity conjecture} 

\author{Irakli Patchkoria}
\address{University of Aberdeen, Scotland, UK}
\email{irakli.patchkoria@abdn.ac.uk}

\author{Piotr Pstr\k{a}gowski}
\address{Harvard University, USA}
\email{pstragowski.piotr@gmail.com}

\begin{document}

\begin{abstract}
To any well-behaved homology theory we associate a derived $\infty$-category which encodes its Adams spectral sequence. As applications, we prove a conjecture of Franke on algebraicity of certain homotopy categories and establish homotopy-coherent monoidality of the Adams filtration. 
\end{abstract}

\maketitle

{\hypersetup{linkcolor=black}
\tableofcontents
}

\section{Introduction}

In this paper, we characterize and classify homology theories which admit an Adams spectral sequence based on injectives. To each such homology theory, we associate an appropriate derived $\infty$-category, which we prove is an $\infty$-categorical deformation in a precise sense, and a suitable context for Goerss-Hopkins theory. As two particular applications of this construction,  we prove 
\begin{enumerate}
    \item a conjecture of Franke that all stable $\infty$-categories which admit a homology theory with a sufficiently simple target have equivalent homotopy categories and 
    \item that the Adams filtration associated to a spectrum $R$ with a right unital multiplication is lax symmetric monoidal, providing numerous new examples of filtered $\mathbf{E}_{n}$-ring spectra. 
\end{enumerate}
We discuss our results at more length below. 

If $\ccat$ is a stable $\infty$-category, it is common to study it using homology theories; that is, additive functors $H\colon \ccat \rightarrow \acat$ valued in an abelian category which take cofibre sequences in $\ccat$ to triangles exact in the middle and which take the suspension to a distinguished autoequivalence of $\acat$, which we think of as an internal grading shift. 

Many important homology theories have the property of being what we will call \emph{adapted}; that is, $\acat$ has enough injectives and for every injective $i \in \acat$ there exists a $c_{i} \in \ccat$ such that $H(c_{i}) \simeq i$ and such that the induced map 
\[
[c, c_{i}]_{\ccat} \rightarrow \Hom_{\acat}(H(c), i)
\]
from homotopy classes is an isomorphism for any $c \in \ccat$. Here, one can think of $c_{i}$ as an appropriate Eilenberg-MacLane object.

An insight due to Adams is that this condition allows one to lift injective resolutions along $H$, leading for any $c, d \in \ccat$ to a spectral sequence of signature 
\[
\Ext^{s, t}_{\acat}(H(d), H(c)) \Rightarrow [\Sigma^{t-s} d, c].
\]
This is arguably \emph{the} most important general method of computing homotopy classes in the stable context, in particular stable homotopy groups of spheres. 

As the second page of the Adams spectral sequence is given by $\Ext$-groups, one expects that the more simple the homological algebra of $\acat$, the greater control it exerts over the stable $\infty$-category. A conjecture of Franke, which we prove in this work in complete generality, asserts that if the abelian category in question is sufficiently simple, then it uniquely determines the homotopy category of $\ccat$. 

If $\acat$ is an abelian category equipped with a local grading; that is, a distinguished autoequivalence $[1]$, then a \emph{splitting of order $q+1$} is a collection of Serre subcategories $\acat_{\phi}$ indexed by $\phi \in \mathbb{Z}/(q+1)$ such that $[1](\acat_{\phi}) \subseteq \acat_{\phi+1}$ and $\acat \simeq \prod_{\phi \in \mathbb{Z}/(q+1)} \acat_{\phi}$.

\begin{theorem}[{Franke's algebraicity conjecture, \ref{theorem:frankes_algebraicity}}]
\label{theorem:franke_algebraicity_in_introduction}
Let $H\colon \ccat \rightarrow \acat$ be a conservative, adapted homology theory such that $\acat$ admits a splitting of order $q+1$ and is of cohomological dimension $d \leq q$. Then, there exists a canonical equivalence
\[
h_{q+1-d} (\ccat) \simeq h_{q+1-d} (\dcat^{per}(\acat))
\]
of homotopy $(q+1-d)$-categories of $\ccat$ and the periodic derived $\infty$-category of $\acat$.
\end{theorem}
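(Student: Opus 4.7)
The plan is to exploit the derived $\infty$-category associated to $H$, constructed earlier in the paper, which carries a distinguished natural transformation $\tau$ and interpolates between topology and algebra: inverting $\tau$ recovers $\ccat$ (up to a completion accounted for by conservativity of $H$), while the $\tau$-torsion quotient is a purely algebraic derived category built from $\acat$. Under the splitting of order $q+1$, the latter identifies with $\dcat^{per}(\acat)$, because the $\mathbb{Z}/(q+1)$-grading on $\acat$ supplies precisely the internal periodicity needed to pass from a $\mathbb{Z}$-graded derived category to a $\mathbb{Z}/(q+1)$-graded one.

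The main comparison is then obstruction-theoretic. A map in the derived $\infty$-category restricts both to a map in its generic fiber $\ccat$ and to a map in its special fiber $\dcat^{per}(\acat)$, and the discrepancy between these is governed by a $\tau$-Bockstein spectral sequence whose $E_2$-page consists of the $\Ext^{s,*}_{\acat}$-groups (essentially the Adams spectral sequence repackaged synthetically). Truncating to the homotopy $(q+1-d)$-category retains only differentials and extensions up through a bounded $\tau$-adic depth; the cohomological dimension bound $s \leq d$, combined with the periodicity forced by the splitting, then ensures every surviving contribution either vanishes for dimension reasons or is already detected after at most $q+1-d$ steps, producing a canonical equivalence on mapping $(q+1-d)$-types.

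To assemble these pointwise comparisons into a functor $h_{q+1-d}(\dcat^{per}(\acat)) \to h_{q+1-d}(\ccat)$, I would use adaptedness of $H$ to pick Eilenberg--MacLane witnesses $c_i$ for each injective $i \in \acat$, extend along injective resolutions of length at most $d$, and descend to homotopy categories. Conservativity of $H$, together with the fact that both sides realize the same abelian category of cohomology objects, makes this functor essentially surjective; the obstruction analysis makes it fully faithful at the level of $h_{q+1-d}$.

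The main obstacle will be genuine canonicity and naturality: one must organize the lifts and obstructions so that the resulting equivalence does not depend on choices of resolutions, and so that it is functorial in $H$. A related subtlety is the interplay between the internal grading shift $[1]$ on $\acat$ (which defines the splitting) and the suspension in $\ccat$; the identification of the $\tau$-torsion fiber with the \emph{periodic} derived category must intertwine these, and the inequality $d \leq q$ is precisely what guarantees that all obstruction $\Ext^{s,t}_{\acat}$-groups with $s > d$ vanish before truncation depth $q+1-d$ is reached.
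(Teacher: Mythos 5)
Your proposal has the right general flavour (the deformation $\dcat^{\omega}(\ccat)$, the $\tau$-Bockstein/Adams dictionary, $\Ext$-vanishing above dimension $d$), but it contains a genuine error and is missing the two ideas that actually make the paper's proof work. First, the special fibre of the deformation is \emph{not} $\dcat^{per}(\acat)$: the $C\tau$-modules in $\dcat^{\omega}(\ccat)$ are identified with the \emph{bounded connective} derived $\infty$-category $\dcat^{b}(\acat)$ (\cref{theorem:finite_ctau_modules_same_as_derived_category}), which is prestable, whereas $\dcat^{per}(\acat)$ is a stable $\infty$-category of differential objects and plays a completely different role: it is a second stable $\infty$-category carrying its own conservative adapted homology theory to $\acat$ (\cref{proposition:homology_functor_on_periodic_derived_cat_is_a_homology_theory}), and the paper runs the entire machine on \emph{both} $\ccat$ and $\dcat^{per}(\acat)$ in parallel rather than extracting the algebraic model as a fibre of the deformation over $\ccat$. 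Your comparison of "a map restricted to the generic fibre $\ccat$ and to the special fibre $\dcat^{per}(\acat)$" therefore does not get off the ground as stated.

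Second, you never explain the mechanism by which the splitting of order $q+1$ is used, and this is the heart of the argument. The point is that for injectives of pure weight $\phi$ one has $\pi_{k}\Map_{\ccat}(c,d)\simeq \Hom_{\acat}(H(c)[k],H(d))=0$ for $0<k\leq q$, so $h_{q+1}\ccat^{inj}_{\phi}\simeq \acat^{inj}_{\phi}$ is discrete; this produces a Bousfield functor $\beta^{inj}\colon \acat^{inj}\rightarrow h_{q+1}\ccat^{inj}$ splitting $H$ on injectives, hence a monadic adjunction between $\dcat^{b}(\acat)$ and $\Mod_{C\tau^{q+1}}(\dcat^{\omega}(\ccat))$ whose monad is identified with the purely algebraic truncated thread monad $T_{q}$. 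That identification is what shows $\Mod_{C\tau^{q+1}}$, and with it the Goerss--Hopkins stage $\mathcal{M}_{q}(\ccat)$ of potential $q$-stages, depends only on $\acat$ and so agrees with $\mathcal{M}_{q}(\dcat^{per}(\acat))$. The passage from $\mathcal{M}_{q}$ to $h_{q+1-d}$ is then done by the Goerss--Hopkins tower, whose lifting obstructions live in $\Ext^{k+3,k+1}_{\acat}$ and vanish for $k\geq q\geq d$. Finally, your fallback plan of building a functor $h_{q+1-d}(\dcat^{per}(\acat))\rightarrow h_{q+1-d}(\ccat)$ by choosing injective lifts and extending along resolutions is essentially Franke's original realization-functor strategy; the canonicity problem you flag (independence of choices, compatibility with connecting morphisms) is exactly the documented gap in that approach, and the paper's proof is designed precisely to avoid ever constructing such a functor directly.
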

The periodic derived $\infty$-category $\dcat^{per}(\acat)$ is of purely algebraic nature, obtained from the category of differential objects in $\acat$ by inverting quasi-isomorphisms, so that \cref{theorem:franke_algebraicity_in_introduction} should be interpreted as giving an algebraic description of the homotopy category $h \ccat$. The statement and the first attempted proof of this conjecture are due to Franke \cite{franke1996uniqueness}; we discuss the history of this and problem in more detail in \S\ref{subsection:history_of_algebraicity_results} below.

While quite general,  \cref{theorem:franke_algebraicity_in_introduction} applies to homotopy categories of many familiar stable $\infty$-categories, such as the following examples.

\begin{example}[{Modules over ring spectra, \ref{theorem:frankes_algebraicity_for_modules}}] \label{modspintro}
Let $R$ be an $\mathbf{E}_{1}$-ring spectrum such that the homotopy ring $R_{*} \colonequals \pi_{*}R$ 
\begin{enumerate}
    \item is concentrated in degrees divisible by $(q+1)$ and 
    \item is of global dimension $d \leq q$ as a graded ring. 
\end{enumerate}
Then, $\pi_{*}\colon \Mod_{R}(\spectra) \rightarrow \Mod_{R_{*}}(\mathrm{gr}\abeliangroups)$ is an adapted homology theory satisfying the conditions of \cref{theorem:franke_algebraicity_in_introduction} and thus we have a canonical equivalence
\[
h_{q+1-d} \Mod_{R}(\spectra) \simeq h_{q+1-d} \dcat(R_{*}),
\]
where on the right hand side we treat $R_{*}$ as a dg-algebra with the zero differential. 

When $d \leq 3$, such as for $KU$, $ko_{(p)}$ at an odd prime $p$, or Morava $K$-theories and their connective covers, several examples of an equivalence of this type at the level of homotopy $1$-categories were known due to work of many authors \cite{GreenleesS1}, \cite{Wolbert}, \cite{DugShiEx}  \cite{patchkoria2012}, \cite{patchkoriaKU}. In most of these examples, our result strengthens these equivalences to the level of higher homotopy categories. 

When $d \geq 4$, such as for $\mathbf{E}_{1}$-forms of $BP\langle n \rangle$ and Johnson-Wilson theory $E(n)$ at sufficiently large primes, these equivalences are completely new. 
\end{example}

\begin{example}[{Chromatic algebraicity, \ref{theorem:chromatic_algebraicity}}]
\label{example:chromatic_algebraicity_in_introduction}
It is classical that  Johnson-Wilson homology 
\[
E(n)_{*}\colon \spectra_{E(n)} \rightarrow \Comod_{E(n)_{*}E(n)}.
\]
satisfies the conditions of Franke's conjecture when $2p-2 > n^{2}+n$. Thus, \cref{theorem:franke_algebraicity_in_introduction} implies that we have
\[
h_{k} (\spectra_{E(n)}) \simeq h_{k} (\dcat(E(n)_{*}E(n))).
\]
where $k = 2p-2-n^{2}-n$. This is an extension of a result from second author's thesis, which proved an equivalence of homotopy categories under the weaker bound $2p-2 > 2(n^{2}+n)$, as well as the classical work of Bousfield at $n=1$ \cite{bousfield1990classification},  \cite{pstragowski_chromatic_homotopy_algebraic}. The current work improves the range in which equivalences exist to the one predicted by Franke.
\end{example}

More generally, in \S\ref{subsection:diagram_infty_categories}, we produce many further examples by passing to diagram $\infty$-categories and bounding the cohomological dimension of the resulting abelian category. For example, we prove an equivariant analogue of the chromatic algebraicity of \cref{example:chromatic_algebraicity_in_introduction} which applies to the $\infty$-category of genuine $E(n)$-local $G$-spectra, where $G$ is a finite group of order coprime to $p$.

As the above examples highlight, the equivalences produced by \cref{theorem:franke_algebraicity_in_introduction} can almost never be lifted to the level of stable $\infty$-categories, and so their construction must necessarily be quite involved. Let us discuss some of the methods we employ. 

\subsection{Adams spectral sequences} 

Our general approach to Franke's conjecture follows the strategy from the second's author thesis \cite{pstragowski_chromatic_homotopy_algebraic}: one wants to embed $\ccat$ into a suitable ``derived $\infty$-category'' and apply Goerss-Hopkins theory \cite{goerss2014hopkins}, \cite{abstract_gh_theory}. In the case of Adams-type homology theories on spectra, a suitable derived $\infty$-category is given by synthetic spectra \cite{pstrkagowski2018synthetic}. To perform an analogous construction in the general case requires a much deeper understanding of Adams spectral sequences and adapted homology theories,  so let us first describe our results in this direction. 

Associated to a stable $\infty$-category $\ccat$ we have the category $A(\ccat) \colonequals P_{\Sigma}^{fp}(\ccat, \abeliangroups)$ of finitely presented presheaves of abelian groups, known as the Freyd envelope \cite{freyd1966stable}. This can be shown to be abelian, and the truncated Yoneda embedding $y\colon \ccat \rightarrow A(\ccat)$ is naturally a homology theory, in fact the universal one \cite{neeman2001triangulated}.

\begin{theorem}[Characterization of adapted homology theories, \ref{theorem:characterization_of_adapted_homology_theories}]
\label{theorem:adapted_ht_same_as_exact_localizations_in_introduction}
Let $H\colon \ccat \rightarrow \acat$ be a homology theory and assume that $\acat$ has enough injectives. Then, the following conditions are equivalent: 
\begin{enumerate}
    \item $H$ is adapted, 
    \item the induced exact functor $L_{H}\colon A(\ccat) \rightarrow \acat$ is a localization; that is, it identifies $\acat$ with a Gabriel quotient of the Freyd envelope by a localizing subcategory. 
\end{enumerate}
\end{theorem}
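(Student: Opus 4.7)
By the universal property of the Freyd envelope, any homology theory $H\colon \ccat \to \acat$ factors uniquely as $L_H \circ y$ for an exact functor $L_H\colon A(\ccat) \to \acat$; moreover, $y$ is fully faithful and the representables form a set of projective generators of $A(\ccat)$. Both conditions in the theorem concern the interaction of $L_H$ with injective objects of $\acat$: adaptedness says every injective of $\acat$ is represented by an object of $\ccat$, while being a Gabriel localization says $L_H$ admits a fully faithful right adjoint.

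\textbf{Direction (1) $\Rightarrow$ (2).} Given adaptedness, for each injective $i \in \acat$ set $R(i) \colonequals y(c_i)$. The adapted isomorphism together with Yoneda full faithfulness yields
\[
\Hom_{A(\ccat)}(y(c), R(i)) \simeq [c, c_i]_\ccat \simeq \Hom_\acat(H(c), i) = \Hom_\acat(L_H y(c), i)
\]
naturally in $c$. Both sides are contravariant left-exact functors of $X \in A(\ccat)$ (the right-hand side because $L_H$ is exact and $i$ is injective), and they agree on the projective generators $y(c)$. Presenting any $X$ as a cokernel $y(c_1) \to y(c_0) \to X \to 0$ and comparing kernels extends the iso to all $X$, so $R$ is a partial right adjoint on the full subcategory of injectives. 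Using that $\acat$ has enough injectives, extend $R$ to all of $\acat$ via chosen copresentations $0 \to a \to i^0 \to i^1$ by $R(a) \colonequals \ker(R(i^0) \to R(i^1))$. Exactness of $L_H$ gives $L_H R \simeq \mathrm{id}_\acat$, so $R$ is fully faithful and $L_H$ is a Gabriel localization by the localizing subcategory $\ker L_H$.

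\textbf{Direction (2) $\Rightarrow$ (1).} Conversely, given a fully faithful right adjoint $R$ to $L_H$, for an injective $i \in \acat$ the object $R(i) \in A(\ccat)$ is injective because $L_H$ is exact. The adjunction specializes on representables to $\Hom_{A(\ccat)}(y(c), R(i)) \simeq \Hom_\acat(H(c), i)$, so if $R(i)$ is representable by some $c_i \in \ccat$, then $H(c_i) \simeq L_H R(i) \simeq i$ and Yoneda full faithfulness yields the adapted isomorphism $[c, c_i]_\ccat \simeq \Hom_\acat(H(c), i)$. This direction therefore reduces to showing that $R(i)$ is representable.

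\textbf{Main obstacle.} The crux is thus establishing that injective objects of $A(\ccat)$ lying in the essential image of $R$ are representable by objects of $\ccat$. I expect this to be a structural feature of the Freyd envelope of a stable $\infty$-category: since $y$ takes cofibre sequences to four-term exact sequences, representables in $A(\ccat)$ are simultaneously projective and injective, and an injective should be a retract of a $y(c)$. Making this precise — presumably via the universal property of $A(\ccat)$ combined with idempotent completeness of $\ccat$, or by a Brown representability argument for cohomological functors on $\ccat$ — is the main technical hurdle, after which the two directions link cleanly through the adjunction formula above.
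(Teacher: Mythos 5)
Your proposal follows essentially the same route as the paper: factor through the Freyd envelope, define the right adjoint on injectives by $R(i)=y(c_i)$, extend to all of $\acat$ by left-exactness using an injective copresentation, and deduce full faithfulness from $L_HR\simeq\mathrm{id}$ via the adapted structure maps; the converse likewise reduces to representability of the injective $R(i)$. The one step you leave open is exactly the classical theorem of Freyd that the paper itself only cites (representables in $A(\ccat)$ of a stable $\infty$-category are injective and cogenerate under finite limits, so every injective is a direct summand of a representable, hence representable when $\ccat$ is idempotent-complete); note that this idempotent-completeness hypothesis is present in the body version of the theorem even though the introduction's phrasing omits it.
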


The study of quotients of the Freyd envelope has recently gained traction in the setting when $\ccat$ is rigid monoidal; starting with the work of Balmer, Krause and Stevenson who call certain such quotients the homological residue fields \cite{balmer2019tensor}, \cite{balmer2020nilpotence} \cite{barthel2021stratification}. Our work clarifies the relationship between these residue fields and the Adams spectral sequence. 

In particular, we show that if $R$ a homotopy ring spectrum, not necessarily Adams-type, then there exists a canonical comonad $C$ on the abelian category $\Mod_{R_{*}}$ such that $R_{*}\colon \spectra \rightarrow \Mod_{R_{*}}$ has a unique, \emph{adapted} lift to $C$-comodules. It follows formally that the $E_{2}$-page of the $R_{*}$-based Adams spectral sequence is computed by $\Ext$-groups in $\Comod_{C}(\Mod_{R_{*}})$. If $R$ is Adams-type, then $C \colonequals R_{*}R \otimes_{R_{*}} -$ and the result is classical. In the general case, we believe these comonads and the resulting description of the Adams $E_{2}$-page are completely novel.

There are further spectral sequences which are often referred to as an Adams spectral sequence, and which a priori do not fit into the framework of homology theories explained above. For example, if $R$ is an $\mathbf{E}_{1}$-ring spectrum, then given $X \in \spectra$ we can construct the Amitsur resolution
\[
X \rightarrow R \otimes X \rightrightarrows R \otimes R \otimes  X \triplerightarrow \ldots
\]
and applying $[Y, -]$ for any spectrum leads to the descent spectral sequence, which in general not isomorphic to the Adams spectral sequence based on comodules. However, there is a more general notion of an Adams spectral sequence, due to Miller and later expanded on by Christensen, which includes the descent spectral sequence as an example \cite{miller1975some}, \cite{Christensen}, \cite{hopkins1999complex}.

We say a class of arrows $\mathcal{M} \subseteq \Fun(\Delta^{1}, \ccat)$ in a stable $\infty$-category is a \emph{monomorphism class} if it satisfies simple axioms analogous to those satisfies by monomorphisms in an ordinary category, such as closure under composition and pushouts, see \cref{definition:epimorphism_class} for a precise statement. An object $c \in \ccat$ is $\mathcal{M}$-injective if it has the right lifting property with respect to $\mathcal{M}$. 

If $\ccat$ has enough $\mathcal{M}$-injectives, then any object admits an appropriate $\mathcal{M}$-injective resolution, leading to a spectral sequence. For example, to recover the descent spectral sequence, we take $\mathcal{M}$ to be the class of maps which are split inclusions of spectra after applying $R \otimes -$. 

To our great surprise, this general framework turns out to be completely equivalent to the one defined by abelian categories. 

\begin{theorem}[Classification of Adams spectral sequences, \ref{theorem:adapted_homology_theories_correspond_to_injective_epimorphism_classes}]
\label{theorem:classification_of_asss_in_introduction}
If $\ccat$ is an idempotent-complete stable $\infty$-category, then the following three collections of data are equivalent
\begin{enumerate}
    \item adapted homology theories $H\colon \ccat \rightarrow \acat$,
    \item localizing subcategories $K \subseteq A(\ccat)$ compatible with the local grading such that the quotient $A(\ccat)/K$ has enough injectives and
    \item monomorphisms classes $\mathcal{M} \subseteq \Fun(\Delta^{1}, \ccat)$ such that $\ccat$ has enough $\mathcal{M}$-injectives. 
\end{enumerate}
\end{theorem}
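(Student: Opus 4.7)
The equivalence of collections (1) and (2) is essentially the content of \cref{theorem:adapted_ht_same_as_exact_localizations_in_introduction}: given an adapted $H\colon \ccat \to \acat$, the induced exact functor $L_{H}\colon A(\ccat) \to \acat$ is a Gabriel localization whose kernel $K = \ker(L_H)$ is automatically compatible with the local grading (since $H$ commutes with suspension) and satisfies the enough-injectives hypothesis on the quotient; conversely any such $K$ produces an adapted homology theory by the composite $\ccat \xrightarrow{y} A(\ccat) \twoheadrightarrow A(\ccat)/K$. So the real content is to establish the bijection between data (1) and (3).

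For the direction $(1) \Rightarrow (3)$, I would take $\mathcal{M}_{H} \colonequals \{f \in \Fun(\Delta^{1},\ccat) : H(f) \text{ is a monomorphism in } \acat\}$. The monomorphism class axioms for $\mathcal{M}_H$ are inherited from the corresponding properties of monomorphisms in $\acat$, translated through the fact that $H$ sends cofibre sequences to long exact sequences and carries pushouts of cofibre sequences appropriately. For ``enough $\mathcal{M}_H$-injectives'' the adaptedness hypothesis does the work: given $c \in \ccat$, we choose an embedding $H(c) \hookrightarrow i$ into an injective and lift it to a map $c \to c_i$ using the adaptedness isomorphism $[c, c_i] \cong \Hom_\acat(H(c), i)$, where $c_i$ is the Eilenberg--MacLane object with $H(c_i) \simeq i$; the same adaptedness then ensures $c_i$ is $\mathcal{M}_H$-injective.

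For the direction $(3) \Rightarrow (1)$, the plan is to construct a localizing subcategory $K_\mathcal{M} \subseteq A(\ccat)$ satisfying the conditions in (2) and then invoke \cref{theorem:adapted_ht_same_as_exact_localizations_in_introduction}. Specifically, I would let $K_\mathcal{M}$ be the smallest localizing subcategory of $A(\ccat)$ compatible with the local grading such that $y(d)/y(c) \in K_\mathcal{M}$ for every $f\colon c \to d$ in $\mathcal{M}$. Closure axioms and compatibility with the autoequivalence are immediate. The delicate step is to verify that $A(\ccat)/K_\mathcal{M}$ has enough injectives; my approach would be to show directly that $y(i)$ descends to an injective of the quotient for any $\mathcal{M}$-injective $i$, interpreting $\mathcal{M}$-injectivity (right lifting against $\mathcal{M}$) as the adjoint formulation of the vanishing of $\Ext^{1}$ against objects of $K_\mathcal{M}$. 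Since the hypothesis of enough $\mathcal{M}$-injectives lets us embed any $c$ into such an $i$, the resulting $y(i)$ cogenerate.

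The hardest point will be closing the loop: writing $H_\mathcal{M}$ for the adapted homology theory produced above, we must check $\mathcal{M}_{H_\mathcal{M}} = \mathcal{M}$. One inclusion — that every $f \in \mathcal{M}$ becomes a monomorphism after $H_\mathcal{M}$ — is immediate from the definition of $K_\mathcal{M}$. The reverse inclusion amounts to the claim that $\mathcal{M}$ is detected by the right lifting property against $\mathcal{M}$-injectives; this is where having enough $\mathcal{M}$-injectives is used crucially, and I would prove it by taking an $\mathcal{M}$-injective resolution of the cofibre of any $f$ with $H_\mathcal{M}(f)$ a monomorphism and explicitly constructing the required factorisation through a map in $\mathcal{M}$. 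Dually, that $K_{\mathcal{M}_H} = K$ follows by showing that $K$ is generated, as a localizing subcategory compatible with the grading, by quotients $y(d)/y(c)$ arising from $H$-monomorphisms $c \to d$, which is a direct consequence of every object of $A(\ccat)$ being finitely presented.
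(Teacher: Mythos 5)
Your overall architecture is the paper's: the equivalence of (1) and (2) is \cref{theorem:characterization_of_adapted_homology_theories}, and the bridge to (3) runs through attaching to a class of maps a subcategory of the Freyd envelope built from (co)kernels of representables, with "enough $\mathcal{M}$-injectives" matched against "enough injectives in the quotient" exactly as you describe. The gaps are concentrated in your construction of $K_{\mathcal{M}}$, and the step you call "immediate" is where the real work lives.

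First, you define $K_{\mathcal{M}}$ as the \emph{smallest localizing subcategory} containing certain objects. In $A(\ccat)$, which is essentially never presentable or even well-copowered (\cref{warning:freyd_envelope_usually_not_presentable}), it is not clear such a smallest localizing subcategory exists, and even granting existence you would have to show it does not overshoot the class you actually want. The paper instead proves (following Beligiannis, \cref{proposition:epimorphism_classes_same_as_localizing_subcategories}) that the honest class of cokernels $\mathrm{coker}(y(d) \to y(e))$, for $d \to e$ in the associated epimorphism class, is \emph{already} a Serre subcategory closed under the local grading; this is a genuine diagram chase — closure under subobjects uses pullback-stability of the class, closure under extensions is a horseshoe-lemma argument — and is precisely where the axioms of a monomorphism class are consumed. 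Localizing-ness is then not imposed but \emph{deduced}: after lifting $\mathcal{M}$-injectives to injectives of the quotient (as you propose), \cref{theorem:characterization_of_adapted_homology_theories} supplies the fully faithful right adjoint. Second, your generators are the wrong objects: killing $y(d)/y(c) = \mathrm{coker}(y(f))$ forces $H(f)$ to be an \emph{epimorphism}; to force $H(f)$ monic for $f \in \mathcal{M}$ you must kill $\ker(y(f))$, equivalently $\mathrm{coker}\bigl(y(d) \to y(\mathrm{cofib}(f))\bigr)$ up to a shift of the local grading. Finally, for closing the loop the paper does not argue via detection by $\mathcal{M}$-injectives: it shows directly that any map whose associated object lies in $K$ already belongs to the class, by comparing two projective presentations of that object and invoking pullback-stability together with the two-out-of-three axiom. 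Your resolution-based alternative may well work, but as sketched it is the least substantiated part of the proposal, whereas the paper's version is an elementary chase.
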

The above result is an extension of the work of Beligiannis \cite{beligiannis2000relative}, who proved an equivalence between $(2)$ and $(3)$; we directly connect both to the Adams spectral sequence. As one surprising consequence, we see that adapted homology theories form a poset. 

In particular, \cref{theorem:classification_of_asss_in_introduction} implies that for any $\mathbf{E}_{1}$-ring spectrum $R$, there is a uniquely defined abelian category such that the $E_{2}$-page of the descent spectral sequence is \emph{always} given by the corresponding $\Ext$-groups. Even for an Adams-type ring spectrum, this abelian category is usually not equivalent to  $\Comod_{R_{*}R}$, as the classes of $R \otimes -$-split and $R_{*}$-injective maps do not necessarily coincide. 

Adams resolutions based on a non-Adams-type ring spectrum have been a powerful tool in stable homotopy theory, perhaps most famously used in resolution of the height one telescope conjecture at $p = 2$ by Mahowald \cite{mahowald1981bo}, \cite{mahowald1982image}. Describing the Adams $E_{2}$-page in this context is delicate and an active area of research \cite{gonzalez2000vanishing}, \cite{beaudry2019tmf}, \cite{beaudry20202}; the methods developed here provide a new avenue to these calculations, placing the problem firmly in the realm of abelian categories.

Miller's idea of rephrasing an Adams spectral sequence as being determined by a class of morphisms is at the core of our construction of a suitable derived $\infty$-category associated to a homology theory, which we discuss in more detail below. Before we do so, let us describe the following striking application of our methods.

\begin{theorem}[{Monoidality of the $R$-Adams filtration, \ref{theorem:monoidality_of_the_classical_adams_filtration}}]
\label{theorem:monoidality_of_r_adams_filtration_in_the_introduction}
Let $R$ be a spectrum which admits a right unital multiplication. Then, the functor 
\[
F^{R}\colon \spectra \rightarrow \Fil(\spectra)
\]
which associates to any spectrum its $R$-Adams filtration is lax symmetric monoidal. In particular, if $X$ is an $\mathbf{E}_{n}$-algebra, then $F^{R}(X)$ is a filtered $\mathbf{E}_{n}$-algebra. 
\end{theorem}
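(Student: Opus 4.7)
The plan is to realize the $R$-Adams filtration as the composition of two lax symmetric monoidal functors by passing through a symmetric monoidal derived $\infty$-category $\synspectra_R$ built from the monomorphism class associated to $R$.

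First, I would apply \cref{theorem:classification_of_asss_in_introduction} to identify the $R$-Adams filtration with the filtration associated to the monomorphism class $\mathcal{M}_R \subseteq \Fun(\simp{1}, \spectra)$ consisting of those arrows $f$ such that $\mathrm{id}_R \otimes f$ admits a retraction in $\spectra$. The right-unital multiplication $\mu \colon R \otimes R \to R$ with unit $\eta \colon \monunit \to R$ is used here to show that $\eta$ itself lies in $\mathcal{M}_R$, since $\mu \circ (\mathrm{id}_R \otimes \eta) = \mathrm{id}_R$ provides the splitting of $R \otimes \eta$; from this it follows that the canonical Adams tower $\bar{R}^{\otimes n} \otimes X \to X$, where $\bar{R} \colonequals \fib(\eta)$, realizes the $\mathcal{M}_R$-injective resolution of $X$, and in particular that $\spectra$ has enough $\mathcal{M}_R$-injectives.

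Second, I would observe that $\mathcal{M}_R$ is stable under tensor products: if $f \in \mathcal{M}_R$ with splitting $r$ of $\mathrm{id}_R \otimes f$, then for any arrow $g$ in $\spectra$ the map $f \otimes g$ again lies in $\mathcal{M}_R$ via the splitting $r \otimes g$. This tensor-closedness is precisely what is needed to lift the derived $\infty$-category $\synspectra_R$ associated to $\mathcal{M}_R$ to a symmetric monoidal presentable $\infty$-category. Via \cref{theorem:adapted_ht_same_as_exact_localizations_in_introduction}, $\mathcal{M}_R$ corresponds to a localizing subcategory $K_R \subseteq A(\spectra)$, and the tensor-closedness of $\mathcal{M}_R$ implies that $K_R$ is a $\otimes$-ideal for the Day convolution on the Freyd envelope. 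The Gabriel quotient $A(\spectra)/K_R$ then inherits a symmetric monoidal structure, which one lifts to $\synspectra_R$ using the deformation-theoretic presentation of this $\infty$-category developed earlier in the paper.

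Third, I would identify the Adams filtration functor with the composite $F^R \simeq \tau^{\bullet} \circ \nu$, where $\nu \colon \spectra \to \synspectra_R$ is the canonical ``synthetic'' functor and $\tau^{\bullet} \colon \synspectra_R \to \Fil(\spectra)$ is the filtration functor associated to the deformation parameter $\tau$. The functor $\nu$ is lax symmetric monoidal as the right adjoint of a symmetric monoidal realization, and $\tau^{\bullet}$ is even symmetric monoidal since the Day convolution on $\Fil(\spectra)$ is precisely designed so that the $\tau$-adic filtration on a tensor product respects filtration degrees. The lax symmetric monoidality of $F^R$ then follows by composition, and the $\ekoperad{n}$-algebra statement is a formal consequence of the fact that lax symmetric monoidal functors preserve $\ekoperad{n}$-algebras. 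The main obstacle is verifying the factorization $F^R \simeq \tau^{\bullet} \circ \nu$ on the nose, which requires matching the combinatorics of the $\mathcal{M}_R$-injective resolution $\bar{R}^{\otimes n} \otimes X \to X$ with the $\tau$-adic filtration on the synthetic analog $\nu(X)$; once this identification is established, compatibility of the multiplicative structures becomes automatic because both are inherited from the symmetric monoidal structure on $\synspectra_R$ constructed in the previous step.
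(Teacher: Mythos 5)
Your proposal follows essentially the same route as the paper: use the right-unital multiplication to show $\spectra$ has enough injectives for the class of $R$-split maps, observe that this class is stable under tensoring, use this to descend a Day convolution symmetric monoidal structure to the derived $\infty$-category of $R$-split sheaves, and then exhibit $F^{R}$ as a composite of lax symmetric monoidal functors through that category (in the paper the factorization through the Postnikov tower is essentially definitional, so the "main obstacle" you identify is handled by \cref{theorem:properties_of_the_h_adams_filtration} via d\'{e}calage rather than inside the monoidality proof).

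Two small overstatements, neither load-bearing: the claim that $f \otimes g \in \mathcal{M}_R$ for an \emph{arbitrary} arrow $g$ is false (your proposed section $r \otimes g$ does not even have the right source and target unless $g$ is an equivalence) — what is true and what the descent of Day convolution actually requires is stability under $- \otimes A$ for objects $A$; and the $\tau$-adic filtration functor is only \emph{lax} symmetric monoidal (connective covers in a compatible $t$-structure are lax, not strong), which is all that is needed.
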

Note that if $R$ is itself $\mathbf{E}_{\infty}$, then a lax symmetric monoidal Adams filtration can be produced by taking the d\'{e}calage of the Amitsur resolution, as in the work of Gheorghe, Ricka, Krause and Isaksen \cite{gheorghe2018c}. The surprising part of the above result is that no such structure on $R$ is necessary.

\cref{theorem:monoidality_of_r_adams_filtration_in_the_introduction} provides numerous new families of $\mathbf{E}_{n}$-algebras in filtered spectra; these are intricate objects and thus difficult to construct by hand. They are important in computations, as they establish compatibility with power operations, which in the case of the classical $H \mathbb{F}_{p}$-based Adams were the subject of the extensive work of Bruner \cite{bruner2009adams}, \cite{bruner2006h}. 

Our work extends the reach of these methods to a wide class of Adams spectral sequences. For example, $S^{0} / p$ is known to admit a right unital multiplication when $p$ is odd, and we deduce that the induced filtration is compatible with power operations. In the algebraic setting, a similar observation was used by May to produce various formulas involving the Bockstein spectral sequence \cite{may1970general}.

Informally, \cref{theorem:monoidality_of_r_adams_filtration_in_the_introduction} parallels the observation that the Bousfield localization at an arbitrary spectrum is lax symmetric monoidal, with no structure on the spectrum in question required. The core input needed for the proof is that the class of $R$-split maps is stable under tensor products with an arbitrary spectrum and a construction of an appropriate derived $\infty$-category, which is our next point of focus.

\subsection{Derived $\infty$-categories and Goerss-Hopkins theory}

To construct an appropriate derived $\infty$-category associated to an adapted homology theory $H\colon \ccat \rightarrow \acat$, we consider certain classes of sheaves on $\ccat$. This requires care, as in practice $\ccat$ is almost never small. 

We say an additive presheaf $X\colon \ccat \rightarrow \spaces$ is \emph{perfect} if it belongs to the smallest subcategory of $\Fun_{\Sigma}(\ccat^{op}, \spaces)$ closed under finite colimits. We show that perfect presheaves are also closed under finite limits, and form a prestable  $\infty$-category whose heart we can identify with the classical Freyd envelope.

As a consequence of \cref{theorem:classification_of_asss_in_introduction}, an adapted homology theory is completely determined by the class of those maps in $\ccat$ which are taken to epimorphisms, and these determine a Grothendieck pretopology. The connective \emph{perfect derived $\infty$-category} associated to an adapted homology theory $H\colon \ccat \rightarrow \acat$, which we denote by
\[
\dcat^{\omega}(\ccat),
\]
is the $\infty$-category of perfect sheaves with respect to the $H$-epimorphism topology. 

We show that $\dcat^{\omega}(\ccat)$ is a localization of the $\infty$-category of perfect presheaves so that it is itself prestable; moreover, there is a canonical equivalence $\dcat^{\omega}(\ccat)^{\heartsuit} \simeq \acat$. The Yoneda embedding of $\ccat$ factors through sheaves and determines a fully faithful embedding $\nu\colon \ccat \rightarrow \dcat^{\omega}(\ccat)$, which following tradition we will call the \emph{synthetic analogue}.  

If we write $[1]\colon \dcat^{\omega}(\ccat) \rightarrow \dcat^{\omega}(\ccat)$ for the autoequivalence induced by $\Sigma\colon \ccat \rightarrow \ccat$, then for any perfect sheaf $X$ on $\ccat$ there is a canonical comparison map 
\[
\tau\colon \Sigma X[-1] \rightarrow X
\]
which measures the extent to which $X$ takes suspensions in $\ccat$ to loops. We write $C\tau \otimes X$ for the cofibre of this map.

\begin{theorem}[Special fibre, \ref{theorem:finite_ctau_modules_same_as_derived_category}]
\label{theorem:description_of_special_fibre_in_introduction}
The endofunctor $C\tau \otimes -\colon \dcat^{\omega}(\ccat) \rightarrow \dcat^{\omega}(\ccat)$ admits a canonical structure of a monad. Moreover, the inclusion of the heart induces a canonical equivalence
\[
\Mod_{C\tau}(\dcat^{\omega}(\ccat)) \simeq \dcat^{b}(\acat)
\]
between modules over this monad and the connective bounded derived $\infty$-category of $\acat$. 
\end{theorem}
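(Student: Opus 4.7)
The plan is to prove the two claims in sequence. First, I would establish the monad structure on $C\tau \otimes -$ by constructing an $\mathbf{E}_{1}$-algebra structure on the object $C\tau = \cofib(\tau\colon \Sigma \mathbb{1}[-1] \to \mathbb{1})$ in $\dcat^{\omega}(\ccat)$, so that $C\tau \otimes -$ becomes the corresponding free-module monad. The strategy is to realize $C\tau$ as a truncation of the unit $\mathbb{1}$ with respect to a suitable $t$-structure finer than the one whose heart is $\acat$, or equivalently to exhibit $C\tau \otimes -$ as a reflection onto the subcategory of objects on which $\tau$ acts nilpotently; either way, the natural monoidal refinement of this truncation endows $C\tau$ with the desired algebra structure.

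Next, to identify $\Mod_{C\tau}(\dcat^{\omega}(\ccat))$ with $\dcat^{b}(\acat)$, I would construct a comparison functor $\Phi\colon \dcat^{b}(\acat) \to \Mod_{C\tau}(\dcat^{\omega}(\ccat))$ extending the inclusion of the heart. The key observation is that every object $X \in \acat \simeq \dcat^{\omega}(\ccat)^{\heartsuit}$ lifts canonically to a $C\tau$-module: since $\Sigma X[-1]$ and $X$ lie in different homological degrees under the natural $t$-structure, the comparison map $\tau_{X}$ is essentially uniquely nullhomotopic, the defining cofibre sequence splits, and we obtain a retraction $C\tau \otimes X \to X$ that serves as the $C\tau$-action. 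This assembles into a fully faithful embedding $\acat \hookrightarrow \Mod_{C\tau}(\dcat^{\omega}(\ccat))^{\heartsuit}$, and the universal property of the bounded derived $\infty$-category then produces $\Phi$.

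To show that $\Phi$ is an equivalence, I would equip both categories with the natural $t$-structures with heart $\acat$, check $t$-exactness of $\Phi$, and reduce to verifying that $\Map_{\Mod_{C\tau}}(X, Y[n]) \simeq \Ext^{n}_{\acat}(X, Y)$ for $X, Y \in \acat$. This identification follows by resolving $X$ by free $C\tau$-modules of the form $C\tau \otimes Z$ with $Z \in \dcat^{\omega}(\ccat)$ and using the defining cofibre sequence $\Sigma \mathbb{1}[-1] \to \mathbb{1} \to C\tau$ to convert $C\tau$-linear maps into honest mapping spaces in $\dcat^{\omega}(\ccat)$, which are controlled by $\acat = \dcat^{\omega}(\ccat)^{\heartsuit}$. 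The main obstacle I anticipate is producing the $\mathbf{E}_{1}$-algebra structure on $C\tau$ coherently and checking that $\Mod_{C\tau}(\dcat^{\omega}(\ccat))$ inherits a $t$-structure with heart $\acat$; once these are in place, the equivalence with $\dcat^{b}(\acat)$ follows by standard generation arguments using that the bounded derived category is generated by its heart.
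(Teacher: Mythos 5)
The central step of your first paragraph does not work in the generality of the theorem. You propose to obtain the monad structure by exhibiting $C\tau$ as an $\mathbf{E}_{1}$-algebra $\cofib(\tau\colon \Sigma\monunit[-1]\to\monunit)$ and taking the free-module monad, but this presupposes a monoidal structure on $\dcat^{\omega}(\ccat)$ with a unit $\monunit$. The theorem is stated for an arbitrary stable $\infty$-category $\ccat$ equipped with an adapted homology theory; no monoidal structure is assumed, none is available on $\dcat^{\omega}(\ccat)$, and the paper explicitly flags the notation $C\tau\otimes X$ as abusive for exactly this reason. Your fallback of realizing $C\tau\otimes-$ as a reflection onto the $\tau$-nilpotent objects also fails: $C\tau\otimes-$ is not idempotent, and $\Mod_{C\tau}(\dcat^{\omega}(\ccat))\simeq\dcat^{b}(\acat)$ is not a full subcategory of $\dcat^{\omega}(\ccat)$, so no localization of $\dcat^{\omega}(\ccat)$ can produce this monad. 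The paper's route is genuinely non-monoidal: $C\tau^{n}\otimes-$ is \emph{defined} as the composite $\pi(n)_{*}\pi(n)^{*}$ of the adjunction induced by the projection $\ccat\to h_{n}\ccat$ onto the homotopy $n$-category (\cref{notation:monad_associated_to_hnc_adjunction_is_tensor_with_ctau}), which is a monad for free, and the spiral cofibre sequence (\cref{proposition:spiral_cofibre_sequence}) identifies its underlying endofunctor with $\cofib(\tau^{n})$. This is the idea your proposal is missing, and it is the crux of the first claim.

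Your second and third paragraphs are broadly in line with the paper: the comparison functor is indeed the unique exact extension of the identification of hearts, obtained from the universal property of $\dcat^{b}(\acat)$ (\cref{theorem:universal_property_of_bounded_derived_cat_of_an_abelian_cat}), and the verification that it is an equivalence reduces to a heart-level check. Two points still need care. First, your generation argument silently requires that every $C\tau$-module with perfect underlying sheaf is bounded (\cref{corollary:finite_ctaun_modules_are_bounded}); without this one cannot pass from truncated subcategories to the whole $\infty$-category. Second, the paper avoids computing $\Ext$-groups directly by comparing two monadic adjunctions over $\tau_{\leq n}\dcat^{\omega}(\ccat)$ (the free $C\tau$-module adjunction and the homology adjunction $H^{*}\dashv H_{*}$) via Lurie's monadicity comparison criterion, reducing everything to a $\pi_{0}$-check on objects of the form $\nu(c)_{\leq n}$; your $\Ext$-computation could be made to work, but it rests on mapping-space identifications in $\dcat^{\omega}(\ccat)$ that are themselves only available once the monad has been correctly constructed.
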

Note that the notation $C\tau \otimes X$ is abusive, if only slightly: we do not assume that $\ccat$ is monoidal and consequently there is no natural monoidal structure on its derived $\infty$-category. In practice, the monad structure on this endofunctor is enough to use the calculus of cofibres of $\tau$ familiar from synthetic spectra and motivic homotopy theory. 

\begin{theorem}[The generic fibre, \ref{remark:generic_fibre_of_derived_category_of_homology_theory}]
\label{theorem:description_of_generic_fibre_in_introduction}
Let $\tau^{-1}\colon \dcat^{\omega}(\ccat) \rightarrow \ccat$ be the left Kan extension of the identity of $\ccat$ along the synthetic analogue $\nu\colon \ccat \rightarrow \dcat^{\omega}(\ccat)$. Then, 
\begin{enumerate}
    \item $\tau^{-1}$ is both left and right exact, 
    \item sends all of the maps $\tau\colon \Sigma X[-1] \rightarrow X$ to equivalences and 
    \item is universal with respect to the above two properties. 
\end{enumerate}
\end{theorem}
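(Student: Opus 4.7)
The plan is to verify the three claims in order, using the universal property of left Kan extension alongside the interaction between prestability of the source and stability of the target.

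Perfect sheaves are by construction generated under finite colimits from the representables, so $\dcat^{\omega}(\ccat)$ is itself generated under finite colimits by the essential image of $\nu$. The left Kan extension $\tau^{-1}$ of $\mathrm{id}_{\ccat}$ along the fully faithful $\nu$ therefore exists, satisfies $\tau^{-1}\nu\simeq\mathrm{id}_{\ccat}$, and preserves finite colimits by construction, giving the right-exact half of~(1). By functoriality of $\dcat^{\omega}(-)$ one has $\nu\Sigma_{\ccat}\simeq[1]\nu$, and combining this with right exactness and generation by representables yields intertwining equivalences $\tau^{-1}\Sigma\simeq\Sigma_{\ccat}\tau^{-1}$ and $\tau^{-1}[1]\simeq\Sigma_{\ccat}\tau^{-1}$. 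The map $\tau_{X}\colon \Sigma X[-1]\to X$ is natural in $X$, and unwinding the intertwinings on a representable $\nu(c)$ identifies $\tau^{-1}\tau_{\nu(c)}$ with the identity of $c$; right exactness then propagates this to arbitrary $X$, establishing~(2).

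For left exactness, the key observation is that in any prestable $\infty$-category every pullback square is simultaneously a pushout square, since both are formed in the ambient stabilization (using the standard connectivity estimate that cofibres of maps between connective objects remain connective). Right exactness of $\tau^{-1}$ therefore carries any such square to a pushout in $\ccat$, which by the stability of $\ccat$ is again a pullback. Combined with $\tau^{-1}(0)\simeq 0$, this yields preservation of all finite limits and completes~(1). I expect this to be the most delicate step: one has to confirm that the pushouts in $\dcat^{\omega}(\ccat)$ really do agree with the pushouts in its stabilization, and in particular that the category of perfect sheaves is closed under these pushouts, but both should follow from the definitions.

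For the universal property~(3), given any left- and right-exact $F\colon \dcat^{\omega}(\ccat)\to\ecat$ inverting every $\tau_{X}$, I set $G\colonequals F\nu$. Then both $F$ and $G\tau^{-1}$ are right exact and agree on the representables through $\tau^{-1}\nu\simeq\mathrm{id}$, so by generation of $\dcat^{\omega}(\ccat)$ under finite colimits they agree on all objects; uniqueness of $G$ is then immediate from $G\simeq G\tau^{-1}\nu\simeq F\nu$. The only remaining care needed is to upgrade the pointwise equivalence $F\simeq G\tau^{-1}$ to a coherent one, which should follow from the universal property of the left Kan extension defining $\tau^{-1}$.
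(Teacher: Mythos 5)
Your argument for part (1) breaks at the left exactness step. The claim that in a prestable $\infty$-category every pullback square is also a pushout square is false: prestability gives you only the colimit half of that comparison. Pushouts of connective objects do agree with pushouts in the Spanier--Whitehead stabilization (cofibres of maps of connective objects stay connective), but pullbacks do not --- the fibre of a map of connective objects need not be connective, so finite limits in a prestable $\infty$-category are the connective covers of the limits in the stabilization, and a pullback square in $\dcat^{\omega}(\ccat)$ is generally not a pullback (hence not a pushout) there. Concretely, for $X \in \dcat^{\omega}(\ccat)$ the square with corners $\Omega X, 0, 0, X$ is a pullback, while the pushout of $0 \leftarrow \Omega X \rightarrow 0$ is $\Sigma\Omega X$, whose canonical map to $X$ has cofibre $\pi_{0}X \in \acat$; this is nonzero whenever $X$ has nonzero homology. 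So right exactness of $\tau^{-1}$ cannot be upgraded to left exactness by this route. A secondary gap sits at the start: $\dcat^{\omega}(\ccat)$ is generated under finite colimits by the representables but is not \emph{freely} so generated --- it is a localization of $A_{\infty}^{\omega}(\ccat)$, which is the free finite-colimit completion --- so generation alone does not give you that the left Kan extension along $\nu$ exists, is right exact, or satisfies $\tau^{-1}\nu \simeq \mathrm{id}$; you must also check that the right exact extension out of $A_{\infty}^{\omega}(\ccat)$ inverts the sheafification equivalences.

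The paper resolves both issues with one mechanism that your proof should adopt. The identity of $\ccat$ is left exact, hence a prestable enhancement of the \emph{zero} homology theory, since the heart of $\ccat$ as a prestable $\infty$-category vanishes. By \cref{theorem:universal_property_of_finite_presheaves} it therefore admits a unique \emph{exact} extension out of $A_{\infty}^{\omega}(\ccat)$; left exactness is established there not formally but by showing the extension carries discrete objects to discrete objects (via projective resolutions by representables and the geometric realization spectral sequence) and then invoking \cite{lurie_spectral_algebraic_geometry}[C.3.2.1]. The factorization through the localization $\dcat^{\omega}(\ccat)$ is then automatic from \cref{theorem:universal_property_of_finite_derived_category}, because the induced functor on hearts is zero and so trivially factors through $\acat$. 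Your treatment of (2) --- inverting $\tau$ on representables and propagating by exactness --- matches the paper's, and your sketch of (3) is in the right spirit modulo the deferred coherence, but (1) must be rebuilt along the lines above.
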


Taken together, these two results show that $\dcat^{\omega}(\ccat)$ can be thought of as a  ``$\infty$-categorical deformation'', similar to synthetic spectra, stable motives after $p$-completion, and Brantner's and Balderrama's $\infty$-categories associated to algebraic theories \cite{balderrama2021deformations}, \cite{brantner2017lubin}, \cite{pstrkagowski2018synthetic}, \cite{abstract_gh_theory}.

Through the synthetic analogue functor, the $H$-Adams spectral sequence for $c \in \ccat$ gets identified with the $\tau$-Bockstein spectral sequence associated to $\nu(c)$. This ability to ``tilt'' an Adams spectral sequence, first obtained for the Adams-Novikov spectral sequence using motivic methods by Gheorghe, Isaksen, Wang and Xu \cite{gheorghe2018special}, \cite{isaksen_stable_stems}, \cite{more_stable_stems}, has important computational consequences. For example, Burklund, Hahn and Senger use $C\tau$ methods to prove strong results about the classical Adams spectral sequence, solving - among other things - several conjectures in geometric topology \cite{burklund2019boundaries}, \cite{burklund2020inertia}, \cite{burklund2020high}, \cite{burklund2021extension}.

The current work shows that large parts of the powerful $C\tau$-formalism, known previously only for homology theories based on Adams-type ring spectra, can be set up in the generality of an arbitrary adapted homology theory, making it applicable to a much larger variety of problems. In particular, we are able to construct a general form of the Goerss-Hopkins tower. 

\begin{theorem}[Goerss-Hopkins tower, \ref{example:potential_infty_stages_are_objects_of_c}, \ref{example:potential_o_stages_are_discrete_objects}, \ref{prop: general obstruction for objects}, \ref{remark:obstructions_to_lifting_morphisms}]
Let $H\colon \ccat \rightarrow \acat$ be an adapted homology theory. Then, there exists a tower of additive $\infty$-categories 
\[
\mathcal{M}_{\infty}(\ccat) \rightarrow \ldots \rightarrow \mathcal{M}_{1}(\ccat) \rightarrow \mathcal{M}_{0}(\ccat)
\]
with the following properties: 
\begin{enumerate}
    \item for each $n \geq 0$, $\mathcal{M}_{n}(\ccat)$ is an $(n+1)$-category, 
    \item there is a canonical equivalence $\mathcal{M}_{0}(\ccat) \simeq \acat$,
    \item there is a canonical equivalence $\mathcal{M}_{\infty}(\ccat) \simeq \ccat$ and 
    \item for each of $\mathcal{M}_{n+1}(\ccat) \rightarrow \mathcal{M}_{n}(\ccat)$, there are obstructions in $\Ext^{n+3} _{\acat}$ (respectively $\Ext^{n+2} _{\acat}$, $\Ext^{n+1} _{\acat}$,  etc.) to lifting objects (respectively morphisms, homotopies, etc.)
\end{enumerate}
\end{theorem}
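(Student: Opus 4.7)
The plan is to construct the tower $\{\mathcal{M}_n(\ccat)\}$ as $\tau$-adic truncations inside the perfect derived $\infty$-category $\dcat^{\omega}(\ccat)$. Concretely, I would define a \emph{potential $n$-stage} to be a perfect sheaf $X$ whose $\tau$-adic Postnikov layers are concentrated in the range $[0, n]$; equivalently, a connective $C\tau^{n+1}$-module whose special fibre $C\tau \otimes X$ is discrete, i.e.\ lies in $\acat \subseteq \dcat^b(\acat)$ under the identification of \cref{theorem:description_of_special_fibre_in_introduction}. For $n = \infty$ the condition becomes invertibility of $\tau$, and the transition functor $\mathcal{M}_{n+1}(\ccat) \to \mathcal{M}_n(\ccat)$ is the $\tau$-adic truncation which kills the top $\tau$-layer, concretely $X \mapsto X \otimes_{C\tau^{n+2}} C\tau^{n+1}$.

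With this definition, parts (2) and (3) become formal consequences of the fibre theorems. The equivalence $\mathcal{M}_0(\ccat) \simeq \acat$ follows from \cref{theorem:description_of_special_fibre_in_introduction}, since a discrete $C\tau$-module is by definition an object of the heart $\acat$. The equivalence $\mathcal{M}_\infty(\ccat) \simeq \ccat$ follows from \cref{theorem:description_of_generic_fibre_in_introduction} together with the full faithfulness of the synthetic analogue $\nu$: $\tau$-invertible perfect sheaves are exactly the essential image of $\nu$. For part (1), I would argue that the mapping space $\Map_{\mathcal{M}_n}(X, Y)$ fits into a finite tower of fibrations whose $n+1$ graded pieces are Eilenberg--MacLane spaces of appropriately shifted $\Ext$-groups in $\acat$ between the special fibres of $X$ and $Y$; since only finitely many pieces contribute and each is bounded in degree, $\Map_{\mathcal{M}_n}(X, Y)$ is $n$-truncated, so $\mathcal{M}_n$ is an $(n+1)$-category.

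For part (4), lifting $X_n \in \mathcal{M}_n(\ccat)$ to $\mathcal{M}_{n+1}(\ccat)$ is controlled by the $k$-invariant of the Postnikov-type extension relating consecutive $C\tau^{n+1}$- and $C\tau^{n+2}$-module structures; this $k$-invariant lives in $\Ext^{n+3}_{\acat}(C\tau \otimes X_n, C\tau \otimes X_n)$ with an appropriate internal grading shift, producing the promised obstruction class. Analogous $k$-invariants for lifting morphisms and higher homotopies sit one cohomological degree lower each, yielding the $\Ext^{n+2}, \Ext^{n+1}, \ldots$ pattern by the usual mapping-space-to-obstruction degree shift. The main technical obstacle I anticipate is verifying the naturality and convergence of the tower --- specifically, that $\varprojlim_n \mathcal{M}_n(\ccat)$ really does recover $\mathcal{M}_\infty(\ccat) \simeq \ccat$. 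This amounts to completeness of the $\tau$-adic filtration on perfect sheaves, which I expect to extract from convergence of the $H$-Adams spectral sequence under the synthetic-analogue identification with the $\tau$-Bockstein spectral sequence on $\dcat^{\omega}(\ccat)$.
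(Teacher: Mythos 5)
Your overall strategy is the one the paper uses: potential $n$-stages are defined inside $\dcat^{\omega}(\ccat)$ by an $n$-truncatedness plus $\tau$-periodicity condition (\cref{def:general_potential_l_stage}), the bottom of the tower is the heart and the top is $\ccat$, and the obstruction to lifting an object is the $k$-invariant of the two-cell object $C\tau \otimes X$, which for a potential $n$-stage has homotopy concentrated in degrees $0$ and $n+2$ and hence is classified by a class in $\Ext^{n+3,n+1}_{\acat}(\pi_0X,\pi_0X)$. Two of your steps, however, do not work as written. First, your transition functor $X \mapsto X \otimes_{C\tau^{n+2}} C\tau^{n+1}$ is the relative base-change monad of \cref{construction:goerss_hopkins_tower_for_prestable_freyd_envelope}, and \cref{warning:relative_monads_dont_preserve_perfection} shows it does \emph{not} preserve perfection: already $C\tau \otimes_{C\tau^{2}} y(c)$ has infinitely many nonvanishing homotopy groups (the analogue of $k$ not being perfect over $k[x]/x^{2}$). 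So a priori your functor does not even land in $\dcat^{\omega}(\ccat)$, let alone in $\mathcal{M}_{n}(\ccat)$; identifying the relative tensor with the Postnikov truncation \emph{on potential stages} is true but is itself a nontrivial statement of the type proved in \cref{lemma:ctau_tensor_over_ctaun_is_discrete_fora_potential_n_stage}. The paper avoids all of this by taking the transition functor to be the plain Postnikov truncation $(-)_{\leq n}$, which visibly restricts to $\mathcal{M}_{n+1}(\ccat) \to \mathcal{M}_{n}(\ccat)$; you should do the same.

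Second, the identification $\mathcal{M}_{\infty}(\ccat) \simeq \ccat$ does not formally follow from the universal property of $\tau^{-1}$ in \cref{theorem:description_of_generic_fibre_in_introduction}: that result identifies the localization of $\dcat^{\omega}(\ccat)$ at $\tau$ with $\ccat$, but it does not by itself say that the $\tau$-invertible objects are exactly the essential image of $\nu$. The actual argument (\cref{example:potential_infty_stages_are_objects_of_c}) uses the perfectness criterion of \cref{lemma:characterization_of_finite_presheaves_on_stable_inftycat}: since $\tau$ acts invertibly on the \emph{unsheafified} homotopy groups of a potential $\infty$-stage $X$, these are eventually (hence all) representable, so $\pi_{0}X \simeq y(c)$ for some $c$, and the resulting map $\nu(c) \to X$ is an isomorphism on all unsheafified homotopy groups, hence an equivalence. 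This is short but it is a genuine step you need to supply. Finally, note that the convergence $\varprojlim_{n}\mathcal{M}_{n}(\ccat) \simeq \mathcal{M}_{\infty}(\ccat)$ you worry about at the end is not part of this statement; the paper treats it separately, under additional hypotheses, in \cref{proposition:convergence_of_gh_tower}.
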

The convergence of the Goerss-Hopkins tower, that is, the extent to which it is a limit diagram of $\infty$-categories, is more subtle and closely related to the convergence of the $H$-based Adams spectral sequence \cite{abstract_gh_theory}. In \S\ref{subsection:goerss_hopkins_theory}, we show that it converges if $\acat$ is of finite cohomological dimension and $H$ is conservative, which is enough for the purpose of Franke's conjecture.

As there is a variety of sources of ``$\infty$-categorical deformations'', it is natural to ask if $\dcat^{\omega}(\ccat)$ is in a certain sense universal. As $\dcat^{\omega}(\ccat)^{\heartsuit} \simeq \acat$, we can think of the perfect derived $\infty$-category as a twisted version of the derived $\infty$-category of $\acat$, and expect that their universal properties will be related; this is indeed the case. 

\begin{theorem}[Universal property of the perfect derived $\infty$-category, \ref{theorem:universal_property_of_finite_derived_category}]
\label{theorem:universal_property_of_domegaccat_in_introduction}
Let $H\colon \ccat \rightarrow \acat$ be an adapted homology theory and $\dcat$ be a prestable $\infty$-category with finite limits. Suppose that we are given 
\begin{enumerate}
    \item an exact functor $G_{0}\colon \acat \rightarrow \dcat^{\heartsuit}$ of abelian categories,
    \item a left exact functor $\euscr{G}\colon \ccat \rightarrow \dcat$ together with a natural isomorphism $\euscr{G}(c)_{\leq 0} \simeq G_0(H(c))$.
\end{enumerate}
Then, there is a unique exact $G\colon \dcat^{\omega}(\ccat) \rightarrow \dcat$ such that $G \circ \nu \simeq \euscr{G}$ and $G_{0} \simeq G_{| \acat}$.
\end{theorem}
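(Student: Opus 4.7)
The plan is to construct $G$ in three stages: first extend $\euscr{G}$ from $\ccat$ to the $\infty$-category of perfect presheaves by a universal property, then show this extension descends to perfect sheaves using the hypothesis on $G_{0}$ together with left exactness of $\euscr{G}$, and finally verify left exactness and uniqueness of the resulting functor.

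\textbf{Step 1 (Extension to perfect presheaves).} By the characterization of $P_{\Sigma}^{fp}(\ccat, \spaces)$ as the universal right-exact completion of the additive $\infty$-category $\ccat$ among prestable $\infty$-categories with finite colimits, the additive functor $\euscr{G}$ extends uniquely to a right-exact functor $\widetilde{G}\colon P_{\Sigma}^{fp}(\ccat, \spaces) \rightarrow \dcat$. Since $\dcat^{\omega}(\ccat)$ is a left-exact localization of perfect presheaves with respect to the $H$-epimorphism topology (by the construction recalled in the introduction and \cref{theorem:classification_of_asss_in_introduction}), the task reduces to showing that $\widetilde{G}$ inverts the generating local equivalences.

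\textbf{Step 2 (Descent to sheaves).} The generating local equivalences come from $H$-epimorphism covers $c' \twoheadrightarrow c$ in $\ccat$, packaged as a simplicial resolution built from iterated fibres in $\ccat$ (an Adams-type resolution rather than a naive \v{C}ech nerve, since $\ccat$ lacks pullbacks). The compatibility $\euscr{G}(-)_{\leq 0} \simeq G_{0}\circ H$ together with exactness of $G_{0}$ ensures that $\widetilde{G}$ sends an $H$-epimorphism to a map which is a surjection on $0$-truncations in $\dcat$; dually, left exactness of $\euscr{G}$ ensures that the fibre sequences in $\ccat$ built from the Adams resolution are sent to fibre sequences in $\dcat$. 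Combining these two facts, the Adams resolution of $c$ gets sent by $\widetilde{G}$ to an effective-epimorphism resolution in the prestable $\dcat$, whose geometric realization recovers $\widetilde{G}(\nu(c)) = \euscr{G}(c)$. This produces the desired factorization $G\colon \dcat^{\omega}(\ccat) \rightarrow \dcat$, automatically right exact.

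\textbf{Step 3 (Left exactness and uniqueness).} By construction, $G \circ \nu \simeq \euscr{G}$ and, since the heart $\acat \hookrightarrow \dcat^{\omega}(\ccat)$ consists of those objects $X$ with $\tau_{X}$ an equivalence on $\pi_{0}$, the restriction $G|_{\acat}$ must agree with the composite of $\pi_{0}\circ \nu \simeq H$ followed by $G_{0}$, i.e., $G|_{\acat} \simeq G_{0}$. For left exactness one observes that $\dcat^{\omega}(\ccat)$ is generated under finite colimits by $\nu(\ccat) \cup \acat$, so a fibre sequence is controlled by fibre sequences obtained by pushouts of such generators; since $G|_{\nu(\ccat)} = \euscr{G}$ is left exact and $G|_{\acat} = G_{0}$ is exact, $G$ preserves all such fibres. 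Uniqueness is immediate from this generation statement: two exact functors agreeing on $\nu(\ccat)$ and on $\acat$ coincide.

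\textbf{Main obstacle.} The crux is Step 2. The subtlety is that the $H$-epimorphism topology is not generated by literal \v{C}ech nerves, so I need to package the local equivalences precisely as Adams-type resolutions inside $\ccat$ and show that the two complementary hypotheses on the input data, namely left exactness of $\euscr{G}$ and exactness of $G_{0}$ linked by the compatibility isomorphism, together force the geometric realization of the resulting diagram to match $\euscr{G}(c)$. Essentially, the $G_{0}$-data controls the $0$-truncation of the resolution while $\euscr{G}$ controls the higher homotopical data, and the compatibility isomorphism is what glues the two into a genuine sheaf-descent statement in $\dcat$.
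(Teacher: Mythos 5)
Your skeleton (extend along $\nu$ to perfect presheaves, descend through the sheafification, then check exactness and uniqueness) is the same as the paper's, but two of your steps have genuine gaps. The first is left exactness. In Step 1 you only invoke the universal right-exact extension of an additive functor, so your $\widetilde{G}$ is a priori merely right exact, and your Step 3 attempt to recover left exactness from the fact that $\dcat^{\omega}(\ccat)$ is generated under finite colimits by $\nu(\ccat)$ and $\acat$ is not a valid inference: left exactness cannot be checked on colimit generators (a right exact functor that preserves fibres between generators need not preserve fibres between their colimits). This is precisely the nontrivial content of \cref{theorem:universal_property_of_finite_presheaves}, which is what you should be citing in Step 1: the left Kan extension of a \emph{left exact} $\euscr{G}$ along $\nu$ is exact, and the proof there has to show that the extension carries discrete objects to discrete objects (resolving a discrete presheaf by representables $y(c_{\bullet})$ and running the realization spectral sequence) before applying Lurie's $t$-exactness criterion. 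Without that input the rest of your argument does not get off the ground.

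The second gap is in the descent step. As written it cannot be carried out: $\dcat$ is only assumed to admit finite limits, so you have no geometric realizations of Adams resolutions in $\dcat$ to take (and, incidentally, $\ccat$ is stable, so it does have pullbacks and \v{C}ech nerves — that is not the obstruction). More importantly, the representables $\nu(c)$ are already sheaves by \cref{proposition:properties_of_finite_derived_infty_cat_of_stable_cat}, so there is nothing to descend there; what has to be shown is that the exact extension $\widetilde{G}$ annihilates the $L$-acyclic perfect presheaves. The decisive structural fact, which your argument never uses, is that these acyclics are \emph{bounded}, so the localization $A_{\infty}^{\omega}(\ccat) \rightarrow \dcat^{\omega}(\ccat)$ is determined by its heart: an exact functor factors through it if and only if its restriction to $A(\ccat)$ kills $K = \ker(A(\ccat) \rightarrow \acat)$, and that is exactly what the existence of $G_{0}$ with $\euscr{G}(c)_{\leq 0} \simeq G_{0}(H(c))$ supplies (each homotopy object of a bounded acyclic lies in $K$, and an exact functor preserves Postnikov towers, so the acyclic is sent to zero). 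Your closing intuition that the $G_{0}$-data controls the $0$-truncation is the right one, but the argument must run through boundedness of the acyclics and the heart, not through resolutions of the representables.
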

Using this result, we obtain a comparison with the theory of synthetic spectra associated to an Adams-type ring spectrum $R$, showing that there is a canonical exact, fully faithful embedding 
\[
\dcat^{\omega}(\spectra) \rightarrow \synspectra_{R}
\]
whose image is generated under finite colimits by the synthetic analogues of ordinary spectra. 

More generally, in \S\ref{subsection:digression_derived_cat_in_grothendieck_abelian_case} we describe a variant of our construction which takes as an input a particularly nice type of a homology theory and which outputs a Grothendieck prestable $\infty$-category in the sense of Lurie \cite{lurie_spectral_algebraic_geometry}[\S C]. Applied to an Adams-type homology theory; this outputs \emph{exactly} hypercomplete synthetic spectra, providing the latter with a universal property of their own. 

\subsection{Further applications}

Apart from proving Franke's algebraicity conjecture and providing a coherent monoidality result for Adams filtrations, we believe that several constructions in this paper are of independent interest. Their applications go beyond the ones given in this paper, which we hope at least partially justifies its length. To provide some motivation, as well as a guide to the reader, we mention here some of these recent developments.

The derived infinity category associated to a homology theory constructed in the current work is used in an essential way in Burklund's breakthrough work on multiplications on Moore spectra \cite{burklund2022moore}. The latter paper shows that $S^{0}/2^{\lceil \nicefrac{3}{2}k+1 \rceil}$ and $S^{0}/p^{k+1}$ for $p$ odd can be made into $\mathbf{E}_{k}$-algebras, solving a long standing problem in algebraic topology. To obtain the improved bound at odd primes, Burklund uses the derived $\infty$-category of spectra associated to the class of $S^{0}/p$-split epimorphisms, which can be made symmetric monoidal by \cref{proposition:perfect_derived_infty_cat_of_r_split_maps_acquires_a_monoidal_structure}. More generally, Burklund proves a result about $\mathbf{E}_k$-structures on $\mathbb{I}/v^n$, where $\mathbb{I}$ is the monoidal unit of an $\mathbf{E}_m$-monoidal $\infty$-category and $v$ a self-map of $\mathbb{I}$, which rests on the fact that the constructions given in this paper apply to an arbitrary stable $\infty$-category. 

The adapted factorization of \cref{theorem:adapted_factorization_of_homology_theories} is used by Burklund and the second author to describe the $E_2$-term of various Adams spectral sequences in terms of quiver representations \cite{burklundpstragowski2023}. In particular, one obtains an elegant description of the $E_2$-term of the Adams spectral sequence sequence based on integral homology in terms of the classical Adams spectral sequence. 

The appendix of \cite{burklundpstragowski2023} develops further functoriality and methods of comparing the derived $\infty$-categories introduced in the current work. 

Several of algebraicity equivalences between homotopy categories constructed in the current work, in particular chromatic algebraicity of \cref{example:chromatic_algebraicity_in_introduction}, were shown to be symmetric monoidal in the work of Barkan \cite{barkan2023}.

\subsection{Summary of contents} 

In \S\ref{subsection:history_of_algebraicity_results} right below, we provide a brief background on algebraicity results for homotopy categories of stable $\infty$-categories. The sections \S\ref{subsection:conventions} and  \S\ref{subsection:acknowledgements} are devoted to conventions and acknowledgements. 

In \S\ref{section:homology_theories_and_adams_spectral_sequences}, we review the notion of a homology theory and the associated Adams spectral sequence. We characterize adapted homology theories in terms of the Freyd envelope. 

In \S\ref{section:classification_of_adams_spectral_sequences}, we discuss Miller's approach to the Adams spectral sequence, and classify adapted homology theories in terms of monomorphism classes. 

In \S\ref{section:prestable_freyd_envelope}, we introduce the notion of perfect and almost perfect presheaves on an additive $\infty$-category $\ccat$. Their $\infty$-categories, which we denote by $A_{\infty}^{\omega}(\ccat)$ and $A_{\infty}(\ccat)$ and call the (perfect) prestable Freyd envelope, are direct $\infty$-categorical analogues of the classical Freyd envelope. We show that these $\infty$-categories inherit favourable categorical properties from $\ccat$, such as the existence of limits. 

If $\ccat$ is stable, we prove in \S\ref{subsection:prestable_enhancements_to_homological_functor} that its prestable envelope can be characterized completely in terms of classifying homology theories together with certain left exact homotopy-coherent lifts, which we call prestable enhancements. In \S\ref{subsection:thread_structure_on_prestable_freyd_envelope}, we introduce the map $\tau$ and prove that its cofibres admit canonical monad structures, developing a non-monoidal version of the calculus of cofibres of $\tau$.  

\S\ref{section:derived_infty_cats} is devoted to the central construction of the paper, the derived $\infty$-category $\dcat^{\omega}(\ccat)$ associated to an adapted homology theory $H\colon \ccat \rightarrow \acat$. In \S\ref{subsection:bounded_derived_cat_of_abelian_cat}, we describe the bounded derived $\infty$-category of an abelian category with enough injectives as an $\infty$-category of perfect sheaves, and prove its universal property. In \S\ref{subsection:perfect_derived_infty_cat_of_a_stable_infty_cat}, we establish basic properties of $\dcat^{\omega}(\ccat)$ by showing that sheafification with respect to the $H$-epimorphism topology preserves perfect sheaves. In \S\ref{subsection:homology_adjunction_and_thread_structure}, we describe an adjunction between the two derived $\infty$-categories and relate the algebraic one to $C\tau$-modules. 

In \S\ref{subsection:monoidality_of_the_adams_filtration}, we relate the Postnikov towers in the derived $\infty$-category $\dcat^{\omega}(\ccat)$ to the $H$-Adams spectral sequence. As an application, we construct functorial Adams filtrations for an arbitrary homology theory, and in \S\ref{subsection:postnikov_towers_and_adams_filtration} we show that the Adams filtration associated to a spectrum with a right unital multiplication is lax symmetric monoidal. 

\S\ref{section:further_topics_in_derived_infty_cats} is devoted to deeper topics in the study of the derived $\infty$-category associated to a homology theory. In \S\ref{subsection:completing_derived_infty_cats}, we show that under certain assumptions, prestable completions of the derived $\infty$-category an be described using almost perfect hypersheaves. In \S\ref{subsection:hypercompletion_and_locality}, we describe the interaction between hypercompletion of sheaves and locality in the sense of Bousfield. In \S\ref{subsection:goerss_hopkins_theory}, we construct the Goerss-Hopkins tower associated to an adapted homology theory and prove its basic properties. In \S\ref{subsection:digression_derived_cat_in_grothendieck_abelian_case}, we describe a variant of our construction which always produces a Grothendieck prestable $\infty$-category. In \S\ref{subsection:case_of_modules_and_synthetic_spectra}, we compare our construction to the ones already appearing in the literature. 

\S\ref{section:proving_algebraicity} is devoted to the proof of Franke's conjecture. In \S\ref{subsection:algebraic_model} we describe our algebraic model in terms of differential objects. In \S\ref{subsection:splitting_of_abelian_categories_and_the_bousfield_functor}, we show that if $H\colon \ccat \rightarrow \acat$ is an adapted homology theory such that $\acat$ admits a splitting of order $(q+1)$, then $H$ admits a partial inverse known as the Bousfield functor. In \S\ref{subsection:bousfield_adjunction}, we show that this induces a monadic adjunction between $C\tau^{q+1}$-modules and the derived $\infty$-category of $\acat$, and identify the monad explicitly in \S\ref{subsection:the_truncated_thread_monad}. We combine this with a Goerss-Hopkins obstruction theory argument to complete the proof of the conjecture in 
\S\ref{subsection:complete_proof_of_frankes_conjecture}. 

In \S\ref{section:applications_to_diagram_categories_and_equivariant_objects}, we give examples of adapted homology theories to which algebraicity conjecture applies. We discuss module $\infty$-categories and $E(n)$-local spectra in, respectively, \S\ref{subsection:modules_over_ring_spectra} and \S\ref{subsection:chromatic_algebraicity}. We extend these results to diagram $\infty$-categories through the study of their cohomological dimension in  \S\ref{subsection:diagram_infty_categories}.

\subsection{Background on algebraicity results in stable homotopy theory} 
\label{subsection:history_of_algebraicity_results}

The prototypical example of a algebracity result in homotopy theory is Quillen's description of rational homotopy theory \cite{quillen1969rational}. In the language of $\infty$-categories, Quillen proves that there is an equivalence
\[
\spaces_{*, \geq 1}^{\mathbb{Q}} \simeq \mathrm{dgLie}_{\geq 1}^{\mathbb{Q}}
\]
between the $\infty$-categories of pointed, simply-connected rational spaces and of connected differential graded Lie algebras over $\mathbb{Q}$. Passing to the stable setting, this reduces to the (much more simple) equivalence
\[
\spectra_{\mathbb{Q}} \simeq \dcat(\mathbb{Q})
\]
between rational spectra and the stable derived $\infty$-category of the rationals. 

The idea of algebraic description of localizations of spectra was taken up by Bousfield \cite{bousfield1985homotopy}, who studied the homotopy category of $KU_{(p)}$-local spectra, where $KU_{(p)}$ is complex topological $K$-theory localized at an odd prime. Due to the Adams splitting 
\[
KU_{(p)} \simeq \bigoplus_{i=0}^{p-2} \Sigma^{2i} E(1)
\]
this is equivalent to studying the localization at the first Johnson-Wilson homology. 

Bousfield's methods to understand $h_1(\spectra_{E(1)})$ go back to Adams' idea of using $e$ and $d$-invariants to understand the $J$-homomorphism \cite{AdamsJIV}. More precisely, Bousfield uses the $E(1)$-based Adams spectral sequence of signature
\[
E_2^{p, q}=\Ext_{E(1)_*E(1)}^p(E(1)_*X, E(1)_*Y[q]) \Rightarrow [L_{E(1)}X, L_{E(1)}Y]_{p+q}.
\]
Here the Ext-groups are taken in the abelian category of $E(1)_*E(1)$-comodules, $L_{E(1)}$ is the Bousfield localization and brackets denote homotopy classes of maps of spectra. 

The key observation is that the if $p$ is an odd prime,  then the abelian category $\Comod_{E(1)_*E(1)}$ has cohomological dimension $2$. Thus, the above Adams spectral sequence has a horizontal vanishing line and the only possible non-trivial differential is
\[
d_2 \colon \Hom_{E(1)_*E(1)}(E(1)_*X, E(1)_*Y[q]) \to \Ext_{E(1)_*E(1)}^2(E(1)_*X, E(1)_*Y[q-1]).
\]
The second crucial observation of Bousfield was that since $E(1)_{*}E(1)$ is concentrated in degrees divisible by $2p-2$, the abelian category of comodules has a splitting 
\[
\Comod_{E(1)_*E(1)} \simeq \bcat_{0} \times \bcat_{1} \times \cdots \times \bcat_{2p-3}
\]
as a product of its Serre subcategories of comodules concentrated in a single degree modulo $2p-2$\footnote{This is a small lie. Instead of $E(1)_{*}E(1)$-comodules, Bousfield works with $KU_{*}$-modules equipped with suitable Adams operations and obtains the splitting using weight decomposition associated to the latter. This argument is equivalent to the one using the grading on $E(1)_{*}E(1)$.}.

Using the $d_2$ differential in the Adams spectral sequence, Bousfield defines for any $E(1)$-local $X$ a class 
\[
\kappa_{X} \in \Ext_{E(1)_*E(1)}^2(E(1)_*X, E(1)_*X[-1]).
\] 
He then uses the splitting and the Adams spectral sequence to prove that equivalence classes of $E(1)$-local spectra are in bijection with the isomorphism classes of pairs $(M, \kappa)$, where $M$ is a comodule and $\kappa \in \Ext_{E(1)_*E(1)}^2(M, M[-1])$ \cite[ 9.1]{bousfield1985homotopy}. Intuitively, Bousfield's class $\kappa$ is a higher analogue of $d$ and $e$-invariants of Adams, since $d$ and $e$-invariants live in $\Hom$ and $\Ext^1$-groups, respectively, which instead of classifying morphisms classifies objects. 

Bousfield also gives a partial descriptions of homotopy classes between $E(1)$-local spectra. More precisely, using the Adams filtration, Bousfield defines a bigraded category associated to $h_1\spectra_{E(1)}$ and a bigraded category where the objects are the pairs $(M, \kappa)$ and shows that they are equivalent. This gives a complete algebraic description of the homotopy category up to extension problems.

Note that Bousfield comes very close, but does not provide an algebraic category which is equivalent to $h_1\spectra_{E(1)}$. This was one of the main motivations for searching for a category which would improve Bousfield's classification. 

Inspired by the ideas of Bousfield as well as algebraic geometry, Jens Franke created a general setup to attack such algebraic classification problems \cite{franke1996uniqueness}. His motivation was the realization functor of Beilinson, Berstein and Deligne \cite{BBD}. 

Franke works with stable model categories and derivators rather than stable $\infty$-categories, but his set-up is virtually identical to the one we use in the current work; in particular, he defines the notion of an adapted homology theory $H\colon \ccat \rightarrow \acat$.  Assuming that $\acat$ admits a splitting as a product of $(q+1)$ Serre subcategories (in a way compatible with its local grading) and that it is of cohomological dimension less than $q+1$, Franke attempts to construct an algebraic model for the homotopy category $h_1\ccat$. 

The algebraic model in question is given by the category of differential objects; that is, of pairs $(a,\partial)$, where $a \in \acat$ and $\partial \colon a \to a[1]$ is a differential satisfying $\partial^{2} = 0$. These have a natural notion of homology and hence of quasi-isomorphisms.

Franke constructs a model structure on differential objects with quasi-isomorphisms as weak equivalences, and this has an underlying stable $\infty$-category which we denote in the current work by $\dcat^{per}(\acat)$. Using the aforementioned ideas of Beilinson, Berstein and Deligne, Franke constructs a realization functor 
\[
\mathcal{R} \colon h_1 \dcat^{per}(\acat) \to h_1\ccat,
\]
which he then attempts to show is an equivalence. Unfortunately, the latter argument has a gap. 

We were not able to resolve the question of whether Franke's $\mathcal{R}$ is an equivalence, and our arguments in the main text proceed using very different methods. However, in the interest of history, let us sketch the ingenious construction of Franke's realization functor.

Using the splitting 
\[
\acat \simeq \prod_{\phi \in \mathbb{Z}/(q+1)} \acat_{\phi}
\]
Franke observes that to give a differential object is equivalent to give a chain complex of objects $A_{\bullet}$ in $\acat$ together with a periodicity isomorphism
\begin{equation}
\label{equation:periodic_chain_complex_in_history}
A_{\bullet}[q+1] \simeq A_{\bullet-q-1}.
\end{equation}
To any such periodic chain complex, Franke associates a certain diagram in $\ccat$ of the form  
\[
\xymatrix{ X_{\zeta_0}
    \ar@{<-}[drrrrrrrr]%
    |<<<<<<<<<<<<<<<<<<{\text{\Large \textcolor{white}{$\blacksquare$}}}%
    |<<<<<<<<<<<<<<<<<<<<<<<{\text{\Large \textcolor{white}{$\blacksquare$}}}
    |<<<<<<<<<<<<<<<<<<<<<<<<<<<<<<<<<<<<<<<{\text{\Large \textcolor{white}{$\blacksquare$}}}
    |>>>>>>>>>>>>>>>>>>>>>>>>>>>>>>>>>>>>>>>>{\text{\Large \textcolor{white}{$\blacksquare$}}}
    |>>>>>>>>>>>>>>>>>>>>>>>>>{\text{\Large \textcolor{white}{$\blacksquare$}}}
    |>>>>>>>>>>>>>>>>>>{\text{\Large \textcolor{white}{$\blacksquare$}}}
& & X_{\zeta_1} & & \ldots & & X_{\zeta_{q-1}} & & X_{\zeta_{q}} \\   X_{\beta_0} \ar[u]^{l_0} \ar[urr]^{k_1} & & X_{\beta_1} \ar[u]_{l_1} \ar[urr] & & \ldots \ar[urr] & & X_{\beta_{q-1}} \ar[u]^{l_{q-1}} \ar[urr]^{k_{q}} & & X_{\beta_{q}}, \ar[u]_{l_{q}} }
\]
which he calls \emph{crowned diagrams}. These are subject to conditions which are a homotopy-theoretic incarnation of how kernels and images of maps in the given chain complex  are related to one another. 

More precisely, the conditions are that after applying the homology $H \colon \ccat \to \acat$, the objects $X_{\zeta_{i}}$ give the cycles of $A_{\bullet}$ and $X_{\beta_{i}}$ give the boundaries. Moreover, the maps $l_i$ induce the inclusion of the boundaries into cycles. Finally, the mapping cone sequences of $k_i$ yield the extensions relating the chains with the boundaries and cycles. 

Franke shows that for any periodic chain complex such a crowned diagram exists; in fact that it is essentially unique in a suitable homotopy category of diagrams. Taking the homotopy colimit of this unique crowned diagram defines the realization functor 
\[
\mathcal{R}\colon d\acat \rightarrow h_{1} \ccat 
\]
whose source is the category of differential objects. Franke shows that this inverts quasi-isomorphisms and hence defines a functor 
\[
\mathcal{R}\colon h_{1} \dcat^{per}(\acat) \rightarrow h_{1} \ccat,
\]
he then tries to show that this functor is an equivalence  \cite[Section 2, Page 56, Theorem 5]{franke1996uniqueness}.

An error in the proof shows up in \cite[Page 62, Proposition 2]{franke1996uniqueness}. The strategy is to prove that the functor $\mathcal{R}$ preserves certain distinguished triangles which appear in the construction of the corresponding Adams spectral sequence in both the source and target categories. If one could show that $\mathcal{R}$ preserves such distinguished triangles as claimed in \cite[Page 62, Proposition 2]{franke1996uniqueness}, then one would be able to compare the Adams spectral sequences and show that $\mathcal{R}$ is fully faithful. The essential surjectivity would then easily follow, too. 

However, it is not clear at all that $\mathcal{R}$ preserves such distinguished triangles. More precisely, such triangles are of the form
\[
A \xrightarrow{f} B \xrightarrow{g} C \xrightarrow{d} \Sigma A,
\]
where the connecting morphism $d$ induces zero on the homology theory $H$. The argument given in the proof of \cite[Page 62, Proposition 2]{franke1996uniqueness} shows that 
\[\mathcal{R}(A) \xrightarrow{\mathcal{R}(f)} \mathcal{R}(B) \xrightarrow{\mathcal{R}(g)} \mathcal{R}(C)\]
is a part of a distinguished triangle, but does not justify why $\mathcal{R}(d)$ is the correct connecting morphism. 

This is a subtle technical issue; it is difficult to understand maps which induce zero on homology as one cannot approach them using the underlying algebra. Experts in triangulated categories will immediately notice that such questions can be very hard to solve; this is directly related to the non-functoriality of mapping cones.

The above gap in the argument of \cite[Page 62, Proposition 2]{franke1996uniqueness} was first discovered in the paper \cite{patchkoria2012}. By putting further conditions on $\acat$; this gap can be fixed: the papers \cite{GreenleesS1, patchkoriaKU, patchkoria2017exotic} fix this gap in the case when the abelian category $\acat$ has cohomological dimension at most 3. In particular, \cite{patchkoria2017exotic} provided an algebraic model for $h_1\spectra_{E(1)}$ at primes $p \geq 5$, strengthening the result of Bousfield. 

To fix the gap and show that Franke's realization functor $\mathcal{R}$ preserves appropriate distinguished triangles, one has to find crowned diagram models for mapping cones of differential objects. Unfortunately, even in the case of cohomological dimension $2$ such models become combinatorially involved, see \cite[Proof of Lemma 6.2.1]{patchkoria2012}.

There have also been results in the negative direction, where one shows that the homotopy category $h \ccat$ of a given stable $\infty$-category \emph{cannot} be equivalent to a derived category of an abelian category \cite{schwede2008algebraic}. 

An even stronger form of a negative answer would be to show that whenever we have $h \ccat \simeq h \dcat$ as triangulated categories, then $\ccat \simeq \dcat$ as stable $\infty$-categories. For example, Schwede's Annals paper shows that the $\infty$-category of spectra has this property \cite{schwede2007stable}. This was extended by Roitzheim to the $E(1)$-local category when $p=2$, providing a complement to Bousfield's work at odd primes \cite{Roitzheim}.

A solution to the algebraicity question of a slightly different flavour appears in the work of Barthel, Schlank and Stapleton, who consider the ultraproduct of the $\infty$-categories of $E(n)$-local spectra, indexed by the primes, and show that it is equivalent to the ultraproduct of Franke's models in the strongest possible sense, as a symmetric monoidal $\infty$-category \cite{barthel2017chromatic}. Their work was extended to cover the $K(n)$-local case in \cite{bss_monochromatic}.

Our approach to the question of algebraic models based on a homology theory, which we decided to call \emph{Franke's algebraicity conjecture} to highlight our intellectual debt\footnote{We would like to thank Jens Franke for agreeing on our use of this terminology.}, uses similar ideas but is not directly based on the realization functor $\mathcal{R}$. Instead, we base our arguments on Goerss-Hopkins theory. The latter technology, based on Dwyer-Kan-Stover and Bousfield's work on homotopy-theoretic resolutions, was used in the second author's thesis to give algebraic models for the homotopy categories of $E(n)$-local spectra at large primes \cite{pstragowski_chromatic_homotopy_algebraic}. 

\subsection{Conventions}
\label{subsection:conventions}

Our set-theoretic conventions are the same as those in Lurie's book \cite{higher_topos_theory}. To be more precise, we assume that there is a fixed universe $\spaces$ of small spaces, and unless stated otherwise we will assume that our $\infty$-categories have small mapping spaces (but need not be themselves necessarily small). There are places where we pass to a larger universe $\widehat{\spaces}$ and hence encounter $\infty$-categories with large mapping spaces, but we will be very explicit about this. 

We will generally denote derived $\infty$-categories (for example, of an abelian category) using $\dcat(-)$; however, unlike most sources, we reserve an unadorned letter to denote the \emph{connective} derived $\infty$-category, which is prestable rather than stable. In symbols, our convention is that 
\[
\dcat(\ccat) \colonequals \dcat_{\geq 0}(\ccat).
\]
This non-standard choice comes down to practicality; the stable derived $\infty$-categories appear here very little, while their prestable variants are our main focus. 

\subsection{Acknowledgements}
\label{subsection:acknowledgements}

As we hope the title makes clear, we  have an incredible intellectual debt to the work of Jens Franke, and would like to thank him for his inspirational paper.  

The first named author would like to thank Emanuele Dotto, Rune Haugseng, Drew Heard, Julie Rasmusen, Constanze Roitzheim and Stefan Schwede for useful conversations and comments.  

I (the second named author) would like to thank Paul Goerss, who first suggested to me to look into algebraicity problems, and to whom an a early version of this project was described around the time I was graduating in 2019. I would like to thank Jesper Grodal for proof-reading a fellowship application about this project, helping many of these ideas crystallize. I would like to thank Robert Burklund, whose many ideas and whose viewpoint on ``synthetic'' things were a huge inspiration for the direction taken here\footnote{We hope at least parts of this paper will be useful in Robert's upcoming book on synthetic spectra.}. I would like to thank Lars Hesselholt and Jeremy Hahn for reading an earlier version of this work and for the helpful comments they provided. 

\section{Homology theories and Adams spectral sequences}
\label{section:homology_theories_and_adams_spectral_sequences}

In this section we study homological functors on stable $\infty$-categories with values in abelian categories. We introduce the notion of an adapted homology theory which gives rise to a version of the Adams spectral sequence. We discuss the universal homology theory and its target, the Freyd envelope, and use it to characterize adapted homology theories.

\subsection{Homological functors} 

In this very short section, we fix our terminology concerning homological functors and homology theories. Everything here is very classical; an excellent resource is Neeman's book \cite{neeman2001triangulated}.

\begin{definition}
\label{definition:homological_functor}
Let $\ccat$ be a stable $\infty$-category and $\acat$ be an abelian category. We say a functor $H\colon \ccat \rightarrow \acat$ is is \emph{homological} if

\begin{enumerate}
    \item  $H$ is additive and 
    \item if
\[
c \rightarrow d \rightarrow e
\]
is a cofibre sequence in $\ccat$, then 
\[
H(c) \rightarrow H(d) \rightarrow H(e)
\]
is exact.
\end{enumerate}

\end{definition}

\begin{example}
For any object $c \in \ccat$, the functor $[c, -]\colon \ccat \rightarrow \abeliangroups$ is homological. 
\end{example}
In practice, we usually have a little bit more structure on homological functors, namely certain compatibility with the grading. This will be important in the current work and thus we make it explicit.

\begin{definition}
A \emph{local grading} on an $\infty$-category $\ccat$ is an autoequivalence 
\[
[1]_{\ccat}\colon \ccat \rightarrow \ccat.
\]
A \emph{locally graded} $\infty$-category is a pair of an $\infty$-category and a local grading.
\end{definition}

\begin{notation}
\label{notation:local_grading_same_as_action}
Since $\mathbb{Z}$ is the free group object in spaces generated by a single point, the forgetful functor 
\[
\Fun(\mathrm{B}\mathbb{Z}, \euscr{C}at_{\infty}) \rightarrow \{ \ \textnormal{locally graded $\infty$-categories} \  \}
\]
is an equivalence of $\infty$-categories. Informally, this is saying that given an equivalence on an $\infty$-category $\ccat$, there is an essentially unique way to coherently choose its composites and their inverses. If $\ccat$ is locally graded and $n \in \mathbb{Z}$, we will write
\[
[n]_{\ccat}\colon \ccat \rightarrow \ccat
\]
for the $n$-th composite of the local grading
\end{notation}

\begin{remark}
Besides the definition and the observation made in \cref{notation:local_grading_same_as_action}, there are many other equivalent ways of expressing the data of a local grading, see \cite{lurierotation}[2.4.4].
\end{remark}

The following are some examples of local gradings which will be of interest in this paper. 
\begin{example}
Let $\ccat$ be a stable $\infty$-category. Then, the suspension functor $\Sigma\colon \ccat \rightarrow \ccat$ is a local grading.  
\end{example}

\begin{example}
Let $R_{*}$ be a graded ring. Then, the category $\Mod(R_{*})$ of graded $R$-modules has a local grading determined by $(M[1])_{k} = M_{k-1}$.
\end{example}

In what follows, we will assume that $\ccat$ is a stable $\infty$-category, always considered as a locally graded $\infty$-category using the suspension functor, and that $\acat$ is an abelian category with some choice of a local grading. 

\begin{definition}
\label{definition:homology_theory}
We say a functor $H\colon \ccat \rightarrow \acat$ of locally graded $\infty$-categories is a \emph{homology theory} if its underlying functor is homological. 
\end{definition}

\begin{remark}
Note that to promote a homological functor $H\colon \ccat \rightarrow \acat$ to a homology theory is the same as to choose an isomorphism
\[
H(\Sigma c) \simeq H(c)[1]_{\acat}
\]
natural in $c \in \ccat$. Note that this is additional, non-trivial data; in particular a homology theory is not given by a datum of a functor alone.
\end{remark}

\begin{example}
Let $\acat = \textnormal{Vect}(\mathbb{F}_{p})$ be the abelian category of graded vector spaces. Then, the mod $p$ homology functor 
\[
H_{*}(-, \mathbb{F}_{p})\colon \spectra \rightarrow \textnormal{Vect}(\mathbb{F}_{p})
\]
is canonically a homology theory. 
\end{example}

\subsection{Adams spectral sequence and adaptedness} 

In practice, among all homology theories $H\colon \ccat \rightarrow \acat$, the well-behaved are the ones where the injectives of $\acat$ can be compatibly lifted to objects of $\ccat$, giving rise to the Adams spectral sequence. To make this precise, we make the following definition.  

\begin{definition}
\label{definition:injectives_in_a_stable_category}
Let $H\colon \ccat \rightarrow \acat$ be a homology theory and let $i \in \acat$ be an injective. An \emph{injective
lift $i_{\ccat}$ associated to $i$} is an object representing the functor 
\[
\Hom_{\acat}(H(-), i)\colon \ccat^{op} \rightarrow \abeliangroups.
\]
in the homotopy category $h \ccat$. Explicitly, it is an object $i_{\ccat} \in \ccat$ together with a map $H(i_{\ccat}) \rightarrow i$ such that for any $c \in \ccat$, the induced composite
\[
[c, i_{\ccat}] \rightarrow \Hom_{\acat}(H(c), H(i_{\ccat})) \rightarrow \Hom_{\acat}(H(c), i)
\]
is an isomorphism of abelian groups.
\end{definition}

\begin{example}
\label{example:em_spectrum_associated_injective_for_mod_p_homology}
Consider $H_{*}(-, \mathbb{F}_{p})\colon \spectra \rightarrow \textnormal{Vect}(\mathbb{F}_{p})$. In the latter category, the degree-zero one-dimensional vector space $\mathbb{F}_{p}$ is injective, and we have 
\[
\Hom_{\mathbb{F}_{p}}(H_{*}(X, \mathbb{F}_{p}), \mathbb{F}_{p}) \simeq H^{*}(X, \mathbb{F}_{p}) \simeq [X, H\mathbb{F}_{p}],
\] 
Thus, the Eilenberg-MacLane spectrum $H \mathbb{F}_{p}$ is the associated injective lift. 
\end{example}

\begin{definition}
\label{definition:lifting_injective}
We say a homology theory $H\colon \ccat \rightarrow \acat$ has \emph{lifts for injectives} if 
\begin{enumerate}
    \item $\acat$ has enough injectives and 
    \item any injective $i \in \acat$ lifts to some $i_{\ccat}$.
\end{enumerate}
\end{definition}
The fact of having lifts for injectives already has some pleasant consequences. 

\begin{lemma}
\label{lemma:homology_theories_with_lifts_of_injectives_preserve_coproducts}
Let $H\colon \ccat \rightarrow \acat$ be a homology theory which has lifts for injectives. Then, $H$ preserves all, possibly infinite, direct sums which exist in $\ccat$. 
\end{lemma}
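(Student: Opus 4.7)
The plan is to exploit the fact that in an abelian category with enough injectives, a morphism $\phi$ is an isomorphism precisely when $\Hom_{\acat}(\phi, i)$ is an isomorphism for every injective $i \in \acat$. Failure of injectivity or surjectivity of $\phi$ can be detected by embedding, respectively, its kernel or cokernel into an injective object, which the hypothesis on $\acat$ permits. The presence of injective lifts then lets us transfer this detection problem from $\acat$ into $\ccat$, where the universal property of the coproduct finishes the argument.

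Concretely, let $(c_\alpha)_{\alpha \in I}$ be a family in $\ccat$ whose coproduct $c \colonequals \bigoplus_\alpha c_\alpha$ exists, and write
\[
\phi\colon \bigoplus_\alpha H(c_\alpha) \longrightarrow H(c)
\]
for the canonical comparison map in $\acat$. Fix an injective $i \in \acat$ together with a lift $i_\ccat \in \ccat$. The natural isomorphism $[-, i_\ccat] \cong \Hom_{\acat}(H(-), i)$, applied to the structure maps $c_\alpha \to c$, identifies $\Hom_{\acat}(\phi, i)$ with the canonical restriction
\[
[c, i_\ccat] \longrightarrow \prod_\alpha [c_\alpha, i_\ccat].
\]
Since $c$ is the coproduct of the $c_\alpha$ in $\ccat$, and coproducts in a stable $\infty$-category descend to coproducts in its homotopy category, this latter map is a bijection. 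Combined with the detection principle above, this forces $\phi$ to be an isomorphism in $\acat$, which is exactly the claim.

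The argument is essentially a direct unwinding of definitions, so I do not foresee any serious obstacle. The only point requiring mild care is the naturality step: one must verify that the representability isomorphism for $i_\ccat$ genuinely identifies $\Hom_{\acat}(\phi, i)$ with the coproduct restriction map on $[-, i_\ccat]$, as opposed to some other natural transformation. This is automatic once one recalls that $\phi$ itself is built, summand by summand, out of the $H$-images of the inclusions $c_\alpha \to c$, so the commutative square expressing the identification is just the naturality square of $[-, i_\ccat] \cong \Hom_{\acat}(H(-), i)$ evaluated on those inclusions.
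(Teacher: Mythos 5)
This is essentially the paper's own argument: the paper also reduces to checking $\Hom_{\acat}(H(\bigoplus c_{\alpha}), i) \simeq \prod \Hom_{\acat}(H(c_{\alpha}), i)$ for injective $i$, rewrites both sides as homotopy classes of maps into the lift $i_{\ccat}$, and concludes from the fact that $\ccat \rightarrow h\ccat$ preserves direct sums. The one cosmetic difference is that you form a comparison map $\phi$ out of a coproduct $\bigoplus_{\alpha} H(c_{\alpha})$ that is not guaranteed to exist in $\acat$ (which is only assumed to have enough injectives, not to be cocomplete); the paper instead verifies the universal property of $H(\bigoplus c_{\alpha})$ directly against injectives, which sidesteps this and is the safer formulation of ``preserves direct sums''.
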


\begin{proof}
Let $c_{\alpha}$ be a family of objects of $\ccat$ which admits a direct sum. Since $\acat$ has enough injectives, it is enough to show that for any injective $i \in \acat$ we have 
\[
\Hom_{\acat}(H(\bigoplus c_{\alpha}), i) \simeq \prod \Hom_{\acat}(H(c_{\alpha}), i). 
\]
Both sides can be rewritten in terms of homotopy classes of maps into the associated injective $i_{\ccat}$, and the result follows from the fact that the projection $\ccat \rightarrow h\ccat$ onto the homotopy category preserves all direct sums which exist in $\ccat$. 
\end{proof}

Note that the condition of having lifts for all injectives  might seem quite restrictive at first glance; this is decidedly not the case. In fact, we will now show that \cref{lemma:homology_theories_with_lifts_of_injectives_preserve_coproducts} already characterizes such homology theories whenever $\ccat$ is presentable. Since plenty of stable $\infty$-categories occurring in practice are presentable, this provides a whole zoo of examples.

\begin{proposition}[Brown representability]
\label{proposition:brown_representability}
Let $\ccat$ be a presentable stable $\infty$-category and let $E\colon \ccat^{op} \rightarrow \abeliangroups$ be a homological functor which takes arbitrary direct sums in $\ccat$ to products of abelian groups. Then, $E$ is representable in the homotopy category $h \ccat$. 
\end{proposition}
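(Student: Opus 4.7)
This is the classical Brown representability theorem in the setting of a presentable stable $\infty$-category, and I would prove it by building the representing object as a transfinite colimit obtained from a set of compact generators, following the Adams/Neeman cellular strategy. Since $E$ factors through a hom-functor $E\colon h\ccat^{op} \to \abeliangroups$, it suffices to construct $X \in \ccat$ and a universal class $\eta \in E(X)$ for which evaluation gives an isomorphism on the homotopy category.

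First, use presentability to choose a regular cardinal $\kappa$ such that $\ccat$ is $\kappa$-compactly generated and fix a small set $G$ of $\kappa$-compact generators closed under the suspension $\Sigma$. The sum-to-product hypothesis yields
\[
E\bigl(\bigoplus_{g \in G,\, x \in E(g)} g\bigr) \simeq \prod_{g,\, x} E(g),
\]
and I set $X_{0}$ equal to this coproduct, equipped with the tautological class $\eta_{0}$ whose $(g,x)$-entry is $x$. The induced Yoneda-type map $[g, X_{0}] \to E(g)$, $f \mapsto f^{*}(\eta_{0})$, is surjective for every $g \in G$ by construction.

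Second, I would repair the failure of injectivity transfinitely. At a successor stage, define $X_{i+1} \colonequals \cofib\bigl(\bigoplus_{\alpha} g_{\alpha} \to X_{i}\bigr)$, where the indexing runs over all pairs $(g_{\alpha}, f_{\alpha})$ with $g_{\alpha} \in G$ and $f_{\alpha}$ in the kernel of $[g_{\alpha}, X_{i}] \to E(g_{\alpha})$; since each $f_{\alpha}$ maps to zero in $E(g_{\alpha})$, the exactness at the middle term of $E$ applied to the defining cofibre sequence produces a lift $\eta_{i+1} \in E(X_{i+1})$ of $\eta_{i}$. At a limit stage, the mapping telescope cofibre sequence combined with the sum-to-product hypothesis yields a Milnor-type short exact sequence
\[
0 \to {\lim}^{1} E(\Sigma X_{j}) \to E(\mathrm{colim}\, X_{j}) \to \lim E(X_{j}) \to 0
\]
through which the compatible family of universal classes lifts to $X_{\infty}$.

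Finally, take $X \colonequals X_{\kappa}$ and its limiting class $\eta_{\kappa}$. The $\kappa$-compactness of each $g \in G$ gives $[g, X] \simeq \mathrm{colim}\, [g, X_{i}]$, so surjectivity persists to the colimit while the systematic killing of kernel elements at each stage forces injectivity. Thus $[-, X] \to E(-)$ is an isomorphism on $G$; because both functors are homological and send arbitrary direct sums to products, they agree on the localizing subcategory of $h\ccat$ generated by $G$, which is all of $h\ccat$. I expect the main obstacle to be the transfinite bookkeeping needed to make the $\lim^{1}$-terms vanish and to choose compatible lifts $\eta_{i}$ coherently; for a fully clean argument one can alternatively invoke Neeman's Brown representability for well-generated triangulated categories \cite{neeman2001triangulated}, after verifying that $h\ccat$ is well-generated since $\ccat$ is presentable.
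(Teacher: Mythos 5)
Your primary construction has a genuine gap exactly where you suspect it does, and it is worth being precise about why it is not mere bookkeeping. The Milnor sequence
\[
0 \to \textstyle\varprojlim^{1} E(\Sigma X_{j}) \to E(\varinjlim X_{j}) \to \varprojlim E(X_{j}) \to 0
\]
is only available for colimits of countable cofinality: the mapping-telescope presentation of the colimit as a cofibre of a shift map on a coproduct, which is what produces this sequence, works for $\omega$-indexed towers, while for a tower of uncountable length the comparison between $E(\varinjlim X_{j})$ and $\varprojlim E(X_{j})$ involves higher derived limits $\varprojlim^{s}$ that need not vanish. Since your generators are only $\kappa$-compact for a possibly uncountable regular $\kappa$, your tower must have length $\kappa$ and you hit this at every limit ordinal of uncountable cofinality; this is precisely the obstruction that confines the naive cellular argument to the compactly generated case, and the known counterexamples to Brown representability for general triangulated categories with coproducts show it cannot be argued away formally. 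Your fallback---invoke Neeman's Brown representability for well-generated triangulated categories after checking that $h\ccat$ is well generated (true for presentable stable $\ccat$, but a nontrivial input)---is a correct repair; note, however, that the well-generated theorem is itself proved by a different countable-tower argument using perfect generating sets, so the fallback replaces your construction rather than completing it.

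The paper takes a different and shorter route that avoids well-generation entirely: it writes $\ccat$ as a localization of the compactly generated $\Ind(\ccat^{\kappa})$ at a saturated class $S$ of maps, applies Neeman's classical compactly generated Brown representability to the composite $(\Ind(\ccat^{\kappa}))^{op} \rightarrow \ccat^{op} \xrightarrow{E} \abeliangroups$ (which still sends coproducts to products, since the localization functor preserves coproducts), and then observes that the representing object is automatically $S$-local because the functor factors through $\ccat$, hence already lies in $\ccat$. This reduction needs only the compactly generated input and no new construction; your route, once channelled through the well-generated theorem, proves a statement valid for a wider class of triangulated categories but at the cost of importing that heavier machinery.
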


\begin{proof}
When $\ccat$ is compactly generated, this is a classical result of Neeman \cite{neeman1996grothendieck}. For arbitrary presentable $\ccat$, choose a regular cardinal $\kappa$ such that $\ccat$ is generated by its subcategory $\ccat^{\kappa}$ of $\kappa$-compact objects \cite[5.5.1]{higher_topos_theory}. Then the functor $\Ind(\ccat^{\kappa}) \rightarrow \ccat$ is a localization of stable $\infty$-categories at some saturated class $S$ of maps in $\Ind(\ccat^{\kappa})$. 

By Neeman's representability, the composite
\[
(\Ind(\ccat^{\kappa}))^{op} \rightarrow (\ccat)^{op} \xrightarrow{E} \abeliangroups
\]
is representable by some $X \in \Ind(\ccat^{\kappa})$. However, since the above composite factors through $\ccat$, we see that $X$ is in fact $S$-local and so equivalent to an object of $\ccat$. This ends the argument. 
\end{proof}

\begin{remark}
We were very surprised that this simple argument given above does not seem to appear in the literature. The above reduction to the compactly-generated case is analogous to the reduction to presheaves appearing in characterization of representable functors on presentable $\infty$-categories \cite[5.5.2.2]{higher_topos_theory}.
\end{remark}

\begin{corollary}
\label{corollary:coproduct_preserving_homology_theory_on_presentable_cat_has_lifts_of_injectives}
Suppose that $\ccat$ is presentable and let $H\colon \ccat \rightarrow \acat$ be a homology theory which preserves arbitrary direct sums. Then, $H$ has lifts for injectives. 
\end{corollary}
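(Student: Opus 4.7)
The plan is to apply Brown representability (\cref{proposition:brown_representability}) to the functor $F \colonequals \Hom_{\acat}(H(-), i)\colon \ccat^{op} \rightarrow \abeliangroups$ for each injective $i \in \acat$. Concretely, to produce the lift $i_{\ccat}$ of Definition \ref{definition:injectives_in_a_stable_category}, it suffices to verify that $F$ is a homological functor on $\ccat^{op}$ which carries direct sums in $\ccat$ to products of abelian groups; then the representing object produced by \cref{proposition:brown_representability} is $i_{\ccat}$, and the evaluation of the representation on the identity yields the required map $H(i_{\ccat}) \rightarrow i$.

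For the homological property, let $c \rightarrow d \rightarrow e$ be a cofibre sequence in $\ccat$. Since $H$ is a homology theory, the sequence $H(c) \rightarrow H(d) \rightarrow H(e)$ is exact in $\acat$. Because $i$ is injective, the contravariant functor $\Hom_{\acat}(-, i)$ is exact on $\acat$, so the induced sequence $F(e) \rightarrow F(d) \rightarrow F(c)$ is exact, i.e.\ $F$ is homological on $\ccat^{op}$. For the product condition, given a family $\{c_{\alpha}\}$ with a direct sum in $\ccat$, the hypothesis that $H$ preserves direct sums combined with the fact that $\Hom_{\acat}(-, i)$ turns direct sums into products gives
\[
F\bigl(\bigoplus c_{\alpha}\bigr) = \Hom_{\acat}\bigl(\bigoplus H(c_{\alpha}), i\bigr) \simeq \prod \Hom_{\acat}(H(c_{\alpha}), i) = \prod F(c_{\alpha}),
\]
as required.

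With these two properties in hand, \cref{proposition:brown_representability} applied to $F$ produces an object $i_{\ccat} \in \ccat$ together with a natural isomorphism $[c, i_{\ccat}] \simeq \Hom_{\acat}(H(c), i)$ in the homotopy category $h\ccat$. Taking $c = i_{\ccat}$ and tracing the identity produces the structural map $H(i_{\ccat}) \rightarrow i$, and unwinding naturality shows this map exhibits $i_{\ccat}$ as an injective lift in the sense of \cref{definition:injectives_in_a_stable_category}. The existence of enough injectives in $\acat$ is either part of the standing hypothesis or to be assumed separately; the content of the corollary is precisely the lifting, which follows from the representability argument just sketched.

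There is no real obstacle once \cref{proposition:brown_representability} is available; the only subtle point is verifying that the functor of interest satisfies both hypotheses of Brown representability, and both of these are immediate from the assumptions on $H$ together with the injectivity of $i$.
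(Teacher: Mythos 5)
Your proposal is correct and is exactly the paper's argument: the paper's proof is a one-line appeal to \cref{proposition:brown_representability} applied to $\Hom_{\acat}(H(-), i)$, and your verification that this functor is homological (via exactness of $\Hom_{\acat}(-,i)$ for injective $i$) and takes direct sums to products (via the coproduct hypothesis on $H$) is precisely what the paper leaves implicit. Your caveat about enough injectives is also consistent with \cref{definition:lifting_injective}.
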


\begin{proof}
This is immediate from \cref{proposition:brown_representability}, as the functor $\Hom_{\acat}(H(-), i)\colon \ccat^{op} \rightarrow \abeliangroups$ satisfies the conditions of Brown representability. 
\end{proof}

A more subtle point is that having lifts for injectives is still not enough to have a well-behaved Adams spectral sequence, even when both $\ccat$ and $\acat$ are very nice. The issue is that if $i_{\ccat}$ is an associated injective, then we have a comparison map $H(i_{\ccat}) \rightarrow i$, but this need not be an isomorphism, as the following example shows. 

\begin{warning}
\label{warning:homology_theory_with_non_compatible_injectives}
We have seen in \cref{example:em_spectrum_associated_injective_for_mod_p_homology} that the injective associated to $\mathbb{F}_{p}$ via the homology theory $H_{*}(-, \mathbb{F}_{p})\colon \spectra \rightarrow \textnormal{Vect}(\mathbb{F}_{p})$ is given by the Eilenberg-MacLane spectrum $H \mathbb{F}_{p}$.
However, $H_{*}(H \mathbb{F}_{p}, \mathbb{F}_{p}) \simeq A_{*}$ is the dual Steenrod algebra, and the comparison map $A_{*} \rightarrow \mathbb{F}_{p}$ is given by the counit of the Hopf algebra structure, which is not an isomorphism.
\end{warning}
In light of the above warning, we make the following definition. 
\begin{definition} \label{definition:adapted_homology_theory}
We say a homology theory $H\colon \ccat \rightarrow \acat$ is \emph{adapted} if: 
\begin{enumerate}
    \item $\acat$ has enough injectives,
    \item any injective $i \in \acat$ lifts to an associated injective $i_{\ccat}$ (i.e. $H$ has lifts for injectives) and,
    \item the structure map $H(i_{\ccat}) \rightarrow i$ is an isomorphism.
\end{enumerate}
\end{definition}

\begin{remark}
\label{remark:adapted_homology_theories_under_other_names_and_comparison_with_franke}
The concept of adapted homology theories has been studied by different authors under different names. In Franke's manuscript, these homology theories are said to \emph{possess Eilenberg-MacLane objects for injectives} \cite{franke1996uniqueness}. The above condition, spelled out in the setting of an arbitrary triangulated category, appears as early as the work of Moss \cite{moss1968composition}.

To be more precise, in \cite{franke1996uniqueness}[p.~52, Definition 10] Franke only asks that every object $a \in \acat$ can be embedded into an injective $i$ which admits an injective lift, not that all injectives do. This is very close to asking for $H$ to be adapted, as under Franke's condition any injective of $\acat$ is necessarily a direct summand of one that admits a lift. Thus, these two conditions are equivalent whenever $\ccat$ is idempotent-complete; for example, if it admits either countable coproducts or products. 
\end{remark}

As we observed in \cref{lemma:homology_theories_with_lifts_of_injectives_preserve_coproducts}, a homology theory with lifts of injectives, in particular an adapted one, preserves all direct sums which exist in $\ccat$. The corresponding statement for infinite products is slightly more subtle, but let us observe products of injective objects do get preserved. 

\begin{lemma}
\label{lemma:an_injective_associated_to_a_product_is_a_product_associated_injectives}
Let $H\colon \ccat \to \acat$ be an adapted homology theory and $i = \prod i^{\alpha}$ be an arbitrary product of injectives of $\acat$. Then, any choice of maps $i_{\ccat} \rightarrow i^{\alpha}_{\ccat}$ of associated injectives inducing the projections $i \rightarrow i_{\alpha}$ on homology induces an equivalence
\[
i_{\ccat} \simeq \prod i_{\ccat}^{\alpha}.
\]
\end{lemma}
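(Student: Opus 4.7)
The goal is to verify that the maps $f_{\alpha}\colon i_{\ccat} \to i^{\alpha}_{\ccat}$ exhibit $i_{\ccat}$ as the $\infty$-categorical product of the $i^{\alpha}_{\ccat}$ in $\ccat$. By the universal property of products, it is enough to show that for every $c \in \ccat$ the induced map of mapping spaces
\[
\Map_{\ccat}(c, i_{\ccat}) \longrightarrow \prod_{\alpha} \Map_{\ccat}(c, i^{\alpha}_{\ccat})
\]
is an equivalence. Since $\ccat$ is stable, both sides are infinite loop spaces and it suffices to verify that this induces an isomorphism on all homotopy groups, i.e.\ that for every $k \geq 0$ and every $c \in \ccat$ the map
\[
[\Sigma^{k} c, i_{\ccat}] \longrightarrow \prod_{\alpha} [\Sigma^{k} c, i^{\alpha}_{\ccat}]
\]
is an isomorphism of abelian groups.

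Next, I would translate both sides via the defining property of associated injective lifts. By \cref{definition:injectives_in_a_stable_category}, for each object $d \in \ccat$ evaluation induces an isomorphism $[d, i_{\ccat}] \simeq \Hom_{\acat}(H(d), i)$, and similarly for each $i^{\alpha}_{\ccat}$. Applied with $d = \Sigma^{k} c$, this identifies the displayed map with the natural comparison
\[
\Hom_{\acat}(H(\Sigma^{k} c), i) \longrightarrow \prod_{\alpha} \Hom_{\acat}(H(\Sigma^{k} c), i^{\alpha}),
\]
where the components are induced by $H(f_{\alpha})\colon H(i_{\ccat}) \to H(i^{\alpha}_{\ccat})$ followed by the structure maps $H(i^{\alpha}_{\ccat}) \to i^{\alpha}$. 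By hypothesis, the composite $H(i_{\ccat}) \to H(i^{\alpha}_{\ccat}) \to i^{\alpha}$ is identified (using that $H$ is adapted, so the structure map $H(i_{\ccat}) \to i$ is an isomorphism) with the projection $i \to i^{\alpha}$.

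Thus the map in question is simply the canonical comparison map induced by the projections from the product $i \simeq \prod i^{\alpha}$ in $\acat$, which is an isomorphism by the defining universal property of the product in the abelian category. Assembling these identifications across all $k$ and $c$ yields the required equivalence of mapping spaces, and hence the claimed equivalence $i_{\ccat} \simeq \prod i^{\alpha}_{\ccat}$. I do not anticipate any real obstacle: the only point to be careful about is checking that the natural isomorphisms supplied by the injective lift property are compatible with the maps $f_{\alpha}$, which is exactly the content of the assumption that the $f_{\alpha}$ realize the projections $i \to i^{\alpha}$ on $H$.
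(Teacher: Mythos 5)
Your proof is correct and follows essentially the same route as the paper's: reduce the $\infty$-categorical product to a homotopy-category statement (equivalently, to homotopy classes out of all objects and their suspensions), then identify $[c, i_{\ccat}]$ and $\prod_{\alpha}[c, i^{\alpha}_{\ccat}]$ with $\Hom_{\acat}(H(c), i)$ and $\prod_{\alpha}\Hom_{\acat}(H(c), i^{\alpha})$ via the defining property of injective lifts. The paper compresses this into one line ("products are detected in the homotopy category"); your version just makes the reduction and the compatibility with the projections explicit.
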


\begin{proof}
In a stable $\infty$-category, products are detected in the homotopy category, where we have 
\[
[c, i_{\ccat}] \simeq \Hom_{\acat}(H(c), i) \simeq \prod \Hom_{\acat}(H(c), i^{\alpha}) \simeq \prod [c, i^{\alpha}_{\ccat}]
\]
as needed. 
\end{proof}

\begin{corollary}
\label{corollary:an_adapted_homology_theory_commutes_with_producs_of_injectives}
Let $H\colon \ccat \rightarrow \acat$ be an adapted homology theory and let $i^{\alpha} \in \acat$ be a family of injectives. If the product $\prod i^{\alpha}$ exists in $\acat$, then the product $\prod i^{\alpha}_{\ccat}$ exists in $\ccat$ and moreover 
\[
H(\prod i^{\alpha}_{\ccat}) \simeq \prod H(i^{\alpha}_{\ccat}).
\]
In other words, an adapted homology theory commutes with taking homology of injectives. 
\end{corollary}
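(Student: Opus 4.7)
The plan is to deduce this directly from \cref{lemma:an_injective_associated_to_a_product_is_a_product_associated_injectives} by producing an injective lift of the product $i \colonequals \prod i^{\alpha}$ and exhibiting the required projection maps. The key observation is that in any abelian category with enough injectives, a product of injectives (when it exists) is again injective, so $i$ is an injective object of $\acat$.

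Since $H$ is adapted, $i$ admits an associated injective lift $i_{\ccat} \in \ccat$ together with a structure isomorphism $H(i_{\ccat}) \simeq i$. For each index $\alpha$, the composite
\[
H(i_{\ccat}) \simeq i = \prod_{\beta} i^{\beta} \twoheadrightarrow i^{\alpha}
\]
is a morphism in $\acat$, and by the universal property of the associated injective $i^{\alpha}_{\ccat}$ (\cref{definition:injectives_in_a_stable_category}) it is represented by a unique homotopy class of maps $p_{\alpha}\colon i_{\ccat} \rightarrow i^{\alpha}_{\ccat}$ which by construction induces the projection $i \rightarrow i^{\alpha}$ after applying $H$. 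Applying \cref{lemma:an_injective_associated_to_a_product_is_a_product_associated_injectives} to this collection of maps yields a canonical equivalence $i_{\ccat} \simeq \prod_{\alpha} i^{\alpha}_{\ccat}$, so in particular the product $\prod_{\alpha} i^{\alpha}_{\ccat}$ exists in $\ccat$.

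It then remains only to identify homology: we have
\[
H\!\left(\prod_{\alpha} i^{\alpha}_{\ccat}\right) \simeq H(i_{\ccat}) \simeq i = \prod_{\alpha} i^{\alpha} \simeq \prod_{\alpha} H(i^{\alpha}_{\ccat}),
\]
where the last equivalence uses the structure isomorphisms $H(i^{\alpha}_{\ccat}) \simeq i^{\alpha}$ coming from adaptedness.

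There is no real obstacle here; the content has already been packaged in \cref{lemma:an_injective_associated_to_a_product_is_a_product_associated_injectives}, and the corollary is just the observation that the ambient injective lift needed to invoke that lemma always exists once the product of the $i^{\alpha}$ is again injective. The only small point worth noting is that adaptedness is used twice: once to guarantee the existence of $i_{\ccat}$ with $H(i_{\ccat}) \simeq i$, and once to identify each $H(i^{\alpha}_{\ccat})$ with $i^{\alpha}$ in the final chain of equivalences.
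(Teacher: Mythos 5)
Your proof is correct and is essentially the paper's own argument: the paper likewise deduces the corollary from \cref{lemma:an_injective_associated_to_a_product_is_a_product_associated_injectives} by noting that the injective lift of $\prod i^{\alpha}$ is the desired product. You merely spell out the (true and worth noting) intermediate facts that a product of injectives is again injective and that the projections lift to maps of associated injectives via the representing property.
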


\begin{proof}
This is immediate from \cref{lemma:an_injective_associated_to_a_product_is_a_product_associated_injectives}, since the injective of $\ccat$ associated to $\prod i^{\alpha}$ is the needed product. 
\end{proof}

Let us observe that it if we already know that $H$ is adapted, it is easy to recognize injectives. 

\begin{lemma}
\label{lemma:injectives_recognized_by_homology_and_locality}
Let $H\colon \ccat \rightarrow \acat$ be an adapted homology theory and suppose that $c \in \ccat$ is such that $H(c)$ is injective. Then, the following are equivalent: 
\begin{enumerate}
    \item $c$ is an injective lift associated to $H(c)$ or 
    \item $c$ is $H$-local.
\end{enumerate}
\end{lemma}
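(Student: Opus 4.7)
The plan is to produce an explicit $H$-equivalence between $c$ and the chosen injective lift $i_{\ccat}$ associated to $H(c)$, and then to leverage locality to promote this equivalence into an honest equivalence in $\ccat$. First I would note that any injective lift is automatically $H$-local: indeed, by definition $[-, i_{\ccat}] \simeq \Hom_{\acat}(H(-), i)$, and the right-hand side manifestly inverts $H$-equivalences. This immediately gives the direction $(1) \Rightarrow (2)$.

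For $(2) \Rightarrow (1)$, suppose $c$ is $H$-local and $H(c)$ is injective. Let $i_{\ccat}$ be any injective lift of $H(c)$; by adaptedness, the structure map $H(i_{\ccat}) \to H(c)$ is an isomorphism. The universal property of $i_{\ccat}$ applied to the identity of $H(c)$ produces a canonical map $f \colon c \to i_{\ccat}$ whose image under $H$ (post-composed with the structure iso $H(i_{\ccat}) \simeq H(c)$) is the identity. In particular $H(f)$ is an isomorphism, so $f$ is an $H$-equivalence.

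Using that $c$ and $i_{\ccat}$ are both $H$-local, the map $f$ induces isomorphisms $f^{\ast}\colon [i_{\ccat}, c] \xrightarrow{\sim} [c, c]$ and $f^{\ast}\colon [i_{\ccat}, i_{\ccat}] \xrightarrow{\sim} [c, i_{\ccat}]$. The first gives $g \colon i_{\ccat} \to c$ with $g \circ f = \id_{c}$, and then applying the second to $f \circ g$ versus $\id_{i_{\ccat}}$ (both of which restrict along $f$ to $f$) forces $f \circ g = \id_{i_{\ccat}}$. Hence $f$ is an equivalence, so $c$ represents the same functor as $i_{\ccat}$ in $h\ccat$, which is precisely the assertion that $c$ is an injective lift of $H(c)$.

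The substantive content here is the interplay between adaptedness (which ensures the structure map is an iso and hence that an $H$-equivalence $c \to i_{\ccat}$ exists at all) and the Yoneda-style locality argument, so no step should present a genuine obstacle beyond carefully unpacking \cref{definition:adapted_homology_theory}.
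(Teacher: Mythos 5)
Your proof is correct and follows the same route as the paper: the paper likewise notes that $(1)\Rightarrow(2)$ is immediate, produces the map $c \to i_{\ccat}$ from the identity of $H(c)$ via adaptedness, and concludes that an $H$-isomorphism between $H$-local objects is an equivalence. You simply spell out that last standard step (mapping the cofibre sequence into the two local objects and running the retraction argument), which the paper leaves implicit.
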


\begin{proof}
It is clear that any injective lift is $H$-local, so we only have to show that $(2) \Rightarrow (1)$.

Let $j$ be an injective lift associated to $H(c)$, which exists by adaptedness. Then, the identity of $H(c)$ determines a homotopy class of maps $c \rightarrow j$ which by construction is an isomorphism on homology. As both the source and target are $H$-local, it must be an equivalence. 
\end{proof}

The importance of adapted homology theories is that they allow an Adams spectral sequence, as we now recall. 

\begin{construction}[Adams spectral sequence]
\label{construction:adams_spectral_sequence_associated_to_a_homology_theory}
Let $H\colon \ccat \rightarrow \acat$ be an adapted homology theory and $d \in \ccat$ be an object. 
Writing $d = d^{0}$, we can find an embedding $H(d^{0}) \hookrightarrow i^{0}$ into an injective, which determines a map $d \rightarrow i^{0}_{\ccat}$ into a corresponding associated injective object of $\ccat$. Proceeding inductively by setting $d^{i+1} = cofib(d^{i} \rightarrow i^{i}_{\ccat})$, we construct an \emph{Adams resolution} of the form 
\[
\begin{tikzcd}
	{d^{0}} & {i^{0}_{\ccat}} & {i^{1}_{\ccat}} & {i^{2}_{\ccat}} & {\ldots} \\
	& {d^{1}} & {d^{2}} & {\ldots}
	\arrow[from=1-1, to=1-2]
	\arrow[from=1-2, to=2-2]
	\arrow[from=1-2, to=1-3]
	\arrow[from=2-2, to=1-3]
	\arrow[from=1-3, to=2-3]
	\arrow[from=2-3, to=2-2, dotted]
	\arrow[from=2-3, to=1-4]
	\arrow[from=1-3, to=1-4]
	\arrow[from=1-4, to=2-4]
	\arrow[from=2-4, to=1-5]
	\arrow[from=1-4, to=1-5]
	\arrow[from=2-2, to=1-1, dotted]
	\arrow[from=2-4, to=2-3, dotted]
\end{tikzcd}
\]
This tower has the property that applying $H$ to the top row, we obtain an injective resolution of $H(d) = H(d^{0})$. Applying $[c, -]_{*}$ for some other object $c \in \ccat$ to the above tower we obtain an exact couple which leads to a spectral sequence with 
\[
E_{2}^{s, t} \colonequals \Ext_{\acat}^{s, t}(H(c), H(d)),
\]
using that the above tower was encoding the injective resolution of $H(d)$. This is the \emph{$H$-Adams spectral sequence}, and in favourable cases it converges to $[c, d]_{*}$, the group of homotopy classes of maps, or a suitable completion thereof. 
\end{construction}

\begin{remark}
Observe that if the homology of the target has finite injective dimension; that is, admits a finite injective resolution, then the $H$-Adams resolution can also be chosen to be finite, and the spectral sequence is automatically convergent to $[c,d]_*$. This is in particular always the case when $\acat$ has finite cohomological dimension.
\end{remark}

\begin{remark}
\label{remark:millers_adams_spectral_sequence}
As was first observed by Miller, the $H$-Adams tower is determined by remarkably less information than the homology theory $H\colon \ccat \rightarrow \acat$. Namely, it is enough to know which maps in $\ccat$ are $H$-monic. We will return to and expand on these ideas in \S\ref{subsection:epimorphism_classes}.
\end{remark}

\begin{remark}
\label{remark:cosimplicial_adams_resolutions}
It is sometimes convenient to use a cosimplicial resolution rather than a tower; namely, by repeatedly embedding $d$ into an $H$-injective construct a cosimplicial object
\[
d \rightarrow i^{0}_{\ccat} \rightrightarrows i^{1}_{\ccat} \triplerightarrow \ldots
\]
with the property that after applying $H$ and taking alternating sums of coboundary morphisms, it becomes an injective resolution of $H(d)$ in $\acat$. After applying $[c, -]$ this leads to a totalization spectral sequence which is isomorphic to the one of \cref{construction:adams_spectral_sequence_associated_to_a_homology_theory}.

Such a cosimplicial injective resolution can always be constructed by the dual of \cite{higher_algebra}[7.2.1.4], where we take $S$ to be the class of $H$-injectives (note that presentability is not needed in the proof; the assumption that any object admits an $H$-monic map into an $H$-injective suffices).  
\end{remark}
A reader familiar with the classical Adams spectral sequence and its $E_{2}$-term can now suspect as to why exactly the homology theory $H_{*}(-, \mathbb{F}_{p})\colon \spectra \rightarrow \textnormal{Vect}(\mathbb{F}_{p})$ fails to be adapted, as we observed in \cref{warning:homology_theory_with_non_compatible_injectives}: there simply is not enough information in the target category to get an approximation to homotopy classes of spectra. If we want an adapted homology theory, we have to consider $H_{*}(-, \mathbb{F}_{p})$ with its full structure, as in the following example. 
\begin{example}
\label{example:mod_p_homology_has_compatible_injectives_with_comodules}
By standard arguments, the mod $p$ homology $H_{*}(X, \mathbb{F}_{p})$ of a spectrum $X$ has a canonical structure of a comodule over the dual Steenrod algebra $A_{*}$. Thus, the previously considered homology theory factors as 
\[
\spectra \rightarrow \Comod(A_{*}) \rightarrow \textnormal{Vect}(\mathbb{F}_{p}),
\]
where $\Comod(A_{*})$ is the category of comodules over $A_{*}$ and the second functor is the forgetful one. 

In the category of comodules, $A_{*}$ itself is injective. Moreover, we have
\[
\Hom_{\Comod(A_{*})}(H_{*}(X, \mathbb{F}_{p}), A_{*}) \simeq H^{*}(X, \mathbb{F}_{p}) \simeq [X, H\mathbb{F}_{p}],
\]
so that the associated injective is again the Eilenberg-MacLane spectrum. Moreover, the comparison map $H_{*}(H\mathbb{F}_{p}, \mathbb{F}_{p}) \rightarrow A_{*}$ is now an isomorphism of comodules. In fact, one can show that 
\[
H_{*}(-, \mathbb{F}_{p})\colon \spectra \rightarrow \Comod(A_{*})
\]
is an adapted homology theory.
\end{example}

\begin{remark}
We will show later, in \S\ref{subsection:digression_adapted_factorization}, that for a homology theory admitting lifts of injectives to fail to be adapted, the situation of \cref{example:mod_p_homology_has_compatible_injectives_with_comodules} is essentially the only thing that can happen. More precisely, we will show that any homology theory which admits lifts of injectives canonically factors through an adapted one followed by what is essentially a forgetful functor, like the one from $A_{*}$-comodules into graded vector spaces considered above.

As \cref{corollary:coproduct_preserving_homology_theory_on_presentable_cat_has_lifts_of_injectives} assures us that homology theories with lifts of injectives are abundant, this will produce a variety of examples of adapted homology theories. 
\end{remark}

\subsection{Recollections on the Freyd envelope}
\label{subsection:recollections_on_freyd_envelope}

In this section we will recall the construction of the Freyd envelope, which is a certain kind of abelian category associated to an additive $\infty$-categories $\ccat$ which admits finite limits. 

\begin{remark}
Of particular interest is the case of a Freyd envelope of a \emph{stable} $\infty$-category. For an extended discussion in this context, and proofs of certain properties which we only state, we refer the reader to the excellent monograph by Neeman \cite{neeman2001triangulated}.
\end{remark}
If $\ccat$ is an additive $\infty$-category, recall that a discrete additive presheaf is a functor $X\colon \ccat^{op} \rightarrow \abeliangroups$ which preserves finite sums. These naturally form a category which we denote by $\Fun_{\Sigma}(\ccat^{op}, \abeliangroups)$. 

\begin{example}
Let $\ccat$ be an additive $\infty$-category. If $c \in \ccat$, the associated \emph{representable} discrete additive presheaf $y(c)\colon \ccat^{op} \rightarrow \abeliangroups$
is defined by  
\[
y(c)(d) = [d, c] \colonequals \pi_{0} \Map_{\ccat}(d, c)
\]
\end{example}

\begin{definition}
We say a discrete additive presheaf $X\colon \ccat^{op} \rightarrow \abeliangroups$ is \emph{finitely presented} if there is a cokernel sequence of additive presheaves
\[
y(c) \rightarrow y(d) \rightarrow X \rightarrow 0
\]
where $y(c), y(d)$ are representable.
\end{definition}

\begin{definition}
\label{definition:classical_freyd_envelope}
The \emph{Freyd envelope} of an additive $\infty$-category $\ccat$, denoted by $A(\ccat)$, is the full subcategory of $\Fun_{\Sigma}(\ccat^{op}, \abeliangroups)$ spanned by finitely presented presheaves. 
\end{definition}

\begin{remark}
\label{remark:freyd_envelope_determined_by_homotopy_category}
Notice that since $\abeliangroups$ form an ordinary category, any additive presheaf canonically factors through the homotopy category $\ccat$. Thus the natural projection $\ccat \rightarrow h\ccat$ induces an equivalence $A(\ccat) \simeq A(h \ccat)$. In particular, $A(\ccat)$ only depends on the homotopy category.
\end{remark}

\begin{remark}
Formation of representables assembles into discrete Yoneda functor $y\colon \ccat \rightarrow A(\ccat)$. Note that this is in general far from being fully faithful, since $A(\ccat)$ is only an ordinary category, while $\ccat$ need not be. However, $y$ factors through the homotopy category, in which case the classical Yoneda lemma implies that $y\colon h \ccat \rightarrow A(\ccat)$ is fully faithful. 
\end{remark}

\begin{remark}
\label{remark:freyd_envelope_obtained_by_adjoining_cokernels}
By definition, the Freyd envelope is the smallest subcategory of the category $\Fun_{\Sigma}(\ccat^{op}, \abeliangroups)$ of all additive presheaves which contains all representables and is closed under isomorphisms and under finite colimits. 

Using this description, one can show that the Freyd envelope $A(\ccat)$ enjoys the following universal property: if $\bcat$ is an (ordinary) additive category which admits finite colimits, any additive functor $\ccat \rightarrow \bcat$ uniquely extends to a \emph{right exact} additive functor $A(\ccat) \rightarrow \bcat$.
\end{remark}

\begin{lemma}
\label{lemma:coproducts_in_the_freyd_envelope}
Let $\kappa$ be a cardinal and suppose that $\ccat$ admits $\kappa$-small coproducts. Then, $A(\ccat)$ admits $\kappa$-small coproducts and $y\colon \ccat \rightarrow A(\ccat)$ preserves these. 
\end{lemma}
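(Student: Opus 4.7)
The plan is to construct coproducts in $A(\ccat)$ explicitly via presentations, using the coproducts that already exist in $\ccat$. Fix a $\kappa$-small family $\{X_\alpha\}_{\alpha \in I}$ of finitely presented presheaves and, for each $\alpha$, choose a presentation $y(c_\alpha) \xrightarrow{f_\alpha} y(d_\alpha) \to X_\alpha \to 0$; since $y\colon h\ccat \to A(\ccat)$ is fully faithful, each $f_\alpha$ is the image of a morphism $\tilde{f}_\alpha\colon c_\alpha \to d_\alpha$ in $h\ccat$. Using the $\kappa$-small coproducts in $\ccat$, I form $c \colonequals \bigoplus_\alpha c_\alpha$, $d \colonequals \bigoplus_\alpha d_\alpha$ and the induced map $\tilde{f}\colon c \to d$, and then set $X \colonequals \coker\bigl(y(\tilde{f})\bigr) \in A(\ccat)$, which exists by \cref{remark:freyd_envelope_obtained_by_adjoining_cokernels}. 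The canonical maps $y(c_\alpha) \to y(c)$ and $y(d_\alpha) \to y(d)$ induce structure maps $X_\alpha \to X$.

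To verify the universal property, fix an arbitrary $Y \in A(\ccat)$ and apply $\Hom_{A(\ccat)}(-, Y)$ to the presentations of $X$ and of each $X_\alpha$. The Yoneda lemma identifies
$$\Hom_{A(\ccat)}(X, Y) \simeq \ker\bigl(Y(d) \to Y(c)\bigr), \qquad \Hom_{A(\ccat)}(X_\alpha, Y) \simeq \ker\bigl(Y(d_\alpha) \to Y(c_\alpha)\bigr).$$
Since kernels commute with products in $\abeliangroups$, the desired identification $\Hom_{A(\ccat)}(X, Y) \simeq \prod_\alpha \Hom_{A(\ccat)}(X_\alpha, Y)$ reduces to showing that the canonical maps $Y\bigl(\bigoplus_\alpha c_\alpha\bigr) \to \prod_\alpha Y(c_\alpha)$ and $Y\bigl(\bigoplus_\alpha d_\alpha\bigr) \to \prod_\alpha Y(d_\alpha)$ are isomorphisms.

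The main technical content of the lemma is thus the claim that every finitely presented additive presheaf takes $\kappa$-small coproducts in $\ccat$ to products in $\abeliangroups$, and this is what I expect to be the only real step. For representables $Y = y(e)$ the claim is just the defining universal property $[\bigoplus_\alpha c_\alpha, e] \simeq \prod_\alpha [c_\alpha, e]$. For general $Y$, I note that both functors $Y \mapsto Y\bigl(\bigoplus_\alpha c_\alpha\bigr)$ and $Y \mapsto \prod_\alpha Y(c_\alpha)$ are right exact from $A(\ccat)$ to $\abeliangroups$: the first because cokernels in $A(\ccat)$ are computed pointwise in the ambient presheaf category, and the second because products are exact in $\abeliangroups$. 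Since these right exact functors agree on representables, comparing the sequences obtained by applying them to a presentation $y(e_1) \to y(e_2) \to Y \to 0$ forces them to agree on $Y$. Finally, applying the entire construction to the trivial presentations $y(0) \to y(c_\alpha) \xrightarrow{\mathrm{id}} y(c_\alpha) \to 0$ exhibits $y\bigl(\bigoplus_\alpha c_\alpha\bigr)$ as the $A(\ccat)$-coproduct of $\{y(c_\alpha)\}_\alpha$, showing that the Yoneda embedding preserves $\kappa$-small coproducts.
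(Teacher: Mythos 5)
Your proof is correct and follows essentially the same route as the paper: the coproduct is built by taking cokernels of presentations formed from coproducts in $\ccat$, and the key input in both arguments is that every finitely presented presheaf takes $\kappa$-small coproducts in $\ccat$ to products of abelian groups (which the paper deduces from $X$ being a finite colimit of representables and products commuting with finite colimits, and which you deduce by comparing two right exact functors that agree on representables — the same idea, spelled out slightly differently). Your explicit verification of the universal property via $\Hom(X,Y)\simeq\ker(Y(d)\to Y(c))$ is a fine substitute for the paper's "immediate calculation."
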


\begin{proof}
We first check that $y$ preserves $\kappa$-small coproducts; by the Yoneda lemma this is the same as saying that each $X \in A(\ccat)$, considered as an additive presheaf, takes $\kappa$-small coproducts in $\ccat$ to products of abelian groups. This property clearly holds for each representable, and the general case follows from the fact that each $X \in A(\ccat)$ is a finite colimit of representables and that products of abelian groups commute with finite colimits. 

Now suppose that $X_{\alpha}$ is a $\kappa$-small family of objects of the Freyd envelope, and for each choose a cokernel presentation 
\[
y(c_{\alpha}) \rightarrow y(d_{\alpha}) \rightarrow X_{\alpha} \rightarrow 0.
\]
Then an immediate calculation verifies that the cokernel of 
\[
y(\oplus _{\alpha} c_{\alpha}) \rightarrow y(\oplus_{\alpha} d_{\alpha}) 
\]
is the needed coproduct $\oplus_{\alpha} X_{\alpha}$ in the Freyd envelope. 
\end{proof}

\begin{lemma}
\label{lemma:freyd_envelope_has_products_when_c_does}
Let $\kappa$ be a cardinal and suppose that $\ccat$ admits $\kappa$-small products. Then, $A(\ccat)$ is closed under $\kappa$-small products in $\Fun_{\Sigma}(\ccat^{op}, \abeliangroups)$ and the Yoneda functor $\ccat \rightarrow A(\ccat)$ preserves products. 
\end{lemma}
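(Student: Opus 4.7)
The plan is to verify both claims by showing that the pointwise product in $\Fun_{\Sigma}(\ccat^{op}, \abeliangroups)$ preserves finite presentability and agrees, on representables, with the Yoneda image of the product in $\ccat$.

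First I would check that products in $\Fun_{\Sigma}(\ccat^{op}, \abeliangroups)$ exist and are computed pointwise: a pointwise product of additive presheaves is again additive, since finite products of abelian groups commute with arbitrary products. Next I would prove that the Yoneda functor $y\colon \ccat \rightarrow \Fun_{\Sigma}(\ccat^{op}, \abeliangroups)$ preserves $\kappa$-small products. Given a family $\{c_{\alpha}\}$ admitting a product in $\ccat$, the universal property of the product together with the definition of the representables gives, naturally in $d \in \ccat$,
\[
y\bigl(\prod_{\alpha} c_{\alpha}\bigr)(d) \;=\; \bigl[d, \prod_{\alpha} c_{\alpha}\bigr] \;\simeq\; \prod_{\alpha} [d, c_{\alpha}] \;=\; \bigl(\prod_{\alpha} y(c_{\alpha})\bigr)(d).
\]

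For the closure statement, I would take a $\kappa$-small family $\{X_{\alpha}\}_{\alpha \in I}$ of objects of $A(\ccat)$ and choose cokernel presentations $y(c_{\alpha}) \rightarrow y(d_{\alpha}) \rightarrow X_{\alpha} \rightarrow 0$. Because products are exact in $\abeliangroups$ and products in $\Fun_{\Sigma}$ are computed pointwise, applying $\prod_{\alpha}$ preserves right exactness. Combining this with the previous paragraph yields a cokernel presentation
\[
y\bigl(\prod_{\alpha} c_{\alpha}\bigr) \rightarrow y\bigl(\prod_{\alpha} d_{\alpha}\bigr) \rightarrow \prod_{\alpha} X_{\alpha} \rightarrow 0,
\]
which exhibits $\prod_{\alpha} X_{\alpha}$ as finitely presented. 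Since $A(\ccat) \subseteq \Fun_{\Sigma}(\ccat^{op}, \abeliangroups)$ is full, the pointwise product is also the product inside $A(\ccat)$, so both conclusions of the lemma follow at once.

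No serious obstacle is expected; the argument is entirely formal. The only two points requiring minor care are verifying that additivity is preserved under pointwise products (which is immediate from commutativity of products with products in $\abeliangroups$) and observing that exactness of the product functor on $\abeliangroups$ transfers pointwise to $\Fun_{\Sigma}(\ccat^{op}, \abeliangroups)$, so that finite presentability survives.
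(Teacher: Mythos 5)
Your proposal is correct and follows essentially the same route as the paper: products of representables are representable via $[d,\prod_\alpha c_\alpha]\simeq\prod_\alpha[d,c_\alpha]$, and exactness of products in $\abeliangroups$ transports a choice of cokernel presentations to one for the product. The extra preliminary remarks (pointwise computation of products, fullness of $A(\ccat)$) are fine but not needed beyond what the paper already records.
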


\begin{proof}
If $y(c_{\alpha})$ is a $\kappa$-small family of representables, then since 
\[
\prod_{\alpha} [d, c_{\alpha}] \simeq [d, \prod_{\alpha} c_{\alpha}]
\]
we see that products of representables are again representable, in particular finitely presented. This also shows the second claim, that $y$ preserves all products which exist in $\ccat$. 

Going back to the first claim, suppose that $X_{\alpha}$ is a $\kappa$-small family of finitely presented presheaves, each with a presentation
\[
y(c_{\alpha}) \rightarrow y(d_{\alpha}) \rightarrow X_{\alpha} \rightarrow 0.
\]
Then, since products of abelian groups are exact, we have a cokernel sequence
\[
y(\prod_{\alpha} c_{\alpha}) \rightarrow y(\prod_{\alpha} d_{\alpha}) \rightarrow \prod_{\alpha} X_{\alpha} \rightarrow 0,
\]
showing that the product is again finitely presented, as needed. 
\end{proof}

\begin{remark}
There is an important distinction between \cref{lemma:coproducts_in_the_freyd_envelope} and \cref{lemma:freyd_envelope_has_products_when_c_does}. Namely, the inclusion $A(\ccat) \hookrightarrow \Fun_{\Sigma}(\ccat^{op}, \abeliangroups)$ preserves all products, but not necessarily coproducts. This is by design, since we want $\ccat \rightarrow A(\ccat)$ to preserves both products and coproducts - the issue is that the usual Yoneda embedding valued in the whole presheaf category preserves all products, but very rarely preserves coproducts. 
\end{remark}

As \cref{remark:freyd_envelope_obtained_by_adjoining_cokernels} remark shows, the Freyd envelope can be uniquely characterized as an additive category with finite colimits. The question of the existence of finite limits is more subtle, but follows from the work of Freyd which we now recall. 

\begin{definition}
\label{definition:weak_kernel}
Let $\acat$ be an additive category. We say a map $k \rightarrow a$ is a \emph{weak kernel} of $a \rightarrow b$ if the composite 
\[
k \rightarrow a \rightarrow b
\]
is zero, and for every map $x \rightarrow a$ such that 
\[
x \rightarrow a \rightarrow b 
\]
also vanishes, there exists a map $x \rightarrow k$, not necessarily unique, such that the entire diagram commutes. 
\end{definition}

\begin{remark}
A weak kernel diagram $k \rightarrow a \rightarrow b$ is a kernel diagram if and only if the first map is a monomorphism. Thus, weak kernels are a generalization of kernels. 
\end{remark}

\begin{lemma}
\label{lemma:homotopy_cat_of_additive_infty_cat_with_finite_limits_has_weak_cokers}
Let $\ccat$ be an additive $\infty$-category with finite limits. Then, the homotopy category $h \ccat$ has weak kernels. 
\end{lemma}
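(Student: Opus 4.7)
The plan is to construct weak kernels in $h\ccat$ directly using the fact that $\ccat$ has finite limits, and in particular fibers. The key observation is that in an additive $\infty$-category, a fiber sequence $k \to a \to b$ comes equipped with a preferred nullhomotopy of the composite, and a map to the fiber from some $x$ is precisely the data of a map $x \to a$ together with such a nullhomotopy. Since the definition of weak kernel only requires the \emph{existence} (not uniqueness) of the factorization, we are free to forget the extra data of the nullhomotopy when we pass to $h\ccat$.

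More concretely, I would proceed as follows. Let $[f] \colon a \to b$ be a morphism in $h\ccat$. Choose a representative $f \colon a \to b$ in $\ccat$ (possible by definition of the homotopy category) and form the fiber $k \colonequals \fib(f)$, which exists since $\ccat$ has finite limits. The fiber sits in a pullback square with the zero object, and therefore the structure map $k \to a$ comes equipped with a canonical nullhomotopy of the composite $k \to a \to b$; in particular, $[k \to a \to b] = 0$ in $h\ccat$. Hence the first condition of \cref{definition:weak_kernel} is satisfied.

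For the lifting property, suppose $[g] \colon x \to a$ is a morphism in $h\ccat$ such that $[f] \circ [g] = 0$. Lift $[g]$ to a morphism $g \colon x \to a$ in $\ccat$; then $f \circ g \colon x \to b$ is nullhomotopic, so there exists a nullhomotopy $H$ in $\Map_{\ccat}(x,b)$ from $f \circ g$ to the zero map. The pair $(g, H)$ is exactly the data of a map $x \to k = a \times_b 0$ by the universal property of the fiber, and its postcomposition with $k \to a$ is canonically homotopic to $g$. Passing to $h\ccat$ yields the desired factorization of $[g]$ through $k \to a$.

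There is no real obstacle here; the only subtle point is to keep straight that the fiber-to-$a$ composite in $\ccat$ is not literally zero but only canonically nullhomotopic, and that different choices of nullhomotopy in the lifting step yield different lifts $x \to k$. This is precisely why one obtains a \emph{weak} kernel rather than a kernel: the factorization exists but is not unique, reflecting the fact that the homotopy category remembers homotopy classes of maps but not the higher data of homotopies.
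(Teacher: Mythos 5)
Your proof is correct and follows essentially the same route as the paper: both form the fibre $k = \fib(f)$ using the finite limits hypothesis and then verify the weak kernel property via the induced exact sequence $[x,k] \to [x,a] \to [x,b]$, which you unpack explicitly through the universal property of the pullback and the choice of nullhomotopy. The paper simply phrases this last step as an application of the long exact sequence of homotopy groups of mapping spaces.
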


\begin{proof}
Suppose that $c \rightarrow d$ is a morphism in $\ccat$, and complete it to a fibre sequence
\[
k \rightarrow c \rightarrow d.
\]
Then, long exact sequence of homotopy of the associated mapping spaces implies that 
\[
[x, k] \rightarrow [x, c] \rightarrow [x, d]
\]
is exact for any $x \in \ccat$ and we deduce that $k$ is a weak kernel of $c \rightarrow d$ in the homotopy category $h \ccat$.
\end{proof}

\begin{proposition}
\label{proposition:if_c_has_finite_limits_freyd_envelope_is_an_abelian_subcat}
Let $\ccat$ be an additive $\infty$-category such that the homotopy category $h \ccat$ has weak kernels; for example, an additive $\infty$-category with finite limits. Then, the Freyd envelope $A(\ccat)$ is an abelian subcategory of $\Fun_{\Sigma}(\ccat^{op}, \abeliangroups)$; that is, it is closed under extensions, finite limits and colimits. 
\end{proposition}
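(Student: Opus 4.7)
The ambient category $\Fun_{\Sigma}(\ccat^{op}, \abeliangroups)$ is abelian, so it suffices to verify that $A(\ccat)$ is closed under kernels, cokernels, finite direct sums, and extensions. My plan is to reduce to the ordinary-categorical case via \cref{remark:freyd_envelope_determined_by_homotopy_category} (so $\ccat$ becomes an additive $1$-category with weak kernels) and then execute the classical Freyd construction. Finite direct sums come from \cref{lemma:coproducts_in_the_freyd_envelope}, and cokernels should be routine: given $f\colon X \to Y$ with presentations $y(c_1) \xrightarrow{\gamma} y(c_0) \to X \to 0$ and $y(d_1) \xrightarrow{\delta} y(d_0) \to Y \to 0$, I would lift $y(c_0) \to X \xrightarrow{f} Y$ to $\tilde{f}\colon y(c_0) \to y(d_0)$ by the Yoneda lemma, producing a finite presentation $y(c_0) \oplus y(d_1) \to y(d_0) \to \coker(f) \to 0$. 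Closure under extensions follows similarly, by a horseshoe-lemma argument that combines finite presentations of $X$ and $Z$ into one for $Y$, using projectivity of representables in $\Fun_{\Sigma}(\ccat^{op}, \abeliangroups)$.

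The heart of the argument is closure under kernels. I would take a weak kernel $W \to c_0 \oplus d_1$ in $\ccat$ of $(\tilde{f}, -\delta)\colon c_0 \oplus d_1 \to d_0$ and show that the composite $y(W) \to y(c_0) \to X$ lands in $\ker(f)$ and surjects onto it: any class $\xi \in \ker(f)(a)$ is represented by some $x\colon a \to c_0$ with $\tilde{f} x = \delta y$ for a suitable $y\colon a \to d_1$, and the weak kernel property factors the pair $(x, y)$ through $W$. To promote this to a \emph{finite presentation} of $\ker(f)$, I would analyze the kernel of $y(W) \twoheadrightarrow \ker(f)$: it consists of maps $w\colon a \to W$ for which the composite $a \to W \to c_0$ factors through $\gamma$. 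A second weak kernel, this time of the map $W \oplus c_1 \to c_0$ whose components are $W \to c_0$ and $-\gamma$, would yield $V \in \ccat$ such that $y(V) \to y(W)$ has image equal to this kernel, giving the desired presentation $y(V) \to y(W) \to \ker(f) \to 0$.

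The hard part will be precisely this second step. Finding a representable surjecting onto $\ker(f)$ uses the weak kernel hypothesis just once, but ensuring the resulting surjection has finitely generated kernel --- so that $\ker(f)$ is finitely \emph{presented} rather than merely finitely generated --- requires iterating the construction. This double use of the weak kernel hypothesis is what enforces the coherence needed for $A(\ccat)$ to be abelian; everything else in the argument is formal bookkeeping with presentations of representables.
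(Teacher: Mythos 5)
Your proposal is correct and follows essentially the same route as the paper: reduce to the homotopy category via \cref{remark:freyd_envelope_determined_by_homotopy_category} and then run the classical Freyd construction, where the paper simply cites Freyd's original result for closure under kernels and cokernels and Neeman's argument for extensions. The two-step weak-kernel argument you spell out for kernels --- first a weak kernel of $(\tilde{f},-\delta)$ to get a representable surjecting onto $\ker(f)$, then a second weak kernel of $(p,-\gamma)$ to show that surjection has finitely generated kernel --- is precisely the content of the cited classical proof.
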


\begin{proof}
By \cref{remark:freyd_envelope_determined_by_homotopy_category}, we have a canonical equivalence $A(\ccat) \simeq A(h \ccat)$, so that we can replace $\ccat$ by its homotopy category, which is an ordinary category with weak kernels. 

Under this assumption, it is a classical result of Freyd that $A(\ccat)$ is closed under kernels and cokernels in the whole additive presheaf category \cite{freyd1966representations}[1.4], and closure under extensions follows from Neeman's argument given in the proof of \cite{neeman2001triangulated}[5.1.9].
\end{proof}

\begin{lemma}
Let $\ccat$ be an additive $\infty$-category such that the homotopy category $h\ccat$ has weak kernels. Then, the Freyd envelope $A(\ccat)$ has enough projectives, and every representable is projective. 
\end{lemma}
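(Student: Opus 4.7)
The plan is to verify both statements by unpacking the definition of $A(\ccat)$ and invoking the Yoneda lemma.

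First I would show that every representable $y(c)$ is projective. By \cref{proposition:if_c_has_finite_limits_freyd_envelope_is_an_abelian_subcat}, the inclusion $A(\ccat) \hookrightarrow \Fun_{\Sigma}(\ccat^{op}, \abeliangroups)$ is exact, since it preserves kernels and cokernels. In the ambient additive presheaf category, exactness is computed pointwise (as limits and colimits of abelian group valued presheaves are), so exactness of a sequence in $A(\ccat)$ can be tested by evaluating at each object $c \in \ccat$. By the Yoneda lemma applied in $\Fun_{\Sigma}(\ccat^{op}, \abeliangroups)$, we have
\[
\Hom_{A(\ccat)}(y(c), X) \simeq X(c)
\]
natural in $X$. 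Hence the functor $\Hom_{A(\ccat)}(y(c), -)$ is precisely evaluation at $c$, which is exact by the above. This shows $y(c)$ is projective.

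Next I would show that there are enough projectives. By \cref{definition:classical_freyd_envelope}, every $X \in A(\ccat)$ sits in a cokernel sequence
\[
y(c) \rightarrow y(d) \rightarrow X \rightarrow 0
\]
with $y(c), y(d)$ representable. In particular the map $y(d) \to X$ is an epimorphism in $\Fun_{\Sigma}(\ccat^{op}, \abeliangroups)$, and therefore also in its abelian subcategory $A(\ccat)$. Since $y(d)$ is projective by the previous step, this exhibits the required epimorphism from a projective object onto $X$.

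There is no real obstacle here; the only subtle point to be careful about is that exactness inside $A(\ccat)$ really agrees with pointwise exactness of the underlying presheaves, which is guaranteed by \cref{proposition:if_c_has_finite_limits_freyd_envelope_is_an_abelian_subcat}. Everything else is a direct application of the Yoneda lemma and the defining presentation of finitely presented additive presheaves.
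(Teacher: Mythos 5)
Your proof is correct and follows essentially the same route as the paper: representables are projective because $\Hom(y(c),-)$ is evaluation at $c$ via Yoneda (the paper phrases this as representables being projective already in the ambient presheaf category, which restricts to $A(\ccat)$ since the latter is an abelian subcategory), and enough projectives then follows from the defining cokernel presentation of finitely presented presheaves. Your extra care about exactness in $A(\ccat)$ agreeing with pointwise exactness is exactly the role played by \cref{proposition:if_c_has_finite_limits_freyd_envelope_is_an_abelian_subcat} in the paper's argument as well.
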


\begin{proof}
The first part follows from the second, as by definition any object of $A(\ccat)$ is a cokernel of a map of representables. It is clear that the latter are projective as objects of the Freyd envelope, as they are even projective in the presheaf category. 
\end{proof}

Of particular interest is the case when $\ccat$ is a stable $\infty$-category. In this case, there is a strong relationship between finite limits and colimits, and a triangle is fibre if and only if it is cofibre. This gives $h \ccat$ a peculiar property that every map is both a weak kernel and a weak cokernel, giving the Freyd envelope a host of additional properties. 

\begin{theorem}[Freyd]
\label{theorem:for_stable_cat_representables_are_injective_in_freyd_envelope}
Suppose that $\ccat$ is a stable $\infty$-category. Then, in addition to being projective as objects of the Freyd envelope, the representables $y(c)$ are also injective, and they generate it under finite limits. 
\end{theorem}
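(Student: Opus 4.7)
The plan is to reduce both claims to the self-duality of a stable $\infty$-category: every fibre sequence is also a cofibre sequence. This lets one identify certain kernels of maps between representables with images of other maps between representables, which is the key input for both injectivity and finite limit generation.

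For injectivity of $y(c)$, I would check that $\Ext^{1}_{A(\ccat)}(X, y(c)) = 0$ for every $X \in A(\ccat)$. Given a presentation $y(a) \xrightarrow{y(f)} y(b) \to X \to 0$, write $I \subseteq y(b)$ for the image of $y(f)$. Since $y(b)$ is projective by the previous lemma, applying $\Hom(-, y(c))$ to $0 \to I \to y(b) \to X \to 0$ identifies $\Ext^{1}(X, y(c))$ with the cokernel of the map $[b, c] \to \Hom_{A(\ccat)}(I, y(c))$, and the task becomes computing this latter group. Completing $f$ to a fibre sequence $k \to a \to b$ in $\ccat$, the long exact sequence of $[x, -]$-groups at each level $x$ gives an exact sequence $y(k) \to y(a) \to I \to 0$ in $A(\ccat)$, so that $\Hom_{A(\ccat)}(I, y(c))$ is the subgroup of $[a, c]$ consisting of maps that vanish on $k$. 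Stability is then used crucially: since $k \to a \to b$ is also a cofibre sequence, that subgroup coincides with the image of $[b, c] \to [a, c]$, so $[b, c] \to \Hom(I, y(c))$ is already surjective and the $\Ext^{1}$ vanishes.

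For generation under finite limits, I would exhibit an arbitrary $X$ as the kernel of a single morphism between two representables. Keeping the notation above, set $d = \cofib(f)$ in $\ccat$, so that by stability both $a \to b \to d$ and $b \to d \to \Sigma a$ are fibre sequences. The same long exact sequence argument applied to $a \to b \to d$ identifies $\ker(y(b) \to y(d))$ with $I$ as a subpresheaf of $y(b)$, embedding $X = y(b)/I$ into $y(d)$; applied to $b \to d \to \Sigma a$ it identifies $\ker(y(d) \to y(\Sigma a))$ with the image of $y(b) \to y(d)$, which is exactly $X$. Thus $X \simeq \ker(y(d) \to y(\Sigma a))$, displaying $X$ as a finite limit of representables.

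The one substantive point common to both arguments is the identification $\ker(y(f)) = \mathrm{image}(y(\fib f))$ as subpresheaves of $y(a)$, which holds precisely because in a stable $\infty$-category the long exact sequence of $[x, -]$-groups extends to the left. This is the heart of Freyd's classical argument; once it is established, both statements follow by formal epi-mono manipulation inside the abelian category $A(\ccat)$.
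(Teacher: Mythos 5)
Your argument is correct, and it reconstructs exactly the classical Freyd--Neeman proof that the paper outsources to \cite{neeman2001triangulated}[5.1.11, 5.1.23]: the identification $\ker(y(f)) = \mathrm{im}(y(\fib f))$ coming from the two-sided long exact sequence of a (co)fibre sequence is precisely the key step there, and your epi--mono bookkeeping in $A(\ccat)$ (using projectivity of representables for the $\Ext^{1}$ computation, and the kernel presentation $X \simeq \ker(y(d) \to y(\Sigma a))$ for generation under finite limits) is the standard way to finish. Nothing to add.
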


\begin{proof}
This is \cite{neeman2001triangulated}[5.1.10, 5.1.11, 5.1.23].
\end{proof}

\begin{remark}
\label{remark:injectives_are_representable_when_c_idemcomplete}
If $\ccat$ is a stable $\infty$-category which is idempotent complete, then any injective of $A(\ccat)$ is representable. To see this, notice that if $i$ is injective, then by \cref{theorem:for_stable_cat_representables_are_injective_in_freyd_envelope} we have a monomorphism $i \rightarrow y(c)$ for some $c \in \ccat$. It follows that $i$ is a direct summand of $y(c)$, so that $i$ is representable by the corresponding direct summand of $c$, which exists by idempotent completeness. 
\end{remark}

As we have observed in \cref{remark:freyd_envelope_obtained_by_adjoining_cokernels}, the Freyd envelope of an additive $\infty$-category can be characterized as the ordinary category obtained from $h \ccat$ by freely adjoining cokernels. If $\ccat$ is in fact stable, the Freyd envelope enjoys \emph{another universal property}, related to homology theories. This explains the central importance of this construction to the current work. 

To speak of homology theories, we will need to speak of local gradings. 

\begin{definition}
\label{definition:local_grading_on_classical_freyd_envelope}
Let $\ccat$ be a locally graded additive $\infty$-category. Then, the \emph{induced local grading} on the Freyd envelope $A(\ccat)$ is defined by 
\[
(X[1])(c) \colonequals X(c[-1]).
\]
\end{definition}

\begin{remark}
Note that the $[-1]$ appearing in the definition above is necessary for the Yoneda functor $y\colon \ccat \rightarrow A(\ccat)$ to become a homomorphism of locally graded $\infty$-categories, since 
\[
y(c[1])(d) \colonequals [d, c[1]] \simeq [d[-1], c] \equalscolon (y(c)[1])(d).
\]
\end{remark}

\begin{remark}
By \cref{remark:freyd_envelope_obtained_by_adjoining_cokernels}, the Freyd envelope, considered as an additive category with finite colimits, is covariantly functorial in $\ccat$. The induced local grading given above is just the local grading induced by this covariant functoriality. 
\end{remark}

\begin{example}
If $\ccat$ is stable, then it has a canonical local grading given by the suspension functors. In this case, the induced local grading on the Freyd envelope is given by 
\[
(X[1])(c) \colonequals X(\Sigma^{-1} c).
\]
\end{example}

\begin{theorem}[Freyd]
\label{theorem:homological_universal_property_of_freyd_envelope}
Let $\ccat$ be a stable $\infty$-category. Then, the the functor $y\colon \ccat \rightarrow A(\ccat)$ is homological. Moreover, it is universal in the following sense: for any homological functor $H\colon \ccat \rightarrow \acat$, there is an essentially unique exact functor $L\colon A(\ccat) \rightarrow \acat$ of abelian categories such that the following diagram commutes
\begin{center}
\begin{tikzcd}
	{\ccat} && {\acat} \\
	& {\acat(\ccat)}
	\arrow["{H}", from=1-1, to=1-3]
	\arrow["{y}"', from=1-1, to=2-2]
	\arrow["{\exists_{!} L}"', from=2-2, to=1-3, dashed]
\end{tikzcd}.
\end{center}
\end{theorem}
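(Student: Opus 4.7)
The statement has two ingredients: the Yoneda embedding $y\colon \ccat \to A(\ccat)$ being homological, and a universal property among homological functors. I would handle these in turn.

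First I would check that $y$ is homological. Additivity is immediate from the Yoneda embedding into an additive category. For the cofibre-sequence axiom, given $a \to b \to c$ a cofibre sequence in $\ccat$, my approach is to verify exactness of $y(a) \to y(b) \to y(c)$ at $y(b)$ by working inside the ambient presheaf category $\Fun_{\Sigma}(\ccat^{op}, \abeliangroups)$, which contains $A(\ccat)$ as an abelian subcategory by \cref{proposition:if_c_has_finite_limits_freyd_envelope_is_an_abelian_subcat}. Pointwise at $x \in \ccat$, exactness reduces to exactness of $[x, a] \to [x, b] \to [x, c]$, which holds because cofibre sequences in a stable $\infty$-category are also fibre sequences.

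Next I would construct $L$ and establish uniqueness. By \cref{remark:freyd_envelope_obtained_by_adjoining_cokernels}, the Freyd envelope $A(\ccat)$ is characterized as the free additive category on $h\ccat$ admitting cokernels, so any additive functor $H\colon \ccat \to \acat$ extends uniquely to a right exact additive $L\colon A(\ccat) \to \acat$ with $L(y(c)) \simeq H(c)$ and, for a presentation $y(a) \xrightarrow{y(\alpha)} y(b) \to X \to 0$, with $L(X) \simeq \coker(H(\alpha))$. To upgrade right exactness to full exactness under the assumption that $H$ is homological, my plan is to take an arbitrary short exact sequence $0 \to X \to Y \to Z \to 0$ in $A(\ccat)$ and apply the horseshoe lemma using projectivity of representables from \cref{theorem:for_stable_cat_representables_are_injective_in_freyd_envelope}. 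This yields compatible length-one projective resolutions $y(a_{\bullet}) \to y(b_{\bullet}) \to \bullet$ for $\bullet \in \{X, Y, Z\}$. Because representables are simultaneously injective (again by \cref{theorem:for_stable_cat_representables_are_injective_in_freyd_envelope}), the short exact sequences of representables in the two top rows split; combined with the full-faithfulness of $y$ on $h\ccat$ and \cref{lemma:coproducts_in_the_freyd_envelope}, this lets me arrange $a_Y \simeq a_X \oplus a_Z$ and $b_Y \simeq b_X \oplus b_Z$. Computing $L$ via the cokernel formula then makes the exactness of $0 \to L(X) \to L(Y) \to L(Z) \to 0$ follow from the snake lemma applied to a block-triangular map of biproducts in $\acat$.

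The hard part will be the exactness of $L$. Right exactness is entirely formal from the free-cocompletion universal property, but left exactness depends essentially on the stable structure of $\ccat$, mediated by Freyd's result that representables in $A(\ccat)$ are both projective and injective. This rigidity of resolutions in $A(\ccat)$ is what allows the additivity of $H$, together with its compatibility with cofibre sequences, to upgrade the a priori only right exact functor $L$ to a genuinely exact one.
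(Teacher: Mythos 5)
The first half of your argument is fine: the pointwise verification that $y$ is homological is correct (using that $A(\ccat)$ is an abelian subcategory of additive presheaves, so exactness is levelwise), and existence/uniqueness of the right exact extension $L$ follows from the free-cocompletion property exactly as you say. (For what it is worth, the paper does not argue this theorem at all; it simply cites Neeman.) The problem is in the step you yourself flag as the hard one. Your horseshoe construction does produce compatible presentations with $a_Y \simeq a_X \oplus a_Z$, $b_Y \simeq b_X \oplus b_Z$ and a block-triangular map between them (though these are presentations, not ``length-one projective resolutions'': objects of $A(\ccat)$ typically have infinite projective dimension, so $y(a)\to y(b)$ is not injective). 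But the snake lemma applied to the resulting block-triangular diagram in $\acat$ only yields the exact sequence
\[
\ker\bigl(H(a_Z) \to H(b_Z)\bigr) \xrightarrow{\ \delta\ } L(X) \to L(Y) \to L(Z) \to 0,
\]
that is, exactness at $L(Y)$ and $L(Z)$. Injectivity of $L(X) \to L(Y)$ is equivalent to the vanishing of the connecting map $\delta$, and nothing in your argument addresses this. Indeed, as written, your exactness step uses only the additivity of $H$ (to identify $H$ of biproducts) and never the hypothesis that $H$ carries cofibre sequences to exact sequences; since an arbitrary additive functor does not admit an exact extension to $A(\ccat)$, the argument cannot be complete as it stands.

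The missing input is a direct appeal to the long exact sequence. The standard route (Freyd; Neeman 5.1.18) is to show that the right exact $L$ preserves monomorphisms, which together with right exactness gives exactness. Since representables are injective in $A(\ccat)$, any monomorphism $X \hookrightarrow Y$ prolongs to a monomorphism $X \hookrightarrow y(c)$, so it suffices to treat subobjects of representables; any such subobject is $\mathrm{im}(y(\beta))$ for some $\beta\colon b \to c$ in $h\ccat$. Completing $\beta$ to a cofibre sequence $b \to c \to c'$ and rotating gives $\ker(y(\beta)) = \mathrm{im}(y(\Omega c') \to y(b))$, hence $\mathrm{im}(y(\beta)) \cong \coker\bigl(y(\Omega c') \to y(b)\bigr)$, and therefore $L(\mathrm{im}(y(\beta))) = \coker\bigl(H(\Omega c') \to H(b)\bigr)$. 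The exactness of $H(\Omega c') \to H(b) \to H(c)$ --- which is precisely the homological hypothesis on $H$ --- identifies this cokernel with $\mathrm{im}(H(\beta)) \subseteq H(c)$, giving the required injectivity. This is the point where the stable structure and the homological property of $H$ actually enter, and it is what your snake-lemma step is missing.
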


\begin{proof}
This is \cite{neeman2001triangulated}[5.1.18]. 
\end{proof}

\begin{remark}[Universal property and homology theories]
\label{remark:universal_property_of_classical_freyd_envelope_and_local_gradings}
Using the local grading on $A(\ccat)$ of \cref{definition:local_grading_on_classical_freyd_envelope}, the discrete Yoneda embedding $y\colon \ccat \rightarrow A(\ccat)$ acquires a structure of a locally graded functor so that $y$ is a homology theory in the sense of \cref{definition:homology_theory}. We claim that this functor is also universal in this sense; that is, that for any homology theory $H\colon \ccat \rightarrow \acat$ there is a unique \emph{locally graded}, exact functor $A(\ccat) \rightarrow \acat$ such that the diagram as above commutes. 

To see this, observe that if $H\colon \ccat \rightarrow \acat$ is a homology theory, then \cref{theorem:homological_universal_property_of_freyd_envelope} shows that there is a unique exact functor $L\colon A(\ccat) \rightarrow \acat$ which makes the above diagram commute. To show the claim, we have to promote $L$ to a homomorphism of locally graded $\infty$-categories, which is the same as choosing a natural isomorphism 
\begin{equation}
\label{equation:natural_iso_making_unique_exact_extension_a_locally_graded_functor}
[1]_{\acat} \circ L \simeq L \circ [1]_{A(\ccat)}. 
\end{equation}

The two sides above are the unique exact functors which yield, respectively, $[1]_{\acat} \circ H$ and $H \circ \Sigma$ after precomposing with $y$. The latter two are canonically isomorphic when $H$ is a homology theory, thus the universal property of $A(\ccat)$ implies that we have a canonical isomorphism of (\ref{equation:natural_iso_making_unique_exact_extension_a_locally_graded_functor}), as needed. 
\end{remark}

\begin{remark}
One can show that that the given homology theory $H$ preserves $\kappa$-small direct sums if and only if the corresponding exact functor out of $A(\ccat)$ does, see \cite{neeman2001triangulated}[5.1.24]. 
\end{remark}

\begin{warning}
\label{warning:freyd_envelope_usually_not_presentable}
Beware that even if $\ccat$ is a presentable $\infty$-category, $A(\ccat)$ will not be presentable except in the most trivial cases. In fact, it is not well-powered, hence not presentable, already when $\ccat = \dcat(\mathbb{Z})$ is the derived $\infty$-category of the integers \cite{neeman2001triangulated}[C.3].
\end{warning}

\subsection{Characterization of adapted homology theories} \label{sec: Homology theories and Freyd}

In \cref{definition:adapted_homology_theory}, we introduced the notion of an adapted homology theory $H\colon \ccat \rightarrow \acat$; that is one, where the injectives of $\acat$ can be compatibly lifted to objects of $\ccat$, giving rise to the Adams spectral sequence. In this section, we completely classify such homology theories by showing that they can be characterized either in terms of localizing subcategories of the Freyd envelope or epimorphism classes. Moreover, we show that any homology theory with target an abelian category with enough injectives factorizes uniquely through an adapted one in an explicit way. 

Recall that by \cref{theorem:homological_universal_property_of_freyd_envelope}, any homology theory $H\colon \ccat \rightarrow \acat$ factors uniquely through a exact functor $L\colon A(\ccat) \rightarrow \acat$. In some ways, it is easier to study the induced functor $L$, as it is an exact functor between abelian categories, rather than $H$ itself, whose source is a stable $\infty$-category. Our goal is to relate the properties of the homology theory to the properties of the induced exact functor.
\begin{lemma}
\label{lemma:functor_induced_by_homology_theory_a_left_adjoint_when_injectives_lift}
Let $\ccat$ be an idempotent complete stable $\infty$-category and $H\colon \ccat \rightarrow \acat$ a homology theory such that $\acat$ has enough injectives. Then, the following are equivalent:

\begin{enumerate}
    \item every injective of $\acat$ has a lift in $\ccat$,
    \item induced functor $L\colon A(\ccat) \rightarrow \acat$ has a right adjoint. 
\end{enumerate}
\end{lemma}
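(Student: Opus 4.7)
The plan is to exploit the key structural fact from the recollections on the Freyd envelope: when $\ccat$ is a stable, idempotent-complete $\infty$-category, the injective objects of $A(\ccat)$ are precisely the representables (combining \cref{theorem:for_stable_cat_representables_are_injective_in_freyd_envelope} and \cref{remark:injectives_are_representable_when_c_idemcomplete}). The construction of $L$ via $L\circ y\simeq H$ together with this identification translates questions about injectives in $\acat$ into questions about representability in $A(\ccat)$, which is the whole point.

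For $(2)\Rightarrow(1)$, let $R$ be a right adjoint to the exact functor $L\colon A(\ccat)\to\acat$. Since right adjoints to exact functors between abelian categories preserve injectives, for any injective $i\in\acat$ the object $R(i)\in A(\ccat)$ is injective, hence representable by some $c_i\in\ccat$. The adjunction then yields a chain of natural isomorphisms
\[
[c,c_i]\simeq \Hom_{A(\ccat)}(y(c),y(c_i))\simeq \Hom_{A(\ccat)}(y(c),R(i))\simeq \Hom_{\acat}(L(y(c)),i)=\Hom_{\acat}(H(c),i),
\]
which is exactly the property of $c_i$ being an injective lift of $i$ in the sense of \cref{definition:injectives_in_a_stable_category}.

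For $(1)\Rightarrow(2)$, I will show that for every $a\in\acat$ the functor $\Hom_{\acat}(L(-),a)\colon A(\ccat)^{op}\to\abeliangroups$ is representable; as $A(\ccat)$ is abelian and $L$ is colimit-preserving on its restricted domain of finitely presented objects, this will produce a right adjoint. When $a=i$ is injective with lift $i_\ccat$, the defining property of the lift gives $\Hom_{\acat}(L(y(c)),i)\simeq [c,i_\ccat]\simeq \Hom_{A(\ccat)}(y(c),y(i_\ccat))$; since every $X\in A(\ccat)$ is a cokernel of a map of representables and $L$ is right exact, this promotes to a natural isomorphism $\Hom_{\acat}(L(X),i)\simeq \Hom_{A(\ccat)}(X,y(i_\ccat))$. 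For a general $a\in\acat$, choose an injective copresentation $0\to a\to i^0\to i^1$ (possible since $\acat$ has enough injectives) and set
\[
R(a):=\ker\!\bigl(y(i^0_\ccat)\to y(i^1_\ccat)\bigr),
\]
the kernel being computed in the abelian category $A(\ccat)$, which exists by \cref{proposition:if_c_has_finite_limits_freyd_envelope_is_an_abelian_subcat}. Since $L$ is exact, $\Hom_{\acat}(L(X),-)$ sends the copresentation of $a$ to a left exact sequence and the injective case identifies the result with $\Hom_{A(\ccat)}(X,R(a))$, giving representability.

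The main subtlety is not the pointwise construction but assembling these representing objects into an actual right adjoint and verifying that the argument does not secretly depend on the chosen copresentation of $a$. This is handled uniformly by the representability criterion for adjoint functors: once each $\Hom_{\acat}(L(-),a)$ has been shown representable, the universal property pins down $R(a)$ up to canonical isomorphism and promotes the assignment $a\mapsto R(a)$ to a functor right adjoint to $L$, with the dependence on copresentations absorbed into the standard naturality machine.
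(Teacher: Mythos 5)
Your proposal is correct and follows essentially the same route as the paper: for $(2)\Rightarrow(1)$ you use that the right adjoint of the exact functor $L$ preserves injectives together with the representability of injectives in $A(\ccat)$ for idempotent-complete stable $\ccat$, and for $(1)\Rightarrow(2)$ you define $R$ on injectives via $R(i)=y(i_{\ccat})$ and extend to general $a$ as a kernel coming from an injective copresentation. The paper phrases the extension step as checking that the presheaf $\Hom_{\acat}(H(-),a)$ is finitely presented rather than as representability of $\Hom_{\acat}(L(-),a)$ on $A(\ccat)$, but this is the same argument in substance.
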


\begin{proof}
$(1 \Rightarrow 2)$ If $L$ has a right adjoint $R$, then the latter satisfies 
\[
\Hom_{\acat(\ccat)}(x, Ra) \simeq \Hom_{\acat}(Lx, a)
\]
for all $x \in \acat(\ccat)$, $a \in \acat$. Notice that since $L$ is right exact and $\acat(\ccat)$ is generated by the image of $\ccat$ under finite colimits, it is enough that we have 
\[
(Ra)(c) \simeq \Hom_{\acat(\ccat)}(y(c), Ra) \simeq \Hom_{\acat}(H(c), a) 
\]
for all $c \in \ccat$. Thus, the definition of $R$ is forced on us, and it will exist precisely when the additive presheaf $\Hom_{\acat}(H(-), a)\colon \ccat^{op} \rightarrow \abeliangroups$ is finitely presented for every $a \in \acat$, so that it is an element of the Freyd envelope. 

If $i \in \acat$ is injective, then
\[
[c, i_{\ccat}] \simeq \Hom_{\acat(\ccat)}(y(c), y(i_{\ccat})) \simeq \Hom_{\acat}(H(c), i),
\]
where $i_{\ccat}$ is the injective lift of $i$ from \cref{definition:injectives_in_a_stable_category}, so that $R(i) \simeq y(i_{\ccat})$ in this case, which is finitely presented. However, since $\acat$ has enough injectives, any $a \in \acat$ can be written as a kernel
\[
0 \rightarrow a \rightarrow i \rightarrow j
\]
of a map between injectives. Then, $Ra \simeq \mathrm{ker}(y(i_{\ccat}) \rightarrow y(j_{\ccat}))$ in additive presheaves, which is also finitely presented. 

$(2 \Rightarrow 1)$ Conversely, suppose that $R$ exists. Since $L$ is exact, it follows that for every injective $i \in \acat$, $R(i)$ is an injective object of the Freyd envelope. Since $\ccat$ is idempotent complete, we deduce from \cref{remark:injectives_are_representable_when_c_idemcomplete} that $R(i) \simeq y(j)$ is representable. It follows that $j$ is an injective lift in $\ccat$ associated to $i$. 
\end{proof}

\begin{theorem}[Characterization of adapted homology theories]
\label{theorem:characterization_of_adapted_homology_theories}
Let $\ccat$ be an idempotent complete stable $\infty$-category and $H\colon \ccat \rightarrow \acat$ a homology theory such that $\acat$ has enough injectives. Then, the following three conditions are equivalent: 
\begin{enumerate}
    \item $H$ is adapted,
    \item the induced functor $L\colon \acat(\ccat) \rightarrow \acat$ has a fully faithful right adjoint,
    \item the kernel of the induced functor $L\colon \acat(\ccat) \rightarrow \acat$ is a localizing subcategory of the Freyd envelope and $L$ identifies $\acat$ with the Gabriel quotient; that is, any exact functor out of $A(\ccat)$ annihilating $\mathrm{ker}(L)$ factors uniquely through $\acat$ 
\end{enumerate}
\end{theorem}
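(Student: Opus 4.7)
My plan is to split the three-way equivalence into two more manageable parts: first establish $(1) \Leftrightarrow (2)$ by leveraging \cref{lemma:functor_induced_by_homology_theory_a_left_adjoint_when_injectives_lift}, and then establish $(2) \Leftrightarrow (3)$ using standard abelian-category theory of Gabriel quotients.

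For $(1) \Leftrightarrow (2)$: the idea is that \cref{lemma:functor_induced_by_homology_theory_a_left_adjoint_when_injectives_lift} already yields existence of a right adjoint $R$ to $L$ precisely when injectives lift; what remains is to control the counit $\epsilon\colon LR \to \mathrm{id}_{\acat}$. The lemma's construction identifies $R(i) \simeq y(i_{\ccat})$ for any injective $i$, whence $LR(i) \simeq L(y(i_{\ccat})) \simeq H(i_{\ccat})$ and a direct chase through the adjunction isomorphism $[c, i_{\ccat}] \simeq \Hom_{\acat}(H(c),i)$ (specialized at $c = i_{\ccat}$) shows that $\epsilon_i$ is exactly the comparison map $H(i_{\ccat}) \to i$. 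Thus adaptedness of $H$ is precisely the statement that $\epsilon$ is an isomorphism on every injective, while fully faithfulness of $R$ is the statement that $\epsilon$ is an isomorphism everywhere. To bridge these, I would use that $\acat$ has enough injectives to embed any $a \in \acat$ into an exact sequence $0 \to a \to i^{0} \to i^{1}$ with $i^{0}, i^{1}$ injective; exactness of $L$ (from \cref{theorem:homological_universal_property_of_freyd_envelope}) together with left-exactness of $R$ yields a commuting diagram of exact sequences, and the five lemma completes the argument.

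For $(2) \Leftrightarrow (3)$: this is formal abelian-category theory. If $L$ admits a fully faithful right adjoint $R$, then $K \colonequals \ker L$ is Serre (being the kernel of an exact functor between abelian categories), and $L$ exhibits $\acat$ as equivalent to the Gabriel quotient $A(\ccat)/K$ via $R$, so in particular $K$ is localizing and $L$ enjoys the universal property of the quotient. Conversely, by construction the Gabriel localization functor at a localizing subcategory admits a fully faithful right adjoint, so any $L$ matching its universal property inherits this. These facts go back to Gabriel's thesis and are covered in Popescu's monograph; I would simply cite them.

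The main technical concern is that $A(\ccat)$ is generally not a Grothendieck abelian category (\cref{warning:freyd_envelope_usually_not_presentable}), so one must be slightly careful when invoking the classical localization theory. However, the formal correspondence between localization adjunctions, Serre kernels, and Gabriel quotients does not require a Grothendieck hypothesis; it is really a statement about exact localizations of abelian categories and holds in this generality. Thus the principal work sits in $(1) \Leftrightarrow (2)$, specifically in unwinding that the counit of the adjunction coincides with the adaptedness comparison map, after which the five-lemma reduction is straightforward.
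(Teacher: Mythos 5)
Your proposal is correct and follows essentially the same route as the paper: existence of the right adjoint comes from \cref{lemma:functor_induced_by_homology_theory_a_left_adjoint_when_injectives_lift}, the counit on an injective $i$ is identified with the comparison map $H(i_{\ccat}) \rightarrow i$, and left exactness of $LR$ together with enough injectives promotes the isomorphism from injectives to all objects (your five-lemma step is the same argument made explicit). The only cosmetic difference is in $(2) \Leftrightarrow (3)$: where you would cite classical Gabriel localization theory, the paper re-proves the needed facts in its appendix via the comonad $LR$ (\cref{proposition:comodules_form_a_gabriel_quotient}, \cref{theorem:properties_of_comonadic_factorization_of_an_exact_left_adjoint}), precisely to stay self-contained in the non-Grothendieck setting you correctly flag as harmless.
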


\begin{proof}
($1 \Rightarrow 2$) By \cref{lemma:functor_induced_by_homology_theory_a_left_adjoint_when_injectives_lift}, the right adjoint $R$ exists, we only have to show it is fully faithful. We have seen in the proof of the above lemma that if $i \in \acat$ is an injective, then $R(i) = y(i_{\ccat})$ is represented by the associated injective of $\ccat$. Moreover, under the identification 
\[
LRi \simeq L(y(i_{\ccat})) \simeq H(i_{\ccat}),
\]
the counit map of an injective $i \in \acat$ corresponds to the structure map $H(i_{\ccat}) \rightarrow i$. Thus, if $H$ is adapted, then the counit map is an isomorphism for all injectives. Since $LR$ is left exact, we deduce that it is always an isomorphism, so that $R$ is fully faithful. 

$(2 \Rightarrow 1)$ This is the same as the argument above, with the reasoning reversed. 

($2 \Rightarrow 3$) When $R$ is fully faithful, $LR \simeq id_{\acat}$ so that $\acat$ coincides with the category of comodules over the comonad $LR$. Then, the needed result is \cref{proposition:comodules_form_a_gabriel_quotient}.

($3 \Rightarrow 2$) It follows again from \cref{proposition:comodules_form_a_gabriel_quotient} that $\acat$ can be identified with the category of $LR$-comodules, since they are both Gabriel quotients by the same Serre subcategory. The latter has a fully faithful right adjoint by \cref{theorem:properties_of_comonadic_factorization_of_an_exact_left_adjoint}.
\end{proof}

\begin{corollary}
\label{corollary:localizing_class_determines_an_adapted_homology_theory}
Let $\ccat$ be an idempotent complete stable $\infty$-category. An adapted homology theory $H\colon \ccat \rightarrow \acat$ is completely determined by the kernel of the induced functor $L\colon A(\ccat) \rightarrow \acat$.
\end{corollary}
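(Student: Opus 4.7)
The plan is to deduce the corollary directly from part (3) of Theorem \ref{theorem:characterization_of_adapted_homology_theories}, which tells us that an adapted homology theory $H\colon \ccat \to \acat$ identifies $\acat$ with the Gabriel quotient of $A(\ccat)$ by the localizing subcategory $K \colonequals \ker(L)$.

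More precisely, suppose we are given two adapted homology theories $H\colon \ccat \to \acat$ and $H'\colon \ccat \to \acat'$ such that the kernels of the induced exact functors $L\colon A(\ccat) \to \acat$ and $L'\colon A(\ccat) \to \acat'$ coincide as subcategories $K \subseteq A(\ccat)$. I would argue that there is a canonical equivalence $\acat \simeq \acat'$ of locally graded abelian categories under which $L$ and $L'$ are identified, and hence under which $H = L \circ y$ and $H' = L' \circ y$ are identified as well. The universal property of the Gabriel quotient supplied by Theorem \ref{theorem:characterization_of_adapted_homology_theories}(3) yields a unique exact functor $\acat \to \acat'$ factoring $L'$, and symmetrically a unique exact functor $\acat' \to \acat$ factoring $L$. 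By the uniqueness clause in the universal property, the two composites must coincide with the identities on $\acat$ and $\acat'$, giving the required canonical equivalence.

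To upgrade this to an equivalence of \emph{homology theories} rather than just exact functors, I would apply the locally graded version of the universal property recorded in Remark \ref{remark:universal_property_of_classical_freyd_envelope_and_local_gradings}: since the local grading on $A(\ccat)$ is inherited from $\ccat$ and $K$ is automatically compatible with this grading (it is the kernel of a locally graded functor), the Gabriel quotient inherits a canonical local grading making $L$ into a locally graded exact functor, and this structure is preserved by the comparison equivalence produced above. Composing with the Yoneda functor $y\colon \ccat \to A(\ccat)$ (which is itself a locally graded homology theory) then identifies $H$ with $H'$ as homology theories.

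The main point to verify is really just that the universal property of (3) is strong enough to reconstruct not only the abelian category $\acat$ but also the functor $L$ from $K$; this is exactly what the ``factors uniquely'' statement provides, so no genuine obstacle arises. The one small subtlety worth flagging is the compatibility with local gradings, which needs invoking the refined universal property of Remark \ref{remark:universal_property_of_classical_freyd_envelope_and_local_gradings} rather than the plain Theorem \ref{theorem:homological_universal_property_of_freyd_envelope}; this ensures the reconstructed datum is a homology theory and not just an exact functor of plain abelian categories.
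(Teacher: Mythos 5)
Your proposal is correct and follows exactly the route the paper takes: the paper's proof consists of the single sentence that the statement is immediate from condition (3) of \cref{theorem:characterization_of_adapted_homology_theories}, i.e.\ the universal property of the Gabriel quotient, which is precisely the uniqueness argument you spell out. Your additional remarks on recovering the local grading via \cref{remark:universal_property_of_classical_freyd_envelope_and_local_gradings} are a reasonable elaboration of what the paper leaves implicit.
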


\begin{proof}
This is immediate from the last equivalent property in the statement of \cref{theorem:characterization_of_adapted_homology_theories}.
\end{proof}

\begin{remark}
\label{remark:adapted_homology_theory_determined_by_homology_epimorphisms}
Note that if $L$ is induced by a homology theory $H\colon \ccat \rightarrow \acat$, we have  
\[
L(\mathrm{coker}(y(c) \rightarrow y(d)) = \mathrm{coker}(H(c) \rightarrow H(d)).
\]
It follows that $\ker(L)$ can be identified with cokernels of \emph{$H$-epimorphisms} in $\ccat$; that is, those $c \rightarrow d$ such that $H(c) \rightarrow H(d)$ is an epimorphism. Thus, \cref{corollary:localizing_class_determines_an_adapted_homology_theory} can be interpreted as saying that $H$ is completely determined by its class of epimorphisms. This point of view will be expanded upon in the next chapter. 
\end{remark}

\section{Classification of Adams spectral sequences}
\label{section:classification_of_adams_spectral_sequences}

In this section we classify adapted homology theories in terms of epimorphism classes and localizing subcategories of the Freyd envelope. This gives a classification of Adams spectral sequences. 

\subsection{Epimorphism classes and Miller's Adams spectral sequence}
\label{subsection:epimorphism_classes}

As we have observed in \cref{remark:adapted_homology_theory_determined_by_homology_epimorphisms}, an adapted homology theory is completely determined by the class of homology epimorphisms. This mirrors an earlier observation due to Miller that an $R$-based Adams spectral sequence, where $R$ is a homotopy ring spectrum, is completely determined by the class of those maps of spectra which are split after applying $R $.  

In this section, we relate these two phenomena by axiomatizing the notion of an epimorphism class and recalling Miller's construction of the Adams spectral sequence. 

\begin{definition}
\label{definition:epimorphism_class}
Let $\ccat$ be a stable $\infty$-category. We say a class of arrows $\ecat \subseteq \Fun(\Delta^{1}, \ccat)$ is an \emph{epimorphism class} if:
\begin{enumerate}
    \item all equivalences are in $\ecat$,
    \item for any pair $f, g$ of composable arrows, if $f, g \in \ecat$ then also $g \circ f \in \ecat$  and if $g \circ f \in \ecat$ then also $g \in \ecat$,
    \item $\ecat$ is stable under pullbacks along arbitrary maps in $\ccat$,
    \item an arrow $f\colon c \rightarrow d$ belongs to $\ecat$ if and only if $\Sigma f\colon \Sigma c \rightarrow \Sigma d$ does. 
\end{enumerate}
\end{definition}

\begin{remark}
As a consequence of (2), if $f, g$ are homotopic, then one belongs to an epimorphism class if and only if the other does. Thus, epimorphism classes can be alternatively defined as classes of maps in the homotopy category $h \ccat$. 
\end{remark}

A reader can observe that the axioms satisfied by an epimorphism class are the same as those enjoyed by the class of epimorphisms in any ordinary category, and consequently by the class of effective epimorphisms in any $\infty$-topos or a Grothendieck prestable $\infty$-category \cite{higher_topos_theory}[6.2.3]. It is natural to ask if there is a similarly natural choice in the stable setting, to which we offer the following warning. 

\begin{warning}
\label{warning:all_maps_are_effective_epi_in_a_stable_infty_category}
In a stable $\infty$-category $\ccat$, \emph{any} arrow $c \rightarrow d$ is an effective epimorphism; that is, the \v{C}ech nerve 
\[
\ldots \triplerightarrow c \times_{d} c \rightrightarrows c \rightarrow d
\]
is a colimit diagram. This is at the same time a source of great simplification in the context of stable $\infty$-categories, but also of the inability of the latter to distinguish between more subtle notions of exactness.
\end{warning}

Intuitively, in the stable context there is no natural non-trivial notion of an epimorphism, and epimorphism classes offer varying choice of such. Let us first give a few examples. 

\begin{example}
\label{example:split_surjections_an_epi_class}
The class of \emph{split surjections}; that is, those maps $c \rightarrow d$ which are equivalent to a projection onto a direct summand, is an epimorphism class. This is the smallest epimorphism class, as any epimorphism class contains this one as a consequence of $(2)$ and $(1)$.
\end{example}
\begin{example} \label{example:homology_surjections}
Let $H\colon \ccat \rightarrow \acat$ be a homology theory. Then, the class of $H$-epimorphisms; that is, those maps $c \rightarrow d$ such that $H(c) \rightarrow H(d)$ is an epimorphism, is an epimorphism class on $\ccat$. 
\end{example}

\begin{example}
\label{example:preimage_of_epi_class_is_epi}
Let $L\colon \ccat \rightarrow \dcat$ be an exact functor between stable $\infty$-categories and let $\ecat$ be an epimorphism class on $\dcat$. Then, $L^{-1}(\ecat)$ is an epimorphism class in $\ccat$. 
\end{example}

\begin{example}
\label{example:r_split_maps}
A particular combination of \cref{example:split_surjections_an_epi_class} and  \cref{example:preimage_of_epi_class_is_epi} is the class of \emph{$R$-split maps}, where $R$ is a spectrum; that is, those maps $X \rightarrow Y$  such that $R \otimes X \rightarrow R \otimes Y$ is a split epimorphism of spectra. 
\end{example}

Dually, we can define monomorphisms. 

\begin{definition}
Let $\ecat$ be an epimorphism class on a stable $\infty$-category $\ccat$. We will say an arrow $c \rightarrow d$ is \emph{$\ecat$-monic} if the canonical map $ d \rightarrow \mathrm{cofib}(c \rightarrow d)$ is in $\ecat$. 
\end{definition}

\begin{example}
Let $H\colon \ccat \rightarrow \acat$ be a homology theory and $\ecat$ the corresponding class of $H$-epimorphisms. Then, an arrow $c \rightarrow d$ is $\ecat$-monic if and only if $H(c) \rightarrow H(d)$ is a monomorphism, as a consequence of the long exact sequence of homology. 
\end{example}

\begin{example}
If $\ecat$ is the class of split surjections, then a map is $\ecat$-monic if and only if it is an inclusion of a direct summand. 
\end{example}

\begin{remark}
If $\ecat$ is an epimorphism class, then the class of $\ecat$-monics satisfies the (almost) dual version of the axioms of \cref{definition:epimorphism_class}: it contains equivalences, satisfies an appropriate 2-out-of-3, and is closed under pushouts. Moreover, it is not hard to see that the corresponding classes of epimorphism and monomorphisms determine each other. 

Thus, our choice of privileging epimorphisms is somewhat arbitrary, as all of the results in this chapter can be alternatively phrased in terms of monomorphism classes. We decided to use epimorphisms for two reasons: 

\begin{enumerate}
    \item the connection to the Freyd envelope which we will explore later is easier to phrase in terms of epimorphisms and 
    \item for wide classes of $\infty$-categories, such as $\infty$-topoi, epimorphisms are more interesting than monomorphisms. 
\end{enumerate}
\end{remark}

\begin{definition}
\label{definition:epi_class_injective}
Let $\ecat$ be an epimorphism class. We say an object $i \in \ccat$ is \emph{$\ecat$-injective} if it has the right lifting property with respect to $\ecat$-monics. That is, if for any $\ecat$-monic $c \rightarrow d$, the induced map 
\[
[d, i] \rightarrow [c, i]
\]
is an epimorphism of abelian groups. Equivalently, $i$ is $\ecat$-injective if for every $c \rightarrow d$ in $\ecat$, the induced map 
\[
[d, i] \rightarrow [c, i]
\]
is a monomorphism of abelian groups.
\end{definition}

\begin{definition}
Let $\ecat$ be an epimorphism class. We say $\ccat$ \emph{has enough $\ecat$-injectives} if for every $c \in \ccat$ there exists an $\ecat$-monic map $c \rightarrow i$ into an $\ecat$-injective. 
\end{definition}
Before we give examples, let us observe that this property is exactly what is needed to be able to construct the Adams spectral sequence.
\begin{construction}[Miller's Adams spectral sequence] \label{Miller}
Let $\ecat$ be an epimorphism class such that $\ccat$ has enough $\ecat$-injectives and let $d \in \ccat$. Then, writing $d = d^{0}$, by assumption we can choose an $\ecat$-monic map $d^{0} \rightarrow i^{0}$ into an $\ecat$-injective. Proceeding inductively by setting $d^{k+1} = \mathrm{cofib}(d^{k} \rightarrow i^{k})$, we construct an \emph{$\ecat$-Adams resolution} of the form 
\[
\begin{tikzcd}
	{d^{0}} & {i^{0}} & {i^{1}} & {i^{2}} & {\ldots} \\
	& {d^{1}} & {d^{2}} & {\ldots}
	\arrow[from=1-1, to=1-2]
	\arrow[from=1-2, to=2-2]
	\arrow[from=1-2, to=1-3]
	\arrow[from=2-2, to=1-3]
	\arrow[from=1-3, to=2-3]
	\arrow[from=2-3, to=2-2, dotted]
	\arrow[from=2-3, to=1-4]
	\arrow[from=1-3, to=1-4]
	\arrow[from=1-4, to=2-4]
	\arrow[from=2-4, to=1-5]
	\arrow[from=1-4, to=1-5]
	\arrow[from=2-2, to=1-1, dotted]
	\arrow[from=2-4, to=2-3, dotted]
\end{tikzcd}
\]
Applying $[c, -]$ for any other object $c$ leads to a spectral sequence, the $\ecat$-Adams spectral sequence. As with injective resolutions in abelian categories, one can show that any two $\ecat$-Adams towers are related by an appropriate notion of homotopy equivalence and so all such spectral sequences are isomorphic from the second page on. 
\end{construction}

\begin{remark} A similar axiomatic construction of the Adams spectral sequence in general triangulated categories is given by Christensen in \cite{Christensen}. 

\end{remark}

\begin{example}
\label{example:split_surjections_form_an_epimorphism_class}
Let $\ecat$ consists of split surjections. Then $\ecat$-monics are given by split monomorphisms and every object of $\ccat$ is $\ecat$-injective. The corresponding Adams spectral sequence will, for any pair of objects, have an $E_{2}$-page which vanishes outside of $s = 0$ and hence collapse immediately.  
\end{example}

\begin{example}
\label{example:millers_adams_associated_to_an_adjunction}
Suppose that $L \dashv R\colon \ccat \rightleftarrows \dcat$ is an adjunction between stable $\infty$-categories. As we have seen in \cref{example:preimage_of_epi_class_is_epi}, the class of these maps $c_{1} \rightarrow c_{2}$ such that $Lc_{1} \rightarrow Lc_{2}$ is a split surjection in $\dcat$ forms an epimorphism class on $\ccat$. 

It is not difficult to verify that any object of the form $Rd$ for $d \in \dcat$ is injective. In this case, any $c \in \ccat$ admits a canonical Adams resolution of the form 
\[
c \rightarrow RL(c) \rightarrow RLRL(c) \rightarrow \ldots
\]
obtained by taking the cobar resolution and taking alternating sums of face maps.
\end{example}

\begin{example}
\label{example:epi_class_of_r_split_maps_has_enough_injectives_when_r_is_e1}
A particularly important case of \cref{example:millers_adams_associated_to_an_adjunction} is given by letting $R$ be an $\mathbf{E}_{1}$-ring spectrum, and considering the left adjoint $R \otimes -\colon \spectra \rightarrow \Mod_{R}(\spectra)$. Then, all spectra underlying $R$-modules are injective, and we have canonical Adams resolutions of the form 
\[
X \rightarrow R \otimes X \rightarrow R \otimes R \otimes X \rightarrow \ldots.
\]
This can be also thought as the Amitsur complex associated to the map $S^{0} \rightarrow R$ tensored with $X$. After normalizing the cosimplicial object, we can obtain the original resolution due to Adams \cite{adams1995stable}:
\[
X \rightarrow R \otimes X \rightarrow R \otimes \overline{R} \otimes X \rightarrow \ldots,
\]
where $\overline{R} \colonequals \mathrm{fib}(S^{0} \rightarrow R)$. The corresponding Miller's Adams spectral sequence is just the descent spectral sequence associated to the map $S^{0} \rightarrow R$ of ring spectra.  
\end{example}
Now suppose that $H\colon \ccat \rightarrow \acat$ is an adapted homology theory, which determines and is determined by the class of $H$-epimorphisms by 
\cref{remark:adapted_homology_theory_determined_by_homology_epimorphisms}. In this context, a priori we have two notions of $H$-injectives, and two notions of Adams spectral sequences, one given in \cref{construction:adams_spectral_sequence_associated_to_a_homology_theory} and the other one due to Miller given above. In fact, these two coincide, as the consequence of the following. 

\begin{proposition}
\label{proposition:injectives_associated_to_epis_and_homology_theory_are_the_same}
Let $\ccat$ be an idempotent complete stable $\infty$-category, $H\colon \ccat \rightarrow \acat$ an adapted homology theory and $\ecat$ the class of $H$-epimorphisms. Then, an object $c \in \ccat$ is $\ecat$-injective if and only if it is equivalent to an injective lift $i_{\ccat}$ for some  injective $i \in \acat$.
\end{proposition}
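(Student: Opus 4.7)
The plan is to prove the two implications separately, reducing the nontrivial direction to Lemma \ref{lemma:injectives_recognized_by_homology_and_locality}, which recognizes injective lifts via $H$-locality and injectivity of $H(c)$. A key preliminary observation, used throughout, is that a map $d_1 \to d_2$ in $\ccat$ is $\ecat$-monic if and only if $H(d_1) \to H(d_2)$ is a monomorphism in $\acat$, which follows from the long exact sequence of $H$ applied to the cofibre sequence of the map.

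For the implication $(\Leftarrow)$, suppose $c \simeq i_{\ccat}$ is an injective lift of some injective $i \in \acat$. The defining property of injective lifts gives, for every $d \in \ccat$, a natural identification $[d, i_{\ccat}] \simeq \Hom_{\acat}(H(d), i)$. Injectivity of $i$ in $\acat$ then translates directly into the required right lifting property against $\ecat$-monics, using the observation above.

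For the implication $(\Rightarrow)$, the strategy is to show separately that $H(c)$ is injective in $\acat$ and that $c$ is $H$-local, so that Lemma \ref{lemma:injectives_recognized_by_homology_and_locality} applies. To establish injectivity of $H(c)$, I would use that $\acat$ has enough injectives to choose a monomorphism $H(c) \hookrightarrow i$, lift it along adaptedness to a map $c \to i_{\ccat}$ whose image under $H$ recovers the chosen monomorphism, and observe that this map is then $\ecat$-monic. The $\ecat$-injectivity of $c$ forces this $\ecat$-monic to split, and idempotent completeness of $\ccat$ realizes $c$ as a direct summand of $i_{\ccat}$. Applying $H$ and using $H(i_{\ccat}) \simeq i$ then exhibits $H(c)$ as a summand of the injective $i$, hence itself injective. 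To show $H$-locality of $c$, I would take an arbitrary $H$-acyclic object $X$ (i.e., with $H(X) = 0$) and consider the cofibre sequence $X[-1] \to 0 \to X$. Since $H(0) \to H(X)$ is a map of zero objects and hence epic, the map $X[-1] \to 0$ is $\ecat$-monic; $\ecat$-injectivity of $c$ then makes $[0, c] \to [X[-1], c]$ surjective, forcing $[X[-1], c] = 0$. Since $H$-acyclic objects are closed under suspension, this yields $\Map(X, c) \simeq 0$ for all $H$-acyclic $X$, which is precisely $H$-locality.

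The main obstacle is the forward direction, and specifically the step proving injectivity of $H(c)$: this is where adaptedness, idempotent completeness, and the translation between $\ecat$-monics and $H$-monomorphisms must all be combined correctly. Once $H(c)$ is known to be injective, the $H$-locality argument is short; the only subtlety is choosing the right cofibre sequence $X[-1] \to 0 \to X$ to convert $H$-acyclicity of $X$ into an $\ecat$-monic to which the injectivity hypothesis can be applied.
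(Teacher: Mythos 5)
Your proof is correct, and the core of the forward direction — embed $H(c)$ into an injective, lift to an $\ecat$-monic $c \to i_{\ccat}$ via adaptedness, and split it using $\ecat$-injectivity so that $c$ becomes a summand of $i_{\ccat}$ — is exactly the paper's argument. The only difference is how you conclude: the paper observes directly that a direct summand of $i_{\ccat}$ is the injective lift of the corresponding summand of $i$, whereas you verify the two hypotheses of \cref{lemma:injectives_recognized_by_homology_and_locality} (injectivity of $H(c)$ as a summand of $i$, and $H$-locality of $c$ via the $\ecat$-monic $X[-1] \to 0$ for $H$-acyclic $X$); both finishes are valid, and yours has the small merit of making explicit that $\ecat$-injectives are automatically $H$-local.
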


\begin{proof}
Since $\ecat$-monomorphisms are the same as $H$-monomorphisms, it is clear that all injective lifts $i_{\ccat}$ associated to $i \in \acat$ are $\ecat$-injective in the sense of \cref{definition:epi_class_injective}. 

Conversely, if $c \in \ccat$ is $\ecat$-injective, we can choose an $H$-monomorphism $c \rightarrow i_{\ccat}$ into an injective lift $i_{\ccat}$ for some injective $i$, which will split by $\ecat$-injectivity of $c$. Thus, $c$ is a direct summand of $i_{\ccat}$, and so is equivalent to $j_{\ccat}$ for the corresponding direct summand $j$ of $i$. 
\end{proof}

\begin{corollary}
Miller's Adams spectral sequence associated to $H$-epimorphisms of an adapted homology theory $H\colon \ccat \rightarrow \acat$ coincides with the spectral sequence obtained by lifting an injective resolution in $\acat$ to $\ccat$. 
\end{corollary}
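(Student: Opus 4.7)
The plan is to show that both spectral sequences arise from the same class of towers, namely the $\ecat$-Adams resolutions of \cref{Miller}, and then invoke the standard uniqueness statement for such resolutions asserted in that construction.

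First, I would observe that by \cref{proposition:injectives_associated_to_epis_and_homology_theory_are_the_same} the class of $\ecat$-injectives, where $\ecat$ is the class of $H$-epimorphisms, is precisely the class of injective lifts $i_{\ccat}$ of injectives $i \in \acat$. Using this, I would argue that the Adams resolution of \cref{construction:adams_spectral_sequence_associated_to_a_homology_theory}, in which each successive map $d^{k} \to i^{k}_{\ccat}$ is by construction induced by a monomorphism $H(d^{k}) \hookrightarrow i^{k}$ of an injective, is automatically an $\ecat$-Adams resolution in the sense of \cref{Miller}: the target is $\ecat$-injective by \cref{proposition:injectives_associated_to_epis_and_homology_theory_are_the_same}, and the map is $\ecat$-monic since $\ecat$-monics are exactly $H$-monics (a consequence of $H$ being homological and the definition of $\ecat$).

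Next, I would verify the converse: any $\ecat$-Adams resolution of $d$ gives rise to an injective resolution of $H(d)$ in $\acat$ after applying $H$. Here, each $i^{k}$ is $\ecat$-injective, hence of the form $i^{k}_{\ccat}$ for some injective $i^{k} \in \acat$, and each $d^{k} \to i^{k}_{\ccat}$ being $\ecat$-monic means $H(d^{k}) \to i^{k}$ is a monomorphism in $\acat$. Applying the homological functor $H$ to the cofibre sequences $d^{k} \to i^{k}_{\ccat} \to d^{k+1}$ and using monicity of the first map, I would obtain short exact sequences
\[
0 \to H(d^{k}) \to i^{k} \to H(d^{k+1}) \to 0
\]
which splice together into an injective resolution $0 \to H(d) \to i^{0} \to i^{1} \to \ldots$ of $H(d)$. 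Thus an $\ecat$-Adams tower is exactly the same data as an Adams resolution of $d$ lifting an injective resolution of $H(d)$ along $H$.

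Finally, exact couples associated to two towers of this common type produce spectral sequences whose $E_{2}$-pages are both $\Ext_{\acat}^{s,t}(H(c), H(d))$, since both are computed by the same injective resolution after applying $[c,-]_{*}$ to the $\ecat$-injective terms. Invoking the uniqueness of $\ecat$-Adams towers up to homotopy (stated in \cref{Miller}), the two spectral sequences are isomorphic from the $E_{2}$-page onwards. The main subtlety I expect is making the identification of the two towers strictly natural rather than merely existing; this is handled by the homotopy uniqueness of $\ecat$-Adams resolutions, which in turn follows by the standard argument lifting comparison maps of injective resolutions inductively using the right lifting property of $\ecat$-injectives against $\ecat$-monics.
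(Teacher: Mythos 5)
Your proof is correct and follows essentially the same route as the paper: the key input is \cref{proposition:injectives_associated_to_epis_and_homology_theory_are_the_same}, after which both spectral sequences are seen to arise from the same inductive procedure of choosing $H$-monic maps into the same class of objects. The paper states this in one sentence; your additional verifications (that each construction's tower qualifies as the other's, and the appeal to homotopy uniqueness of $\ecat$-Adams towers) are exactly the details being elided there.
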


\begin{proof}
By \cref{proposition:injectives_associated_to_epis_and_homology_theory_are_the_same}, both spectral sequences are obtained by resolving the source by repeatedly choosing an $H$-monomorphism into the same class of objects. 
\end{proof}

\begin{warning}[Modified and classical Adams spectral sequence]
The following used to confuse the authors and thus we thought it might be worthwhile to make it explicit. 

If $E$ is a ring spectrum,  then there are \emph{two} natural epimorphism classes associated to $E$, the first one given by $E_{*}$-surjective maps. If $E$ is Adams-type, for example if it is  Landweber exact, then it is known that for any $X \in \spectra$, $E_{*}X$ has a canonical structure of an $E_{*}E$-comodule. This leads to an adapted homology theory
\[
E_{*}\colon \spectra \rightarrow \ComodE
\]
determining the same class of epimorphisms. The corresponding injectives are those $E$-local spectra $I$ such that $E_{*}I$ is injective as a comodule, and the resulting Adams spectral sequence is sometimes called a \emph{modified Adams spectral sequence}. It has the curious property that the $E^{2}$-term is \emph{always} given by $\Ext$-groups in comodules.

Alternatively, as an epimorphism class we can take $E$-split surjective maps, in which case the Adams spectral sequence is the one based on the Amitsur resolution
\[
X \rightarrow E \otimes X \rightarrow E \otimes E \otimes X \rightarrow \ldots
\]

There's always a comparison map from the Amitsur resolution into the comodule one, because any $E$-split map is in particular $E_{*}$-surjective.
Applying $[Y, -]$ to the above two different resolutions gives in general two non-isomorphic spectral sequences. It is well-known that these two coincide when $E_{*}Y$ is projective as an $E_{*}$-module.
\end{warning}

\subsection{Adams spectral sequence and adapted homology theories}

One way in which Adams spectral sequences associated to an adapted homology theory $H$ have an advantage over a general one associated to an epimorphism class is that their $E_{2}$-term is readily computable as the Ext-groups in the target abelian category. In this section, we will show that this is in fact always true, as every epimorphism class with enough injectives arises from a unique adapted homology theory. 

In fact, we will show that for an idempotent-complete, stable $\infty$-category $\ccat$, the following three sets of data are equivalent: 

\begin{enumerate}
\item epimorphism classes with enough injectives
\item localizing subcategories of $A(\ccat)$ such that the Gabriel quotient has enough injectives
\item adapted homology theories 
\end{enumerate}
As each of these gives rise to an Adams spectral sequence, this result can be interpreted as a classification of Adams spectral sequences. 

We will first begin by relating (1) and (2), where the relationship holds in a little bit more generality.

\begin{proposition}[Beligiannis]
\label{proposition:epimorphism_classes_same_as_localizing_subcategories}
Let $\ecat$ be an epimorphism class on a stable $\infty$-category $\ccat$. Let $K$ be the collection of those objects $k \in A(\ccat)$ such that there exists a cokernel sequence
\[
y(c) \rightarrow y(d) \rightarrow k \rightarrow 0
\]
with $c \rightarrow d \in \ecat$. Then, 

\begin{enumerate}
    \item $K$ is a Serre subcategory of $A(\ccat)$ closed under the local grading and 
    \item any such Serre subcategory of $A(\ccat)$ arises in this way from a unique epimorphism class in $\ccat$. 
\end{enumerate}
\end{proposition}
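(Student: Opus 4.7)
The plan is to establish the claimed bijection with inverse $K \mapsto \ecat_{K} \colonequals \{f \colon c \to d \text{ in } \ccat \mid \coker(y(f)) \in K\}$. Throughout, the proof relies on three structural facts: $A(\ccat)$ is an abelian subcategory of $\Fun_{\Sigma}(\ccat^{op}, \abeliangroups)$, so any subobject in $A(\ccat)$ of a finitely presented object is itself finitely presented; representables are both projective and injective in $A(\ccat)$ by \cref{theorem:for_stable_cat_representables_are_injective_in_freyd_envelope}; and stability of $\ccat$ forces the sequence $y(c \times_{d} d') \to y(c) \oplus y(d') \to y(d)$ to be exact in the middle, so that the image of $y(c \times_{d} d') \to y(d')$ coincides with the image of the honest abelian-group pullback $y(c) \times_{y(d)} y(d')$.

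\textbf{Part (1).} Closure of $K$ under the local grading is immediate from axiom (4) via $y(\Sigma c) \simeq y(c)[1]$. For the Serre axioms I would verify: closure under quotients by writing the target of a surjection $k \twoheadrightarrow k'$ as $y(d)/L$ with $L \supseteq \mathrm{im}(y(c) \to y(d))$, lifting finite generators of $L$ through the projective $y(d)$ to produce $c' \to d$ in $\ccat$, and observing that the composite $c \hookrightarrow c \sqcup c' \xrightarrow{(f, f')} d$ equals $f \in \ecat$, so axiom (2) places $(f, f')$ in $\ecat$; closure under subobjects by choosing $y(b) \twoheadrightarrow k' \hookrightarrow k$, lifting to $y(b) \to y(d)$ by projectivity, forming $Q \colonequals b \times_{d} c$ in $\ccat$ whose projection to $b$ lies in $\ecat$ by axiom (3), and using the fibre-sequence identity above to verify $\coker(y(Q) \to y(b)) \simeq k'$; closure under extensions by a horseshoe-type assembly producing a lower-triangular presentation $c' \oplus c'' \to d' \oplus d''$ of the middle term whose precomposition with the inclusion $c' \hookrightarrow c' \oplus c''$ recovers $\alpha' \in \ecat$, so that axiom (2) again applies.

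\textbf{Part (2).} One checks that $\ecat_{K}$ is an epimorphism class: axioms (1) and (4) are immediate (the latter using closure of $K$ under the local grading together with $y(\Sigma c) \simeq y(c)[1]$); axiom (2) follows from the right-exact sequence $\coker(y(f)) \to \coker(y(gf)) \to \coker(y(g)) \to 0$ combined with Serre closure under quotients and extensions; axiom (3) uses the injection $\coker(y(c \times_{d} d' \to d')) \hookrightarrow \coker(y(c) \to y(d))$ coming from the fibre-sequence computation together with Serre closure under subobjects. The identity $K_{\ecat_{K}} = K$ is then straightforward, since every $k \in K$ admits some presentation $y(c) \to y(d) \to k \to 0$ whose defining map lies in $\ecat_{K}$ by definition.

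\textbf{Uniqueness (the main obstacle).} Given any other epimorphism class $\ecat'$ with $K_{\ecat'} = K$, the inclusion $\ecat' \subseteq \ecat_{K}$ is immediate from the definitions, but the converse $\ecat_{K} \subseteq \ecat'$ is the crux of the proof. Given $f \colon c \to d$ with $\coker(y(f)) = k \in K_{\ecat'}$, I would fix a witness $g \colon c_{0} \to d_{0} \in \ecat'$ with $\coker(y(g)) \cong k$ and use projectivity of $y(d)$ to lift the identity on $k$ to a map $\phi \colon d \to d_{0}$ in $\ccat$ compatible with the two projections to $k$. Pulling back $g$ along $\phi$ produces $Q \colonequals c_{0} \times_{d_{0}} d \to d$ in $\ecat'$ by axiom (3), and the fibre-sequence identity yields $\mathrm{im}(y(Q) \to y(d)) = \mathrm{im}(y(f))$ inside $A(\ccat)$. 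Projectivity of $y(Q)$ then lifts $Q \to d$ through the surjection $y(c) \twoheadrightarrow \mathrm{im}(y(f))$, exhibiting a factorisation $Q \to d = f \circ (Q \to c)$, and axiom (2) forces $f \in \ecat'$.
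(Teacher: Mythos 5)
Your overall strategy is the same as the paper's (which follows Beligiannis): Serre closure is verified by diagram manipulations in $A(\ccat)$ using projectivity of representables together with the surjection $y(c\times_{d}d')\twoheadrightarrow y(c)\times_{y(d)}y(d')$ coming from the weak-kernel property, and uniqueness is obtained by pulling a witness $c_{0}\to d_{0}\in\ecat'$ back along a lift $d\to d_{0}$ and factoring the resulting map through $f$, so that axiom (2) of \cref{definition:epimorphism_class} applies. Your quotient and subobject cases and your uniqueness argument are correct and agree with the paper's in substance. In part (2) you verify the axioms for $\ecat_{K}$ directly from the exact sequence $\coker(y(f))\to\coker(y(gf))\to\coker(y(g))\to 0$ and closure under subobjects, where the paper instead passes to the homological functor $\ccat\to A(\ccat)/K$ and quotes \cref{example:homology_surjections}; both routes work, and yours avoids any discussion of the Gabriel quotient at this stage.

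The one step that fails as written is the extension case. Precomposing the lower-triangular presentation $M=\begin{pmatrix} f' & h \\ 0 & f'' \end{pmatrix}\colon c'\oplus c''\to d'\oplus d''$ with the inclusion $c'\hookrightarrow c'\oplus c''$ does \emph{not} recover $\alpha'=f'$; it gives $\iota_{d'}\circ f'\colon c'\to d'\oplus d''$, and this composite is in general not in $\ecat$. (Take $\ecat$ to be the split surjections of \cref{example:split_surjections_an_epi_class}: a section of $\iota_{d'}\circ f'$ would force $\mathrm{id}_{d''}=0$.) So axiom (2), in the form ``$g\circ f\in\ecat$ implies $g\in\ecat$'', cannot be invoked, and your write-up never actually establishes that $M\in\ecat$, which is exactly what membership of the extension in $K$ requires. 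The claim is true, but needs a different argument, for instance the factorization
\[
\begin{pmatrix} f' & h \\ 0 & f'' \end{pmatrix}
=\bigl(\mathrm{id}_{d'}\oplus f''\bigr)\circ\begin{pmatrix} 1 & h \\ 0 & 1 \end{pmatrix}\circ\bigl(f'\oplus \mathrm{id}_{c''}\bigr),
\]
in which the middle shear is an automorphism of $d'\oplus c''$ (hence in $\ecat$ by axiom (1)) and the outer two maps are pullbacks of $f''$ and $f'$ along the projections onto $c''$ and $d'$ respectively (hence in $\ecat$ by axiom (3)); closure under composition then gives $M\in\ecat$. The paper is itself terse here, dismissing the extension case as ``an easy diagram chase like in the classical Horseshoe lemma,'' but it does not assert the incorrect identification that your justification rests on; you should replace that sentence with the decomposition above (note, by contrast, that your quotient case is fine precisely because there the target of $(f,f')$ is $d$ rather than a direct sum, so the restriction to $c$ really is $f$).
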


\begin{proof}
This is essentially a restatement of a result of Beligiannis, who proves it in the triangulated context \cite{beligiannis2000relative}. For completeness, we give the argument here. 

We first check that $K$ is closed under quotients. Suppose we have a cokernel sequence $y(c) \rightarrow y(d) \rightarrow k \rightarrow 0$ with $c \rightarrow d \in \ecat$ and let $k \rightarrow k^{\prime}$ be a quotient map.  It follows that the composite $y(d) \rightarrow k^{\prime}$ is also a surjection, and so its kernel is an image of some map $y(c^{\prime}) \rightarrow y(d)$. Using projectivity of $y(c)$, we see we can lift its map to $y(c^{\prime})$ obtaining a commutative diagram \[
\begin{tikzcd}
	{y(c)} & {y(d)} & k & 0 \\
	{y(c^{\prime})} & {y(d)} & {k^{\prime}} & 0
	\arrow[from=1-1, to=2-1]
	\arrow[from=1-1, to=1-2]
	\arrow[from=2-1, to=2-2]
	\arrow[from=1-2, to=1-3]
	\arrow[from=2-2, to=2-3]
	\arrow["id"{description}, from=1-2, to=2-2]
	\arrow[from=1-3, to=2-3]
	\arrow[from=1-4, to=2-4]
	\arrow[from=1-3, to=1-4]
	\arrow[from=2-3, to=2-4]
\end{tikzcd}
\]
the rows of which are cokernel sequences. The left square arises from a diagram
\[
c \rightarrow c^{\prime} \rightarrow d,
\]
and since the whole composite is in $\ecat$, so must be the latter map. Thus, $k^{\prime} \in K$, as needed.

We now check the case of subobjects, so let $k^{\prime} \subseteq k$. Choose a surjection $y(d^{\prime}) \rightarrow k^{\prime} \times_{k} y(d)$ and consider the diagram
\[
\begin{tikzcd}
	{y(c \times_{d} d^{\prime}) } & {y(d^{\prime})} & {k^{\prime}} & 0 \\
	{y(c)} & {y(d)} & k & 0
	\arrow[from=1-3, to=1-4]
	\arrow[from=2-3, to=2-4]
	\arrow[from=1-2, to=1-3]
	\arrow[from=1-1, to=1-2]
	\arrow[from=2-1, to=2-2]
	\arrow[from=2-2, to=2-3]
	\arrow[from=1-3, to=2-3]
	\arrow[from=1-2, to=2-2]
	\arrow[from=1-1, to=2-1]
\end{tikzcd}
\]
The arrow $y(c \times_{d} d^{\prime}) \rightarrow y(c) \times_{y(d)} y(d^{\prime})$ is rarely an isomorphism, but it is always a surjection, and it follows that the upper row is also a cokernel sequence. Since $c \times_{d} d^{\prime} \rightarrow d^{\prime}$ is a pullback of a map in $\ecat$, it is also contained in it, which is what we wanted to show.

We now deal with extensions, so suppose that
\[
0 \rightarrow k^{\prime} \rightarrow k \rightarrow k^{\prime \prime} \rightarrow 0
\]
is an exact sequence such that $k^{\prime}, k^{\prime \prime} \in K$ so can be written as cokernels of appropriate maps in $\ecat$. Since $y(d^{\prime \prime})$ is projective, the given map into $k^{\prime \prime}$ lifts to $k$. We claim that 
\[
y(c^{\prime} \oplus c^{\prime \prime}) \rightarrow y(d^{\prime} \oplus d^{\prime \prime}) \rightarrow k
\]
is a cokernel sequence. This can be checked using an easy diagram chase like in the classical Horseshoe lemma of homological algebra. 


Because $\ecat$ is closed under suspension, and the local grading on the Freyd envelope is induced by the suspension, it is clear that $K$ is closed under it. This ends the first part of the argument. 

Conversely, if $K$ is such a Serre subcategory, we claim that the class $\ecat$ of maps in $\ccat$ whose cokernels in $A(\ccat)$ belong to $K$, forms an epimorphism class. Since $K$ is assumed to be closed under the local grading, the quotient acquires a unique local grading from that of the Freyd envelope. It follows that the composite
\[
\ccat \rightarrow A(\ccat) \rightarrow A(\ccat) / K.
\]
is a homological functor, from which it readily follows that $\ecat$ satisfies the axioms of an epimorphism class by \cref{example:homology_surjections}. 

To show uniqueness, we just have to check that if $\ecat$ is an epimorphism class and $K$ is as above, then $\ecat$ is \emph{exactly} the class of maps whose formal cokernels are in $K$. Thus, let $c \rightarrow d$ be arbitrary and suppose that $k = \mathrm{coker}(y(c) \rightarrow (d))$ is in $K$. It follows that there is a different map $c^{\prime} \rightarrow d^{\prime} \in \ecat$ such that we also have $k = \mathrm{coker}(y(c^{\prime}) \rightarrow y(d^{\prime}))$. Using projectivity of representables, we can construct a commutative diagram
\[
\begin{tikzcd}
	{y(c)} & {y(d)} & {k} & 0 \\
	{y(c^{\prime})} & {y(d^{\prime})} & k & 0.
	\arrow[from=1-3, to=1-4]
	\arrow[from=2-3, to=2-4]
	\arrow[from=1-2, to=1-3]
	\arrow[from=1-1, to=1-2]
	\arrow[from=2-1, to=2-2]
	\arrow[from=2-2, to=2-3]
	\arrow["{id}", from=1-3, to=2-3]
	\arrow[from=1-2, to=2-2]
	\arrow[from=1-1, to=2-1]
\end{tikzcd}
\]
To say that the left square above is injective on cokernels means that a map into $x \to d$ in $h\ccat$ factors through $c$ if and only if the composite map into $d^{\prime}$ factors through $c'$. It follows that the map $c^{\prime} \times _{d^{\prime}} d \rightarrow d$, which is in $\ecat$, being a base-change of one which is, factors through $c$:
\[c^{\prime} \times _{d^{\prime}} d \to c \to d.\]
This by (2) of \cref{definition:epimorphism_class} implies that $c \to d$ is in $\ecat$. 
 
\end{proof}

\begin{remark} If $\ccat$ admits $\kappa$-indexed coproducts, then an analogue of  \cref{proposition:epimorphism_classes_same_as_localizing_subcategories} holds with $\kappa$-indexed sums. More precisely, epimorphism classes $\ecat$ closed under $\kappa$-small coproducts in the $\infty$-category $\Fun(\Delta^{1}, \ccat)$ are in one to one correspondence with $\kappa$-small coproduct-closed Serre subcategories of $A(\ccat)$ closed under the local grading and closed under arbitrary coproducts.

To see this, suppose that $k_{\alpha} \in K$ be a $\kappa$-small family of objects, each of which is a cokernel of a map $y(c_{\alpha}) \rightarrow y(d_{\alpha})$ in $\ecat$. We want to show that $\oplus_{\alpha} k_{\alpha} \in K$, and we have a cokernel sequence
\[
y(\oplus c_{\alpha}) \rightarrow y(\oplus d_{\alpha}) \rightarrow \oplus k_{\alpha} \rightarrow 0,
\]
since $y\colon \ccat \rightarrow A(\ccat)$ preserves coproducts by \cref{lemma:coproducts_in_the_freyd_envelope} and cokernels commute with direct sums. This is the needed presentation, since $\oplus c_{\alpha} \rightarrow \oplus d_{\alpha} \in \ecat$. 

\end{remark}

One hope might be that if $\ecat$ is an epimorphism class and $K$ the corresponding Serre subcategory of the Freyd envelope, then the composite 
\[
\ccat \rightarrow A(\ccat) \rightarrow A(\ccat) / K
\]
is an adapted homology theory. However, for the latter to hold, we need to know that $A(\ccat) / K$ has enough injectives, which is not automatic. 

Worse yet, for a general choice of $K$, there is no good reason for the quotient to be well-behaved. In fact, it is not clear that it is locally small, because categories of the form $A(\ccat)$ are usually not well-copowered - that is, it is possible to have an object with a proper class of subobjects, even for presentable $\ccat$. 

Luckily, it turns out that resolving the first issue also resolves the latter - having enough injectives guarantees that $K$ is localizing, so that $A(\ccat)/K$ can be identified with a full subcategory of $A(\ccat)$ and so is locally small. 

\begin{theorem}[Classification of Adams spectral sequences]
\label{theorem:adapted_homology_theories_correspond_to_injective_epimorphism_classes}
The following three sets of data one can associate to an idempotent-complete stable $\infty$-category $\ccat$ are equivalent:
\begin{enumerate}
    \item an epimorphism class $\ecat$ of morphisms such that $\ccat$ has enough $\ecat$-injectives,
    \item localizing subcategories $K$ of $A(\ccat)$ of such that the Gabriel quotient $A(\ccat)/K$ has enough injectives and 
    \item adapted homology theories $H\colon \ccat \rightarrow \acat$.
\end{enumerate}
\end{theorem}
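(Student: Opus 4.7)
The plan is to assemble the three equivalences using two results already at hand: Beligiannis' correspondence, pairing epimorphism classes on $\ccat$ with Serre subcategories $K \subseteq A(\ccat)$ closed under the local grading, and the characterization theorem for adapted homology theories, giving $(2) \Leftrightarrow (3)$. The new content is therefore the matching of the ``enough injectives'' conditions, which I would address as $(1) \Leftrightarrow (3)$, since the proposition identifying $\ecat$-injectives with injective lifts from $\acat$ makes this direction the most direct; the deduction of $(2)$ from $(3)$ is then immediate via the characterization theorem.

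For $(3) \Rightarrow (1)$, given an adapted $H\colon \ccat \to \acat$, I take $\ecat$ to be the class of $H$-epimorphisms. The previous proposition identifies $\ecat$-injectives with injective lifts $i_{\ccat}$ of injectives $i \in \acat$. Since $\acat$ has enough injectives by adaptedness, for any $c \in \ccat$ we embed $H(c) \hookrightarrow i$, lift $i$ to $i_{\ccat}$, and use that $H(i_{\ccat}) \simeq i$ to see that the induced map $c \to i_{\ccat}$ is $H$-monic and hence $\ecat$-monic. This gives enough $\ecat$-injectives.

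For $(1) \Rightarrow (3)$, let $\ecat$ have enough $\ecat$-injectives and let $K$ be the associated Serre subcategory from Beligiannis. The crucial observation is that for any $\ecat$-injective $i \in \ccat$, the representable $y(i) \in A(\ccat)$ is right-$K$-orthogonal: $K$ is generated as a Serre subcategory by cokernels $k = \mathrm{coker}(y(c) \to y(d))$ with $c \to d \in \ecat$, and since the representables $y(c)$, $y(d)$ are projective in $A(\ccat)$, the vanishing $\Hom_{A(\ccat)}(k, y(i)) = 0$ reduces to injectivity of $[d, i] \to [c, i]$, which is precisely $\ecat$-injectivity of $i$. Because representables in the Freyd envelope of a stable $\infty$-category are themselves injective (by Freyd's theorem, already recalled in the excerpt), the image $\tilde y(i)$ in any would-be quotient $A(\ccat)/K$ is automatically injective. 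I would then iterate the Miller $\ecat$-Adams resolution to produce, for each $c \in \ccat$, a tower whose layers are representables of $\ecat$-injectives and whose successive cokernels lie in $K$, exhibiting a $K$-local replacement of $y(c)$. Extending from representables to arbitrary $X \in A(\ccat)$ using the cokernel presentation $y(c) \to y(d) \to X \to 0$ and projectivity of representables to splice the resolutions of $y(c)$ and $y(d)$ shows that $K$ is localizing and that $A(\ccat)/K$ has enough injectives, given by the images of the $y(i)$. The characterization theorem then produces an adapted homology theory whose class of epimorphisms recovers $\ecat$ by construction.

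The main technical hurdle is the $(1) \Rightarrow (3)$ direction, specifically the promotion of $K$ from a mere Serre subcategory to a \emph{localizing} one. The $\ecat$-Adams tower handles representables essentially by construction, but extending to arbitrary objects of $A(\ccat)$ requires one to splice resolution towers of different representables compatibly. Projectivity of representables in the Freyd envelope supplies the needed lifts of comparison maps, so this is a book-keeping matter rather than a conceptual obstacle; nonetheless, it is the step on which the whole equivalence hinges. Once the localizing property is in place, the ``enough injectives'' half follows from Freyd's theorem applied to the $y(i)$, and the comparison with adapted homology theories is then entirely formal.
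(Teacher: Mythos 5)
Your proposal is correct and follows the same overall skeleton as the paper: Beligiannis' correspondence handles epimorphism classes versus Serre subcategories, \cref{theorem:characterization_of_adapted_homology_theories} handles $(2) \Leftrightarrow (3)$, and the real work is matching the two ``enough injectives'' conditions. Your $(3) \Rightarrow (1)$ direction and your verification that $\Hom_{A(\ccat)}(k, y(i)) = 0$ for $\ecat$-injective $i$ (reducing to injectivity of $[d,i] \to [c,i]$ via projectivity of representables) are exactly the paper's computations.

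The one genuine divergence is in how you establish that $K$ is \emph{localizing}. You propose to build $K$-local replacements by hand: Adams towers for representables, then a splicing argument for general $X \in A(\ccat)$, and you identify this splicing as ``the step on which the whole equivalence hinges.'' The paper avoids this work entirely. It first shows that $A(\ccat)/K$ has enough injectives --- every object of $A(\ccat)$ embeds into a representable by Freyd's theorem, and the image of an $\ecat$-monic $x \to i$ is monic in the quotient --- and then observes that the composite $\ccat \to A(\ccat) \to A(\ccat)/K$ is a homology theory whose injectives all lift to $\ccat$ (using idempotent-completeness to pass to direct summands of the $y(i)$), so that \cref{theorem:characterization_of_adapted_homology_theories} hands back a fully faithful right adjoint, i.e.\ the localizing property, for free. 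Your direct construction of local replacements does work (for a representable, $\ker(y(i^0) \to y(i^1))$ built from an $\ecat$-Adams resolution is a $K$-closed replacement of $y(c)$, and the general case reduces to injective copresentations in the quotient), but it is strictly more labour than needed, and the splicing for general $X$ is delicate precisely because the localization functor is only left exact, so it does not commute with the cokernel presentation you invoke. One smaller gloss: the claim that $y(i)$ is ``automatically'' injective in the quotient because it is injective in $A(\ccat)$ and right-orthogonal to $K$ is true, but it does require the computation $\Hom_{A(\ccat)/K}(a, y(i)) \simeq \Hom_{A(\ccat)}(a, y(i))$ via the colimit formula for hom-sets in a Gabriel quotient, which the paper spells out and which also disposes of the local-smallness worry for $A(\ccat)/K$ before the localizing property is known.
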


\begin{proof}
The bijection between $(2)$ and $(3)$ is provided by the Gabriel quotient construction and by \cref{theorem:characterization_of_adapted_homology_theories}. Since we also know that epimorphism classes on $\ccat$ correspond to Serre classes on $A(\ccat)$, by \cref{proposition:epimorphism_classes_same_as_localizing_subcategories}, to show the bijection between $(1)$ and $(2)$ we just have to check that the ``enough injectives'' conditions match up.

First assume that $\ccat$ has enough $\ecat$-injectives and let $i \in \ccat$ be one. For ease of reading, we will not distinguish between an object of $\ccat$ and its image in $A(\ccat)$. We then see that for any $k \in K$, say $k = \mathrm{coker}(c \rightarrow d)$ for a map $c \rightarrow d$ in $\ecat$, we have
\[
\Hom_{A(\ccat)}(k, i) \simeq \mathrm{ker}([d, i] \rightarrow [c, i]) = 0,
\]
using $\ecat$-injectivity condition.

We claim that this implies that not only is $i$ injective in $A(\ccat)$, which is true for any representable, but the same is true for its image in $A(\ccat) / K$. We first check that for any $a \in A(\ccat)$, 
\begin{equation}
\label{equation:maps_into_injective_same_in_quotient}
\Hom_{A(\ccat)/K}(a, i) \simeq \Hom_{A(\ccat)}(a, i).
\end{equation}
By the direct construction of the Gabriel quotient, the left hand side is isomorphic to
\[
\varinjlim \Hom_{A(\ccat)}(a^{\prime}, i^{\prime}),
\]
where the colimit is taken over those subobjects $a^{\prime} \hookrightarrow a$ such that the quotient is in $K$, and those quotients $i \twoheadrightarrow i^{\prime}$ such that the kernel is in $K$. Since $i$ does not admit non-zero maps out of objects in $K$, there are no quotients of the latter type except isomorphisms. Similarly, since $i$ is injective not admitting maps out of objects of $K$, we see that $\Hom_{A(\ccat)}(a, i) \rightarrow \Hom_{A(\ccat)}(a^{\prime}, i)$ is an isomorphism for any such $a^{\prime}$. This shows (\ref{equation:maps_into_injective_same_in_quotient}).

We now check that $\Hom_{A(\ccat)/K}(-, i)$ takes monomorphisms to epimorphisms. Any map in the Gabriel quotient is isomorphic to an image of a map $a \rightarrow b$, and it is monic in the quotient if and only if its kernel $k$ in $A(\ccat)$ is in $K$. Then, we have an exact sequence
\[
\Hom_{A(\ccat)}(b, i) \rightarrow \Hom_{A(\ccat)}(a, i) \rightarrow \Hom_{A(\ccat)}(k, i) \rightarrow 0,
\]
where the third term is zero. Since we have checked these maps coincide with those in the quotient, we see that the image of $i$ is injective in $A(\ccat)/K$, as claimed.

We now claim that $A(\ccat)/K$ has enough injectives. To see this, observe that any object of $A(\ccat)$ is a subobject of a representable $x$. The same is then true in the quotient, so it is enough to show that any image of a representable embeds into an injective of $A(\ccat)/K$. If $\ccat$ has enough $\ecat$-injectives, we have an $\ecat$-monic map $x \rightarrow i$ into an $\ecat$-injective. Then, the image of this map is monic in the quotient, which is what we wanted. 

Consider the composite homology theory 
\[
\ccat \rightarrow A(\ccat) \rightarrow A(\ccat)/K,
\]
if $i \in \ccat$ is an $\ecat$-injective, then (\ref{equation:maps_into_injective_same_in_quotient}) implies that its image in the quotient can be lifted to an associated injective of $\ccat$, which will be $i$ itself. Since any object of the quotient embeds into one of the form $y(i)$, by passing to direct summands, we deduce that any injective of the quotient can be lifted to $\ccat$. Then, \cref{theorem:characterization_of_adapted_homology_theories} implies that the quotient functor has a fully-faithful right adjoint; in other words, $K$ is localizing.

Conversely, suppose we have a localizing subcategory $K$ such that $A(\ccat)/K$ has enough injectives. Let $c \in \ccat$ be an object and choose a monic $c \rightarrow i$ into an injective in the quotient. Since the projection is exact, the right adjoint $R\colon A(\ccat)/K \hookrightarrow A(\ccat)$ preserves injectives, and the chosen map is adjoint to one of the form $c \rightarrow Ri$, where the target is an injective of $A(\ccat)$. Thus, the latter must be representable by \cref{remark:injectives_are_representable_when_c_idemcomplete}, and one can verify immediately that the corresponding homotopy class is the needed $\ecat$-monic map into an $\ecat$-injective.
\end{proof}

\begin{corollary}
\label{corollary:millers_adams_sseq_has_e2_term_given_by_exts}
Let $\ccat$ be an idempotent complete stable $\infty$-category. Then, any epimorphism class $\ecat$ such that $\ccat$ has enough $\ecat$-injectives is the class of $H$-epimorphisms for a unique adapted homology theory $H\colon \ccat \rightarrow \acat$. 

In particular, if $c, d \in \ccat$ are any two objects, then in the $\ecat$-Adams spectral sequence obtained by applying $[c, -]$ to an $\ecat$-Adams resolution of $d$ we have
\[
E^{s, t}_{2} \simeq \Ext_{\acat}^{s, t}(H(c), H(d)).
\]
That is, all Miller's Adams spectral sequences have $E_{2}$-pages given by $\Ext$-groups in appropriate abelian category with enough injectives. 
\end{corollary}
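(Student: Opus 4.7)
The plan is to deduce this corollary directly from the classification theorem \ref{theorem:adapted_homology_theories_correspond_to_injective_epimorphism_classes} together with the identification of injectives in \ref{proposition:injectives_associated_to_epis_and_homology_theory_are_the_same}. Essentially all the real work has already been done; what remains is to assemble these pieces to recognize Miller's spectral sequence as an instance of the injective-based Adams spectral sequence of \ref{construction:adams_spectral_sequence_associated_to_a_homology_theory}.

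First, I would invoke \ref{theorem:adapted_homology_theories_correspond_to_injective_epimorphism_classes} to produce, from the given epimorphism class $\ecat$ with enough $\ecat$-injectives, the corresponding localizing subcategory $K \subseteq A(\ccat)$ whose Gabriel quotient has enough injectives, and then the corresponding adapted homology theory $H\colon \ccat \to \acat$, where $\acat \simeq A(\ccat)/K$. The uniqueness clause in the classification theorem shows that $H$ is the unique such homology theory, and by construction (together with \ref{remark:adapted_homology_theory_determined_by_homology_epimorphisms}) its class of $H$-epimorphisms is precisely $\ecat$.

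Next, I would compare the two Adams towers. By \ref{proposition:injectives_associated_to_epis_and_homology_theory_are_the_same}, the $\ecat$-injective objects of $\ccat$ agree, up to equivalence, with the injective lifts $i_{\ccat}$ of injective objects $i \in \acat$. Moreover, since $H$-monomorphisms coincide with $\ecat$-monomorphisms, the construction of an $\ecat$-Adams resolution of $d$ as in \ref{Miller} is exactly of the form of the $H$-Adams resolution of \ref{construction:adams_spectral_sequence_associated_to_a_homology_theory}: each stage embeds $d^{k}$ via an $H$-monic map into an injective lift $i_{\ccat}^{k}$, and by adaptedness, applying $H$ to the top row produces an injective resolution of $H(d)$ in $\acat$. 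As standard comparison arguments show that any two such resolutions yield isomorphic spectral sequences from the $E_{2}$-page onwards, the $\ecat$-Adams spectral sequence of \ref{Miller} is isomorphic, from $E_{2}$ on, to the $H$-Adams spectral sequence of \ref{construction:adams_spectral_sequence_associated_to_a_homology_theory}.

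The latter has $E_{2}^{s,t} \simeq \Ext_{\acat}^{s,t}(H(c), H(d))$ by construction, yielding the desired identification. There is no real obstacle here; the only delicate point is to make sure that the Adams resolution built from any $\ecat$-monic into an $\ecat$-injective really does descend to an injective resolution of $H(d)$ in $\acat$, which is immediate from adaptedness and the fact that $H$ sends $\ecat$-injectives to injectives of $\acat$ with the structure map $H(i_{\ccat}) \to i$ an isomorphism.
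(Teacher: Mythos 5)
Your proposal is correct and follows exactly the route the paper intends: the first claim is the combination of \cref{theorem:adapted_homology_theories_correspond_to_injective_epimorphism_classes} with \cref{remark:adapted_homology_theory_determined_by_homology_epimorphisms}, and the identification of the $E_2$-page then reduces, via \cref{proposition:injectives_associated_to_epis_and_homology_theory_are_the_same}, to the observation that the $\ecat$-Adams resolution of \cref{Miller} and the $H$-Adams resolution of \cref{construction:adams_spectral_sequence_associated_to_a_homology_theory} resolve $d$ by the same class of objects. No gaps.
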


\begin{remark}
\label{remark:characterization_of_the_image_of_right_adjoint}
In the context of \cref{theorem:adapted_homology_theories_correspond_to_injective_epimorphism_classes}, it is standard that the image of the right adjoint $A(\ccat)/K \hookrightarrow A(\ccat)$ can be characterized as those objects $x$ such that $\Ext^{i}_{A(\ccat)}(k, x) = 0$ for any $k \in K$ and $i = 0, 1$.
\end{remark}

\subsection{Digression: Adapted factorization}
\label{subsection:digression_adapted_factorization}

As we observed in \cref{corollary:millers_adams_sseq_has_e2_term_given_by_exts}, the fact that any epimorphism class with enough injectives is the class of homology epimorphisms for some adapted homology theory $H\colon \ccat \rightarrow \acat$ is useful in that it allows one to use homological algebra to compute the $E_{2}$-page of the relevant spectral sequence. To make the best use of this, we would like to have as explicit description of $\acat$ as possible. 

In this short section, we will give one such description in the special case where $\ecat$ is given by homology epimorphisms, but the homology theory in question is not necessarily adapted.

\begin{theorem}[Adapted factorization]
\label{theorem:adapted_factorization_of_homology_theories}
Let $H\colon \ccat \rightarrow \acat$ be a homology theory such that $\acat$ has enough injectives which lift to injectives of $\ccat$. Then, there is a unique factorization
\[
\begin{tikzcd}
	{\ccat} && {\acat} \\
	& {\acat^{\star}}
	\arrow["{H}", from=1-1, to=1-3]
	\arrow["{H^{\star}}"', from=1-1, to=2-2]
	\arrow["{U}"', from=2-2, to=1-3]
\end{tikzcd},
\]
where $H^{\star}$ is an adapted homology theory and $U$ is a comonadic exact left adjoint. In particular, $\acat^{\star}$ is the category of comodules over a certain left exact comonad $C$ over $\acat$. 
\end{theorem}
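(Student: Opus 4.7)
The plan is to exploit the universal property of the Freyd envelope (\cref{theorem:homological_universal_property_of_freyd_envelope}) to convert the problem into an assertion about exact functors of abelian categories, and then produce the factorization by applying comonadic decomposition to the induced exact functor. By universality, $H$ extends uniquely to an exact $L\colon A(\ccat) \rightarrow \acat$ with $L \circ y \simeq H$. Since every injective of $\acat$ admits a lift, \cref{lemma:functor_induced_by_homology_theory_a_left_adjoint_when_injectives_lift} furnishes a right adjoint $R\colon \acat \rightarrow A(\ccat)$. I set $C \colonequals LR$, a comonad on $\acat$. Left exactness of $C$ follows because $L$ is exact and $R$, being a right adjoint, preserves all limits; in particular $\acat^{\star} \colonequals \Comod_{C}(\acat)$ is abelian, and the forgetful functor $U\colon \acat^{\star} \rightarrow \acat$ is exact and left adjoint to the cofree comodule functor $a \mapsto (Ca, \Delta_{a})$.

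Next I would check that the comonadic comparison functor $L^{\star}\colon A(\ccat) \rightarrow \acat^{\star}$, sending $x$ to the comodule $(Lx, L\eta_{x})$, satisfies $U \circ L^{\star} \simeq L$ and admits a fully faithful right adjoint. The latter is exactly the content of \cref{theorem:properties_of_comonadic_factorization_of_an_exact_left_adjoint} together with \cref{proposition:comodules_form_a_gabriel_quotient}, which identify $\acat^{\star}$ with the Gabriel quotient $A(\ccat)/\ker(L)$. Setting $H^{\star} \colonequals L^{\star} \circ y$, we obtain a homology theory with $U \circ H^{\star} \simeq H$. To invoke \cref{theorem:characterization_of_adapted_homology_theories} and conclude that $H^{\star}$ is adapted, I must supply enough injectives in $\acat^{\star}$: for any injective $i \in \acat$, exactness of $U$ implies that the cofree comodule $(Ci, \Delta_{i})$ is injective, and every $(a,\gamma) \in \acat^{\star}$ embeds into $(Ca, \Delta_{a})$ via $\gamma$, which is split monic by counitality. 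Hence $H^{\star}$ is adapted and the existence half is complete.

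For uniqueness, suppose $H \simeq U' \circ H'$ is another such factorization with $H'$ adapted and $U'$ a comonadic exact left adjoint. Passing through the Freyd envelope gives $L \simeq U' \circ L'$, where by \cref{theorem:characterization_of_adapted_homology_theories} $L'$ has a fully faithful right adjoint $R'$; writing $V'$ for the right adjoint of $U'$, one deduces that the right adjoint of $L$ is $R \simeq R' \circ V'$ and hence
\[
C = LR \simeq U' L' R' V' \simeq U' V',
\]
so the comonad induced by $U'$ coincides canonically with $C$. Comonadicity of $U'$ then identifies $\acat'$ with $\Comod_{C}(\acat) = \acat^{\star}$ compatibly with $H^{\star}$, giving the required uniqueness.

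I expect the chief technical obstacle to be the fully faithful right adjoint claim for $L^{\star}$, i.e.\ the comonadic (co)reflection identifying $\acat^{\star}$ with a Gabriel quotient of the Freyd envelope. This is the place where one must cleanly relate the Amitsur-type cobar resolution of objects of $A(\ccat)$ to the comodule structure, and it is the step where earlier structural results about comonadic factorizations are doing the real work; once it is in hand, the adaptedness of $H^{\star}$ and the uniqueness of the decomposition follow by routine bookkeeping through \cref{theorem:characterization_of_adapted_homology_theories}.
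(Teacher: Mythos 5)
Your proposal is correct and follows essentially the same route as the paper: reduce via \cref{theorem:characterization_of_adapted_homology_theories} to factoring the induced exact functor $L\colon A(\ccat)\to\acat$, set $C = LR$ and $\acat^{\star} = \Comod_{C}(\acat)$, and invoke \cref{theorem:properties_of_comonadic_factorization_of_an_exact_left_adjoint}, \cref{proposition:comodules_form_a_gabriel_quotient} and \cref{proposition:category_of_comodules_enough_injectives} (your explicit uniqueness argument, which the paper leaves implicit, is also fine). The one step to tighten is enough injectives: the cofree comodule $(Ca,\Delta_{a})$ need not be injective for general $a$, so you must first embed $a\hookrightarrow i$ into an injective of $\acat$ and compose the split mono $a\to Ca$ with the mono $Ca\to Ci$, exactly as in the proof of \cref{proposition:category_of_comodules_enough_injectives}.
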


\begin{proof}
By \cref{theorem:characterization_of_adapted_homology_theories}, to give a factorization as above is equivalent to giving a factorization 
\[
\begin{tikzcd}
	{A(\ccat)} && {\acat} \\
	& {\acat^{\star}}
	\arrow["{L}", from=1-1, to=1-3]
	\arrow["{L^{\star}}"', from=1-1, to=2-2]
	\arrow["{U}"', from=2-2, to=1-3]
\end{tikzcd},
\]
where $L^{\star}$ is a quotient by a localizing subcategory, $U$ is a comonadic exact left adjoint, and $\acat^{\star}$ has enough injectives. By \cref{theorem:properties_of_comonadic_factorization_of_an_exact_left_adjoint}, the category of comodules over the comonad $C \colonequals LR$ has the first two properties. It has enough injectives 
by \cref{proposition:category_of_comodules_enough_injectives}. 
\end{proof}

\begin{example}
\label{example:adams_type_factorization}
Let $R$ be a ring spectrum, so that we have a homology theory $R_{*}\colon \spectra \rightarrow \Mod_{R_{*}}$ where the target is Grothendieck, in particular has enough injectives. 

If $R$ is Adams-type, then this factors through the category of $R_{*}R$-comodules, and the uniqueness of the adapted factorization is exactly
\begin{center}
\begin{tikzcd}
	{\spectra} && {\Mod_{R_{*}}} \\
	& {\Comod_{R_*R}}
	\arrow["{L}", from=1-1, to=1-3]
	\arrow["{L^{\star}}"', from=1-1, to=2-2]
	\arrow["{U}"', from=2-2, to=1-3]
\end{tikzcd}.
\end{center}
Indeed, $U$ is exact and conservative, so the two homology theories determine the same class of epimorphisms, and $L^{\star}$ is adapted by a theorem of Devinatz \cite{devinatz1997morava}[1.5]. 
\end{example}

\begin{warning}
In \cref{example:adams_type_factorization}, the relevant comonad is $R_{*}R \otimes_{R_{*}} -$, which is also right exact. This is not always the case for factorizations obtained by \cref{theorem:adapted_factorization_of_homology_theories}, and is one advantage of having the Adams-type condition which guarantees that the universal abelian category we are looking for is exactly the category $\Comod_{R_*R}$. 
\end{warning}

\begin{remark}
As one possible application, \cref{theorem:adapted_factorization_of_homology_theories} is used by Burklund and the second author to describe the $E_2$-term of various Adams spectral sequences based on a non-flat homology theory in terms of quiver representations \cite{burklundpstragowski2023}. 
\end{remark}

\begin{remark}
Ideas similar to our proof of \cref{theorem:characterization_of_adapted_homology_theories} appear in the recent work of Balmer and Cameron, who use comonads to describe certain homological residue fields \cite{balmer2021computing}. In particular, applying \cref{example:adams_type_factorization} to the case of Morava $K$-theories, we recover their calculation of the homological residue fields of $\spectra^{\omega}$ as being equivalent to $K(n)_{*}K(n)$-comodules. 
\end{remark}

\section{Prestable Freyd envelope}
\label{section:prestable_freyd_envelope}

In this section, we will study the $\infty$-categorical generalization of the classical Freyd envelope discussed in \S\ref{subsection:recollections_on_freyd_envelope}. This will be a prestable $\infty$-category obtained from a given additive $\infty$-category by freely attaching certain kinds of colimits. 

\subsection{Almost perfect presheaves}
\label{subsection:almost_perfect_presheaves}

Recall from \cref{remark:freyd_envelope_obtained_by_adjoining_cokernels} that the classical Freyd envelope $A(\ccat)$ is obtained from the homotopy category $h\ccat$ by formally adjoining cokernels. In an additive category, any cokernel sequence
\[
a \rightarrow b \rightarrow c
\]
can be rewritten as a reflexive coequalizer, namely
\[
a \oplus b \rightrightarrows b \rightarrow c.
\]
Using this, one can show that the classical Freyd envelope can be described as the category obtained from $h\ccat$ by formally adjoining reflexive coequalizers. 

The $\infty$-categorical analogue of a reflexive coequalizers is given by a geometric realization of a simplicial object, which suggests that to obtain an $\infty$-categorical analogue of the Freyd envelope, we should enlarge a given $\ccat$ by freely adjoining geometric realizations. 

Alternatively, by \cref{proposition:if_c_has_finite_limits_freyd_envelope_is_an_abelian_subcat}, if $\ccat$ admits finite limits, then $A(\ccat)$ can be described as the smallest subcategory of $\Fun_{\Sigma}(\ccat^{op}, \sets)$ containing all the representables and closed under finite colimits. This suggests a different generalization of the Freyd envelope, where we take the same definition, but replace sets by the $\infty$-category of spaces. 

As it turns out, these two approaches do not in general coincide. However, they \emph{both} lead to important $\infty$-categories, each interesting in their own right. The two possible results are closely related, and we will discuss both, starting with the former idea of freely attaching geometric realizations. 

\begin{definition}
Let $\ccat$ be an additive $\infty$-category. We say a product-preserving presheaf $X\colon \ccat^{op} \rightarrow \spaces$ of spaces is \emph{almost perfect} if it belongs to the smallest subcategory of $\Fun_{\Sigma}(\ccat^{op}, \spaces)$ containing the representables and closed under geometric realizations.
\end{definition}

\begin{definition}
\label{definition:prestable_freyd_envelope}
The \emph{prestable Freyd envelope} $A_{\infty}(\ccat)$ is the $\infty$-category of almost perfect presheaves on $\ccat$. It is an additive subcategory of $\Fun_{\Sigma}(\ccat^{op}, \spaces)$. 
\end{definition}

\begin{remark}
Since geometric realizations commute with products, we can alternatively describe $A_{\infty}(\ccat)$ as the smallest subcategory of the full presheaf $\infty$-category $\Fun(\ccat^{op}, \spaces)$ which contains representables and is closed under geometric realizations. 
\end{remark}

\begin{remark}
\label{remark:prestable_envelope_obtained_by_freely_adding_geometric_realizations}
By construction, the $\infty$-category $A_{\infty}(\ccat)$, has the following universal property: for any $\infty$-category $\dcat$ admitting geometric realizations, restriction along the Yoneda embedding gives an equivalence
\[
\Fun_{\sigma}(A_{\infty}(\ccat), \dcat) \simeq \Fun(\ccat, \dcat)
\]
between geometric-realization preserving functors out of $A_{\infty}(\ccat)$, and all functors out of $\ccat$, see \cite{higher_topos_theory}[5.3.5.9]. The inverse to the above equivalence is given by left Kan extension.
\end{remark}

\begin{remark}
It is implicit in our account that $\Fun_{\Sigma}(\ccat^{op}, \spaces)$, and its direct-product closed subcategory $A_{\infty}(\ccat)$, is additive. To see this, observe that since $\ccat$ is additive, any product-preserving presheaf of spaces has a canonical lift to a presheaf of $\mathbf{E}_{\infty}$-spaces and hence can be identified with a presheaf of connective spectra. 
\end{remark}

It is only understandable that a reader would feel unease at considering functor $\infty$-categories out of a non-small source, which does often lead to set-theoretic difficulties. We first show that this is not the case here, and that $A_{\infty}(\ccat)$ inherits good finiteness properties from $\ccat$. 

\begin{proposition}
The $\infty$-category $A_{\infty}(\ccat)$ is locally small.
\end{proposition}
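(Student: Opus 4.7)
The plan is to introduce an auxiliary full subcategory $\mathcal{L}(\ccat) \subseteq \Fun(\ccat^{op}, \spaces)$ consisting of those presheaves $X$ with the property that $\Map(X, Y)$ is a small space for every $Y \in A_{\infty}(\ccat)$, and then to show that $\mathcal{L}(\ccat)$ already contains the generators and closure operations that define $A_{\infty}(\ccat)$.

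First I would check that every representable $y(c)$ lies in $\mathcal{L}(\ccat)$. This is immediate from the Yoneda lemma, since for any $Y \in A_{\infty}(\ccat)$ we have $\Map(y(c), Y) \simeq Y(c)$, and $Y(c)$ is a small space because $Y$ is a presheaf of (small) spaces on $\ccat$, which was assumed to have small mapping spaces. Next I would verify that $\mathcal{L}(\ccat)$ is closed under geometric realizations. Suppose $X \simeq |X_{\bullet}|$ is the realization of a simplicial object with each $X_{n} \in \mathcal{L}(\ccat)$. Then for any $Y \in A_{\infty}(\ccat)$,
\[
\Map(X, Y) \;\simeq\; \Map(\mathop{\mathrm{colim}}_{\Delta^{op}} X_{\bullet}, Y) \;\simeq\; \lim_{\Delta} \Map(X_{n}, Y),
\]
which is a small limit (indexed by the small category $\Delta$) of small spaces, hence itself small.

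By the inductive/minimality description of $A_{\infty}(\ccat)$ as the smallest subcategory of $\Fun(\ccat^{op}, \spaces)$ containing representables and closed under geometric realizations, we conclude $A_{\infty}(\ccat) \subseteq \mathcal{L}(\ccat)$. Unpacking the definition, this is precisely the statement that $\Map_{A_{\infty}(\ccat)}(X, Y)$ is small for every pair of almost perfect presheaves $X, Y$, proving local smallness.

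There is no serious obstacle; the only thing to be a little careful about is that although $\ccat$ itself may be large, so that $\Fun(\ccat^{op}, \spaces)$ is a priori not locally small, the argument never requires mapping spaces into a generic presheaf: the target is always restricted to $A_{\infty}(\ccat)$, which allows the induction to go through cleanly using only that mapping out of a colimit is a limit of mapping spaces.
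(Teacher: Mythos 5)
Your proof is correct and follows essentially the same argument as the paper: both define the class of presheaves $X$ for which the relevant mapping spaces are small, observe it contains representables by the Yoneda lemma, and note it is closed under geometric realizations because mapping out of a colimit is a small limit of small spaces. The only (immaterial) difference is that you restrict the targets $Y$ to $A_{\infty}(\ccat)$, whereas the paper allows $Y$ to range over all of $\Fun_{\Sigma}(\ccat^{op},\spaces)$; either choice works since any $Y$ valued in small spaces has $\Map(y(c),Y)\simeq Y(c)$ small.
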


\begin{proof}
The $\infty$-category of those presheaves $X \in \Fun_{\Sigma}(\ccat^{op}, \spaces)$, such that for an arbitrary other $Y$, the mapping space $\map(X, Y)$ is small, is closed under geometric realization, since small spaces are closed under totalizations. Since it also contains all the representables, by the Yoneda lemma, it necessarily contains all of $A_{\infty}(\ccat)$. 
\end{proof}

To give a better characterization of being almost perfect, we need to introduce some notation.
\begin{notation}
The Yoneda embedding of $\ccat$ factors through almost perfect presheaves, and we will denote this factorization by $\nu\colon \ccat \rightarrow A_{\infty}(\ccat)$. Explicitly, we have 
\[
\nu(c)(d) \colonequals \Map_{\ccat}(d, c).
\]
We will keep using $y$ to denote the Yoneda embedding in the homotopy category, so that 
\[
y(c)(d) \colonequals \Map_{h\ccat}(d, c) \simeq [d, c].
\]
Notice that $y(c) \colonequals \tau_{\leq 0} \nu(c)$, the Postnikov $0$-truncation in the $\infty$-category $\Fun_{\Sigma}(\ccat^{op}, \spaces)$.
\end{notation}

\begin{remark}
The $\nu$ notation for the Yoneda embedding was first used in the context of the $\infty$-category of synthetic spectra of \cite{pstrkagowski2018synthetic}.
\end{remark}

\begin{proposition}
\label{proposition:characterization_of_almost_perfect_presheaves}
Let $\ccat$ be an additive $\infty$-category which admits finite limits. Then, the following conditions on a a product-preserving presheaf $X\colon \ccat^{op} \rightarrow \spaces$ are equivalent: 

\begin{enumerate}
    \item $X$ can be written as a geometric realization $X \simeq | \nu(c_{\bullet})|$ of a simplicial diagram of representables,
    \item $X$ is almost perfect, 
    \item $\pi_{k}X$ is finitely presented presheaf of abelian groups; that is, $\pi_{k}X \in A(\ccat)$, for every $k \geq 0$. 
\end{enumerate}
\end{proposition}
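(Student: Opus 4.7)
The plan is to prove the cyclic chain of implications $(1) \Rightarrow (2) \Rightarrow (3) \Rightarrow (1)$; the first two are routine closure arguments, while the last is the substantive step.

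The implication $(1) \Rightarrow (2)$ is immediate from the definition of almost perfect. For $(2) \Rightarrow (1)$, the class of presheaves expressible as $|\nu(c_\bullet)|$ trivially contains all representables (take $c_\bullet$ constant) and is closed under geometric realizations by bisimplicial diagonalization: if each $X_n \simeq |\nu(c_{n,\bullet})|$, then $|X_\bullet|$ is the realization of the bisimplicial presheaf $\nu(c_{\bullet,\bullet})$, which coincides with the realization of the diagonal simplicial object $n \mapsto \nu(c_{n,n})$. For $(2) \Rightarrow (3)$, let $\dcat$ denote the class of $X$ satisfying (3); I check that $\dcat$ contains representables and is closed under geometric realizations. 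For representables, using that $\ccat$ admits finite limits we can form $\Omega^k c = 0 \times_c 0 \times_c \cdots \times_c 0$, giving $\pi_k \nu(c)(d) \simeq \pi_k \Map_\ccat(d,c) \simeq [d, \Omega^k c]$ and hence $\pi_k \nu(c) \simeq y(\Omega^k c) \in A(\ccat)$. For closure under geometric realizations, I use the pointwise spectral sequence for simplicial connective spectra
\[
E^2_{p,q} = \pi_p(\pi_q X_\bullet) \Longrightarrow \pi_{p+q} |X_\bullet|.
\]
Each $\pi_q X_n$ lies in $A(\ccat)$, and since $A(\ccat)$ is an abelian subcategory of presheaves of abelian groups by \cref{proposition:if_c_has_finite_limits_freyd_envelope_is_an_abelian_subcat}, the Moore homologies $\pi_p(\pi_q X_\bullet)$ also lie in $A(\ccat)$. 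First-quadrant convergence and closure of $A(\ccat)$ under extensions then yield $\pi_n |X_\bullet| \in A(\ccat)$.

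The substantive step is $(3) \Rightarrow (1)$. Given $X$ with $\pi_k X \in A(\ccat)$ for all $k$, I construct a simplicial object $c_\bullet$ in $\ccat$ with $|\nu(c_\bullet)| \simeq X$ via a bar-resolution style argument. Because $\pi_0 X$ is finitely presented, there is a surjection $y(c_0) \twoheadrightarrow \pi_0 X$; by the Yoneda identification $\pi_0 \Map(\nu(c_0), X) \simeq (\pi_0 X)(c_0)$ this lifts to $\nu(c_0) \to X$ inducing a surjection on $\pi_0$. The fiber $F_0 = \fib(\nu(c_0) \to X)$, computed pointwise in $\Fun_\Sigma(\ccat^{op}, \spaces)$, is then connective, and its homotopy groups again lie in $A(\ccat)$ by the long exact sequence and abelianness of $A(\ccat)$. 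Iterating produces maps $\nu(c_{n+1}) \to F_n$ and fibers $F_{n+1}$, and the standard bar-resolution recipe in a prestable $\infty$-category with enough projectives assembles these choices into an augmented simplicial object $\nu(c_\bullet) \to X$.

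The main obstacle is verifying that this augmented simplicial object indeed realizes $X$. The essential input is projectivity of the representables: by Yoneda $\Map(\nu(c), Y) \simeq Y(c)$, so $\Map(\nu(c), -)$ commutes with geometric realizations (which are pointwise). Combined with the spectral sequence above and the vanishing of the homotopy of the tower of fibers in the limit, this identifies $|\nu(c_\bullet)| \simeq X$. Converting the iterative ``kill homotopy'' data into a bona fide augmented simplicial resolution is the most technical part of the argument and amounts to the prestable analogue of the Dold--Kan correspondence for the projectively generated prestable $\infty$-category at hand.
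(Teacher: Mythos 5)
Your argument follows essentially the same route as the paper: $(1)\Rightarrow(2)$ is definitional, $(2)\Rightarrow(3)$ is the realization spectral sequence together with the fact that $A(\ccat)$ is an abelian subcategory closed under extensions (\cref{proposition:if_c_has_finite_limits_freyd_envelope_is_an_abelian_subcat}), and $(3)\Rightarrow(1)$ is an inductive resolution by representables using finite presentation of $\pi_0$ and closure of the class in $(3)$ under fibres. (Your separate $(2)\Rightarrow(1)$ via bisimplicial diagonals is redundant given the cyclic chain, and note that assembling the individual resolutions $c_{n,\bullet}$ into a genuine bisimplicial object requires functoriality you have not arranged; it is harmless only because you never need this implication.)

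The one place where your write-up falls short of a proof is exactly the step you flag at the end of $(3)\Rightarrow(1)$, and your diagnosis of what is needed there is off. Converting the tower of fibres $F_0, F_1, \ldots$ into an augmented simplicial object is not a prestable Dold--Kan issue; Dold--Kan governs simplicial objects in an \emph{abelian} (or stable) category and does not produce the resolution here. The correct mechanism, and the one the paper uses, is the hypercovering existence theorem (the dual of \cite{higher_algebra}[7.2.1.4]): one builds a levelwise hypercover $\nu(c_\bullet)\to X$ by choosing at each simplicial level a $\pi_0$-surjection from a representable onto the relevant matching object, and the inductive hypothesis that these matching objects again have finitely presented homotopy groups is where closure of the class $(3)$ under finite limits is actually used. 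Convergence is then not an ad hoc statement about ``vanishing of the homotopy of the tower of fibers'' but the standard fact that a hypercover of connective spectra is a colimit diagram (the map from the $n$-skeleton of the realization to $X$ becomes arbitrarily connective). With that substitution your argument is the paper's proof.
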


\begin{proof}
$(1) \Rightarrow (2)$ is clear. To see $(2) \Rightarrow (3)$, observe that for every representable we have
\[
(\pi_{k} \nu(c))(d) \simeq \pi_{k} \Map(d, c) \simeq [d, \Omega^{k} c]
\]
which is finitely presented. Thus, it is enough to check that the last condition is closed under geometric realizations, so suppose that $X \simeq | X_{\bullet} |$ where all of $X_{\bullet}$ have finitely-presented homotopy groups. We have a convergent first quadrant spectral sequence
\[
H_{s} \pi_{t} X_{\bullet} \Rightarrow \pi_{s+t} X,
\]
and the left hand side is finitely presented in each degree, since finitely presented presheaves form an abelian subcategory of $\Fun_{\Sigma}(\ccat^{op}, \abeliangroups)$ by \cref{proposition:if_c_has_finite_limits_freyd_envelope_is_an_abelian_subcat}. Since they are also closed under extensions, we deduce that $\pi_{s+t}X$ is also finitely presented, as needed. 

We now move on to $(3) \Rightarrow (1)$. Observe that since the classical Freyd envelope is an abelian subcategory, presheaves with finitely presented homotopy groups are closed under fibres, cofibres and products in $\Fun_{\Sigma}(\ccat^{op}, \spaces)$, and so under all finite limits and colimits. 

Now suppose that $X$ has finitely presented homotopy, so that in particular there is a surjection $y(c) \rightarrow \pi_{0}X$ for some $c \in \ccat$. This determines a point of $\pi_{0}X(c)$, and so a homotopy class of maps $\nu(c) \rightarrow X$ which is $\pi_{0}$-surjective. Proceeding inductively, and using that all latching objects will again have finitely presented homotopy groups, we see that $X$ admits a levelwise-hypercover by representables in $\Fun_{\Sigma}(\ccat^{op}, \spaces)$, see \cite{higher_algebra}[7.2.1.4]. All hypercovers of connective spectra are colimit diagrams, ending the argument.
\end{proof}

\begin{corollary}
\label{corollary:almost_perfect_presheaves_closed_under_finite_limits_colimits_truncations}
Suppose that $\ccat$ is an additive $\infty$-category which admits finite limits. Then, the prestable Freyd envelope $A_{\infty}(\ccat)$ is closed under finite limits, finite colimits, extensions and Postnikov truncations in $\Fun_{\Sigma}(\ccat^{op}, \spaces)$. In particular, it is prestable. 
\end{corollary}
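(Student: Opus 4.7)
The plan is to bootstrap everything off the third characterization in \cref{proposition:characterization_of_almost_perfect_presheaves}: a product-preserving presheaf $X$ is almost perfect precisely when $\pi_k X$ lies in the classical Freyd envelope $A(\ccat)$ for every $k \geq 0$. Since $A(\ccat)$ is an abelian subcategory of $\Fun_{\Sigma}(\ccat^{op}, \abeliangroups)$ by \cref{proposition:if_c_has_finite_limits_freyd_envelope_is_an_abelian_subcat} (using that $h\ccat$ has weak kernels because $\ccat$ has finite limits), all closure properties reduce to statements about homotopy groups.

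First I would handle Postnikov truncations: since $\pi_k(\tau_{\leq n}X)$ equals $\pi_k X$ for $k \leq n$ and vanishes otherwise, it stays in $A(\ccat)$ whenever $X$ does. For extensions, given a cofibre sequence $X \to Y \to Z$ in $\Fun_{\Sigma}(\ccat^{op}, \spaces)$ with $X,Z$ almost perfect, the long exact sequence of homotopy groups presents each $\pi_k Y$ as an extension of a subobject of $\pi_k Z$ by a quotient of $\pi_k X$, both of which lie in the abelian category $A(\ccat)$, so $\pi_k Y \in A(\ccat)$ as well. For finite limits and colimits, I would run the same kind of spectral-sequence-and-filtration argument as in the proof of \cref{proposition:characterization_of_almost_perfect_presheaves}: a finite limit or colimit of almost perfect presheaves can be computed in a bounded number of stages using fibres, cofibres and finite products/coproducts, all of which preserve the property of having finitely presented homotopy by the previous observations together with the fact that finitely presented presheaves of abelian groups are closed under finite products and coproducts in the abelian ambient category.

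Finally, for prestability, I would invoke that the ambient $\infty$-category $\Fun_{\Sigma}(\ccat^{op}, \spaces)$ is itself prestable: product-preserving presheaves of spaces on an additive $\infty$-category are canonically identified with connective-spectrum-valued additive presheaves, so the category is the connective part of the stable presheaf category $\Fun_{\Sigma}(\ccat^{op}, \spectra)$. Having already shown $A_{\infty}(\ccat)$ is closed under finite limits, finite colimits and extensions in the ambient prestable $\infty$-category, it inherits prestability directly (the defining cofibre-equals-fibre condition for prestability is a pointwise condition on finite colimit and finite limit diagrams, which match under the inclusion).

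The only mildly subtle point, and the one I would be most careful about, is the finite-limits closure: a priori one needs to check that pullbacks of almost perfect presheaves are computed correctly at the level of homotopy, which requires the convergent Mayer--Vietoris type spectral sequence to land termwise in $A(\ccat)$. This works because $A(\ccat)$ is closed under kernels, cokernels and extensions in $\Fun_{\Sigma}(\ccat^{op}, \abeliangroups)$, so the spectral sequence does not leave the Freyd envelope on any page.
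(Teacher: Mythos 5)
Your proposal is correct and follows essentially the same route as the paper, whose proof is a single sentence reducing everything to the third characterization in \cref{proposition:characterization_of_almost_perfect_presheaves} together with the fact that $A(\ccat)$ is an abelian subcategory of $\Fun_{\Sigma}(\ccat^{op}, \abeliangroups)$ closed under kernels, cokernels and extensions. You have simply spelled out the long-exact-sequence bookkeeping that the paper leaves implicit, and your details are sound.
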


\begin{proof}
This is immediate from the third characterization of \cref{proposition:characterization_of_almost_perfect_presheaves}, as the classical Freyd envelope enjoys the same closure properties in additive presheaves of abelian groups. 
\end{proof}

\begin{remark}[Weakening existence of finite limits]
\label{remark:for_characterization_of_ap_presheaves_no_finite_limits_needed}
Observe that in the proof of  \cref{proposition:characterization_of_almost_perfect_presheaves} we did not use the full power of the assumption that $\ccat$ has finite limits, and in fact we need slightly less than that. The two conditions required for \cref{proposition:characterization_of_almost_perfect_presheaves} and consequently \cref{corollary:almost_perfect_presheaves_closed_under_finite_limits_colimits_truncations} to hold are that
\begin{enumerate}
    \item the homotopy category $h \ccat$ has weak kernels, which guarantees that the classical Freyd envelope is closed under kernels inside additive presheaves and 
    \item for any $c \in \ccat$ and any $k \geq 0$, the additive presheaf $\pi_{k}\Map(-, c)$ is finitely presented. 
\end{enumerate}
These conditions are strictly weaker; for example, if $\ccat$ satisfies them, then so do all of the higher homotopy categories $h_{n} \ccat$ for $n \geq 1$. On the other hand, existence of limits is rarely if ever preserved by passing to homotopy categories. 
\end{remark}

\begin{corollary}
\label{corollary:almost_perfect_presheaves_preserve_all_direct_sums}
Let $X\colon \ccat^{op} \rightarrow \spaces$ be an almost perfect presheaf. Then $X$ takes all small direct sums which exist in $\ccat$ to products of spaces.
\end{corollary}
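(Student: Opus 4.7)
The plan is to reduce the question about spaces to a purely algebraic statement about presheaves of abelian groups by passing to homotopy groups, and then invoke the characterization of almost perfect presheaves given by \cref{proposition:characterization_of_almost_perfect_presheaves}.

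First, I would construct the canonical comparison map. Given a direct sum $c = \bigoplus_{\alpha} c_{\alpha}$ in $\ccat$, the coprojections $c_{\beta} \to c$ together with the contravariance of $X$ induce a natural map
\[
\varphi\colon X(c) \to \prod_{\alpha} X(c_{\alpha})
\]
in $\spaces$. Since $X \in \Fun_{\Sigma}(\ccat^{op}, \spaces)$ is additive and $\ccat$ is additive, the presheaf $X$ is automatically valued in grouplike $\mathbf{E}_{\infty}$-spaces, equivalently in connective spectra. To show $\varphi$ is an equivalence it therefore suffices to show that each $\pi_{k}\varphi$ is an isomorphism of abelian groups for every $k \geq 0$.

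The key simplification comes from noting that $\pi_{k}$ commutes with small products of connective spectra, so the target of $\pi_{k}\varphi$ is $\prod_{\alpha} (\pi_{k}X)(c_{\alpha})$, while the source is $(\pi_{k}X)(\bigoplus_{\alpha} c_{\alpha})$. Under this identification $\pi_{k}\varphi$ is simply the canonical comparison map associated to the presheaf $\pi_{k}X$. By \cref{proposition:characterization_of_almost_perfect_presheaves}, each $\pi_{k}X$ lies in the classical Freyd envelope $A(\ccat)$, so everything reduces to proving that any finitely presented additive presheaf $F\colon \ccat^{op} \to \abeliangroups$ sends direct sums existing in $\ccat$ to products of abelian groups.

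This last statement is then straightforward, and is essentially the argument already used in \cref{lemma:homology_theories_with_lifts_of_injectives_preserve_coproducts}. For a representable $y(c)$ it is immediate from the universal property of the coproduct, since $[\bigoplus_{\alpha} c_{\alpha}, c] \simeq \prod_{\alpha} [c_{\alpha}, c]$. A general $F \in A(\ccat)$ admits a cokernel presentation $y(a) \to y(b) \to F \to 0$; computing cokernels pointwise in $\Fun_{\Sigma}(\ccat^{op}, \abeliangroups)$ and using that arbitrary products are exact in $\abeliangroups$ (that is, $\abeliangroups$ is AB4$^{*}$) yields the desired equality $F(\bigoplus_{\alpha} c_{\alpha}) \simeq \prod_{\alpha} F(c_{\alpha})$. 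I expect no serious obstacle here: all of the real work is done by \cref{proposition:characterization_of_almost_perfect_presheaves}, which trades a homotopy-coherent statement about the prestable Freyd envelope for an algebraic one about its ordinary $1$-categorical shadow.
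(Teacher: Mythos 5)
Your proposal is correct and is essentially the paper's own argument: the corollary is checked on homotopy groups using \cref{proposition:characterization_of_almost_perfect_presheaves}, and the resulting statement about finitely presented presheaves of abelian groups is exactly the content of \cref{lemma:coproducts_in_the_freyd_envelope} (whose proof likewise uses a cokernel presentation by representables and exactness of products in $\abeliangroups$).
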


\begin{proof}
This can be tested on homotopy groups, where it is true by \cref{proposition:characterization_of_almost_perfect_presheaves} and \cref{lemma:coproducts_in_the_freyd_envelope}. \end{proof}

\begin{proposition}
\label{proposition:nu_preserves_direct_sums}
Let $\ccat$ have $\kappa$-indexed direct sums. Then so does $A_{\infty}(\ccat)$, and $\nu\colon \ccat \rightarrow A_{\infty}(\ccat)$ preserves them. 
\end{proposition}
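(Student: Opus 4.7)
The plan is to verify the universal property of coproducts using the Yoneda lemma together with \cref{corollary:almost_perfect_presheaves_preserve_all_direct_sums}, which tells us that almost perfect presheaves send $\kappa$-small direct sums in $\ccat$ to products in $\spaces$. I would handle the case of representables first, then bootstrap to arbitrary almost perfect presheaves by writing them as geometric realizations.

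First, suppose $\{c_\alpha\}$ is a $\kappa$-small family in $\ccat$, and let $c = \bigoplus_\alpha c_\alpha$ be its direct sum. I claim $\nu(c)$ is the coproduct of $\{\nu(c_\alpha)\}$ in $A_\infty(\ccat)$. Indeed, for any $Y \in A_\infty(\ccat)$, the Yoneda lemma gives $\Map_{A_\infty(\ccat)}(\nu(c), Y) \simeq Y(c)$, and by \cref{corollary:almost_perfect_presheaves_preserve_all_direct_sums} the right-hand side is $\prod_\alpha Y(c_\alpha) \simeq \prod_\alpha \Map_{A_\infty(\ccat)}(\nu(c_\alpha), Y)$. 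This shows both that the coproduct exists and that $\nu$ preserves it.

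Next, given an arbitrary $\kappa$-small family $\{X_\alpha\}$ in $A_\infty(\ccat)$, use \cref{proposition:characterization_of_almost_perfect_presheaves} to write each $X_\alpha \simeq |\nu(c_\alpha^\bullet)|$ as a geometric realization of a simplicial diagram of representables. Because $\ccat$ admits $\kappa$-small direct sums, we may form the simplicial object $[n] \mapsto \bigoplus_\alpha c_\alpha^n$ in $\ccat$, and define the candidate coproduct to be
\[
Z \;\colonequals\; \bigl| \nu\bigl( \textstyle\bigoplus_\alpha c_\alpha^\bullet \bigr) \bigr|.
\]
This is a geometric realization of representables, hence lies in $A_\infty(\ccat)$. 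To verify the universal property, for any $Y \in A_\infty(\ccat)$ I compute
\[
\Map(Z, Y) \simeq \Tot\, Y\bigl(\textstyle\bigoplus_\alpha c_\alpha^\bullet\bigr) \simeq \Tot \prod_\alpha Y(c_\alpha^\bullet) \simeq \prod_\alpha \Tot\, Y(c_\alpha^\bullet) \simeq \prod_\alpha \Map(X_\alpha, Y),
\]
where the first equivalence is the universal property of geometric realization combined with Yoneda, the second is \cref{corollary:almost_perfect_presheaves_preserve_all_direct_sums}, and the third is the fact that totalization commutes with products.

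There is no serious obstacle: the only mild subtlety is that direct sums in $A_\infty(\ccat)$ cannot in general be computed levelwise in $\Fun_\Sigma(\ccat^{op},\spaces)$ (the levelwise sum of representables is not the representable on the direct sum), which is why one must construct the coproduct intrinsically via the formula above and verify its universal property by Yoneda rather than appealing to any ambient presheaf category. Once one fixes on this strategy, the verification is a direct application of the two preceding results.
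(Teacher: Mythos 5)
Your proposal is correct and follows essentially the same route as the paper: write each $X_\alpha$ as a geometric realization of representables, take the realization of the levelwise direct sum in $\ccat$, and verify the universal property by the chain $\Map(Z,Y)\simeq \Tot\, Y(\bigoplus_\alpha c_{\alpha}^\bullet)\simeq \Tot\prod_\alpha Y(c_\alpha^\bullet)\simeq\prod_\alpha\Map(X_\alpha,Y)$ using \cref{corollary:almost_perfect_presheaves_preserve_all_direct_sums}. The only difference is that you isolate the representable case as a preliminary step, which the paper absorbs into the general computation; your closing remark about direct sums not being computed levelwise in $\Fun_\Sigma(\ccat^{op},\spaces)$ matches the remark the paper makes immediately after the proposition.
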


\begin{proof}
Suppose that $X_{\alpha}$ is a family of almost perfect presheaves, so that we can write each one 
\[
X_{\alpha} \simeq | \nu (c_{\alpha, \bullet}) |
\]
as a geometric realization of representables by \cref{proposition:characterization_of_almost_perfect_presheaves}.  We claim that 
\[
| \nu (\oplus_{\alpha} c_{\alpha, \bullet}) |
\]
is the needed direct sum $\bigoplus _{\alpha} X_{\alpha}$ in the prestable Freyd envelope. To see this, suppose that $Y \in A_{\infty}(\ccat)$ is another almost perfect presheaf. Then, 
\[
\Map(| \nu (\bigoplus_{\alpha} c_{\alpha, \bullet}) |, Y) \simeq \textnormal{Tot}(Y(\bigoplus_{\alpha} c_{\alpha, \bullet})) \simeq \textnormal{Tot}(\prod_{\alpha} Y(c_{\alpha, \bullet})) 
\]
where in the middle equivalence we have used \cref{corollary:almost_perfect_presheaves_preserve_all_direct_sums}, and further
\[
\textnormal{Tot}(\prod_{\alpha} Y(c_{\alpha, \bullet})) \simeq \prod_{\alpha} \textnormal{Tot}  Y(c_{\alpha, \bullet})) \simeq \prod_{\alpha} \Map(|\nu (c_{\alpha, \bullet})|, Y) \simeq \prod_{\alpha} \Map(X_{\alpha}, Y),
\]
as needed.
\end{proof}

\begin{proposition}
\label{proposition:products_in_prestable_freyd_envelope}
Let $\kappa$ be infinite and suppose that $\ccat$ has $\kappa$-indexed products. Then, $A_{\infty}(\ccat)$ is closed under such products in $\Fun_{\Sigma}(\ccat^{op}, \spaces)$, and hence under all $\kappa$-small limits. Moreover, if that is the case, then $\nu\colon \ccat \rightarrow A_{\infty}(\ccat)$ preserves all $\kappa$-small limits. 
\end{proposition}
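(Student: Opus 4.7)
The plan is to reduce the statement to the classical analogue for the ordinary Freyd envelope via the homotopy-group characterization of almost perfect presheaves given by \cref{proposition:characterization_of_almost_perfect_presheaves}. Given a $\kappa$-small family $\{X_{\alpha}\}$ in $A_{\infty}(\ccat)$, I would first form the product $X = \prod_{\alpha} X_{\alpha}$ inside $\Fun_{\Sigma}(\ccat^{op}, \spaces)$, where by general presheaf nonsense it is computed pointwise. Because each $X_{\alpha}$ is product-preserving with values in grouplike $\mathbf{E}_{\infty}$-spaces (equivalently, $\Omega^{\infty}$ of a connective spectrum), arbitrary products commute with the functors $\pi_{k}$, giving $\pi_{k}X \cong \prod_{\alpha} \pi_{k}X_{\alpha}$ in $\Fun_{\Sigma}(\ccat^{op}, \abeliangroups)$. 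By \cref{proposition:characterization_of_almost_perfect_presheaves}, each $\pi_{k}X_{\alpha}$ belongs to the classical Freyd envelope $A(\ccat)$, and \cref{lemma:freyd_envelope_has_products_when_c_does} tells us that $A(\ccat)$ is closed under $\kappa$-small products inside $\Fun_{\Sigma}(\ccat^{op}, \abeliangroups)$ whenever $\ccat$ admits such products. Hence $\pi_{k}X \in A(\ccat)$ for every $k \geq 0$, so $X$ is almost perfect by the same characterization.

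Having established closure under $\kappa$-indexed products, I would combine this with \cref{corollary:almost_perfect_presheaves_closed_under_finite_limits_colimits_truncations} to obtain closure under all $\kappa$-small limits: any such limit may be written as an equalizer of a pair of maps between two $\kappa$-small products, and equalizers are finite limits, which $A_{\infty}(\ccat)$ already absorbs.

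For the preservation claim, the unrestricted Yoneda embedding $\ccat \to \Fun(\ccat^{op}, \spaces)$ sends every limit that exists in $\ccat$ to the pointwise limit. Since the previous step shows that such pointwise $\kappa$-small limits of almost perfect presheaves are themselves almost perfect, the inclusion $A_{\infty}(\ccat) \hookrightarrow \Fun_{\Sigma}(\ccat^{op}, \spaces)$ creates these limits, so $\nu(\lim_{\alpha} c_{\alpha}) \simeq \lim_{\alpha} \nu(c_{\alpha})$ in $A_{\infty}(\ccat)$, as desired.

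There is no genuine obstruction beyond invoking the right characterization; the only technical point that requires a moment of thought is that arbitrary products commute with each $\pi_{k}$, which is guaranteed precisely because our presheaves land in product-preserving, hence grouplike $\mathbf{E}_{\infty}$-valued, functors and would fail in the setting of unrestricted pointed spaces.
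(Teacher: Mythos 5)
Your proof is correct and follows essentially the same route as the paper: detect products on levelwise homotopy groups, invoke the characterization of almost perfect presheaves together with the closure of the classical Freyd envelope under $\kappa$-small products, and then reduce general $\kappa$-small limits to products and finite limits, with the preservation claim for $\nu$ being automatic from the Yoneda embedding. The only cosmetic point is that in the $\infty$-categorical setting one should reduce $\kappa$-small limits to $\kappa$-small products plus pullbacks (as the paper does) rather than to equalizers of products, which is the $1$-categorical decomposition; since you already have closure under all finite limits, this changes nothing.
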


\begin{proof}
Since products in the presheaf $\infty$-category are detected at the level of levelwise homotopy groups, by \cref{proposition:characterization_of_almost_perfect_presheaves} this is an immediate consequence of the same closure for the classical Freyd envelope, which we have shown in \cref{lemma:freyd_envelope_has_products_when_c_does}. The case of $\kappa$-small limits follows from the fact that an additive $\infty$-category with $\kappa$-small products and fibres has all $\kappa$-small limits. 

Finally, the claim about $\nu$ is immediate, as the Yoneda embedding preserves all limits which exist in the source. 
\end{proof}

\begin{remark}
As in the classical case, there is a distinction between \cref{proposition:nu_preserves_direct_sums} and \cref{proposition:products_in_prestable_freyd_envelope}. Namely, the inclusion $A_{\infty}(\ccat) \hookrightarrow \Fun_{\Sigma}(\ccat^{op}, \spaces)$ preserves even infinite products, but not necessarily infinite direct sums. 
\end{remark}

As we have observed before, $A_{\infty}(\ccat)$ is a prestable $\infty$-category. In the latter context, it is possible to define an internal variant of homotopy groups, as we will now recall.

\begin{notation}
\label{notation:t_structure_homotopy}
If $\ccat$ is prestable, the \emph{heart} is $\ccat^{\heartsuit} \colonequals \tau_{\leq 0} \ccat$, the subcategory of discrete objects. If $\ccat$ has finite limits, then the inclusion of the heart admits a left adjoint denoted by 
\[
\pi_{0}\colon \ccat \rightarrow \ccat^{\heartsuit};
\]
to check it exists one can verify that the cofibre of the canonical map $\Sigma \Omega c \rightarrow c$ satisfies the needed universal property of $\pi_{0} c$. 

It then follows from \cite{BBD} (see also \cite{higher_algebra}[1.2.1.12]) that the heart has all finite limits and colimits, and in fact that it is abelian, since it can be identified with the heart on the corresponding $t$-structure on the $\infty$-category of Spanier-Whitehead objects in $\ccat$, see \cite{lurie_spectral_algebraic_geometry}[C.1.2.9 and C.1.2.11]. 

Using the heart and $\pi_{0}$ we define the higher \emph{homotopy groups} by letting
\[
\pi_{0}\colon \ccat \rightarrow \ccat^{\heartsuit}
\]
be the truncation, and setting 
\[
\pi_{k}(c) \colonequals \pi_{0}(\Omega^{k} c).
\]
These correspond to the usual $t$-structure homotopy groups on the stabilization, although in the prestable setting they are necessarily non-negatively graded. 
\end{notation}

\begin{example}
\label{example:heart_of_prestable_envelope}
Suppose that $\ccat$ is an additive $\infty$-category which admits finite limits, so that $A_{\infty}(\ccat)$ is prestable. A presheaf $X \in A_{\infty}(\ccat)$ is discrete if and only if it is discrete as an object of $\Fun(\ccat^{op}, \spaces)$; that is, when it is valued in discrete spaces. It follows that we have an equivalence 
\[
A_{\infty}(\ccat)^{\heartsuit} \simeq A(\ccat)
\]
between the heart of the prestable Freyd envelope and the classical abelian Freyd envelope. In terms of this equivalence, the t-structure homotopy groups of \cref{notation:t_structure_homotopy} coincide with the levelwise ones in the sense that 
\[
(\pi_{k}X)(c) \simeq \pi_{k} X(c).
\]
\end{example}

\begin{remark}
\label{remark:local_grading_on_prestable_freyd_envelope}
Using the universal property of $A_{\infty}(\ccat)$ outlined in \cref{remark:prestable_envelope_obtained_by_freely_adding_geometric_realizations}, we see that if $\ccat$ is locally graded, then $A_{\infty}(\ccat)$ inherits an essentially unique local grading such that $\nu$ admits a structure of a locally graded functor. 

Using the latter constraint, one verifies directly that the induced local grading is given explicitly by
\[
(X[1])(c) \colonequals X(c[-1]).
\]
In the important case when $\ccat$ is stable and locally graded through suspension, we thus have 
\[
(X[1])(c) \colonequals X(\Sigma^{-1}c);
\]
we will study this particular grading in more detail in \S\ref{subsection:thread_structure_on_prestable_freyd_envelope}.
\end{remark}

\subsection{Perfect presheaves} 

As we discussed at the beginning of \S\ref{subsection:almost_perfect_presheaves}, another way to characterize the classical Freyd envelope is as a finite colimit closure of representables in the category of discrete additive presheaves. This naturally generalizes to the $\infty$-categorical context in the following way. 

\begin{definition}
\label{definition:perfect_presheaf}
Let $\ccat$ be an additive $\infty$-category. Then, an additive presheaf $X \in \Fun_{\Sigma}(\ccat^{op}, \spaces)$ is \emph{perfect} if it belongs to the smallest subcategory containing all representables and closed under finite colimits. 
\end{definition}

\begin{definition}
The \emph{perfect prestable Freyd envelope} is the $\infty$-category $A_{\infty}^{\omega}(\ccat)$ of perfect presheaves. 
\end{definition}

\begin{remark}
\label{remark:universal_property_of_perfect_freyd_envelope}
By construction, $A_{\infty}^{\omega}(\ccat)$ has the following universal property: any additive functor $\ccat \rightarrow \dcat$ into an additive $\infty$-category with finite colimits extends uniquely to a right exact functor $A_{\infty}^{\omega}(\ccat) \rightarrow \dcat$. 
\end{remark}

\begin{lemma}
\label{lemma:perfect_presheaves_are_almost_perfect}
Let $\ccat$ be an additive $\infty$-category. Then, any perfect presheaf $X$ is almost perfect; that is, $A_{\infty}^{\omega}(\ccat)$ is naturally a subcategory of $A_{\infty}(\ccat)$. 
\end{lemma}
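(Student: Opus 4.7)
The plan is to reduce the statement to showing that the full subcategory $A_\infty(\ccat) \subseteq \Fun_{\Sigma}(\ccat^{op}, \spaces)$ is closed under finite colimits. Since $A_\infty(\ccat)$ already contains all representables and since $A_\infty^\omega(\ccat)$ is by definition the smallest subcategory with these two closure properties, the desired inclusion will then follow at once. As the ambient $\infty$-category $\Fun_{\Sigma}(\ccat^{op}, \spaces)$ is prestable, finite colimits can be assembled from the zero object, binary direct sums, and cofibres of morphisms, so it is enough to verify closure under each of these operations.

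The zero object case is immediate: $\nu(0)$ is a zero object of $\Fun_{\Sigma}(\ccat^{op}, \spaces)$. For binary direct sums, I would argue by a double induction. Fix $Y \in A_\infty(\ccat)$ and consider the full subcategory of those $X$ for which $X \oplus Y \in A_\infty(\ccat)$. Since direct sums in $\Fun_{\Sigma}(\ccat^{op}, \spaces)$ are computed levelwise as products in $\spaces$, and since sifted colimits (in particular geometric realizations) commute with finite products, this subcategory is closed under geometric realizations in the $X$-variable. A symmetric argument, combined with the identity $\nu(c) \oplus \nu(d) \simeq \nu(c \oplus d)$ (which holds because $\nu$ preserves finite direct sums, as Yoneda sends finite products to finite products and finite products coincide with finite direct sums in an additive $\infty$-category), shows that this subcategory contains all representables and hence coincides with $A_\infty(\ccat)$ itself.

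For cofibres, one invokes the standard bar-construction presentation of pushouts in a pointed $\infty$-category admitting geometric realizations and finite coproducts: for any morphism $f \colon X \to Y$ in $\Fun_{\Sigma}(\ccat^{op}, \spaces)$ one has
\[
\mathrm{cofib}(f) \;\simeq\; \bigl|\, [n] \mapsto Y \oplus X^{\oplus n} \,\bigr|,
\]
where the simplicial structure is induced by $f$, the zero map, and the codiagonals on $X$. This is just the two-sided bar construction in the coCartesian monoidal structure on the ambient category, identifying every object with its canonical algebra structure whose multiplication is the fold map. Provided $X, Y \in A_\infty(\ccat)$, each term of this simplicial object is a finite direct sum of almost perfect presheaves and is therefore almost perfect by the previous step; the geometric realization is then itself almost perfect by definition.

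The main obstacle is the bar-construction step: one must carefully identify the correct simplicial object and verify that its realization agrees with the cofibre formed in the ambient presheaf category. Once this is in place the remaining closure verifications are formal, and the inclusion $A_\infty^\omega(\ccat) \subseteq A_\infty(\ccat)$ follows immediately.
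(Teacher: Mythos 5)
Your proposal is correct and follows essentially the same route as the paper: both reduce to closure of $A_{\infty}(\ccat)$ under finite direct sums and cofibres, and both handle cofibres by presenting $\mathrm{cofib}(X \to Y)$ as the geometric realization of the two-sided bar construction $[n] \mapsto X^{\oplus n} \oplus Y$ for the coCartesian monoidal structure. The extra detail you supply on the direct-sum step (levelwise products commuting with geometric realizations, plus $\nu(c)\oplus\nu(d)\simeq\nu(c\oplus d)$) is a fine elaboration of what the paper asserts more briefly.
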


\begin{proof}
Since $A_{\infty}(\ccat)$ contains all representables and is closed under direct sums, it is enough to check that it is closed under cofibres. Suppose that $X \rightarrow Y$ is a map of almost perfect presheaves. As both are additive, this map presents $X$ as acting on $Y$ with respect to the cartesian symmetric monoidal structure, so that we can write a bar construction of the form 
\[
\ldots \ X \oplus X \oplus Y \triplerightarrow X \oplus Y \rightrightarrows Y
\]
The geometric realization of this bar construction is equivalent to the cofibre of $X \rightarrow Y$, so that it is also almost perfect. 
\end{proof}
In \cref{corollary:almost_perfect_presheaves_closed_under_finite_limits_colimits_truncations} we observed that if $\ccat$ admits finite limits, then almost perfect presheaves are closed under limits in the $\infty$-category of all additive presheaves. Our main result of this section will be an analogous result for perfect presheaves. 

\begin{remark}
While the conclusions are the same, to prove that perfect presheaves are closed under finite limits is both more involved and more surprising. For example, if $R$ is a classical commutative ring, then connective almost perfect complexes are closed under finite limits in the derived $\infty$-category $\dcat_{\geq 0}(R)$ as soon as $R$ is coherent \cite{higher_algebra}[7.2.4.18]; for example, noetherian. 

On the other hand, for connective perfect complexes to be closed under finite limits in $\dcat_{\geq 0}(R)$ requires that every finitely presented module has a finite projective resolution, a very strong assumption on the ring $R$. 

Beware that the crux of the difficulty is that we are asking for finite limit closure inside the connective derived $\infty$-category rather than the stable one; in the latter finite colimit closure and finite limit closure coincide, and asking for finite limit closure has no teeth. 
\end{remark}

\begin{lemma}
\label{lemma:discrete_almost_perfect_presheaves_are_perfect}
Let $\ccat$ be an additive $\infty$-category with finite limits and let $x \in A(\ccat)$ be a discrete finitely presented presheaf. Then, any $\pi_{0}$-surjection $\nu(c) \rightarrow x$ can be completed to a cofibre sequence
\[
\mathrm{cofib}(\nu(e) \rightarrow \nu(d)) \rightarrow \nu(c) \rightarrow x;
\]
for a suitable map $e \rightarrow d$; in particular, $x$ is perfect. 
\end{lemma}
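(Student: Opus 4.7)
The plan is to engineer a map $e \to d$ in $\ccat$ so that $P := \cofib(\nu(e) \to \nu(d))$ recovers the fibre $K := \fib(\nu(c) \to x)$, from which the desired cofibre sequence follows. Note that $K$ lies in $A_\infty(\ccat)$ by \cref{corollary:almost_perfect_presheaves_closed_under_finite_limits_colimits_truncations} and is connective since $\nu(c) \to x$ is $\pi_0$-surjective, with $\pi_0 K = \ker(y(c) \twoheadrightarrow x)$ a finitely presented subobject of $y(c)$. First, I choose a surjection $y(d) \twoheadrightarrow \pi_0 K$ from a representable and lift it to a $\pi_0$-surjective map $\nu(d) \to K$ in $A_\infty(\ccat)$. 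The composite $\nu(d) \to K \to \nu(c)$ is a map between representables and so by full faithfulness of the Yoneda embedding arises from a unique $\psi\colon d \to c$ in $\ccat$. Setting $e := \fib(\psi)$ in $\ccat$, the functor $\nu$ preserves finite limits (\cref{proposition:products_in_prestable_freyd_envelope}) and produces a fibre sequence $\nu(e) \to \nu(d) \to \nu(c)$ in $A_\infty(\ccat)$, whose canonical nullhomotopy of $\nu(e) \to \nu(c)$ induces the desired map $P \to \nu(c)$.

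The key step is identifying $\cofib(P \to \nu(c)) \simeq x$ compatibly with the given surjection. Passing to the stabilisation $\mathrm{Sp}(A_\infty(\ccat))$, let $\tilde{F}$ denote the fibre of $\nu(d) \to \nu(c)$ computed there; since prestable fibres are connective covers of stable ones, $\nu(e) = \tau_{\geq 0}\tilde{F}$. Applying the octahedral axiom to the factorisation $\nu(e) \to \tilde{F} \to \nu(d)$ yields a cofibre sequence
\[
\tau_{\leq -1}\tilde{F} \to P \to \nu(c).
\]
The connectivity of $\nu(d)$ and $\nu(c)$ forces $\pi_i \tilde{F} = 0$ for $i \leq -2$, while $\pi_{-1} \tilde{F} = \coker(y(d) \to y(c)) = x$ because the image of $y(d) \to y(c)$ is exactly $\ker(y(c) \twoheadrightarrow x)$ by construction. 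Hence $\tau_{\leq -1}\tilde{F} \simeq x[-1]$, and rotating yields the cofibre sequence $P \to \nu(c) \to x$. The second map agrees automatically with the original $\pi_0$-surjection: both are maps into the discrete $x$ and hence determined by their action on $\pi_0$, which in both cases is the projection $y(c) \twoheadrightarrow x$. Since all terms are connective this cofibre sequence descends to $A_\infty(\ccat)$, and as $P$ is the cofibre of a map between representables and $\nu(c)$ is representable, $x$ is perfect.

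The main subtlety, and the principal obstacle to dispatch, is the gap between fibre and cofibre sequences in the prestable setting: although $\nu(e) \to \nu(d) \to \nu(c)$ is a fibre sequence in $A_\infty(\ccat)$, it is not a cofibre sequence there because $\nu(d) \to \nu(c)$ is not $\pi_0$-surjective (its image is $\ker(y(c) \to x) \subsetneq y(c)$). The measure of this failure is precisely $\tau_{\leq -1}\tilde{F}$, and the argument hinges on the fact that because $x$ is discrete and $\nu(d), \nu(c)$ are connective, this truncation collapses to a single degree carrying exactly the cokernel $x$; keeping careful track of which constructions happen in $A_\infty(\ccat)$ versus its stabilisation is what makes the octahedral manipulation produce $x$ on the nose rather than an extension involving higher homotopy.
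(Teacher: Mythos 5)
Your proof is correct and follows essentially the same route as the paper's: choose a map $d \rightarrow c$ whose image on $\pi_{0}$ is $\ker(y(c) \rightarrow x)$, set $e = \fib(d \rightarrow c)$, and identify the cofibre of $\cofib(\nu(e) \rightarrow \nu(d)) \rightarrow \nu(c)$ with $x$. The only difference is bookkeeping: where you pass to the stabilization and invoke the octahedral axiom together with the truncation of the stable fibre, the paper stays in $A_{\infty}(\ccat)$ and extracts the same conclusion directly from the long exact sequence of homotopy groups of the fibre sequence $\nu(e) \rightarrow \nu(d) \rightarrow \nu(c)$.
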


\begin{proof}
Since $x$ is finitely presented, the kernel of $\pi_{0} \nu(c) \simeq y(c) \rightarrow x$ is finitely presented and hence we can write it as a quotient of $y(d)$ for some other $d \in \dcat$. This determines a homotopy class of maps $d \rightarrow c$ such that $x \simeq \pi_{0}(\mathrm{cofib}(\nu(d) \rightarrow \nu(c))$.

Let $e$ be a fibre of $d \rightarrow c$ in $\ccat$, so that we have a fibre sequence
\[
\nu(e) \rightarrow \nu(d) \rightarrow \nu(c)
\]
of perfect presheaves. The long exact sequence of homotopy groups tells us that if $z$ denotes the cofibre of $\nu(e) \rightarrow \nu(d)$, then the induced map $z \rightarrow \nu(c)$ is an isomorphism on $\pi_{k}$ for $k > 0$ and is injective on $\pi_{0}$, with the image equal to $\mathrm{im}(\pi_{0} \nu(d)).$ Thus, we have a cofibre sequence
\[
z \rightarrow \nu(c) \rightarrow x
\]
as needed. 
\end{proof}

\begin{lemma}
\label{lemma:perfection_detected_by_looping_into_a_representable}
Let $\ccat$ be an additive $\infty$-category with finite limits and let $x \in A_{\infty}^{\omega}(\ccat)$ be a perfect presheaf. Then, $\Omega^{k} x$ is representable for $k$ large enough. 
\end{lemma}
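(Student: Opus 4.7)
My plan is to induct on the construction of $x$ as a perfect presheaf. Concretely, I would define $\mathcal{P} \subseteq A_{\infty}^{\omega}(\ccat)$ to be the class of those $X$ for which there exists some $k \geq 0$ with $\Omega^{k} X$ representable, where the loop functor is computed inside the prestable $\infty$-category $A_{\infty}(\ccat) \supseteq A_{\infty}^{\omega}(\ccat)$ (using \cref{corollary:almost_perfect_presheaves_closed_under_finite_limits_colimits_truncations} to ensure prestability and existence of finite limits in the ambient $\infty$-category). The goal is then to show that $\mathcal{P}$ contains the representables and is stable under the finite-colimit operations that generate perfect presheaves by \cref{definition:perfect_presheaf}.

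The easy closures come first. Every $\nu(c)$ lies in $\mathcal{P}$ with $k = 0$, since $\nu$ is a Yoneda embedding and hence preserves finite limits, giving $\Omega^{k} \nu(c) \simeq \nu(\Omega^{k} c)$ for all $k \geq 0$ (using that $\ccat$ itself has finite limits). The zero presheaf is obviously in $\mathcal{P}$, and $\mathcal{P}$ is closed under finite direct sums because $\Omega$ commutes with products and direct sums of representables are representable in the additive $\infty$-category $\ccat$. The main step is closure under cofibres: given a cofibre sequence $u \to v \to x$ with $u, v \in \mathcal{P}$, I would invoke the defining feature of a prestable $\infty$-category, namely that cofibre sequences extend to the left as fibre sequences $\Omega x \to u \to v$. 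Since $\Omega$ commutes with finite limits, this iterates to give $\Omega^{k+1} x \simeq \mathrm{fib}(\Omega^{k} u \to \Omega^{k} v)$ for all $k \geq 0$; choosing $k$ large enough that $\Omega^{k} u \simeq \nu(a)$ and $\Omega^{k} v \simeq \nu(b)$ are simultaneously representable, we obtain $\Omega^{k+1} x \simeq \nu(\mathrm{fib}(a \to b))$, which is again representable because $\nu$ preserves the finite limits that exist in $\ccat$.

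Finally, since any pushout in an additive $\infty$-category can be realised as the cofibre of a map into a direct sum, the class $\mathcal{P}$ — being closed under the zero object, binary direct sums, and cofibres of single maps — is automatically closed under all finite colimits, and so contains the entire finite-colimit closure of representables, i.e.\ all of $A_{\infty}^{\omega}(\ccat)$. The one genuinely essential ingredient, and the main conceptual obstacle if one tried a more naive colimit-closure argument, is the prestable rotation from cofibres to fibres: $\Omega$ has no reason to preserve cofibres in a merely prestable setting, so without this rotation the finite-colimit presentation of $x$ cannot be converted into anything useful after looping.
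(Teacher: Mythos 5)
Your proof is correct and follows essentially the same route as the paper: show the class of presheaves with this property contains representables (via $\Omega^k\nu(c)\simeq\nu(\Omega^k c)$) and is closed under finite colimits, with the key step being the prestable rotation of a cofibre sequence $u\to v\to x$ to the fibre sequence $\Omega x\to u\to v$, after which the fibre of a map of representables is again representable. Your additional remarks on direct sums and on reducing general finite colimits to cofibres and sums are details the paper leaves implicit, but the argument is the same.
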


\begin{proof}
Since $\Omega \nu(c) \simeq \nu(\Omega c)$, the collection of presheaves with this property contains all the representables. Thus, it is enough to verify that it is closed under finite colimits. 

Suppose that $x \rightarrow y \rightarrow z$ is a cofibre sequence such that $x, y$ have the needed property. For each $k \geq 0$ we then have a fibre sequence
\[
\Omega^{k+1} z \rightarrow \Omega^{k} y \rightarrow \Omega^{k} x.
\]
Choose $k$ large enough so that $\Omega^{k} y \simeq \nu(d), \Omega^{k}x \simeq \nu(c)$ for some $d, c \in \ccat$. It follows that $\Omega^{k+1} z \simeq \nu(e)$, where $e$ is the fibre of $d \rightarrow c$, and so is representable. 
\end{proof}

\begin{theorem}
\label{theorem:if_ccat_has_finite_limits_perfect_presheaves_closed_under_limits}
Let $\ccat$ be an additive $\infty$-category with finite limits. Then, the perfect prestable Freyd envelope $A_{\infty}^{\omega}(\ccat)$ is closed under finite limits in additive presheaves of spaces. In particular, it is a prestable $\infty$-category with finite limits. 
\end{theorem}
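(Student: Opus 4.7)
The strategy is to reduce closure under finite limits to closure under fibres of morphisms: since $A_{\infty}^{\omega}(\ccat)$ contains a zero object and binary products coincide with direct sums in the additive setting (and direct sums are finite colimits, which already preserve perfectness), the only remaining ingredient is fibres. I derive closure under fibres by proving the following two statements in tandem:
\begin{enumerate}
\item[(I)] for every perfect presheaf $Y$, the loops $\Omega Y$ computed in $A_{\infty}(\ccat)$ is perfect;
\item[(II)] for every morphism $f \colon X \to Y$ between perfect presheaves, the fibre $\fib(f)$ computed in $A_{\infty}(\ccat)$ is perfect.
\end{enumerate}
These are essentially equivalent: (I) is the special case $X = 0$ of (II), while (II) follows from (I) together with closure under cofibres via the identity $\fib(f) \simeq \Omega \cofib(f)$. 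This identity holds because $A_{\infty}(\ccat)$ sits as a full subcategory of the Grothendieck prestable presheaf $\infty$-category $\Fun_{\Sigma}(\ccat^{\mathrm{op}}, \spaces)$, closed under both finite limits and colimits by \cref{corollary:almost_perfect_presheaves_closed_under_finite_limits_colimits_truncations}; in this setting, fibre sequences coincide with cofibre sequences, since fibres in the ambient are automatically connective and so the prestable and stable computations agree.

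I proceed by induction on a ``level'' of perfect presheaves, defined recursively: $0$ and representables have level $0$, and $\cofib(h\colon A \to B)$ and $A \oplus B$ have level $n+1$ whenever $A, B$ have level $\leq n$. Every perfect presheaf has finite level. The base cases are handled directly: $\Omega 0 \simeq 0$, $\Omega \nu(c) \simeq \nu(\Omega c)$ is representable (using finite limits in $\ccat$ and that $\nu$ preserves limits), and fibres of maps between representables are representable by the Yoneda embedding. For the inductive step at level $n$, statement (I) follows from the inductive hypothesis: if $Y \simeq \cofib(h)$ then $\Omega Y \simeq \fib(h)$ is perfect by (II) at smaller levels, and if $Y \simeq A \oplus B$ then $\Omega Y \simeq \Omega A \oplus \Omega B$ is perfect by (I) at smaller levels. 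For (II) at level $n$, I perform a sub-induction on the level of $X$: the base case $X = 0$ reduces to (I) at level $n$, which has just been established; for larger $X$ one appeals to the general fibre sequence $\fib(p) \to \fib(f \circ p) \to \fib(f)$ associated to a composite, with $p$ chosen either as the canonical map $B \to X$ from a cofibre presentation $X \simeq \cofib(A \to B)$ (so that $\fib(p) \simeq A$ via $\Omega \Sigma A \simeq A$) or as the inclusion $\iota_{A} \colon A \to A \oplus B$ of a direct summand (so that $\fib(p) \simeq \Omega B$ via $\cofib(\iota_{A}) \simeq B$). In each case, the sub-inductive hypothesis and (I) at smaller levels together exhibit $\fib(f)$ as the cofibre of two perfect presheaves, hence perfect.

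The main technical subtlety is verifying that this joint induction is well-founded. The potential worry is that (II) at outer level $n$ with $X = 0$ reduces to (I) at the same level $n$; the crux is that (I) at level $n$, as shown above, only requires (II) at \emph{strictly smaller} levels, so the apparent cycle breaks. The inner induction on the level of $X$ is then a routine matter once (I) at level $n$ has been secured. The supporting identity $\fib(f) \simeq \Omega \cofib(f)$ and the coincidence of fibre and cofibre sequences in $A_{\infty}(\ccat)$ are the only remaining technical points, and both follow from the observations recorded in the first paragraph.
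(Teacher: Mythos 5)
Your reduction of finite limits to fibres, and the identity $\fib(f) \simeq \Omega\,\cofib(f)$, are both fine: the identity holds in any prestable $\infty$-category with finite limits, because every cofibre sequence of connective objects is a fibre sequence (pass to the Spanier--Whitehead category and take connective covers) and one then rotates $X \to Y \to \cofib(f)$. But your blanket justification ``fibre sequences coincide with cofibre sequences'' is false in the direction you repeatedly use afterwards: in a prestable $\infty$-category a fibre sequence $A \to B \to C$ is a cofibre sequence only when $B \to C$ is surjective on $\pi_{0}$. This asymmetry is the entire difficulty of the theorem (see the remark following it in the paper contrasting finite limit closure in the connective versus the stable setting), and it is exactly where your induction breaks.

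Concretely, the sub-induction establishing (II) has two gaps. First, the base case $\mathrm{lev}(X)=0$ also contains $X=\nu(c)$, and the case $f\colon \nu(c)\to Y$ with $Y$ of positive level is never treated; it does not reduce to (I) at the current stage, since $\Omega\,\cofib(f)$ would require (I) one level higher, reopening the cycle you were trying to break. Second, in the inductive step the fibre sequence $\fib(p)\to\fib(f\circ p)\to\fib(f)$ is a cofibre sequence precisely when $\fib(f\circ p)\to\fib(f)$ is $\pi_{0}$-surjective; since that map is a base change of $p$ along $\fib(f)\to X$, this holds when $p$ is $\pi_{0}$-surjective --- so your Case 1 ($p\colon B\to\cofib(A\to B)$) is fixable once you supply this argument --- but fails for $p=\iota_{A}\colon A\to A\oplus B$. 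For a concrete counterexample take $f=\pi_{A}\colon A\oplus B\to A$: then $\fib(f)\simeq B$, whereas your recipe produces $\cofib\bigl(\fib(\iota_{A})\to\fib(\mathrm{id}_{A})\bigr)\simeq\cofib(\Omega B\to 0)\simeq\Sigma\Omega B$, which differs from $B$ whenever $\pi_{0}B\neq 0$ (e.g.\ $B=\nu(c)$). So Case 2 gives the wrong answer and the induction does not close. The paper instead isolates the one hard statement ``if $\Omega x$ is perfect then $x$ is perfect'' (combined with \cref{lemma:perfection_detected_by_looping_into_a_representable}) and proves it by an explicit construction: choose a $\pi_{0}$-surjection $\nu(c)\to x$, verify via the snake lemma that the relevant fibre sequence is in fact a cofibre sequence, present its discrete cofibre using \cref{lemma:discrete_almost_perfect_presheaves_are_perfect}, and finish with a pushout. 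Your proposal contains no analogue of that step, and some such argument is unavoidable.
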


\begin{proof}
The collection of those almost perfect $x$ such that $\Omega^{k} x$ is representable for $k$ large enough is closed under finite limits when $\ccat$ has them by the argument of \cref{lemma:perfection_detected_by_looping_into_a_representable}. Thus, it is enough to check that any presheaf with this property is necessarily perfect. As any representable is perfect, it is enough to show that if $\Omega x$ is perfect, then so is $x$. 
Choose a $\pi_{0}$-surjection $\nu(c) \rightarrow x$, which by taking fibres we can complete to a diagram of almost perfect presheaves
\[\begin{tikzcd}
	{\Omega x} & {f} & {f_{0}} \\
	{0} & {\nu(c)} & {\nu(c)} \\
	{x_{\geq 1}} & {x} & {\pi_{0}x}
	\arrow[from=3-1, to=3-2]
	\arrow[from=3-2, to=3-3]
	\arrow[from=2-3, to=3-3]
	\arrow[from=2-2, to=3-2]
	\arrow[from=2-1, to=3-1]
	\arrow[from=2-1, to=2-2]
	\arrow[from=2-2, to=2-3]
	\arrow[from=1-3, to=2-3]
	\arrow[from=1-2, to=2-2]
	\arrow[from=1-1, to=2-1]
	\arrow[from=1-1, to=1-2]
	\arrow[from=1-2, to=1-3]
\end{tikzcd}\]
where every column and row is a fibre sequence. In fact, the two right most columns are even cofibre sequences by construction, as the latter maps are $\pi_{0}$-surjective. We claim so is the top row. 

To see this, it is enough to check that $\pi_{0}f \rightarrow \pi_{0} f_{0}$ is surjective. Since $\pi_{0}x$ is a discrete presheaf, taking $\pi_{0}$ of the two right-most columns gives a diagram 
\[
\begin{tikzcd}
	& {\pi_{0}f} & {y(c)} & {\pi_{0}x} & 0 \\
	0 & {\pi_{0} f_{0}} & {y(c)} & {\pi_{0}x}
	\arrow[from=1-2, to=1-3]
	\arrow[from=1-3, to=1-4]
	\arrow[from=2-3, to=2-4]
	\arrow["{id}", from=1-4, to=2-4]
	\arrow[from=1-3, to=2-3]
	\arrow[from=1-2, to=2-2]
	\arrow[from=2-2, to=2-3]
	\arrow[from=1-4, to=1-5]
	\arrow[from=2-1, to=2-2]
\end{tikzcd}.
\]
Applying the snake lemma we see that $\pi_{0} f \rightarrow \pi_{0} f_{0}$ is surjective, as needed. 

By \cref{lemma:discrete_almost_perfect_presheaves_are_perfect}, we can write $f_{0} \simeq \mathrm{cofib}(\nu(e) \rightarrow \nu(d))$ for a suitable map $e \rightarrow d$. This determines a map $\nu(d) \rightarrow f_{0}$ which by $\pi_{0}$-surjectivity lifts to an arrow $\nu(d) \rightarrow f$ which by taking fibres we can complete to a diagram 
\[\begin{tikzcd}
	{\nu(e)} & {\nu(d)} & {f_{0}} \\
	{\Omega x} & f & {f_{0}}
	\arrow[from=2-2, to=2-3]
	\arrow[from=1-2, to=1-3]
	\arrow[from=1-2, to=2-2]
	\arrow["{id}", from=1-3, to=2-3]
	\arrow[from=1-1, to=2-1]
	\arrow[from=1-1, to=1-2]
	\arrow[from=2-1, to=2-2]
\end{tikzcd}\]
such that both rows are cofibre sequences. It follows that the left-most square is a pushout, and hence $f$ is perfect as a pushout of perfects. As $x \simeq \mathrm{cofib}(f \rightarrow \nu(c))$, we deduce that $x$ is also perfect, ending the argument. 
\end{proof}

\begin{remark}
By \cref{lemma:discrete_almost_perfect_presheaves_are_perfect}, the inclusion  $A_{\infty}^{\omega}(\ccat) \rightarrow A_{\infty}(\ccat)$ induces an equivalence on the hearts. 
\end{remark}

It will be useful to have an explicit description of perfect presheaves in terms of certain geometric realizations, analogous to \cref{proposition:characterization_of_almost_perfect_presheaves} in the almost perfect case. To show that any perfect presheaf has a suitable finite resolution, we will need the concept of projective dimension. 

\begin{notation}
\label{definition:ext_groups_in_prestable_freyd_envelope}
Let $x, y \in A_{\infty}(\ccat)$ be almost perfect presheaves. Then, for any $k \geq 0$ we put 
\[
\Ext^{k}(x, y) \colonequals \pi_{0} \Map_{A_{\infty}(\ccat)}(x, \Sigma^{k} y)
\]
and similarly for any $k < 0$ 
\[
\Ext^{k}(x, y) \colonequals \pi_{0} \Map_{A_{\infty}(\ccat)}(x, \Omega^{-k} y).
\]
\end{notation}

\begin{remark}
\label{remark:ext_groups_homotopy_groups_of_mapping_spectrum_hence_les}
The $\infty$-category $A_{\infty}(\ccat)$ is prestable and so can be identified with the positive part of a $t$-structure on its Spanier-Whitehead $\infty$-category. Then, the $\Ext$-groups of \cref{definition:ext_groups_in_prestable_freyd_envelope} can be identified with the homotopy groups of internal mapping spectra in the latter $\infty$-category, which is already stable. 

It follows that if $x \rightarrow y \rightarrow z$ is a cofibre sequence of almost perfect presheaves, then for any other $w$ we have long exact sequences
\[
\ldots \rightarrow \Ext^{-1}(x, w) \rightarrow \Ext^{0}(z, w) \rightarrow \Ext^{0}(y, w) \rightarrow \Ext^{0}(x, w) \rightarrow \Ext^{1}(z, w) \rightarrow \ldots
\]
and 
\[
\ldots \rightarrow \Ext^{-1}(w, z) \rightarrow \Ext^{0}(w, x) \rightarrow \Ext^{0}(w, y) \rightarrow \Ext^{0}(w, z) \rightarrow \Ext^{1}(w, x) \rightarrow \ldots.
\]
\end{remark}

\begin{definition}
Let $x \in A_{\infty}(\ccat)$ be an almost perfect presheaf. If $k \geq 0$, we say $x$ is of \emph{projective dimension at most $k$} if $\Ext^{s}(x, w) = 0$ for any other almost perfect $w$ and any $s > k$. 
\end{definition}

\begin{example}
\label{example:representables_are_of_proj_dimension_zero}
Let $x \colonequals \nu(c)$ be representable. Then, Yoneda lemma implies that $x$ is of projective dimension zero. 
\end{example}

\begin{lemma}
\label{lemma:fibres_along_surjection_from_rep_lowers_projective_dimension}
Let $x$ be of projective dimension at most $k > 0$, and choose a $\pi_{0}$-surjection $\nu(c) \rightarrow x$ from a representable. Then, the fibre $f$ of this map is at most of projective dimension $k-1$. 
\end{lemma}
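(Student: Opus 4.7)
The plan is to read off the statement from the long exact sequence of $\Ext$-groups attached to the fibre sequence $f \to \nu(c) \to x$, together with the vanishing of $\Ext^{s}(\nu(c),-)$ for $s > 0$ provided by \cref{example:representables_are_of_proj_dimension_zero}. The key input is \cref{remark:ext_groups_homotopy_groups_of_mapping_spectrum_hence_les}: $\Ext$-groups in $A_{\infty}(\ccat)$ are homotopy groups of mapping spectra in its Spanier–Whitehead stabilization, and the given fibre sequence becomes a cofibre sequence there, so it yields a long exact sequence when we map into any almost perfect presheaf.

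First I would record that $f$ is itself almost perfect, so the statement ``projective dimension of $f$'' is meaningful: indeed $\nu(c)$ and $x$ are almost perfect, and almost perfect presheaves are closed under fibres by \cref{corollary:almost_perfect_presheaves_closed_under_finite_limits_colimits_truncations}. Next, for any almost perfect $w$, \cref{remark:ext_groups_homotopy_groups_of_mapping_spectrum_hence_les} furnishes a long exact sequence
\[
\ldots \to \Ext^{t}(\nu(c), w) \to \Ext^{t}(f, w) \to \Ext^{t+1}(x, w) \to \Ext^{t+1}(\nu(c), w) \to \ldots
\]
(the usual shift in the contravariant direction).

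Now fix $t > k-1$, equivalently $t \geq k \geq 1$. Then $t \geq 1$ so $\Ext^{t}(\nu(c), w) = 0$ by the projectivity of representables recorded in \cref{example:representables_are_of_proj_dimension_zero}, while $t+1 > k$ forces $\Ext^{t+1}(x, w) = 0$ since $x$ has projective dimension at most $k$. The exact sequence then sandwiches $\Ext^{t}(f, w)$ between two zero groups, so it vanishes. As $w$ and $t > k-1$ were arbitrary, $f$ is of projective dimension at most $k-1$.

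There is no real obstacle here; the only point that requires any care is the very first sentence, namely that the fibre sequence in the prestable $\infty$-category $A_{\infty}(\ccat)$ actually produces the advertised long exact sequence of $\Ext$-groups. This is exactly the content of \cref{remark:ext_groups_homotopy_groups_of_mapping_spectrum_hence_les}, where the fibre sequence is reinterpreted as a cofibre sequence in the Spanier–Whitehead stabilization; everything else is bookkeeping with the degree shift and the ranges $t \geq k$ and $t+1 > k$.
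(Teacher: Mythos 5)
Your proof is correct and is essentially the paper's own argument: the paper likewise notes that $f \to \nu(c) \to x$ is a cofibre sequence and then cites \cref{example:representables_are_of_proj_dimension_zero} together with the long exact sequence of \cref{remark:ext_groups_homotopy_groups_of_mapping_spectrum_hence_les}. The one place to be slightly more explicit is that the fibre sequence is a cofibre sequence (equivalently, remains one in the Spanier--Whitehead stabilization) precisely \emph{because} the map $\nu(c) \to x$ is $\pi_{0}$-surjective; otherwise the fibre in the stabilization would have a nonzero $\pi_{-1}$ and would not agree with $f$.
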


\begin{proof}
Since $f \rightarrow \nu(c) \rightarrow x$ is a cofibre sequence by construction, this is immediate from \cref{example:representables_are_of_proj_dimension_zero} and the long exact sequence of \cref{remark:ext_groups_homotopy_groups_of_mapping_spectrum_hence_les}.
\end{proof}

\begin{lemma}
\label{lemma:perfects_of_projective_dimension_zero_are_rep}
Let $x$ be of projective dimension zero. Then, $x$ is a direct summand of a representable. If $x$ is perfect, then it is representable. 
\end{lemma}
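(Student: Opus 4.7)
The plan is to exploit projective dimension zero to exhibit $x$ as a retract of a representable, then deduce representability when $x$ is perfect.

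First I would apply \cref{proposition:characterization_of_almost_perfect_presheaves} to see that $\pi_{0}x$ is finitely presented, so there exists a surjection $y(c) \twoheadrightarrow \pi_{0}x$ of discrete presheaves. By the Yoneda lemma this lifts to a $\pi_{0}$-surjection $p\colon \nu(c) \to x$ in $A_{\infty}(\ccat)$. Letting $f$ be the fibre of $p$, which is again almost perfect by \cref{corollary:almost_perfect_presheaves_closed_under_finite_limits_colimits_truncations}, the argument appearing in the proof of \cref{theorem:if_ccat_has_finite_limits_perfect_presheaves_closed_under_limits} shows that $f \to \nu(c) \to x$ is also a cofibre sequence in $A_{\infty}(\ccat)$, since $p$ is $\pi_{0}$-surjective.

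The central step is to split this cofibre sequence. The long exact sequence of $\Ext$-groups from \cref{remark:ext_groups_homotopy_groups_of_mapping_spectrum_hence_les} gives
\[ \Ext^{0}(x, \nu(c)) \to \Ext^{0}(x, x) \to \Ext^{1}(x, f), \]
and the rightmost group vanishes because $x$ has projective dimension zero and $f$ is almost perfect. Hence the identity $\mathrm{id}_{x}$ lifts to a section $s\colon x \to \nu(c)$ of $p$. The combined map $(s, i)\colon x \oplus f \to \nu(c)$, where $i$ is the fibre inclusion, fits into a morphism from the split cofibre sequence $f \to x \oplus f \to x$ to $f \to \nu(c) \to x$ whose outer arrows are identities; passing to cofibres (or inspecting the long exact sequence in $\pi_{*}$) shows the middle arrow is an equivalence. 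This exhibits $\nu(c) \simeq x \oplus f$ and proves the first claim.

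For the second claim, assume $x$ is perfect. The projector $e = s \circ p\colon \nu(c) \to \nu(c)$ corresponds under the Yoneda equivalence $\Map_{A_{\infty}(\ccat)}(\nu(c), \nu(c)) \simeq \Map_{\ccat}(c, c)$ to a homotopy-idempotent endomorphism of $c$. The main obstacle is this last step: one must promote this to a coherent idempotent in $\ccat$ and split it, which is automatic once $\ccat$ is idempotent-complete (a hypothesis used elsewhere in the paper, cf.\ \cref{remark:injectives_are_representable_when_c_idemcomplete}). Under that assumption the splitting yields $c \simeq a \oplus b$ in $\ccat$ with $\nu(a) \simeq x$, so $x$ itself is representable.
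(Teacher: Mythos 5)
Your proof of the first claim is essentially the paper's: choose a $\pi_{0}$-surjection $\nu(c)\to x$, use the long exact sequence of \cref{remark:ext_groups_homotopy_groups_of_mapping_spectrum_hence_les} together with $\Ext^{1}(x,f)=0$ to produce a section, and conclude $\nu(c)\simeq x\oplus f$. That part is fine.

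The second claim is where there is a genuine gap: you invoke idempotent-completeness of $\ccat$, which is not a hypothesis of the lemma. The standing assumption in this section is only that $\ccat$ is an additive $\infty$-category with finite limits, and the whole point of the statement ``perfect of projective dimension zero $\Rightarrow$ representable'' is that perfectness substitutes for idempotent-completeness. Indeed, your argument never uses perfectness in an essential way — it would ``prove'' that every direct summand of a representable is representable, which is false without idempotent-completeness (the image of a non-split idempotent on $c$ is a summand of $\nu(c)$ of projective dimension zero, but it is only almost perfect, not perfect). The coherence issue you raise is actually not the obstacle — the idempotent $s\circ p$ splits in $A_{\infty}(\ccat)$ with image $x$, hence is coherent, and transfers to a coherent idempotent on $c$ via the fully faithful $\nu$ — but the splitting in $\ccat$ is simply unavailable.

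The paper's route avoids this entirely. Since $x$ is perfect, \cref{lemma:perfection_detected_by_looping_into_a_representable} gives that $\Omega^{k}x$ is representable for $k$ large, and each $\Omega^{k}x$ is again perfect of projective dimension zero (using \cref{theorem:if_ccat_has_finite_limits_perfect_presheaves_closed_under_limits} and the fact that summands of representables have projective dimension zero). One then inducts downward: assuming $\Omega x\simeq \nu(s)$ is representable, the cofibre sequences $f\to\nu(c)\to x$ and $\nu(e)\to\nu(d)\oplus\Omega x\to f$ constructed in the proof of \cref{theorem:if_ccat_has_finite_limits_perfect_presheaves_closed_under_limits} both split by projective dimension zero, and since $\nu$, like any additive functor, preserves \emph{split} cofibre sequences, one gets $f\simeq\nu(\cofib(e\to d\oplus s))$ and then $x\simeq\nu(\cofib(t\to c))$ — exhibiting $x$ as representable by an honest cofibre computed in $\ccat$, with no idempotent splitting required. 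If you want to salvage your approach you would have to either add idempotent-completeness as a hypothesis (weakening the lemma and breaking its later uses) or explain why perfectness forces the idempotent on $c$ to split, which is in effect what the paper's induction accomplishes.
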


\begin{proof}
Choose a $\pi_{0}$-surjection $\nu(c) \rightarrow x$ from a representable. If $x$ is of projective dimension zero, then the long exact sequence of \cref{remark:ext_groups_homotopy_groups_of_mapping_spectrum_hence_les} implies that it has a left lifting property with respect to $\pi_{0}$-surjections and hence the map from $\nu(c)$ is split, as needed. 

The other part is more interesting. By what we have shown above, if $x$ is of projective dimension zero, then it is a direct summand of a representable. It follows that $\Omega^{k} x$ is also of projective dimension zero for all $k \geq 0$, and it is again perfect by \cref{theorem:if_ccat_has_finite_limits_perfect_presheaves_closed_under_limits}. As it is always representable for $k$ large enough by \cref{lemma:perfection_detected_by_looping_into_a_representable}, it will be enough to verify that if $x$ is of projective dimension zero and $\Omega x$ is representable, then $x$ is representable. 

In the proof of \cref{theorem:if_ccat_has_finite_limits_perfect_presheaves_closed_under_limits}, we have constructed a cofibre sequence
\[
f \rightarrow \nu(c) \rightarrow x
\]
such that $f$ itself fits into a cofibre sequence
\[
\nu(e) \rightarrow \nu(d) \oplus \Omega x \rightarrow f.
\]
When $x$ is of projective dimension zero, then the first sequence is split. Thus, $f$ is of projective dimension zero and hence the second one is also split. 

By inductive assumption, we can we write $\Omega x \simeq \nu(s)$. It follows that $f \simeq \nu(\mathrm{cofib}(e \rightarrow d \oplus s))$, as $\nu$ preserves cofibres along split cofibre sequences, as any additive functor does. Thus, $f$ is representable, say $f \simeq \nu(t)$ and $x \simeq \nu(\mathrm{cofib}(t \rightarrow c))$ by the same argument. 
\end{proof}

\begin{lemma}
\label{lemma:any_perfect_sheaf_of_finite_projective_dimension}
Let $x$ be a perfect presheaf. Then, $x$ is of finite projective dimension at most $k$ for some $k \geq 0$. 
\end{lemma}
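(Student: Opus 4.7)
The plan is to proceed by induction on the finite-colimit generation of perfect presheaves. Let $\mathcal{P} \subseteq A_{\infty}^{\omega}(\ccat)$ denote the full subcategory spanned by perfect presheaves of finite projective dimension. By \cref{definition:perfect_presheaf}, it suffices to show that $\mathcal{P}$ contains every representable and is closed under finite colimits inside $A_{\infty}^{\omega}(\ccat)$; for then $\mathcal{P}$ must coincide with the latter.

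The base case is \cref{example:representables_are_of_proj_dimension_zero}: every representable $\nu(c)$ has projective dimension at most $0$, and so lies in $\mathcal{P}$. Closure under binary coproducts is formal, since $\Ext^{s}(-, w)$ sends finite coproducts in the first variable to products of abelian groups; if $x$ and $y$ have projective dimensions at most $k$ and $\ell$, then
\[
\Ext^{s}(x \oplus y, w) \simeq \Ext^{s}(x, w) \oplus \Ext^{s}(y, w)
\]
vanishes for $s > \max(k, \ell)$ and any almost perfect $w$, so $x \oplus y \in \mathcal{P}$.

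For closure under cofibres, suppose $x \to y$ is a map in $\mathcal{P}$ whose source and target have projective dimensions at most $k$ and $\ell$ respectively, and let $z$ be its cofibre. The long exact sequence of \cref{remark:ext_groups_homotopy_groups_of_mapping_spectrum_hence_les} yields, for each almost perfect $w$, an exact
\[
\Ext^{s-1}(x, w) \to \Ext^{s}(z, w) \to \Ext^{s}(y, w);
\]
both outer terms vanish as soon as $s > \max(k+1, \ell)$, so $z$ has projective dimension at most $\max(k+1, \ell)$ and lies in $\mathcal{P}$. Since the zero presheaf is representable as $\nu(0)$, and since pushouts in the prestable $\infty$-category $A_{\infty}^{\omega}(\ccat)$ can be computed as cofibres of maps into binary coproducts, closure under binary coproducts and cofibres promotes to closure under all finite colimits, completing the argument.

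I expect no real obstacle; this is a direct induction whose only input beyond formal $\Ext$-manipulations is the existence of the long exact sequences in \cref{remark:ext_groups_homotopy_groups_of_mapping_spectrum_hence_les}, which in turn reflects the prestability of $A_{\infty}(\ccat)$ established in \cref{corollary:almost_perfect_presheaves_closed_under_finite_limits_colimits_truncations}. The only subtlety worth flagging is that the projective-dimension bounds grow with each cofibre, but that growth is finite at each step and so never threatens the finiteness conclusion.
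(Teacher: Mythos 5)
Your proof is correct and follows essentially the same route as the paper: representables have projective dimension zero, the long exact sequence of \cref{remark:ext_groups_homotopy_groups_of_mapping_spectrum_hence_les} gives closure under cofibres (with the bound growing by at most one), and hence the class contains all perfect presheaves. The paper states this more tersely but the argument is identical in substance.
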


\begin{proof}
By the long exact sequence of \cref{remark:ext_groups_homotopy_groups_of_mapping_spectrum_hence_les}, the subcategory of additive presheaves with this property is closed under cofibres. As it also contains all representables, we deduce that it contains all perfect presheaves. 
\end{proof}

\begin{corollary}
\label{corollary:perfect_presheaves_realizations_of_skeletal_simplicial_objects}
Let $\ccat$ be an additive $\infty$-category with finite limits. Then an additive presheaf $x \in \Fun_{\Sigma}(\ccat^{op}, \spaces)$ is perfect if and only if it is a geometric realization of a simplicial object $\nu(c_{\bullet})$ which is $k$-skeletal for some $k \geq 0$. 
\end{corollary}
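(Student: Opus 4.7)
The backward direction is immediate: a $k$-skeletal simplicial object $\nu(c_\bullet)$ is, by definition, the left Kan extension of its restriction to $\Delta_{\leq k}^{op}$, so its realization $|\nu(c_\bullet)|$ is the colimit over the finite category $\Delta_{\leq k}^{op}$ of a diagram of representables; in particular it is a finite colimit of representables and hence perfect.

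For the forward direction, I would induct on the projective dimension of $x$, which is finite by \cref{lemma:any_perfect_sheaf_of_finite_projective_dimension}. The base case (projective dimension $0$) is settled by \cref{lemma:perfects_of_projective_dimension_zero_are_rep}: such an $x$ is representable, and hence the realization of the constant ($0$-skeletal) simplicial object. For the inductive step, assume $x$ has projective dimension $k > 0$, choose a $\pi_0$-surjection $\nu(c) \to x$, and let $f$ denote its fibre. By \cref{theorem:if_ccat_has_finite_limits_perfect_presheaves_closed_under_limits} the fibre $f$ is perfect, and by \cref{lemma:fibres_along_surjection_from_rep_lowers_projective_dimension} it has projective dimension at most $k-1$, so by the inductive hypothesis $f \simeq |\nu(d_\bullet)|$ for some $(k-1)$-skeletal simplicial object of representables.

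It remains to promote the cofibre sequence $|\nu(d_\bullet)| \to \nu(c) \to x$ to a $k$-skeletal presentation of $x$. The plan is to view the given map $|\nu(d_\bullet)| \to \nu(c)$ as an augmentation of $\nu(d_\bullet)$, and to build a $k$-skeletal simplicial object $\nu(e_\bullet)$ that agrees with $\nu(d_\bullet)$ in degrees below $k$ and whose normalized piece in degree $k$ is $\nu(c)$, with the new face maps on the $\nu(c)$-summand encoding the augmentation. Concretely, one sets $\nu(e_i) = \nu(d_i)$ for $i < k$, takes $\nu(e_k)$ to be the direct sum of the level-$k$ latching object of $\nu(d_\bullet)$ with $\nu(c)$, defines the extra face maps on the $\nu(c)$-summand from the augmentation, and extends to higher degrees by left Kan extension.

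The main obstacle is verifying that $|\nu(e_\bullet)| \simeq \mathrm{cofib}(|\nu(d_\bullet)| \to \nu(c)) \simeq x$. The cleanest approach is probably to pass to the semisimplicial picture: in our additive prestable setting, realizations of simplicial objects agree with those of their underlying semisimplicial objects, so it suffices to specify face maps $d_0, \ldots, d_k : \nu(c) \to \nu(d_{k-1})$ extending those of $\nu(d_\bullet)$, which amounts to unwinding the augmentation data, and then to identify the colimit of the resulting $(k+1)$-stage semisimplicial diagram with the desired cofibre by a direct pushout computation using the skeletal filtration.
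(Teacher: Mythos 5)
Your backward direction and your overall strategy for the forward direction (induction on projective dimension, via \cref{lemma:any_perfect_sheaf_of_finite_projective_dimension}, \cref{lemma:fibres_along_surjection_from_rep_lowers_projective_dimension} and \cref{lemma:perfects_of_projective_dimension_zero_are_rep}) agree with the paper's. The gap is in the splicing step. You place $\nu(c)$ in simplicial degree $k$, so that $\nu(e_{\bullet})$ agrees with $\nu(d_{\bullet})$ below degree $k$ and has normalized piece $\nu(c)$ on top. First, this does not typecheck: the ``new face maps on the $\nu(c)$-summand'' would have to be maps $\nu(c) \rightarrow \nu(d_{k-1})$, whereas the augmentation $|\nu(d_{\bullet})| \rightarrow \nu(c)$ is encoded by a map $\nu(d_{0}) \rightarrow \nu(c)$ going in the opposite direction, so there is nothing to define them from. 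Second, even granting such maps, the skeletal filtration shows that the realization of your $\nu(e_{\bullet})$ would be $\mathrm{cofib}(\Sigma^{k-1}\nu(c) \rightarrow f)$ --- the result of attaching a top cell to $f$ --- and not $\mathrm{cofib}(f \rightarrow \nu(c)) \simeq x$. Already for $k=1$, where $f \simeq \nu(d)$ is representable, your recipe produces $\mathrm{cofib}(\nu(c) \rightarrow \nu(d))$ for some map in the wrong direction rather than $\mathrm{cofib}(\nu(d) \rightarrow \nu(c))$.

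The correct splice is the mapping-cone shift: $\nu(c)$ must sit in degree $0$, with the normalized pieces of $\nu(d_{\bullet})$ shifted up by one and the new differential $\nu(d_{0}) \rightarrow \nu(c)$ given by the augmentation; then the skeletal filtration identifies the realization with $\mathrm{cofib}(|\nu(d_{\bullet})| \rightarrow \nu(c)) \simeq x$, and the result is $k$-skeletal. Equivalently --- and this is how the paper argues --- one does not splice at the end at all, but runs the resolution procedure from the proof of \cref{proposition:characterization_of_almost_perfect_presheaves} from the start: repeatedly choose $\pi_{0}$-surjections from representables to build a hypercover of $x$; by \cref{lemma:fibres_along_surjection_from_rep_lowers_projective_dimension} the projective dimension of the relevant objects drops at each stage, so after $k$ steps one reaches projective dimension zero, hence a representable by \cref{lemma:perfects_of_projective_dimension_zero_are_rep}, and the hypercover may be stopped there, yielding a $k$-skeletal simplicial object realizing $x$. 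Either fix works, but as written your inductive step constructs the wrong object.
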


\begin{proof}
Geometric realization of a $k$-skeletal simplicial object coincides with its $k$-skeleton, so it is a finite colimit. Thus, any presheaf with this property is perfect. 

Conversely, if $x$ is perfect, then it is also almost perfect by \cref{lemma:perfect_presheaves_are_almost_perfect}. Thus, the inductive procedure of choosing $\pi_{0}$-surjections from a representable in the proof of \cref{proposition:characterization_of_almost_perfect_presheaves} shows that it is a geometric realization of representables. 

Since $x$ is perfect, by \cref{lemma:any_perfect_sheaf_of_finite_projective_dimension} it is of projective dimension at most $k$ for some $k \geq 0$. It follows from \cref{lemma:fibres_along_surjection_from_rep_lowers_projective_dimension} that this inductive procedure results in a perfect of projective dimension zero after $k$-steps, and hence a representable by  \cref{lemma:perfects_of_projective_dimension_zero_are_rep}. Thus, the process can be stopped there, ending with a $k$-skeletal simplicial object. 
\end{proof}

\begin{remark}[Comparison to the almost perfect case]
As we observed in \cref{remark:for_characterization_of_ap_presheaves_no_finite_limits_needed}, the corresponding statement for almost perfect presheaves \cref{corollary:almost_perfect_presheaves_closed_under_finite_limits_colimits_truncations} requires slightly less assumptions; in particular it applies to the higher homotopy categories $h_{n} \ccat$ of an additive $\infty$-category with finite limits. 

This is not true in the perfect case. For example, consider the stable $\infty$-category of perfect complexes over a field $F$ and let $\ccat_{2} \mathrm{Perf}(F)$ be its homotopy $2$-category. Then, one can show that if we consider $F$ as a perfect complex concentrated in degree zero, then $y(F) = \pi_{0} \nu_{\ccat}(F)$ is not perfect by a suitable count of dimensions. Thus, perfect presheaves over $\ccat$ are not stable under finite limits. 

This is analogous to classical results about complexes over a ring. More precisely, connective almost perfect complexes are stable under limits in $\dcat(R)_{\geq 0}$ as soon as $R$ is coherent, but to deduce the same stability for perfect complexes requires the ring to be regular, a much stronger condition. 
\end{remark}

It will be useful to have a concrete criterion to decide whether a given presheaf is perfect in the sense of  \cref{definition:perfect_presheaf}, analogous to our characterization of almost perfect presheaves given in \cref{proposition:characterization_of_almost_perfect_presheaves}. The two cases of most interest to us are when $\ccat$ is either an ordinary category or stable; we will deal with the former here and characterize the latter in later in \cref{lemma:characterization_of_finite_presheaves_on_stable_inftycat}.

\begin{lemma}
\label{lemma:recognition_of_finite_presheaves_on_an_ordinary_category}
Let $\ccat$ be an additive $\infty$-category with finite limits which is an $n$-category for some finite $n$. Then, the following are equivalent for a presheaf $X \in \Fun_{\Sigma}(\ccat^{op}, \spaces)$:
\begin{enumerate}
    \item $X$ is perfect or 
    \item $\pi_{k}X$ is finitely presented for all $k \geq 0$ and vanishes for almost all of them. 
\end{enumerate}
\end{lemma}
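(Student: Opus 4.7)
The plan is to prove both implications separately, each by a straightforward induction. The key tools will be the formalism of Postnikov truncations inside the prestable $\infty$-category $A_\infty(\ccat)$ and the closure of perfect presheaves under finite limits established in \cref{theorem:if_ccat_has_finite_limits_perfect_presheaves_closed_under_limits}.

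For $(1) \Rightarrow (2)$, I would argue that the class of presheaves satisfying $(2)$ forms a subcategory of $A_\infty(\ccat)$ that contains every representable and is closed under finite colimits; by the definition of perfect, this forces every perfect presheaf to satisfy $(2)$. That representables lie in this class uses precisely the hypothesis that $\ccat$ is an $n$-category: the mapping spaces $\Map_\ccat(d,c)$ are $(n-1)$-truncated, so $\pi_k \nu(c) = 0$ for $k \geq n$, while the remaining homotopy presheaves $\pi_k \nu(c) \simeq y(\Omega^k c)$ are representable and in particular finitely presented. Closure under finite colimits follows from the long exact sequence of homotopy groups in the prestable $\infty$-category $A_\infty(\ccat)$ (equivalently the long exact sequence of \cref{remark:ext_groups_homotopy_groups_of_mapping_spectrum_hence_les}), combined with the observation that subquotients of finitely presented additive presheaves of abelian groups are finitely presented, since $A(\ccat)$ is abelian by \cref{proposition:if_c_has_finite_limits_freyd_envelope_is_an_abelian_subcat}.

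For $(2) \Rightarrow (1)$, first note that $X$ is automatically almost perfect by \cref{proposition:characterization_of_almost_perfect_presheaves}. I then plan to induct on the largest integer $N$ with $\pi_N X \neq 0$, treating $X \simeq 0$ as the trivially perfect base case. When $N = 0$, the presheaf $X$ is discrete and finitely presented and is perfect by \cref{lemma:discrete_almost_perfect_presheaves_are_perfect}. For the inductive step with $N \geq 1$, I would consider the Postnikov fibre sequence
\[
\Sigma^N(\pi_N X) \to X \to \tau_{\leq N-1} X
\]
in $A_\infty(\ccat)$, where I identify $\tau_{\geq N} X$ with $\Sigma^N(\pi_N X)$ using that $\Sigma^N$ is fully faithful on $N$-connective objects in a prestable $\infty$-category. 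The discrete presheaf $\pi_N X$ is perfect by \cref{lemma:discrete_almost_perfect_presheaves_are_perfect}, so $\Sigma^N(\pi_N X)$ is perfect; and $\tau_{\leq N-1} X$ again satisfies $(2)$ but with strictly smaller top homotopy degree, so it is perfect by the inductive hypothesis. Because every object in sight is connective, the above fibre sequence is simultaneously a cofibre sequence in $A_\infty(\ccat)$, and $X$ may be identified with the fibre in $A_\infty(\ccat)$ of a connecting map $\tau_{\leq N-1} X \to \Sigma^{N+1}(\pi_N X)$ between two perfect presheaves. Finally, \cref{theorem:if_ccat_has_finite_limits_perfect_presheaves_closed_under_limits} implies that such a finite limit is again perfect, closing the induction.

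The most delicate point will be justifying the interchange between fibre and cofibre sequences in the prestable $\infty$-category $A_\infty(\ccat)$, since this equivalence can fail for prestable categories in general. In our situation, however, every object in the Postnikov filtration is by construction connective, so the relevant suspensions never leave $A_\infty(\ccat)$ and the standard argument from the stabilization goes through unchanged. Once this bookkeeping is handled, the remaining ingredients are purely formal applications of results already established in the paper.
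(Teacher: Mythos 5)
Your proposal is correct and follows essentially the same route as the paper: the forward direction is the same closure-under-finite-colimits argument using the $n$-category hypothesis on representables, and your Postnikov induction for $(2)\Rightarrow(1)$ is precisely the paper's "iterated extensions starting from discrete finitely presented presheaves," made explicit via the rotated fibre sequence and \cref{theorem:if_ccat_has_finite_limits_perfect_presheaves_closed_under_limits}. The fibre/cofibre interchange you worry about is unproblematic here (and is used in the same way in \cref{corollary:perfect_prestable_freyd_envelope_generated_under_limits_by_representables_and_em_presheaves}), since every cofibre sequence in a prestable $\infty$-category with finite limits is automatically a fibre sequence.
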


\begin{proof}
Observe that additive presheaves satisfying $(2)$ are closed under finite colimits. When $\ccat$ is an $n$-category, we have $\pi_{k} \nu(c) = 0$ for $k \geq n$, and we deduce that every representable satisfies $(2)$, so that $(1) \Rightarrow (2)$. 

Conversely, every additive presheaf satisfying $(2)$ can be written using iterated extensions and colimits starting from discrete finitely presented presheaves. Since every such presheaf is perfect by \cref{lemma:discrete_almost_perfect_presheaves_are_perfect}, using \cref{theorem:if_ccat_has_finite_limits_perfect_presheaves_closed_under_limits} we deduce that $(2) \Rightarrow (1)$. 
\end{proof}

\subsection{Prestable enhancements to a homological functor}
\label{subsection:prestable_enhancements_to_homological_functor}

As we observed in \cref{remark:prestable_envelope_obtained_by_freely_adding_geometric_realizations}, by construction the prestable Freyd envelope $A_{\infty}(\ccat)$ has a certain universal property regarding geometric realizations, mirroring the relationship between the abelian Freyd envelope and cokernels. However, we have seen in \cref{theorem:homological_universal_property_of_freyd_envelope} that when $\ccat$ is stable, the latter has \emph{another} universal property, namely it is the target of a universal homology theory. In this section, we will describe a similar universal property of $A_{\infty}(\ccat)$. 

The idea of characterization of the prestable Freyd envelope of a stable $\infty$-category rests on the observation that the functor $\nu\colon \ccat \rightarrow A_{\infty}(\ccat)$ behaves like a homology theory in the sense of \cref{definition:homology_theory}, with the obvious difference being that the target is prestable rather than abelian. More precisely, it satisfies the following axioms: 

\begin{definition}
\label{definition:prestable_enhancement}
Let $\ccat$ be a stable $\infty$-category. We say a functor $\euscr{H}\colon \ccat \rightarrow \dcat$ into a prestable $\infty$-category $\dcat$ with finite limits is a \emph{prestable enhancement} to a homological functor when
\begin{enumerate}
\item $\euscr{H}$ is additive and 
\item $\euscr{H}$ is left exact.
\end{enumerate}
\end{definition}

\begin{remark}
A left exact functor between additive $\infty$-categories is automatically additive; thus, the first axiom is superfluous. We have written the two axioms in this way to underline the analogy with \cref{definition:homological_functor} of a homological functor valued in an abelian category.
\end{remark}

\begin{remark}[Why ``enhancement''?]
\label{remark:why_enhancement}
The terminology of \cref{definition:prestable_enhancement} is motivated as follows. If $c \rightarrow d \rightarrow e$ is a cofibre sequence in $\ccat$, we get a fibre sequence
\[
\euscr{H}(c) \rightarrow \euscr{H}(d) \rightarrow \euscr{H}(e)
\]
in $\dcat$. Taking homotopy groups, we obtain a long exact sequence
\[
\ldots \rightarrow \pi_{1} \euscr{H}(d) \rightarrow \pi_{1} \euscr{H}(e) \rightarrow \pi_{0} \euscr{H}(c) \rightarrow \pi_{0} \euscr{H}(d) \rightarrow \pi_{0} \hcat(e),
\]
in particular $\pi_{0} \circ \hcat\colon \ccat \rightarrow \dcat^{\heartsuit}$ is a homological functor in the classical sense of \cref{definition:homological_functor}.
\end{remark}

\begin{notation}
If $H\colon \ccat \rightarrow \dcat^{\heartsuit}$ is a homological functor, then by a \emph{prestable enhancement of $H$} we will mean a prestable enhancement $\euscr{H}\colon \ccat \rightarrow \dcat$ together with a chosen equivalence $\pi_{0} \circ \euscr{H} \simeq H$. 
\end{notation}

Note that many homological functors $H\colon \ccat \rightarrow \acat$ that arise in practice are in fact homology theories; that is, there is a chosen local grading on $\acat$ and $H$ has a structure of a locally graded functor, where $\ccat$ is locally graded using the suspension. 

Similarly, most examples of prestable enhancements which we construct also carry this additional structure: there is a chosen local grading on $\dcat$ and $\euscr{H}$ is a locally graded functor, so that there is a canonical equivalence 
\[
\euscr{H}(\Sigma c) \simeq (\euscr{H}(c))[1]_{\dcat},
\]
We decided not to make this data a part of of a definition of prestable enhancement, as universal properties stated without mentioning the local grading are strictly more general, and they imply they locally graded counterparts as in \cref{remark:universal_properties_of_local_gradings_on_prestable_freyd_envelopes} below.

\begin{remark}
Suppose that we are in the common situation where $\pi_{0} \circ \euscr{H}\colon \ccat \rightarrow \dcat^{\heartsuit}$ is a homology theory; that is, is locally graded. Then, using left exactness we see that 
\[
\pi_{k} \hcat(c) \simeq \pi_{0} \Omega^{k} \hcat(c) \simeq \pi_{0} \hcat(\Sigma^{-k} c) \simeq (\pi_{0} \euscr{H}(c))[-k]_{\dcat}.
\]
Thus, the homotopy groups of $\hcat$ exhibit a form of periodicity, and do not contain any more information than what is already visible in the  underlying homology theory $\pi_{0} \circ \hcat$. The additional information captured is more subtle than this, pertaining to how these homotopy groups are threaded together. 
\end{remark}

\begin{example}
For any spectrum $R$, we have a homology theory
\[
R_{*}\colon \spectra \rightarrow grAb,
\]
which for simplicity here we will treat as valued in graded abelian groups. We can write
\[
R_{n}(X) \colonequals \pi_{0}(R \otimes \Sigma^{-n} X),
\]
which suggests a prestable enhancement obtained by remembering the whole connective part of the relevant spectrum, rather than just the $\pi_{0}$. Indeed, it is not too difficult to check that we have a prestable-valued homology theory
\[
\euscr{R}\colon \spectra \rightarrow gr\spectra_{\geq 0}
\]
valued in the $\infty$-category of graded connective spectra given by 
\[
\euscr{R}_{n}(X)\colonequals  (R \otimes \Sigma^{-n}X)_{\geq 0}.
\]
Note that the above formula encodes the homotopy type of connective covers of $R \otimes X$, and so contains much more information than the $R$-homology groups of $X$ alone. 
\end{example}

\begin{example}
The functor $\nu\colon \ccat \rightarrow A_{\infty}(\ccat)$ is a prestable enhancement compatible with the local grading, by \cref{corollary:almost_perfect_presheaves_closed_under_finite_limits_colimits_truncations}, as the Yoneda embedding into presheaves preserves all limits.
\end{example}
It is the second example which is more relevant to us. As we observed in \cref{example:heart_of_prestable_envelope}, we have an equivalence $A_{\infty}^{\heartsuit}(\ccat) \simeq A(\ccat)$ between the heart of the prestable Freyd envelope and the abelian Freyd envelope. The underlying homology theory
\[
\pi_{0} \circ \nu\colon \ccat \rightarrow A(\ccat)
\]
is the Yoneda embedding for the homotopy category, which has the property of being the universal homology theory by \cref{theorem:homological_universal_property_of_freyd_envelope}. This suggests that perhaps $\nu$ itself is universal in a certain sense. This is indeed the case, as we will show below. 

Recall from \cref{remark:prestable_envelope_obtained_by_freely_adding_geometric_realizations} that $A_{\infty}(\ccat)$ already has one universal property; namely, it is obtained from $\ccat$ be freely attaching geometric realizations. As in the case of the classical Freyd envelope, the goal is to rephrase this property in the language of homology theories when $\ccat$ is stable. 

\begin{lemma}
\label{lemma:unique_extension_from_prestable_envelope_preserves_colimits_iff_original_functor_preserves_finite_sums}
Let $f\colon \ccat \rightarrow \dcat$ be an additive functor of additive $\infty$-categories. Suppose moreover that $\dcat$ admits geometric realizations and that $\ccat$ admits finite limits and $\kappa$-small sums. Then, the following are equivalent:

\begin{enumerate}
    \item $f$ preserves $\kappa$-small sums
    \item the unique geometric realization-preserving extension $\euscr{F}\colon A_{\infty}(\ccat) \rightarrow \dcat$ preserves all $\kappa$-small colimits.
\end{enumerate}
In particular, the left Kan extension of an additive functor is right exact. 
\end{lemma}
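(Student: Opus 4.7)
The plan is to treat $(2) \Rightarrow (1)$ by direct unwinding, and to handle the harder direction $(1) \Rightarrow (2)$ by reducing preservation of arbitrary $\kappa$-small colimits to preservation of $\kappa$-small direct sums, the latter being checked using the description of almost perfect presheaves as geometric realizations of representables.

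The backward implication is immediate: $\nu\colon \ccat \to A_{\infty}(\ccat)$ preserves $\kappa$-small direct sums by \cref{proposition:nu_preserves_direct_sums}, so if $\euscr{F}$ preserves $\kappa$-small colimits then so does $f \simeq \euscr{F} \circ \nu$.

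For the forward direction, observe first that $\euscr{F}$ preserves geometric realizations by construction, being the unique such extension of $f$ afforded by \cref{remark:prestable_envelope_obtained_by_freely_adding_geometric_realizations}. It is a general fact that a functor between $\infty$-categories admitting $\kappa$-small direct sums and geometric realizations which preserves both kinds of colimits preserves all $\kappa$-small colimits (every $\kappa$-small colimit can be written via a bar construction as the realization of a simplicial object with terms being $\kappa$-small direct sums). It therefore suffices to show that $\euscr{F}$ preserves $\kappa$-small direct sums. For this, I would apply \cref{proposition:characterization_of_almost_perfect_presheaves} to each member of a $\kappa$-small family $(X_{\alpha})$ in $A_{\infty}(\ccat)$, choosing simplicial resolutions $X_{\alpha} \simeq |\nu(c_{\alpha,\bullet})|$ by representables. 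Since colimits commute with colimits in $A_{\infty}(\ccat)$ and $\nu$ preserves $\kappa$-small direct sums, we obtain
\[
\bigoplus_{\alpha} X_{\alpha} \;\simeq\; \Big|\,\nu\Big(\bigoplus_{\alpha} c_{\alpha,\bullet}\Big)\Big|,
\]
and applying $\euscr{F}$ together with $\euscr{F} \circ \nu \simeq f$ and the hypothesis that $f$ preserves $\kappa$-small sums yields
\[
\euscr{F}\Big(\bigoplus_{\alpha} X_{\alpha}\Big) \;\simeq\; \Big|\,\bigoplus_{\alpha} f(c_{\alpha,\bullet})\Big| \;\simeq\; \bigoplus_{\alpha} |f(c_{\alpha,\bullet})| \;\simeq\; \bigoplus_{\alpha} \euscr{F}(X_{\alpha}).
\]

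The step most likely to require careful bookkeeping is the commutation of $\euscr{F}$ with the outer direct sum of geometric realizations, which formally requires that the relevant $\kappa$-small direct sums exist in $\dcat$; this is implicit in the meaning of statement $(2)$ and imposes no additional global hypothesis on $\dcat$, since one only needs sums of specific objects in the image of $\euscr{F}$. Finally, for the ``in particular'' clause, note that any additive functor preserves finite direct sums, so instantiating the equivalence in the finite case yields that the left Kan extension preserves finite colimits, i.e.\ is right exact.
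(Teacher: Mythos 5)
Your proposal is correct and follows essentially the same route as the paper: the backward direction via $\nu$ preserving sums, and the forward direction by first establishing preservation of $\kappa$-small direct sums using the presentation $X_\alpha \simeq |\nu(c_{\alpha,\bullet})|$ from \cref{proposition:characterization_of_almost_perfect_presheaves}, then reducing general $\kappa$-small colimits to sums and geometric realizations via the bar construction (the paper spells this last reduction out for coequalizers rather than citing it as a general fact, but it is the same argument).
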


\begin{proof}
Observe that $(2 \Rightarrow 1)$ is clear, since $\nu\colon \ccat \rightarrow A_{\infty}(\ccat)$ preserves all direct sums which exist in $\ccat$ by \cref{proposition:nu_preserves_direct_sums}. An argument proving $(1 \Rightarrow 2)$ appears in the work of Lurie as \cite{lurie2011derived}[4.2.14], but we will recall it here for the ease of reference. 

We first check that $\euscr{F}$ preserves $\kappa$-small direct sums. Suppose that $X_{\alpha} \in A_{\infty}(\ccat)$ is a $\kappa$-small family. Then, we can write each $X_{\alpha} \simeq | \nu(c_{\alpha, \bullet})|$ as a geometric realizations of objects in the image of $\nu$ and 
\[
\bigoplus_{\alpha} X_{\alpha} \simeq | \nu(\bigoplus c_{\alpha, \bullet}) |
\]
since $\nu$ commutes with directs sums by another application of \cref{proposition:nu_preserves_direct_sums}. Applying $\euscr{F}$ to the right hand side we get 
\[
| f(\bigoplus_{\alpha} c_{\alpha, \bullet}) | \simeq | \bigoplus _{\alpha} f(c_{\alpha, \bullet})| \simeq \bigoplus _{\alpha} | f(c_{\alpha, \bullet}) | \simeq \bigoplus_{\alpha} \euscr{F}(X_{\alpha}),
\]
which is what we wanted. 

Having verified direct sums, it is enough to check that $\euscr{F}$ preserves coequalizers, so let 
\[
X^{\prime} \rightrightarrows X
\]
consist of two paraller arrows in $A_{\infty}(\ccat)$. This admits a left Kan extension to a simplicial object which will have the form 
\[
\ldots X^{\prime} \oplus X \oplus X \triplerightarrow X^{\prime} \oplus X \rightrightarrows X,
\]
and whose colimit is the same as that of the original diagram by virtue of being a left Kan extension. Since $\euscr{F}$ preserves geometric realizations and direct sums, the above diagram will be taken to the left Kan extension of the image of $\euscr{F}(X^{\prime}) \rightrightarrows \euscr{F}(X)$. As $\euscr{F}$ preserves geometric realizations and finite sums, we deduce that it preserves coequalizer diagrams, as needed.
\end{proof}

\begin{theorem}
\label{theorem:universal_property_of_prestable_freyd_envelope}
Let $\hcat\colon \ccat \rightarrow \dcat$ be a prestable enhancement to a homological functor and suppose that $\dcat$ admits geometric realizations. Then, there exists an essentially unique exact, geometric realization-preserving functor $\euscr{L}\colon A_{\infty}(\ccat) \rightarrow \dcat$ such that the following diagram commutes 

\begin{center}
\begin{tikzcd}
	{\ccat} && {\dcat} \\
	& {\acat_{\infty}(\ccat)}
	\arrow["{\hcat}", from=1-1, to=1-3]
	\arrow["{\nu}"', from=1-1, to=2-2]
	\arrow["{\exists_{!} \euscr{L}}"', from=2-2, to=1-3, dashed]
\end{tikzcd}.
\end{center}
In other words, to give an exact, geometric realization-preserving functor out of the prestable Freyd envelope $A_{\infty}(\ccat)$ into a prestable $\infty$-category $\dcat$ with finite limits and geometric realizations is essentially the same datum as to give a homological functor $\ccat \rightarrow \dcat^{\heartsuit}$ together with a prestable enhancement.
\end{theorem}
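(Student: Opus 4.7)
The plan is to construct $\euscr{L}$ explicitly as a left Kan extension and then verify the required properties. First, the existence and uniqueness among geometric realization-preserving extensions follows immediately from the universal property of $A_{\infty}(\ccat)$ recorded in \cref{remark:prestable_envelope_obtained_by_freely_adding_geometric_realizations}: applied to $\euscr{H}\colon \ccat \to \dcat$ (noting $\dcat$ has geometric realizations), it produces a unique geometric realization-preserving $\euscr{L} \colonequals \mathrm{Lan}_{\nu} \euscr{H}$ with $\euscr{L} \circ \nu \simeq \euscr{H}$. Concretely, any $X \in A_{\infty}(\ccat)$ admits by \cref{proposition:characterization_of_almost_perfect_presheaves} a resolution $X \simeq |\nu(c_{\bullet})|$, and the extension is forced to satisfy $\euscr{L}(X) \simeq |\euscr{H}(c_{\bullet})|$.

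Right exactness of $\euscr{L}$ is then a direct consequence of \cref{lemma:unique_extension_from_prestable_envelope_preserves_colimits_iff_original_functor_preserves_finite_sums}: since $\euscr{H}$ is in particular additive (being left exact between additive $\infty$-categories), the extension preserves all colimits that exist in $A_{\infty}(\ccat)$, and therefore in particular all finite colimits.

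The substantive remaining step, and in my view the main obstacle, is verifying that $\euscr{L}$ is left exact. Additivity reduces this to preservation of fibers of maps. For a map between representables $\nu(a) \to \nu(b)$, the fact that $\nu$ is left exact (\cref{proposition:products_in_prestable_freyd_envelope}) gives $\mathrm{fib}(\nu(a) \to \nu(b)) \simeq \nu(\mathrm{fib}(a \to b))$, and left exactness of $\euscr{H}$ at the level of the fiber sequence $\mathrm{fib}(a\to b) \to a \to b$ in $\ccat$ is exactly what is needed. The subtlety is extending this to arbitrary maps, because fibers do not commute with geometric realizations in a general prestable $\infty$-category.

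My preferred strategy is to stabilize: view $\euscr{H}$ as a functor $\ccat \to \mathrm{Sp}\,\dcat$ via the inclusion of the connective part, in which case it becomes an exact functor between stable $\infty$-categories. Using that $\mathrm{Sp}\,A_{\infty}(\ccat)$ is generated under colimits by $\nu(\ccat)$, the exact extension produces a colimit-preserving functor $\mathrm{Sp}\,A_{\infty}(\ccat) \to \mathrm{Sp}\,\dcat$ which is automatically exact, since between stable $\infty$-categories, colimit-preserving functors preserve fibers. One then verifies that this stabilized extension restricts to a functor $A_{\infty}(\ccat) \to \dcat$: this uses that $A_{\infty}(\ccat)$ is generated under colimits by the representables $\nu(c)$, whose images $\euscr{H}(c)$ lie in the connective part $\dcat \subseteq \mathrm{Sp}\,\dcat$, combined with the fact that connective objects are closed under colimits. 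The restriction is then automatically left exact, being obtained by restricting an exact functor between stable $\infty$-categories along left-exact inclusions, and it agrees with the Kan extension $\euscr{L}$ by the essential uniqueness established in the first paragraph.
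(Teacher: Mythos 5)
Your first two paragraphs are fine and agree with the paper: the unique geometric-realization-preserving extension is the left Kan extension, and right exactness is exactly \cref{lemma:unique_extension_from_prestable_envelope_preserves_colimits_iff_original_functor_preserves_finite_sums}. You also correctly isolate left exactness as the real content. But your strategy for that step has two genuine gaps, and the first one is fatal at the very first move. The composite $\ccat \xrightarrow{\euscr{H}} \dcat \hookrightarrow \mathrm{Sp}\,\dcat$ is \emph{not} an exact functor of stable $\infty$-categories, because the inclusion of the connective part of a $t$-structure does not preserve fibres (loops in $\dcat$ are $\tau_{\geq 0}$ of loops in $\mathrm{Sp}\,\dcat$). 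Already for $\euscr{H} = \nu$ one has $\nu(\Omega c)(d) \simeq F_{\ccat}(d,c)_{\geq 1}[-1]$, while the fibre of $0 \to \nu(c)$ computed in presheaves of spectra has $\pi_{-1}(d) = [d,c]$; more starkly, $\tau_{\geq 0}\colon \spectra \to \spectra_{\geq 0} \hookrightarrow \spectra$ is a left exact enhancement whose composite into spectra is the (non-exact) truncation functor. So there is no exact functor $\ccat \to \mathrm{Sp}\,\dcat$ to stabilize, and no universal property producing the colimit-preserving functor $\mathrm{Sp}\,A_{\infty}(\ccat) \to \mathrm{Sp}\,\dcat$ you invoke.

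The second gap is the final claim that a restriction of an exact functor of stable $\infty$-categories to connective parts is automatically left exact. This is false: $\Sigma\colon \spectra \to \spectra$ is exact and preserves connectivity, but its restriction $\Sigma\colon \spectra_{\geq 0} \to \spectra_{\geq 0}$ does not preserve fibres ($\mathrm{fib}(0 \to S^0) = 0$ in connective spectra, while $\mathrm{fib}(0 \to \Sigma S^0) = S^0$). What is actually needed is that the stabilized functor be left $t$-exact, i.e.\ preserve coconnective objects, which by \cite{lurie_spectral_algebraic_geometry}[C.3.2.1] reduces to showing that $\euscr{L}$ carries truncated objects to truncated objects, hence discrete objects to discrete objects. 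This is precisely where the hypothesis that $\pi_{0}\circ\euscr{H}$ is \emph{homological} must enter (it does not enter your argument anywhere), and it is the heart of the paper's proof: one uses the cofibre sequence $\Sigma\Omega\nu(c) \to \nu(c) \to y(c)$ to see that $\euscr{L}(y(c)) \simeq \pi_{0}\euscr{H}(c)$ is discrete, and then resolves an arbitrary discrete object by representables, using exactness of the resulting chain complex in $\dcat^{\heartsuit}$ and the geometric realization spectral sequence to conclude that $\euscr{L}$ of a discrete object is discrete.
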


\begin{proof}
Let $\euscr{L}\colon A_{\infty}(\ccat) \rightarrow \dcat$ be the left Kan extension of $\hcat$ along $\nu$; by \cref{remark:prestable_envelope_obtained_by_freely_adding_geometric_realizations} we know that this is the unique extension of $\hcat$ along $\nu$ which preserves geometric realizations. We have to verify that this left Kan extension is exact if and only if $\euscr{H}$ was a prestable enhancement. 

By \cref{lemma:unique_extension_from_prestable_envelope_preserves_colimits_iff_original_functor_preserves_finite_sums}, we know that $\euscr{L}$ is right exact if and only if $\euscr{H}$ is additive. Thus, to finish proving the theorem we are left with verifying that one is left exact if and only if the other is. 

As $\nu$ is left exact, it is clear that if $\euscr{L}$ is left exact then so will be $\euscr{H}$. Conversely, assume that the latter holds. As $\hcat \simeq \euscr{L} \circ \nu$, $\euscr{L}$ commutes with $\Omega$ on objects of the form $\nu(c)$; that is, the canonical map
\[
\euscr{L} \Omega \nu(c) \rightarrow \Omega \euscr{L} \nu(c)
\]
is an equivalence. We claim that this implies that $\euscr{L}$ takes projectives in the heart $A_{\infty}(\ccat)^{\heartsuit}~\simeq~A(\ccat)$ into the heart $\dcat^{\heartsuit}$. 

We know that projectives in the Freyd envelope are retracts of those of the form $y(c) \simeq \pi_{0} \nu(c)$ for $c \in \ccat$, that is, of representables in the homotopy category of $\ccat$. We have a cofibre sequence
\[
\Sigma \Omega \nu (c)\rightarrow \nu(c) \rightarrow y(c)
\]
which by above is taken by $\euscr{L}$ to a cofibre sequence
\[
\Sigma \Omega \hcat(c) \rightarrow \hcat(c) \rightarrow \euscr{L}(y(c)),
\]
because $\euscr{L}$ commutes both with all suspensions by virtue of being cocontinuous, as well as with loops on objects of the form $\nu(c)$. This implies that $\euscr{L}(y(c)) \simeq \pi_{0} \hcat(c)$, which is discrete, so that the same is true for retracts.

Now suppose that $x \in A(\ccat)$ is arbitrary, then we can find a projective resolution of $x$
\[
\ldots \rightarrow y(c_{2}) \rightarrow y(c_{1}) \rightarrow y(c_{0})
\]
where every three terms are an image of a cofibre sequence in $\ccat$. The projective terms determine, by Dold-Kan correspondence, a simplicial object in $A(\ccat)$. As the above is a resolution, the geometric realization spectral sequence collapses on the second page and shows that we have an equivalence $| y(c_{\bullet}) | \simeq x$ in $A_{\infty}(\ccat)$.  

Since $\euscr{L}$ preserves geometric realizations and $\euscr{L}(y(c)) \simeq \pi_{0} \hcat(c)$, we deduce that $\euscr{L}(x)$ is the geometric realization of the simplicial object corresponding to the chain complex
\[
\ldots \rightarrow \pi_{0} \hcat(c_{2}) \rightarrow \pi_{0} \hcat(c_{1}) \rightarrow \pi_{0} \hcat(c_{0})
\]
But this is exact outside of the last term, since every three terms are an image of cofibre sequence in $\ccat$, and $\pi_{0} \circ \hcat$ is homological, as we observed in \cref{remark:why_enhancement}. Thus, the geometric realization spectral sequence proves that $\euscr{L}(x)$ is discrete, as claimed. 

Since $\euscr{L}$ is a right exact functor of prestable $\infty$-categories which takes discrete objects to discrete objects, we claim it follows formally that it is left exact. To check this, it is enough to verify that the induced exact extension $SW(\euscr{L})\colon SW(A_{\infty}(\ccat)) \rightarrow SW(\dcat)$ between stable $\infty$-categories of Spanier-Whitehead objects, is $t$-exact \cite{lurie_spectral_algebraic_geometry}[C.3.2.1]. 

The extension clearly preserves connective objects, and since  coconnective object of the Spanier-Whitehead $\infty$-category are precisely those which can be written as the $n$-th delooping of the image of some $n$-truncated object of the prestable $\infty$-category \cite{lurie_spectral_algebraic_geometry}[C.1.2.9], it is enough to check that $\euscr{L}$ itself preserves $n$-truncated objects. But every such is an extension of suspensions of discrete objects, so we can reduce to the case of $n=0$, which we already covered. This ends the argument that $\euscr{L}$ is left exact. 
\end{proof}

\begin{remark}
\label{remark:preserving_geometric_realizations_automatic_for_exact_functor_into_complete}
A slightly restrictive part of \cref{theorem:universal_property_of_prestable_freyd_envelope} is the requirement for the classifying functor $\euscr{L}$ to preserve geometric realizations. In practice, this is often satisfied automatically for formal reasons. 

For example, suppose that $\dcat$ is complete as a prestable $\infty$-category; that is, that we have $\dcat \simeq \varprojlim \tau_{\leq n} \dcat$. Then, to give an exact functor $A_{\infty}(\ccat) \rightarrow \dcat$ is the same as to give a compatible family of exact functors $\tau_{\leq n} A_{\infty}(\ccat) \rightarrow \tau_{\leq n} \dcat$. Each of the latter is a functor of $(n+1)$-categories, so that it preserves geometric realization as in this context it is equivalent to a finite colimit. We deduce that the original functor also preserves geometric realizations, as needed. 
\end{remark}

\begin{remark} In view of \cref{lemma:unique_extension_from_prestable_envelope_preserves_colimits_iff_original_functor_preserves_finite_sums} we see that under the conditions of \cref{theorem:universal_property_of_prestable_freyd_envelope}, the prestable enhancement $\hcat$ preserves $\kappa$-small sums if and only if the unique extension $\euscr{L}$ does. In particular, if $\hcat$ preserves all small sums, then $\euscr{L}$ is cocontinuous. 
\end{remark}

As we observed in \cref{remark:preserving_geometric_realizations_automatic_for_exact_functor_into_complete}, a slightly unsatisfying part of this result is the requirement that the classifying functor $\euscr{L}\colon A_{\infty}(\ccat) \rightarrow \dcat$ should preserve geometric realizations. No such additional restriction is needed in the classical setting of  \cref{theorem:homological_universal_property_of_freyd_envelope}, this comes down to the fact that in ordinary categories geometric realizations are equivalent to reflexive coequalizers, hence are in fact finite colimits. This issue can be avoided by working with the perfect prestable Freyd envelope, as the following shows. 

\begin{theorem}
\label{theorem:universal_property_of_finite_presheaves}
Suppose that $\ccat$ is stable and let $\hcat\colon \ccat \rightarrow \dcat$ be a prestable enhancement to a homological functor. Then, there exists an essentially unique exact functor $\euscr{L}\colon A^{\omega}_{\infty}(\ccat) \rightarrow \dcat$  such that the following diagram commutes 

\begin{center}
\begin{tikzcd}
	{\ccat} && {\dcat} \\
	& {\acat^{\omega}_{\infty}(\ccat)}
	\arrow["{\hcat}", from=1-1, to=1-3]
	\arrow["{\nu}"', from=1-1, to=2-2]
	\arrow["{\exists_{!} \euscr{L}}"', from=2-2, to=1-3, dashed]
\end{tikzcd}.
\end{center}
In other words, to give an exact functor out of the perfect prestable Freyd envelope $A^{\omega}_{\infty}(\ccat)$ into a prestable $\infty$-category $\dcat$ with finite limits is essentially the same datum as to give a homological functor $\ccat \rightarrow \dcat^{\heartsuit}$ together with a prestable enhancement.
\end{theorem}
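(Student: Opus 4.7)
The plan is to first construct $\euscr{L}$ using the universal property of $A^{\omega}_{\infty}(\ccat)$ established in \cref{remark:universal_property_of_perfect_freyd_envelope}, which provides a unique right exact extension of $\hcat$ along $\nu\colon \ccat \to A^{\omega}_{\infty}(\ccat)$. Since an exact functor is in particular right exact, this universal property also yields the uniqueness statement, and only the left exactness of the right exact extension remains to be verified.

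To show left exactness, we adapt the proof of \cref{theorem:universal_property_of_prestable_freyd_envelope}, noting that the geometric realization-preservation hypothesis is no longer necessary because every perfect presheaf is built from representables by \emph{finite} colimits. On representables, $\euscr{L}\Omega\nu(c) \simeq \hcat(\Sigma^{-1}c) \simeq \Omega\hcat(c) \simeq \Omega\euscr{L}\nu(c)$ by left exactness of $\hcat$. Applying $\euscr{L}$ to the cofibre sequence $\Sigma\Omega\nu(c) \to \nu(c) \to y(c)$ then yields a cofibre sequence in $\dcat$ whose first map is an equivalence onto $\Sigma\Omega\hcat(c)$, so $\euscr{L}(y(c)) \simeq \pi_{0}\hcat(c)$ is discrete; closure under retracts extends this to all projectives of $A(\ccat)$.

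For a general discrete perfect presheaf $x \in A^{\omega}_{\infty}(\ccat)^{\heartsuit} \simeq A(\ccat)$, \cref{lemma:any_perfect_sheaf_of_finite_projective_dimension} ensures that $x$ has finite projective dimension, hence admits a \emph{finite} projective resolution $0 \to P_{n} \to \cdots \to P_{0} \to x \to 0$ in the classical Freyd envelope, with each $P_{i}$ a retract of some $y(c_{i})$. This presents $x$ as an iterated cofibre in $A^{\omega}_{\infty}(\ccat)$, and since $\euscr{L}$ is right exact the same iterated cofibre computes $\euscr{L}(x)$ as the realization of the chain complex $\pi_{0}\hcat(c_{\bullet})$. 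The latter is exact outside the last term, because $\pi_{0}\circ\hcat$ is a classical homological functor in the sense of \cref{definition:homological_functor}, so the (now finite) geometric realization spectral sequence collapses to show $\euscr{L}(x)$ is discrete. Passing to Spanier--Whitehead stabilizations as in \cref{theorem:universal_property_of_prestable_freyd_envelope} via \cite[C.3.2.1]{lurie_spectral_algebraic_geometry}, preservation of both connective and truncated objects forces $t$-exactness, which gives left exactness of $\euscr{L}$.

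The main obstacle is the finiteness of projective dimension used in the third paragraph: this is precisely what distinguishes the perfect setting from the almost perfect one of \cref{theorem:universal_property_of_prestable_freyd_envelope} and is what allows us to dispense with any hypothesis on geometric realizations in $\dcat$. Everything else is a direct adaptation of the earlier argument.
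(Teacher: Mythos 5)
Your first two paragraphs are fine: uniqueness does follow from \cref{remark:universal_property_of_perfect_freyd_envelope} because an exact functor is in particular right exact, and the computation $\euscr{L}(y(c)) \simeq \pi_{0}\hcat(c)$ via the cofibre sequence $\Sigma\Omega\nu(c) \to \nu(c) \to y(c)$ is exactly as in the proof of \cref{theorem:universal_property_of_prestable_freyd_envelope}. The gap is in the third paragraph, at the step you yourself flag as the crux. The finite projective dimension supplied by \cref{lemma:any_perfect_sheaf_of_finite_projective_dimension} is measured by the $\Ext$-groups of \cref{definition:ext_groups_in_prestable_freyd_envelope}, whose objects of projective dimension zero are the representables $\nu(c)$ of the \emph{prestable} envelope; the finite resolution it produces (\cref{corollary:perfect_presheaves_realizations_of_skeletal_simplicial_objects}) is a skeletal simplicial object with non-discrete terms $\nu(c_{i})$, to which your ``chain complex exact away from the last term'' argument does not apply. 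It does \emph{not} yield a finite resolution of a discrete perfect $x$ in the abelian category $A(\ccat)$ by retracts of the $y(c_{i})$, and in general no such resolution exists: by \cref{theorem:for_stable_cat_representables_are_injective_in_freyd_envelope} representables are injective as well as projective in $A(\ccat)$, so $A(\ccat)$ is a Frobenius abelian category, and in a Frobenius category any object of finite projective dimension is already projective (the last syzygy is projective, hence injective, so the resolution splits off). For instance $\coker(y(S^{0}) \xrightarrow{2} y(S^{0}))$ in $A(\spectra^{\omega})$ is not a retract of a representable and admits no finite projective resolution. So the discreteness of $\euscr{L}(x)$ for a general discrete perfect $x$ — the input needed for the final $t$-exactness argument — is not established.

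The paper sidesteps this by a reduction rather than a re-run of the earlier argument: embed $\dcat$ into $\mathrm{Ind}(\dcat)$, which does admit geometric realizations, apply \cref{theorem:universal_property_of_prestable_freyd_envelope} there to get the unique exact, realization-preserving extension $A_{\infty}(\ccat) \to \mathrm{Ind}(\dcat)$, and observe that its restriction to $A_{\infty}^{\omega}(\ccat)$ lands in the essential image of $\dcat$, which is closed under finite colimits. If you want to keep your direct route, replace the resolution by the single presentation of \cref{lemma:discrete_almost_perfect_presheaves_are_perfect}, $x \simeq \cofib\bigl(\cofib(\nu(e) \to \nu(d)) \to \nu(c)\bigr)$ with $e = \fib(d \to c)$, and check by hand — using left exactness of $\hcat$ and the long exact sequence in the Spanier--Whitehead category of $\dcat$ — that the corresponding iterated cofibre in $\dcat$ is $\coker(\pi_{0}\hcat(d) \to \pi_{0}\hcat(c))$, hence discrete. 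That verification is the missing content.
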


\begin{proof}
Consider the universal right exact embedding $i\colon \dcat \rightarrow \textnormal{Ind}(\dcat)$ into a prestable $\infty$-category which admits all small colimits. Then, by \cref{theorem:universal_property_of_prestable_freyd_envelope}, $\euscr{H}$ induces a unique exact, geometric realization-preserving extension $\euscr{L}\colon A_{\infty}(\ccat) \rightarrow \textnormal{Ind}(\dcat)$. As the essential image of $\dcat$ in the $\textnormal{Ind}$-completion is closed under finite colimits, we deduce that the restriction of $\euscr{L}$ to $A_{\infty}^{\omega}(\ccat)$ factors uniquely through $\dcat$. This is the needed unique extension to an exact functor. 
\end{proof}

\begin{remark}[Universal property of the local grading]
\label{remark:universal_properties_of_local_gradings_on_prestable_freyd_envelopes}
As we observed in \cref{remark:local_grading_on_prestable_freyd_envelope}, the prestable Freyd envelope admits a unique local grading such that $\nu\colon \ccat \rightarrow A_{\infty}(\ccat)$ becomes a locally graded functor, where we grade $\ccat$ using the suspension functor. 

By the same argument as in the discrete case given in \cref{remark:universal_property_of_classical_freyd_envelope_and_local_gradings}, the universal property of \cref{theorem:universal_property_of_prestable_freyd_envelope} implies a universal property of $A_{\infty}(\ccat)$ as a locally graded $\infty$-category. To be more precise, for any locally graded prestable $\infty$-category $\dcat$ which admits geometric realizations, geometric-realization preserving, locally graded exact functors $\euscr{L}\colon A_{\infty}(\ccat) \rightarrow \dcat$ correspond uniquely to prestable enhancements $\ccat \rightarrow \dcat$ compatible with local gradings. 

Analogous discussion applies to the perfect prestable Freyd envelope $A_{\infty}^{\omega}(\ccat)$ with the difference being that $\dcat$ need not admit geometric realizations and $\euscr{L}$ need not preserve them. 
\end{remark}

\subsection{Thread structure on the prestable Freyd envelope}
\label{subsection:thread_structure_on_prestable_freyd_envelope}

Suppose that $\ccat$ is a stable $\infty$-category. As we observed in \cref{remark:local_grading_on_prestable_freyd_envelope}, the Freyd envelope $A_{\infty}(\ccat)$ has an induced local grading given by
\[
(X[1])(c) \colonequals X(\Sigma^{-1}c).
\]
This is the unique local grading for which $\nu\colon \ccat \rightarrow A_{\infty}(\ccat)$ is a locally graded functor; that is, such that $\nu(\Sigma c) \simeq \nu(c)[1]$. 

This local grading is different from the suspension of the prestable Freyd envelope; however, they are related by a natural transformation. The precise difference is the subject of this section.

\begin{construction}
\label{construction:canonical_thread_structure_on_prestable_freyd}
Since $\nu$ is a functor, for any $c \in \ccat$ we have a canonical map 
\[
\Sigma \nu c \rightarrow \nu(\Sigma c) \simeq \nu(c)[1].
\]
This is in fact a natural transformation of functors $\ccat \rightarrow A_{\infty}(\ccat)$, and by left Kan extension we obtain a natural transformation 
\[
\tau\colon \Sigma X \rightarrow X[1]
\]
of geometric realization-preserving endofunctors of the prestable Freyd envelope. 
\end{construction}

\begin{definition}
\label{definition:canonical_thread_structure_on_prestable_freyd_envelope}
The \emph{canonical thread structure} on $A_{\infty}(\ccat)$ is the natural transformation 
\[
\tau\colon \Sigma X \rightarrow X[1]
\]
of \cref{construction:canonical_thread_structure_on_prestable_freyd}.
\end{definition}

\begin{remark}
The thread structure on $X \in A_{\infty}(\ccat)$ can be described concretely. It is adjoint to the map $X \rightarrow \Omega X[1]$ which is levelwise over $c \in \ccat$ given by the canonical map of spaces 
\[
X(c) \simeq X(\Sigma \Omega c) \rightarrow \Omega X( \Omega c)
\]
measuring the extent to which $X$ takes suspensions to loops. To see this, observe that these two descriptions coincide on representables, and since both functors in question preserve geometric realizations, they must coincide for a general almost perfect presheaf. 
\end{remark}

An immediate calculation shows that if $c \in \ccat$, then the thread structure fits into a cofibre sequence
\[
\Sigma \nu(c)[-1] \rightarrow \nu(c) \rightarrow y(c);
\]
in other words, the map $\tau$ into a representable is a $1$-connective cover. More generally:

\begin{lemma}
\label{lemma:cofibre_of_powers_of_tau_into_representable_a_truncation}
For any $n \geq 1$, the map $\tau^{n}$ into a representable fits into a cofibre sequence
\[
\Sigma^{n} \nu(c)[-n] \rightarrow \nu(c) \rightarrow \nu(c)_{\leq (n-1)}.
\]
\end{lemma}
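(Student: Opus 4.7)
The plan is to identify the map $\tau^n\colon \Sigma^n \nu(c)[-n] \to \nu(c)$ with the counit of the adjunction $\Sigma^n \dashv \Omega^n$ in $A_{\infty}(\ccat)$, and then invoke the standard Postnikov decomposition $X_{\geq n} \to X \to X_{\leq n-1}$ applied to the connective object $X = \nu(c)$.

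Since $\nu\colon \ccat \to A_{\infty}(\ccat)$ preserves limits by \cref{proposition:products_in_prestable_freyd_envelope}, one has a natural identification
\[
\nu(c)[-k] = \nu(\Omega^{k} c) \simeq \Omega^{k} \nu(c)
\]
for every $k \geq 0$, so the source of $\tau^n$ becomes $\Sigma^n \Omega^n \nu(c)$. Unwinding \cref{construction:canonical_thread_structure_on_prestable_freyd}, the map $\tau\colon \Sigma\Omega \nu(c) \to \nu(c)$, being the left Kan extension of the canonical comparison $\Sigma \nu(c) \to \nu(\Sigma c)$, is adjoint to the identity of $\Omega\nu(c)$ and hence coincides with the counit of $\Sigma \dashv \Omega$; iterating gives the desired identification of $\tau^n$ with the $n$-fold counit. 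A direct homotopy group calculation then shows that $\Sigma^n \Omega^n \nu(c)$ is $n$-connective (since $\Omega^n$ preserves connectivity and shifts homotopy down by $n$, and $\Sigma^n$ shifts it back up by $n$) and that the counit induces an isomorphism on $\pi_{k}$ for $k \geq n$. Consequently $\Sigma^{n} \Omega^{n} \nu(c) \to \nu(c)$ realizes the $n$-connective cover, and the Postnikov cofibre sequence yields $\mathrm{cofib}(\tau^n) \simeq \nu(c)_{\leq n-1}$.

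The main obstacle is the careful identification of $\tau^n$ with the iterated counit; the cleanest circumvention is an induction on $n$ starting from the established base case $n=1$. For the inductive step, apply the base case to $\Omega^{n-1} c$ and suspend $(n-1)$ times to obtain a cofibre sequence
\[
\Sigma^{n} \nu(c)[-n] \to \Sigma^{n-1} \nu(c)[-(n-1)] \to \Sigma^{n-1} \pi_{n-1} \nu(c),
\]
then combine this with the inductive hypothesis via the octahedral axiom to produce a cofibre sequence
\[
\Sigma^{n-1} \pi_{n-1} \nu(c) \to \mathrm{cofib}(\tau^n) \to \nu(c)_{\leq n-2}.
\]
The resulting extension is identified with $\nu(c)_{\leq n-1}$ via the universal property of truncation: $\mathrm{cofib}(\tau^n)$ is $(n-1)$-truncated as an extension of two such objects, and the natural map $\nu(c) \to \mathrm{cofib}(\tau^n)$ agrees with the truncation map by comparison of the resulting Postnikov filtration with the standard one.
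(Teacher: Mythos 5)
Your proposal is correct and follows essentially the same route as the paper: the paper's one-line proof simply unwinds everything levelwise over $d \in \ccat$ to the standard cofibre sequence of grouplike $\mathbf{E}_{\infty}$-spaces $\mathrm{B}^{n}\Omega^{n}\Map_{\ccat}(d,c) \rightarrow \Map_{\ccat}(d,c) \rightarrow \Map_{\ccat}(d,c)_{\leq n-1}$, which is exactly your identification of $\tau^{n}$ on a representable with the counit $\Sigma^{n}\Omega^{n}\nu(c) \rightarrow \nu(c)$ exhibiting the $n$-connective cover. Your inductive repackaging via the octahedral axiom is a sound but unnecessary extra safeguard for the bookkeeping of the iterated counit.
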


\begin{proof}
After unwrapping definitions, levelwise over $d \in \ccat$, the above is the cofibre sequence of grouplike $E_{\infty}$-spaces $\mathrm{B}^{n} \Omega^{n} \Map_{\ccat}(d, c) \rightarrow \Map_{\ccat}(d, c) \rightarrow \Map_{\ccat}(d, c)_{\leq (n-1)}$. 
\end{proof}
Note that \cref{lemma:cofibre_of_powers_of_tau_into_representable_a_truncation} is not true for a general almost perfect presheaf. To obtain the correct generalization, observe that the cofibre $\nu(c)_{\leq (n-1)}$ can be interpreted as the presheaf represented by $c$ in the $n$-th homotopy category $h_{n} \ccat$. This suggests that the cofibre of powers of $\tau$ is related to the (higher) homotopy categories; this is indeed the case, as we will now show. 

\begin{notation}
\label{notation:n_th_homotopy_category_projection_adjunction_on_ap_presheaves} 
Let $\pi(n)\colon \ccat \rightarrow h_{n} \ccat$ be the projection onto the $n$-th homotopy category. We then have an adjunction
\[
\pi(n)^{*} \dashv \pi(n)_{*}\colon A_{\infty}(\ccat) \rightleftarrows A_{\infty}(h_{n} \ccat)
\]
between the $\infty$-categories of almost perfect presheaves given by left Kan extension and restriction. 
\end{notation}

\begin{remark}
Note that the restriction $\pi(n)_{*}$ takes representables to almost perfect presheaves by \cref{lemma:cofibre_of_powers_of_tau_into_representable_a_truncation}. As it also preserves geometric realizations, it follows that it preserves the property of being almost perfect, which is implicit in \cref{notation:n_th_homotopy_category_projection_adjunction_on_ap_presheaves}. 
\end{remark}

\begin{remark}
Note that the characterization of almost perfect presheaves in terms of having finitely presented homotopy groups also applies to presheaves over $h_{n} \ccat$, see \cref{remark:for_characterization_of_ap_presheaves_no_finite_limits_needed}. This will not be needed for what follows. 
\end{remark}

\begin{proposition}
\label{proposition:spiral_cofibre_sequence}
For any $n \geq 1$, the map $\tau^{n}$ into an almost perfect $X$ fits into a canonical cofibre sequence
\[
\Sigma^{n} X[-n] \rightarrow X \rightarrow \pi(n)_{*} \pi(n)^{*} X
\]
\end{proposition}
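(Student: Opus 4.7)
The plan is to reduce to the case of representables, where the statement follows from \cref{lemma:cofibre_of_powers_of_tau_into_representable_a_truncation}, and then extend to arbitrary almost perfect presheaves using the universal property of $A_\infty(\ccat)$ as the free cocompletion of $\ccat$ under geometric realizations.

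First I would verify the identification for representables. Since $\pi(n)^{*}\colon A_\infty(\ccat) \to A_\infty(h_n\ccat)$ is a left adjoint, it sends the representable $\nu(c)$ to the representable $\nu_{h_n\ccat}(\pi(n)(c))$. Restricting back, $\pi(n)_{*}\pi(n)^{*}\nu(c)$ is the presheaf sending $d$ to $\Map_{h_n\ccat}(\pi(n)(d), \pi(n)(c)) \simeq \tau_{\leq n-1}\Map_\ccat(d,c)$, since $h_n\ccat$ has $(n-1)$-truncated mapping spaces. As Postnikov truncation of almost perfect presheaves is computed levelwise on homotopy groups, this coincides with $\nu(c)_{\leq n-1}$, which by \cref{lemma:cofibre_of_powers_of_tau_into_representable_a_truncation} is the cofibre of $\tau^n\colon \Sigma^n\nu(c)[-n] \to \nu(c)$. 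Assembling the canonical identification with the unit $\eta$, I obtain, naturally in $c$, a cofibre sequence
\[
\Sigma^n\nu(c)[-n] \xrightarrow{\tau^n} \nu(c) \xrightarrow{\eta} \pi(n)_{*}\pi(n)^{*}\nu(c),
\]
that is, a functor $F\colon \ccat \to \Fun(\Delta^2, A_\infty(\ccat))$ factoring through the subcategory of cofibre sequences.

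Next I would extend $F$ to all of $A_\infty(\ccat)$. The three endofunctors $X \mapsto \Sigma^n X[-n]$, $X \mapsto X$, and $X \mapsto \pi(n)_{*}\pi(n)^{*} X$ all preserve geometric realizations: the first two trivially, and the third because $\pi(n)^{*}$ is cocontinuous as a left adjoint, while $\pi(n)_{*}$ is restriction along $\pi(n)^{op}$ and so preserves all colimits computed pointwise in $\spaces$, in particular geometric realizations, which remain in $A_\infty$ by \cref{corollary:almost_perfect_presheaves_closed_under_finite_limits_colimits_truncations}. The natural transformations $\tau^n$ (by \cref{construction:canonical_thread_structure_on_prestable_freyd}) and $\eta$ are likewise compatible with geometric realizations. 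Applying the universal property of $A_\infty(\ccat)$ (\cref{remark:prestable_envelope_obtained_by_freely_adding_geometric_realizations}) to the target $\Fun(\Delta^2, A_\infty(\ccat))$, left Kan extension produces a unique geometric-realization-preserving functor $\widetilde F\colon A_\infty(\ccat) \to \Fun(\Delta^2, A_\infty(\ccat))$ whose three entries are precisely these functors and whose transformations are $\tau^n$ and $\eta$.

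It remains to observe that cofibre sequences in the prestable $\infty$-category $A_\infty(\ccat)$ are closed under geometric realizations, since these are colimits and cofibres commute with colimits. As $\widetilde F(\nu(c))$ is a cofibre sequence for every $c$ and every $X \in A_\infty(\ccat)$ is a geometric realization of representables by \cref{proposition:characterization_of_almost_perfect_presheaves}, the value $\widetilde F(X)$ is a cofibre sequence for every almost perfect $X$, as required. The main obstacle I anticipate is the bookkeeping ensuring that $\pi(n)_{*}$ genuinely lands in $A_\infty(\ccat)$ and that its geometric realizations match those in $A_\infty$; both points reduce to the fact that $A_\infty(\ccat)$ is closed under geometric realizations inside $\Fun_\Sigma(\ccat^{op}, \spaces)$, where all colimits are computed pointwise.
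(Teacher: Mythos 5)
Your proposal is correct and follows essentially the same route as the paper's own proof: identify the unit $\nu(c) \rightarrow \pi(n)_{*}\pi(n)^{*}\nu(c)$ with the $(n-1)$-truncation so that \cref{lemma:cofibre_of_powers_of_tau_into_representable_a_truncation} gives the representable case, then propagate to all almost perfect presheaves using that all three functors (and cofibre sequences) are preserved by geometric realizations. Your version simply spells out the naturality bookkeeping (the functor to $\Fun(\Delta^{2}, A_{\infty}(\ccat))$) that the paper leaves implicit.
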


\begin{proof}
Observe that each of these three functors of $X$ preserves geometric realization. Since cofibre sequences are stable under geometric realizations, in fact all colimits, the case of an arbitrary almost perfect presheaf follows from \cref{lemma:cofibre_of_powers_of_tau_into_representable_a_truncation}, as after unwrapping the definitions we see that in the representable case the unit $\nu(c) \rightarrow \pi(n)_{*} \pi(n)^{*} \nu(c)$ identifies the target with the $(n-1)$-th Postnikov truncation of the source. 
\end{proof}

\begin{remark}
The fibre sequence established by \cref{proposition:spiral_cofibre_sequence} is also known in the literature as the \emph{spiral fibre sequence}, and arises in much more general contexts than the additive one presented in the current work, see \cite{balderrama2021deformations}, \cite{pstrkagowski2017moduli}. 
\end{remark}
The above fundamental observation places a lot of importance on this monad, so that it deserves its own notation. 

\begin{notation}
\label{notation:monad_associated_to_hnc_adjunction_is_tensor_with_ctau}
We will write 
\[
C\tau^{n} \otimes - \colonequals \pi(n)_{*} \pi(n)^{*}
\]
for the monad associated to the adjunction of \cref{notation:n_th_homotopy_category_projection_adjunction_on_ap_presheaves}. We will refer to modules over the monad $C\tau^{n} $ as \emph{$C\tau^{n}$-modules}. 
\end{notation}

\begin{warning}
Beware that \cref{notation:monad_associated_to_hnc_adjunction_is_tensor_with_ctau} is potentially abusive, as unless $\ccat$ is monoidal, there is no natural notion of a tensor product of almost perfect presheaves. 

Our notation is motivated by the fact that if $\ccat$ \emph{was} stably monoidal, then $A_{\infty}(\ccat)$ would inherit a monoidal structure right exact in each variable. In this case, we could identify the functor $C\tau^{n}$ with tensoring with $\nu(\monunit_{\ccat})_{\leq n-1} \simeq C\tau^{n} \otimes \nu(\monunit_{\ccat})$, and the monad structure would correspond to the canonical associative algebra structure on the latter. This is what happens in the case of the $\infty$-category of synthetic spectra of \cite{pstrkagowski2018synthetic}.
\end{warning}

\begin{remark}[Passing to perfect presheaves]
\label{remark:perfect_presheaves_and_ctau_monad}
Note that since perfect presheaves are stable under cofibres, \cref{proposition:spiral_cofibre_sequence} implies that if $X \in A_{\infty}^{\omega}(\ccat)$ is perfect, so is $C\tau^{n} \otimes X$ for any $n \geq 1$. In particular, the monad $C\tau^{n} $ restricts to a monad on the perfect prestable Freyd envelope. 
\end{remark}

One part of the importance of the monad $C\tau^{n} $ is that it allows one to recover the Freyd envelopes of the homotopy categories, as the following shows. 

\begin{proposition}
\label{proposition:adjunction_induced_by_projection_onto_homotopy_category_is_monadic}
The adjunction $\pi(n)^{*} \dashv \pi(n)_{*}\colon A_{\infty}(\ccat) \rightleftarrows A_{\infty}(h_{n} \ccat)$ is monadic; that is, it induces an equivalence \[
\Mod_{C\tau^{n} \otimes -}(A_{\infty}(\ccat)) \simeq A_{\infty}(h_{n} \ccat)
\]
with modules over the monad $C\tau^{n} $. 
\end{proposition}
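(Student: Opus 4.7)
The plan is to apply the monadicity theorem of Barr--Beck--Lurie (cf.\ \cite{higher_algebra}[4.7.3.5]) to the adjunction $\pi(n)^{*} \dashv \pi(n)_{*}$. We need to verify two conditions: that $\pi(n)_{*}$ is conservative, and that it preserves geometric realizations of $\pi(n)_{*}$-split simplicial objects. In fact, we will show that it preserves all geometric realizations, which is the stronger of the two.

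First I would check conservativity. The functor $\pi(n)_{*}$ is given by restriction along $\pi(n)\colon \ccat \rightarrow h_{n}\ccat$. Since $\pi(n)$ is essentially surjective (indeed, a bijection on objects), if $Y \in A_{\infty}(h_{n}\ccat)$ satisfies $\pi(n)_{*}Y \simeq 0$, then for every $c \in \ccat$ the space $Y(\pi(n)(c))$ is contractible, so $Y \simeq 0$. A functor between prestable $\infty$-categories which detects zero objects is conservative, so $\pi(n)_{*}$ is conservative.

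Next I would show $\pi(n)_{*}$ preserves geometric realizations. Let $Y_{\bullet}$ be a simplicial object in $A_{\infty}(h_{n}\ccat)$. Because $A_{\infty}(h_{n}\ccat)$ is by construction closed under geometric realizations inside $\Fun_{\Sigma}(h_{n}\ccat^{\mathrm{op}}, \spaces)$ (and similarly for $\ccat$), and because colimits in presheaf $\infty$-categories are computed pointwise, the geometric realization $|Y_{\bullet}|$ is computed objectwise: $|Y_{\bullet}|(x) \simeq |Y_{\bullet}(x)|$ for all $x \in h_{n}\ccat$. Restricting along $\pi(n)$ then yields $(\pi(n)_{*}|Y_{\bullet}|)(c) \simeq |Y_{\bullet}(\pi(n)(c))| \simeq |\pi(n)_{*}Y_{\bullet}(c)|$, and this is the pointwise formula for $|\pi(n)_{*}Y_{\bullet}|$ in $A_{\infty}(\ccat)$. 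Hence $\pi(n)_{*}$ commutes with geometric realizations.

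The main subtlety, though quite mild, is simply to record that $\pi(n)_{*}$ lands in almost perfect presheaves, which was noted just after \cref{notation:n_th_homotopy_category_projection_adjunction_on_ap_presheaves}: it follows from \cref{lemma:cofibre_of_powers_of_tau_into_representable_a_truncation}, which identifies the restriction of a representable on $h_{n}\ccat$ with the cofibre $\nu(c)_{\leq n-1}$ of a power of $\tau$, combined with closure of $A_{\infty}(\ccat)$ under geometric realizations and cofibres. Given these two properties, Barr--Beck--Lurie yields the canonical equivalence
\[
A_{\infty}(h_{n}\ccat) \simeq \Mod_{\pi(n)_{*}\pi(n)^{*}}(A_{\infty}(\ccat)) = \Mod_{C\tau^{n} \otimes -}(A_{\infty}(\ccat)),
\]
as desired.
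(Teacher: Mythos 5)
Your proposal is correct and follows exactly the paper's argument: invoke Barr--Beck--Lurie, note that $\pi(n)_{*}$ preserves geometric realizations because it is a restriction functor (colimits being computed pointwise), and deduce conservativity from the surjectivity of $\ccat \rightarrow h_{n}\ccat$ on objects. The extra remark that $\pi(n)_{*}$ lands in almost perfect presheaves is also the same point the paper records immediately after \cref{notation:n_th_homotopy_category_projection_adjunction_on_ap_presheaves}.
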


\begin{proof}
By \cite{higher_algebra}[4.7.3.5] it is enough to show that $\pi(n)_{*}$ preserves geometric realizations and is conservative. The first part is clear, as it is given by restriction, and the second follows from the fact that $\ccat \rightarrow h_{n} \ccat$ is surjective on objects. 
\end{proof}

\begin{remark}[Linearity of $\tau$]
Note that if $X$ is a module over the monad $C\tau^{n} \otimes -$, then the map
\[
\tau\colon \Sigma X[-1] \rightarrow X
\]
of \cref{definition:canonical_thread_structure_on_prestable_freyd_envelope} has a canonical lift to a morphism of modules. In this sense, $C\tau^{n} \otimes -$-modules behave as if they were modules over a commutative algebra.

To see this, observe that to any $c \in \ccat$ we can functorially associate a pullback square 
\[\begin{tikzcd}
	{\Omega c} & 0 \\
	0 & c
	\arrow[from=1-1, to=1-2]
	\arrow[from=1-1, to=2-1]
	\arrow[from=2-1, to=2-2]
	\arrow[from=1-2, to=2-2]
\end{tikzcd}\]
This yields a functor $\ccat \rightarrow \Fun(\Delta^{1} \times \Delta^{1}, \ccat)$ which descends to a functor $h_{n} \ccat \rightarrow \Fun(\Delta^{1} \times \Delta^{1}, h_{n}\ccat)$, since the target is an $n$-category. Composing with the Yoneda embedding
\[
h_{n} \ccat \hookrightarrow A_{\infty}(h_{n} \ccat) \simeq \Mod_{C\tau^{n} \otimes -}(A_{\infty}(\ccat)),
\]
for every $c \in h_{n} \ccat$ we get a canonical square 
\[\begin{tikzcd}
	{C\tau^{n} \otimes (\Omega c)} & 0 \\
	0 & C\tau^{n} \otimes \nu(c)
	\arrow[from=1-1, to=1-2]
	\arrow[from=1-1, to=2-1]
	\arrow[from=2-1, to=2-2]
	\arrow[from=1-2, to=2-2]
\end{tikzcd}\]
which is usually not a pullback square, but induces a morphism of $C\tau^{n}$-modules 
\[
(C\tau^{n} \otimes \nu(c))[-1] \simeq C\tau^{n} \otimes \nu(\Omega c) \rightarrow \Omega (C\tau^{n} \otimes \nu(c))
\]
the adjoint of which we can identify with the map $\tau$ of \cref{definition:canonical_thread_structure_on_prestable_freyd_envelope}. 

Since the $\infty$-category $A_{\infty}(\ccat) \simeq \Mod_{C\tau^{n} \otimes -}(A_{\infty}(\ccat))$ is freely generated under geometric realizations by $h_{n} \ccat$ and both $\Sigma$ and $[-1]$ preserve these, this natural transformation extends uniquely to all of $C\tau^{n} \otimes -$-modules.
\end{remark}

\begin{proposition}
\label{proposition:forgetful_functor_from_modules_is_exact}
The forgetful functor 
\[
\Mod_{C\tau^{n} \otimes -}(A_{\infty}(\ccat)) \rightarrow A_{\infty}(\ccat)
\]
is exact and induces an equivalence 
\[
\tau_{\leq n-1} (\Mod_{C\tau^{n} \otimes -}(A_{\infty}(\ccat)))  \simeq \tau_{\leq n-1}A_{\infty}(\ccat)
\]
between the subcategories of $(n-1)$-truncated objects. 
\end{proposition}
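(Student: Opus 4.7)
The plan is to exploit the monadic equivalence of \cref{proposition:adjunction_induced_by_projection_onto_homotopy_category_is_monadic}, which identifies $\Mod_{C\tau^{n} \otimes -}(A_{\infty}(\ccat))$ with $A_{\infty}(h_{n} \ccat)$ and the forgetful functor with the restriction functor $\pi(n)_{*}\colon A_{\infty}(h_n \ccat) \to A_{\infty}(\ccat)$. The two halves of the proposition can then be handled separately using this identification.

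For exactness, $\pi(n)_{*}$ automatically preserves finite limits as a right adjoint. For finite colimits, I would use that by \cref{corollary:almost_perfect_presheaves_closed_under_finite_limits_colimits_truncations} the finite colimits of $A_{\infty}(\ccat)$ and $A_{\infty}(h_n\ccat)$ are inherited from the ambient $\infty$-categories of additive presheaves, where they are computed pointwise after identifying additive presheaves of spaces on an additive $\infty$-category with presheaves of connective spectra. Since restriction along $\pi(n)$ is manifestly a pointwise operation, it preserves these colimits. An alternative route is to observe that the spiral cofibre sequence of \cref{proposition:spiral_cofibre_sequence} exhibits $C\tau^{n}\otimes -$ as a cofibre of the colimit-preserving endofunctors $\Sigma(-)[-n]$ and $\mathrm{id}$, so the monad $C\tau^n \otimes -$ itself preserves finite colimits, and the forgetful functor from modules over such a monad inherits this property.

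For the $(n-1)$-truncation claim, under the monadic identification the statement reduces to showing that $\pi(n)_{*}$ restricts to an equivalence $\tau_{\leq n-1} A_{\infty}(h_{n} \ccat) \simeq \tau_{\leq n-1} A_{\infty}(\ccat)$. Since $\Omega$ is computed pointwise in both prestable Freyd envelopes (being a finite limit, which is inherited from the presheaf category by \cref{corollary:almost_perfect_presheaves_closed_under_finite_limits_colimits_truncations}), the higher homotopy groups $\pi_{k}$ are pointwise, and so $\tau_{\leq n-1}$ of either side consists of almost perfect presheaves taking values in the full subcategory $\spaces_{\leq n-1}$ of $(n-1)$-truncated spaces. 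Because $\spaces_{\leq n-1}$ is an $n$-category, the universal property of $h_{n} \ccat$ supplies an equivalence of functor $\infty$-categories
\[
\Fun(\ccat^{op}, \spaces_{\leq n-1}) \simeq \Fun((h_{n}\ccat)^{op}, \spaces_{\leq n-1})
\]
implemented precisely by restriction along $\pi(n)$. This passes to the subcategories of almost perfect presheaves because the almost perfect condition only depends on the underlying homotopy category: by \cref{remark:freyd_envelope_determined_by_homotopy_category} the classical Freyd envelopes $A(\ccat)$ and $A(h_n \ccat)$ agree, and the characterization of almost perfectness by finitely presented homotopy presheaves applies to $h_n \ccat$ by \cref{remark:for_characterization_of_ap_presheaves_no_finite_limits_needed}.

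The main subtle point will be cleanly checking that the prestable $t$-structure on $A_{\infty}(\ccat)$ coincides with the pointwise one, i.e. that an almost perfect presheaf is $(n-1)$-truncated in the intrinsic sense exactly when it takes values in $(n-1)$-truncated spaces. Once this compatibility is nailed down, the universal property of $h_n \ccat$ applied to the $n$-category $\spaces_{\leq n-1}$ delivers the claimed equivalence without further difficulty.
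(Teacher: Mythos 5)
Your proposal is correct and follows essentially the same route as the paper: identify the forgetful functor with restriction $\pi(n)_{*}$ via the monadic equivalence of \cref{proposition:adjunction_induced_by_projection_onto_homotopy_category_is_monadic}, note that restriction is exact, and obtain the equivalence on $(n-1)$-truncated objects from the universal property of $h_{n}\ccat$ applied to presheaves valued in $(n-1)$-truncated spaces. The compatibility of the intrinsic and levelwise $t$-structures that you flag as the subtle point is already recorded in \cref{example:heart_of_prestable_envelope}, so your argument closes without further work.
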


\begin{proof}
By \cref{proposition:adjunction_induced_by_projection_onto_homotopy_category_is_monadic}, we can identify the above forgetful functor with the restriction functor $\pi(n)_{*}\colon A_{\infty}(h_{n} \ccat) \rightarrow A_{\infty}(\ccat)$ along the projection $\ccat \rightarrow h_{n} \ccat$. It follows that it must be exact, as the latter is. 

Similarly, since any presheaf of $(n-1)$-truncated spaces $X\colon \ccat^{op} \rightarrow \tau_{\leq n-1} \spaces$ uniquely factors through $h_{n}\ccat$ by the universal property of the latter, we deduce we have an equivalence between $(n-1)$-truncated objects. 
\end{proof}

\begin{remark}
In the setting of \cref{proposition:forgetful_functor_from_modules_is_exact}, we can deduce further that for any $k \geq 0$, the forgetful functor $\Mod_{C\tau^{n} \otimes -}(A_{\infty}(\ccat)) \rightarrow A_{\infty}(\ccat)$ induces an equivalence between subcategories of objects with homotopy concentrated in degrees $k \leq d < k+n$. Indeed, these subcategories are equivalent to those of $n$-truncated objects by a $k$-fold application of $\Sigma$. 
\end{remark}
As $n$ varies, the relevant $\infty$-categories and adjunctions can be assembled into a tower, as in the following. 

\begin{construction}[Relative case]
\label{construction:goerss_hopkins_tower_for_prestable_freyd_envelope}
We have a tower of additive $\infty$-categories 
\[
\ccat \rightarrow \ldots h_{2} \ccat \rightarrow h_{1} \ccat=h \ccat 
\]
which induces a tower of prestable Freyd envelopes and adjunctions 
\[
A_{\infty}(\ccat) \rightleftarrows \ldots \rightleftarrows A_{\infty}(h_{2} \ccat) \rightleftarrows A_{\infty}(h_1 \ccat).
\]
Each of these adjunctions is also monadic by the same argument as in \cref{proposition:adjunction_induced_by_projection_onto_homotopy_category_is_monadic}. Using notation analogous to the slightly abusive \cref{notation:monad_associated_to_hnc_adjunction_is_tensor_with_ctau}, for any $n \geq k \geq 1$, the adjunction induced by $h_{n+1} \ccat \rightarrow h_{k+1} \ccat$ induces a monad $C\tau^{k} \otimes_{C\tau^{n}} -$ on $C\tau^{n} $-modules with the property that 
\[
C\tau^{k} \otimes_{C\tau^{n}} (C\tau^{n} \otimes X) \simeq C\tau^{k} \otimes X.
\]
Note that any $C\tau^{k}$-module has a canonical structure of a $C\tau^{n}$-module, with the above monad corresponding to the left adjoint to that forgetful functor. 
\end{construction}

\begin{remark}
\label{remark:relative_monad_preserves_pi_0}
Note that the forgetful functors $\Mod_{C\tau^{k}}(A_{\infty}(\ccat)) \rightarrow \Mod_{C\tau^{n}}(A_{\infty}(\ccat))$ for $k < n$ also induce equivalences on the hearts, as they commute with forgetful functors into $A_{\infty}(\ccat)$ which both have this property by \cref{proposition:forgetful_functor_from_modules_is_exact}.

It follows that their left adjoints preserve $\pi_{0}$-s, and we deduce that after identifying both sides with objects of $A(\ccat)$, there is a canonical isomorphism 
\[
\pi_{0}(C\tau^{k} \otimes_{C\tau^{n}} X) \simeq \pi_{0}X.
\]
\end{remark}

\begin{warning}[Relative situation and perfection]
\label{warning:relative_monads_dont_preserve_perfection}
In \cref{construction:goerss_hopkins_tower_for_prestable_freyd_envelope}, we introduced the ``relative'' monads $C\tau^{k} \otimes_{C\tau^{n}} -$ on the $\infty$-category $\Mod_{C\tau^{n} \otimes -}(A_{\infty}(\ccat))$. These do not in general preserve modules with the property that the underlying presheaf is perfect, unlike the ``absolute'' monads $C\tau^{k} \otimes -$. 

As an explicit example, a reader can compute that $C\tau \otimes_{C\tau^{2}} y(c)$ for a non-zero $c \in \ccat$ has infinitely many homotopy groups on which $\tau$ necessarily acts by zero as the result is a $C\tau$-module. Thus, $C\tau \otimes _{C\tau^{2}} y(c)$ is not perfect. This is analogous to the observation that $k$ is not a perfect $k[x]/x^{2}$-module, even though both are perfect as modules over $k[x]$. 
\end{warning}

The process of forming $C\tau^{n}$-modules corresponds to modding out by some power of $\tau$. Going in the opposite direction, one can ask if there is a way to \emph{invert} $\tau$. We have the following construction, where for complete generality we will need to restrict to perfect presheaves. 

\begin{construction}[$\tau$-inversion]
\label{construction:tau_inversion}
Let $\ccat$ be a stable $\infty$-category. Then, $\ccat$ is also prestable, and the identity functor $\mathrm{id} \colon \ccat \rightarrow \ccat$ is a left exact and hence a prestable enhancement. Thus, by \cref{theorem:universal_property_of_finite_presheaves}, the left Kan extension provides a unique exact extension 
\[
\tau^{-1}\colon A_{\infty}^{\omega}(\ccat) \rightarrow \ccat
\]
which we will call the \emph{$\tau$-inversion functor}. 
\end{construction}

\begin{remark}[Why ``$\tau$-inversion''?]
Observe that for any $c \in \ccat$, the map 
\[
\Sigma \nu(c) \rightarrow \nu(\Sigma c) 
\]
becomes the identity of $\Sigma c$ after applying $\tau^{-1}\colon A_{\infty}^{\omega}(\ccat) \rightarrow \ccat$. Thus, we deduce that $\tau^{-1}$ inverts $\tau$ for all representables, and since it is exact, we deduce that it inverts the map $\tau$ on any perfect presheaf. One can show that it is a universal exact functor with this property. 

Note that this in particular means that $\tau^{-1}$ annihilates all presheaves of the form $C\tau \otimes X$, and hence all perfect presheaves with only finitely many non-zero homotopy groups.
\end{remark}

\begin{remark}
It is not too difficult to verify that there's a canonical isomorphism
\[
y(\tau^{-1} X) \simeq \varinjlim \pi_{k}X[k]
\]
in $A(\ccat)$, where the latter colimit exists as this diagram is eventually constant, as we show in \cref{lemma:characterization_of_finite_presheaves_on_stable_inftycat} below. Thus, the equivalence class of $\tau^{-1}X$ is completely determined by the homotopy groups of $X$.
\end{remark}

\begin{remark}[$\tau$-inversion of almost perfects]
If $\ccat$ admits geometric realizations, then the restriction to perfect presheaves can be avoided. In this case, \cref{theorem:universal_property_of_prestable_freyd_envelope} implies that the identity of $\ccat$ extends uniquely to an exact, geometric realization-preserving functor $\tau^{-1}\colon A_{\infty}(\ccat) \rightarrow \ccat$. On perfect presheaves, this necessarily coincides with the functor of \cref{construction:tau_inversion}.
\end{remark}
We now give the promised characterization of perfect presheaves using the thread structure. 

\begin{lemma}
\label{lemma:characterization_of_finite_presheaves_on_stable_inftycat}
Let $\ccat$ be a stable $\infty$-category. Then, the following are equivalent for an additive presheaf $X \in \Fun_{\Sigma}(\ccat^{op}, \spaces)$:
\begin{enumerate}
    \item $X$ is perfect, 
    \item $\pi_{k}X$ is finitely presented for each $k \geq 0$, and there exists an $N$ such that for any $k \geq N$, $\pi_{k}X$ is a representable discrete presheaf and $\tau\colon \pi_{k}X[-1] \rightarrow \pi_{k+1}(X)$ is an isomorphism.
\end{enumerate}
\end{lemma}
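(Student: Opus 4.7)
The plan is to prove both implications by reducing the perfectness of $X$ to the representability of $\Omega^{N}X$, since by the proof of \cref{theorem:if_ccat_has_finite_limits_perfect_presheaves_closed_under_limits}, an almost perfect presheaf is perfect exactly when $\Omega^{N}X$ is representable for some $N \geq 0$. In both directions \cref{proposition:characterization_of_almost_perfect_presheaves} identifies the condition ``$\pi_{k}X$ is finitely presented for all $k \geq 0$'' with $X$ being almost perfect, so only the behaviour in high degrees requires work.

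For $(1) \Rightarrow (2)$, \cref{lemma:perfection_detected_by_looping_into_a_representable} supplies some $N$ with $\Omega^{N}X \simeq \nu(d)$ for $d \in \ccat$, whence $\pi_{k}X = \pi_{k-N}\nu(d) = y(d)[N-k]$ for $k \geq N$, a representable discrete presheaf. Since $A_{\infty}(\ccat)$ is prestable with $\Omega\Sigma \simeq \mathrm{id}$, every $N$-connective object equals $\Sigma^{N}$ of its $N$-fold loops, so the connective cover satisfies $X_{\geq N} \simeq \Sigma^{N}\Omega^{N}X \simeq \Sigma^{N}\nu(d)$. Naturality of $\tau$ along $X_{\geq N} \to X$, which is an isomorphism on $\pi_{k}$ for $k \geq N$, reduces the $\tau$-iso condition to the case $X = \Sigma^{N}\nu(d)$. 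Applying the spiral cofibre sequence of \cref{proposition:spiral_cofibre_sequence} together with the fact that $C\tau \otimes \nu(d) \simeq y(d)$ and $C\tau \otimes -$ commutes with $\Sigma$ produces
\[ \Sigma^{N+1}\nu(d)[-1] \to \Sigma^{N}\nu(d) \to \Sigma^{N}y(d), \]
whose long exact sequence (using that $\Sigma^{N}y(d)$ has homotopy concentrated in degree $N$) gives $\tau\colon \pi_{k}\Sigma^{N}\nu(d)[-1] \xrightarrow{\simeq} \pi_{k+1}\Sigma^{N}\nu(d)$ for all $k \geq N$.

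For $(2) \Rightarrow (1)$, writing $\pi_{N}X \simeq y(d)$, the identity in $[d,d] = y(d)(d) = (\pi_{N}X)(d) = \pi_{0}\Map_{A_{\infty}(\ccat)}(\Sigma^{N}\nu(d), X)$ furnishes a canonical map $\phi\colon \Sigma^{N}\nu(d) \to X$, tautologically an isomorphism on $\pi_{N}$. Induction on $k \geq N$ using the naturality square for $\tau$
\[
\begin{tikzcd}
\pi_{k}\Sigma^{N}\nu(d)[-1] \ar[r, "\pi_{k}\phi{[-1]}"] \ar[d, "\tau"'] & \pi_{k}X[-1] \ar[d, "\tau"] \\
\pi_{k+1}\Sigma^{N}\nu(d) \ar[r, "\pi_{k+1}\phi"'] & \pi_{k+1}X
\end{tikzcd}
\]
then upgrades this to an isomorphism for all $k \geq N$: the left vertical is an isomorphism by the spiral computation of the previous paragraph, while the right vertical is an isomorphism by hypothesis~(2). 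Applying $\Omega^{N}$ and using $\Omega^{N}\Sigma^{N}\simeq \mathrm{id}$, the resulting map $\nu(d) \to \Omega^{N}X$ is an isomorphism on every $\pi_{k}$, hence an equivalence, since equivalences in $\Fun_{\Sigma}(\ccat^{op}, \spaces)$ are detected pointwise on homotopy groups of spaces. Thus $\Omega^{N}X$ is representable, and $X$ is perfect by the proof of \cref{theorem:if_ccat_has_finite_limits_perfect_presheaves_closed_under_limits}.

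The main obstacle is conceptual rather than technical: recognising that the $\tau$-iso condition in~(2) is exactly what is needed to promote the isomorphism $\pi_{N}X \simeq y(d)$ to an equivalence $\Omega^{N}X \simeq \nu(d)$ via a single canonical map and a short induction. Once the identification $X_{\geq N} \simeq \Sigma^{N}\nu(d)$ is established in the perfect case, everything else is bookkeeping with the spiral cofibre sequence and the local grading.
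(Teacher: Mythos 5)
Your proof is correct and follows essentially the same route as the paper's: both directions hinge on \cref{lemma:perfection_detected_by_looping_into_a_representable} to reduce to $\Omega^{N}X$ being representable, and on the $\tau$-naturality induction to upgrade a single $\pi_{N}$-isomorphism from a representable to an equivalence. The only cosmetic differences are that you work with the map $\Sigma^{N}\nu(d)\to X$ and loop down rather than passing to $\Omega^{N}X$ at the outset, and that you spell out via the spiral cofibre sequence why $\tau$ acts invertibly in high degrees, a point the paper merely asserts.
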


\begin{proof}
If $X$ is perfect, then it is almost perfect and thus it has finitely presented homotopy groups. Moreover, we have $\Omega^{k} X \simeq \nu(c)$ for some $c \in \ccat$ and $k \geq 0$ by \cref{lemma:perfection_detected_by_looping_into_a_representable}. Thus, $\tau$ acts isomorphically on $\pi_{i}X$ for $i \geq k$. 

Conversely, suppose that $X$ satisfies $(2)$. For any $N \geq 0$ we have a fibre sequence 
\[
X \rightarrow X_{\leq N-1} \rightarrow \Sigma^{N+1} \Omega^{N} X,
\]
where by assumption the middle presheaf is bounded with finitely presented homotopy groups and hence perfect. Thus, it is enough to verify that $\Omega^{N}X$ is representable, and we can replace $X$ by the latter. 

By choosing $N$ large enough and we can assume that $\pi_{0}X$ is a representable discrete presheaf, and that $\tau\colon \pi_{k}X[-1] \rightarrow \pi_{k+1}X$ is an isomorphism for all $k \geq 0$. 
Choose an element of $\pi_{0}X(c)$ which corresponds to an isomorphism $\pi_{0}X \simeq y(c)$; this determines a homotopy class of maps $\nu(c) \rightarrow X$. The latter is a $\pi_{0}$-isomorphism by construction and since $\tau$ acts isomorphically on homotopy of both presheaves, we deduce that it is an isomorphism on all homotopy groups and hence an equivalence.
\end{proof}

\begin{corollary}
\label{corollary:finite_ctaun_modules_are_bounded}
Let $n \geq 1$ and $X \in \Mod_{C\tau^{n} \otimes -}(A_{\infty}^{\omega}(\ccat))$. Then, $X$ has bounded homotopy groups. 
\end{corollary}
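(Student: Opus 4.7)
The plan is to exploit two facts: first, that being a $C\tau^{n}$-module forces the power $\tau^{n}$ to act nullhomotopically on $X$, and second, that perfection forces $\tau$ to act as an isomorphism on homotopy groups in high enough degree.

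First I would unwind what it means for $X$ to be a $C\tau^{n}\otimes -$-module. By definition there is a structure map $\mu\colon C\tau^{n}\otimes X\to X$ splitting the unit $\eta_{X}\colon X\to C\tau^{n}\otimes X$. But by \cref{proposition:spiral_cofibre_sequence}, this unit fits into the cofibre sequence
\[
\Sigma^{n}X[-n]\xrightarrow{\tau^{n}} X\xrightarrow{\eta_{X}} C\tau^{n}\otimes X.
\]
Since $\eta_{X}$ admits a retraction $\mu$, the composite $\mu\circ\eta_{X}\circ\tau^{n}=\tau^{n}$ factors through $\mu$ composed with the canonical nullhomotopy of $\eta_{X}\circ\tau^{n}$, and hence $\tau^{n}\colon \Sigma^{n}X[-n]\to X$ is itself nullhomotopic.

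Next I would invoke \cref{lemma:characterization_of_finite_presheaves_on_stable_inftycat}: since $X$ is perfect, there exists some $N\geq 0$ such that for all $k\geq N$ the map $\tau\colon \pi_{k}X[-1]\to \pi_{k+1}X$ is an isomorphism of discrete presheaves. Iterating $n$ times, $\tau^{n}$ induces an isomorphism $\pi_{k}X[-n]\simeq \pi_{k+n}X$ for all $k\geq N$, i.e., on all sufficiently high homotopy groups.

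The conclusion is then immediate: the induced map of $\pi_{k+n}$ of the nullhomotopic morphism $\tau^{n}$ is zero, yet for $k\geq N$ it is an isomorphism; so $\pi_{k+n}X=0$ for all $k\geq N$, which shows that $X$ has only finitely many non-zero homotopy groups. I do not expect any serious obstacle here — the real content is already packaged in the spiral cofibre sequence and the characterization of perfect presheaves, so the proof is essentially a two-line consequence once the module structure is translated into the vanishing of $\tau^{n}$.
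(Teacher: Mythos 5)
Your proof is correct and follows the same route as the paper: deduce that $\tau^{n}$ is null from the module structure, and then combine this with criterion (2) of \cref{lemma:characterization_of_finite_presheaves_on_stable_inftycat} to force the high homotopy groups to vanish. The paper simply asserts the nullity of $\tau^{n}$ without spelling out the retraction argument via the spiral cofibre sequence, which you supply explicitly; otherwise the two arguments are identical.
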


\begin{proof}
Observe that if an almost perfect presheaf $X$ admits a structure of $C\tau^{n} $-module, then the map 
\[
\tau^{n}\colon \Sigma^{n} X [-n] \rightarrow X 
\]
is necessarily null. It then follows from the criterion $(2)$ of \cref{lemma:characterization_of_finite_presheaves_on_stable_inftycat} that $\pi_{k} X$ vanishes for $k$ large enough as needed when $X$ is perfect. 
\end{proof}

\begin{corollary}
\label{corollary:perfect_prestable_freyd_envelope_generated_under_limits_by_representables_and_em_presheaves}
The perfect prestable Freyd envelope $A_{\infty}^{\omega}(\ccat)$ is generated under finite limits by presheaves with homotopy group in a single degree and representables. 
\end{corollary}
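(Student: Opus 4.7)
The plan is to show that the smallest finite-limit-closed subcategory $\euscr{X} \subseteq A_\infty^\omega(\ccat)$ containing the representables and the Eilenberg--MacLane (EM) presheaves---those whose homotopy is concentrated in a single degree---is all of $A_\infty^\omega(\ccat)$. The strategy has two steps: first handle perfects with bounded homotopy via Postnikov towers, then reduce the general case to this bounded case plus a representable using the $\tau$-criterion of \cref{lemma:characterization_of_finite_presheaves_on_stable_inftycat}.

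For the bounded step, I would induct on the top nonvanishing homotopy degree $n$. If $n=0$ then $Y$ is discrete and finitely presented, hence perfect by \cref{lemma:discrete_almost_perfect_presheaves_are_perfect} and already an EM presheaf. For the inductive step, write $Y \simeq Y_{\leq n}$ as the fibre of the $k$-invariant
\[
Y_{\leq n-1} \longrightarrow \Sigma^{n+1} \pi_n Y.
\]
Here $\pi_n Y \in A(\ccat)$ is finitely presented (since $Y$ is perfect), so $\Sigma^{n+1}\pi_n Y$ is an EM presheaf in $\euscr{X}$, and $Y_{\leq n-1}$ lies in $\euscr{X}$ by induction; hence $Y \in \euscr{X}$.

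For the general case, pick $N$ as in \cref{lemma:characterization_of_finite_presheaves_on_stable_inftycat}, so that $\Omega^N X \simeq \nu(c)$ is representable. The proof of that lemma already gives a fibre sequence
\[
X \longrightarrow X_{\leq N-1} \longrightarrow \Sigma^{N+1}\Omega^N X \simeq \Sigma^{N+1}\nu(c),
\]
in which $X_{\leq N-1}$ is bounded perfect and therefore in $\euscr{X}$ by the previous step. To place $\Sigma^{N+1}\nu(c)$ in $\euscr{X}$, apply the spiral fibre sequence of \cref{proposition:spiral_cofibre_sequence} to $\nu(c)$ at $n = N+1$, then shift by the local grading $[N+1]$ to obtain
\[
\Sigma^{N+1}\nu(c) \longrightarrow \nu(c)[N+1] \longrightarrow \bigl(\nu(c)_{\leq N}\bigr)[N+1].
\]
Since $\ccat$ is stable, $\nu(c)[N+1] \simeq \nu(\Sigma^{N+1}c)$ is representable; and $\nu(c)_{\leq N} = C\tau^{N+1}\otimes \nu(c)$ is perfect by \cref{remark:perfect_presheaves_and_ctau_monad}, so its shift is a bounded perfect presheaf, in $\euscr{X}$ by the bounded case. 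Thus $\Sigma^{N+1}\nu(c) \in \euscr{X}$ and hence $X \in \euscr{X}$.

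The main subtlety I expect is verifying that the cofibre sequences produced by \cref{proposition:spiral_cofibre_sequence} and by the Postnikov filtration are genuine fibre sequences within the connective prestable $\infty$-category $A_\infty^\omega(\ccat)$, rather than merely in its Spanier--Whitehead stabilization. This comes down to checking that all fibres appearing are themselves connective and perfect; connectivity is automatic because $X$ is, and perfectness follows because $[n]$ is an equivalence of $A_\infty^\omega(\ccat)$ preserving representables and $\Sigma$ is cocontinuous, while the truncations and $C\tau^{N+1}$ preserve perfectness by \cref{remark:perfect_presheaves_and_ctau_monad}.
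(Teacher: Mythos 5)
Your argument is correct and follows essentially the same route as the paper's proof: reduce bounded perfects to single-degree presheaves via Postnikov decompositions, then exhibit a general perfect $X$ as the fibre of $X_{\leq N-1} \rightarrow \Sigma^{N+1}\Omega^{N}X$ and identify the third term with the fibre of a representable mapping to its truncation. Your shifted spiral sequence $\Sigma^{N+1}\nu(c) \rightarrow \nu(\Sigma^{N+1}c) \rightarrow \nu(\Sigma^{N+1}c)_{\leq N}$ is precisely the paper's identification $\Sigma\nu(c)_{\geq n} \simeq \nu(\Sigma c)_{\geq n+1} = \mathrm{fib}(\nu(\Sigma c) \rightarrow \nu(\Sigma c)_{\leq n})$ in different notation, so the two proofs coincide.
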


\begin{proof}
Using Postnikov decompositions, we see that presheaves with homotopy group in a single degree generate under finite limits all bounded presheaves. Thus, it is enough to show that bounded presheaves and representables suffice. 

By \cref{lemma:characterization_of_finite_presheaves_on_stable_inftycat}, if $X$ is perfect, then there exists a $c \in \ccat$ such that for $n$ large enough we have a fibre sequence
\[
\nu(c)_{\geq n} \rightarrow X \rightarrow X_{\leq n-1}.
\]
This then rotates to a fibre sequence
\[
X \rightarrow X_{\leq n-1} \rightarrow \Sigma \nu(c)_{\geq n} 
\]
and we are done since $\Sigma \nu(c)_{\geq n} \simeq \nu(\Sigma c)_{\geq n+1}$ is the fibre of $\nu(\Sigma c) \rightarrow \nu(\Sigma c)_{\leq n}$. 
\end{proof}

\section{Derived $\infty$-categories}
\label{section:derived_infty_cats}

A reasonable abelian category $\acat$ has as associated connective derived $\infty$-category $\dcat(\acat)$, which is a prestable $\infty$-category with heart canonically equivalent to $\acat$, and which is in a precise sense initial with respect to this property, allowing one to form derived functors.

The construction is extremely useful, and one would often like to be able to construct such ``derived $\infty$-categories'' in a more general context, including when the input is not abelian. In this section, we will focus on a particular variant of this construction, where the input will be a stable $\infty$-category $\ccat$ together with an epimorphism class. 

\begin{remark}
The need to start with an epimorphism class comes from the fact that stable $\infty$-categories do not come with an interesting non-trivial notion of epimorphism, as we observed in \cref{warning:all_maps_are_effective_epi_in_a_stable_infty_category}. 
\end{remark}

\begin{remark}
Informally, the objects of the derived $\infty$-category of an abelian category can be described as appropriate ``resolutions'' of objects of $a$; for example, when $\acat$ has enough projectives, then $\dcat(\acat)$ can be described as the localization of the category of levelwise-projective chain complexes at the class of quasi-isomorphisms. This is also true in our context, but for technical reasons it will be convenient to proceed slightly differently, by defining the derived $\infty$-category as certain kinds of sheaves. 
\end{remark}

\begin{warning}
In the current work, by the derived $\infty$-category we always mean the \emph{connective} derived $\infty$-category, which is prestable rather than stable, and we denote it by $\dcat(\acat)$ rather than the more common $\dcat_{\geq 0}(\acat)$. We will have no need to consider any of the stable derived $\infty$-categories and thus we felt that to keep writing $_{\geq 0}$ would clutter the notation with little obvious gain. 
\end{warning}

Note that by \cref{theorem:adapted_homology_theories_correspond_to_injective_epimorphism_classes}, epimorphism classes with enough injectives are in one to one correspondence with adapted homology theories $H\colon \ccat \rightarrow \acat$. Thus, alternatively we can think that the starting input for our construction is an adapted homology theory. It will turn out that what we obtain can be considered as a categorification of the Adams spectral sequence. 

\subsection{Bounded derived $\infty$-category of an abelian category as sheaves}
\label{subsection:bounded_derived_cat_of_abelian_cat}

If $\acat$ is an abelian category with enough injectives, its coconnective derived $\infty$-category can be obtained by starting with the subcategory of injectives and freely attaching totalizations, see \cite{higher_algebra}[1.3.3.8] for the dual statement. Passing to right bounded objects and stabilizing, we obtain a construction of the bounded derived $\infty$-category which is valid for any $\acat$ and which does not directly use chain complexes or any form of non-reflective localization. 

Unfortunately, this description is somewhat inconvenient for many purposes, as the process of freely attaching totalizations is most easily described using \emph{copresheaves}, which is often unnatural. In this section, we will give an alternative description of the bounded derived $\infty$-category using \emph{sheaves} and the prestable Freyd envelope. 

In what follows, let $\acat$ be an abelian category with enough injectives. In \cref{definition:classical_freyd_envelope}, we have introduced the classical Freyd envelope, which by  \cref{proposition:if_c_has_finite_limits_freyd_envelope_is_an_abelian_subcat} is itself an abelian category. The two are related in the following way.

\begin{lemma} \label{lemma:yoneda has left adjoint}
The Yoneda embedding $y\colon \acat \rightarrow A(\acat)$ has a left adjoint which presents $\acat$ as a quotient of its Freyd envelope by a localizing subcategory. 
\end{lemma}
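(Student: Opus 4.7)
The plan is to construct $L$ via the universal property of the Freyd envelope, verify the adjunction, and establish exactness of $L$ by exhibiting an explicit short projective resolution, from which the Gabriel quotient description follows. Since the identity functor $\mathrm{id}_\acat$ is additive with codomain an abelian category admitting finite colimits, by \cref{remark:freyd_envelope_obtained_by_adjoining_cokernels} it extends uniquely to a right exact additive functor $L\colon A(\acat) \to \acat$ with $L \circ y \simeq \mathrm{id}_\acat$; concretely, $L(X) = \mathrm{coker}(f)$ for any presentation $y(b) \xrightarrow{y(f)} y(a) \to X \to 0$. The adjunction $L \dashv y$ follows from a direct computation on presentations, since $\Hom_{A(\acat)}(-, y(c))$ and $\Hom_\acat(L(-), c)$ are both left exact in the first variable and agree on representables by the classical Yoneda lemma. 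Full faithfulness of $y$ (Yoneda) makes the counit $Ly \to \mathrm{id}_\acat$ invertible.

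The main step is exactness of $L$; granting this, $K \colonequals \ker(L)$ is a Serre subcategory closed under the local grading, and reasoning parallel to \cref{theorem:characterization_of_adapted_homology_theories} (with \cref{proposition:comodules_form_a_gabriel_quotient}) identifies $\acat$ with the Gabriel quotient $A(\acat)/K$, exhibiting $K$ as localizing. The key observation for exactness is that every $X \in A(\acat)$ admits a length-three projective resolution in $A(\acat)$: given a presentation $y(b) \xrightarrow{y(f)} y(a) \to X \to 0$, left exactness of $y$ as a right adjoint gives $\ker(y(f)) = y(\ker f)$, which is itself representable and hence projective, so
\[
0 \to y(\ker f) \to y(b) \xrightarrow{y(f)} y(a) \to X \to 0
\]
is an exact projective resolution of $X$.

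Applying $\Hom_{A(\acat)}(-, y(j))$ for an injective $j \in \acat$ and using the Yoneda lemma yields the cochain complex $0 \to \Hom_\acat(L(X), j) \to \Hom_\acat(a, j) \to \Hom_\acat(b, j) \to \Hom_\acat(\ker f, j)$. The hard part is verifying this complex is exact everywhere beyond degree zero; this uses injectivity of $j$ in $\acat$ twice: first, to identify the image of the middle map with those maps $b \to j$ factoring through $\mathrm{im}(f) \hookrightarrow a$ (via injective extension along this mono), which matches the kernel of the next map (the maps vanishing on $\ker f$); and second, to check that the final map is surjective, since any map $\ker f \to j$ extends along the mono $\ker f \hookrightarrow b$. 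Thus $\Ext^{\geq 1}_{A(\acat)}(X, y(j)) = 0$, so $y(j)$ is injective in $A(\acat)$, and a standard enough-injectives argument then upgrades this to exactness of $L$: if $\ker(L(X') \to L(X))$ were nonzero for some mono $X' \hookrightarrow X$, embedding this kernel into an injective $j$ and using the adjunction against the injective $y(j)$ would extend the resulting map $X' \to y(j)$ to $X \to y(j)$, contradicting the nontriviality of the kernel.
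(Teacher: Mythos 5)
Your proof is correct, but it does substantially more work than the paper's own argument, and the extra work is aimed at a point the paper passes over. The paper's proof is two sentences: the left adjoint is the left Kan extension of $\mathrm{id}_\acat$ along $y$ (which exists since $\acat$ has cokernels), and the claim that this is a quotient by a localizing subcategory is reduced to full faithfulness of the right adjoint $y$, i.e.\ the classical Yoneda lemma. You construct $L$ and the adjunction the same way, but then you explicitly verify that $L$ is \emph{exact} --- via the length-two projective resolution $0 \to y(\ker f) \to y(b) \to y(a) \to X \to 0$, the computation that $y(j)$ is injective in $A(\acat)$ whenever $j \in \acat$ is injective, and the extension argument against $y(j)$. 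This is a genuine addition rather than a detour: a left adjoint with fully faithful right adjoint is only a \emph{Gabriel} quotient by a localizing subcategory once one knows the left adjoint is exact (this is the standing hypothesis of \cref{theorem:properties_of_comonadic_factorization_of_an_exact_left_adjoint} and \cref{proposition:comodules_form_a_gabriel_quotient}, which the paper invokes implicitly), so you are supplying the real content behind the phrase ``localizing subcategory.'' Two small remarks. First, your argument uses that $\acat$ has enough injectives; this is legitimate here because it is the standing hypothesis of the section containing the lemma, but note that exactness of $L$ (the ``Auslander formula'' $\acat \simeq A(\acat)/\mathrm{eff}(\acat)$) holds for an arbitrary abelian category by a different, purely diagrammatic argument, so the lemma does not actually need the injectivity hypothesis. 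Second, your byproducts --- that $A(\acat)$ has global dimension at most $2$ and that $\Hom_{A(\acat)}(X, y(j))$ computes $\Hom_\acat(LX, j)$ with vanishing higher Ext for $j$ injective --- are essentially the $n=0$ case of \cref{proposition:sheafication_of_suspended_representable_in_freyd_envelope_of_abelian_cat}, so they fit naturally into the surrounding section.
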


\begin{proof}
The left adjoint is given by the left Kan extension along the identity of $\acat$, which exists since the latter already has cokernels. To say that it presents $\acat$ as a quotient by a localizing subcategory is the same as to say that the right adjoint $y$ is fully faithful, which is the classical Yoneda lemma as $\acat$ is an ordinary category. 
\end{proof}

We first describe the image of the Yoneda embedding in sheaf-theoretic terms. 

\begin{definition}[{\cite[\href{https://stacks.math.columbia.edu/tag/05PM}{Tag 05PM}]{stacks-project}}]
The \emph{epimorphism topology} on an abelian category $\acat$ is a Grothendieck pretopology where covering families $\{ a_{i} \rightarrow b \}$ consist of a single epimorphism. 
\end{definition}

\begin{lemma}
\label{lemma:yoneda_embedding_of_abelian_category_identifies_with_sheaves}
The Yoneda embedding $y\colon \acat \hookrightarrow A(\acat)$ identifies the source with the subcategory of those finitely presented presheaves which are also sheaves for the epimorphism topology. 
\end{lemma}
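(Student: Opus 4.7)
The plan is to prove both inclusions. For the easy direction, to see each representable $y(c)$ is a sheaf, I would use that any covering epi $a \twoheadrightarrow b$ in an abelian category has the property that its Čech object splits: $a \times_{b} a \cong a \oplus a'$ where $a' = \ker(a \to b)$. So the sheaf condition for $y(c)$ with respect to such a cover reduces to exactness of $0 \to \Hom(b, c) \to \Hom(a, c) \to \Hom(a', c)$, which is just left exactness of $\Hom(-, c)$. Unwrapping this, a presheaf $X \in A(\acat)$ is a sheaf for the epi topology precisely when $X \colon \acat^{op} \to \abeliangroups$ is left exact in the usual abelian sense: every short exact sequence $0 \to a' \to a \to b \to 0$ goes to an exact sequence $0 \to X(b) \to X(a) \to X(a')$.

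For the converse, let $X \in A(\acat)$ be a finitely presented sheaf, presented as $X = \mathrm{coker}(y(f))$ for some $f \colon a \to b$. Set $I = \mathrm{image}(f) \hookrightarrow b$ and $c = \mathrm{coker}(f)$, giving the short exact sequence $0 \to I \to b \to c \to 0$ in $\acat$. First I would show $X \simeq y(b)/y(I)$. Since $y$ is left exact (as the right adjoint of $L$ by \cref{lemma:yoneda has left adjoint}), we have $\mathrm{image}(y(f)) \subseteq y(I)$, and the quotient $K_{1} = y(I)/\mathrm{image}(y(f))$ is in $K = \ker(L)$ (being $\mathrm{coker}(y(a) \to y(I))$ for the epimorphism $a \twoheadrightarrow I$). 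The natural map $K_{1} \hookrightarrow X$ must vanish because $\Hom(K_{1}, X) = \ker(X(I) \to X(a))$ (from the projective presentation of $K_{1}$), and this kernel is zero by the sheaf property applied to the cover $a \twoheadrightarrow I$. Hence $K_{1} = 0$ and $X = y(b)/y(I)$.

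Next I would identify $y(b)/y(I)$ with $y(c)$. The morphism $y(p) \colon y(b) \to y(c)$ induced by $p \colon b \twoheadrightarrow c$ has kernel $y(I)$ by left exactness of $y$, so $X$ sits inside $y(c)$ as the image of $y(p)$, with cokernel $k_{2} = \mathrm{coker}(y(p)) \in K$. From the four-term exact sequence $0 \to y(I) \to y(b) \to y(c) \to k_{2} \to 0$, which is a projective resolution of $k_{2}$ since all $y(-)$ are projective in $A(\acat)$, both $\Hom(k_{2}, X)$ and $\Ext^{1}(k_{2}, X)$ can be computed in terms of the sequence $0 \to X(c) \to X(b) \to X(I)$; the sheaf property forces this sequence to be exact at $X(c)$ and $X(b)$, hence both groups vanish. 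The long exact sequence of $\Hom(-, X)$ applied to $0 \to X \to y(c) \to k_{2} \to 0$ then shows $\mathrm{id}_{X}$ lifts to some $s \colon y(c) \to X$, splitting the embedding. Thus $k_{2}$ is a direct summand of the representable $y(c)$; since $\acat$ is idempotent complete, $k_{2}$ is itself representable, and applying $L$ forces $k_{2} = 0$. This gives $X \simeq y(c)$.

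The main obstacle is the simultaneous juggling of three different exactness properties: the sheaf property of $X$, the left exactness of $y$, and the projectivity of representables in $A(\acat)$. One cannot directly invoke \cref{remark:characterization_of_the_image_of_right_adjoint} to simply equate sheaves with $\Ext^{i}(k, X) = 0$ for all $k \in K$, because the sheaf property a priori only gives this vanishing on the specific generators $\mathrm{coker}(y(g))$ with $g$ an epimorphism rather than on all of the localizing subcategory; the argument above sidesteps this by applying the Ext-vanishing only to the two specific objects $K_{1}$ and $k_{2}$ that naturally arise from the presentation of $X$, and then using idempotent completeness of $\acat$ to close out the argument.
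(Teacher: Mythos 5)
Your proof is correct, but it takes a genuinely different route from the paper's for the harder inclusion. The paper handles the easy direction by citing the Stacks project and then disposes of the converse in two lines: since the exact functor $\acat \rightarrow \sheaves(\acat)$ has image closed under finite colimits and sheafification preserves finite colimits, the sheafification of every finitely presented presheaf lands in the image of $y$, so a finitely presented presheaf that is already a sheaf must be representable. You instead argue by hand: starting from a presentation $X = \coker(y(f))$, you use the sheaf condition twice (once on $a \twoheadrightarrow \mathrm{im}(f)$ to kill $K_1$ and identify $X$ with $y(b)/y(I)$, once on $b \twoheadrightarrow c$ to compute $\Hom(k_2,X)=\Ext^1(k_2,X)=0$ from the projective resolution $0 \to y(I) \to y(b) \to y(c) \to k_2 \to 0$), then split off $k_2$ as a summand of $y(c)$ and kill it via idempotent completeness and right exactness of $L$. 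All the ingredients you invoke (left exactness of $y$, projectivity of representables, $A(\acat)$ abelian with enough projectives) are established in the paper, and each step checks out, including the identification of the Čech object $a\times_b a \cong a \oplus \ker(f)$ that reduces the sheaf condition to left exactness. What your approach buys is independence from the existence and exactness of a sheafification functor on the (non-small) presheaf category, at the cost of length; the paper's argument is shorter but leans on that machinery. Your closing remark is also apt: the sheaf condition only gives $\Ext^{0,1}$-vanishing against the specific generators of $\ker(L)$ arising from epimorphisms, and your argument correctly restricts attention to the two such objects $K_1$ and $k_2$ produced by the chosen presentation rather than quoting \cref{remark:characterization_of_the_image_of_right_adjoint} wholesale.
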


\begin{proof}
By \cite[\href{https://stacks.math.columbia.edu/tag/05PN}{Tag 05PN}]{stacks-project}, the Yoneda embedding factors through the subcategory of sheaves and the induced functor into sheaves is exact. Since the image of $\acat$ generates the Freyd envelope under finite colimits, the same will be true for the subcategory of sheaves. It follows that the functor into finitely presented sheaves has to be an equivalence. 
\end{proof}

In \cref{definition:prestable_freyd_envelope}, we have introduced the $\infty$-categorical analogue of the classical Freyd envelope, namely the prestable Freyd envelope. Since we have just seen that $\acat$ can be recovered from the classical Freyd envelope as a category of sheaves, it is natural to expect that we can similarly recover the derived $\infty$-category by starting with $A_{\infty}(\acat)$ and again enforcing some kind of descent. 

Unfortunately, there are set-theoretic issues involved in this plan, since the sheaves we consider are not indexed by a small $\infty$-category, and so the existence of a suitable sheafification functor is not immediate. The situation is not all that dire, however - as a consequence of \cref{lemma:yoneda_embedding_of_abelian_category_identifies_with_sheaves} and \cref{lemma:yoneda has left adjoint}, we see that the needed sheafification does exist on the heart. This will be enough to construct at least the bounded versions of the needed sheaf $\infty$-categories. 

\begin{notation}
\label{notation:sheafication_in_large_spaces_on_abelian_category}
It will be convenient to argue in a context where we know that sheafification exists. Recall that $A_{\infty}(\acat)$ is by definition a subcategory of presheaves $\Fun(\acat^{op}, \spaces)$, choosing a larger universe, we can embed the latter into the $\infty$-category
\[
\widehat{P}_{\Sigma}(\acat) \colonequals \Fun_{\Sigma}(\acat^{op}, \widehat{\spaces})
\]
of product-preserving presheaves valued in large spaces. Relative to this larger universe, $\acat$ is small, and so by standard results there exists a sheafification functor which we will denote by $L\colon \widehat{P}_{\Sigma}(\acat) \rightarrow \widehat{P}_{\Sigma}(\acat)$ \cite{higher_topos_theory}[6.2.2.7].
\end{notation}

\begin{remark}
This is implicit in what we wrote above, but since $\acat$ is additive and the pretopology on $\acat$ consists only of single maps, the sheafification (and hypercomplete sheafification) functors do preserve product-preserving presheaves by \cite{pstrkagowski2018synthetic}[2.5]
\end{remark}

The following criterion for being a sheaf will be useful. 

\begin{lemma}
\label{lemma:detecting_sheaves_in_additive_setting}
Let $\dcat$ be an additive $\infty$-category equipped with a Grothendieck pretopology where covering families consists of single morphisms and let $X \in \widehat{P}_{\Sigma}(\dcat)$. Then, $X$ is a sheaf if and only if for every fibre sequence $a \rightarrow b \rightarrow c$, where the second map is a covering, the induced sequence $X(c) \rightarrow X(b) \rightarrow X(a)$ of spaces is a fibre sequence. 
\end{lemma}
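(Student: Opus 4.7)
The sheaf condition for a single-morphism cover $f \colon b \to c$ states that $X(c) \to \mathrm{Tot}(X(\check{C}(f)^{\bullet}))$ is an equivalence, where $\check{C}(f)^{n} = b^{\times_{c}(n+1)}$ is the Čech nerve. My plan is to show that, for any $X \in \widehat{P}_{\Sigma}(\dcat)$ and any morphism $f$ with fibre $i \colon a \to b$, the totalization $\mathrm{Tot}(X(\check{C}(f)^{\bullet}))$ is canonically equivalent to $\mathrm{fib}(X(b) \to X(a))$. The biconditional in the lemma then follows immediately from the definition of a sheaf for a topology with single-morphism covers.

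The first step uses that in an additive $\infty$-category, any morphism $f \colon b \to c$ with fibre $i \colon a \to b$ admits a canonical splitting of its Čech nerve via
\[
\check{C}(f)^{n} \simeq b \oplus a^{n}, \qquad (b_{0}, \ldots, b_{n}) \longmapsto (b_{0},\, b_{1} - b_{0},\, \ldots,\, b_{n} - b_{n-1}).
\]
Explicit computation of the face and degeneracy maps under this splitting identifies the simplicial structure with that of a twisted bar construction: the face $d_{0}$ absorbs the first $a$-coordinate into $b$ along $i$, while the remaining faces and degeneracies match those of the standard bar construction of the additive group object $a$. Applying the product-preserving presheaf $X$, which is valued in $\mathbb{E}_{\infty}$-spaces by additivity, yields a cosimplicial object $X(\check{C}(f)^{\bullet}) \simeq X(b) \times X(a)^{\bullet}$ whose structure is exactly the cobar construction encoding the restriction map $i^{*} \colon X(b) \to X(a)$.

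The key computation is the identification $\mathrm{Tot}(X(b) \times X(a)^{\bullet}) \simeq \mathrm{fib}(i^{*})$. Inspection of the codegeneracies under the splitting shows that the normalized Moore cochain complex of this cosimplicial $\mathbb{E}_{\infty}$-space has $N^{0} = X(b)$, $N^{1} = X(a)$, and $N^{n} = 0$ for $n \geq 2$, with the only nontrivial differential equal to $i^{*}$. The Bousfield--Kan spectral sequence therefore collapses at $E_{2}$, and its abutment matches $\pi_{*} \mathrm{fib}(i^{*})$ degree by degree via the long exact sequence of the fibration. Combined with the natural comparison map $\mathrm{fib}(i^{*}) \to \mathrm{Tot}(X(\check{C}(f)^{\bullet}))$ induced by the augmentation $\check{C}(f)^{0} = b$, this produces the desired equivalence; substituting into the sheaf condition then gives the biconditional simultaneously in both directions.

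The main obstacle is this totalization computation: while the spectral sequence argument produces the right associated graded, assembling it into an actual equivalence of spaces (rather than a bare isomorphism of homotopy groups) requires care in constructing the comparison map and verifying its compatibility with the fibre sequence $\mathrm{fib}(i^{*}) \to X(b) \to X(a)$. I expect this to be routine once the cosimplicial framework and the augmentation from $X(c)$ are pinned down, but it is the only step that is not purely formal.
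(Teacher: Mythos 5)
Your proof is correct and is essentially the same argument as the one the paper relies on: the paper does not prove this lemma itself but cites \cite{pstrkagowski2018synthetic}[2.8], whose proof proceeds exactly via the shear splitting $b^{\times_{c}(n+1)} \simeq b \oplus a^{\oplus n}$ of the \v{C}ech nerve and the resulting identification of the totalization with $\fib(X(b) \to X(a))$. The one step you flag as delicate is handled cleanly without any spectral sequence: since $X$ is automatically valued in grouplike $\mathbf{E}_{\infty}$-spaces (hence connective spectra) and totalizations are limits, one may compute in spectra, where your observation that the conormalization vanishes in degrees $\geq 2$ says the cosimplicial object is $1$-coskeletal, so its totalization is literally the iterated fibre $\fib(X(b) \to X(a))$.
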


\begin{proof}
This is \cite{pstrkagowski2018synthetic}[2.8].
\end{proof}

\begin{remark}
In the particular case of an abelian category with the epimorphism topology, the characterization of \cref{lemma:detecting_sheaves_in_additive_setting} reduces to asking that $X$ takes short exact sequences to fibre sequences of spaces. 
\end{remark}

\begin{proposition}
\label{proposition:sheafication_of_suspended_representable_in_freyd_envelope_of_abelian_cat}
Suppose that $a \in \acat$ and consider the suspension $\Sigma^{n} y(a) \in A_{\infty}(\acat)$ in almost perfect presheaves, which is explicitly defined by the $n$-fold bar construction
\[
(\Sigma^{n} y(a))(b) = \mathrm{B}^{n}(\Hom_{\acat}(b, a)).
\]
Then, we have that 
\begin{enumerate}
    \item $L \Sigma^{n} y(a)$ is almost perfect and 
    \item $\pi_{k}(L \Sigma^{n} y(a))(b) \simeq \Ext_{\acat}^{n-k}(b, a)$.
\end{enumerate}
\end{proposition}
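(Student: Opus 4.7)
The plan is to induct on $n \geq 0$. For the base case $n=0$, $y(a)$ is itself a sheaf by \cref{lemma:yoneda_embedding_of_abelian_category_identifies_with_sheaves}, so $Ly(a) \simeq y(a)$ is representable (hence almost perfect by \cref{proposition:characterization_of_almost_perfect_presheaves}), and its only nonzero presheaf homotopy is $\pi_0 y(a) = \Hom_\acat(-,a) = \Ext^0_\acat(-,a)$, as required.

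For the inductive step, I would pick an embedding $a \hookrightarrow I$ into an injective of $\acat$ with cokernel $c$. Since $y\colon \acat \to A(\acat)$ identifies $\acat$ with sheaves and is exact for the epimorphism topology (\cref{lemma:yoneda_embedding_of_abelian_category_identifies_with_sheaves}), the sequence $y(a) \to y(I) \to y(c)$ is a fibre sequence of sheaves. Rotating and using that $L$ preserves colimits (so $L\Sigma^{n-1}$ applied in presheaves agrees with $\Sigma^{n-1}$ in sheaves), one obtains a fibre sequence
\[
L\Sigma^{n-1}y(I) \to L\Sigma^{n-1}y(c) \to L\Sigma^n y(a)
\]
in sheaves; this is automatically also a fibre sequence in $\widehat{P}_\Sigma(\acat)$ since sheaves are limit-closed in presheaves. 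The critical simplification is that $\Sigma^{n-1}y(I)$ is \emph{already} a sheaf: by \cref{lemma:detecting_sheaves_in_additive_setting} it suffices that $B^{n-1}\Hom(-,I)$ take short exact sequences in $\acat$ to fibre sequences of spaces, which is immediate from injectivity of $I$ (making $\Hom(-,I)$ exact on the SES) together with the fact that $K(-,n-1)$ converts short exact sequences of abelian groups into fibre sequences. So $L\Sigma^{n-1}y(I) = \Sigma^{n-1}y(I)$, with presheaf $\pi_{n-1} = y(I)$ and every other $\pi_k$ vanishing.

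Running the long exact sequence of \emph{presheaf} homotopy groups for this fibre sequence, together with the inductive hypothesis $\pi_k L\Sigma^{n-1}y(c) = \Ext^{n-1-k}_\acat(-,c)$, determines all $\pi_k L\Sigma^n y(a)$. The boundary degrees are $k=n$, where $\pi_n L\Sigma^n y(a) \cong \ker(y(I)\to y(c)) = y(a) = \Ext^0(-,a)$, and $k = n-1$, where $\pi_{n-1} L\Sigma^n y(a) \cong \coker(y(I)\to y(c))$, identified with $\Ext^1(-,a)$ via the long exact sequence of $\Ext$ and the vanishing $\Ext^{\geq 1}(-,I)=0$. For $k \leq n-2$ the connecting maps to/from the $\pi_\ast$ of $\Sigma^{n-1}y(I)$ are both zero, giving $\pi_k L\Sigma^n y(a) \cong \Ext^{n-1-k}(-,c)$, which dimension-shifts to $\Ext^{n-k}(-,a)$ again using $\Ext^{\geq 1}(-,I)=0$; for $k > n$ both neighbours vanish, so $\pi_k L\Sigma^n y(a) = 0 = \Ext^{n-k}(-,a)$. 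Finally, each $\Ext^m(-,a)$ is finitely presented in $A(\acat)$ (a cokernel of representables computed from the injective resolution), so \cref{proposition:characterization_of_almost_perfect_presheaves} yields that $L\Sigma^n y(a)$ is almost perfect.

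The principal hazard I expect in turning this into a full proof is cleanly separating the \emph{presheaf} $\pi_k$ (which are the $\Ext$-groups the proposition targets) from the \emph{sheaf} $\pi_k$ of $L\Sigma^n y(a)$, most of which vanish. What rescues the argument is that the fibre sequence used in the induction lives simultaneously in sheaves and presheaves, so the LES of presheaf homotopy is still available; the entire content of the computation is then concentrated in the observation that $\Sigma^{n-1}y(I)$ is already a sheaf for $I$ injective, which freezes its presheaf homotopy into a single representable degree and makes the LES tractable.
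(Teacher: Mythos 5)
Your argument is correct, and it shares with the paper the one essential observation -- that $\Sigma^{m}y(I)$ is already a sheaf when $I$ is injective, by \cref{lemma:detecting_sheaves_in_additive_setting} -- but the global structure is different. The paper takes a full injective resolution $a \to i_{0} \to i_{1} \to \cdots$, forms the coaugmented cosimplicial object $\Sigma^{n}y(i_{\bullet})$, and shows the comparison map $\Sigma^{n}y(a) \to \Tot(\Sigma^{n}y(i_{\bullet}))$ is a sheafification by computing sheaf homotopy groups via the totalization spectral sequence; since everything is $n$-truncated the totalization is a finite limit, which gives almost perfection of the target at once by \cref{corollary:almost_perfect_presheaves_closed_under_finite_limits_colimits_truncations}. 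You instead unroll the resolution one step at a time: a single embedding $a \hookrightarrow I$ with cokernel $c$, induction on $n$, and dimension shifting through the long exact sequence of presheaf homotopy groups of the rotated cofibre sequence $L\Sigma^{n-1}y(I) \to L\Sigma^{n-1}y(c) \to L\Sigma^{n}y(a)$ (which, as you note, is simultaneously a fibre sequence of presheaves, since every cofibre sequence in a prestable $\infty$-category with finite limits is a fibre sequence and limits of sheaves are computed in presheaves). What your route buys is the avoidance of the Dold--Kan bookkeeping and the totalization spectral sequence, and it keeps the two kinds of homotopy groups honestly separated throughout, since the LES you run is genuinely the presheaf one. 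What it costs is that almost perfection no longer falls out of a finite-limit presentation and must instead be extracted from finite presentation of the homotopy groups $\Ext^{m}_{\acat}(-,a)$ via \cref{proposition:characterization_of_almost_perfect_presheaves} (your parenthetical justification -- kernels and cokernels of representables, using that $A(\acat)$ is an abelian subcategory -- is the right one); also, the explicit finite totalization model for $L\Sigma^{n}y(a)$ that the paper's proof produces is reused elsewhere, e.g.\ in the proof of \cref{proposition:criterion_for_functor_out_of_perfect_derived_cat_to_be_ff}, so it is worth having in hand even if one proves the present proposition your way.
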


\begin{proof}
Observe that both statements are true when $a = i$ is injective, as then $\Sigma^{n} y(a)$ is already a sheaf as a consequence of \cref{lemma:detecting_sheaves_in_additive_setting}. 

Now let $a$ be arbitrary and choose an injective resolution 
\[
0 \rightarrow a \rightarrow i_{0} \rightarrow i_{1} \rightarrow \ldots 
\]
in $\acat$ which through the Dold-Kan correspondence we can identify with an coaugmented cosimplicial object. We claim that the induced map of almost perfect presheaves
\[
\theta\colon \Sigma^{n} y(a) \rightarrow \Tot(\Sigma^{n} y(i_{\bullet}))
\]
is a sheafification, this shows (2) after unwrapping the definitions. Moreover, since all presheaves here are $n$-bounded, we can replace the totalization by a suitably large partial totalization, which is a finite limit. As almost perfect presheaves are closed under finite limits by \cref{corollary:almost_perfect_presheaves_closed_under_finite_limits_colimits_truncations}, this also shows (1). 

As $\theta$ is a map of bounded presheaves, to see that it is an equivalence after sheafification it is enough to check that it is an isomorphism on sheaf homotopy groups. We can identify discrete sheaves with objects of $\acat$ by \cref{lemma:yoneda_embedding_of_abelian_category_identifies_with_sheaves}. Under this identification, the totalization spectral sequence shows that the $k$-th sheaf homotopy groups of $\Tot(\Sigma^{n} y(i_{\bullet}))$ can be identified with $(n-k)$-th cohomology group of the complex
\[
0 \rightarrow i_{0} \rightarrow i_{1} \rightarrow \ldots
\]
These vanish unless $k = n$, in which case the cohomology group is $a$, as needed. 
\end{proof}

\begin{remark}
One consequence of  \cref{proposition:sheafication_of_suspended_representable_in_freyd_envelope_of_abelian_cat} is that the $\Ext$-groups of an abelian category with enough injectives can be identified with cohomology groups of $\acat$ with respect to the epimorphism topology. This can be suitably generalized to a statement about more general derived functors. 
\end{remark}

\begin{corollary}
\label{corollary:sheafication_exists_for_bounded_sheaves_in_abelian_case}
Let $X \in A^{\omega}_{\infty}(\acat)$ be a perfect presheaf. Then $LX$ is perfect. 
\end{corollary}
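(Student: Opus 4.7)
The plan is to proceed by induction on the top degree $n$ such that $\pi_n X \neq 0$. The recognition criterion of \cref{lemma:recognition_of_finite_presheaves_on_an_ordinary_category} (applicable since $\acat$ is an ordinary $1$-category with finite limits) identifies perfect presheaves on $\acat$ with those almost perfect presheaves whose homotopy groups vanish above some degree, so such an induction is well-founded, and additionally shows (together with \cref{corollary:almost_perfect_presheaves_closed_under_finite_limits_colimits_truncations}) that Postnikov truncations of perfect presheaves remain perfect.

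For the base case, $X \in A(\acat)$ is a discrete finitely presented presheaf, and its sheafification is computed by the left adjoint to $y\colon \acat \hookrightarrow A(\acat)$ from \cref{lemma:yoneda has left adjoint}; since \cref{lemma:yoneda_embedding_of_abelian_category_identifies_with_sheaves} identifies the image of $y$ with the sheaves, we conclude $LX \simeq y(a)$ for some $a \in \acat$, which is representable and in particular perfect.

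For the inductive step, consider the Postnikov fibre sequence
\[
\Sigma^n \pi_n X \to X \to X_{\leq n-1}
\]
in $A_\infty^\omega(\acat)$, where $\pi_n X \in A(\acat)$ and $X_{\leq n-1}$ is perfect with lower top degree. Since $L$ is left exact, applying it yields a fibre sequence
\[
L\Sigma^n \pi_n X \to LX \to LX_{\leq n-1},
\]
whose right-most term is perfect by the inductive hypothesis. By closure of perfect presheaves under finite limits (\cref{theorem:if_ccat_has_finite_limits_perfect_presheaves_closed_under_limits}), it suffices to prove that $L\Sigma^n \pi_n X$ is perfect. Setting $Y = \pi_n X$ and $a = L(Y) \in \acat$, the base case gives $LY \simeq y(a)$, and since $L$ preserves colimits (being a left adjoint) and is idempotent on sheaves we have
\[
L \Sigma^n Y \simeq L \Sigma^n L Y \simeq L \Sigma^n y(a).
\]
The latter is perfect by \cref{proposition:sheafication_of_suspended_representable_in_freyd_envelope_of_abelian_cat}: its proof explicitly presents $L\Sigma^n y(a)$ as a finite partial totalization of presheaves $\Sigma^n y(i_\bullet)$ arising from an injective resolution of $a$, each of which is a suspension of a representable and hence perfect, so the partial totalization is a finite limit of perfects and is itself perfect by \cref{theorem:if_ccat_has_finite_limits_perfect_presheaves_closed_under_limits}.

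The main subtlety lies in the closure of perfect presheaves under Postnikov truncations, which is essential for the inductive step and for producing a homotopically bounded Postnikov tower; happily, this follows cleanly from the two parallel characterizations of perfect and almost perfect presheaves in terms of levelwise homotopy groups, and the rest of the argument is bookkeeping involving left exactness of sheafification and the identification $LY \simeq y(a)$.
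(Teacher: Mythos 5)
Your argument is correct in substance and takes essentially the same route as the paper: reduce to presheaves with homotopy concentrated in a single degree via the Postnikov tower, use left exactness of $L$ together with \cref{proposition:sheafication_of_suspended_representable_in_freyd_envelope_of_abelian_cat} for the single-degree case, and finish with the recognition criterion of \cref{lemma:recognition_of_finite_presheaves_on_an_ordinary_category}. The paper packages this as a closure argument (the class of almost perfect $X$ with $LX$ almost perfect is closed under finite limits and contains all presheaves with a single homotopy group, hence contains all perfects) rather than an explicit induction, but the content is identical; your base case, identifying $L$ on discrete presheaves with the composite of the Gabriel quotient and $y$, is also fine.

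One step is misjustified as written. In the fibre sequence $L\Sigma^{n}\pi_{n}X \rightarrow LX \rightarrow LX_{\leq n-1}$ the object you care about, $LX$, is the \emph{total space}, not the fibre, so closure of perfect presheaves under finite limits does not by itself let you deduce that $LX$ is perfect from perfection of the two outer terms; that inference needs closure under extensions. The repair is immediate and uses only tools you already invoke: either rotate the Postnikov sequence to the form $X \rightarrow X_{\leq n-1} \rightarrow \Sigma^{n+1}\pi_{n}X$ (as in the paper's later arguments), so that $LX$ is literally the fibre of a map of perfect sheaves and \cref{theorem:if_ccat_has_finite_limits_perfect_presheaves_closed_under_limits} applies directly; or observe that over the ordinary category $\acat$ perfect presheaves \emph{are} closed under extensions, because by \cref{lemma:recognition_of_finite_presheaves_on_an_ordinary_category} perfection there is equivalent to being almost perfect with bounded homotopy, almost perfect presheaves are closed under extensions by \cref{corollary:almost_perfect_presheaves_closed_under_finite_limits_colimits_truncations}, and boundedness is preserved by the long exact sequence of homotopy groups.
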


\begin{proof}
Since $L$ is left exact and almost perfect sheaves are closed under finite limits, we deduce that so is the class of those almost perfect $X$ such that $LX$ is again almost perfect. As the latter contains all presheaves with homotopy concentrated in a single degree by \cref{proposition:sheafication_of_suspended_representable_in_freyd_envelope_of_abelian_cat}, we deduce that it contains all perfect presheaves. By \cref{lemma:recognition_of_finite_presheaves_on_an_ordinary_category}, we conclude that $LX$ is perfect.  
\end{proof}
Knowing that the needed localization exists, we can make the main definition of this section. 

\begin{definition}
\label{definition:bounded_derived_infty_cat_of_abelian_cat_as_sheaves}
The \emph{bounded (connective) derived $\infty$-category} of $\acat$, denoted by $\dcat^{b}(\acat)$, is the $\infty$-category of bounded almost perfect sheaves on $\acat$ with respect to the epimorphism topology. 
\end{definition}

\begin{warning}
As we first warned the reader in \S\ref{subsection:conventions}, unlike most sources, we use $\dcat(-)$ to denote the \emph{connective} derived $\infty$-categories. Note that these are prestable rather than stable. In terms of \cref{definition:bounded_derived_infty_cat_of_abelian_cat_as_sheaves}, our convention is that 
\[
\dcat^{b}(\acat) \colonequals \dcat_{\geq 0}(\acat).
\]
When we will need to consider a stable variant of the derived $\infty$-category, which happens very little, we will be very explicit about this. 
\end{warning}

The following summarizes the main properties of the derived $\infty$-category. 
\begin{proposition}
\label{proposition:properties_of_finite_derived_infty_cat_of_abelian_cat}
We have that 
\begin{enumerate}
\item the functor $L\colon A_{\infty}^{\omega}(\acat) \rightarrow \dcat^{b}(\acat)$ is an exact localization, in particular 
\item $\dcat^{b}(\acat)$ is a prestable $\infty$-category with finite limits. Moreover, 
\item we have a canonical equivalence $\dcat^{b}(\acat)^{\heartsuit} \simeq \acat$ and 
\item for any $a, b \in \acat$, we have $\pi_{0} \Map_{\dcat^{b}(\acat)}(a, \Sigma^{n} b) \simeq \Ext^{n}_{\acat}(a, b)$.
\end{enumerate}
\end{proposition}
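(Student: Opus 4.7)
My plan is to deduce all four assertions from the preparatory results in this subsection, exploiting the key observation that since $\acat$ is an ordinary (additive) category, Lemma \ref{lemma:recognition_of_finite_presheaves_on_an_ordinary_category} identifies perfect presheaves on $\acat$ with \emph{bounded} almost perfect presheaves. Consequently $\dcat^{b}(\acat)$ is precisely the full subcategory of $A_{\infty}^{\omega}(\acat)$ spanned by those perfect presheaves that are sheaves for the epimorphism topology.

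For (1), I would invoke Corollary \ref{corollary:sheafication_exists_for_bounded_sheaves_in_abelian_case} to conclude that the sheafification $L$ of Notation \ref{notation:sheafication_in_large_spaces_on_abelian_category} restricts to an endofunctor of $A_{\infty}^{\omega}(\acat)$, whose essential image is exactly $\dcat^{b}(\acat)$ by the identification above. Since $L$ is already known to be exact and left adjoint to the (fully faithful) inclusion of sheaves, the restriction $L\colon A_{\infty}^{\omega}(\acat) \to \dcat^{b}(\acat)$ is an exact reflective localization. Statement (2) is then formal: $\dcat^{b}(\acat)$ inherits finite limits from $A_{\infty}^{\omega}(\acat)$ (which has them by Theorem \ref{theorem:if_ccat_has_finite_limits_perfect_presheaves_closed_under_limits}) by computing them in the ambient category and reflecting, and prestability passes through a left exact reflective localization.

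For (3), an object of $\dcat^{b}(\acat)$ is discrete iff its underlying presheaf is $0$-truncated, hence is a finitely presented discrete additive sheaf on $\acat$; Lemma \ref{lemma:yoneda_embedding_of_abelian_category_identifies_with_sheaves} identifies this subcategory with $\acat$ itself. For (4), using the reflective adjunction together with the fact that $a \in \acat$ is already a sheaf, I compute
\[
\Map_{\dcat^{b}(\acat)}(a,\Sigma^{n} b) \simeq \Map_{A_{\infty}^{\omega}(\acat)}\bigl(y(a), L(\Sigma^{n} y(b))\bigr),
\]
and then read off $\pi_{0}$ by evaluating at $a$ and applying Proposition \ref{proposition:sheafication_of_suspended_representable_in_freyd_envelope_of_abelian_cat}, which gives exactly $\Ext^{n}_{\acat}(a,b)$.

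The substantive input has already been isolated: Corollary \ref{corollary:sheafication_exists_for_bounded_sheaves_in_abelian_case} (that $L$ preserves perfection) and Proposition \ref{proposition:sheafication_of_suspended_representable_in_freyd_envelope_of_abelian_cat} (the explicit computation of sheafified suspended representables) carry all the real work. The remaining proof is essentially bookkeeping; the only point where one must pause is confirming that the localization is prestable, which comes down to noting that $L$ preserves both finite colimits (being a left adjoint) and finite limits (being a left exact sheafification), so the loops–suspension structure descends cleanly.
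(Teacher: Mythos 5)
Your proposal is correct and follows the same route as the paper: the paper likewise deduces (1) and (2) from Corollary \ref{corollary:sheafication_exists_for_bounded_sheaves_in_abelian_case}, (3) from Lemma \ref{lemma:yoneda_embedding_of_abelian_category_identifies_with_sheaves}, and (4) from Proposition \ref{proposition:sheafication_of_suspended_representable_in_freyd_envelope_of_abelian_cat}. Your explicit use of Lemma \ref{lemma:recognition_of_finite_presheaves_on_an_ordinary_category} to reconcile ``perfect'' with ``bounded almost perfect'' is a detail the paper leaves implicit, but the argument is the same.
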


\begin{proof}
Property $(1)$ follows from \cref{corollary:sheafication_exists_for_bounded_sheaves_in_abelian_case}, which tells us that the sheafification and inclusion adjunction restricts to bounded almost perfect presheaves, and $(2)$ is an immediate consequence. Property $(3)$ is \cref{lemma:yoneda_embedding_of_abelian_category_identifies_with_sheaves} and the last one is \cref{proposition:sheafication_of_suspended_representable_in_freyd_envelope_of_abelian_cat}.
\end{proof}

We will now prove that the bounded derived $\infty$-category, as we defined it, satisfies the expected universal property. 

\begin{lemma}
\label{lemma:any_perfect_presheaf_over_abelian_category_rep_by_a_bounded_chain_complex}
Every perfect presheaf $X \in A_{\infty}^{\omega}(\acat)$ can be written as a geometric realization of a simplicial object corresponding under the Dold-Kan equivalence to a bounded chain complex $C_{\bullet}$ in $\acat$. Moreover, the t-structure homotopy groups of its sheafification satisfy 
\[
\pi_{k}^{\acat}(LX) \simeq H_{k}(C_{\bullet}).
\]
\end{lemma}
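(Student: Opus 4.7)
I begin with the existence statement. By \cref{corollary:perfect_presheaves_realizations_of_skeletal_simplicial_objects}, applied to the abelian $1$-category $\acat$ (which has finite limits), any perfect presheaf $X \in A_{\infty}^{\omega}(\acat)$ can be written as $X \simeq |\nu(a_\bullet)|$ for some simplicial object $a_\bullet$ in $\acat$ which is $k$-skeletal for some $k \geq 0$. Since $\acat$ is an ordinary abelian category, the classical Dold-Kan equivalence identifies simplicial objects in $\acat$ with non-negatively graded chain complexes in $\acat$, and under this correspondence $k$-skeletal simplicial objects match precisely chain complexes concentrated in degrees $0 \leq i \leq k$. Let $C_{\bullet}$ be the bounded chain complex corresponding to $a_\bullet$.

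For the homotopy group computation, I first reduce $LX$ to a realization in $\dcat^{b}(\acat)$. Because $\acat$ is an ordinary category, each $\nu(a_i)(b) = \Hom_{\acat}(b, a_i)$ is discrete, so $\nu(a_i) = y(a_i)$; by \cref{lemma:yoneda_embedding_of_abelian_category_identifies_with_sheaves}, each such representable is already a sheaf for the epimorphism topology and hence fixed by $L$. Since $L\colon A_{\infty}^{\omega}(\acat) \to \dcat^{b}(\acat)$ is a left adjoint (see \cref{proposition:properties_of_finite_derived_infty_cat_of_abelian_cat}), it commutes with geometric realizations, so
\[
LX \simeq L|\nu(a_\bullet)| \simeq |L \nu(a_\bullet)| \simeq |y(a_\bullet)|,
\]
where the right-hand side is the realization, inside the prestable $\infty$-category $\dcat^{b}(\acat)$, of a simplicial object valued in the heart $\dcat^{b}(\acat)^{\heartsuit} \simeq \acat$.

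It remains to identify $\pi_{k}^{\acat}(|y(a_\bullet)|)$ with $H_{k}(C_\bullet)$. For this I invoke the standard fact that, in any prestable $\infty$-category with a $t$-structure, the homotopy groups of the realization of a simplicial object in the heart compute the homology of the associated normalized chain complex. Concretely, the skeletal filtration of $|y(a_\bullet)|$ gives a first-quadrant spectral sequence
\[
E^{1}_{s,t} = \pi_{t}^{\acat}\bigl( y(a_s) \bigr) \;\Longrightarrow\; \pi_{s+t}^{\acat}\bigl( |y(a_\bullet)| \bigr)
\]
whose $E^{1}$-page is concentrated on the $t = 0$ row (since each $y(a_s)$ lies in the heart), so it collapses on $E^{2}$. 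The $d^{1}$-differential is the alternating sum of face maps, exhibiting $E^{2}_{k, 0} = H_{k}(C_\bullet)$ and yielding the desired identification.

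The principal subtlety is the final step: one needs the Dold--Kan--type identification of realization with homology to hold in the prestable $t$-structure setting on $\dcat^{b}(\acat)$. This is well-known but deserves care; the spectral-sequence argument above suffices, and can alternatively be phrased by noting that the composite $\acat \hookrightarrow \dcat^{b}(\acat) \to \textnormal{Fun}(\thickdelta^{op}, \dcat^{b}(\acat)) \xrightarrow{|-|} \dcat^{b}(\acat)$ agrees under Dold-Kan with the functor sending a chain complex to its image in the derived $\infty$-category, whose homotopy groups are by construction its homology.
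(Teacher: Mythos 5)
Your proof is correct and follows essentially the same route as the paper: both parts rest on \cref{corollary:perfect_presheaves_realizations_of_skeletal_simplicial_objects}, the Dold--Kan identification of skeletal simplicial objects with bounded complexes, and a collapsing realization spectral sequence. The only cosmetic difference is that the paper runs the spectral sequence on presheaf homotopy groups in $A_{\infty}^{\omega}(\acat)$ and sheafifies the resulting isomorphism at the end, whereas you push the (finite, skeletal) realization through $L$ first and run the spectral sequence in $\dcat^{b}(\acat)$; both orders of operations work.
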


\begin{proof}
The first part is immediate from \cref{corollary:perfect_presheaves_realizations_of_skeletal_simplicial_objects}, as under the Dold-Kan equivalence skeletal simplicial objects correspond to bounded complexes. For the second part, observe that we have a realization spectral sequence of presheaf homotopy groups 
\[
H_{k}(\pi_n y(C_{\bullet})) \Rightarrow \pi_{k+n}X.
\]
which collapses on the second page giving an isomorphism 
\[H_k(y(C_{\bullet})) \simeq  \pi_k X\]
of objects of $A(\acat)$. After sheafifying, this gives the desired isomorphism. 
\end{proof}

\begin{theorem}
\label{theorem:universal_property_of_bounded_derived_cat_of_an_abelian_cat}
Let $\acat$ be an abelian category with enough injectives and $\dcat$ a prestable $\infty$-category with finite limits. Then, left Kan extension gives an equivalence between the following two kinds of data: 

\begin{enumerate}
    \item exact functors $\acat \rightarrow \dcat^{\heartsuit}$ of abelian categories and 
    \item exact functors $\dcat^{b}(\acat) \rightarrow \dcat$ of prestable $\infty$-categories with finite limits. 
\end{enumerate}
\end{theorem}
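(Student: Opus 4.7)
The plan is to construct the inverse to restriction-to-the-heart by passing through the perfect prestable Freyd envelope $A_{\infty}^{\omega}(\acat)$. Given an exact functor $F\colon \acat \rightarrow \dcat^{\heartsuit}$, I would first compose with the heart inclusion $\dcat^{\heartsuit} \hookrightarrow \dcat$ to obtain an additive functor $\acat \rightarrow \dcat$, and then apply the universal property of $A_{\infty}^{\omega}(\acat)$ as a finite colimit completion (\ref{remark:universal_property_of_perfect_freyd_envelope}) to extend it uniquely to a right exact functor $\tilde{G}\colon A_{\infty}^{\omega}(\acat) \rightarrow \dcat$. The sought-after functor $\tilde{F}$ should then arise by factoring $\tilde{G}$ through the exact localization $L\colon A_{\infty}^{\omega}(\acat) \rightarrow \dcat^{b}(\acat)$ of \ref{proposition:properties_of_finite_derived_infty_cat_of_abelian_cat}.

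The key step is to show that $\tilde{G}$ inverts every sheafification equivalence. By \ref{lemma:any_perfect_presheaf_over_abelian_category_rep_by_a_bounded_chain_complex}, every $X \in A_{\infty}^{\omega}(\acat)$ can be written as $X \simeq |y(C_{\bullet})|$ for a bounded chain complex $C_{\bullet}$ in $\acat$, with $\pi_{k} LX \simeq H_{k}(C_{\bullet})$. Since $C_{\bullet}$ is bounded, the corresponding Dold--Kan simplicial object is skeletal and its geometric realization is a finite colimit, so right exactness of $\tilde{G}$ gives $\tilde{G}(X) \simeq |F(C_{\bullet})|$ in $\dcat$. Computing $t$-structure homotopy in the prestable target and using exactness of $F$ on $\acat$ yields $\pi_{k} \tilde{G}(X) \simeq H_{k}(F(C_{\bullet})) \simeq F(H_{k}(C_{\bullet})) \simeq F(\pi_{k} LX)$. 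Consequently, a map inverted by $L$ is sent by $\tilde{G}$ to a map inducing isomorphisms on all homotopy groups; since $\acat$ is a 1-category, \ref{lemma:recognition_of_finite_presheaves_on_an_ordinary_category} ensures that $\tilde{G}(X)$ is bounded in $\dcat$, and Postnikov convergence for bounded objects in a prestable $\infty$-category upgrades homotopy-group isomorphism to equivalence. Hence $\tilde{G}$ descends uniquely to $\tilde{F}\colon \dcat^{b}(\acat) \rightarrow \dcat$.

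Right exactness of $\tilde{F}$ is inherited from $\tilde{G}$ through the exact localization. For left exactness, a fibre sequence $X' \rightarrow X \rightarrow X''$ in $\dcat^{b}(\acat)$ yields a long exact sequence on $\pi_{k}$ valued in $\acat$, which remains long exact after applying $F$; together with the natural identification $\pi_{k} \tilde{F} \simeq F \circ \pi_{k}$, this matches the long exact sequence arising from the canonical comparison of $\tilde{F}(X')$ with the fibre of $\tilde{F}(X) \rightarrow \tilde{F}(X'')$ in $\dcat$, so the five lemma combined with Postnikov convergence forces an equivalence. Uniqueness of the correspondence is immediate: every object of $\dcat^{b}(\acat)$ is a finite colimit of objects of $\acat$ via \ref{lemma:any_perfect_presheaf_over_abelian_category_rep_by_a_bounded_chain_complex}, so any exact $\tilde{F}$ is pinned down by its restriction to the heart.

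The main obstacle is the descent step: one has to invert \emph{every} sheafification equivalence, not merely a generating set such as the maps coming from short exact sequences in $\acat$. The chain complex presentation of \ref{lemma:any_perfect_presheaf_over_abelian_category_rep_by_a_bounded_chain_complex} circumvents this by reducing the question to a homotopy-group calculation, but this reduction depends essentially on $\acat$ being a 1-category so that perfect presheaves, and therefore their images under $\tilde{G}$, are bounded in $\dcat$; without boundedness the Postnikov convergence argument would fail.
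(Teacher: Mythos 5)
Your proposal is correct and follows essentially the same route as the paper: extend through the perfect prestable Freyd envelope $A_{\infty}^{\omega}(\acat)$ via \cref{remark:universal_property_of_perfect_freyd_envelope}, use the chain-complex presentation of \cref{lemma:any_perfect_presheaf_over_abelian_category_rep_by_a_bounded_chain_complex} together with the realization spectral sequence and exactness of $F$ to descend through the localization, and then verify left exactness by a homotopy-group argument. The only cosmetic differences are that the paper reduces the descent step to showing $F$ annihilates $L$-acyclic objects (avoiding the naturality of your identification $\pi_{k}\tilde{G}(X)\simeq F(\pi_{k}LX)$ across a map), and checks left exactness via the criterion that $n$-truncated objects go to $n$-truncated objects rather than your five-lemma comparison.
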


\begin{proof}
The correspondence from $(2)$ to $(1)$ is given by restricting a given functor to the hearts. We claim left Kan extension provides an inverse. 

By \cref{remark:universal_property_of_perfect_freyd_envelope}, any additive functor $f\colon \acat \rightarrow \dcat$ extends uniquely to a right exact functor $F\colon A_{\infty}^{\omega}(\acat) \rightarrow \dcat$. We have to show that if $f$ is an exact functor, then the extension $F$ factors uniquely through an exact functor out of $\dcat^{b}(\acat)$. 

We first claim that $F$ factors through the bounded derived $\infty$-category in the first place. We have to show that it inverts $L$-equivalences, which are exactly those maps whose cofibre is $L$-acyclic. As $F$ is right exact, it is enough to check that $F$ annihilates $L$-acyclic objects. 

By \cref{lemma:any_perfect_presheaf_over_abelian_category_rep_by_a_bounded_chain_complex}, any $L$-acyclic object can be represented as a geometric realization $X \simeq y(a_{\bullet})$ of a simplicial object of $\acat$ corresponding to an acyclic bounded chain complex. Since $F$ is right exact, it follows that $F(X)$ is a geometric realization of $f(a_{\bullet})$, which is an acyclic complex in $\dcat^{\heartsuit}$ as $f$ is assumed to be exact. It follows from the geometric realization spectral sequence that $F(X) = 0$, as needed. 

We are left with showing that the unique right exact factorization through the derived $\infty$-category, which we will also denote by $F\colon \dcat^{b}(\acat) \rightarrow \dcat$, is left exact. The proof of \cite{lurie_spectral_algebraic_geometry}[C.3.2.2] shows that it is enough to verify that $F$ takes $n$-truncated objects to $n$-truncated objects. 

We use the same argument as above. If $X \in \dcat^{b}(\acat)$ is $n$-truncated, then by \cref{lemma:any_perfect_presheaf_over_abelian_category_rep_by_a_bounded_chain_complex} we can write $X \simeq y(a_{\bullet})$, where $a_{\bullet}$ corresponds to a chain complex in $\acat$ with no homology above degree $n$. Thus, $F(X) \simeq |f(a_{\bullet})|$ also has no $\dcat^{\heartsuit}$-valued $t$-structure homotopy groups above this degree, proving that it is $n$-truncated. 
\end{proof}

\subsection{Perfect derived $\infty$-category of a stable $\infty$-category} 
\label{subsection:perfect_derived_infty_cat_of_a_stable_infty_cat}

In this section, we associate to a stable $\infty$-category $\ccat$ equipped with a choice of an adapted homology theory an appropriate \emph{perfect derived $\infty$-category} $\dcat^{\omega}(\ccat)$. This will be a prestable $\infty$-category which is universal with respect to the property of admitting a prestable enhancement to the chosen homology theory in the sense of \cref{definition:prestable_enhancement}. 

\begin{remark}[Why relative to a homology theory?]
To construct a derived $\infty$-category, one has to fix a good notion of an epimorphism. In the abelian context, there is a natural such choice. However, as discussed at more length in \S\ref{subsection:epimorphism_classes}, a stable $\infty$-category does not have a canonical non-trivial notion of an epimorphism.

Thus, our derived $\infty$-category will depend not only on a stable $\ccat$, but also on a choice of an epimorphism class. This is the same as choosing an adapted homology theory by \cref{theorem:adapted_homology_theories_correspond_to_injective_epimorphism_classes}. 
\end{remark}

\begin{remark}[Why ``perfect''?]
We chose the term ``perfect'' for $\dcat^{\omega}(\ccat)$ as it will have the property of being generated under finite colimits by the image of a suitable embedding of $\ccat$. Thus, it will be analogous to the bounded derived $\infty$-category $\dcat^{b}(\acat)$ of an abelian category introduced in \cref{definition:bounded_derived_infty_cat_of_abelian_cat_as_sheaves}. 

The reason to avoid the word ``bounded'' in the stable case is that $\dcat^{\omega}(\ccat)$ will not be bounded in the sense that its every object is $n$-truncated for some $n$. This is a consequence of the fact that a non-zero stable  $\infty$-category $\ccat$ contains no $n$-truncated objects. 
\end{remark}
Informally, one would like to define the objects of the derived $\infty$-category associated to an adapted homology theory $H$ to consist of ``formal $H$-Adams resolutions''. Like in the abelian case, one can obtain the coconnective variant by starting with the subcategory of $H$-injectives and freely attaching totalizations, then stabilizing and passing to a suitable subcategory. For technical reasons, this is not the route we will take, instead giving a sheaf-theoretic description, mirroring our construction of the derived $\infty$-category of an abelian category. 

\begin{remark}[Expected properties]
Before proceeding with the construction, let us remark that the properties we expect from the derived $\infty$-category associated to $H\colon \ccat \rightarrow \acat$ mirror those of the $\infty$-category of $\synspectra_{E}$ of synthetic spectra. In particular, we will see that
\begin{enumerate}
    \item the derived $\infty$-category $\dcat^{\omega}(\ccat)$ is related by a well-behaved  adjunction to $\dcat^{b}(\acat)$,
    \item that its Postnikov tower describes the $H$-Adams spectral sequence and that 
    \item a suitable tower of its subcategories is a Goerss-Hopkins tower.
\end{enumerate}
These will be explored in, respectively,  \S\ref{subsection:homology_adjunction_and_thread_structure}, \S\ref{subsection:postnikov_towers_and_adams_filtration} and \S\ref{subsection:goerss_hopkins_theory}.
\end{remark}
We now move on to the construction. 

\begin{notation}
We fix a stable $\infty$-category equipped with a choice of an adapted homology theory $H\colon \ccat \rightarrow \acat$. We recall that by \cref{theorem:characterization_of_adapted_homology_theories} the induced functor $A(\ccat) \rightarrow \acat$ presents the latter as a Gabriel quotient of the classical Freyd envelope by a localizing subcategory which we will denote by $K$.
\end{notation}
Recall that our construction of $\dcat^{b}(\acat)$ given in \S\ref{subsection:bounded_derived_cat_of_abelian_cat} proceeded by starting with the prestable Freyd envelope $A_{\infty}(\acat)$ and enforcing descent with respect to epimorphisms. This is the route we will also take in the stable case. 
\begin{definition}
The \emph{$H$-epimorphism topology} is a Grothendieck pretopology on $\ccat$ in which a family of maps $\{ c_{i} \rightarrow d \}$ is a covering if it consists of a single map which is an $H$-epimorphism. 
\end{definition}

We first verify that this gives the right answer in case of discrete almost perfect presheaves.

\begin{proposition}
\label{proposition:k_local_objects_are_sheaves}
A finitely presented discrete presheaf $x \in A(\ccat)$ is a sheaf with respect to the $H$-epimorphism topology if and only if it is in the image of the right adjoint $\acat \hookrightarrow A(\ccat)$.
\end{proposition}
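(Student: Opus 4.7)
The plan is to match the sheaf condition of \cref{lemma:detecting_sheaves_in_additive_setting} with the $\Ext$-vanishing characterization of the image of $R$ provided by \cref{remark:characterization_of_the_image_of_right_adjoint}. Concretely, \cref{remark:characterization_of_the_image_of_right_adjoint} tells us that $x$ lies in the image of $R$ if and only if $\Hom_{A(\ccat)}(k,x)=0=\Ext^{1}_{A(\ccat)}(k,x)$ for every $k\in K$, while by \cref{lemma:detecting_sheaves_in_additive_setting} specialized to the discrete, additive setting, $x$ is a sheaf for the $H$-epimorphism topology iff for every cofibre (= fibre) sequence $a\to b\to c$ in $\ccat$ with $b\to c$ an $H$-epi, the sequence $x(c)\to x(b)\to x(a)$ is left exact.

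The key input is that, by \cref{proposition:epimorphism_classes_same_as_localizing_subcategories}, every $k\in K$ admits a presentation $y(b)\to y(c)\to k\to 0$ with $b\to c$ an $H$-epi; conversely, every $H$-epi $b\to c$ produces a $k\in K$ in this way. Given such a presentation, I would split the map $y(b)\to y(c)$ via the image $J=\mathrm{im}(y(b)\to y(c))$, obtaining two short exact sequences
\[
0\to J\to y(c)\to k\to 0,\qquad 0\to \ker(y(b)\to y(c))\to y(b)\to J\to 0.
\]
Using that representables are projective in $A(\ccat)$ (by \cref{theorem:for_stable_cat_representables_are_injective_in_freyd_envelope} and standard Freyd envelope theory), the long exact sequence from the first one reads
\[
0\to\Hom(k,x)\to x(c)\to\Hom(J,x)\to\Ext^{1}(k,x)\to 0.
\]
On the other hand, from the fact that $y\colon\ccat\to A(\ccat)$ is homological, the sequence $y(a)\to y(b)\to y(c)$ is exact in the middle, so a direct Yoneda computation identifies $\Hom(J,x)$ with $\ker\bigl(x(b)\to x(a)\bigr)$. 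Splicing these two identifications gives a natural exact sequence
\[
0\to\Hom(k,x)\to x(c)\to x(b)\to x(a)
\]
whose right-hand portion $x(c)\to x(b)\to x(a)$ is left exact exactly when the two flanking groups $\Hom(k,x)$ and $\Ext^{1}(k,x)$ both vanish.

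From this translation, both directions fall out. If $x\in R(\acat)$, then the flanking $\Ext$-groups vanish for every $k\in K$, hence for every $H$-epi cover, so $x$ is a sheaf by \cref{lemma:detecting_sheaves_in_additive_setting}. Conversely, if $x$ is a sheaf, then for an arbitrary $k\in K$ we pick a presentation coming from some $H$-epi $b\to c$ with fibre $a$; the sheaf condition on this cover forces the flanking $\Hom$ and $\Ext^{1}$ to vanish, so $x\in R(\acat)$. The main obstacle I anticipate is purely bookkeeping: carefully identifying $\ker(x(b)\to x(a))$ with $\Hom(J,x)$ only uses exactness in the middle of $y(a)\to y(b)\to y(c)$, and the projectivity of $y(c)$ only ensures that there is no obstruction to lifting maps out of $J$ to maps out of $y(c)$; these two facts together are precisely what makes the sheaf condition equivalent to the $(K,{\leq}1)$-local condition, and once they are written out cleanly the argument is immediate.
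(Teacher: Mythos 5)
Your proposal is correct and follows essentially the same route as the paper: both reduce the statement to the $\Ext^{0,1}$-vanishing characterization of the image of the right adjoint from \cref{remark:characterization_of_the_image_of_right_adjoint}, identify objects of $K$ as cokernels of $y(b)\to y(c)$ for $H$-epimorphisms, and match the resulting vanishing with the left-exactness of $x(c)\to x(b)\to x(a)$ via \cref{lemma:detecting_sheaves_in_additive_setting}. The only difference is bookkeeping: the paper computes the two $\Ext$-groups directly as the cohomology of $x(c')\to x(b)\to x(c)$ using the rotated-triangle projective resolution, whereas you splice long exact sequences through the image $J$ — these are the same computation.
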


\begin{proof}
Since $\acat \simeq A(\ccat)/K$ is a Gabriel quotient, it is known that the image of the right adjoint can be characterized as those $x \in A(\ccat)$ such that $\Ext^{s}_{A(\ccat)}(k, x)$ for all $k \in K$ and $s = 0, 1$ (Remark \ref{remark:characterization_of_the_image_of_right_adjoint}). In our context, the objects belonging to $K$ are precisely those that can be written as a cokernel of a map $y(d) \rightarrow y(e)$ induced by an $H$-epimorphism $d \rightarrow e$. 

Completing the latter to a fibre sequence, we obtain a projective resolution
\[
y(c) \rightarrow y(d) \rightarrow y(e) \rightarrow k \rightarrow 0
\]
in $A(\ccat)$ and the relevant $\Ext$-groups can be computed as zeroth and first cohomology of the chain complex
\[
x(e) \rightarrow x(d) \rightarrow x(c).
\]
These vanish precisely when the above is a kernel sequence, hence the claimed result follows from the characterization of additive presheaves given in \cref{lemma:detecting_sheaves_in_additive_setting}.
\end{proof}
We will be working as in \cref{notation:sheafication_in_large_spaces_on_abelian_category}, with the $\infty$-category 
\[
\widehat{P}_{\Sigma}(\ccat) \colonequals \Fun_{\Sigma}(\ccat^{op}, \widehat{\spaces}) 
\]
of product-preserving presheaves of large spaces. In this context, we have a sheafification functor $L_{\ccat}\colon \widehat{P}_{\Sigma}(\ccat) \rightarrow \widehat{P}_{\Sigma}(\ccat)$ for formal reasons. 

Our goal is to show that $L_{\ccat}$ preserves perfect presheaves, so that sheaves form a localization of the Freyd envelope. We could argue directly, as we did in the abelian case in \cref{corollary:sheafication_exists_for_bounded_sheaves_in_abelian_case}, but it will be more convenient to use the work we did in the previous section. 

\begin{definition}
\label{definition:homology_adjunction_for_freyd_envelopes}
Let $H^{*}\colon \widehat{P}_{\Sigma}(\ccat) \rightarrow \widehat{P}_{\Sigma}(\acat)$ be the left Kan extension of the homology functor $H\colon \ccat \rightarrow \acat$. This preserves almost perfect presheaves and hence induces an adjunction
\[
H^{*} \dashv H_{*}\colon A_{\infty}(\ccat) \rightleftarrows A_{\infty}(\acat),
\]
where $H_{*}$ is the restriction along $H$ functor. We will call $H^{*} \dashv H_{*}$ the \emph{homology adjunction}. 
\end{definition}
Observe that, by definition, $H\colon \ccat \rightarrow \acat$ preserves (and reflects) covering morphisms if we equip $\acat$ with the epimorphism topology. Thus, it is a morphism of sites in the sense of \cite{pstrkagowski2018synthetic}[A.10], and $H_{*}$ preserves sheaves. In fact, more is true, as we have the following:

\begin{proposition}
\label{proposition:homology_has_covering_lifting_property}
The functor $H\colon \ccat \rightarrow \acat$ has the covering lifting property. That is, for every $c \in \ccat$ and every epimorphism $a \rightarrow H(c)$, there exists an $H$-epimorphism $d \rightarrow c$ and a commutative diagram 
\[
\begin{tikzcd}
	& {H(d)} \\
	a & {H(c).}
	\arrow[two heads, from=1-2, to=2-2]
	\arrow[two heads, from=2-1, to=2-2]
	\arrow[from=1-2, to=2-1]
\end{tikzcd}
\]
\end{proposition}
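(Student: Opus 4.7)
The plan is to lift the given epimorphism $a \twoheadrightarrow H(c)$ from $\acat$ up to the Freyd envelope $A(\ccat)$, use projectivity of representables there to realize it as coming from a morphism of $\ccat$, and then descend back. The essential input is \cref{theorem:characterization_of_adapted_homology_theories}: since $H$ is adapted, the universal exact extension $L\colon A(\ccat) \to \acat$ of $H = L \circ y$ has a fully faithful right adjoint $R\colon \acat \hookrightarrow A(\ccat)$, and moreover $R$ lands in the finitely presented presheaves (as noted in the proof of \cref{lemma:functor_induced_by_homology_theory_a_left_adjoint_when_injectives_lift}, $R$ of an injective is representable and $R$ of a general object is a finite limit thereof).

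First, form the pullback
\[
a' \colonequals y(c) \times_{R(H(c))} R(a)
\]
in $A(\ccat)$, using the unit $\eta\colon y(c) \to R(L(y(c))) = R(H(c))$ and the map $R(a) \to R(H(c))$. This lies in $A(\ccat)$ because $\ccat$ has finite limits and so $A(\ccat)$ is closed under finite limits in additive presheaves (\cref{proposition:if_c_has_finite_limits_freyd_envelope_is_an_abelian_subcat}). Applying $L$ preserves this pullback (since $L$ is exact) and, because $R$ is fully faithful, the counit $\epsilon\colon LR \xrightarrow{\sim} \mathrm{id}_{\acat}$ is an isomorphism. The triangle identities then show that $L(\eta)$ becomes the identity of $H(c)$ under the identification $L(R(H(c))) \simeq H(c)$, so the pullback collapses to $L(a') \simeq a$ with $L(a' \to y(c)) = (a \twoheadrightarrow H(c))$.

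Now $a'$ is finitely presented, so it admits a surjection $y(d) \twoheadrightarrow a'$ from a single representable by the very definition of the Freyd envelope. Compose to get $y(d) \twoheadrightarrow a' \to y(c)$; by the Yoneda lemma this corresponds to a homotopy class of morphisms $d \to c$ in $\ccat$. Applying $L$ and using that $L$ is exact (hence preserves epimorphisms), the composite becomes $H(d) \twoheadrightarrow a \twoheadrightarrow H(c)$, which agrees with $H$ applied to $d \to c$. In particular $H(d) \to H(c)$ is epi so $d \to c$ is an $H$-epimorphism, and the map $H(d) \to a$ fills in the required commutative triangle.

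The construction itself is routine once the correct setup is in place; the genuine content is the existence of the fully faithful right adjoint $R$ with values in finitely presented presheaves, which is exactly what adaptedness provides via \cref{theorem:characterization_of_adapted_homology_theories}. No further obstacle arises.
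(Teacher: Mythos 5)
Your proof is correct. The one point worth double-checking is that the pullback $a' = y(c) \times_{R(H(c))} R(a)$ really lives in $A(\ccat)$, and your justification is sound: $R$ takes values in finitely presented presheaves (since $R(i)$ is representable for $i$ injective and a general $R(a)$ is a kernel of a map of such, as in the proof of \cref{lemma:functor_induced_by_homology_theory_a_left_adjoint_when_injectives_lift}), and $A(\ccat)$ is closed under finite limits in additive presheaves by \cref{proposition:if_c_has_finite_limits_freyd_envelope_is_an_abelian_subcat}. The triangle identity argument identifying $L(a') \to H(c)$ with $a \twoheadrightarrow H(c)$ is also correct.

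Your route differs from the paper's. The paper first reduces to the case $a = H(e)$ for some $e \in \ccat$ (every object of the Gabriel quotient is a quotient of the image of a representable), and then invokes the sheaf-theoretic description of $\acat$: by \cref{proposition:k_local_objects_are_sheaves}, $H(c)$ is the sheafification of $y(c)$ for the $H$-epimorphism topology, so the plus-construction formula for maps into a sheafification exhibits $y(e) \to H(c)$ as represented by an honest map $y(d) \to y(c)$ over some covering $d \to e$. You instead stay entirely inside the Freyd envelope and argue formally with the adjunction $L \dashv R$: pull back along the unit, observe the pullback is finitely presented, and cover it by a representable. Your argument is more self-contained and elementary — it needs no explicit model of morphisms in the localization — whereas the paper's version fits its surrounding narrative by making the sheaf-theoretic interpretation of adaptedness do the work (and the paper separately remarks that a third, injective-based construction is possible). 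Both ultimately rest on the same input, namely that adaptedness provides the fully faithful right adjoint $R$ landing in $A(\ccat)$.
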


\begin{proof}
Since $H$ is adapted, we have $\acat \simeq A(\ccat)/K$. Since in the Freyd envelope, any object is a quotient of a representable, any object $a \in \acat$ is a quotient of an object of the form $H(e)$, so that we can assume that $a = H(e)$ in the first place. 

We claim that there exists an $H$-epimorphism $d \rightarrow e$, such that the composite 
\[
H(d) \rightarrow H(e) \rightarrow H(c)
\]
lifts to an actual map $d \rightarrow c$, this proves the proposition. To see this, observe that by \cref{proposition:k_local_objects_are_sheaves}, $H(c)$ can be identified with the sheafification of $y(c)$ in the Freyd envelope. Thus, the plus-construction of the sheafification shows that the map $y(e) \rightarrow H(c)$ has an honest representative $y(d) \rightarrow y(c)$, where $d \rightarrow e$ is an $H$-epimorphism covering. 
\end{proof}

\begin{remark}
Having in mind a reader feeling a little uneasy with the sheaf-theoretic argument given above, we add that it is possible to construct the $H$-epimorphism $d \rightarrow e$ appearing in the proof of \cref{proposition:homology_has_covering_lifting_property} directly, by embedding $c$ into an injective and using adaptedness. 
\end{remark}

\begin{corollary}
\label{corollary:restriction_along_homology_commutes_with_sheafication_and_hp_sheafication}
The restriction $H_{*}\colon \widehat{P}_{\Sigma}(\acat) \rightarrow \widehat{P}_{\Sigma}(\ccat)$ commutes with sheafification and hypercomplete sheafification.
\end{corollary}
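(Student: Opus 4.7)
The plan is to deduce this from the covering lifting property established in \cref{proposition:homology_has_covering_lifting_property} via standard site-theoretic machinery. The key observation is that $H\colon \ccat \to \acat$ is a morphism of sites: an arrow $c \to d$ in $\ccat$ is an $H$-epimorphism precisely when $H(c) \to H(d)$ is an epimorphism in $\acat$, so $H$ preserves and reflects covering families. Consequently, $H_{*}$ takes sheaves to sheaves. What we need in addition is that $H_{*}$ commutes with the sheafification functor, and this is exactly the role played by the covering lifting property.

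First I would recall the plus-construction description of sheafification: for $X \in \widehat{P}_{\Sigma}(\acat)$, the sheafification $L_{\acat}X$ is obtained (up to transfinite iteration) as the colimit
\[
(X^{+})(a) \simeq \varinjlim_{a' \twoheadrightarrow a} \mathrm{Tot}(X(\check{C}(a'/a))),
\]
indexed over epimorphism covers of $a$. For $c \in \ccat$ we want to compare $(L_{\acat}X)(H(c))$ with $(L_{\ccat}(H_{*}X))(c)$; the latter is computed by an analogous colimit indexed over $H$-epimorphism covers $d \twoheadrightarrow c$, with Čech nerves $\check{C}(d/c)$ evaluated on $X \circ H$. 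The covering lifting property of \cref{proposition:homology_has_covering_lifting_property} says exactly that for every cover $a \twoheadrightarrow H(c)$ there exists an $H$-epimorphism $d \twoheadrightarrow c$ and a factorization $H(d) \to a \twoheadrightarrow H(c)$. Since $H$ is additive and preserves the fibre products computing Čech nerves in the Freyd-envelope setting (cofibre sequences in $\ccat$ go to exact sequences in $\acat$), this factorization gives a cofinality statement between the two indexing categories of covers.

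The main step is therefore to verify this cofinality and conclude that the two plus-constructions agree. Concretely, I would show that the functor sending an $H$-epimorphism $d \twoheadrightarrow c$ to the epimorphism $H(d) \twoheadrightarrow H(c)$ is cofinal in the category of epimorphism covers of $H(c)$ in $\acat$; the covering lifting property produces the required refinements, and additivity together with \cref{lemma:detecting_sheaves_in_additive_setting} (which reduces the Čech computation to a fibre sequence $X(H(c)) \to X(H(d)) \to X(H(e))$ where $e = \mathrm{fib}(d \to c)$) matches the two totalizations. Iterating the plus-construction transfinitely yields the equality $H_{*} \circ L_{\acat} \simeq L_{\ccat} \circ H_{*}$. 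Alternatively, one can appeal directly to \cite{pstrkagowski2018synthetic}[A.10] or the analogous general statement, which packages exactly this argument.

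For the hypercomplete case, the same argument applies but with hypercovers replacing ordinary Čech covers. The covering lifting property propagates inductively up the levels of a hypercover: given a hypercover of $H(c)$, one lifts it level-by-level to a hypercover of $c$ in $\ccat$ (at each stage using \cref{proposition:homology_has_covering_lifting_property} to refine the required epimorphism into the image of an $H$-epimorphism, and using that $\ccat$ has the relevant finite limits to form the matching objects). The resulting cofinality then identifies $H_{*} \circ \widehat{L}_{\acat}$ with $\widehat{L}_{\ccat} \circ H_{*}$. I expect the main obstacle to be purely bookkeeping: carefully matching Čech (and hyper-Čech) diagrams across $H$ and verifying the cofinality cleanly, particularly because in our setting $\ccat$ is large and we are working in the $\widehat{\spaces}$-enlarged universe, so some care is needed to ensure the colimits involved in the plus-construction are indexed over legitimately small diagrams after fixing the object $c$.
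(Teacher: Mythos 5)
Your proposal is correct and follows essentially the same route as the paper: the paper's proof is a one-line deduction from the covering lifting property of \cref{proposition:homology_has_covering_lifting_property}, citing \cite{pstrkagowski2018synthetic}[A.13] for the standard site-theoretic fact, which is exactly the plus-construction/cofinality argument (and its hypercover variant) that you sketch. The only difference is that you unpack the standard argument rather than citing it.
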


\begin{proof}
This follows from the covering lifting property, see \cite{pstrkagowski2018synthetic}[A.13] for the proof. 
\end{proof}
The above gives us a good method of showing that sheafification preserves desirable properties, as long as we can present a given presheaf as being of the form $H_{*}X$ for some $X \in \widehat{P}_{\Sigma}(\acat)$, as then sheafification can be computed in the latter. Luckily, we have a good supply of such presheaves. 

\begin{lemma}
\label{lemma:homology_adjunction_is_an equivalence_on_hearts}
The restriction of the right adjoint to the hearts 
\[
H_{*}\colon A(\acat) \rightarrow A(\ccat)
\]
induces an adjoint equivalence between the categories of finitely presented sheaves. In particular, any discrete sheaf $X \in A(\ccat)$ can be written uniquely in the form $H_{*}Y$ for some sheaf $Y \in A(\acat)$. 
\end{lemma}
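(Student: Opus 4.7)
The plan is to reduce the claim to two earlier identifications of finitely presented sheaves with $\acat$ itself. By \cref{lemma:yoneda_embedding_of_abelian_category_identifies_with_sheaves}, the discrete Yoneda embedding $y\colon \acat \hookrightarrow A(\acat)$ identifies $\acat$ with the full subcategory of finitely presented sheaves for the epimorphism topology. Similarly, by \cref{proposition:k_local_objects_are_sheaves}, the fully faithful right adjoint $R_{H}\colon \acat \hookrightarrow A(\ccat)$ to the Gabriel localization $L_{H}\colon A(\ccat) \to \acat$ identifies $\acat$ with the full subcategory of finitely presented sheaves for the $H$-epimorphism topology. Thus both subcategories in question are canonically equivalent to $\acat$, and the whole claim reduces to checking that $H_{*}$ corresponds to $\mathrm{id}_{\acat}$ under these identifications.

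The main step is to verify that $H_{*} \circ y \simeq R_{H}$ as functors $\acat \to A(\ccat)$. This is essentially an unwinding of definitions: for $a \in \acat$ and $c \in \ccat$, the left-hand side evaluated at $c$ is
\[
(H_{*} y(a))(c) \,=\, y(a)(Hc) \,=\, \Hom_{\acat}(Hc, a),
\]
while the adjunction $L_{H} \dashv R_{H}$ combined with $L_{H}(y(c)) \simeq H(c)$ yields
\[
R_{H}(a)(c) \,\simeq\, \Hom_{A(\ccat)}(y(c), R_{H}(a)) \,\simeq\, \Hom_{\acat}(Hc, a),
\]
naturally in both $a$ and $c$. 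It follows that $H_{*}$ takes finitely presented sheaves on $\acat$ into finitely presented sheaves on $\ccat$ and, under the identifications above, is naturally isomorphic to the identity of $\acat$, hence is an equivalence.

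For the adjoint equivalence assertion, I will observe that a quasi-inverse is induced by the left adjoint of the homology adjunction: since $H^{*}$ sends $\nu(c)$ to $\nu(Hc)$ and is right exact, its $\pi_{0}$-truncation restricted to $A(\ccat) \subseteq A_{\infty}(\ccat)$ is naturally identified with $L_{H}$ landing in $\acat \hookrightarrow A(\acat)$. Applied to a finitely presented sheaf $R_{H}(a)$ on $\ccat$, this returns $L_{H} R_{H}(a) \simeq a$, corresponding under $y$ to $y(a) \in A(\acat)$, so $H^{*}$ descends to the inverse equivalence. The main obstacle, modest as it is, is the bookkeeping between presheaves and sheaves in the two universes and the compatibility of the two topologies: the reason $H_{*}$ preserves sheaves in the first place, and hence that the argument above makes sense, is precisely the covering lifting property of $H$ established in \cref{proposition:homology_has_covering_lifting_property}.
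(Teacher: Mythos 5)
Your proof is correct and follows essentially the same route as the paper: identify both categories of finitely presented sheaves with $\acat$ via \cref{lemma:yoneda_embedding_of_abelian_category_identifies_with_sheaves} and \cref{proposition:k_local_objects_are_sheaves}, then check by the Yoneda computation $(H_{*}y(a))(c) \simeq \Hom_{\acat}(H(c),a) \simeq R_{H}(a)(c)$ that $H_{*}$ corresponds to the identity of $\acat$ under these identifications. Your explicit unwinding is exactly the ``chasing through the definitions'' the paper leaves implicit, and the extra paragraph identifying the sheafified $H^{*}$ with the quasi-inverse is a correct (if optional) elaboration.
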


\begin{proof}
The categories of finitely presented discrete sheaves on both sides can be identified with the abelian category $\acat$, as a consequence of \cref{proposition:k_local_objects_are_sheaves} and \cref{lemma:yoneda_embedding_of_abelian_category_identifies_with_sheaves}. Chasing through the definitions shows that $H_{*}$ is compatible with these equivalences. 
\end{proof}

\begin{corollary}
\label{corollary:presheaves_with_single_homotopy_group_can_be_sheafified_in_abelian_cat}
Let $X \in A_{\infty}^{\omega}(\ccat)$ be a presheaf with homotopy groups concentrated in a single degree $n$. Then, $L_{\ccat} X \simeq H_{*}(L_{\acat} Y)$ for some $Y \in A_{\infty}(\acat)$ with homotopy concentrated in single degree $n$.
\end{corollary}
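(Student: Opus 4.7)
The plan is to exploit the fact that $X$ is a shift of a discrete presheaf and reduce the sheafification problem to the abelian case via \cref{lemma:homology_adjunction_is_an equivalence_on_hearts}. Since $X$ has homotopy concentrated in degree $n$, there is a canonical equivalence $X \simeq \Sigma^n \pi_n X$ in $A_\infty^\omega(\ccat)$, where $\pi_n X \in A(\ccat)$ is viewed as a discrete object. Because sheafification $L_\ccat$ is a left adjoint (hence commutes with $\Sigma$), we have $L_\ccat X \simeq \Sigma^n L_\ccat(\pi_n X)$, and the problem reduces to understanding the sheafification of a single finitely presented discrete presheaf on $\ccat$.

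The next step is to produce $Y$. The sheafification $L_\ccat(\pi_n X)$ is a discrete sheaf in $A(\ccat)$, so by \cref{lemma:homology_adjunction_is_an equivalence_on_hearts} there is an essentially unique $M \in \acat$ with $L_\ccat(\pi_n X) \simeq H_*(y(M))$, where $y\colon \acat \to A(\acat)$ is the Yoneda embedding; moreover $y(M)$ is already a sheaf by \cref{lemma:yoneda_embedding_of_abelian_category_identifies_with_sheaves}. I then define
\[
Y := \Sigma^n y(M) \in A_\infty(\acat),
\]
which manifestly has homotopy concentrated in degree $n$ and is in fact perfect.

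The remaining task is to verify $H_*(L_\acat Y) \simeq L_\ccat X$. Using \cref{corollary:restriction_along_homology_commutes_with_sheafication_and_hp_sheafication} to commute $H_*$ past $L_\acat$, and then commuting $H_*$ past $\Sigma^n$, one obtains
\[
H_*(L_\acat \Sigma^n y(M)) \simeq L_\ccat H_*(\Sigma^n y(M)) \simeq L_\ccat \Sigma^n H_*(y(M)) \simeq \Sigma^n L_\ccat H_*(y(M)).
\]
Since $H_*(y(M))$ is already a sheaf, the final term collapses to $\Sigma^n H_*(y(M)) \simeq \Sigma^n L_\ccat(\pi_n X) \simeq L_\ccat(\Sigma^n \pi_n X) \simeq L_\ccat X$, which is what we want.

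The only step that requires genuine justification is the identity $H_*\Sigma^n \simeq \Sigma^n H_*$ on the relevant subcategory of $A_\infty(\acat)$. This rests on two observations: first, $H_*$ is literally restriction along $H\colon \ccat \to \acat$ and hence acts pointwise in the target $\widehat{\spaces}$; and second, by \cref{corollary:almost_perfect_presheaves_closed_under_finite_limits_colimits_truncations} the suspension in $A_\infty(\acat)$ (respectively $A_\infty(\ccat)$) is computed levelwise in the ambient presheaf category. Combining these shows that $H_*$ intertwines the two suspensions, finishing the proof.
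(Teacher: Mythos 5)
Your proof is correct and follows essentially the same route as the paper's: write $X$ as a shift of a discrete presheaf, use \cref{lemma:homology_adjunction_is_an equivalence_on_hearts} to realize its (discrete) sheafification as $H_*$ of a sheaf on $\acat$, and conclude via \cref{corollary:restriction_along_homology_commutes_with_sheafication_and_hp_sheafication}. The only difference is cosmetic — you commute $L_\ccat$ past $\Sigma^n$ explicitly and spell out why $H_*$ intertwines the suspensions, steps the paper leaves implicit.
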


\begin{proof}
Write $X \simeq \Sigma^{n} x$ for some $x \in A(\ccat)$ and $n \geq 0$. Since $L$ is a localization, the morphism $\Sigma^{n} x \rightarrow \Sigma^{n} Lx$ becomes an equivalence after applying $L$. Thus, we can assume that $X \simeq \Sigma^{n} x$, where $x \in A(\ccat)$ is already a sheaf. 

By \cref{lemma:homology_adjunction_is_an equivalence_on_hearts}, we can then write $\Sigma^{n} x \simeq H_{*} \Sigma^{n} y$ for some sheaf $y \in A(\acat)$, and the result follows from \cref{corollary:restriction_along_homology_commutes_with_sheafication_and_hp_sheafication} by taking $Y \colonequals \Sigma^{n} y$.
\end{proof}

\begin{proposition}
\label{proposition:bounded_almost_perfect_presheaves_have_sheafication}
Let $X \in A^{\omega}_{\infty}(\ccat)$ be a perfect presheaf. Then, the sheafification $LX$ is perfect.
\end{proposition}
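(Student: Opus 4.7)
The plan is to reduce the proposition to two base cases by exploiting the generation result of Corollary \ref{corollary:perfect_prestable_freyd_envelope_generated_under_limits_by_representables_and_em_presheaves}, which presents $A_{\infty}^{\omega}(\ccat)$ as the closure under finite limits of representables together with presheaves whose homotopy is concentrated in a single degree. Since the sheafification $L = L_{\ccat}$ is a left exact reflection onto sheaves (finite limits of sheaves being computed in presheaves), the full subcategory of those $X \in A_{\infty}^{\omega}(\ccat)$ for which $LX$ is perfect is closed under finite limits in $A_{\infty}^{\omega}(\ccat)$; it therefore suffices to verify the claim on the two types of generators.

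For a representable $X = \nu(c)$, I would appeal to the additive sheaf criterion of Lemma \ref{lemma:detecting_sheaves_in_additive_setting}: one must check that for every $H$-epimorphism $d \rightarrow e$ with fibre $a$ the sequence $\nu(c)(e) \rightarrow \nu(c)(d) \rightarrow \nu(c)(a)$ is a fibre sequence of spaces. Since $\ccat$ is stable, $\Map_{\ccat}(-, c)$ sends every cofibre sequence to a fibre sequence, so this is automatic. Hence $\nu(c)$ is already a sheaf and $L \nu(c) \simeq \nu(c)$ is perfect by definition.

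For $X$ with homotopy concentrated in a single degree, Corollary \ref{corollary:presheaves_with_single_homotopy_group_can_be_sheafified_in_abelian_cat} produces an equivalence $L X \simeq H_{*}(L_{\acat} Y)$ for some $Y \in A_{\infty}(\acat)$ with homotopy in a single degree. The abelian case (Corollary \ref{corollary:sheafication_exists_for_bounded_sheaves_in_abelian_case}) then yields that $L_{\acat} Y$ is a bounded, perfect object of $A_{\infty}^{\omega}(\acat)$, whose homotopy groups in particular lie in $\acat$. Restriction along $H$ commutes with the formation of homotopy groups, so $\pi_{k} H_{*}(L_{\acat} Y) \simeq H_{*}(\pi_{k} L_{\acat} Y)$, and to conclude via the stable characterization of perfection (Lemma \ref{lemma:characterization_of_finite_presheaves_on_stable_inftycat}) it suffices to observe that $H_{*}(a) \in A(\ccat)$ is finitely presented for every $a \in \acat$. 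This step uses adaptedness in an essential way: by Theorem \ref{theorem:characterization_of_adapted_homology_theories}, the right adjoint $H_{*}\colon \acat \hookrightarrow A(\ccat)$ to the Gabriel quotient exists and sends injectives to representables; for general $a$, an injective coresolution $0 \to a \to i_{0} \to i_{1}$ in $\acat$ exhibits $H_{*}(a)$ as a kernel of a map of representables in $A(\ccat)$, hence as an object of the Freyd envelope.

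The main obstacle is this last step: the representable case and the left exactness of sheafification are essentially formal, but verifying that $H_{*}$ of an object of $\acat$ is finitely presented as a presheaf on $\ccat$ genuinely requires the adaptedness hypothesis. Without it, the analogous right adjoint would land only in the ambient additive presheaf category rather than in the Freyd envelope $A(\ccat)$, and the stable recognition criterion of Lemma \ref{lemma:characterization_of_finite_presheaves_on_stable_inftycat} could not be applied.
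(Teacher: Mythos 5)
Your proof is correct and follows essentially the same route as the paper: reduce via left exactness of $L$ and the finite-limit generation of $A_{\infty}^{\omega}(\ccat)$ by representables and single-homotopy-group presheaves (\cref{corollary:perfect_prestable_freyd_envelope_generated_under_limits_by_representables_and_em_presheaves}), observe that representables are already sheaves, and handle the single-degree case through \cref{corollary:presheaves_with_single_homotopy_group_can_be_sheafified_in_abelian_cat} and the abelian case. Your final paragraph correctly unpacks the point the paper leaves implicit in its citations, namely that adaptedness is what puts $H_{*}(a)$ inside $A(\ccat)$ for general $a \in \acat$.
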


\begin{proof}
We will first assume that $X$ has bounded presheaf homotopy groups. In this case, it belongs to the smallest subcategory of $A^{\omega}_{\infty}(\ccat)$ closed under fibres and containing all perfect presheaves with a single non-zero homotopy group. As the subcategory of those perfect presheaves such that $LX$ is perfect is closed under fibres since $L$ is left exact, this case is an immediate consequence of \cref{corollary:presheaves_with_single_homotopy_group_can_be_sheafified_in_abelian_cat} and the abelian case covered in \cref{corollary:sheafication_exists_for_bounded_sheaves_in_abelian_case}.

Now, consider the case when $X \simeq \nu(c)$ for $c \in \ccat$ is a representable. In this case, $X$ satisfies the criterion of \cref{lemma:detecting_sheaves_in_additive_setting}, as it is already a sheaf. 

Finally, the general case follows from the perfectness criterion of \cref{lemma:characterization_of_finite_presheaves_on_stable_inftycat} which implies that any perfect presheaf is a fibre of a map from a bounded presheaf to one of the form $\nu(c)_{\geq n}$ for some $n$. Since the latter itself is a fibre of a map $\nu(c) \rightarrow \nu(c)_{\leq n-1}$, we are done. 
\end{proof}

\begin{definition} \label{perfderived}
\label{definition:perfect_derived_cat_of_stable_cat}
Let $H\colon \ccat \rightarrow \acat$ be an adapted homology theory. Then, the perfect derived $\infty$-category of $\ccat$ relative to $H$ is given by 
\[
\dcat^{\omega}(\ccat) \colonequals A_{\infty}^{\omega, sh}(\ccat), 
\]
the $\infty$-category of perfect sheaves on $\ccat$ with respect to the $H$-epimorphism topology. 
\end{definition}

\begin{warning}
Note that we assume that the adapted homology theory $H$ is understood from the context. It would be more correct to write
\[
\dcat^{\omega}(\ccat, H)
\]
rather than $\dcat^{\omega}(\ccat)$, as the derived $\infty$-category depends on the choice of homology theory. However, this would clutter our notation, so we will not do so.
\end{warning}

The following lists the main properties of the perfect derived $\infty$-category. 

\begin{proposition}
\label{proposition:properties_of_finite_derived_infty_cat_of_stable_cat}
We have that 
\begin{enumerate}
\item $L\colon A_{\infty}^{\omega}(\ccat) \rightarrow \dcat^{\omega}(\ccat)$ is an exact localization compatible with local grading, in particular 
\item $\dcat^{\omega}(\ccat)$ is a locally graded, prestable $\infty$-category with finite limits. Moreover, 
\item we have a canonical equivalence $\dcat^{\omega}(\ccat)^{\heartsuit} \simeq \acat$ and
\item the kernel $\mathrm{ker}(L)$ of the localization functor contains only bounded objects and as a localization of $A^{\omega}_{\infty}(\ccat)$, the perfect derived $\infty$-category is determined by its heart. Finally, 
\item the synthetic analogue functor $\nu\colon \ccat \rightarrow A_{\infty}^{\omega}(\ccat)$ factors through $\dcat^{\omega}(\ccat)$ and 
\item a cofibre sequence $c \rightarrow d \rightarrow e$ in $\ccat$ induces a cofibre sequence $\nu(c) \rightarrow \nu(d) \rightarrow \nu(e)$ if and only if $H(d) \rightarrow H(e)$ is surjective. 
\end{enumerate}
\end{proposition}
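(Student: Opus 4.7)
The plan is to deduce items (1)--(6) as consequences of \cref{proposition:bounded_almost_perfect_presheaves_have_sheafication}, which shows that sheafification preserves perfect presheaves. This immediately gives that $L\colon A^{\omega}_{\infty}(\ccat)\to\dcat^{\omega}(\ccat)$ is a reflective localization, and it is exact because sheafification in the ambient $\widehat{P}_{\Sigma}(\ccat)$ is left exact while right exactness is formal. Compatibility with the local grading holds because axiom (4) of \cref{definition:epimorphism_class} asserts stability of the $H$-epimorphism topology under suspension, so the autoequivalence $X\mapsto X(-[-1])$ preserves the sheaf condition. Item (2) is then the standard fact that a finite-limit-closed reflective subcategory of a prestable $\infty$-category with finite limits is prestable with finite limits. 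For (5), the detection criterion \cref{lemma:detecting_sheaves_in_additive_setting} only requires that each $\Map_{\ccat}(-,d)$ take $H$-epimorphism fibre sequences to fibre sequences of spaces, which holds automatically because $\ccat$ is stable; hence every $\nu(d)$ is already a sheaf and $\nu$ factors through $\dcat^{\omega}(\ccat)$. Item (3) then follows from \cref{proposition:k_local_objects_are_sheaves}, since a discrete perfect sheaf is a discrete finitely presented sheaf, and these are exactly the objects of the image $\acat\hookrightarrow A(\ccat)$.

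The main content is (4). For the boundedness claim, let $X$ be a perfect presheaf with $LX=0$. By \cref{lemma:perfection_detected_by_looping_into_a_representable} and \cref{lemma:characterization_of_finite_presheaves_on_stable_inftycat} there exist $N\ge 0$ and $c\in\ccat$ with $\Omega^{N}X\simeq\nu(c)$, together with a fibre sequence
\[
X\to X_{\leq N-1}\to \Sigma^{N+1}\nu(c).
\]
Applying the exact $L$, invoking (5) to see $L\nu(c)\simeq\nu(c)$, and using $LX=0$, we obtain $\fib\bigl(LX_{\leq N-1}\to\Sigma^{N+1}\nu(c)\bigr)=0$; in a prestable $\infty$-category with finite limits this forces the map to be an isomorphism on $\pi_{k}$ for all $k\geq 1$. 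On the other hand, exactness of $L$ makes $LX_{\leq N-1}$ concentrated in degrees $\leq N-1$, so taking $\pi_{N+1}$ yields $0=\pi_{N+1}\Sigma^{N+1}\nu(c)=y(c)$, whence $c=0$ and $X\simeq X_{\leq N-1}$ is bounded. The second half of (4) then follows by a standard argument: every object of $\ker(L)$ being bounded can be built from its discrete homotopy objects by Postnikov assembly, and each such lies in $\ker(L)\cap A(\ccat)=K$, the Serre subcategory that by \cref{corollary:localizing_class_determines_an_adapted_homology_theory} records precisely the adapted homology theory.

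Finally, (6) is formal given (5). In a prestable $\infty$-category with finite limits a fibre sequence $A\to B\to C$ is a cofibre sequence precisely when $B\to C$ is an effective epimorphism, equivalently $\pi_{0}$-surjective in the abelian heart. Since $\nu$ is left exact, $\nu(c)\to\nu(d)\to\nu(e)$ is always a fibre sequence in $\dcat^{\omega}(\ccat)$; exactness of $L$ gives $\pi_{0}L\nu(d)\simeq Ly(d)\simeq H(d)$ and similarly for $e$, so the cofibre-sequence criterion reduces to surjectivity of $H(d)\to H(e)$. The principal obstacle in the whole proof is the boundedness half of (4); the key point is that exactness of $L$ together with the perfect presheaf classification of \cref{lemma:characterization_of_finite_presheaves_on_stable_inftycat} pins down that the ``stable tail'' $\Omega^{N}X\simeq\nu(c)$ of any $L$-acyclic $X$ must itself vanish, since $\nu$ always lands in sheaves and so cannot be annihilated by $L$ unless its argument is zero.
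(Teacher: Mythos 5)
Your overall strategy matches the paper's: everything is reduced to \cref{proposition:bounded_almost_perfect_presheaves_have_sheafication}, the fact that every representable $\nu(d)$ is already a sheaf, and the structural fibre sequence relating a perfect presheaf to its bounded truncation and a shifted representable. Items (1), (2), (3), (5) and (6) are handled correctly and essentially as in the paper.

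There is, however, a genuine gap in your boundedness argument for (4). After applying $L$, the object $L\bigl(\Sigma^{N+1}\nu(c)\bigr)$ is the suspension of $\nu(c)$ computed \emph{in sheaves}, and its homotopy groups are the sheaf-theoretic ones valued in $\dcat^{\omega}(\ccat)^{\heartsuit}\simeq\acat$. Thus $\pi_{N+1}$ of this object is $H(c)$, not $y(c)$: the presheaf suspension $\Sigma^{N+1}\nu(c)$ is not a sheaf even though $\nu(c)$ is, so your closing remark that ``$\nu$ always lands in sheaves and so cannot be annihilated by $L$ unless its argument is zero'' does not apply to the tail $\Sigma^{N+1}\nu(c)$. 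Your computation therefore only yields $H(c)=0$, and since $H$ is not assumed conservative this does not give $c=0$. The argument can be repaired: from $H(c)=0$ one gets $H(\Omega^{j}c)=H(c)[-j]=0$ for all $j$, so \emph{all} sheaf homotopy groups of $\Sigma^{N+1}_{\dcat^{\omega}}\nu(c)\simeq LX_{\leq N-1}$ vanish; a truncated object of a prestable $\infty$-category with vanishing homotopy groups is zero, so $\Sigma^{N+1}_{\dcat^{\omega}}\nu(c)=0$; since suspension is fully faithful on a prestable $\infty$-category this forces $\nu(c)=0$, and then $c=0$ by the Yoneda lemma, whence $X\simeq X_{\leq N-1}$ is bounded. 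With this correction your route to (4) is sound and in fact more explicit than the paper's, which merely observes that every perfect presheaf is an extension of a sheaf of the form $\nu(c)$ and a bounded presheaf and leaves the rest implicit.
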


\begin{proof}
The first two properties are immediate from \cref{proposition:bounded_almost_perfect_presheaves_have_sheafication}, where we verified that sheafification takes perfect presheaves to perfect presheaves, and the fact that the Grothendieck pretopology on $\ccat$ is compatible with suspension which includes the local grading of the Freyd envelope.

The properties $(3)-(5)$ follow from the proof of \cref{proposition:bounded_almost_perfect_presheaves_have_sheafication}, as we observed that $\nu(c)$ for $c \in \ccat$ is already a sheaf, and that any other perfect presheaf is an extension of a sheaf of this form and a bounded presheaf. 

For the last property, observe that since $\nu$ is left exact, if $\nu(c) \rightarrow \nu(d) \rightarrow \nu(e)$ is always fibre. It is cofibre if and only if the last map is an epimorphism on $\dcat^{\omega}(\ccat)^{\heartsuit} \simeq \acat$-valued zeroth homotopy group, but we have 
\begin{equation}
\label{equation:nu_is_a_prestable_enhancement_of_h}
\pi_{0}^{\heartsuit} \nu(c) \simeq H(c)
\end{equation}
as objects of $\acat$ by construction.
\end{proof}

Let us observe that the perfect derived $\infty$-category has the following elegant universal property similar to the one of \cref{theorem:universal_property_of_bounded_derived_cat_of_an_abelian_cat} appearing in the abelian case.

Note that as a consequence of the natural isomorphism  (\ref{equation:nu_is_a_prestable_enhancement_of_h}) above, $\nu\colon \ccat \rightarrow \dcat^{\omega}(\ccat)$ is a prestable enhancement to $H\colon \ccat \rightarrow \acat$ in the sense of \cref{definition:prestable_enhancement}; our main result is that it is in fact universal with respect to this property. 

\begin{theorem}[Universal property of the perfect derived $\infty$-category]
\label{theorem:universal_property_of_finite_derived_category}
Let $H\colon \ccat \rightarrow \acat$ be an adapted homology theory,  $\dcat^{\omega}(\ccat)$ the corresponding perfect derived $\infty$-category and $\dcat$ a prestable $\infty$-category with finite limits. Then, left Kan extension along $\nu\colon \ccat \rightarrow \dcat^{\omega}(\ccat)$ induces an equivalence between the two following collections of data:
\begin{enumerate} 
\item exact functors $G\colon \dcat^{\omega}(\ccat) \rightarrow \dcat$ of prestable $\infty$-categories and 
\item exact functors $G_{0}\colon \acat \rightarrow \dcat^{\heartsuit}$ of abelian categories \emph{together} with a prestable enhancement of $G_{0} \circ H$.
\end{enumerate}
\end{theorem}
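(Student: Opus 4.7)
The plan is to combine the universal property of the perfect prestable Freyd envelope from \cref{theorem:universal_property_of_finite_presheaves} with the fact that $L\colon A_{\infty}^{\omega}(\ccat) \to \dcat^{\omega}(\ccat)$ is an exact localization whose kernel is determined by its heart. Given an exact $G$ as in (1), the pair $(G_{\mid \acat}, G \circ \nu)$ immediately satisfies the requirements of (2) using $\dcat^{\omega}(\ccat)^{\heartsuit} \simeq \acat$ together with the natural isomorphism $\pi_{0}\nu(c) \simeq H(c)$ recorded in \cref{proposition:properties_of_finite_derived_infty_cat_of_stable_cat}; the content of the theorem is to construct the inverse. Given $(G_{0}, \euscr{G})$ as in (2), I would apply \cref{theorem:universal_property_of_finite_presheaves} to the left exact additive functor $\euscr{G}$ to obtain a unique exact extension $\tilde{G}\colon A_{\infty}^{\omega}(\ccat) \to \dcat$, and the central question is whether $\tilde{G}$ descends along $L$.

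To show that $\tilde{G}$ annihilates $\ker(L)$, I would first restrict to hearts, obtaining an exact functor $\tilde{G}^{\heartsuit}\colon A(\ccat) \to \dcat^{\heartsuit}$ which on representables satisfies $\tilde{G}^{\heartsuit}(y(c)) \simeq \pi_{0}\euscr{G}(c) \simeq G_{0}H(c)$. The universal property of the classical Freyd envelope \cref{theorem:homological_universal_property_of_freyd_envelope} then forces $\tilde{G}^{\heartsuit}$ to factor as $A(\ccat) \xrightarrow{L_{0}} \acat \xrightarrow{G_{0}} \dcat^{\heartsuit}$ through the Gabriel quotient of \cref{theorem:characterization_of_adapted_homology_theories}, so in particular $\tilde{G}^{\heartsuit}$ annihilates the localizing subcategory $K = \ker(L_{0})$. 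Property~(4) of \cref{proposition:properties_of_finite_derived_infty_cat_of_stable_cat} now asserts that $\ker(L)$ contains only bounded perfect presheaves and is determined by its heart $K$; any such acyclic object can be assembled from shifts of objects of $K$ through finitely many fibre-cofibre sequences, so the exactness of $\tilde{G}$ propagates the heart-level vanishing to all of $\ker(L)$. Consequently $\tilde{G}$ inverts $L$-equivalences, factors uniquely as $\tilde{G} = G \circ L$, and the resulting $G\colon \dcat^{\omega}(\ccat) \to \dcat$ is exact because $L$ is an exact, essentially surjective localization. Uniqueness of $G$ and the fact that the two constructions are mutually inverse follow from the uniqueness clauses of \cref{theorem:universal_property_of_finite_presheaves} once one observes that for an exact $G$ constructed from (2), the precomposition $G \circ L$ is forced to coincide with the unique extension $\tilde{G}$ of $\euscr{G}$.

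The main obstacle I anticipate is the reduction from ``$\tilde{G}$ kills $K$'' to ``$\tilde{G}$ kills all of $\ker(L)$'', which depends essentially on the boundedness half of property~(4). Without boundedness, $L$-acyclic presheaves could have unbounded Postnikov towers that cannot be constructed from the heart using finitely many exact operations, and the exactness of $\tilde{G}$ alone would be insufficient to propagate the heart-level vanishing. With boundedness, a standard induction on the number of nonzero homotopy groups closes the argument, and the rest of the proof is an assembly of previously established universal properties.
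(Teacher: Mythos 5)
Your proposal is correct and follows essentially the same route as the paper: reduce via \cref{theorem:universal_property_of_finite_presheaves} to exact functors out of $A_{\infty}^{\omega}(\ccat)$, observe that the heart-level restriction factoring through $\acat$ is exactly the condition of annihilating $K=\ker(L_0)$, and then use property (4) of \cref{proposition:properties_of_finite_derived_infty_cat_of_stable_cat} (boundedness of $\ker(L)$ together with the localization being determined by its heart) to descend through $L$. The paper compresses the final descent step into a single citation of that proposition, whereas you spell out the Postnikov induction; the content is the same.
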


\begin{proof}
Since $\acat$ is a localization of $A(\ccat)$, to give a collection of data of the second type is equivalent to a prestable enhancement $\euscr{H}$ \emph{with the property} that the composite $\pi_{0} \circ \euscr{H}$ is a homology theory such that the induced exact functor $A(\ccat) \rightarrow \dcat^{\heartsuit}$ factors through $\acat$. 

By \cref{theorem:universal_property_of_finite_presheaves}, prestable enhancements are classified by exact functors out of $A_{\infty}^{\omega}(\ccat)$, and the ones corresponding to those that with the above factorization property are those such that the induced map on the hearts factors through $\acat$. These are exactly those exact functors out of $A_{\infty}^{\omega}(\ccat)$ which factor uniquely through the perfect derived $\infty$-category by \cref{proposition:properties_of_finite_derived_infty_cat_of_stable_cat}.
\end{proof}

\begin{remark}
\label{remark:local_grading_on_perfect_derived_infty_cat}
As in the case of the prestable Freyd envelopes discussed in \cref{remark:universal_properties_of_local_gradings_on_prestable_freyd_envelopes}, $\dcat^{\omega}(\ccat)$ also acquires a unique local grading such that $\nu\colon \ccat \rightarrow \dcat^{\omega}(\ccat)$ acquires a structure of a locally graded functor. Moreover, the universal property of \cref{theorem:universal_property_of_finite_derived_category} implies a universal property of $\dcat^{\omega}(\ccat)$ as a locally graded prestable $\infty$-category. 
\end{remark}

It will be useful to have a concrete way to recognize the perfect derived $\infty$-category, and the following criterion will be sufficient for our purposes. For some intuition about the second condition below, see \cref{remark:intuition_about_second_condition_in_detection_of_perfect_derived_cat} which follows the proof. 

\begin{proposition}
\label{proposition:criterion_for_functor_out_of_perfect_derived_cat_to_be_ff}
Let $\dcat$ be a prestable $\infty$-category with finite limits with $\dcat^{\heartsuit} \simeq \acat$ and let $\nu_{\dcat}\colon \ccat \rightarrow \dcat$ be a prestable enhancement of $H$. Let $G\colon \dcat^{\omega}(\ccat) \rightarrow \dcat$ be the functor induced by \cref{theorem:universal_property_of_finite_derived_category}; that is, the unique exact functor such that $G \circ \nu \simeq \nu_{\dcat}$ and $G_{0} \colonequals G |_{\acat}$ is the chosen equivalence with $\dcat^{\heartsuit}$. Then, $G$ is fully faithful if and only if 
\begin{enumerate}
    \item $\nu_{\dcat}$ is fully faithful and,
    \item $\Ext^{s}_{\dcat}(\nu_{\dcat}(c), i) \colonequals \pi_{0} \Map_{\dcat}(\nu_{\dcat}(c), \Sigma^{s} i) = 0$ for any $c \in \ccat$, $i \in \acat^{inj}$ and $s > 0$. 
\end{enumerate}
Moreover, if the above holds then $G$ is an equivalence if and only if the image of $\nu_{\dcat}$ generates $\dcat$ under finite colimits. 
\end{proposition}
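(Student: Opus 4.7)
I would prove necessity of (1) by noting that $\nu\colon \ccat \to \dcat^{\omega}(\ccat)$ is fully faithful (by \cref{proposition:properties_of_finite_derived_infty_cat_of_stable_cat}, as representables are already sheaves), so if $G$ is fully faithful then $\nu_{\dcat} \simeq G\circ\nu$ is too. For necessity of (2), it suffices to verify that $\Ext^{s}_{\dcat^{\omega}(\ccat)}(\nu(c), i) = 0$ for $s > 0$ and $i$ injective, since this vanishing transfers to $\dcat$ under the fully faithful $G$. To see this, I would argue that $\Sigma^{s} i$ is already a sheaf on $\ccat$: by \cref{lemma:homology_adjunction_is_an equivalence_on_hearts}, $i \in \acat$ corresponds to the sheaf $H_{*}i \in A(\ccat)$; by \cref{corollary:restriction_along_homology_commutes_with_sheafication_and_hp_sheafication}, $H_{*}$ commutes with sheafification; and by the analogue of \cref{proposition:sheafication_of_suspended_representable_in_freyd_envelope_of_abelian_cat} applied in $\acat$, $\Sigma^{s} i$ is already a sheaf on $\acat$. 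Yoneda then gives $\Map_{\dcat^{\omega}(\ccat)}(\nu(c), \Sigma^{s} i) \simeq K(\Hom_{\acat}(H(c), i), s)$, whose $\pi_{0}$ vanishes for $s > 0$.

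For sufficiency, assume (1) and (2). Fix $c \in \ccat$ and let $\mathcal{C}_{c} \subseteq \dcat^{\omega}(\ccat)$ be the class of $y$ for which $\Map_{\dcat^{\omega}(\ccat)}(\nu(c), y) \to \Map_{\dcat}(\nu_{\dcat}(c), Gy)$ is an equivalence. Since $G$ is left exact and $\Map$ is left exact in the second variable, $\mathcal{C}_{c}$ is closed under finite limits. Combining \cref{corollary:perfect_prestable_freyd_envelope_generated_under_limits_by_representables_and_em_presheaves} with left exactness of the sheafification $L$, the category $\dcat^{\omega}(\ccat)$ is generated under finite limits by the representables $\nu(d)$ and the objects $\Sigma^{k} a$ for $a \in \acat$ and $k \geq 0$. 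Representables lie in $\mathcal{C}_{c}$ by (1), so the heart of the argument is to verify $\Sigma^{k} a \in \mathcal{C}_{c}$.

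For this verification, I would use an injective resolution: pick a short exact sequence $0 \to a \to i \to a' \to 0$ with $i$ injective, which gives a fibre-and-cofibre sequence $a \to i \to a'$ in any prestable $\infty$-category with heart $\acat$, and rotating produces a further fibre-and-cofibre sequence $i \to a' \to \Sigma a$. I would check that its $\Sigma^{k-1}$-shifts $\Sigma^{k-1}i \to \Sigma^{k-1}a' \to \Sigma^{k}a$ remain fibre sequences in the prestable setting by passing to Spanier-Whitehead stabilization and observing that $\Sigma^{k-1} i$ is connective, so the SW fibre and prestable fibre agree. Mapping $\nu(c)$ (resp.\ $\nu_{\dcat}(c)$) into these fibre sequences and using the vanishing of $\Ext^{s}(-, i)$ for $s > 0$ (by (2) in $\dcat$, and by the first paragraph's computation in $\dcat^{\omega}(\ccat)$), the induced long exact sequences let one identify $\Ext^{s}(\nu_{?}(c), a) \simeq \Ext^{s}_{\acat}(H(c), a)$ in a $G$-compatible way, and therefore $\pi_{n} \Map(\nu_{?}(c), \Sigma^{k} a) \simeq \Ext^{k-n}_{\acat}(H(c), a)$ on both sides. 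The comparison map is then a $\pi_{*}$-isomorphism, hence an equivalence. The most delicate step is justifying the prestable fibre-sequence statement for the rotated and shifted sequences, which requires a careful comparison between the Spanier-Whitehead stable and prestable fibre constructions; I expect this to be the main technical obstacle.

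With $\mathcal{C}_{c} = \dcat^{\omega}(\ccat)$ for every $c$ in hand, fix $y$ and consider the class $\mathcal{D}_{y}$ of $x$ for which $\Map(x, y) \to \Map(Gx, Gy)$ is an equivalence. This is closed under finite colimits (since $G$ is right exact) and contains every $\nu(c)$; because $\dcat^{\omega}(\ccat)$ is generated under finite colimits by representables, $\mathcal{D}_{y}$ exhausts $\dcat^{\omega}(\ccat)$, proving $G$ fully faithful. For the final assertion, if $G$ is fully faithful then its essential image is closed under finite colimits and contains $G(\nu(\ccat)) = \nu_{\dcat}(\ccat)$; therefore $G$ is an equivalence precisely when its essential image exhausts $\dcat$, equivalently when $\nu_{\dcat}(\ccat)$ generates $\dcat$ under finite colimits.
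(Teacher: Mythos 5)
Your proposal is correct and follows essentially the same route as the paper: necessity is checked by transporting the two conditions (which hold in $\dcat^{\omega}(\ccat)$ itself, via the sheaf $\Sigma^{s}i \simeq \mathrm{B}^{s}\Hom_{\acat}(H(-),i)$) along the fully faithful $G$, and sufficiency reduces, by right exactness of $G$ and finite-colimit generation, to $X = \nu(c)$, then by left exactness and \cref{corollary:perfect_prestable_freyd_envelope_generated_under_limits_by_representables_and_em_presheaves} to $Y$ a representable or a shifted heart object, with the representable case handled by (1) and the heart case by injective resolution together with (2). The only divergence is that you reduce $\Sigma^{k}a$ to the injective case by dimension-shifting along rotated fibre sequences from short exact sequences, where the paper instead uses a finite cosimplicial injective resolution and the (finite, by truncatedness) totalization; both work, and your worry about rotated prestable fibre sequences is resolved exactly as you suggest, by connectivity of the Spanier--Whitehead fibre.
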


\begin{proof}
Observe that the above two properties hold for the identity functor of $\dcat^{\omega}(\ccat)$ itself, the first one by the Yoneda lemma and the second one since 
\[
\Map_{\dcat^{\omega}(\ccat)}(\nu(c), \Sigma^{s} i) \simeq \mathrm{B}^{s}\Hom_{\acat}(H(c), i),
\]
as the latter formula defines a sheaf with respect to the $H$-epimorphism topology when $i$ is injective with the correct sheaf homotopy groups.

In the general case, since the functor $G$ is an equivalence on the hearts by construction, any injective $i \in \acat$ is in its image. As so are all objects of the form $\nu_{\dcat}$, we deduce that the above two conditions are necessary, as they both hold for $\dcat^{\omega}(\ccat)$ itself.

Now suppose that the above two properties hold, we have to show that $G$ is fully faithful; that is, that for any $X, Y \in \dcat^{\omega}(\ccat)$, the induced morphism of mapping spaces
\[
\Map_{\dcat^{\omega}(\ccat)}(X, Y) \rightarrow \Map_{\dcat}(GX, GY)
\]
is an equivalence. Since $G$ is right exact and the perfect derived $\infty$-category is generated under finite colimits by representables $\nu(c)$ for $c \in \ccat$, we can restrict to the case when $X \simeq \nu(c)$ is of this form. 

Similarly, since $G$ is left exact, by \cref{corollary:perfect_prestable_freyd_envelope_generated_under_limits_by_representables_and_em_presheaves} we can assume that $Y$ is either also a representable or has sheaf homotopy concentrated in a single degree, as these kinds of sheaves generate the perfect derived $\infty$-category under finite limits. 

In the first case, we have $Y \simeq \nu(d)$ and the claim follows from 
\[
\Map_{\dcat^{\omega}(\ccat)}(\nu(c), \nu(d)) \simeq \Map_{\ccat}(c, d) \simeq \Map_{\dcat}(\nu_{\dcat}(c), \nu_{\dcat}(d)) \simeq \Map_{\dcat}(G(\nu(c)), G(\nu(d))
\]
since both $\nu$ and $\nu_{\dcat}$ are fully faithful by assumption.

Now assume that $Y \simeq \Sigma^{s} a$ for $a \in \acat$ is a sheaf with homotopy concentrated in a single degree. If 
\[
a \rightarrow i_{0} \rightrightarrows i_{1} \triplerightarrow\ldots
\]
is a cosimplicial injective resolution, then the totalization spectral sequence implies that 
\[
\Sigma^{s} a \rightarrow \Sigma^{s} i_{0} \rightrightarrows \Sigma^{s} i_{1} \triplerightarrow \ldots
\]
is a limit diagram. As this is a limit of $s$-truncated sheaves, it can be replaced by a finite totalization, and we deduce that we can further assume that $a = i$ is injective. 

After reducing to $X= \nu(c)$ and $Y \colonequals \Sigma^{s} i$, then for $s = 0$ we have 
\begin{equation}
\label{equation:equivalence_of_mapping_spaces_in_criterion_for_being_domega}
\Map_{\dcat^{\omega}(\ccat)}(\nu(c), i) \simeq \Hom_{\acat}(H(c), i) \simeq \Map_{\dcat}(\nu_{\dcat}(c), i) \simeq \Map_{\dcat}(G(\nu(c)), i) 
\end{equation}
as $G$ is an equivalence on the hearts and both of $\nu, \nu_{\dcat}$ are prestable enhancements of $H$. 

If $s > 0$, then both of the relevant mapping spaces have no homotopy groups below degree $s$ by assumption $(2)$ above, and so it is enough to check it is an equivalence after applying $\Omega^{s}$, reducing to the already covered case $s = 0$. 

For the last part, observe that if $G$ is fully faithful, then it is an equivalence if and only if it is essentially surjective. Since $\dcat^{\omega}(\ccat)$ is generated under finite colimits by representables, the claim follows. 
\end{proof}

\begin{remark}[Relation to the homology adjunction]
\label{remark:intuition_about_second_condition_in_detection_of_perfect_derived_cat}
Condition $(2)$ appearing in \cref{proposition:criterion_for_functor_out_of_perfect_derived_cat_to_be_ff} should be interpreted as saying that $\nu(c)$ are relatively projective in the perfect derived $\infty$-category, with relative projectivity measured with respect to the injectives of the heart. 

To see that this holds for $\dcat^{\omega}(\ccat)$ in a more intuitive manner than the direct argument given in the proof above, it is convenient to use the homology adjunction $H^{*} \dashv H_{*}\colon \dcat^{\omega}(\ccat) \leftrightarrows \dcat^{b}(\acat)$ which we will later introduce in  \S\ref{subsection:homology_adjunction_and_thread_structure}. 

Since the latter induces an adjunction between the hearts and $H^{*}(\nu(c)) \simeq H(c)$, we can write 
\[
\Ext^{s}_{\dcat^{\omega}(\ccat)}(\nu(c), i) \simeq \Ext^{s}_{\dcat^{\omega}(\ccat)}(\nu(c), H_{*}i) \simeq \Ext^{s}_{\dcat^{b}(\acat)}(H(c), i)
\]
which now vanishes if $s > 0$ since $i$ is assumed injective and the right hand side is just the classical $\Ext$-group of $\acat$. Thus, condition $(2)$ is automatic whenever we have a sufficiently well-behaved adjunction with the derived $\infty$-category of $\acat$ which is an equivalence on the hearts and such that $H^{*}(\nu(c))$ is discrete. 

Note that we can identify $H^{*} \dashv H_{*}$ with the free $C\tau$-module adjunction, as we will show later in  \cref{theorem:finite_ctau_modules_same_as_derived_category}. Thus, condition $(2)$ also holds whenever the relevant $\infty$-category admits some form of the $C\tau$-formalism. 
\end{remark}

\subsection{Homology adjunction and the thread structure}
\label{subsection:homology_adjunction_and_thread_structure}

In \S\ref{subsection:thread_structure_on_prestable_freyd_envelope}, we have introduced the thread structure on the prestable Freyd envelope of a stable $\infty$-category $\ccat$, which is a certain transformation 
\[
\tau\colon \Sigma X \rightarrow X[1]
\]
natural in $X \in A_{\infty}(\ccat)$. This had the property that the functor taking $X$ to the cofibre of 
\[
C\tau^{n} \otimes X \colonequals \mathrm{cofib}(\Sigma^{n} X[-n] \rightarrow X) 
\]
had a natural structure of a monad. In this section, we will show that these functors are compatible with the localization giving rise the the derived $\infty$-category, and that they can be used to construct a Goerss-Hopkins tower associated to $H\colon \ccat \rightarrow \acat$. 

One can work with either finite or almost perfect presheaves, but since we have introduced the derived $\infty$-category in full generality only in the finite case, we will focus on the former. 

In what follows, let $L$ denote the localization functor $A_{\infty}^{\omega}(\ccat) \rightarrow \dcat^{\omega}(\ccat)$ which corresponds to sheafification with respect to the $H$-epimorphism topology. We say a morphism is an $L$-equivalence if it is taken to an equivalence of sheaves.

\begin{lemma}
The functor $C\tau^{n} \otimes -\colon A_{\infty}^{\omega}(\ccat) \rightarrow A_{\infty}^{\omega}(\ccat)$ takes $L$-equivalences to $L$-equivalences, so that it descends to a monad on the derived $\infty$-category $\dcat^{\omega}(\ccat)$. 
\end{lemma}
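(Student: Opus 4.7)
The plan is to reduce the statement to the exactness and local-grading compatibility of $L$, both of which are already recorded in \cref{proposition:properties_of_finite_derived_infty_cat_of_stable_cat}(1), combined with the spiral cofibre sequence of \cref{proposition:spiral_cofibre_sequence}. Concretely, I would first observe that both the suspension $\Sigma$ and the local grading $[-n]$ preserve $L$-equivalences in $A_{\infty}^{\omega}(\ccat)$: the former because $L$ is in particular right exact so that $L\Sigma \simeq \Sigma L$, and the latter because the $H$-epimorphism topology is stable under $\Sigma$ by axiom (4) of \cref{definition:epimorphism_class}, which is precisely what makes the local grading descend to $\dcat^{\omega}(\ccat)$ and forces the canonical natural transformation $L \circ [1] \to [1] \circ L$ to be an equivalence. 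Consequently the composite $\Sigma^{n}(-)[-n]$ preserves $L$-equivalences.

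Next, given any $L$-equivalence $f\colon X \to Y$, the naturality of the thread map $\tau$ of \cref{construction:canonical_thread_structure_on_prestable_freyd} and of the defining cofibre sequence
\[
\Sigma^{n} X[-n] \xrightarrow{\tau^{n}} X \to C\tau^{n}\otimes X
\]
provided by \cref{proposition:spiral_cofibre_sequence} yields a commutative diagram of cofibre sequences
\[
\begin{tikzcd}
\Sigma^{n}X[-n] \ar[r] \ar[d,"\Sigma^{n}f[-n]"'] & X \ar[r] \ar[d,"f"] & C\tau^{n}\otimes X \ar[d,"C\tau^{n}\otimes f"] \\
\Sigma^{n}Y[-n] \ar[r] & Y \ar[r] & C\tau^{n}\otimes Y
\end{tikzcd}
\]
in $A_{\infty}^{\omega}(\ccat)$. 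Applying the exact functor $L$ produces a map of cofibre sequences in $\dcat^{\omega}(\ccat)$ in which the two leftmost vertical maps are equivalences, by assumption on $f$ and by the previous step. The five lemma (in its prestable incarnation, or simply passing to the Spanier–Whitehead stabilization) then forces the rightmost vertical map to be an equivalence, which is exactly the statement that $C\tau^{n}\otimes f$ is an $L$-equivalence.

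With that in hand, the final sentence of the lemma is formal: by the universal property of the localization $L\colon A_{\infty}^{\omega}(\ccat) \to \dcat^{\omega}(\ccat)$, the endofunctor $C\tau^{n}\otimes -$ descends uniquely to an endofunctor of $\dcat^{\omega}(\ccat)$ compatible with $L$, and its monad structure established in \cref{notation:monad_associated_to_hnc_adjunction_is_tensor_with_ctau}, being given by natural transformations of endofunctors of $A_{\infty}^{\omega}(\ccat)$, descends along the same localization.

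No step here appears to be a genuine obstacle, as the content has been arranged so that the work is already packed into \cref{proposition:properties_of_finite_derived_infty_cat_of_stable_cat}; the only point that would merit explicit verification is the compatibility of $L$ with $[1]$, which I would spell out via axiom (4) of \cref{definition:epimorphism_class} and the formula $(X[1])(c) = X(\Sigma^{-1}c)$ of \cref{remark:local_grading_on_prestable_freyd_envelope}, showing that the class of sheaves (equivalently, of $L$-local objects) is preserved by $[1]$ and $[-1]$.
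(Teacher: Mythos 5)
Your argument is correct and is essentially the paper's own proof, just unpacked: the paper phrases it as "the class of endofunctors preserving $L$-equivalences is closed under finite colimits and the local grading, contains the identity, and hence contains $C\tau^{n}\otimes - = \mathrm{cofib}(\Sigma^{n}(-)[-n] \to \mathrm{id})$," which is exactly your diagram chase. (One small simplification: after applying the exact $L$ you do not need the five lemma — the two left vertical maps being equivalences means the map of arrows is an equivalence, so the induced map on cofibres is one by functoriality.)
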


\begin{proof}
Since $L$ is an exact localization compatible with the local grading by \cref{proposition:properties_of_finite_derived_infty_cat_of_stable_cat}, the subcategory of endofunctors of $A_{\infty}^{\omega}(\ccat)$ with this property is stable under finite limits, colimits and the local grading. Since it also contains the identity and 
\[
C\tau^{n} \otimes X \colonequals \mathrm{cofib}(\Sigma^{n} X[-n] \rightarrow X) 
\]
by \cref{proposition:spiral_cofibre_sequence}, we deduce that $C\tau^{n} $ is also in this subcategory. 
\end{proof}

\begin{notation}
We will denote the induced monad on $\dcat^{\omega}(\ccat)$ also by $C\tau^{n} {\otimes} -$, hoping that this does not lead to confusion. Note that it differs from the one on $A_{\infty}^{\omega}(\ccat)$ by a sheafification; in other words, it corresponds to taking the cofibre in sheaves rather than in presheaves. 
\end{notation}

In \cref{definition:homology_adjunction_for_freyd_envelopes} we have observed that the functor $H\colon \ccat \rightarrow \acat$ induces a left Kan extension and restriction adjunction 
\[
H^{*} \dashv H_{*}\colon A_{\infty}(\ccat) \leftrightarrows A_{\infty}(\acat).
\]
This restricts to perfect sheaves, and is compatible with the topologies on both sides and so induces an adjunction on $\infty$-categories of sheaves by standard arguments. 

\begin{definition}
\label{definition:homology_adjunction_for_derived_cats}
The \emph{homology adjunction} is the adjunction
\[
H^{*} \dashv H_{*}\colon \dcat^{\omega}(\ccat) \leftrightarrows \dcat^{b}(\acat)
\]
induced by restriction along $H\colon \ccat \rightarrow \acat$. 
\end{definition}

\begin{warning}
The notation used in \cref{definition:homology_adjunction_for_derived_cats}, the right adjoint $H_{*}$ given by restriction coincides with the one on whole Freyd envelope, but the left adjoint differs from the one on presheaves by the process of sheafification. This should not lead to too much confusion.
\end{warning}

\begin{lemma}
\label{lemma:right_adjoint_in_homology_adjunction}
The right adjoint $H_{*}\colon \dcat^{b}(\acat) \rightarrow \dcat^{\omega}(\ccat)$ is exact and induces an equivalence between the hearts; thus, it commutes with $\acat$-valued homotopy groups. 
\end{lemma}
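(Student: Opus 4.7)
The plan is to verify the three assertions in turn: exactness, the heart equivalence, and commutation with homotopy groups, with each consequence essentially formal once the preceding one is in place.

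First, I will show $H_*$ is exact. As a right adjoint, $H_*$ preserves all limits in $\dcat^b(\acat)$, so it suffices to show right exactness, i.e.\ preservation of finite colimits (equivalently, of cofibers). At the level of (big) product-preserving presheaves, $H_*$ is just restriction along $H\colon\ccat\to\acat$, and so preserves colimits because these are computed pointwise. Colimits in sheaves are computed by sheafifying the presheaf colimit, so to deduce that $H_*$ preserves colimits in the sheaf $\infty$-categories it suffices that restriction commutes with sheafification. This is precisely \cref{corollary:restriction_along_homology_commutes_with_sheafication_and_hp_sheafication}, whose hypothesis (the covering lifting property for $H$) was verified in \cref{proposition:homology_has_covering_lifting_property}. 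Combined with \cref{proposition:bounded_almost_perfect_presheaves_have_sheafication}, which ensures that sheafification preserves perfectness, this shows that $H_*\colon \dcat^b(\acat)\to\dcat^\omega(\ccat)$ is well-defined and both left and right exact.

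Next, I will identify $H_*$ on hearts. By \cref{proposition:properties_of_finite_derived_infty_cat_of_stable_cat}(3) and \cref{proposition:properties_of_finite_derived_infty_cat_of_abelian_cat}(3), we have canonical equivalences
\[
\dcat^\omega(\ccat)^\heartsuit \simeq \acat \simeq \dcat^b(\acat)^\heartsuit,
\]
where on each side the heart is identified with the full subcategory of discrete perfect sheaves, i.e.\ finitely presented sheaves in the respective Freyd envelopes. Under these identifications, the restriction $H_*$ on hearts becomes the discrete restriction functor of \cref{lemma:homology_adjunction_is_an equivalence_on_hearts}, which was shown there to induce an equivalence between the categories of finitely presented sheaves on $\ccat$ and on $\acat$. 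Hence $H_*$ restricts to the identity of $\acat$ under these canonical identifications; in particular it is an equivalence of hearts.

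Finally, commutation with $\acat$-valued homotopy groups is a formal consequence. For any $X\in\dcat^b(\acat)$ and $k\geq 0$, the $\acat$-valued homotopy group is $\pi_k^{\heartsuit}X \simeq \pi_0^{\heartsuit}(\Omega^k X)$, and similarly in $\dcat^\omega(\ccat)$. Since $H_*$ is left exact it commutes with $\Omega^k$, and since it is exact and an equivalence on hearts it commutes with the truncation $\pi_0^\heartsuit$ (the right adjoint to the inclusion of the heart), so $\pi_k^\heartsuit(H_* X) \simeq H_*(\pi_k^\heartsuit X) \simeq \pi_k^\heartsuit X$ under the identification of the hearts. The main subtlety in this argument is simply keeping track of the distinction between presheaf and sheaf colimits, which is resolved entirely by the covering lifting property established earlier.
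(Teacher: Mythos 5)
Your proof is correct and follows essentially the same route as the paper: exactness via the fact that $H_*$ is restriction, which commutes with sheafification by \cref{corollary:restriction_along_homology_commutes_with_sheafication_and_hp_sheafication}, and the heart identification via \cref{lemma:homology_adjunction_is_an equivalence_on_hearts}. Your version just spells out the pointwise-colimit and $\pi_k^\heartsuit = \pi_0^\heartsuit \Omega^k$ steps that the paper leaves implicit.
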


\begin{proof}
The right adjoint is given by restriction, and it is exact when restricted to sheaves as the restriction functor on presheaves commutes with sheafification by \cref{corollary:restriction_along_homology_commutes_with_sheafication_and_hp_sheafication}. The second part is \cref{lemma:homology_adjunction_is_an equivalence_on_hearts}.
\end{proof}

\begin{lemma}
\label{lemma:bounded_derived_cat_monadic_over_derived_of_ccat}
For any $n \geq 0$, the induced adjunction
\[
(-)_{\leq n} \circ H^{*} \dashv H_{*}\colon \tau_{\leq n} \dcat^{\omega}(\ccat) \leftrightarrows \tau_{\leq n} \dcat^{b}(\acat)
\]
between subcategories of $n$-truncated objects is monadic. 
\end{lemma}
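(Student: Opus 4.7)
The plan is to verify the two hypotheses of the Barr--Beck--Lurie monadicity theorem \cite{higher_algebra}[4.7.3.5] for the given adjunction: that the right adjoint $H_{*}$ restricted to $n$-truncated objects is conservative, and that $\tau_{\leq n} \dcat^{b}(\acat)$ admits $H_{*}$-split geometric realizations which $H_{*}$ preserves. Note that $H_{*}$ automatically restricts to a functor between the subcategories of $n$-truncated objects, since by \cref{lemma:right_adjoint_in_homology_adjunction} it is exact and hence commutes with Postnikov truncations.

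For conservativity, I would invoke both halves of \cref{lemma:right_adjoint_in_homology_adjunction}: $H_{*}$ is exact and induces an equivalence on hearts, so it commutes with the $\acat$-valued homotopy group functors $\pi_{k}^{\heartsuit}$ modulo that heart equivalence. Suppose $f\colon X \to Y$ is a map in $\tau_{\leq n} \dcat^{b}(\acat)$ with $H_{*}(f)$ an equivalence; passing to the fibre $F$ of $f$ we obtain $H_{*}(F) \simeq 0$, whence $\pi_{k}^{\heartsuit} F \simeq 0$ for every $k \geq 0$. Combined with the $n$-truncatedness of $F$, a standard Postnikov tower argument in prestable $\infty$-categories forces $F \simeq 0$, making $f$ an equivalence.

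For the colimit condition, the key observation is that $\tau_{\leq n} \dcat^{\omega}(\ccat)$ and $\tau_{\leq n} \dcat^{b}(\acat)$ are $(n+1)$-categories, since mapping spaces into an $n$-truncated object of a prestable $\infty$-category are $n$-truncated. In any $(n+1)$-category, the geometric realization of a simplicial object $X_{\bullet}$ is equivalent to the colimit of its restriction to $\Delta^{op}_{\leq n+1}$, because the associated cosimplicial diagram of mapping spaces into any target takes $n$-truncated values and for such a cosimplicial space the totalization agrees with $\mathrm{Tot}^{n+1}$. Consequently, geometric realizations in $\tau_{\leq n} \dcat^{b}(\acat)$ exist and may be computed as the $n$-truncation of the corresponding finite colimit in $\dcat^{b}(\acat)$, which is available by \cref{proposition:properties_of_finite_derived_infty_cat_of_abelian_cat}. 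Since $H_{*}$ is exact it preserves both finite colimits and $n$-truncations, and so preserves all such geometric realizations; in particular the $H_{*}$-split ones.

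The main subtlety is the reduction of geometric realizations to $(n+1)$-skeletal finite colimits in an $(n+1)$-category; once this standard fact is in hand, both hypotheses of Barr--Beck--Lurie follow directly from the exactness and equivalence-on-hearts properties of $H_{*}$ already established in \cref{lemma:right_adjoint_in_homology_adjunction}.
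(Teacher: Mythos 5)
Your overall route is the same as the paper's: both proofs reduce to the Barr--Beck--Lurie criterion, use that the truncated subcategories are $(n+1)$-categories with finite colimits (so that geometric realizations are finite, skeletal colimits preserved by the exact functor $H_{*}$), and deduce conservativity from the fact that $H_{*}$ commutes with $\acat$-valued homotopy groups. The colimit half of your argument is fine and is exactly what the paper asserts more tersely.

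There is, however, one flawed inference in your conservativity step. In a prestable $\infty$-category the implication ``$\fib(f) \simeq 0 \Rightarrow f$ is an equivalence'' is \emph{false}: for instance, in $\dcat(\abeliangroups)$ the map $0 \to \mathbb{Z}$ has fibre $\Omega\,\mathbb{Z} \simeq 0$ but is not an equivalence. Concretely, the long exact sequence of the fibre sequence $F \to X \to Y$ terminates at $\pi_{0}Y$, so $F \simeq 0$ only yields that $\pi_{k}(f)$ is injective for all $k$ and surjective for $k \geq 1$; surjectivity of $\pi_{0}(f)$ is not controlled by the fibre. The repair is immediate from the ingredients you already listed: since $H_{*}$ is exact and an equivalence on hearts, it commutes with all $\pi_{k}^{\heartsuit}$ up to that equivalence, so $H_{*}(f)$ being an equivalence forces every $\pi_{k}^{\heartsuit}(f)$ to be an isomorphism directly; as $X$ and $Y$ are $n$-truncated (hence bounded, so equivalences are detected by homotopy groups), $f$ is an equivalence. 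Bypass the fibre entirely and the proof is correct.
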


\begin{proof}
Since these are $(n+1)$-categories which admit finite colimits, they also admit geometric realizations, which gets preserved by both adjoints, in the latter case by \cref{lemma:right_adjoint_in_homology_adjunction}. Thus, by the Barr-Beck-Lurie criterion \cite{higher_algebra}[4.7.3.5], it is enough to verify that $H_{*}$ is conservative, which follows from the same result as on both sides equivalences are detected by $\acat$-valued homotopy groups. 
\end{proof}

\begin{theorem}
\label{theorem:finite_ctau_modules_same_as_derived_category}
The right adjoint $H_{*}\colon \dcat^{b}(\acat) \rightarrow \dcat^{\omega}(\ccat)$ has a canonical lift to $C\tau$-modules and induces an equivalence
\[
 \Mod_{C\tau \otimes -}(\dcat^{\omega}(\ccat)) \simeq \dcat^{b}(\acat)
\]
between $C\tau$-modules whose underlying sheaf is perfect and the bounded derived $\infty$-category of $\acat$. 
\end{theorem}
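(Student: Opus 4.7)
The plan is to construct the $C\tau$-module lift of $H_{*}$ using the factorization of $H$ through the homotopy category $h\ccat$, and then identify this lift with the free-forgetful adjunction for $C\tau$-modules via a monadicity argument.

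To exhibit the lift, I would observe that since $\acat$ is an ordinary $1$-category, the homology theory factors uniquely as $H \simeq \bar H \circ \pi(1)$ through the projection $\pi(1)\colon \ccat \to h\ccat$. Consequently, the restriction $H_{*}$ on product-preserving presheaves decomposes as restriction along $\bar H$ followed by restriction along $\pi(1)$; the latter has essential image the $C\tau$-modules in $A_{\infty}^{\omega}(\ccat)$ by \cref{proposition:adjunction_induced_by_projection_onto_homotopy_category_is_monadic}, and the construction is compatible with sheafification by \cref{corollary:restriction_along_homology_commutes_with_sheafication_and_hp_sheafication}. This yields the desired lift $\widetilde H_{*}\colon \dcat^{b}(\acat) \to \Mod_{C\tau \otimes -}(\dcat^{\omega}(\ccat))$.

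To show $\widetilde H_{*}$ is an equivalence, I would invoke the Barr-Beck-Lurie criterion for the homology adjunction $H^{*} \dashv H_{*}\colon \dcat^{\omega}(\ccat) \rightleftarrows \dcat^{b}(\acat)$. Conservativity is immediate: by \cref{lemma:right_adjoint_in_homology_adjunction}, $H_{*}$ commutes with the $\acat$-valued homotopy groups, which detect equivalences on both sides since these are prestable $\infty$-categories with finite limits whose hearts are $\acat$. Preservation of $H_{*}$-split geometric realizations is automatic, as split simplicial objects have absolute colimits. This produces a monadic equivalence $\dcat^{b}(\acat) \simeq \Mod_{T}(\dcat^{\omega}(\ccat))$ for the monad $T \colonequals H_{*}H^{*}$.

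The crucial and most delicate step is then identifying the monad $T$ with $C\tau \otimes -$. On a representable $\nu(c)$, both monads compute the same object: $T\nu(c) \simeq H_{*}H(c)$ (using $H^{*}\nu(c) \simeq H(c)$ in the heart), while $(C\tau \otimes -)\nu(c) \simeq \pi(1)_{*}\pi(1)^{*}\nu(c)$ unwinds to the sheafification of the discrete presheaf $[-,c]$, which by adaptedness is also $H(c) \in \acat \subset \dcat^{\omega}(\ccat)$. Moreover, the units $\nu(c) \to T\nu(c)$ and $\nu(c) \to C\tau \otimes \nu(c)$ coincide as the canonical Hurewicz-type map into $H(c)$. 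Since both monads are right exact endofunctors of $\dcat^{\omega}(\ccat)$ and the representables generate this $\infty$-category under finite colimits by \cref{proposition:properties_of_finite_derived_infty_cat_of_stable_cat}, this extends to a natural equivalence of endofunctors; coherence with the multiplicative structures then follows from the universal property of $\dcat^{\omega}(\ccat)$ in \cref{theorem:universal_property_of_finite_derived_category} applied to iterated composites. This identification matches $\widetilde H_{*}$ with the monadic equivalence, completing the argument.
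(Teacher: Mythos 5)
Your route differs from the paper's (which constructs the lift via the universal property of $\dcat^{b}(\acat)$ and then compares the two monadic adjunctions truncation by truncation using \cite{higher_algebra}[4.7.3.16]), and the factorization $H \simeq \bar H \circ \pi(1)$ is a genuinely nice way to produce the lift. But your monadicity step has a real gap. The Barr--Beck--Lurie criterion \cite{higher_algebra}[4.7.3.5] requires that $\dcat^{b}(\acat)$ admit geometric realizations of \emph{$H_{*}$-split} simplicial objects and that $H_{*}$ preserve them. An $H_{*}$-split simplicial object is one whose \emph{image under $H_{*}$} extends over $\Delta_{-\infty}^{op}$; the object itself need not be split, so its realization is not an absolute colimit, and your justification ("split simplicial objects have absolute colimits") addresses the wrong class. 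Worse, $\dcat^{b}(\acat)$ and $\dcat^{\omega}(\ccat)$ only have finite colimits — every object of $\dcat^{b}(\acat)$ is bounded — so the required realizations need not even exist. This is exactly why the paper restricts to the $(n+1)$-categories $\tau_{\leq n}(-)$, where geometric realizations reduce to finite colimits, proves the equivalence there, and then assembles over $n$ using that every object on both sides is $n$-truncated for some $n$ (\cref{corollary:finite_ctaun_modules_are_bounded}). Your argument can be repaired by the same truncation device, but as written the step fails.

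The monad identification is also under-justified. Agreement of $T = H_{*}H^{*}$ and $C\tau \otimes -$ on representables, together with right exactness and generation under finite colimits, does not by itself produce an equivalence \emph{of monads}: you need a globally defined natural transformation compatible with the multiplications, and \cref{theorem:universal_property_of_finite_derived_category} classifies \emph{exact} functors out of $\dcat^{\omega}(\ccat)$, whereas these monads are only right exact, so it does not directly supply the coherence you invoke. The paper sidesteps this entirely: rather than comparing monads, it compares the two monadic adjunctions directly by checking that the canonical map $C\tau \otimes X \to U(H^{*}X)$ is an equivalence for $X$ a generator, which is what \cite{higher_algebra}[4.7.3.16] requires. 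I would recommend adopting that formulation in place of an explicit monad comparison.
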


\begin{proof}
Since the right adjoint $H_{*}$ is exact, the universal property of the bounded derived $\infty$-category of \cref{theorem:universal_property_of_bounded_derived_cat_of_an_abelian_cat} implies that it is uniquely determined by its restriction to the heart. As the forgetful functor is an equivalence on the hearts by \cref{proposition:forgetful_functor_from_modules_is_exact}, it follows that there exists a unique exact lift
\[
U\colon \dcat^{b}(\acat) \rightarrow \Mod_{C\tau \otimes -}(\dcat^{\omega}(\ccat))).
\]
We first claim that for any $n \geq 0$, the restriction
\[
U\colon \tau_{\leq n} \dcat^{b}(\acat) \rightarrow \tau_{\leq n}( \Mod_{C\tau \otimes -}(\dcat^{\omega}(\ccat)))
\]
to subcategories of $n$-truncated objects is an equivalence. 

Observe that both of these $(n+1)$-categories are monadic over $\tau_{\leq n} \dcat^{\omega}(\ccat)$, the former by \cref{lemma:bounded_derived_cat_monadic_over_derived_of_ccat}. Thus, it follows from Lurie's criterion \cite{higher_algebra}[4.7.3.16] that to verify that the restricted lift is an equivalence we only have to check that for any $X \in \tau_{\leq n} \dcat^{\omega}(\ccat)$, the induced map 
\[
C\tau \otimes X \rightarrow U(H^{*}X)
\]
is an equivalence in $\tau_{\leq n}( \Mod_{C\tau \otimes -}(\dcat^{\omega}(\ccat)))$. Since $U$ is exact, both sides are right exact in $X$. 

Any object of $\tau_{\leq n} \dcat^{\omega}(\ccat)$ can be written as a geometric realization of representables, and since the latter is an $(n+1)$-category, this is equivalent to a realization of a finite skeleton. Thus, an arbitrary $X$ can be written as a finite colimit of $\nu(c)_{\leq n}$ for $c \in \ccat$, and it is enough to verify the latter case. 

As $H^{*}(\nu(c)) = H(c)$ by definition and $C\tau \otimes \nu(c) \simeq \nu(c)_{\leq 0}$ by \cref{lemma:cofibre_of_powers_of_tau_into_representable_a_truncation}, which are already discrete, after forgetting down to $\tau_{\leq n} \dcat^{\omega}(\ccat)$, the relevant arrow can be identified with a map
\[
 C\tau \otimes \nu(c) \rightarrow H_{*} H^{*} \nu(c)
\]
with the property that its composite with the unit $\nu(c) \rightarrow  C\tau \otimes \nu(c)$ is the unit of $H^{*} \dashv H_{*}$. Thus, to check that the relevant map is an isomorphism, it is enough to verify that both units are $\pi_{0}$-isomorphisms, which is a consequence of the fact that both adjunctions induce adjoint equivalences between the hearts. 

We have shown that for any $n \geq 0$, the restriction of the lift 
\[
U\colon \dcat^{b}(\acat) \rightarrow \Mod_{C\tau \otimes -}(\dcat^{\omega}(\ccat)))
\]
to subcategories of $n$-truncated objects is an equivalence. However, both prestable $\infty$-categories have the property that each object is $n$-truncated for some $n \geq 0$, the first one by construction and $C\tau^{n} $-modules by \cref{corollary:finite_ctaun_modules_are_bounded}. This ends the argument. 
\end{proof}

\begin{remark}[Special fibre]
One way to interpret \cref{theorem:finite_ctau_modules_same_as_derived_category} is as identifying the special fibre of the ``deformation'' $\infty$-category $\dcat^{\omega}(\ccat)$, for more on this perspective see \cite{gheorghe2018special}, \cite{burklund2020galois} and \cite{balderrama2021deformations}.
\end{remark}

\begin{remark}[Generic fibre]
\label{remark:generic_fibre_of_derived_category_of_homology_theory}
Similarly, one can ask what is the generic fibre of the derived $\infty$-category; that is, the $\infty$-category obtained by inverting all of the maps $\tau\colon \Sigma X[-1] \rightarrow X$ for $X \in \dcat^{\omega}(\ccat)$. This is not difficult. 

We argue as in \cref{construction:tau_inversion} that $\ccat$ is itself a prestable $\infty$-category, and hence the identity determines a prestable enhancement. As the heart of $\ccat$ (in the prestable sense) is zero, the induced classical homology theory vanishes and so canonically factors through $\acat$. It follows from the universal property of the derived $\infty$-category of \cref{theorem:universal_property_of_finite_derived_category} that the left Kan extension exists and is a unique exact functor 
\[
\tau^{-1}\colon \dcat^{\omega}(\ccat) \rightarrow \ccat.
\]
This can be shown to be the universal functor inverting $\tau$, so that the generic fibre of the derived $\infty$-category can be identified with $\ccat$. In particular, it is independent of the chosen homology theory. 
\end{remark}

\subsection{Postnikov towers and Adams filtrations}
\label{subsection:postnikov_towers_and_adams_filtration}

We have previously claimed that the tower of Postnikov covers in the derived $\infty$-category associated to adapted $H\colon \ccat \rightarrow \acat$ encodes the $H$-Adams spectral sequence. In this section, we will make this precise. 

In more detail, we will use connective coverings in the derived $\infty$-category to construct for any $c, d \in \ccat$ an $H$-Adams filtration on the mapping spectrum $F_{\ccat}(c, d)$. This filtration will be functorial in both $c$ and $d$ at the level of $\infty$-categories and have the property that the induced spectral sequence computing $\pi_{*} F(c, d) \simeq [c, d]_{*}$ is the $H$-Adams spectral sequence. 

To have a spectral sequence at the level of mapping spectra rather than mapping spaces, it will be convenient to stabilize our derived $\infty$-categories using the following standard construction.

\begin{construction}
\label{construction:spanier_whitehead_category}
If $\dcat$ is a prestable $\infty$-category, then it can be stabilized by taking the colimit of $\infty$-categories along the diagram 
\[
\begin{tikzcd}
	\dcat & \dcat & \dcat & \ldots
	\arrow["\Sigma", from=1-3, to=1-4]
	\arrow["\Sigma", from=1-2, to=1-3]
	\arrow["\Sigma", from=1-1, to=1-2],
\end{tikzcd}
\]
resulting in the $\infty$-category $SW(\dcat)$ of Spanier-Whitehead objects \cite{lurie_spectral_algebraic_geometry}[C.1.1]. Informally, the objects of the latter are pairs $(X, n)$, $(Y, m)$, where $X, Y \in \dcat$ and $n, m \in \mathbb{Z}$ and with mapping spaces computed by 
\[
\Map_{SW(\dcat)}((X, n), (Y, m)) \simeq \varinjlim_{k} \Map_{\dcat}(\Sigma^{k+n}X, \Sigma^{k+m}Y).
\]
That is, $(X, n)$ represents the formal $n$-th suspension of $X$. 
\end{construction}

\begin{remark}
The $\infty$-category $SW(\dcat)$ is stable and has $\dcat$ as the full subcategory spanned by objects of the form $(X, 0)$, which is a connective part of a unique $t$-structure. 
\end{remark}

\begin{definition}
\label{definition:spanier_whitehead_cat_of_d_is_d_minus}
If $H\colon \ccat \rightarrow \acat$ is a fixed adapted homology theory, we write 
\[
\dcat^{\omega}_{-}(\ccat) \colonequals SW(\dcat^{\omega}(\ccat))
\]
for the Spanier-Whitehead $\infty$-category of \cref{construction:spanier_whitehead_category} and refer to it as the \emph{bounded below perfect derived $\infty$-category}. Similarly, we write 
\[
\dcat^{b}_{-}(\acat) \colonequals SW(\dcat^{b}(\acat))
\]
\end{definition}

\begin{warning}
Note that \cref{definition:spanier_whitehead_cat_of_d_is_d_minus} is slightly awkward, as $\dcat^{\omega}(\ccat)$ is a \emph{subcategory} of $\dcat^{\omega}_{-}(\ccat)$. This is a price for our choice of denoting connective derived $\infty$-categories with $\dcat(-)$.
\end{warning}

\begin{remark}
\label{remark:stabilized_equivalence_of_ctau_modules_and_derived_cat}
Note that \cref{construction:spanier_whitehead_category} is clearly functorial with respect to right exact functors, as it only involves the suspension. Thus, \cref{theorem:finite_ctau_modules_same_as_derived_category} passes to Spanier-Whitehead $\infty$-categories to induce an equivalence
\[
\Mod_{C\tau}(\dcat^{\omega}_{-}(\ccat)) \simeq \dcat^{b}_{-}(\acat).
\]
\end{remark}

\begin{remark}
\label{remark:stabilized_dcat_omega_as_sheaves_of_spectra}
In the same way as $\dcat^{\omega}(\spectra)$ can be viewed as a full subcategory of product-preserving sheaves $Sh_{\Sigma}(\ccat, \widehat{\spaces})$ of large spaces (spanned by those sheaves that happen to be perfect), we can think of $\dcat^{\omega}_{-}(\ccat)$ as a subcategory of $Sh_{\Sigma}(\ccat, \widehat{\spectra})$, product-preserving sheaves of spectrum objects in large spaces. 

To see this, observe that the latter is stable, and admits a $t$-structure whose connective cover we can identify with sheaves of spaces \cite{pstrkagowski2018synthetic}[2.13], so that the universal property of the Spanier-Whitehead construction provides a fully faithful embedding 
\[
\dcat^{\omega}_{-}(\ccat) \hookrightarrow Sh_{\Sigma}(\ccat, \widehat{\spectra})
\]
Using the description given in \cref{construction:spanier_whitehead_category}, we see that the image consists of desuspensions of perfects. 
\end{remark}

\begin{remark}
In the context of \cref{remark:stabilized_dcat_omega_as_sheaves_of_spectra}, the image of $\dcat^{\omega}_{-}(\ccat)$ is is contained already in sheaves of (small) spectra. However, the passage to large spaces has the effect of making the sheaf $\infty$-category presentable with respect to a larger universe, which is often useful for formal reasons: for example, it admits a sheafification functor. 
\end{remark}
As we have promised before, the Postnikov towers in the derived $\infty$-category of $\ccat$ are supposed to encode the $H$-Adams spectral sequence. Now that we stabilize, we would like these towers to extend infinitely in both directions.

\begin{notation}
If $\ccat$ is an $\infty$-category, let us treat $\mathbb{Z}$ as a poset and write 
\[
\Fil(\ccat) \colonequals \Fun(\mathbb{Z}^{op}, \ccat)
\]
for the $\infty$-category of decreasingly \emph{filtered objects}, 
\end{notation}

\begin{proposition}
\label{proposition:existence_of_double_sided_postnikov_towers}
There is an exact functor $\mathrm{Post}: \ccat \rightarrow \Fil(\dcat^{\omega}_{-}(\ccat))$ valued in filtered objects which takes any $c \in \ccat$ to the diagram 
\[
\ldots \rightarrow \mathrm{Post}_{1}(c) \rightarrow \mathrm{Post}_{0}(c) \rightarrow \mathrm{Post}_{-1}(c) \rightarrow \ldots \colonequals \ldots \rightarrow \Sigma \nu(c)[-1] \rightarrow \nu(c) \rightarrow \Sigma^{-1} \nu(c)[1] \rightarrow \ldots
\]
such that each arrow $\mathrm{Post}_{n+1}(c) \rightarrow \mathrm{Post}_{n}(c)$ is an $n+1$-connective cover which can be identified with $\tau$ of \cref{definition:canonical_thread_structure_on_prestable_freyd_envelope}.
\end{proposition}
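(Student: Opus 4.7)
The plan is to construct $\mathrm{Post}$ by iterating the thread structure $\tau: \Sigma \nu(c)[-1] \rightarrow \nu(c)$ of \cref{definition:canonical_thread_structure_on_prestable_freyd_envelope}, now regarded in the stabilization $\dcat^{\omega}_{-}(\ccat)$, and then verify the connectivity claim via the explicit computation
\[
\pi_k \nu(c) \simeq H(c)[-k]_{\acat} \quad (k \geq 0),
\]
which holds because $\nu$ is a locally graded prestable enhancement of $H$ in the sense of \cref{definition:prestable_enhancement}: indeed, $\pi_k \nu(c) \simeq \pi_0 \nu(\Omega^k c) \simeq H(c[-k]_{\ccat}) \simeq H(c)[-k]_{\acat}$.

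First I would form the exact endofunctor $T \colonequals \Sigma \circ [-1]$ on $\dcat^{\omega}_{-}(\ccat)$, which is an autoequivalence because both $\Sigma$ and the local grading are invertible after stabilizing. The thread structure provides an exact natural transformation $\tau: T \Rightarrow \mathrm{id}$, and iterating $\tau$ using naturality yields an $\mathbb{N}^{op}$-indexed diagram of exact endofunctors $\ldots \rightarrow T^2 \rightarrow T \rightarrow \mathrm{id}$ with transitions $T^{n+1} \Rightarrow T^n$ equal to $T^n \tau$. Invertibility of $T$ then extends this canonically to a $\mathbb{Z}^{op}$-indexed diagram in $\Fun^{ex}(\dcat^{\omega}_{-}(\ccat), \dcat^{\omega}_{-}(\ccat))$. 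Evaluating at $\nu(c)$, and using that both $\nu$ and each $T^n$ are exact, yields the desired exact functor $\mathrm{Post}\colon \ccat \rightarrow \Fil(\dcat^{\omega}_{-}(\ccat))$ with $\mathrm{Post}_n(c) \simeq T^n \nu(c) \simeq \Sigma^n \nu(c)[-n]$ and transition maps identifiable with $\tau$.

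For the connectivity claim, the cofibre of $\mathrm{Post}_{n+1}(c) \rightarrow \mathrm{Post}_n(c)$ is, by naturality, $T^n$ applied to the cofibre of $\tau: \Sigma \nu(c)[-1] \rightarrow \nu(c)$; sheafifying \cref{lemma:cofibre_of_powers_of_tau_into_representable_a_truncation} and invoking $\dcat^{\omega}(\ccat)^{\heartsuit} \simeq \acat$ from \cref{proposition:properties_of_finite_derived_infty_cat_of_stable_cat}, this latter cofibre is identified with $H(c) \in \acat \subset \dcat^{\omega}(\ccat)$. Hence the total cofibre is $T^n H(c) \simeq \Sigma^n H(c)[-n]_{\acat}$, concentrated in $\acat$-homotopy degree $n$. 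Combined with $\pi_k T^n \nu(c) \simeq H(c)[-k]_{\acat}$ for $k \geq n$ and $0$ otherwise, the long exact sequence immediately confirms that $\mathrm{Post}_{n+1}(c) \rightarrow \mathrm{Post}_n(c)$ is an isomorphism on $\pi_k$ for $k \geq n+1$ while its source has vanishing $\pi_k$ for $k \leq n$; that is, the map is the $(n+1)$-connective cover.

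The main subtlety is the $\infty$-categorical coherence of the extension to a $\mathbb{Z}^{op}$-indexed diagram. This is handled by first producing the $\mathbb{N}^{op}$-diagram via the standard iteration construction applied to the pointed endofunctor $(T, \tau)$, and then invoking that $T$ is invertible in the monoidal $\infty$-category $\Fun^{ex}(\dcat^{\omega}_{-}(\ccat), \dcat^{\omega}_{-}(\ccat))$ (under composition), so that the functor $\mathbb{N} \rightarrow \Fun^{ex}(\dcat^{\omega}_{-}(\ccat), \dcat^{\omega}_{-}(\ccat))$ factors uniquely through the group completion $\mathbb{Z}$. Everything else — exactness of $\mathrm{Post}$ and identification of the cofibre — follows formally from exactness of each $T^n$, left exactness of $\nu$, and the homotopy calculation above.
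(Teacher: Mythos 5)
Your overall strategy is sound and your homotopy-group bookkeeping (the identification $\pi_{k}\nu(c)\simeq H(c)[-k]$, the cofibre of $\tau$ being $H(c)$ after sheafification, and the resulting long exact sequence argument for the $(n+1)$-connective cover property) is correct, but your route differs genuinely from the paper's. You build the tower by iterating the natural transformation $\tau\colon T\Rightarrow\mathrm{id}$ for the autoequivalence $T=\Sigma\circ[-1]$ and then argue for an extension of the resulting $\mathbb{N}^{op}$-tower to a $\mathbb{Z}^{op}$-tower. The paper instead embeds $\dcat^{\omega}_{-}(\ccat)$ into product-preserving sheaves of spectra and observes that $\Sigma^{n}\nu(c)[-n]$ is the sheafification of $d\mapsto F_{\ccat}(d,c)_{\geq n}$, so that the entire filtered object is literally the Whitehead tower of the single spectral sheaf $Y(c)(d)=F_{\ccat}(d,c)$. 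That construction hands you the $\mathbb{Z}^{op}$-indexed coherent diagram, its functoriality in $c$, and the connective-cover property all at once, with no computation; your approach has the advantage of staying inside the prestable formalism and making the role of $\tau$ explicit, at the cost of having to verify coherence by hand.

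That coherence verification is the one genuine gap. The assertion that "the functor $\mathbb{N}\rightarrow\Fun^{ex}(\dcat^{\omega}_{-}(\ccat),\dcat^{\omega}_{-}(\ccat))$ factors uniquely through the group completion $\mathbb{Z}$" conflates the poset $\mathbb{N}^{op}$ indexing your tower with the monoid $\mathbb{N}$: group completion applies to a one-object diagram $\mathrm{B}\mathbb{N}\rightarrow\mathcal{E}$ landing in invertible objects, not to a filtered diagram. To extend the tower $\ldots\rightarrow T^{2}\rightarrow T\rightarrow\mathrm{id}$ over $\mathbb{Z}^{op}$ you need an actual argument — for instance, forming the colimit over $k$ of the shifted diagrams $T^{-k}\circ D(\cdot+k)$, or invoking the identification of $\Fil(\mathcal{C})$ with modules over a free algebra on a degree-one generator in graded objects — and this is precisely the step the paper's sheaf-of-spectra construction is designed to avoid. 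Separately, note that your parenthetical "both $\nu$ and each $T^{n}$ are exact" overstates what the paper proves: $\nu$ is only left exact (it preserves a cofibre sequence only when the second map is an $H$-epimorphism, by \cref{proposition:properties_of_finite_derived_infty_cat_of_stable_cat}), so the most one can extract from either argument is left exactness of $\mathrm{Post}$; your closing sentence correctly says "left exactness of $\nu$," so keep that version and drop the earlier claim.
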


\begin{proof}
Note that the maps $\tau$ on representables do have the property that their source is a connective cover of the target, by \cref{lemma:cofibre_of_powers_of_tau_into_representable_a_truncation}. So it is enough to construct the Postnikov tower with the first property. 

Let us consider the bounded below derived $\infty$-category as a subcategory of sheaves of spectra, as in  \cref{remark:stabilized_dcat_omega_as_sheaves_of_spectra}. After unwrapping the definitions, we see that 
\[
\mathrm{Post}_{n}(c) \colonequals \Sigma^{n} \nu(c)[-n]
\]
can be identified with the sheafification of the presheaf of spectra defined by 
\[
d \mapsto \Sigma^{n} (F_{\ccat}(d, \Omega^{n} c)_{\geq 0}) \simeq F_{\ccat}(d, c)_{\geq n},
\]
where $F_{\ccat}$ is the internal mapping spectrum. Thus, if $Y(c)$ denotes the spectral sheaf 
\[
d \rightarrow F_{\ccat}(d, c),
\]
then the needed Postnikov tower is given by the tower of connective covers of $Y(c)$. 
\end{proof}

\begin{remark}
Note that the filtered object $\mathrm{Post}_{\star}$ constructed in \cref{proposition:existence_of_double_sided_postnikov_towers} consists of connective covers rather than truncations and so it is really a Whitehead tower rather than a Postnikov tower \cite{hatcher2005algebraic}[4.20]. Our terminology in the current work is thus slightly abusive; we refer to both as Postnikov towers. 
\end{remark}

We are now ready to define the $H$-Adams filtration by mapping into the above Postnikov tower.  

\begin{definition}
\label{definition:adams_filtered_spectrum}
Let $c, d \in \ccat$. Then, the \emph{$H$-Adams filtration} on $F_{\ccat}(d, c)$ is given by the decreasingly filtered spectrum
\[
F^{H}_{\star}(d, c) \colonequals F(\nu(d), \mathrm{Post}_{\star}(c)),
\]
where the internal mapping spectrum on the right is computed in $\dcat^{\omega}_{-}(\ccat)$. 
\end{definition}

To convince the reader that this is a reasonable definition to make, let us compute the associated graded pieces to the above filtration. 

\begin{remark}
\label{remark:associated_graded_of_adams_filtration}
Note that since by \cref{proposition:existence_of_double_sided_postnikov_towers} the map $\mathrm{Post}_{t+1}(c) \rightarrow \mathrm{Post}_{t}(c)$ can be identified with multiplication by $\tau$, there is a functorial equivalence
\[
\mathrm{gr}_{t} F^{H}_{\star}(d, c) \simeq F(\nu(d), \mathrm{cofib}(\mathrm{Post}_{t+1}(c) \rightarrow \mathrm{Post}_{t}(c)))
\]
which we can further rewrite as 
\[
F(\nu(d), C\tau \otimes \Sigma^{t} \nu(c)[-t]) \simeq F_{C\tau}(C\tau \otimes \nu(d), C\tau \otimes \Sigma^{t} \nu(c)[-t]) 
\]
and finally as 
\[
F(\nu(d), C\tau \otimes \Sigma^{t} \nu(c)[-t]) \simeq F_{C\tau}(C\tau \otimes \nu(d), C\tau \otimes \Sigma^{t} \nu(c)[-t]) \simeq F_{\dcat^{b}_{-}(\acat)}(H(d), \Sigma^{t} H(c)[-t])
\]
using the equivalence of $C\tau$-modules and the derived $\infty$-category of $\acat$ of  \cref{remark:stabilized_equivalence_of_ctau_modules_and_derived_cat}. In particular, 
\[
\pi_{t} \mathrm{gr}_{s} F^{H}_{\star}(d, c) \simeq \Ext^{s-t, s}_{\acat}(H(c), H(d)).
\]
In other words, the homotopy groups of the associated graded of the Adams filtration is a reindexed form of the $H$-Adams $E_{2}$-term. 
\end{remark}
Note that \cref{remark:associated_graded_of_adams_filtration} is one strong piece of evidence that the associated graded of the Postnikov tower has something to do with the $H$-Adams filtration. In fact, it is not far from implying it directly, as the following result shows. 

\begin{theorem}
\label{theorem:properties_of_the_h_adams_filtration}
Let $H: \ccat \rightarrow \acat$ be an adapted homology theory and $c, d \in \ccat$. Then, the $H$-Adams filtration $F^{H}_{\star}(d, c)$ has the following properties: 
\begin{enumerate}
    \item there is a canonical equivalence $\varinjlim F^{H}_{\star}(d, c) \simeq F(d, c)$
    \item For every $s, t \in \mathbb{Z}$, 
    \[
    \mathrm{im}(\pi_{s} F^{H}_{t} F(d, c)) \subseteq \pi_{s} F(d, c) \simeq [d, c]_{s}
    \]
    coincides with the subgroup of elements of $H$-Adams filtration at least $t-s$,
    \item the spectral sequence 
    \[
    \pi_{s} \mathrm{gr}_{t}(F^{H}_{\star}(d, c)) \Rightarrow [d, c]_{t-s}
    \]
    induced by this filtration of $F(d, c)$ coincides up to regrading with the $H$-based Adams spectral sequence.
\end{enumerate}. 
\end{theorem}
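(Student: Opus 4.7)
The plan rests on two facts already established just before the theorem: the associated graded of the filtration $F^{H}_{\star}(d,c)$ was essentially identified in Remark \ref{remark:associated_graded_of_adams_filtration} using the $C\tau$-module picture, and the universal property of $\tau$-inversion from Remark \ref{remark:generic_fibre_of_derived_category_of_homology_theory} controls the colimit. Parts (2) and (3) will be proved simultaneously by matching $F^{H}_{\star}$ with a classical $H$-Adams tower via the synthetic analogue functor $\nu$.

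\textbf{Step 1 (Colimit identification, part (1)).} The Postnikov tower $\mathrm{Post}_{\star}(c)=\Sigma^{\star}\nu(c)[-\star]$ has transition maps equal to $\tau$, so its filtered colimit as $t \to -\infty$ is a model for $\tau$-inverted $\nu(c)$. Using the fully faithful embedding $\dcat^{\omega}_{-}(\ccat)\hookrightarrow Sh_{\Sigma}(\ccat,\widehat{\spectra})$ of Remark \ref{remark:stabilized_dcat_omega_as_sheaves_of_spectra}, in which $\Sigma$ is pointwise exact and $\nu(d)$ is a compact projective, we can commute $F(\nu(d),-)$ past the colimit. Since $\tau^{-1}\nu(c)\simeq c$ by Construction \ref{construction:tau_inversion} and $F_{\ccat}(d,c)$ is obtained by stabilising $\nu(c)(d)$ along $\tau$, this produces a canonical equivalence $\varinjlim F^{H}_{t}(d,c)\simeq F_{\ccat}(d,c)$. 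An alternative is to verify the identification on homotopy groups directly: the associated graded $\mathrm{gr}_{t}$ contributes to $\pi_{s}$ only for $s\le t$ (its homotopy is concentrated in bounded degrees by Remark \ref{remark:associated_graded_of_adams_filtration}), so the map $F^{H}_{t}(d,c)\to\varinjlim F^{H}_{t}(d,c)$ is an isomorphism on $\pi_{s}$ for $t$ sufficiently negative.

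\textbf{Step 2 (Comparison with the classical Adams tower, parts (2) and (3)).} Fix an $H$-Adams resolution $c=c^{0}\to i^{0}_{\ccat},\, c^{1}\to i^{1}_{\ccat},\ldots$ of $c$ in $\ccat$ as in Construction \ref{construction:adams_spectral_sequence_associated_to_a_homology_theory}, so each $H(c^{k})\hookrightarrow i^{k}$ is a mono into an injective and $c^{k+1}=\mathrm{cofib}(c^{k}\to i^{k}_{\ccat})$. By Proposition \ref{proposition:properties_of_finite_derived_infty_cat_of_stable_cat}(6), since each $H(i^{k}_{\ccat})\to H(c^{k+1})$ is surjective by construction, $\nu$ preserves these cofibre sequences, yielding fibre sequences $\nu(c^{k})\to\nu(i^{k}_{\ccat})\to\nu(c^{k+1})$ in $\dcat^{\omega}(\ccat)$. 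Iterated loop-desuspension of these sequences assembles into a tower in $\dcat^{\omega}_{-}(\ccat)$ whose $t$-th stage is a model of $\mathrm{Post}_{t}(c)$: the key point is that both this tower and $\mathrm{Post}_{\star}(c)$ are characterised by being successive connective covers of $\nu(c)$ in the sheaf $t$-structure with associated-graded pieces of the form $\Sigma^{t}(\text{injective in }\acat)$, and uniqueness of connective covers then identifies the two. Since $i^{k}_{\ccat}$ is an injective lift, $\nu(i^{k}_{\ccat})\simeq H_{*}(i^{k})$ is a discrete sheaf so that $F(\nu(d),\nu(i^{k}_{\ccat}))\simeq\Hom_{\acat}(H(d),i^{k})$, recovering the $E_{1}$-page of the classical Adams spectral sequence for $[d,c]_{*}$.

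\textbf{Step 3 (Finishing (2) and (3)).} With the above identification, the exact couple of the filtered spectrum $F^{H}_{\star}(d,c)$ becomes isomorphic to the exact couple obtained by applying $[d,-]_{*}$ to the classical Adams tower, hence the induced spectral sequences coincide (modulo the reindexing $(s,t)\leftrightarrow(t-s,s)$ that accounts for our using a filtration on a mapping spectrum instead of a tower of homotopy groups). This gives (3). For (2), the image of $\pi_{s}F^{H}_{t}(d,c)$ in $[d,c]_{s}=\pi_{s}F(d,c)$ consists, by the tower identification, of homotopy classes factoring through $t-s$ successive stages of the Adams resolution, which is precisely the Adams filtration of level $\ge t-s$.

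\textbf{Main obstacle.} The crux is Step 2: showing that the abstractly-defined tower $\mathrm{Post}_{\star}(c)$ inside $\dcat^{\omega}_{-}(\ccat)$ is in fact equivalent, as a filtered object, to the concrete tower obtained from a classical $H$-Adams resolution via $\nu$. The sheaf values $\nu(c)(d)$ and the derived values of discrete sheaves like $H(c)$ differ subtly (the latter involves higher $\Ext$-groups as part of the sheaf cohomology of the $H$-epimorphism topology), and the comparison relies on handling this carefully. The required input—that $\nu$ preserves cofibre sequences arising from Adams resolutions—is exactly Proposition \ref{proposition:properties_of_finite_derived_infty_cat_of_stable_cat}(6), which makes this matching possible.
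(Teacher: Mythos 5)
Your Step 1 is essentially sound and close to the paper's argument (the paper identifies $\pi_{s}F^{H}_{t}(d,c)\simeq [\nu(d),\Sigma^{t-s}\nu(\Omega^{t}c)]$ and observes that for $t-s<0$ the connective cover of the target is $\nu(\Omega^{s}c)$, so full faithfulness of $\nu$ gives the stabilization). The problem is Step 2, which you correctly identify as the crux but whose resolution does not work.

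The central claim of Step 2 — that the tower obtained by applying $\nu$ to a classical $H$-Adams tower $\{\Omega^{k}d^{k}\}$ is equivalent, as a filtered object, to $\mathrm{Post}_{\star}(c)$ because both are "successive connective covers of $\nu(c)$" — is false. Compute the first stage: $\nu(\Omega d^{1})=\mathrm{fib}(\nu(c)\to\nu(i^{0}_{\ccat}))$ has $\pi_{m}\simeq H(d^{1})[-m-1]$ (sheaf homotopy), which is nonzero already for $m=0$; so $\nu(\Omega d^{1})$ is not $1$-connective and in particular is not the $1$-connective cover $\mathrm{Post}_{1}(c)=\Sigma\nu(c)[-1]$, whose homotopy is $H(c)[-m]$ in degrees $m\geq 1$. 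The supporting claim that "$\nu(i^{k}_{\ccat})\simeq H_{*}(i^{k})$ is a discrete sheaf" is also false: $\pi_{m}\nu(i^{k}_{\ccat})\simeq i^{k}[-m]$ for all $m\geq 0$, so $\nu(i^{k}_{\ccat})$ is not discrete (only its truncation $C\tau\otimes\nu(i^{k}_{\ccat})$ is $H_{*}i^{k}$), and correspondingly $F(\nu(d),\nu(i^{k}_{\ccat}))\simeq F_{\ccat}(d,i^{k}_{\ccat})$ is the full mapping spectrum, not $\Hom_{\acat}(H(d),i^{k})$ in degree zero. What is true, and what the paper uses, is that for $c=i$ an $H$-injective the presheaves $d'\mapsto F_{\ccat}(d',i)_{\geq n}$ are already sheaves, so that $F^{H}_{\star}(d,i)$ is literally the Whitehead filtration of $F(d,i)$ by connective covers.

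The relationship between $F^{H}_{\star}(d,c)$ and the classical Adams tower is genuinely not an equivalence of filtered objects but a d\'ecalage. The paper's route is: (i) settle the $H$-injective case as above; (ii) for general $c$ choose a \emph{cosimplicial} Adams resolution $c\to i_{0}\rightrightarrows i_{1}\cdots$ and show, using the computation of $\mathrm{gr}_{t}F^{H}_{\star}$ in terms of $F_{\dcat^{b}_{-}(\acat)}(H(d),\Sigma^{t}H(-)[-t])$ together with the fact that $H(c)\to H(i_{\bullet})$ is an injective resolution, that $F^{H}_{\star}(d,c)\to\Tot\bigl(F^{H}_{\star}(d,i_{\bullet})\bigr)$ is an equivalence on associated graded; (iii) conclude that the spectral sequence of $F^{H}_{\star}(d,c)$ agrees with that of $n\mapsto\Tot\bigl(F(d,i_{\bullet})_{\geq n}\bigr)$, which is the d\'ecalage of the totalization spectral sequence of $F(d,i_{\bullet})$, and invoke Levine's theorem that d\'ecalage only reindexes the spectral sequence. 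Without some version of this d\'ecalage comparison, your parts (2) and (3) are not established.
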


\begin{proof}
After unwrapping the definitions, we see that 
\[
\pi_{s} F_{t}^{H}(d, c) \simeq [\nu(d), \Sigma^{t-s} \nu(\Omega^{t} c)],
\]
where on the right hand side we have homotopy classes of maps in $\dcat^{\omega}_{-}(\ccat)$. If $t-s<0$, we can replace the target by its connective cover $\nu(\Omega^{s} c)$ and we deduce that in this case 
\[
\pi_{s} F_{t}^{H}(d, c) \simeq [\nu(d), \nu(\Omega^{s} c)] \simeq [d, \Omega^{s} c]_{\ccat},
\]
where we have used that $\nu$ is fully faithful. This shows that 
\[
\pi_{s} F_{t}^{H}(d, c) \simeq \pi_{s} F(d, c)
\]
for $t-s < 0$, the isomorphism induced by the connective covers of $Y(c)$ appearing in the proof of  \cref{proposition:existence_of_double_sided_postnikov_towers}. This shows part $(1)$. 

Now let us first show parts $(2)$ and $(3)$ in the special case when $c = i$ is $H$-injective. In this case, for any $n \in \mathbb{Z}$, the presheaf of spectra 
\[
d \rightarrow F_{\ccat}(d, i)_{\geq n} 
\]
considered in \cref{proposition:existence_of_double_sided_postnikov_towers} is already a sheaf, and we deduce using the Yoneda lemma that the $H$-Adams filtration of $F(d, i)$ is given by 
\[
\ldots \rightarrow F(d, i)_{\geq n+1} \rightarrow F(d, i)_{\geq n} \rightarrow F(d, i)_{\geq n-1} \rightarrow \ldots
\]
This has both the needed properties, as an $H$-Adams filtration of a non-zero map $d \rightarrow i$ is always zero when $i$ is $H$-injective. 

Now suppose that $c$ is arbitrary. As we observed in \cref{remark:cosimplicial_adams_resolutions}, by repeatedly embedding $c$ into an injective, we can construct a cosimplicial $H$-Adams resolution
\[
c \rightarrow i_{0} \rightrightarrows i_{1} \triplerightarrow \ldots
\]
such that mapping $d$ into it induces the $H$-Adams spectral sequence. We claim that the induced augmented cosimplicial diagram of filtered spectra
\[
F^{H}_{\star}(d, c) \rightarrow F^{H}_{\star}(d, i_{0}) \rightrightarrows F^{H}_{\star}(d, i_{1}) \triplerightarrow \ldots  
\]
is a limit diagram on the associated graded spectra. 

Indeed, by \cref{remark:associated_graded_of_adams_filtration}, the degree $t$ graded pieces of the above diagram are given by the mapping spectra
\[
F_{\dcat^{b}_{-}(\acat)}(H(d), \Sigma^{t} H(c)[-t]) \rightarrow F_{\dcat^{b}_{-}(\acat)}(H(d), \Sigma^{t} H(i_{\bullet})[-t])
\]
in the derived $\infty$-category of $\acat$. This is a limit diagram of spectra, since
\[
H(c) \rightarrow H(i_{0}) \rightrightarrows H(i_{1}) \triplerightarrow \ldots
\]
is an injective resolution by construction and so a limit diagram in $\dcat^{b}_{-}(\acat)$. It follows that the map of filtered spectra
\[
F^{H}_{\star}(d, c) \rightarrow \Tot(F^{H}_{\star}(d, i_{\bullet}))
\]
into the levelwise totalization is an equivalence on associated graded pieces and hence both induce isomorphic spectral sequences. 

Using the above description of the filtration for an injective object, we deduce that the spectral sequence induced by $F^{H}_{\star}(d, c)$ is the same as the one induced by the filtered spectrum 
\[
\ldots \rightarrow \Tot(F(d, i_{\bullet})_{\geq n+1}) \rightarrow \Tot(F(d, i_{\bullet})_{\geq n}) \rightarrow \Tot(F(d, i_{\bullet})_{\geq n-1}) \rightarrow \ldots,
\]
where notice that we first take connective covers and only then totalize. This is known as the \emph{d\'{e}calage} of the spectral sequence induced by the cosimplicial spectrum
\[
F(d, i_{0}) \rightrightarrows F(d, i_{1}) \triplerightarrow \ldots
\]
and it is a result of Levine that the two resulting spectral sequence agree up to reindexing \cite{levine2015adams}[6.3]. This proves $(3)$ for an arbitrary $c \in \ccat$, as the latter is exactly the $H$-Adams spectral sequence.

Finally, part $(2)$ is a consequence of $(3)$ and that $\pi_{s} F_{s+k}^{H}(d, c) \simeq [d, c]_{s}$ for $k < 0$. Indeed, in a spectral sequence of a filtered spectrum, a class $x \in \pi_{s} F_{s} $ will lift to $\pi_{s} F_{s+k}$ for $k > 0$ if and only if it is not detected in $\pi_{s} \mathrm{gr}_{s+i} \simeq \Ext^{i}_{\acat}$ for $i < k$, which is the same as being of Adams filtration at least $k$. 
\end{proof}

\begin{remark}
Note that $F^{H}(d, c)$ is not an exact functor in either of its variables. However, it does preserve cofibre sequences which are $H$-exact, as these are exactly the ones which are preserved by the synthetic analogue functor by the last part of \cref{proposition:properties_of_finite_derived_infty_cat_of_stable_cat}.

Note that the induced cofibre sequence translates into relations between the relevant $H$-Adams spectral sequences as a consequence of \cref{theorem:properties_of_the_h_adams_filtration}. We will not pursue this here, but we believe that this makes $F^{H}$ into a natural home for ``connecting homomorphism'' type results for the Adams spectral sequence, as in \cite{ravenel_complex_cobordism}[\S2.3].
\end{remark}

\begin{remark}
\label{remark:up_to_graded_the_adams_filtration_is_d\'{e}calage}
Observe that if one is only interested in constructing the filtered spectrum $F^{H}(d, c)$ for a fixed pair of $c, d \in \ccat$, then the proof of \cref{theorem:properties_of_the_h_adams_filtration} provides an elementary recipe which gives the same result up to completion. Namely, one can choose a cosimplicial $H$-Adams resolution
\[
c \rightarrow i_{0} \rightrightarrows i_{1} \triplerightarrow \ldots
\]
and consider the d\'{e}calage
\[
n \mapsto \Tot(F(d, i_{\bullet})_{\geq n}).
\]

However, from this approach it is virtually impossible to lift $F^{H}$ to a functor of $\infty$-categories, as there are too many arbitrary choices. This highlights the intuition that the perfect derived $\infty$-category $\dcat^{\omega}(\ccat)$ is as a place where one can discuss $H$-Adams resolutions without encountering these types of issues. 
\end{remark}

\subsection{Digression: Monoidality of the Adams filtration}
\label{subsection:monoidality_of_the_adams_filtration}

In \cref{definition:adams_filtered_spectrum}, we used derived $\infty$-categories to associate to any adapted $H: \ccat \rightarrow \acat$ a functorial filtration on the internal mapping spectrum $F(d, c)$ which describes the $H$-based Adams spectral sequence. 

In this section, we will extend this result by showing that in the classical setting of spectra and a homology theory based on $R$-split maps, where $R$ is a ring spectrum, formation of this filtration defines a symmetric monoidal functor. 

In fact, in what is perhaps the most surprising, we will not need $R$ to be a ring spectrum of any kind, but only satisfy the following weak condition. 

\begin{definition}
We say $R \in \spectra$ \emph{admits a right unital multiplication} if there exist maps $u: S^{0} \rightarrow R$ and $m\colon R \otimes R \rightarrow R$ such that the composite
\[
\begin{tikzcd}
	R & {R \otimes R} & R
	\arrow["{R \otimes u}", from=1-1, to=1-2]
	\arrow["m", from=1-2, to=1-3]
\end{tikzcd}
\]
is homotopic to the identity. 
\end{definition}
We will show below, in \cref{lemma:spectra_have_enough_r_split_injectives_if_r_has_right_unital_multiplication}, that if $R$ admit a right unital multiplication, then if $\ecat$ denotes the class of $R$-split maps; that is, those $X \rightarrow Y$ such that $R \otimes X \rightarrow R \otimes Y$ admits a section, then $\spectra$ has enough $\ecat$-injectives and thus we obtain an Adams spectral sequence. 

In fact, in this case this spectral sequence is completely classical: for any $X \in \spectra$, the coaugmented semicosimplicial object 
\begin{equation}
\label{equation:semicosimplicial_resolution_based_on_spectrum_r}
X \rightarrow R \otimes X \rightrightarrows R \otimes R \otimes X \triplerightarrow \ldots
\end{equation}
is an $R$-Adams resolution of $X$. Thus, the $R$-based Adams spectral sequence is obtained by mapping any other $Y \in \spectra$ into it and studying the totalization spectral sequence of the corresponding semicosimplicial object. 

If $X$ is a spectrum, then \cref{definition:adams_filtered_spectrum} provides a filtration on $F(S^{0}, X) \simeq X$ which up to completion we can identify with the d\'{e}calage of (\ref{equation:semicosimplicial_resolution_based_on_spectrum_r}). The following surprising result shows that this filtration is compatible with the symmetric monoidal structure on spectra. 

\begin{theorem}[Monoidality of the classical Adams filtration]
\label{theorem:monoidality_of_the_classical_adams_filtration}
Let $R$ be a spectrum which admits a right unital multiplication. Then, the $R$-Adams filtration functor 
\[
F^{R} \colonequals F^{R}(S^{0}, -)\colon \spectra \rightarrow \Fil(\spectra)
\]
is canonically lax symmetric monoidal, where we consider the target with the Day convolution symmetric monoidal structure. In particular, if $X$ is an $\mathbf{E}_{n}$-algebra in spectra, then $F^{R}(X)$ is a filtered $\mathbf{E}_{n}$-algebra. 
\end{theorem}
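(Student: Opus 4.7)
The plan is to exhibit $F^{R}$ as a composite of lax symmetric monoidal functors
\[
\spectra \xrightarrow{\nu} \dcat^{\omega}_{-}(\spectra) \xrightarrow{\mathrm{Post}_{\star}} \Fil(\dcat^{\omega}_{-}(\spectra)) \xrightarrow{F(\nu(S^{0}),-)} \Fil(\spectra),
\]
where $\dcat^{\omega}(\spectra)$ is taken with respect to the class $\ecat$ of $R$-split maps of \cref{example:r_split_maps}. The first step is to equip $\dcat^{\omega}(\spectra)$ with a symmetric monoidal structure by descending the Day convolution on $A_{\infty}^{\omega}(\spectra)$ along the sheafification, and to check that $\nu$ is then symmetric monoidal. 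The crucial input is that $R$-split maps are closed under tensoring with arbitrary spectra: a section $s$ of $R \otimes f$ yields $Z \otimes s$ as a section of $R \otimes Z \otimes f$. That $\spectra$ has enough $\ecat$-injectives, so that the theory of \S\ref{subsection:perfect_derived_infty_cat_of_a_stable_infty_cat} applies, rests on the right-unital multiplication: the map $u \otimes X\colon X \to R \otimes X$ is $R$-split with section $m \otimes X$, and any $R \otimes X$ is $R$-split injective.

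Once $\nu$ is symmetric monoidal, $\nu(S^{0})$ is the monoidal unit of $\dcat^{\omega}_{-}(\spectra)$, and in any stable, closed symmetric monoidal $\infty$-category the mapping-spectrum-out-of-the-unit functor is automatically lax symmetric monoidal; this makes the third arrow in the composite lax symmetric monoidal, and the structure extends degreewise to $\Fil$ under Day convolution. It then remains to make $\mathrm{Post}_{\star}$ lax symmetric monoidal. Using the natural identification $\mathrm{Post}_{\star}(X) \simeq \nu(X) \otimes \mathrm{Post}_{\star}(\mathbbm{1})$, this reduces to upgrading the filtered object
\[
\mathrm{Post}_{\star}(\mathbbm{1}) = \bigl(\cdots \to \Sigma^{n+1}\mathbbm{1}[-(n+1)] \xrightarrow{\tau} \Sigma^{n}\mathbbm{1}[-n] \to \cdots\bigr)
\]
to an $\mathbf{E}_{\infty}$-algebra in $\Fil(\dcat^{\omega}_{-}(\spectra))$. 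The pairing on each bidegree is the canonical equivalence $\Sigma^{n}\mathbbm{1}[-n] \otimes \Sigma^{m}\mathbbm{1}[-m] \simeq \Sigma^{n+m}\mathbbm{1}[-(n+m)]$, expressing compatibility of both the suspension $\Sigma$ and the local grading $[1]$ with the tensor product in $\dcat^{\omega}_{-}(\spectra)$; in particular, $\tau$ corresponds to tensoring the unit algebra.

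The hard part will be producing this $\mathbf{E}_{\infty}$-structure coherently rather than only at the level of each bidegree. The cleanest route is to identify $\mathrm{Post}_{\star}$ with the Whitehead (connective cover) tower for the standard $t$-structure on $\dcat^{\omega}_{-}(\spectra)$, which is compatible with the symmetric monoidal structure in the sense that the connective part is closed under tensor product and the monoidal unit is connective. In such a context a standard argument shows that the tower of connective covers of any $\mathbf{E}_{\infty}$-algebra canonically refines to an $\mathbf{E}_{\infty}$-algebra in filtered objects; applied to $\mathbbm{1}$ this supplies the required structure on $\mathrm{Post}_{\star}(\mathbbm{1})$. Assembling the three lax symmetric monoidal factors yields the desired refinement of $F^{R}$, and the $\mathbf{E}_{n}$-algebra statement then follows by applying this lax symmetric monoidal functor to $\mathbf{E}_{n}$-algebras in $\spectra$.
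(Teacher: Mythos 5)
Your proposal follows the same overall strategy as the paper: the three essential inputs --- the symmetric monoidal structure on $\dcat^{\omega}(\spectra)$ obtained by descending Day convolution along sheafification (resting exactly on the stability of $R$-split maps under $- \otimes A$ and on the right-unital multiplication supplying enough $\ecat$-injectives), the lax symmetric monoidality of the Whitehead/Postnikov tower via compatibility of a $t$-structure with the tensor product, and the lax symmetric monoidality of mapping out of the unit --- are precisely the ones the paper uses. The organizational difference is in the middle step. The paper factors $\mathrm{Post}_{\star}$ as (spectral Yoneda) $\circ$ (constant filtered object) $\circ$ (connective cover for the shifted $t$-structure on filtered sheaves), each piece lax symmetric monoidal for formal reasons (right adjoints of strong monoidal functors, or a $t$-structure compatible with Day convolution). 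You instead reduce to the unit via $\mathrm{Post}_{\star}(X) \simeq \nu(X) \otimes \mathrm{Post}_{\star}(\mathbbm{1})$ and then tensor with the filtered $\mathbf{E}_{\infty}$-algebra $\mathrm{Post}_{\star}(\mathbbm{1})$. This detour is redundant: the ``standard argument'' you invoke is exactly the statement that the whole Whitehead tower functor is lax symmetric monoidal, and once you have that you may apply it to $\nu(X)$ directly. Worse, the identification $\mathrm{Post}_{\star}(-) \simeq \nu(-) \otimes \mathrm{Post}_{\star}(\mathbbm{1})$ as a natural equivalence of \emph{filtered} objects requires the coherent $\otimes$-linearity of $\tau$ (compatibility of the transition maps with tensoring), which is an additional obligation you would have to discharge.

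Two technical points need repair. First, $\dcat^{\omega}_{-}(\spectra)$ is not cocomplete, so the Day convolution on $\Fil(\dcat^{\omega}_{-}(\spectra))$ --- which you need both to say that $\mathrm{Post}_{\star}(\mathbbm{1})$ is an $\mathbf{E}_{\infty}$-algebra and to run the compatible-$t$-structure argument on filtered objects --- does not literally exist there, since the convolution $(F \star G)_{n} = \varinjlim_{p+q \geq n} F_{p} \otimes G_{q}$ involves infinite colimits. The paper's passage to $Sh_{\Sigma}(\spectra, \widehat{\spectra})$, presentable relative to a larger universe, is what makes the Day convolution, the shifted $t$-structure on filtered objects, and the various adjoints available; you must embed into this sheaf category (or Ind-complete) before carrying out your constructions. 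Second, for the same reason $\dcat^{\omega}_{-}(\spectra)$ is not closed symmetric monoidal; the clean formulation is that $F(\nu(S^{0}), -)$ is evaluation at $S^{0}$, the right adjoint of the unique cocontinuous symmetric monoidal functor $\widehat{\spectra} \rightarrow Sh_{\Sigma}(\spectra, \widehat{\spectra})$, hence lax symmetric monoidal. With these adjustments your argument becomes the paper's.
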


\begin{remark}
In the particular case where $R$ is an Adams-type ring spectrum, \cref{theorem:monoidality_of_the_classical_adams_filtration} is a consequence of the construction of synthetic spectra \cite{pstrkagowski2018synthetic}. The reason it is important was explained to us in many conversations with Robert Burklund, Jeremy Hahn and Andy Senger, who encouraged us to write this result down in the general case.
\end{remark}

Note that \cref{theorem:monoidality_of_the_classical_adams_filtration} provides numerous examples of filtrations of ring spectra, in fact one for each $R$ with a right unital multiplication. To understand why producing such filtrations is difficult, let us discuss the classical approach to doing so. 

For example, the complex bordism spectrum admits an $\mathbf{E}_{\infty}$-ring structure, so that its Adams resolution
\begin{equation}
\label{equation:mu_adams_resolution_of_s0}
S^{0} \rightarrow MU \rightarrow MU \otimes MU \triplerightarrow \ldots
\end{equation}
is a cosimplicial object in $\mathbf{E}_{\infty}$-rings. By passing to Deligne's d\'{e}calage, we obtain an Adams filtration of the sphere which is an algebra in filtered spectra, as in the work of Gheorghe, Ricka, Krause and Isaksen \cite{gheorghe2018c}.

The problem with constructing a commutative monoidal structure on the Adams filtration of the sphere using (\ref{equation:mu_adams_resolution_of_s0}) is that it requires complex bordism to be $\mathbf{E}_{\infty}$ to begin with. This stands in large contrast with the Adams filtration itself, which does not depend on this monoidal structure or even $MU$ itself, but only on the class of $MU$-split maps. In many interesting cases, such as those of Morava $K$-theories, even $\mathbf{E}_{2}$-structures are already known not to exist, and so this method cannot produce $\mathbf{E}_{2}$-monoidal $K$-Adams filtrations. 

On the other hand, \cref{theorem:monoidality_of_the_classical_adams_filtration} requires virtually no assumptions on $R$. For example, it applies to $S^{0}/p$ at odd primes, which is known not to carry a structure of an $\mathbf{E}_{1}$-ring, but which is known to admit a right unital multiplication. 

\begin{remark}
\label{remark:monoidality_of_adams_filtration_analogous_to_bousfield_localization}
The surprising monoidality of the Adams filtration of \cref{theorem:monoidality_of_the_classical_adams_filtration} can be compared to the analogous property of the Bousfield localization. 

To see this, suppose for simplicity that $R, S$ are $\mathbf{E}_{1}$-ring spectra and that the $S$-based Adams spectral sequence converges to the Bousfield localization of $R$. In this case, we have a cosimplicial diagram 
\[
R \rightarrow S \otimes R \rightrightarrows S \otimes S \otimes R \triplerightarrow \ldots
\]
which induces an equivalence
\[
R_{S} \simeq \Tot(S^{\otimes \bullet} \otimes R),
\]
furnishing the Bousfield localization with its own $\mathbf{E}_{1}$-ring structure. However, it is well-known that $R_{S}$ admits an $\mathbf{E}_{1}$-structure as soon as $R$ does, irrespective of any structure on $S$, as the Bousfield localization $L\colon \spectra \rightarrow \spectra_{S}$ is symmetric monoidal. 

The situation of \cref{theorem:monoidality_of_the_classical_adams_filtration} is similar: to define the $R$-Adams spectral sequence, we do not need any information except which maps become split after applying $R \otimes -$, and so any further structure on $R$ should not be needed.
\end{remark}

The rest of this section will be devoted to the proof of \cref{theorem:monoidality_of_the_classical_adams_filtration}. We begin with the promised assertion that if $R$ admits a right unital multiplication, then we do indeed have an Adams spectral sequence. 

\begin{lemma}
\label{lemma:spectra_have_enough_r_split_injectives_if_r_has_right_unital_multiplication}
Let $\ecat$ be the class of $R$-split maps; that is, those $X \rightarrow Y$ such that $R \otimes X \rightarrow R \otimes Y$ admits a section. If $R$ admits a right unital multiplication, then $\spectra$ has enough $\ecat$-injectives. 
\end{lemma}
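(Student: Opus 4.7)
The plan is to produce, for every spectrum $X$, a concrete $\ecat$-monic map into an $\ecat$-injective, using only the right unital multiplication data $u\colon S^{0}\to R$ and $m\colon R\otimes R\to R$ satisfying $m\circ(R\otimes u)\simeq \mathrm{id}_{R}$. The natural candidate is the unit $\eta_{X}\colonequals u\otimes X\colon X\to R\otimes X$.

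First I would verify that $\eta_{X}$ is $\ecat$-monic, i.e.\ that the projection $R\otimes X \to \mathrm{cofib}(\eta_{X})$ is $R$-split. After tensoring with $R$, the map $R\otimes\eta_{X}\colon R\otimes X\to R\otimes R\otimes X$ admits the retraction $m\otimes X$ by the right unital multiplication axiom. In a stable $\infty$-category a cofibre sequence whose first map admits a retraction splits, so the induced sequence $R\otimes X\to R\otimes R\otimes X\to R\otimes \mathrm{cofib}(\eta_{X})$ is split, which gives a section of the projection $R\otimes R\otimes X\to R\otimes\mathrm{cofib}(\eta_{X})$. This is precisely the $\ecat$-monicity condition for $\eta_{X}$.

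Next I would show that $R\otimes Y$ is $\ecat$-injective for every spectrum $Y$, so in particular the target $R\otimes X$ above is $\ecat$-injective. Let $f\colon A\to B$ be $\ecat$-monic; by the first step argument (applied in reverse) $R\otimes f$ is a split monomorphism of spectra, so choose a retraction $r\colon R\otimes B\to R\otimes A$. Given any $g\colon A\to R\otimes Y$, define the putative extension
\[
\widetilde{g}\colonequals (m\otimes Y)\circ (R\otimes g)\circ r\circ \eta_{B}\colon B\longrightarrow R\otimes Y.
\]
Using naturality of $\eta$, the retraction identity $r\circ(R\otimes f)=\mathrm{id}_{R\otimes A}$, and finally $m\circ(R\otimes u)\simeq\mathrm{id}_{R}$ to collapse the tail, a direct diagram chase gives $\widetilde{g}\circ f\simeq g$. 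Hence $[B,R\otimes Y]\to[A,R\otimes Y]$ is surjective, so $R\otimes Y$ is $\ecat$-injective.

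The only delicate point is organising the chase in the second step at the level of the $\infty$-category, since $R$ is not an $\mathbf{E}_{1}$-algebra and so the composition $\widetilde{g}$ is not produced by any adjunction of the form considered in \cref{example:millers_adams_associated_to_an_adjunction}; one must work directly with the datum of $u$ and $m$ and appeal only to the single relation $m\circ(R\otimes u)\simeq\mathrm{id}_{R}$. The homotopy coherence required is minimal, however, because $\widetilde{g}\circ f$ is defined by a single sequence of composites whose identification with $g$ only uses that one homotopy, so this goes through in any stable $\infty$-category equipped with such $(u,m)$.
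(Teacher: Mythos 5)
Your overall strategy is exactly the paper's: take $u\otimes X\colon X\to R\otimes X$ as the candidate $\ecat$-monic map into an $\ecat$-injective, check monicity using the retraction $m\otimes X$ of $R\otimes u\otimes X$, and check injectivity of $R\otimes Y$ by explicitly writing down an extension. The first half is correct as written, because in $R\otimes u\otimes X$ the unit lands in the second tensor factor, so the composite with $m\otimes X$ is literally $(m\circ(R\otimes u))\otimes X\simeq\mathrm{id}$, which is the given right-unitality.

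The second half has a genuine slot error. Trace your composite: by naturality, $\widetilde{g}\circ f\simeq(m\otimes Y)\circ(R\otimes g)\circ(u\otimes A)\circ(\text{after cancelling }r\circ(R\otimes f))$, and $(R\otimes g)\circ(u\otimes A)\simeq(u\otimes R\otimes Y)\circ g$. So the tail you need to collapse is $(m\circ(u\otimes R))\otimes Y$, in which the unit sits in the \emph{first} factor of $R\otimes R$ --- that is the \emph{left}-unit composite, and the hypothesis only gives $m\circ(R\otimes u)\simeq\mathrm{id}_{R}$. Since the entire point of the lemma is that only right unitality is assumed, this is precisely the step one cannot wave through: as written, $\widetilde{g}\circ f\not\simeq g$ in general. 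The fix is to insert the symmetry $t\colon R\otimes R\to R\otimes R$ and define $\widetilde{g}\colonequals(m\otimes Y)\circ(t\otimes Y)\circ(R\otimes g)\circ r\circ\eta_{B}$, after which the tail becomes $(m\circ t\circ(u\otimes R))\otimes Y\simeq(m\circ(R\otimes u))\otimes Y\simeq\mathrm{id}$; this is exactly the twist appearing in the paper's diagram. With that one correction your argument coincides with the paper's proof, and your closing remark about homotopy coherence is right: only the single homotopy $m\circ(R\otimes u)\simeq\mathrm{id}_{R}$ (plus the symmetry of the smash product) is ever used.
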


\begin{proof}
We claim that for any $Z \in \spectra$, the unit map $u \otimes Z\colon Z \rightarrow R \otimes Z$ is an $\ecat$-monic map into an $\ecat$-injective. To show that it is $\ecat$-monic, we have to verify that 
\[
R \otimes u \otimes Z\colon R \otimes Z \rightarrow R \otimes R \otimes Z 
\]
is split monic. This is clear, as it has a one-sided inverse given by $m \otimes Z\colon R \otimes R \otimes Z \rightarrow R \otimes Z$.

To verify that $R \otimes Z$ is an $\ecat$-injective, we have to check that if $f\colon X \rightarrow Y$ is a map of spectra such that $R \otimes f\colon R \otimes X \rightarrow R \otimes Y$ admits a one-sided inverse $\pi\colon R \otimes Y \rightarrow R \otimes X$, then any map $s\colon X \rightarrow R \otimes Z$ factors through $Y$. Consider the commutative diagram 
\[
\begin{tikzcd}
	X & Y \\
	{R \otimes X} & {R \otimes Y} & {R \otimes X} & {R  \otimes R \otimes Z} & {R \otimes R \otimes Z} & {R \otimes Z}
	\arrow["{u \otimes X}"', from=1-1, to=2-1]
	\arrow["f", from=1-1, to=1-2]
	\arrow["{R \otimes f}", from=2-1, to=2-2]
	\arrow["{u \otimes Y}", from=1-2, to=2-2]
	\arrow["\pi", from=2-2, to=2-3]
	\arrow["{t \otimes Z}", from=2-4, to=2-5]
	\arrow["{m \otimes Z}", from=2-5, to=2-6]
	\arrow["{R \otimes s}", from=2-3, to=2-4]
\end{tikzcd},
\]
where $t\colon R \otimes R \rightarrow R \otimes R$ is the twist map. Since $R \otimes f$ is a section of $\pi$ and $m$ is right unital, we see that the long composite $X \rightarrow R \otimes Z$ can be identified with $s$. It follows that the composite $Y \rightarrow R \otimes Z$ is the needed factorization. 
\end{proof}
For the rest of the section, we will assume that $R$ a spectrum which admits a right unital multiplication has been fixed. Our arguments are inspired by the case of Bousfield localization mentioned in  \cref{remark:monoidality_of_adams_filtration_analogous_to_bousfield_localization}, we want to express the Adams filtration as arising from a certain exact localization compatible with the symmetric monoidal structure. In this case, the localization will not be of the $\infty$-category of spectra itself, but of its perfect prestable Freyd envelope.

Recall that $\dcat^{\omega}(\spectra)$ is, by definition, given by the $\infty$-category of perfect sheaves on $\spectra$ with respect to the Grothendieck pretopology in which coverings are given by a single $R$-split epimorphic map. Our first goal is to equip this $\infty$-category with a symmetric monoidal structure induced from spectra. This requires the following property special to the class of $R$-split maps. 

\begin{lemma}
\label{lemma:r_split_pretopology_compatible_with_monoidal_structure}
The Grothendieck pretopology of $R$-split maps is compatible with the tensor product of spectra; that is, if $X \rightarrow Y$ is $R$-split, then so is $X \otimes A \rightarrow Y \otimes A$ for any $A \in \spectra$. 
\end{lemma}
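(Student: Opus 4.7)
The proof should be essentially formal, relying only on the fact that tensoring with a fixed spectrum is a functor of $\infty$-categories and therefore preserves retract diagrams. My plan is as follows.

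First, I would unwrap the definition: by hypothesis, $R \otimes f\colon R \otimes X \to R \otimes Y$ is a split epimorphism, so there exists a map $s\colon R \otimes Y \to R \otimes X$ together with a homotopy exhibiting $(R \otimes f) \circ s \simeq \mathrm{id}_{R \otimes Y}$. The goal is to produce an analogous section of $R \otimes (f \otimes A)$. Using the symmetric monoidal structure, identify $R \otimes (f \otimes A)$ with $(R \otimes f) \otimes A$ up to coherent equivalence.

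Next, apply the functor $(-) \otimes A\colon \spectra \to \spectra$ to the section $s$ and to the chosen homotopy. Since $(-) \otimes A$ is a functor of $\infty$-categories, it sends the witnessed section datum to a section datum for $(R \otimes f) \otimes A$; explicitly, the composite
\[
(R \otimes f \otimes A) \circ (s \otimes A) \simeq ((R \otimes f) \circ s) \otimes A \simeq \mathrm{id}_{R \otimes Y} \otimes A \simeq \mathrm{id}_{R \otimes Y \otimes A}.
\]
Therefore $s \otimes A$ is the desired section of $R \otimes (f \otimes A)$, proving that $f \otimes A$ is $R$-split.

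There is no real obstacle; the statement is just the observation that being a split epimorphism is preserved under any functor, applied to $(-) \otimes A$. The only thing to be mildly careful about is passing from a one-sided inverse in the homotopy category to a coherent one in $\spectra$, but $R$-splitness is defined at the level of $\pi_0$ of mapping spaces (the arrow $[R \otimes Y, R \otimes X] \to [R \otimes Y, R \otimes Y]$ hits the identity), so this subtlety is immaterial. I would record the result in a few lines without additional machinery.
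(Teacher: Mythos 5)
Your proof is correct and is exactly the paper's argument: tensor the given section $s$ with $A$ and observe that $s\otimes A$ is a section of $(R\otimes f)\otimes A$. The extra remark about homotopy-coherence versus the homotopy category is fine but, as you note, immaterial here.
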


\begin{proof}
This is clear, as if $s\colon R \otimes Y \rightarrow R \otimes X$ is a section of $R \otimes X \rightarrow R \otimes Y$, then $s \otimes A$ is a section of $R \otimes X \otimes A \rightarrow R \otimes Y \otimes A$. 
\end{proof}

\begin{proposition}
\label{proposition:perfect_derived_infty_cat_of_r_split_maps_acquires_a_monoidal_structure}
The perfect derived $\infty$-category $\dcat^{\omega}(\spectra)$ admits a unique symmetric monoidal structure which preserves finite colimits in each variable and such that $\nu\colon \spectra \rightarrow \dcat^{\omega}(\spectra)$ is symmetric monoidal. 
\end{proposition}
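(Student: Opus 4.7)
My plan is to produce the symmetric monoidal structure in two stages: first on the prestable Freyd envelope $A_{\infty}^{\omega}(\spectra)$, and then to descend it along the sheafification localization $L\colon A_{\infty}^{\omega}(\spectra) \to \dcat^{\omega}(\spectra)$.

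For the first stage, I would invoke Day convolution. Since $\spectra$ is symmetric monoidal and the universal property of $A_{\infty}^{\omega}(\ccat)$ from \cref{remark:universal_property_of_perfect_freyd_envelope} presents it as the free finite-colimit completion of $\spectra$ as an additive $\infty$-category, the standard Day convolution yields an essentially unique symmetric monoidal structure on $A_{\infty}^{\omega}(\spectra)$ such that $\nu\colon \spectra \to A_{\infty}^{\omega}(\spectra)$ is symmetric monoidal and such that the tensor product preserves finite colimits separately in each variable. (Concretely, $\nu(X) \otimes \nu(Y) \simeq \nu(X \otimes Y)$ and tensor products of arbitrary perfect presheaves are computed by expressing each side as a finite colimit of representables.)

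For the second stage, I would appeal to the standard criterion (for instance, \cite{higher_algebra}[4.1.7.4, 2.2.1.9]) that a symmetric monoidal structure on a presentable $\infty$-category descends uniquely along a (reflective) localization $L$ provided the class of $L$-equivalences is stable under tensoring with arbitrary objects. In our situation, the $L$-equivalences are generated, as a strongly saturated class, by the $H$-epimorphism covering sieves; that is, by maps whose cofibres are supported on the class of $R$-split epimorphisms in $\spectra$ (see the construction of $\dcat^{\omega}(\ccat)$ in \cref{perfderived} and the description of the kernel of $L$ from \cref{proposition:properties_of_finite_derived_infty_cat_of_stable_cat}). By \cref{lemma:r_split_pretopology_compatible_with_monoidal_structure}, $R$-split maps are stable under tensoring with any spectrum. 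Since $\nu$ is symmetric monoidal and generates $A_{\infty}^{\omega}(\spectra)$ under finite colimits, and since the tensor product on $A_{\infty}^{\omega}(\spectra)$ is right exact in each variable, tensoring with an arbitrary perfect presheaf then preserves the class of $L$-equivalences. This gives the induced symmetric monoidal structure on $\dcat^{\omega}(\spectra)$ with all the desired properties, and uniqueness follows from uniqueness in each of the two stages.

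The main technical point will be verifying that the saturated class of $L$-equivalences is generated by maps of the form $y(c) \to y(d)$ coming from $R$-split epimorphisms in a way that is manifestly stable under the Day convolution. I expect this to be straightforward once one uses that Day convolution is computed by left Kan extension, so that tensoring with any perfect $X$ commutes with $L$ up to local equivalence; but some care is required to check that the resulting monoidal $L$ really does satisfy the universal property descending from $\spectra$ without imposing extra conditions on $R$. Once that is in place, asserting uniqueness is a formal consequence of the universal property of $A_{\infty}^{\omega}(\spectra)$ as the symmetric monoidal free cocompletion (under finite colimits) of $\spectra$, passed to the quotient $\dcat^{\omega}(\spectra)$.
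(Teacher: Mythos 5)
Your overall strategy --- Day convolution extending the smash product, followed by descent along the sheafification using \cref{lemma:r_split_pretopology_compatible_with_monoidal_structure} --- is the same as the paper's, but the paper executes it in a different ambient category precisely to avoid the technical point you flag at the end. The paper does not put Day convolution on $A_{\infty}^{\omega}(\spectra)$ and then localize; instead it embeds $\dcat^{\omega}(\spectra)$ into $Sh_{\Sigma}(\spectra,\widehat{\spaces})$, which is a localization of the presheaf $\infty$-category $P(\spectra,\widehat{\spaces})$ of \emph{large} spaces. The latter is presentable relative to the larger universe, so Day convolution exists by \cite{higher_algebra}[4.8.1] and, crucially, the sheafification is an accessible localization whose local objects are exactly those satisfying descent along \v{C}ech nerves of coverings. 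Compatibility of the localization with the tensor product therefore reduces to a single check: for $A\in\spectra$, the functor $\nu(A)\otimes_{Day}-$ carries the \v{C}ech nerve of an $R$-split map $Y\to X$ to the \v{C}ech nerve of $A\otimes Y\to A\otimes X$, which is again $R$-split by \cref{lemma:r_split_pretopology_compatible_with_monoidal_structure} and hence still a colimit diagram in sheaves. The symmetric monoidal structure then restricts to the finite-colimit closure of the representables, which is $\dcat^{\omega}(\spectra)$.

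The genuine gap in your version is the claim that the class of $L$-equivalences in $A_{\infty}^{\omega}(\spectra)$ is ``generated, as a strongly saturated class, by the $H$-epimorphism covering sieves.'' That description of the inverted maps is a feature of accessible localizations of presentable $\infty$-categories; $A_{\infty}^{\omega}(\spectra)$ is only the finite-colimit completion, and its sheafification functor is constructed in \cref{proposition:bounded_almost_perfect_presheaves_have_sheafication} by restriction from the large presheaf category, not as a Bousfield localization at a small generating set. So you cannot directly reduce ``$f\otimes X$ is an $L$-equivalence for every $L$-equivalence $f$'' to the elementary covers without first passing to the big category where that generation statement holds. Once you make that move, your argument and the paper's coincide; without it, the descent step is unjustified. (Your first stage is fine: Day convolution on the free finite-colimit completion of an additive symmetric monoidal $\infty$-category does exist and is characterized by the universal property of \cref{remark:universal_property_of_perfect_freyd_envelope}.)
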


\begin{proof}
We can identify $\dcat^{\omega}(\spectra)$ with the smallest  subcategory $Sh_{\Sigma}(\spectra, \widehat{\spaces})$ of sheaves of possibly large spaces which contains representables and is closed under finite colimits. We claim that the larger $\infty$-category admits a unique symmetric monoidal structure compatible with $\nu$ and which preserves large colimits in each variable, this will imply what we need as the symmetric monoidal structure will restrict to $\dcat^{\omega}(\spectra)$. 

The $\infty$-category $Sh_{\Sigma}(\spectra, \widehat{\spaces})$ is a localization of the $\infty$-category $P(\spectra, \widehat{\spaces})$ of presheaves of large spaces. The latter is presentable with respect to a larger universe and thus has a unique Day convolution symmetric monoidal structure extending that of $\spectra$ which preserves large colimits in each variable \cite{higher_algebra}[4.8.1]. 

Since the tensor product of spectra is additive, to check that Day convolution descends to product-preserving sheaves, it is enough to verify that that for any $A \in \spectra$, $\nu(A) \otimes_{Day} -$ takes the \v{C}ech nerve 
\[
\ldots \triplerightarrow \nu(Y \times_{X} Y) \rightrightarrows \nu(Y) \rightarrow \nu(X) 
\]
of any $R$-split $Y \rightarrow X$ to a diagram which becomes a colimit in sheaves. Additivity is clear, and since Day convolution is compatible with $\nu$ the image of the above nerve is 
\[
\ldots \triplerightarrow \nu(A \otimes (Y \times_{X} Y)) \rightrightarrows \nu(A \otimes Y) \rightarrow \nu(A \otimes X). 
\]
As $A \otimes -$ is exact, this is the \v{C}ech nerve of $A \otimes X \rightarrow A \otimes Y$ which is itself $R$-split by \cref{lemma:r_split_pretopology_compatible_with_monoidal_structure} and hence becomes a colimit in sheaves. 
\end{proof}

As the induced symmetric monoidal structure of \cref{proposition:perfect_derived_infty_cat_of_r_split_maps_acquires_a_monoidal_structure} preserves finite colimits in each variable, it extends formally to a symmetric monoidal structure on the bounded below derived $\infty$-category $\dcat^{\omega}_{-}(\spectra)$ of \cref{definition:spanier_whitehead_cat_of_d_is_d_minus}, exact in each variable and compatible with the $t$-structure. The monoidal unit is given by $\nu(S^{0})$. 

We are now ready to prove the main result of this section.

\begin{proof}[Proof of \cref{theorem:monoidality_of_the_classical_adams_filtration}:]
Recall from \cref{definition:adams_filtered_spectrum} that the $R$-Adams filtration of a spectrum $X$ is given by 
\[
\ldots \rightarrow F(\nu(S^{0}), \mathrm{Post}_{1}(X)) \rightarrow F(\nu(S^{0}), \mathrm{Post}_{0} (X)) \rightarrow F(\nu(S^{0}), \mathrm{Post}_{-1}(X)) \rightarrow \ldots
\]
where $\mathrm{Post}_{n}(X) \simeq \Sigma^{n} \nu(X)[-n]$, with connecting maps given by $\tau$. 

As in \cref{remark:stabilized_dcat_omega_as_sheaves_of_spectra}, we can identify $\dcat^{\omega}_{-}(\spectra)$ with a subcategory of product-preserving sheaves of spectra. We first claim that the functor
\[
\mathrm{Post}_{\star}\colon \spectra \rightarrow \Fil(\dcat^{\omega}_{-}(\spectra)) \rightarrow \Fil (Sh_{\Sigma}(\spectra, \widehat{\spectra}))
\]
is lax symmetric monoidal with respect to the symmetric monoidal structure of \cref{proposition:perfect_derived_infty_cat_of_r_split_maps_acquires_a_monoidal_structure}. As constructed in the proof of \cref{proposition:existence_of_double_sided_postnikov_towers}, in terms of sheaves of spectra $\mathrm{Post}_{\star}(X)$ is given by the Postnikov tower of the sheaf $Y(X)$ defined by 
\[
X^{\prime} \mapsto F(X^{\prime}, X).
\]
More precisely, $\mathrm{Post}_{n}(X)$ can be identified with the sheafification of the presheaf 
\[
X^{\prime} \mapsto F(X^{\prime}, X)_{\geq n},
\]
which is the $n$-connective cover of $Y(X)$ in sheaves. 

Thus, $\mathrm{Post}_{\star}$ can be described as a composite of three operations:
\begin{enumerate}
    \item form the spectral sheaf $Y(X)(-) \colonequals F(-, X)$,
    \item take the constant filtered object 
    \[
    \ldots \rightarrow Y(X) \rightarrow Y(X) \rightarrow Y(X) \rightarrow \ldots
    \]
    \item take the connective cover in the $t$-structure on $\Fil(Sh_{\Sigma}(\spectra, \widehat{\spectra}))$ in which $(F_{n})_{n \in \mathbb{Z}}$ is connective if each $F_{n}$ is $n$-connective as a sheaf of (large) spectra.
\end{enumerate}
We claim each of these operations is lax symmetric monoidal. 

Note that the last $t$-structure exists by \cite{higher_algebra}[1.4.4.11], which applies to $Sh_{\Sigma}(\spectra, \widehat{\spectra})$ as it is presentable with respect to the larger universe. The Day convolution symmetric monoidal structure is clearly compatible with this $t$-structure, so that $(3)$ is lax symmetric monoidal, as needed.

Let us move to $(2)$. Taking the constant diagram is right adjoint to the colimit functor 
\[
\varinjlim\colon \Fil(Sh_{\Sigma}(\spectra, \widehat{\spectra})) \rightarrow Sh_{\Sigma}(\spectra, \widehat{\spectra}).
\]
The latter is even strongly symmetric monoidal, and we deduce that its right adjoint is lax symmetric monoidal, as needed. 

For $(1)$, observe that $Y(X)$ is defined even if $X$ is a large spectrum, so that we have a functor 
\[
Y(X)(-) \colonequals F(-, X)\colon \widehat{\spectra} \rightarrow Sh_{\Sigma}(\spectra, \widehat{\spectra})
\]
This is a right adjoint, with left adjoint given by the left Kan extension 
\[
Sh_{\Sigma}(\spectra, \widehat{\spectra}) \rightarrow \widehat{\spectra}
\]
along the inclusion $\spectra \hookrightarrow \widehat{\spectra}$. As the latter is strongly symmetric monoidal, so is the left Kan extension, and we deduce formally that $Y$ is lax symmetric monoidal. 

Finally, by the Yoneda lemma applying $F(\nu(S^{0}
), -)$ to a spectral sheaf is the same as evaluating at $S^{0}$. We deduce that the $R$-Adams filtration functor is the composite 
\[
\spectra \rightarrow \Fil(Sh_{\Sigma}(\spectra, \widehat{\spectra})) \rightarrow \Fil(\widehat{\spectra})
\]
of the $\mathrm{Post}_{\star}$ functor followed by levelwise evaluation of a sheaf at $S^{0} \in \spectra$ (note that the composite actually lands in filtered spectra rather than filtered large spectra, but the second functor does not). The latter functor is obtained by applying the Day convolution to the evaluation at $S^{0}$ functor
\[
Sh_{\Sigma}(\spectra, \widehat{\spectra}) \rightarrow \widehat{\spectra}.
\]
The latter is right adjoint to the unique strongly symmetric monoidal functor $\widehat{\spectra} \rightarrow Sh_{\Sigma}(\spectra, \widehat{\spectra})$ which preserves even large colimits and exists by the universal property of the source. We deduce the evaluation is lax symmetric monoidal, ending the argument. 
\end{proof}

\section{Further topics in derived $\infty$-categories}
\label{section:further_topics_in_derived_infty_cats}

In this chapter, we study deeper topics in the subject of derived $\infty$-categories associated to homology theories, such as their completions and the Goerss-Hopkins tower. We also describe variants of derived $\infty$-categories which are Grothendieck prestable and compare our constructions to those already occurring in the literature. 

\subsection{Completing derived $\infty$-categories}
\label{subsection:completing_derived_infty_cats}

The perfect derived $\infty$-category of $H\colon \ccat \rightarrow \acat$ of \cref{definition:perfect_derived_cat_of_stable_cat} can be sometimes a little inconvenient; for example, even if $\ccat$ is cocomplete, $\dcat^{\omega}(\ccat)$ will in general only admit finite direct sums. The issue does not lie in the sheaf condition, as  \cref{lemma:detecting_sheaves_in_additive_setting} implies that product-preserving sheaves on $\ccat$ are stable under arbitrary direct sums, but rather in the perfectness assumption. 

The issue is easier to observe in the abelian context, where perfectness of an almost perfect presheaf is equivalent to being bounded: an infinite direct sum of bounded presheaves will in general not be bounded anymore.  This is in some sense the only obstruction, one can show that for any $n \geq 0$, the subcategory $\tau_{\leq n}\dcat^{\omega}(\ccat)$ of $n$-truncated object does admit all direct sums which exist in $\ccat$, as a consequence of \cref{proposition:nu_preserves_direct_sums}. 

Thus, one way to endow the derived $\infty$-category with better categorical properties is to consider the prestable completion, which is defined as the limit 
\[
\widehat{\dcat}(\ccat) \colonequals \varprojlim \tau_{\leq n} \dcat^{\omega}(\ccat). 
\]
of $\infty$-categories; this is again prestable by an application of \cite{higher_algebra}[1.2.1.17]. 

A serious drawback to using a limit definition of the complete derived $\infty$-category is that its objects can then be identified with certain diagrams of sheaves, rather than sheaves themselves. This is often cumbersome in practice. In this section, we will show that under certain homological finiteness conditions on one can give an explicit sheaf-theoretic description of the complete derived $\infty$-category. 

The discussion above applies also to the abelian case, with which we will begin as it is the more familiar one. The needed finiteness assumptions are as follows. 

\begin{definition}
\label{definition:homologically_finitary_abelian_category}
We will say an abelian category $\acat$ with enough injectives is \emph{homologically finitary} if either
\begin{enumerate}
    \item $\acat$ is of finite cohomological dimension or 
    \item $\acat$ admits countable products and there exists a $d < \infty$ such that each object of $\acat$ is a quotient of an object $a$ of projective dimension at most $d$; that is, such that $\Ext^{s}_{\acat}(a, b) = 0$ for $s > d$ and any $b \in \acat$. 
\end{enumerate}
\end{definition}

\begin{remark}
In the resolution of Franke's conjecture, we will only be using the case where $\acat$ is of finite cohomological dimension. However, this is a very strong assumption, and in the interest of future applications we will show that the second condition also turns out to be sufficient. A natural example of $\acat$ satisfying the second condition is an abelian category with countable products and enough projectives. 
\end{remark}

\begin{remark}
The notion of homologically finitary will not be used outside of this subsection, it is only introduced to avoid repeating the same assumptions.
\end{remark}
Since we want the resulting derived $\infty$-category to be complete, in particular it should be separated;  that is, have the property that its internal homotopy groups detect equivalences. This is usually not the case for unbounded sheaves, and so we will instead work with the \emph{hypercomplete sheafification} functor $\widehat{L}\colon \Fun_{\Sigma}(\acat^{op}, \widehat{\spaces}) \rightarrow \Fun_{\Sigma}(\acat^{op}, \widehat{\spaces})$ on product-preserving presheaves of large spaces. 

\begin{remark}
For the discussion on the difference between hypercomplete sheaves and sheaves, see \cite{higher_topos_theory}[6.5.4]. Informally, hypercomplete sheaves are those for which sheaf homotopy groups detect equivalences. This is analogous to the property of the classical derived $\infty$-category that all quasi-isomorphisms are equivalences.
\end{remark}

\begin{proposition}
\label{proposition:sheafication_exists_in_abelian_context_when_bounded_or_finite_dimension}
Let $\acat$ be a homologically finitary abelian and suppose that $X \in A_{\infty}(\acat)$ is almost perfect. Then, the hypercomplete sheafification $\widehat{L}X$ is again almost perfect. 
\end{proposition}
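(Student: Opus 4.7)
By the characterization of almost perfect presheaves in Proposition~\ref{proposition:characterization_of_almost_perfect_presheaves}, it suffices to show that the presheaf homotopy groups $\pi_{k}\widehat{L}X$ lie in the classical Freyd envelope $A(\acat)$ for every $k \geq 0$; smallness of $\widehat{L}X$ then follows automatically. The plan is to approximate $X$ by sub-presheaves whose hypercomplete sheafifications we already control, and to transfer finite presentability along this approximation.

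The main tool is the Postnikov tower $X_{n} \colonequals \tau_{\leq n}X$. Each $X_{n}$ is bounded with finitely presented homotopy groups, hence perfect by Lemma~\ref{lemma:recognition_of_finite_presheaves_on_an_ordinary_category}; Corollary~\ref{corollary:sheafication_exists_for_bounded_sheaves_in_abelian_case} then shows that $LX_{n}$ is perfect, and since bounded sheaves are automatically hypercomplete, $\widehat{L}X_{n} \simeq LX_{n}$ is almost perfect. Because $\widehat{L}$ commutes with Postnikov truncation and the $\infty$-topos of hypercomplete sheaves is Postnikov-complete, one obtains
\[
\widehat{L}X \;\simeq\; \varprojlim_{n}\, \widehat{L}X_{n},
\]
a limit which is computed pointwise in the presheaf category. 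It therefore suffices to see that for each fixed $k$ the tower $\{\widehat{L}X_{n}\}$ is essentially constant on $\pi_{k}$ for $n \gg 0$, since in that case $\pi_{k}\widehat{L}X$ will coincide with a homotopy group of an almost perfect sheaf.

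To control the stabilization, apply $\widehat{L}$ to the Postnikov cofibre sequence $\Sigma^{n+1}(\pi_{n+1}X) \to X_{n+1} \to X_{n}$. Proposition~\ref{proposition:sheafication_of_suspended_representable_in_freyd_envelope_of_abelian_cat}, extended from a single representable to the finitely presented presheaf $\pi_{n+1}X$ via its two-term presentation by representables, identifies the homotopy of the resulting fibre in terms of $\mathrm{Ext}^{\ast}_{\acat}(-,a)$ for objects $a \in \acat$ appearing in that presentation. Under hypothesis~(1) these Ext-groups vanish above degree $d$, so the map $\widehat{L}X_{n+1} \to \widehat{L}X_{n}$ is an isomorphism on $\pi_{k}$ as soon as $n \geq k+d+1$. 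Hence $\pi_{k}\widehat{L}X \simeq \pi_{k}\widehat{L}X_{k+d+1}$ is finitely presented, completing case~(1).

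The main obstacle is case~(2), where $\acat$ need not have finite cohomological dimension and the relevant Ext-groups are a priori unbounded. The plan is to refine the Postnikov argument by additionally resolving each $\pi_{n+1}X$ using the projective-dimension hypothesis: inductively express it as an iterated extension built from objects $p \in \acat$ of projective dimension at most $d$, and combine this filtration with the Postnikov one to obtain a two-variable tower whose associated graded pieces have sheafifications with explicitly bounded homotopy concentration. The countable products in $\acat$ enter to guarantee that the associated Milnor $\varprojlim^{1}$-sequences degenerate, so that the iterated limit still has finitely presented presheaf homotopy. The technical heart of the argument is to reconcile the projective-dimension hypothesis, which directly controls Ext in the first slot, with the computation of Proposition~\ref{proposition:sheafication_of_suspended_representable_in_freyd_envelope_of_abelian_cat}, which naturally sees Ext in the second slot; this mismatch is what makes case~(2) genuinely more delicate than case~(1).
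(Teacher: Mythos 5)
Your treatment of case (1) is essentially the paper's argument, but with one logical inversion you should fix: you assert that the $\infty$-topos of hypercomplete sheaves is Postnikov-complete and deduce $\widehat{L}X \simeq \varprojlim_{n}\widehat{L}X_{\leq n}$ from that. Hypercompleteness does not imply Postnikov completeness in general, so this step is unjustified as stated. The correct order is the reverse: first establish, via the Ext-vanishing coming from \cref{proposition:sheafication_of_suspended_representable_in_freyd_envelope_of_abelian_cat} (note that $\Sigma^{n+1}\pi_{n}X$ is $L$-equivalent to $\Sigma^{n+1}y(a)$ for some $a\in\acat$ by \cref{lemma:yoneda_embedding_of_abelian_category_identifies_with_sheaves}, so no extension of that proposition to general finitely presented presheaves is needed), that the tower $\{\widehat{L}X_{\leq n}\}$ stabilizes on each $\pi_{k}$ for $n>k+d$; then observe that $X\to\varprojlim\widehat{L}X_{\leq n}$ is consequently an isomorphism on sheaf homotopy groups with hypercomplete target, which \emph{is} what identifies $\widehat{L}X$ with the limit. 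With that reordering, case (1) goes through exactly as in the paper.

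Case (2) is where the proposal has a genuine gap: it is a plan rather than a proof, and the plan is aimed at an obstacle that does not exist. You describe a ``mismatch'' between the projective-dimension hypothesis (Ext in the first slot) and the computation of \cref{proposition:sheafication_of_suspended_representable_in_freyd_envelope_of_abelian_cat} (which you read as Ext in the second slot). But that proposition computes $\pi_{k}(L\Sigma^{n}y(a))(b)\simeq\Ext^{n-k}_{\acat}(b,a)$: the \emph{evaluation point} $b$ sits in the first slot, which is exactly the variable the hypothesis controls. The resolution is therefore not a two-variable tower with Milnor sequences, but simply a change of site: the full subcategory $\acat^{pd\leq d}$ of objects of projective dimension at most $d$ is closed under pullback along epimorphisms and, by the quotient hypothesis, forms a basis for the epimorphism topology, so restriction along $\acat^{pd\leq d}\hookrightarrow\acat$ detects equivalences of hypercomplete sheaves. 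Evaluated at any $p\in\acat^{pd\leq d}$ the case (1) stabilization estimate applies verbatim, giving $\widehat{L}X\simeq\varprojlim\widehat{L}X_{\leq n}$. The homotopy groups of this limit need not stabilize globally, so finite presentation of $\pi_{k}$ cannot be read off termwise; instead, countable products in $\acat$ enter to guarantee that almost perfect presheaves are closed under countable limits (\cref{proposition:products_in_prestable_freyd_envelope}, via \cref{lemma:freyd_envelope_has_products_when_c_does}), which is what makes the limit almost perfect --- not a degeneration of $\varprojlim^{1}$-sequences. As written, your case (2) would require substantial work to carry out and does not identify the mechanism that actually closes the argument.
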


\begin{proof}
First suppose that $\acat$ is of finite cohomological dimension $d$. Consider the fibre sequences
\[
X_{\leq n} \rightarrow X_{\leq n-1} \rightarrow \Sigma^{n+1} \pi_{n}X,
\]
as these presheaves are bounded, their usual sheafification coincides with the hypercomplete one. Thus, if we apply $\widehat{L}$, the resulting long exact sequence of homotopy groups together with the calculation of homotopy of $\widehat{L}(\Sigma^{n+1} \pi_{n}X)$ of  \cref{proposition:sheafication_of_suspended_representable_in_freyd_envelope_of_abelian_cat} shows that $\pi_{k} \varprojlim \widehat{L} X_{\leq n} \simeq \pi_{k} \widehat{L}\tau_{\leq m} X$ for $m > k+d$. It follows that the natural map 
\[
X \rightarrow \varprojlim \widehat{L}X_{\leq n}
\]
is an isomorphism on sheafified homotopy groups, as $X \rightarrow \widehat{L}X_{\leq m}$ has this property in degrees $k \leq m$.  As the target is a limit of hypercomplete sheaves, it is hypercomplete and we deduce that $\widehat{L}X \simeq \varprojlim \widehat{L} X_{\leq n}$. As each of $\widehat{L}X_{\leq n}$ has finitely presented homotopy groups by \cref{corollary:sheafication_exists_for_bounded_sheaves_in_abelian_case} by the case done above, we deduce that so does $\widehat{L}X$, so that it is almost perfect by  \cref{proposition:characterization_of_almost_perfect_presheaves}. 

Alternatively, suppose that $\acat$ admits countable products and that its every object is a quotient of one lying in the subcategory $\acat^{pd \leq d}$ of objects of projective dimension at most $d$. Observe that the latter is closed under pullbacks along epimorphisms and so inherits a Grothendieck pretopology from $\acat$; moreover, the quotient conditions guarantees that the restriction $i_{*}$ along the inclusion $i\colon \acat^{pd \leq d} \hookrightarrow \acat$ induces an equivalence on the categories of discrete sheaves \cite{lurie2018ultracategories}[B.6.4]. 

We claim that again we have $\widehat{L}X \simeq \varprojlim \widehat{L} X_{\leq n}$. As sheaf homotopy groups detect equivalences of hypercomplete sheaves, it is enough to check that $i_{*} \widehat{L}X \simeq \varprojlim i_{*} \widehat{L} X_{\leq n}$ is an equivalence. This is the same as saying that \[
\widehat{L}X(p) \simeq \varprojlim (\widehat{L} X_{\leq n})(p)
\]
for any $p \in \acat^{pd \leq d}$, which follows from the argument given above in the finite cohomological dimension case. 

Since $\acat$ has countable products, almost perfect presheaves are closed under countable limits by \cref{proposition:products_in_prestable_freyd_envelope}, and we deduce that $\widehat{L}X \simeq \varprojlim \widehat{L} X_{\leq n}$ is almost perfect as needed. 
\end{proof}

\begin{remark}
One might wonder if working with hypercomplete sheafification is necessary in \cref{proposition:sheafication_exists_in_abelian_context_when_bounded_or_finite_dimension}; that is, whether the ordinary sheafification $L$ also preserves almost perfect presheaves. This is the case when $\acat$ has finite cohomological dimension, but not otherwise. 
\end{remark}
We are now ready to define the completed derived $\infty$-category. 

\begin{definition}
\label{definition:complete_derived_infty_cat}
Let $\acat$ be homologically finitary abelian category. The \emph{complete derived $\infty$-category} is given by 
\[
\widehat{\dcat}(\acat) \colonequals A_{\infty}^{hyper}(\acat),
\]
the $\infty$-category of almost perfect hypercomplete sheaves with respect to the epimorphism topology. 
\end{definition}
Note that by \cref{proposition:sheafication_exists_in_abelian_context_when_bounded_or_finite_dimension}, $\widehat{\dcat}(\acat)$ is an exact localization of $A_{\infty}(\acat)$; in particular, it is itself prestable with finite limits. The terminology is motivated by the following. 

\begin{proposition}
\label{proposition:under_finite_dimension_derived_category_already_complete}
Let $\acat$ be a homologically finitary abelian category. Then, the the canonical inclusion $\dcat^{b}(\acat) \rightarrow \widehat{\dcat}(\acat)$ presents the complete derived $\infty$-category as a prestable completion of the bounded one. 
\end{proposition}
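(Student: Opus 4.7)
My plan is to reduce the statement to two assertions: that the inclusion identifies $n$-truncated subcategories, and that $\widehat{\dcat}(\acat)$ is already complete as a prestable $\infty$-category. Combining these gives
\[
\widehat{\dcat}(\acat) \simeq \varprojlim_{n} \tau_{\leq n} \widehat{\dcat}(\acat) \simeq \varprojlim_{n} \tau_{\leq n} \dcat^{b}(\acat),
\]
the latter being by definition the prestable completion of $\dcat^{b}(\acat)$. The first thing I would verify is that the inclusion of \cref{definition:complete_derived_infty_cat} into \cref{definition:bounded_derived_infty_cat_of_abelian_cat_as_sheaves} makes sense: a bounded almost perfect sheaf is automatically hypercomplete, since for bounded presheaves the usual and hypercomplete sheafifications agree (this was the very observation used at the start of the proof of \cref{proposition:sheafication_exists_in_abelian_context_when_bounded_or_finite_dimension}). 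Moreover, such a sheaf is perfect by \cref{lemma:recognition_of_finite_presheaves_on_an_ordinary_category}, confirming the factorization $\dcat^{b}(\acat) \hookrightarrow \widehat{\dcat}(\acat)$.

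For the identification of truncated subcategories, suppose $X \in \widehat{\dcat}(\acat)$ is $n$-truncated; equivalently, its sheaf homotopy groups vanish above degree $n$. Then $X$ is bounded and almost perfect, hence perfect by \cref{lemma:recognition_of_finite_presheaves_on_an_ordinary_category}, so $X$ lies in $\dcat^{b}(\acat)$. Conversely, every object of $\dcat^{b}(\acat)$ is bounded and so lies in $\tau_{\leq n} \widehat{\dcat}(\acat)$ for $n$ large enough. Hence the inclusion induces equivalences $\tau_{\leq n} \dcat^{b}(\acat) \simeq \tau_{\leq n} \widehat{\dcat}(\acat)$ for each $n$; note that these $t$-structures are compatible because the inclusion $\dcat^{b}(\acat) \hookrightarrow \widehat{\dcat}(\acat)$ is the restriction of the fully faithful inclusion of hypercomplete sheaves into itself, hence exact.

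The substantive step is the completeness of $\widehat{\dcat}(\acat)$: I must show the canonical map $X \to \varprojlim_{n} X_{\leq n}$ is an equivalence for every $X \in \widehat{\dcat}(\acat)$. This is exactly what was extracted in the proof of \cref{proposition:sheafication_exists_in_abelian_context_when_bounded_or_finite_dimension}. Indeed, that argument established, in both cases covered by the homologically finitary hypothesis, that for any almost perfect presheaf $Y$ one has $\widehat{L} Y \simeq \varprojlim \widehat{L} Y_{\leq n}$. Applied to $Y = X$ with $X$ already hypercomplete, this yields $X \simeq \varprojlim X_{\leq n}$, as each $\widehat{L} X_{\leq n}$ is computed by the ordinary sheafification (since $X_{\leq n}$ is bounded) and coincides with $X_{\leq n}$ computed in $\widehat{\dcat}(\acat)$. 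Combining completeness with the truncation identification yields the desired equivalence $\widehat{\dcat}(\acat) \simeq \varprojlim_{n} \tau_{\leq n} \dcat^{b}(\acat)$.

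The main obstacle — already overcome in the previous proposition — is handling the second clause of \cref{definition:homologically_finitary_abelian_category} (countable products plus bounded projective dimension of generators), where one does not have uniform control of cohomological dimension; the trick was to test hypercomplete equivalences by restricting along the inclusion $\acat^{pd \leq d} \hookrightarrow \acat$, reducing the homotopy-group calculation to the finite-dimensional case. Beyond invoking this, the remaining work is essentially formal manipulation of $t$-structures on prestable $\infty$-categories.
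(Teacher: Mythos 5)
Your argument establishes only half of what is needed, and the half it omits is where the homologically finitary hypothesis actually does its work. By definition the prestable completion is $\varprojlim_{n} \tau_{\leq n} \dcat^{b}(\acat)$, so after your (correct) identification of the truncated subcategories the proposition amounts to the assertion that the comparison functor $\widehat{\dcat}(\acat) \rightarrow \varprojlim_{n} \tau_{\leq n}\widehat{\dcat}(\acat)$ is an equivalence. Proving that every object $X$ of $\widehat{\dcat}(\acat)$ satisfies $X \simeq \varprojlim X_{\leq n}$ yields full faithfulness of this functor, but not essential surjectivity: you must also show that an \emph{arbitrary} Postnikov pretower $(X_{n})$ --- a compatible family of $n$-truncated objects that is not assumed to arise as the tower of truncations of any given $X$ --- converges to an object of $\widehat{\dcat}(\acat)$ whose $n$-truncations recover the $X_{n}$. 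The two conditions are genuinely independent; for instance, in the prestable $\infty$-category of connective spectra with only finitely many nonzero homotopy groups every object is trivially the limit of its Postnikov tower, yet the category is far from complete.

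Concretely, given such a pretower one must form $X \colonequals \varprojlim X_{n}$ in hypercomplete sheaves of large spaces and then prove that $X$ is almost perfect and that each map $X \rightarrow X_{n}$ is an $n$-truncation. Neither claim is formal: both rest on the stabilization of sheaf homotopy groups, namely that $\pi_{k} X_{n+1} \rightarrow \pi_{k}X_{n}$ is an isomorphism for $n > k+d$, which follows from the fibre sequences $\Sigma^{n+1}\pi_{n+1}X_{n+1} \rightarrow X_{n+1} \rightarrow X_{n}$ together with the vanishing of the $\Ext$-groups computed in \cref{proposition:sheafication_of_suspended_representable_in_freyd_envelope_of_abelian_cat} above the cohomological dimension (and, in the second case of \cref{definition:homologically_finitary_abelian_category}, on the reduction obtained by restricting along $\acat^{pd \leq d} \hookrightarrow \acat$). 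Your appeal to the equivalence $\widehat{L}Y \simeq \varprojlim \widehat{L}Y_{\leq n}$ from the proof of \cref{proposition:sheafication_exists_in_abelian_context_when_bounded_or_finite_dimension} presupposes an ambient almost perfect presheaf $Y$ whose truncations produce the tower, and therefore cannot supply this step.
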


\begin{proof}
Observe that since any bounded almost perfect sheaf is automatically hypercomplete and finite, the inclusion induces an equivalence $\tau_{\leq n} \dcat^{b}(\acat) \rightarrow \tau_{\leq n} \widehat{\dcat}(\acat)$ for every $n \geq 0$. Thus, we only have to verify that the target is indeed complete. 

We first cover the case when $\acat$ is of finite cohomological dimension using the criterion of \cite{higher_topos_theory}[5.5.6.26]. Let $(X_{n})_{n \geq 0} \in \varprojlim \tau_{\leq n} \widehat{\dcat}(\acat)$, be a Postnikov pretower, which we can informally identify with a family of $X_{n}$ of $n$-truncated sheaves together with maps $X_{n} \rightarrow X_{n-1}$ which identify the target as the $(n-1)$-th truncation of the source. We will show that if $X \colonequals \varprojlim X_{n}$ is the limit in the $\infty$-category of all hypercomplete sheaves, then
\begin{enumerate}
    \item $X$ is almost perfect and hence a limit in $\widehat{\dcat}(\acat)$ and 
    \item the canonical maps $X \rightarrow X_{n}$ present the target as the $n$-truncation.
\end{enumerate}
It follows that $X$ extends it to a limit Postnikov tower, as needed. 

It is enough to show that for any fixed $k$, the map $\pi_{k} X \rightarrow \pi_{k} X_n$ on sheaf homotopy groups is an isomorphism for $n$ large enough. Indeed, if this is the case, then the same will be true for the composite 
\[
\pi_{k} X \rightarrow \pi_{k} X_n \rightarrow \pi_{k} X_m
\]
for every $k \leq m \leq n$, as the latter map also has this property.
 
Suppose that $\acat$ is of cohomological dimension $d$. Considering Postnikov fibre sequences as in the proof of \cref{proposition:sheafication_exists_in_abelian_context_when_bounded_or_finite_dimension}, the calculations of  \cref{proposition:sheafication_of_suspended_representable_in_freyd_envelope_of_abelian_cat} show that $\pi_{k} X_{n+1} \simeq \pi_{k} X_n$ for $n > k+d$, as the relevant $\Ext$-groups vanish. This readily implies that $\pi_{k} X \simeq \pi_{k} X_{n}$ for $n > k+d$, even before sheafification, as claimed. It follows that $X$ has finitely presented homotopy groups and hence is almost perfect, ending the argument when $\acat$ is of finite cohomological dimension.

The case where $\acat$ has countable products and every object is a quotient of one in $\acat^{pd \leq d}$ follows from the argument given above by restricting to this subcategory as in the last part of the proof of \cref{proposition:sheafication_exists_in_abelian_context_when_bounded_or_finite_dimension}.
\end{proof}

We now move on to the case of a stable $\infty$-category together with a choice of a homology theory, this will be largely similar to the one done above. Again, we will need some finiteness assumptions. 

\begin{definition}
\label{definition:homologically_finitary_adapted_homology_theory}
We say an adapted homology theory $H\colon \ccat \rightarrow \acat$ is \emph{homologically finitary} if either: 

\begin{enumerate}
    \item $\acat$ is of finite cohomological dimension or 
    \item $\ccat$ admits countable products and there exists a $d$ such that for every $c \in \ccat$ there exists an $H$-epimorphism $c^{\prime} \rightarrow c$ with $H(c^{\prime})$ of projective dimension at most $d$. 
\end{enumerate}
\end{definition}
Note that this is compatible with the previously introduced notion in the following sense. 
\begin{lemma}
If $H\colon \ccat \rightarrow \acat$ is homologically finitary in the sense of \cref{definition:homologically_finitary_adapted_homology_theory}, then $\acat$ is homologically finitary in the sense of \cref{definition:homologically_finitary_abelian_category}.
\end{lemma}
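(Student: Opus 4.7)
The plan is to case-split according to which clause of \cref{definition:homologically_finitary_adapted_homology_theory} is satisfied. If clause (1) holds, i.e.\ $\acat$ has finite cohomological dimension, then clause (1) of \cref{definition:homologically_finitary_abelian_category} holds for the same reason and there is nothing further to prove. So I will concentrate on the case where clause (2) of \cref{definition:homologically_finitary_adapted_homology_theory} holds: $\ccat$ admits countable products, and every $c \in \ccat$ admits an $H$-epimorphism $c' \to c$ with $H(c')$ of projective dimension at most $d$. In this case I must verify both halves of clause (2) of \cref{definition:homologically_finitary_abelian_category}, namely that $\acat$ admits countable products and that every object of $\acat$ is a quotient of an object of projective dimension at most $d$.

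For the countable products, I would invoke \cref{theorem:characterization_of_adapted_homology_theories}, which identifies $\acat$ with a Gabriel quotient of the Freyd envelope $A(\ccat)$ via a localization $L \colon A(\ccat) \rightarrow \acat$ whose fully faithful right adjoint $R \colon \acat \hookrightarrow A(\ccat)$ realises $\acat$ as a reflective subcategory. Since $\ccat$ has countable products, so does $A(\ccat)$ by \cref{lemma:freyd_envelope_has_products_when_c_does}; and any reflective subcategory of a category with countable products inherits them, as products can be computed in the ambient category and then reflected. This gives the first half of the clause.

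For the generation condition, let $a \in \acat$ be arbitrary. Since $L$ is exact and essentially surjective, and every object of $A(\ccat)$ is itself a quotient of a representable, I would pick some $X \in A(\ccat)$ with $L(X) \simeq a$ together with a surjection $y(c) \twoheadrightarrow X$; applying $L$ yields an epimorphism $H(c) = L(y(c)) \twoheadrightarrow a$. The hypothesis then supplies an $H$-epimorphism $c' \to c$ with $H(c')$ of projective dimension at most $d$, and composing gives the desired surjection $H(c') \twoheadrightarrow a$.

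The only mildly delicate point is verifying that $\acat$ inherits countable products; this reduces to the standard fact that the essential image of a fully faithful right adjoint is closed under limits existing in the ambient category, and is therefore more bookkeeping than genuine obstacle. Everything else is a direct translation between the two definitions via the Gabriel quotient identification.
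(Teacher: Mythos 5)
Your proof is correct and follows essentially the same route as the paper: the finite-cohomological-dimension case is immediate, countable products in $\acat$ are inherited from $A(\ccat)$ (via \cref{lemma:freyd_envelope_has_products_when_c_does} and the reflective-subcategory presentation of the Gabriel quotient), and the generation condition follows by writing any object of $\acat$ as a quotient of some $H(c)$ and precomposing with the hypothesized $H$-epimorphism $c' \to c$. Your write-up just spells out a few steps the paper leaves implicit.
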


\begin{proof}
This is clear if $\acat$ is of finite cohomological dimension, so assume that instead $\ccat$ satisfies the second condition. In this case, $A(\ccat)$ also has countable products by \cref{lemma:freyd_envelope_has_products_when_c_does}, and so does $\acat$ as it is a localization of the Freyd envelope by adaptedness. Moreover, since any object of $A(\ccat)$ is a quotient of a representable, we deduce the same is true in $\acat$, so that any object therein is a quotient of one of the form $H(c^{\prime})$ of projective dimension at most $d$. 
\end{proof}

We will denote the relevant hypercomplete sheafification on the $\infty$-category $\Fun_{\Sigma}(\ccat^{op}, \widehat{\spaces})$ by $\widehat{L}_{\ccat}$, to distinguish it from the one we used in the abelian case. The following is the stable analogue of  \cref{proposition:sheafication_exists_in_abelian_context_when_bounded_or_finite_dimension}. 

\begin{proposition}
\label{proposition:in_homologically_finitary_stable_case_hp_sheafication_preserves_ap_presheaves}
Let $H\colon \ccat \rightarrow \acat$ be a homologically finitary adapted homology theory. Then, the hypercomplete sheafification $\widehat{L}_{\ccat}X$ of an almost perfect presheaf $X \in A_{\infty}(\ccat)$ is again almost perfect.
\end{proposition}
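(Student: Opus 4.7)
The plan is to mirror the proof of \cref{proposition:sheafication_exists_in_abelian_context_when_bounded_or_finite_dimension} in the stable setting, transferring the abelian result across the homology adjunction $H^{*} \dashv H_{*}$ in the bounded range and then rebuilding the general case from Postnikov truncations.

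First I would dispose of the bounded case. Using the Postnikov fibre sequences $X_{\leq n} \to X_{\leq n-1} \to \Sigma^{n+1} \pi_{n} X$ together with the closure of almost perfect presheaves under fibres (\cref{corollary:almost_perfect_presheaves_closed_under_finite_limits_colimits_truncations}), the problem reduces to almost perfect $X$ with homotopy concentrated in a single degree. For such $X$ hypercomplete and ordinary sheafification coincide, so by \cref{corollary:presheaves_with_single_homotopy_group_can_be_sheafified_in_abelian_cat} we may write $L_{\ccat} X \simeq H_{*}(L_{\acat} Y)$ for some $Y \in A_{\infty}(\acat)$ again concentrated in a single degree. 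Since $H_{*}$ commutes with hypercomplete sheafification by \cref{corollary:restriction_along_homology_commutes_with_sheafication_and_hp_sheafication}, we obtain $\widehat{L}_{\ccat} X \simeq H_{*}(\widehat{L}_{\acat} Y)$, and the abelian case (\cref{proposition:sheafication_exists_in_abelian_context_when_bounded_or_finite_dimension}) applied to $Y$, combined with the fact that restriction along $H$ preserves the property of having finitely presented homotopy groups (via \cref{lemma:homology_adjunction_is_an equivalence_on_hearts} applied degreewise), finishes this step.

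For the general case, the key is to establish the identification $\widehat{L}_{\ccat} X \simeq \varprojlim_{n} \widehat{L}_{\ccat} X_{\leq n}$ and then show the limit is almost perfect. When $\acat$ has cohomological dimension $d$, I would apply $\widehat{L}_{\ccat}$ to the Postnikov fibre sequences above: by the bounded case each $\widehat{L}_{\ccat} X_{\leq n}$ is almost perfect, and the long exact sequence on sheaf homotopy groups together with the $\Ext$-computation of \cref{proposition:sheafication_of_suspended_representable_in_freyd_envelope_of_abelian_cat} (transported to $\ccat$ via $H_{*}$) shows that $\pi_{k}(\widehat{L}_{\ccat} X_{\leq m}) \to \pi_{k}(\widehat{L}_{\ccat} X_{\leq n})$ is an isomorphism for $n,m > k+d$. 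Hence the limit has finitely presented homotopy groups in every degree and is therefore almost perfect by \cref{proposition:characterization_of_almost_perfect_presheaves}; being a limit of hypercomplete sheaves it is also hypercomplete, so the canonical map $\widehat{L}_{\ccat} X \to \varprojlim \widehat{L}_{\ccat} X_{\leq n}$ is an isomorphism on sheaf homotopy groups between hypercomplete objects and thus an equivalence.

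In the remaining case, where $\ccat$ admits countable products and every $c \in \ccat$ is $H$-covered by some $c^{\prime}$ with $H(c^{\prime})$ of projective dimension at most $d$, I would restrict along the inclusion $i\colon \ccat^{pd \leq d} \hookrightarrow \ccat$ of the full subcategory of such objects. The adaptedness assumption, together with the covering lifting property of \cref{proposition:homology_has_covering_lifting_property}, ensures that $\ccat^{pd \leq d}$ inherits a Grothendieck pretopology from $\ccat$ for which $i_{*}$ induces an equivalence on discrete sheaves; arguing on the restricted site as in the cohomological dimension case reduces the problem to checking stabilization of sheaf homotopy groups of $i_{*} \widehat{L}_{\ccat} X_{\leq n}$, which again follows from the bounded case. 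Since $\ccat$ has countable products, so does $A_{\infty}(\ccat)$ by \cref{proposition:products_in_prestable_freyd_envelope}, and the countable limit $\varprojlim \widehat{L}_{\ccat} X_{\leq n}$ therefore remains almost perfect. The main obstacle is the stabilization statement in the unbounded case: it is where the homological finitary hypothesis is essential, and the passage to the subsite $\ccat^{pd \leq d}$ in the second case requires some bookkeeping to see that the pretopology and the covering lifting property survive the restriction.
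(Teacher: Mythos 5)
Your proposal is correct and follows essentially the same route as the paper: dispose of the bounded case by reduction to presheaves concentrated in a single degree and transfer across $H_{*}$, establish $\widehat{L}_{\ccat}X \simeq \varprojlim \widehat{L}_{\ccat}X_{\leq n}$ via the stabilization of sheaf homotopy groups coming from the $\Ext$-vanishing in finite cohomological dimension, and handle the second finitary hypothesis by restricting to the subsite $\ccat^{pd \leq d}$. The only cosmetic difference is that the paper simply cites \cref{proposition:bounded_almost_perfect_presheaves_have_sheafication} for the bounded case rather than re-running its single-degree reduction.
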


\begin{proof}
First, assume that $\acat$ is of finite cohomological dimension. For any $n \geq 1$, we have a fibre sequence 
\[
\widehat{L}_{\ccat}(X_{\leq n}) \rightarrow \widehat{L}_{\ccat}(X_{\leq n-1}) \rightarrow \widehat{L}_{\ccat}(\Sigma^{n+1} \pi_{n} X).
\]
The presheaf $\Sigma^{n+1} \pi_{n} X$ has homotopy concentrated in a single degree, so that by \cref{corollary:presheaves_with_single_homotopy_group_can_be_sheafified_in_abelian_cat} we can write
\[
\widehat{L}_{\ccat}(\Sigma^{n+1} \pi_{n} X) \simeq H_{*}(\widehat{L}_{\acat}(\Sigma^{n+1} y)),
\]
where $y \in A(\acat)$ is a sheaf with respect to the epimorphism topology, which we can identify with an object of $\acat$ by \cref{lemma:yoneda_embedding_of_abelian_category_identifies_with_sheaves}. It then follows from the calculation given in \cref{proposition:sheafication_of_suspended_representable_in_freyd_envelope_of_abelian_cat} that 
\[
\pi_{k} (\widehat{L}_{\ccat}(\Sigma^{n+1} \pi_{n} X))(c) \simeq \Ext^{n+1-k}_{\acat}(H(c), y).
\]
for any $c \in \ccat$. In particular, if $\acat$ is of cohomological dimension $d$, then these groups vanish below degree $n+1-d$ and we deduce that 
\[
\pi_{k} \widehat{L}_{\ccat}(X_{\leq n}) \rightarrow \pi_{k} \widehat{L}_{\ccat}(X_{\leq n-1})
\]
is an isomorphism in degrees $k < n-d$. It then follows as in the proof of \cref{proposition:sheafication_exists_in_abelian_context_when_bounded_or_finite_dimension} that $\widehat{L}_{\ccat}X \simeq \varprojlim \widehat{L}_{\ccat}X_{\leq n}$ and that 
\[
\pi_{k} \widehat{L}_{\ccat}X \simeq \pi_{k} \widehat{L}_{\ccat} X_{\leq n}
\]
is an isomorphism for $n > k+d$. As each $\widehat{L}_{\ccat} X_{\leq n}$ is almost perfect by \cref{proposition:bounded_almost_perfect_presheaves_have_sheafication}, we deduce that $\widehat{L}_{\ccat}X$ is almost perfect by \cref{proposition:characterization_of_almost_perfect_presheaves}, as it has finitely presented homotopy groups. 

Let us now consider the case where $\ccat$ admits countable products and every object admits an $H$-epimorphism from one in the subcategory $\ccat^{pd \leq d}$ of those $c \in \ccat$ such that $H(c)$ is of projective dimension at most $c$. This subcategory is closed under pullbacks along $H$-epimorphisms, and so it inherits a Grothendieck topology from $\ccat$. Restricting our sheaves along the inclusion $i\colon \ccat^{pd \leq d} \hookrightarrow \ccat$ as in the last part of \cref{proposition:sheafication_exists_in_abelian_context_when_bounded_or_finite_dimension} we reduce to the case done above, ending the argument.  
\end{proof}

\begin{definition}
\label{definition:complete_derived_infty_cat_of_an_homologically_finitary_adapted_homology_theory}
Let $H\colon \ccat \rightarrow \acat$ be a homologically finitary adapted homology theory. Then, its \emph{complete derived $\infty$-category} is the $\infty$-category 
\[
\widehat{\dcat}(\ccat) \colonequals A_{\infty}^{hsh}(\ccat)
\]
of almost perfect, hypercomplete sheaves on $\ccat$ with respect to the $H$-epimorphism topology. 
\end{definition}

\begin{remark}
As we have observed in an analogous situation above, by  \cref{proposition:in_homologically_finitary_stable_case_hp_sheafication_preserves_ap_presheaves} the $\infty$-category $\widehat{\dcat}(\ccat)$ is an exact localization of $A_{\infty}(\ccat)$, in particular it is prestable with finite limits. Moreover, by construction we have $\widehat{\dcat}(\ccat)^{\heartsuit} \simeq \acat$, as in the finite case, since every discrete sheaf is automatically hypercomplete. 
\end{remark} 
One major difference between the abelian and stable cases is that in the latter a perfect sheaf is not necessarily bounded, and so need not be hypercomplete. In fact, even the synthetic analogue $\nu(c)$ can fail to be hypercomplete, which is related to Bousfield localization, as we discuss at more length in \S\ref{subsection:hypercompletion_and_locality} below.

However, if there is still a natural functor $\dcat^{\omega}(\ccat) \rightarrow \widehat{\dcat}(\ccat)$ which sends any perfect sheaf to its hypercomplete sheafification. The following is the stable analogue of \cref{proposition:under_finite_dimension_derived_category_already_complete}, and is proven in the same way. 

\begin{proposition}
\label{proposition:complete_derived_cat_of_stable_cat_a_completion_of_finite_one}
Let $H\colon \ccat \rightarrow \acat$ be a homologically finitary adapted homology theory. Then, the hypercompletion functor $\dcat^{\omega}(\ccat) \rightarrow \widehat{\dcat}(\ccat)$ presents the target as the prestable completion of the source.
\end{proposition}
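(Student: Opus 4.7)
The plan is to mirror the proof of \cref{proposition:under_finite_dimension_derived_category_already_complete} in the stable setting, with \cref{proposition:in_homologically_finitary_stable_case_hp_sheafication_preserves_ap_presheaves} playing the role that \cref{proposition:sheafication_exists_in_abelian_context_when_bounded_or_finite_dimension} played in the abelian case. As there, two things must be checked: first that hypercompletion $\dcat^{\omega}(\ccat) \to \widehat{\dcat}(\ccat)$ induces an equivalence $\tau_{\leq n}\dcat^{\omega}(\ccat) \simeq \tau_{\leq n}\widehat{\dcat}(\ccat)$ for every $n \geq 0$, and second that $\widehat{\dcat}(\ccat)$ is already prestably complete.

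For the first point, an $n$-truncated almost perfect sheaf is bounded, hence automatically hypercomplete, while by the perfectness criterion of \cref{lemma:characterization_of_finite_presheaves_on_stable_inftycat} every bounded almost perfect presheaf is automatically perfect. Thus both $\tau_{\leq n}\dcat^{\omega}(\ccat)$ and $\tau_{\leq n}\widehat{\dcat}(\ccat)$ coincide with the $\infty$-category of bounded $n$-truncated sheaves, and the truncation functors agree through the hypercompletion.

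For the second point, I would apply the Postnikov-tower criterion \cite{higher_topos_theory}[5.5.6.26]: given a Postnikov pretower $(X_n)_{n\geq 0}$ in $\widehat{\dcat}(\ccat)$, form its limit $X \colonequals \varprojlim X_n$ inside the larger $\infty$-category of all hypercomplete sheaves of (possibly large) spaces on $\ccat$. It suffices to show that $X$ is almost perfect, so lies in $\widehat{\dcat}(\ccat)$, and that each $X \to X_n$ exhibits $X_n$ as the $n$-truncation of $X$. Both reduce to verifying that for each fixed $k \geq 0$, the map $\pi_k^{\heartsuit} X \to \pi_k^{\heartsuit} X_n$ on $\acat$-valued sheaf homotopy groups is an isomorphism once $n$ is sufficiently large relative to $k$.

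The main computation comes from the Postnikov fibre sequences
\[
\tau_{\geq n+1} X_{n+1} \longrightarrow X_{n+1} \longrightarrow X_n,
\]
whose fibres have sheaf homotopy concentrated in degrees $\geq n+1$. Using \cref{corollary:presheaves_with_single_homotopy_group_can_be_sheafified_in_abelian_cat} to write such a fibre degree-wise as $H_{*}$ of a sheaf over $\acat$, and then applying the $\Ext$-computation of \cref{proposition:sheafication_of_suspended_representable_in_freyd_envelope_of_abelian_cat}, one sees that the $\pi_k$ of the fibre, evaluated at $c \in \ccat$, is built out of $\Ext^{>n-k}_{\acat}(H(c), -)$-groups. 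If $\acat$ has cohomological dimension $\leq d$ these vanish for $k < n+1-d$, so $\pi_k X_{n+1} \to \pi_k X_n$ is an isomorphism in that range and the argument of \cref{proposition:under_finite_dimension_derived_category_already_complete} carries over verbatim, yielding $\pi_k X \simeq \pi_k X_n$ for $n > k+d$; since these groups are finitely presented, \cref{proposition:characterization_of_almost_perfect_presheaves} implies $X$ is almost perfect. The main obstacle will be the second homologically finitary case, where $\acat$ need not have finite cohomological dimension: following the end of the proof of \cref{proposition:in_homologically_finitary_stable_case_hp_sheafication_preserves_ap_presheaves}, I would restrict all hypercomplete sheaves along the inclusion $i\colon \ccat^{pd \leq d} \hookrightarrow \ccat$ of the subcategory of objects $c$ with $H(c)$ of projective dimension $\leq d$, use that $i_{*}$ detects equivalences of hypercomplete sheaves and commutes with the relevant limits (since $\ccat$, and hence $\ccat^{pd \leq d}$, admits countable products and almost perfect presheaves are closed under them by \cref{proposition:products_in_prestable_freyd_envelope}), and reduce back to the finite-dimensional argument applied to the restricted objects.
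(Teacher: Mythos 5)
Your proof is correct and takes essentially the same route as the paper, which proves this proposition simply by transposing the argument of \cref{proposition:under_finite_dimension_derived_category_already_complete} to the stable setting, exactly as you do: identifying the truncated subcategories via boundedness of truncated almost perfect sheaves, verifying completeness with the Postnikov-pretower criterion and the $\Ext$-vanishing above the cohomological dimension, and handling the second homologically finitary case by restricting along $\ccat^{pd \leq d} \hookrightarrow \ccat$. No gaps.
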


\begin{remark}
As a consequence of \cref{proposition:complete_derived_cat_of_stable_cat_a_completion_of_finite_one} and the universal property of $\dcat^{\omega}(\ccat)$ proven in \cref{theorem:universal_property_of_finite_derived_category}, the complete derived $\infty$-category also has a universal property. Namely, the composite 
\[
\ccat \rightarrow \dcat^{\omega}(\ccat) \rightarrow \widehat{\dcat}(\ccat)
\]
is a prestable enhancement to $H\colon \ccat \rightarrow \acat$, and it is universal among all such \emph{valued in a complete prestable $\infty$-category}. 
\end{remark}

\subsection{Digression: Hypercompletion and locality} 
\label{subsection:hypercompletion_and_locality}

The distinction between hypercomplete and non-hypercomplete sheaves is more subtle in the stable context than the abelian one. In the latter, any perfect sheaf is automatically bounded, and so is hypercomplete. In the stable case, this is not what happens; the synthetic analogues $\nu(c)$ are perfect and are sheaves, but are not always hypercomplete, as the following example shows. 

\begin{warning}
\label{warning:hypercompletion_locality}
Let $H\colon \ccat \rightarrow \acat$ be an adapted homology theory, and let $c \in \ccat$ be a non-$H$-local object in the sense that there exists a non-zero map $d \rightarrow c$ from an $H$-acyclic $d$; that is, an object such that $H(d) = 0$. Then, we claim that $\nu(c)$ cannot be hypercomplete. 

To see this, observe that since $\nu\colon \ccat \rightarrow \dcat^{\omega}(\ccat)$ is fully faithful, we have a non-zero homotopy class of maps $\nu(d) \rightarrow \nu(c)$. If the latter was hypercomplete, the map would factor through the hypercompletion $\widehat{L}\nu(d)$; we claim that the latter is zero. To see this, observe that we have $\pi_{k} \nu(d) \simeq H(d)[-k] = 0 $, where on the left we have the sheaf homotopy groups. It follows that $\widehat{L} \nu(d) = 0$, since we assume that $d$ is $H$-acyclic.
\end{warning} 
Thus, if we want $\nu(c)$ to be hypercomplete, we should at least assume that $c$ is $H$-local. Under fairly weak hypotheses, this turns out to be sufficient. 

\begin{lemma}
\label{lemma:nu_hypercomplete_iff_object_is_local}
Let $H\colon \ccat \rightarrow \acat$ be an adapted homology theory and assume that either:
\begin{enumerate}
    \item $\ccat$ admits geometric realizations or 
    \item $\acat$ is of finite cohomological dimension.
\end{enumerate}
Then, $c \in \ccat$ is $H$-local if and only if $\nu(c)$ is hypercomplete. 
\end{lemma}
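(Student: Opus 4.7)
The forward implication is exactly the content of Warning 5.2.1, so the work lies in the converse: assuming $c \in \ccat$ is $H$-local, we must show that the unit $\nu(c) \to \widehat{L}\nu(c)$ is an equivalence in sheaves. Equivalently, since the cofibre of this unit is $\infty$-connective, it is enough to verify that $\Map(X,\nu(c)) \simeq 0$ for every $\infty$-connective sheaf $X$ on $\ccat$ (with respect to the $H$-epimorphism topology).

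The base case of this testing is straightforward. For any $d \in \ccat$ with $H(d) = 0$, the sheaf $\nu(d)$ has sheaf homotopy groups $\pi_k \nu(d) \simeq H(d)[-k] = 0$ in $\acat$ for all $k \geq 0$, so $\nu(d)$ is $\infty$-connective. Full faithfulness of $\nu$ together with $H$-locality of $c$ then gives
\[
\Map(\Sigma^{k}\nu(d),\nu(c)) \simeq \Omega^{k}\Map_{\ccat}(d,c) = 0
\]
for every $k \geq 0$. Thus $\nu(c)$ kills all $\infty$-connective sheaves coming from $H$-acyclic representables.

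The crux is the reduction of an arbitrary $\infty$-connective $X$ to sheaves of the form $\Sigma^k \nu(d)$ with $d$ $H$-acyclic. Under assumption $(1)$, I would use the geometric realizations in $\ccat$ to construct an Adams-like hypercover: given a $\pi_0$-surjection $\nu(d_0) \to X$, the covering lifting property (Proposition 4.2.4) plus iterated embeddings into $H$-injectives allow one to replace $d_0$ and the subsequent approximations by successively more acyclic objects, with geometric realizations in $\ccat$ supplying the required simplicial colimits in the source rather than only in sheaves. One then expresses $X$ as a colimit (a geometric realization of a simplicial object) of suspensions of $\nu(d_n)$ with $H(d_n) \to 0$ in sheaves, and combining with Step~1 concludes $\Map(X,\nu(c)) = 0$. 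Equivalently, this identifies the hypercompletion $\widehat{L}\nu(c)$ with $\nu$ applied to the Bousfield-type completion of $c$, and $H$-locality says the latter is $c$.

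Under assumption $(2)$, Adams resolutions terminate after at most $d$ steps, where $d$ is the cohomological dimension of $\acat$. I would analyse $X$ via its Postnikov tower in sheaves of large spectra: each layer is concentrated in a single sheaf homotopy degree, and the finite cohomological dimension forces the tower of $\infty$-connective sheaves into a finite filtration whose associated gradeds can be written as $\Sigma^k \nu(d)$ with $H(d)=0$, using Corollary 4.1.12 and the fact that $\infty$-connectivity forces the relevant sheafified abelian cohomology classes to vanish. Then $\Map(-,\nu(c))$ converts this finite filtration into a finite tower of fibre sequences each of whose layers vanishes by Step~1, giving $\Map(X,\nu(c)) = 0$.

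The main obstacle I anticipate is the reduction step in case $(1)$: producing, for a general $\infty$-connective $X$, a resolution by $\nu(d_\bullet)$ with $H(d_n)$ controllably $H$-acyclic. This requires the covering lifting property of $H$ (\cref{proposition:homology_has_covering_lifting_property}) to be iterated compatibly, and some care in checking that the geometric realization of the resulting simplicial object in $\ccat$ actually recovers $X$ after sheafification. Case $(2)$ is conceptually cleaner once one has a sharp Postnikov-style description of $\infty$-connective perfect sheaves under bounded cohomological dimension, but it requires the finite-dimensionality hypothesis to propagate through the interaction of hypercompletion with sheafification.
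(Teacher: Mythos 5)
Your forward direction and the computation $\Map(\Sigma^{k}\nu(d),\nu(c)) \simeq \Omega^{k}\Map_{\ccat}(d,c) = 0$ for $H$-acyclic $d$ are fine, but the converse is not actually proved: the entire content lies in the reduction you yourself flag as "the main obstacle," namely writing an arbitrary $\infty$-connective sheaf $X$ as a colimit of suspensions of $\nu(d)$ with $H(d)=0$. This is not established anywhere, and it is not clearly true — it would essentially say that the localizing class of $\infty$-connective objects in the (large) sheaf topos is generated by acyclic representables, which is a statement at least as strong as the lemma itself. Moreover, the test objects for hypercompleteness range over \emph{all} $\infty$-connective sheaves of the ambient topos, not just perfect or almost perfect ones, so there is no finiteness to exploit. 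Your sketch for case (2) has a more concrete problem: an $\infty$-connective sheaf has \emph{vanishing} sheaf homotopy groups by definition, so its Postnikov tower in sheaves is trivial and carries no information; no finite filtration with layers $\Sigma^{k}\nu(d)$ can be extracted from it. The whole subtlety of non-hypercomplete $\infty$-connective objects is precisely that they are invisible to Postnikov towers.

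The paper's proof sidesteps this by never resolving the test objects. For case (1) it invokes the hypercover criterion (\cite{pstrkagowski2018synthetic}[A.23]): $\nu(c)$ is hypercomplete iff it takes every $H$-epimorphism hypercover $\ldots \triplerightarrow d_{1} \rightrightarrows d_{0} \rightarrow d$ in $\ccat$ to a limit diagram; since $\ccat$ has geometric realizations this reduces to showing $|d_{\bullet}| \rightarrow d$ is an $H$-isomorphism, which follows from a collapsing totalization spectral sequence for maps into injective lifts $i_{\ccat}$ (using adaptedness and the fact that $H(d_{\bullet})$ is an epimorphism hypercover of $H(d)$). For case (2) the argument is applied to $c$ itself rather than to $X$: the finite injective resolution of $H(c)$ lifts to a finite Adams resolution of $c$ terminating (by $H$-locality) in zero, so $c$ lies in the finite-fibre closure of the $H$-injectives; since $\nu$ preserves fibres and hypercomplete sheaves are closed under limits, one reduces to $c$ $H$-injective, where the presheaf truncations of $\nu(c)$ are already sheaves and hence $\nu(c) \simeq \varprojlim \nu(c)_{\leq n}$ is a limit of bounded (hence hypercomplete) sheaves. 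To repair your write-up you would need to replace the "testing against $\infty$-connective objects" strategy with one of these two mechanisms.
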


\begin{proof}
The necessity of the locality condition was shown in \cref{warning:hypercompletion_locality}, so we only verify that it is sufficient.

First assume that $\ccat$ has geometric realizations. By \cite{pstrkagowski2018synthetic}[A.23], $\nu(c)$ is hypercomplete if and only if it takes any $H$-epimorphism hypercover 
\[
\ldots \triplerightarrow d_{1} \rightrightarrows d_{0} \rightarrow d
\]
to a limit diagram. By definition of $\nu$, this happens precisely when 
\[
\Map(d, c) \rightarrow \Map(d_{0}, c) \rightrightarrows \Map(d_{1}, c) \triplerightarrow \ldots 
\]
is a totalization. Since $\ccat$ admits geometric realizations, this is the same as asking for the induced morphism $\Map(d, c) \rightarrow \Map(|d_{\bullet}|, c)$ to be an equivalence. As $c$ is $H$-local by assumption, it is enough to check that $|d_{\bullet}| \rightarrow d$ is an $H$-isomorphism. 

By adaptedness, to be an $H$-isomorphism is equivalent to having the property that for every injective $i \in \acat$, and any associated injective lift $i_{\ccat} \in \ccat$, $\Map(d, i_{\ccat}) \rightarrow \Map(|d_{\bullet}|, i_{\ccat})$ is an equivalence of spaces. 

We have a totalization spectral sequence of the form 
\[
E_{2}^{s, t} \colonequals H^{s} [d_{\bullet}, i_{\ccat}]_{t} \simeq H^{s} \Hom^{t}_{\acat}(H(d_{\bullet}), i) \Rightarrow [|d_{\bullet}|, i_{\ccat}]_{t-s}.
\]
Since $H(d_{\bullet})$ is an epimorphism hypercover of $H(d)$, the $E_{2}$-term vanishes for $s \neq 0$ and gives $\Hom_{\acat}^{t}(H(d), i)$ for $s = 0$. We deduce that the spectral sequence collapses and $H(|d_{\bullet}|) \simeq H(d)$, as needed. 

Alternatively, assume that $\acat$ is of finite cohomological dimension, so that we can choose a finite injective resolution of $H(c)$. This can be lifted using \cref{construction:adams_spectral_sequence_associated_to_a_homology_theory} to a finite Adams resolution of $c$, which since $c$ is assumed to be $H$-local will terminate in finitely many steps in an object which is both $H$-acyclic and local, hence zero. It follows that $c$ belongs to the smallest subcategory of $\ccat$ containing all $H$-injectives and closed under taking fibres. 

As $\nu\colon \ccat \rightarrow \dcat^{\omega}(\ccat)$ preserves fibres and sheaves which are limits of their Postnikov tower are closed under taking these, we deduce that it is enough to show this when $c$ is itself $H$-injective. In this case, the presheaf truncations of $\nu(c)$ are already sheaves, and so $\nu(c)_{\leq n}$ is computed levelwise. It follows that $\nu(c) \simeq \varprojlim \nu(c)_{\leq n}$ as needed, so it is hypercomplete as a limit of bounded sheaves.  
\end{proof}

\begin{proposition}
\label{proposition:adapted_homology_theory_conservative_iff_finite_sheaves_hypercomplete} Let $H\colon \ccat \rightarrow \acat$ be an adapted homology theory. 
Suppose that either $\ccat$ admits geometric realizations or that $\acat$ is of finite cohomological dimension. Then, the following two conditions are equivalent:

\begin{enumerate}
    \item $H$ is conservative or 
    \item every perfect sheaf $X \in \dcat^{\omega}(\ccat)$ is hypercomplete.
\end{enumerate}
\end{proposition}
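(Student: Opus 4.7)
The plan is to prove the equivalence by establishing each implication separately, using the generation result of \cref{corollary:perfect_prestable_freyd_envelope_generated_under_limits_by_representables_and_em_presheaves} together with \cref{lemma:nu_hypercomplete_iff_object_is_local} as the two main tools.

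For $(2) \Rightarrow (1)$, I would argue contrapositively by taking an object $c \in \ccat$ with $H(c) = 0$ and showing $c \simeq 0$. The sheaf homotopy groups of the synthetic analogue $\nu(c)$ satisfy
\[
\pi_{k} \nu(c) \simeq H(c)[-k] = 0
\]
in $\dcat^{\omega}(\ccat)^{\heartsuit} \simeq \acat$. By the hypothesis in $(2)$, the perfect sheaf $\nu(c)$ is hypercomplete, and hypercomplete sheaves with vanishing homotopy are zero. Since $\nu\colon \ccat \hookrightarrow \dcat^{\omega}(\ccat)$ is fully faithful by \cref{proposition:properties_of_finite_derived_infty_cat_of_stable_cat}, this forces $c \simeq 0$, establishing conservativity.

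For the direction $(1) \Rightarrow (2)$, I would first observe that conservativity of $H$ forces every $c \in \ccat$ to be $H$-local: if $d \rightarrow c$ were a nonzero map from an $H$-acyclic object $d$, then $d \simeq 0$ by conservativity, a contradiction. Under either of the side hypotheses (geometric realizations on $\ccat$, or finite cohomological dimension of $\acat$), \cref{lemma:nu_hypercomplete_iff_object_is_local} then gives that $\nu(c)$ is hypercomplete for every $c \in \ccat$. On the other hand, any perfect sheaf with homotopy concentrated in a single degree is bounded, hence automatically hypercomplete.

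The conclusion then comes from the generation statement of \cref{corollary:perfect_prestable_freyd_envelope_generated_under_limits_by_representables_and_em_presheaves}: perfect presheaves are generated under finite limits by representables and by presheaves with homotopy in a single degree. Because sheafification $L\colon A_{\infty}^{\omega}(\ccat) \rightarrow \dcat^{\omega}(\ccat)$ is exact (\cref{proposition:properties_of_finite_derived_infty_cat_of_stable_cat}), the same statement descends to $\dcat^{\omega}(\ccat)$, with generators $\nu(c)$ and sheafifications of single-degree presheaves. Since the subcategory of hypercomplete sheaves is closed under limits inside all sheaves and since every generator has already been shown to be hypercomplete, we conclude that every perfect sheaf is hypercomplete.

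The only substantive input is the passage from $H$-locality of $c$ to hypercompleteness of $\nu(c)$, which is exactly where the side hypothesis on $\ccat$ or $\acat$ enters; without it, the warning in \cref{warning:hypercompletion_locality} shows the statement can fail. Given \cref{lemma:nu_hypercomplete_iff_object_is_local}, the rest is a formal assembly using closure of hypercomplete sheaves under finite limits and the explicit generating set for perfect sheaves.
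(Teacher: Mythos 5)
Your proof is correct and follows essentially the same route as the paper's: both directions hinge on \cref{lemma:nu_hypercomplete_iff_object_is_local} (together with the observation from \cref{warning:hypercompletion_locality} that a hypercomplete sheaf with vanishing homotopy is zero), plus a decomposition of an arbitrary perfect sheaf into representables and bounded pieces. The only cosmetic difference is that the paper assembles a general perfect sheaf from these generators via iterated extensions and invokes closure of hypercomplete sheaves under extensions, whereas you use the finite-limit generation statement of \cref{corollary:perfect_prestable_freyd_envelope_generated_under_limits_by_representables_and_em_presheaves} transported along the left exact sheafification and closure of hypercomplete sheaves under limits; both assemblies are valid.
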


\begin{proof}
As all of the objects of $\ccat$ are $H$-local if and only if $H$ is conservative, \cref{lemma:nu_hypercomplete_iff_object_is_local} implies that conservativity is equivalent to all of the representables $\nu(c)$ for $c \in \ccat$ being hypercomplete. 

As we have seen in the proof of \cref{proposition:bounded_almost_perfect_presheaves_have_sheafication}, any perfect sheaf can be obtained through iterated extensions from the representables $\nu(c)$ and bounded sheaves. Since bounded sheaves are hypercomplete, and the statement follows from the fact that hypercomplete sheaves are stable under extensions.
\end{proof}

\begin{remark}
\label{remark:embedding_of_derived_categories_when_a_of_finite_dimension}
It follows from \cref{proposition:adapted_homology_theory_conservative_iff_finite_sheaves_hypercomplete} that if $\acat$ is of finite cohomological dimension, then $H$ is conservative if and only if the natural functor $\dcat^{\omega}(\ccat) \rightarrow \widehat{\dcat}(\ccat)$ is a fully faithful embedding of prestable $\infty$-categories. 
\end{remark}

\begin{remark}[Bousfield localization]
\label{remark:bousfield_localization}
One can show that for many homology theories any object $c \in \ccat$ admits an $H$-isomorphism into an $H$-local object, necessarily essentially unique. This presents the subcategory $\ccat_{H}$ of $H$-local objects as a localization of $\ccat$ by the class of $H$-isomorphisms, known as the Bousfield localization \cite{bousfield1979localization}, \cite{higher_topos_theory}[5.5.4].

If that is the case, then $H$ canonically factors as a composition
\[
\ccat \rightarrow \ccat_{H} \rightarrow \acat
\]
and one can show that the latter arrow is an adapted homology theory if and only if $H$ was. Moreover, the latter arrow is conservative; in this way, the study of arbitrary adapted homology theories can be reduced to the case of conservative ones. 
\end{remark}

\subsection{Goerss-Hopkins theory}
\label{subsection:goerss_hopkins_theory}

In this section we show how the derived $\infty$-category $\dcat^{\omega}(\ccat)$ associated to an adapted homology theory is an appropriate context for Goerss-Hopkins theory. Since we already established the needed formal properties of the derived $\infty$-category, the derivation of the needed obstructions is completely classical; our account will closely follow the arguments of \cite{moduli_problems_for_structured_ring_spectra} and \cite{pstrkagowski2017moduli, abstract_gh_theory}.

\begin{definition} 
\label{def:general_potential_l_stage}
Let $\infty \geq n \geq 0$. An object $X$ of $\dcat^{\omega}(\ccat)$ is called a \emph{potential $n$-stage} if $X$ is $n$-truncated and the morphism
\[X[-1] \to \Omega X\]
adjoint to $\tau$ identifies the target with the $(n-1)$-truncation of the source.
\end{definition}

\begin{notation}
\label{notation:goerss_hopkins_tower}
We denote the full subcategory of $\dcat^{\omega}(\ccat)$ spanned by potential $n$-stages by $\mathcal{M}_{n}(\ccat)$. Observe that the Postnikov $(n-1)$-truncation in the derived $\infty$-category restricts to functors 
\[
(-)_{\leq n-1}\colon \mathcal{M}_{n}(\ccat) \rightarrow \mathcal{M}_{n-1}(\ccat).
\]
We call the resulting tower of $\infty$-categories of the form 
\[
\mathcal{M}_{\infty}(\ccat) \rightarrow \ldots \rightarrow \mathcal{M}_{2}(\ccat)  \rightarrow \mathcal{M}_{1}(\ccat) \rightarrow \mathcal{M}_{0}(\ccat)
\]
the \emph{Goerss-Hopkins tower} of $\ccat$ relative to the adapted homology theory $H$. 
\end{notation}

\begin{remark}
A tower with remarkably similar properties to the one of \cref{notation:goerss_hopkins_tower} was constructed in the work of Biedermann using Bousfield's resolution model structures \cite{Biedermann}. One would expect that this construction is related to ours, but we did not try to construct a direct comparison.
\end{remark}

The beginning and the end of the tower can be identified very explicitly, as the following two examples show. 

\begin{example}
\label{example:potential_o_stages_are_discrete_objects}
A potential $0$-stage is precisely a discrete object of $\dcat^{\omega}(\ccat)$, with no further conditions, and so can be identified with an object of $\acat$ by \cref{proposition:properties_of_finite_derived_infty_cat_of_stable_cat}. In other words, we have a canonical equivalence $\mathcal{M}_{0}(\ccat) \simeq \acat$.
\end{example}

\begin{example}
\label{example:potential_infty_stages_are_objects_of_c}
For any $c \in \ccat$, the object $\nu(c)$ is a potential $\infty$-stage. In fact, we claim that all potential $\infty$-stages are of this form; that is, $\nu$ induces a canonical equivalence $\ccat \simeq \mathcal{M}_{\infty}(\ccat)$. 

Suppose that $X$ is a potential $\infty$-stage, and observe that since the map $X[-1] \rightarrow \Omega X$ is assumed to be an equivalence, it induces isomorphisms $\pi_{k}X[-1] \simeq \pi_{k+1}X$ also on the unsheafified homotopy groups; that is, the homotopy groups in $A_{\infty}^{\omega}(\ccat)$. It follows from \cref{lemma:characterization_of_finite_presheaves_on_stable_inftycat} that all of these unsheafified homotopy groups are representable; in particular, there is an isomorphism $\pi_{0} X \simeq y(c)$ in $A(\ccat)$  for some $c \in \ccat$. 

A choice of such an isomorphism determines a homotopy class of maps $\nu(c) \rightarrow X$ which is by construction an isomorphism on all unsheafified homotopy groups. Thus, it is a levelwise an equivalence, and hence an isomorphism. 
\end{example}

\begin{remark}
The above two examples show that the beginning of the Goerss-Hopkins tower can be identified with $\ccat$, and so is of homotopy-theoretic nature, while the bottom is the abelian category $\acat$ and so is in a sense purely algebraic. The intermediate $\infty$-categories $\mathcal{M}_{n}(\ccat)$ interpolate between these two extremes in a very controlled manner. 
\end{remark}

The notion of a potential $n$-stage is more interesting in the intermediate case of $\infty > n > 0$. In this case, an $n$-truncated $X$ is a potential $n$-stage if $\tau$ induces isomorphisms 
\[
\pi_{i}X[-1] \rightarrow \pi_{i+1}X 
\]
on $\acat$-valued homotopy groups for $0 \leq i < n$. This condition can be equivalently characterized using homology adjunction
\[
H^{*} \dashv H_{*}\colon \dcat^{\omega}(\ccat) \leftrightarrows \dcat^{b}(\acat), 
\]
as we will now do. Recall that by \cref{theorem:finite_ctau_modules_same_as_derived_category} this adjunction uniquely factors through an equivalence
\[
\Mod_{C\tau}(\dcat^{\omega}(\ccat)) \simeq \dcat^{b}(\acat).
\]
In particular, for any perfect sheaf $X \in \dcat^{\omega}(\ccat))$ we have a canonical cofibre sequence
\[
\Sigma X[-1] \xrightarrow{\tau} X \rightarrow H_*H^* X.
\]
induced by multiplication by $\tau$.

\begin{lemma} \label{characterizing l-stages} Let $X \in \dcat^{\omega}(\ccat)$ and $\infty > n \geq 0$. Then, the following are equivalent: 
\begin{enumerate}
    \item $X$ is a potential $n$-stage or 
    \item $X$ is $n$-truncated and $\pi_i(H_*H^*X) = \pi_i (C \tau \otimes X)=0$ for all $1 \leq i \leq n+1$.
\end{enumerate}
\end{lemma}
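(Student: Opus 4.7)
The starting observation is that by \cref{theorem:finite_ctau_modules_same_as_derived_category}, the homology adjunction $H^* \dashv H_*$ realizes the free $C\tau$-module adjunction, so the unit $X \to H_*H^*X$ coincides with the canonical map $X \to C\tau \otimes X$ arising from the cofibre sequence
\[
\Sigma X[-1] \xrightarrow{\tau} X \to C\tau \otimes X.
\]
In particular $H_*H^*X \simeq C\tau \otimes X$ and the two vanishing conditions in $(2)$ literally coincide, so it suffices to analyze the single condition $\pi_i(C\tau \otimes X) = 0$ for $1 \leq i \leq n+1$.

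The next step is to rephrase being a potential $n$-stage purely in terms of $\tau$. By adjointness of $\Sigma$ and $\Omega$ in the prestable $\infty$-category $\dcat^{\omega}(\ccat)$, asking that the adjoint map $X[-1] \to \Omega X$ exhibit the target as the $(n-1)$-truncation of the source (noting that $\Omega X$ is automatically $(n-1)$-truncated when $X$ is $n$-truncated) is equivalent to demanding that $\pi_i(\tau)\colon \pi_i(\Sigma X[-1]) \to \pi_i(X)$ be an isomorphism of objects of $\acat$ for every $1 \leq i \leq n$.

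The equivalence (1)$\Leftrightarrow$(2) then follows by a short diagram chase in the long exact sequence
\[
\cdots \to \pi_{i+1}(C\tau \otimes X) \to \pi_i(\Sigma X[-1]) \xrightarrow{\pi_i(\tau)} \pi_i(X) \to \pi_i(C\tau \otimes X) \to \pi_{i-1}(\Sigma X[-1]) \to \cdots
\]
obtained by passing to the Spanier--Whitehead stabilization of $\dcat^{\omega}(\ccat)$ and applying the standard LES of its $t$-structure. For (2)$\Rightarrow$(1), vanishing of $\pi_i(C\tau \otimes X)$ for $1 \leq i \leq n+1$ sandwiches $\pi_i(\tau)$ between zeros and forces it to be an isomorphism in the required range. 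For (1)$\Rightarrow$(2), the exact sequence extracts a natural short exact sequence
\[
\mathrm{coker}(\pi_i(\tau)) \hookrightarrow \pi_i(C\tau \otimes X) \twoheadrightarrow \ker(\pi_{i-1}(\tau)),
\]
which gives $\pi_i(C\tau \otimes X) = 0$ in the range $2 \leq i \leq n$; the case $i = 1$ uses $\pi_0(\Sigma X[-1]) = 0$ (since $\Sigma$ raises connectivity in a prestable $\infty$-category), while the case $i = n+1$ uses that $\pi_{n+1}(X) = 0$ by $n$-truncation, combined with the assumed isomorphism $\pi_n(\tau)$, to again force both cokernel and kernel terms to vanish.

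The only subtlety worth flagging, rather than a genuine obstacle, is the distinction between the local grading $[1]$ on $\dcat^{\omega}(\ccat)$ (induced from $\Sigma\colon \ccat \to \ccat$) and the prestable suspension $\Sigma$ of $\dcat^{\omega}(\ccat)$; the map $\tau$ is precisely what measures their discrepancy, so the entire content of the lemma is packaged into the LES above together with the $n$-truncation assumption.
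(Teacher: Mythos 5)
Your proof is correct and follows essentially the same route as the paper: both reduce the definition of a potential $n$-stage to the statement that $\tau\colon \Sigma X[-1] \to X$ induces isomorphisms on $\pi_i$ for $1 \le i \le n$, and then translate this via the long exact sequence of the cofibre sequence $\Sigma X[-1] \to X \to H_*H^*X$ into the vanishing of $\pi_i(H_*H^*X)$ for $1 \le i \le n+1$. The paper's proof is just a terser version of your diagram chase.
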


\begin{proof}
By definition, an $n$-truncated object $X$ is a potential $n$-stage if and only if the map
\[\tau \colon \Sigma X[-1] \rightarrow X\]
induces an isomorphism on sheaf homotopy groups $\pi_i$ for all $1 \leq i \leq n$. Using the cofibre sequence
\[
\Sigma X[-1] \rightarrow X \rightarrow H_*H^*X,
\]
we see that the latter is the case if and only if $\pi_i (H_*H^*X)=0$ for all $1 \leq i \leq n+1$. 
\end{proof}
There is a different characterization of potential $n$-stages, more in line with \cite{abstract_gh_theory}, \cite{lurie_hopkins_brauer_group}, which is also sometimes useful. Observe that since any potential $n$-stage is $n$-truncated, it admits a canonical structure of a $C\tau^{n+1}$-module by \cref{proposition:forgetful_functor_from_modules_is_exact}. In fact, a prototypical example of a potential $n$-stage is given by $\nu(c) _{\leq n} \simeq C\tau^{n+1} \otimes \nu(c)$.

In \cref{construction:goerss_hopkins_tower_for_prestable_freyd_envelope}, we introduced relative monads $C\tau^{k} \otimes_{C\tau^{n+1}} -$ on $\Mod_{C\tau^{n+1}}(A_{\infty}(\ccat))$ coming from the adjunction between $C\tau^{n+1}$ and $C\tau^{k}$-modules, where $k \leq n+1$. Morally, one would expect that we should have 
\begin{equation}
\label{equation:moral_equation_using_relative_tensor_products}
C\tau \otimes_{C\tau^{n+1}} (C\tau^{n+1} \otimes \nu(c)) \simeq C\tau \otimes \nu(c),
\end{equation}
which is discrete, in fact the zero-truncation of $C\tau^{n+1} \otimes \nu(c)$.  Moreover, this property should characterize potential $n$-stages, see \cite{abstract_gh_theory}[4.5].

The issue with making this argument precise is that, as we observed in  \cref{warning:relative_monads_dont_preserve_perfection}, the monads $C\tau \otimes _{C\tau^{n+1}} - $ do not in general preserve modules whose underlying presheaf is perfect. This matter cannot be avoided by working in the almost perfect case, as we are interested in sheaves and we have only proven the existence of sheafification in the perfect context. These issues are somewhat technical, but the statement we are after is morally true, although some care must be taken in formalizing it.

Let $X \in \dcat^{\omega}(\ccat)$ be an $n$-truncated sheaf, so that in particular it admits a structure of a $C\tau^{n+1}$-module. Consider the map $\rho\colon X \rightarrow X_{\leq 0}$ into its $0$-truncation, and observe that the target admits a structure of a $C\tau$-module and that $\rho$ uniquely lifts to a map of $C\tau^{n+1}$-modules.

\begin{lemma}
\label{lemma:ctau_tensor_over_ctaun_is_discrete_fora_potential_n_stage}
Let $X$ be a $n$-truncated sheaf and $\rho\colon X \rightarrow X_{\leq 0}$ be the zero-truncation. Then, $X$ is a potential $n$-stage if and only if for any $Y \in \Mod_{C\tau}(\dcat^{\omega}(\ccat))$, $\rho$ induces a homotopy equivalence
\begin{equation}
\label{equation:zero_truncation_of_potential_n_stage_a_relative_tensor_product}
\Map_{C\tau}(X_{\leq 0}, Y) \simeq \Map_{C\tau^{n+1}}(X, Y).
\end{equation}
\end{lemma}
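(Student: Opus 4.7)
My plan begins with an adjunction-based reformulation. Since the inclusion $\iota\colon \Mod_{C\tau}(\dcat^{\omega}(\ccat)) \hookrightarrow \Mod_{C\tau^{n+1}}(\dcat^{\omega}(\ccat))$ is fully faithful with a left adjoint $L$, and $X_{\leq 0}$ is already a $C\tau$-module (being discrete), the map $\rho\colon X \to X_{\leq 0}$ factors uniquely as $X \to \iota LX \xrightarrow{\iota \phi} X_{\leq 0}$, producing a canonical morphism $\phi\colon LX \to X_{\leq 0}$ in $\Mod_{C\tau}$. Under the equivalence $\Mod_{C\tau}(\dcat^{\omega}(\ccat)) \simeq \dcat^b(\acat)$ of \cref{theorem:finite_ctau_modules_same_as_derived_category}, the adjunction identifies $\rho^{*}$ with precomposition by $\phi$, so the lemma's mapping space equivalence for all $Y \in \Mod_{C\tau}$ is, by Yoneda, equivalent to $\phi$ being an equivalence.

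Next, I would observe that $\phi$ is always a $\pi_0$-isomorphism: since $\iota$ preserves the heart by \cref{remark:relative_monad_preserves_pi_0}, so does its left adjoint $L$, giving $\pi_0 LX \simeq \pi_0 X$; combined with $\pi_0 X_{\leq 0} = \pi_0 X$ tautologically, $\pi_0 \phi$ is the identity. Hence $\phi$ is an equivalence precisely when $LX$ is discrete in $\dcat^b(\acat)$, and the task becomes to show that this discreteness is equivalent to the characterization of potential $n$-stages furnished by \cref{characterizing l-stages}, namely $\pi_i(C\tau \otimes X) = 0$ for $1 \leq i \leq n+1$.

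To establish this equivalence I would apply $L$ to the Postnikov tower $X_{\leq 0} \leftarrow X_{\leq 1} \leftarrow \cdots \leftarrow X_{\leq n} = X$ in $\Mod_{C\tau^{n+1}}$, whose successive fibres $\Sigma^{k} \pi_{k} X$ already lie in the essential image of $\iota$ and are therefore fixed by $L$. Right exactness of $L$ yields a tower in $\dcat^b(\acat)$ with the same associated graded pieces; the question reduces to whether its $k$-invariants all vanish. The precise form of these $k$-invariants is dictated by the spiral cofibre sequence $\Sigma X[-1] \xrightarrow{\tau} X \to C\tau \otimes X$ of \cref{proposition:spiral_cofibre_sequence}: the $k$-invariant of the Postnikov filtration of $X$ is identified, under the equivalence of \cref{theorem:finite_ctau_modules_same_as_derived_category}, with a class whose source and target are successive homotopy groups of $C\tau \otimes X$ in the range that \cref{characterizing l-stages} requires to vanish. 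Thus vanishing of $\pi_i(C\tau \otimes X)$ for $1 \le i \le n+1$ forces the tower $L(X_{\leq \bullet})$ to collapse onto $\pi_0 X$, while any non-vanishing $\pi_i$ produces a non-trivial $k$-invariant obstructing the collapse.

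The main obstacle will be making the identification of the Postnikov $k$-invariants of $L(X_{\leq \bullet})$ with the spiral-sequence classes fully precise; this rests on the $C\tau$-linearity of $\tau$ discussed after \cref{proposition:adjunction_induced_by_projection_onto_homotopy_category_is_monadic} and the naturality of the whole setup. I would first verify the identification in the motivating case $X = \nu(c)_{\leq n}$, where all the objects involved can be written down explicitly, and then deduce the general statement by cell-induction along the Postnikov filtration of $X$.
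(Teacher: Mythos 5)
Your opening reformulation runs into the two obstacles that this lemma is specifically engineered to avoid. First, the left adjoint $L = C\tau \otimes_{C\tau^{n+1}} -$ is not available as a functor landing in $\Mod_{C\tau}(\dcat^{\omega}(\ccat))$: by \cref{warning:relative_monads_dont_preserve_perfection} the relative monads do not preserve perfectness (already $C\tau \otimes_{C\tau^{2}} y(c)$ has infinitely many nonzero homotopy groups), and sheafification has only been constructed for perfect presheaves, so the object $LX$ on which your Yoneda argument rests need not exist in the category where $Y$ ranges. This is precisely the issue flagged in the paragraph preceding the lemma. Second, and more damagingly, the forgetful functor $\iota\colon \Mod_{C\tau} \to \Mod_{C\tau^{n+1}}$ is \emph{not} fully faithful, so objects in its image are not fixed by $L$: for a nonzero discrete $a$ one has $L\iota(a) \simeq C\tau \otimes_{C\tau^{n+1}} a$, which again has infinitely many nonzero homotopy groups (the prestable analogue of $\mathrm{Tor}_{*}^{k[x]/x^{n+1}}(k,k)$ being nonzero in every degree). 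Consequently your claim that the Postnikov layers $\Sigma^{k+1}\pi_{k}X$ ``already lie in the essential image of $\iota$ and are therefore fixed by $L$'' is false, the image of the Postnikov tower under $L$ does not have the same associated graded pieces, and the proposed $k$-invariant analysis computes the wrong object.

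The paper's proof sidesteps both problems by resolving $X$ in the opposite direction: it builds a hypercover of $X$ by objects of the form $C\tau^{n+1}\otimes\nu(c)$ (possible exactly when $X$ is a potential $n$-stage, since fibres of $\pi_{0}$-surjections between potential $n$-stages are again potential $n$-stages), uses that on these ``free'' objects the relative tensor product is computed by the moral equation $C\tau\otimes_{C\tau^{n+1}}(C\tau^{n+1}\otimes\nu(c)) \simeq C\tau\otimes\nu(c)$ holding in almost perfect presheaves, and then argues entirely at the level of mapping spaces, converting the hypercover into a limit on both sides of the asserted equivalence so that $LX$ itself is never formed; the converse direction runs the same resolution backwards through the geometric realization spectral sequence. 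If you want to salvage your strategy, you would have to replace the Postnikov filtration by such a bar-type resolution: the filtration by truncations is exactly the one along which $L$ behaves badly, while the resolution by representable-type objects is the one it respects.
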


\begin{proof}
First assume that $X$ is a potential $n$-stage. Choose an epimorphism $y(c) \rightarrow \pi_{0}X$ for some $c \in \ccat$, this determines a homotopy class of maps $\nu(c) \rightarrow X$. Using the canonical $C\tau^{n+1}$-module structure on the latter, this determines a homotopy class of maps $C\tau^{n+1} \otimes \nu(c) \rightarrow X$ of modules. Note that since this map is surjective on $\pi_{0}$ by construction, it is surjective on all $\acat$-valued homotopy groups as it is natural with respect to $\tau$. Thus, the long exact sequence of homotopy tells us that the fibre $F$ is again a potential $n$-stage with $\pi_{0} F \simeq \mathrm{ker}(H(c) \rightarrow \pi_{0}X)$. 

Proceeding inductively, we construct a hypercover of $X$ of the form 
\[
\ldots \triplerightarrow C\tau^{n+1} \otimes \nu(c_{1}) \rightrightarrows C\tau^{n+1} \otimes \nu(c_{0}) \rightarrow X 
\]
Since hypercovers are colimit diagrams in hypercomplete sheaves, and since every perfect $C\tau^{n+1}$-module is bounded and hence hypercomplete, we deduce that this is a colimit diagram in $\Mod_{C\tau^{n+1}}(\dcat^{\omega}(\ccat))$. 

Since taking fibres of a $\pi_{0}$-surjective map of potential $n$-stages commutes with taking homotopy groups, in particular we deduce that $\pi_{0}(C\tau^{n+1} \otimes \nu(c_{\bullet}))$ is a resolution of $\pi_{0}X$ in $\acat$. Thus, we deduce that 
\[
\ldots \triplerightarrow (C\tau^{n+1} \otimes \nu(c_{1}))_{\leq 0} \rightrightarrows (C\tau^{n+1} \otimes \nu(c_{0}))_{\leq 0} \rightarrow X_{\leq 0}
\]
is also a colimit diagram. 

Since both sides of (\ref{equation:zero_truncation_of_potential_n_stage_a_relative_tensor_product}) take colimits in the source to limits in the target, the above shows us that it is enough to verify the statement for potential $n$-stages of the form $C\tau^{n+1} \otimes \nu(c)$. In this case, since $Y$ is a sheaf, we can replace both $C\tau^{n+1} \otimes \nu(c)$ and its zero-truncation by their unsheafified versions computed in $A_{\infty}(\ccat)$, in which case this is a consequence of the equation (\ref{equation:moral_equation_using_relative_tensor_products}) which holds in almost perfect presheaves. 

Conversely, assume that $X$ is any $n$-truncated sheaf, proceeding as above we can again build a hypercover using objects of the form $C\tau^{n+1} \otimes \nu(c)$, which will be a colimit diagram of modules. If the map (\ref{equation:zero_truncation_of_potential_n_stage_a_relative_tensor_product}) is a homotopy equivalence, then we deduce that 
\[
\Map_{C\tau}(X_{\leq 0}, Y) \simeq \Map_{C\tau^{n+1}}(X, Y) \simeq \varprojlim \Map_{C\tau^{n+1}}(C\tau^{n+1} \otimes \nu(c_{\bullet}), Y) 
\]
and further 
\[
\varprojlim \Map_{C\tau^{n+1}}(C\tau^{n+1} \otimes \nu(c_{\bullet}), Y) \simeq \varprojlim \Map_{C\tau}(C\tau \otimes \nu(c_{\bullet}), Y) \simeq \Map_{C\tau}(\varinjlim C\tau \otimes \nu(c_{\bullet}), Y).
\]
Thus, we have $\varinjlim C\tau \otimes \nu(c_{\bullet}) \simeq X_{\leq 0}$ in $C\tau$-modules, so that
\[
\pi_{0} (C\tau \otimes \nu(c_{\bullet})) \simeq \pi_{0}(C\tau^{n+1} \otimes \nu(c_{\bullet}))
\]
is a resolution of $\pi_{0}X$. Since 
\[
\pi_{k} (C\tau^{n+1} \otimes \nu(c_{\bullet})) \simeq (\pi_{0}(C\tau^{n+1} \otimes \nu(c_{\bullet}))[-k]
\]
for $0 \leq k \leq n$ and vanish otherwise, we deduce that the geometric realization spectral sequence of $\varinjlim C\tau^{n+1} \otimes \nu(c_{\bullet}) \simeq X$ collapses on the second page and that $X$ is a potential $n$-stage. 
\end{proof}

Having characterized potential $n$-stages, let us prove the main property of the Goerss-Hopkins tower, namely that it admits obstructions to lifting objects and morphisms. We begin with the former. 

\begin{construction}[Obstructions to lifting objects] 
\label{construction:obstruction_to_lifting_objects}
Suppose that $X$ is a potential $n$-stage and consider the cofibre sequence
\[
\Sigma X[-1] \xrightarrow{\tau} X \rightarrow H_*H^* X.
\]
Using the long exact sequence of homotopy groups, we see that we have canonical isomorphisms $\pi_{0} H_{*} H^{*} X \simeq \pi_{0}X$, $\pi_{n+2} H_{*} H^{*}X \simeq \pi_{n}X[-1]$ and that the other homotopy groups vanish. 

As the right adjoint $H_{*}$ is exact and induces an equivalence on the hearts, it commutes with homotopy groups and we deduce that the same is true for $H^{*}X \in \dcat^{b}(\acat)$. Thus, Postnikov truncation provides a canonical fibre sequence 
$$\xymatrix{ H^* X \ar[r] & \pi_0 X \ar[r]^-{u_X} & \Sigma^{n+3} \pi_{0}X[-n-1]}. $$
where we have used that $\pi_{n} (X[-1]) \simeq \pi_{0} X[-n-1]$ since $X$ is a potential $n$-stage. 

It follows that $H^{*}X$ is classified up to equivalence as an object of the derived $\infty$-category of $\acat$ by a class 
\[
u_X \in \Ext^{n+3, n+1}_{\acat}(\pi_0X, \pi_0X),
\]
well-defined up to the automorphisms of $\pi_{0}X$.
\end{construction}

\begin{proposition}
\label{prop: general obstruction for objects}
Let $X \in \mathcal{M}_{n}(\ccat)$ be a potential $n$-stage. Then, $X$ can be lifted to a potential $(n+1)$-stage $Y \in \mathcal{M}_{n+1}(\ccat)$ if and only if $H^{*}X$ splits; that is, if and only if the obstruction $u_X\in \Ext^{n+3, n+1}_{\acat}(\pi_0X, \pi_0X)$ vanishes.  
\end{proposition}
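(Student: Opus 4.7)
The plan is to translate the lifting problem into $\dcat^b(\acat)$ via the homology adjunction $H^* \dashv H_*$ and the equivalence $\Mod_{C\tau}(\dcat^{\omega}(\ccat)) \simeq \dcat^b(\acat)$ of \cref{theorem:finite_ctau_modules_same_as_derived_category}. Since $H^*X$ is, by \cref{construction:obstruction_to_lifting_objects}, a two-stage Postnikov object with $\pi_0 \simeq \pi_0 X$ and $\pi_{n+2} \simeq \pi_0 X[-n-1]$, it splits in $\dcat^b(\acat)$ precisely when $u_X$ vanishes; the second equivalence in the statement is automatic, and the content lies in the lifting criterion.

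For the necessity direction, assume $Y \in \mathcal{M}_{n+1}(\ccat)$ with $Y_{\leq n} \simeq X$. The Postnikov cofibre sequence $\Sigma^{n+1}\pi_0 X[-n-1] \to Y \to X$ (where the fibre is identified using the potential $(n+1)$-stage condition $\pi_{n+1}Y \simeq \pi_0 Y[-n-1] = \pi_0 X[-n-1]$) becomes, after applying the right-exact $H^*$ and rotating, a fibre sequence $H^*Y \to H^*X \xrightarrow{\varphi} \Sigma^{n+2} H^*(\pi_0 X[-n-1])$ in $\dcat^b(\acat)$. Since $H^*Y$ has vanishing $\pi_{n+1}$ and $\pi_{n+2}$, the long exact sequence forces $\varphi$ to be an isomorphism on $\pi_{n+2}$. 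Composing $\varphi$ with the Postnikov projection $\Sigma^{n+2}H^*(\pi_0 X[-n-1]) \to \Sigma^{n+2}\pi_0 X[-n-1]$ and precomposing with the top Postnikov inclusion $\Sigma^{n+2}\pi_0 X[-n-1] \hookrightarrow H^*X$ yields a self-map of $\Sigma^{n+2}\pi_0 X[-n-1]$ which is an isomorphism on $\pi_{n+2}$, hence an equivalence; inverting it exhibits a retraction of the top Postnikov layer, splits $H^*X$ and shows $u_X = 0$.

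For the sufficiency direction, suppose $H^*X \simeq \pi_0 X \oplus \Sigma^{n+2}\pi_0 X[-n-1]$. Applying the exact functor $H_*$ yields an analogous splitting of $C\tau \otimes X$ in $\dcat^{\omega}(\ccat)$, and composing the unit $X \to C\tau \otimes X$ with the projection onto the second summand provides a candidate k-invariant $\tilde\kappa \colon X \to \Sigma^{n+2}\pi_0 X[-n-1]$. Define $Y$ as the fibre of $\tilde\kappa$. The long exact sequence for $Y \to X \to \Sigma^{n+2}\pi_0 X[-n-1]$ computes $\pi_i Y \simeq \pi_i X$ for $0 \leq i \leq n$, $\pi_{n+1} Y \simeq \pi_0 X[-n-1]$, and $\pi_i Y = 0$ for $i \geq n+2$, so that $Y$ is $(n+1)$-truncated with the correct underlying homotopy and $Y_{\leq n} \simeq X$.

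The main obstacle will be verifying that $Y$ is genuinely a potential $(n+1)$-stage, that is, that $\tau \colon Y[-1] \to \Omega Y$ induces an isomorphism $\pi_n X[-1] \xrightarrow{\sim} \pi_{n+1} Y$ rather than merely an abstract coincidence of objects. This hinges on the fact that the splitting of $C\tau \otimes X$ arises from a splitting in $\Mod_{C\tau}$, so that the projection onto the $\Sigma^{n+2}$-summand is $\tau$-linear in the sense encoded by the connecting map of $\Sigma X[-1] \xrightarrow{\tau} X \to C\tau \otimes X$; this $\tau$-linearity of $\tilde\kappa$ is precisely what forces $\tau$ to act correctly on the Postnikov layers of $Y$ and completes the construction.
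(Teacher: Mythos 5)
Your construction of $Y$ and your argument for the necessity direction coincide with the paper's: the candidate $(n+1)$-stage is the fibre of the adjoint $X \to \Sigma^{n+2}H_*\pi_0X[-n-1]$ of the splitting map, and conversely the Postnikov cofibre sequence of a lift $Y$, pushed through $H^*$, produces the splitting because $\pi_{n+1}H^*Y = \pi_{n+2}H^*Y = 0$. Note that this last vanishing is exactly \cref{characterizing l-stages} applied to the potential $(n+1)$-stage $Y$; you use it silently, but it is not automatic and should be cited.

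The gap is in the sufficiency direction, and it is the step you yourself flag as ``the main obstacle.'' Computing the homotopy groups of $Y$ from the long exact sequence only shows that $\pi_{n+1}Y$ is \emph{abstractly} isomorphic to $\pi_0X[-n-1]$; the content of being a potential $(n+1)$-stage is that the specific map $\tau\colon \pi_nY[-1]\to\pi_{n+1}Y$ is an isomorphism, and the appeal to ``$\tau$-linearity of $\tilde\kappa$'' is an assertion, not an argument --- you never produce the identification of the connecting map of your fibre sequence with the action of $\tau$ on homotopy. The paper avoids having to make this identification by invoking \cref{characterizing l-stages} in the other direction as well: an $(n+1)$-truncated $Y$ is a potential $(n+1)$-stage if and only if $\pi_i(H_*H^*Y)=0$ for $1\le i\le n+2$. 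Since the map $X \to \Sigma^{n+2}H_*\pi_0X[-n-1]$ is $\pi_0$-surjective, the fibre sequence defining $Y$ is also a cofibre sequence, hence $H^*Y \to H^*X \to \Sigma^{n+2}H^*H_*\pi_0X[-n-1]$ is both a cofibre and a fibre sequence in $\dcat^{b}(\acat)$; the second map is an isomorphism on $\pi_{n+2}$ because its composite with the counit projection is the chosen splitting map, and the long exact sequence then gives $\pi_iH^*Y=0$ for $1\le i\le n+2$. Rerouting your final paragraph through this criterion closes the gap.
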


\begin{proof}
First assume that $u_X=0$, so that $H^* X$ splits 
\[
H^*X \simeq \pi_0X \oplus \Sigma^{n+2}\pi_{0}X[-n-1]
\]
and hence there exists a morphism $H^*X \to \Sigma^{n+2}\pi_0X[-n-1]$ in $\dcat^{b}(\acat)$ inducing an isomorphism on $\pi_{n+2}$. By adjunction this gives a morphism
\[X \to \Sigma^{n+2} H_{*} \pi_0X[-n-1]\]
in the derived $\infty$-category of $\ccat$. By taking the fibre of the latter, we get a fiber sequence
\[
Y \to X \to \Sigma^{n+2} H_* \pi_0X[-n-1]
\]
Since X is $n$-truncated, it follows that $Y$ is $(n+1)$-truncated. As the latter map is $\pi_{0}$-surjective, this is also a cofibre sequence and hence
\[
H^*Y \to H^*X \to \Sigma^{n+2}H^* H_* \pi_0X[-n-1]
\]
is a cofibre sequence. Using prestability we see that this is also a fibre sequence. 

The composite 
\[
H^*X \to \Sigma^{n+2}H^* H_{*} \pi_0X[-n-1] \rightarrow \Sigma^{n+2} \pi_{0}X[-n-1]
\]
is the splitting map we chose, and hence the first map is also an isomorphism on $\pi_{n+2}$, as the only other non-trivial homotopy of $\Sigma^{n+2}H^* H_{*} \pi_0X[-n-1]$ lives in degree $n+4$. By the long exact sequence, we deduce that $\pi_{n+2} H^*Y=0$ and 
$\pi_{i} H^*Y \simeq \pi_{i} H^*X$, for $0 \leq i \leq n+1$. By \cref{characterizing l-stages}, we conclude that $Y$ is a potential $(n+1)$-stage, as needed.  

Conversely, suppose $X$ extends to a potential $(n+1)$-stage $Y$, so that we have a cofibre sequence
\[Y \to X \to \Sigma^{n+2}\pi_0X[-n-1].\]
As the latter object is concentrated in a single degree, it is canonically in the image of $H_{*}$ and hence by adjunction we get a morphism 
\[H^*X \to \Sigma^{n+2}\pi_0X[-n-1] \]
in the derived $\infty$-category of $\acat$. We claim that this induces an isomorphism on $\pi_{n+2}$, so that $H^*X$ splits. Using the long exact sequence of the cofibre sequence
\[H^*Y \to H^* X \to \Sigma^{n+2} H^{*} \pi_0X[-n-1]\]
as above it is enough to see that $\pi_{n+2}H^*Y=0$ and $\pi_{n+1}H^*Y=0$. Both hold by \cref{characterizing l-stages}, since $Y$ is assumed to be a potential $(n+1)$-stage. 
\end{proof}

\begin{proposition} \label{proposition:gh_fibre_sequence_for_mapping_spaces}
Suppose that $X$ and $Y$ are potential $n$-stages where $n \geq 1$. Then, we have a canonical fibre sequence of mapping spaces of the form
\[
\xymatrix{ \Map_{\mathcal{M}_n(\ccat)}(X, Y) \ar[r]^-{(-)_{\leq n-1}} & \Map_{\mathcal{M}_{n-1}(\ccat)}(X_{\leq n-1}, Y_{\leq n-1}) \ar[r] & \Map_{\dcat^{b}(\acat)}(\pi_0X, \Sigma^{n+1} \pi_0 Y[-n]). } 
\]
\end{proposition}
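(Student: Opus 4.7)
The plan is to derive the claimed fibre sequence by applying $\Map_{\dcat^{\omega}(\ccat)}(X, -)$ to a rotated Postnikov fibre sequence for $Y$, and then to identify each of the resulting terms. Since $Y$ is a potential $n$-stage, iterating the defining isomorphism $\pi_k Y \simeq \pi_{k-1} Y[-1]$ (valid for $1 \leq k \leq n$) gives $\pi_n Y \simeq \pi_0 Y[-n]$. The Postnikov fibre sequence $\Sigma^n \pi_n Y \to Y \to Y_{\leq n-1}$ in $\dcat^{\omega}(\ccat)$ rotates in the stable hull $SW(\dcat^{\omega}(\ccat))$ to give a fibre sequence $Y \to Y_{\leq n-1} \to \Sigma^{n+1} \pi_0 Y[-n]$, whose three objects all lie in the prestable part $\dcat^{\omega}(\ccat)$. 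One also checks using the compatibility of $\Omega$ with Postnikov truncation and the local grading that $X_{\leq n-1}$ is itself a potential $(n-1)$-stage, so the truncation does indeed define a functor $\mathcal{M}_n(\ccat) \to \mathcal{M}_{n-1}(\ccat)$.

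Applying $\Map_{\dcat^{\omega}(\ccat)}(X, -)$ to this rotated fibre sequence yields a fibre sequence of spaces
\[
\Map(X, Y) \to \Map(X, Y_{\leq n-1}) \to \Map(X, \Sigma^{n+1} \pi_0 Y[-n]).
\]
The middle term is canonically identified with $\Map_{\mathcal{M}_{n-1}(\ccat)}(X_{\leq n-1}, Y_{\leq n-1})$ via the universal property of truncation, since $Y_{\leq n-1}$ is $(n-1)$-truncated and the moduli $\infty$-categories are full subcategories of the perfect derived $\infty$-category. To identify the third term with $\Map_{\dcat^{b}(\acat)}(\pi_0 X, \Sigma^{n+1} \pi_0 Y[-n])$, observe that since $\Sigma^{n+1} \pi_0 Y[-n]$ is concentrated in a single degree it lies in the image of $H_{*}\colon \dcat^{b}(\acat) \to \dcat^{\omega}(\ccat)$, so the homology adjunction gives
\[
\Map_{\dcat^{\omega}(\ccat)}(X, \Sigma^{n+1} \pi_0 Y[-n]) \simeq \Map_{\dcat^{b}(\acat)}(H^{*} X, \Sigma^{n+1} \pi_0 Y[-n]).
\]

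Now apply $\Map_{\dcat^{b}(\acat)}(-, \Sigma^{n+1} \pi_0 Y[-n])$ to the Postnikov fibre sequence $\Sigma^{n+2} \pi_0 X[-n-1] \to H^{*}X \to \pi_0 X$ of \cref{construction:obstruction_to_lifting_objects}. This yields a fibre sequence whose ``error'' term is $\Map_{\dcat^{b}(\acat)}(\Sigma^{n+2} \pi_0 X[-n-1], \Sigma^{n+1} \pi_0 Y[-n])$, and the key computation is that it is contractible. More generally, for any $A, B \in \acat$ and $s > t \geq 0$, one has $\pi_k \Map_{\dcat^{b}(\acat)}(\Sigma^s A, \Sigma^t B) = \Ext^{\,t-s-k}_{\acat}(A, B)$ for $k \geq 0$, which vanishes because $t-s-k < 0$. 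This is precisely the place where prestability, as opposed to stability, is used: the mapping spectrum in the stable hull has nonzero homotopy $\Ext^{i}$ in sufficiently negative degrees, but these are invisible after applying $\Omega^{\infty}$. With the error term contractible, the fibre sequence collapses to give $\Map_{\dcat^{b}(\acat)}(H^{*}X, \Sigma^{n+1} \pi_0 Y[-n]) \simeq \Map_{\dcat^{b}(\acat)}(\pi_0 X, \Sigma^{n+1} \pi_0 Y[-n])$, and combining the three identifications produces the desired fibre sequence.

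The main obstacle is the careful bookkeeping of truncations, adjunctions, and loops in the prestable setting. In particular, the rotation of the Postnikov fibre sequence must be performed in the stable hull $SW(\dcat^{\omega}(\ccat))$, with a verification that all three terms remain in $\dcat^{\omega}(\ccat)$; and the contractibility computation crucially exploits the fact that loops of a discrete mapping space in the prestable setting are trivial — a phenomenon which breaks in the stable derived category. With these subtleties handled, the argument is a direct combination of the Postnikov tower, the homology adjunction $H^{*} \dashv H_{*}$, and the structure theorem \cref{construction:obstruction_to_lifting_objects} for $H^{*}X$ of a potential $n$-stage.
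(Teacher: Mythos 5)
Your proof is correct, and while it follows the same skeleton as the paper's — rotate the Postnikov sequence of $Y$ to the cofibre-and-fibre sequence $Y \to Y_{\leq n-1} \to \Sigma^{n+1}\pi_n Y$, apply $\Map(X,-)$, and identify the three terms — it handles the crucial third term by a genuinely different mechanism. The paper lifts the whole sequence to $C\tau^{n+1}$-modules and invokes \cref{lemma:ctau_tensor_over_ctaun_is_discrete_fora_potential_n_stage} (the base-change identity $\Map_{C\tau^{n+1}}(X,Z) \simeq \Map_{C\tau}(X_{\leq 0},Z)$ for $Z$ a $C\tau$-module, proved by a hypercover resolution of $X$ by objects $C\tau^{n+1}\otimes\nu(c)$) together with the equivalence $\Mod_{C\tau}(\dcat^{\omega}(\ccat))\simeq \dcat^{b}(\acat)$. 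You instead stay in $\dcat^{\omega}(\ccat)$, pass across the homology adjunction $H^{*}\dashv H_{*}$, and then use the two-cell Postnikov structure of $H^{*}X$ from \cref{construction:obstruction_to_lifting_objects} to trade $\Map_{\dcat^{b}(\acat)}(H^{*}X,-)$ for $\Map_{\dcat^{b}(\acat)}(\pi_{0}X,-)$, killing the error term by the vanishing of $\Ext$ in negative degrees in the prestable derived $\infty$-category. Your route is arguably lighter — it avoids the module-categorical formalism and the hypercover argument entirely — at the cost of leaning on the obstruction-theoretic description of $H^{*}X$; the paper's route keeps the argument inside the $C\tau$-formalism, whose key lemma it reuses elsewhere (notably in the Bousfield adjunction of \S\ref{subsection:bousfield_adjunction}). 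Two small remarks: the rotation does not require passing to the Spanier--Whitehead category, since in any prestable $\infty$-category a cofibre sequence rotates directly to a fibre sequence (and $\Sigma^{n}\pi_{n}Y \to Y \to Y_{\leq n-1}$ is a cofibre sequence because the second map is $\pi_{0}$-surjective); and note that your construction and the paper's a priori exhibit the common fibre $\Map(X,Y)$ over two different-looking bases, which is harmless since the statement only asserts the existence of a canonical fibre sequence with the displayed terms.
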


\begin{proof}
We have a cofibre sequence in $\dcat^{\omega}(\ccat)$ of the form
\[
\xymatrix{Y \ar[r] & Y_{\leq n-1} \ar[r] & \Sigma^{n+1} \pi_n Y.}
\]
Since the first map has an $n$-truncated source and target, it can be lifted to a map of $C\tau^{n+1}$-modules, and hence the same is true for the whole cofibre sequence. 

Applying the functor $\Map_{C\tau^{n+1}}(X,-)$ we get a fibre sequence
\begin{equation}
\label{equation:fibre_sequence_in_obstructions_to_lifting_morphisms}
\xymatrix{\Map_{C\tau^{n+1}}(X,Y) \ar[r] & \Map_{C\tau^{n+1}}(X, Y_{\leq n-1}) \ar[r] & \Map_{C\tau^{n+1}}(X,\Sigma^{n+1} \pi_n Y).}
\end{equation}
We claim this is of the needed form. 

To begin with, since truncations are a left adjoint, and all object in question are $n$-truncated, we have 
\[
\Map_{C\tau^{n+1}}(X, Y_{\leq n-1}) \simeq \Map_{\dcat^{\omega}(\ccat)}(X, Y_{\leq n-1}) \simeq \Map_{\dcat^{\omega}(\ccat)}(X_{\leq n-1}, Y_{\leq n-1}).
\]
The left-most mapping space in (\ref{equation:fibre_sequence_in_obstructions_to_lifting_morphisms}) is immediately identified using \cref{proposition:forgetful_functor_from_modules_is_exact}, so we only have the right-most one left. However, by \cref{lemma:ctau_tensor_over_ctaun_is_discrete_fora_potential_n_stage} we have 
\[
\Map_{C\tau^{n+1}}(X,\Sigma^{n+1} \pi_n Y) \simeq \Map_{C\tau}(X_{\leq 0}, \Sigma^{n+1} \pi_{n}Y),
\]
and the statement follows from the equivalence $\Mod_{C\tau}(\dcat^{\omega}(\ccat)) \simeq \dcat^{b}(\acat)$. 
\end{proof}

\begin{remark}
\label{remark:obstructions_to_lifting_morphisms}
Since we have 
\[
\pi_{k} \Map_{\dcat^{b}(\acat)}(\pi_0X, \Sigma^{n+1} \pi_0 Y[-n]) \simeq \Ext^{n+1-k, n}_{\acat}(\pi_{0} X, \pi_{0} Y), 
\]
it follows from \cref{proposition:gh_fibre_sequence_for_mapping_spaces} that if $X, Y$ are potential $n$-stages, then there are obstruction to lifting homotopy classes of morphisms $Y_{\leq n-1} \rightarrow X_{\leq n-1}$ along the functor $\mathcal{M}_{n}(\ccat) \rightarrow \mathcal{M}_{n-1}(\ccat)$ living in $\Ext^{n+1, n}$. In this way, this statement is very reminiscent of \cref{prop: general obstruction for objects}.

Similarly, lower $\Ext$-groups provide obstructions to lifting homotopies between morphisms and so on. 
\end{remark}
The last ingredient needed in applying Goerss-Hopkins theory is the question of convergence; that is, whether we have
\[
\ccat \simeq \mathcal{M}_{\infty}(\ccat) \simeq \varprojlim \mathcal{M}_{n}(\ccat).
\]
This is related to the convergence of the $H$-based Adams spectral sequence, and hence quite subtle, see \cite{abstract_gh_theory} for more discussion.

In our application to Franke's conjecture, we will only need the simple case when $\acat$ is of finite cohomological dimension. 

\begin{proposition} 
\label{proposition:convergence_of_gh_tower}
Let $H\colon \ccat \rightarrow \acat$ be a conservative, adapted homology theory such that $\acat$ is of finite cohomological dimension. Then, the Goerss-Hopkins tower 
\[
\mathcal{M}_{\infty}(\ccat) \rightarrow \ldots \rightarrow \mathcal{M}_{2}(\ccat)  \rightarrow \mathcal{M}_{1}(\ccat) \rightarrow \mathcal{M}_{0}(\ccat)
\]
is a limit diagram of $\infty$-categories. 
\end{proposition}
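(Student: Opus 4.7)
The plan is to exhibit both $\mathcal{M}_{\infty}(\ccat) \simeq \ccat$ and $\varprojlim_n \mathcal{M}_n(\ccat)$ as full subcategories of the complete derived $\infty$-category $\widehat{\dcat}(\ccat)$ from \S\ref{subsection:completing_derived_infty_cats}, and to argue they coincide. Under our hypotheses --- $H$ conservative and $\acat$ of finite cohomological dimension $d$ --- \cref{proposition:adapted_homology_theory_conservative_iff_finite_sheaves_hypercomplete} and \cref{remark:embedding_of_derived_categories_when_a_of_finite_dimension} provide a fully faithful embedding $\dcat^{\omega}(\ccat) \hookrightarrow \widehat{\dcat}(\ccat)$, while \cref{proposition:complete_derived_cat_of_stable_cat_a_completion_of_finite_one} identifies $\widehat{\dcat}(\ccat) \simeq \varprojlim_n \tau_{\leq n}\dcat^{\omega}(\ccat)$. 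In particular $\ccat \simeq \mathcal{M}_\infty(\ccat) \xhookrightarrow{\nu} \widehat{\dcat}(\ccat)$ and $\varprojlim_n \mathcal{M}_n(\ccat) \hookrightarrow \widehat{\dcat}(\ccat)$ are both full subcategories, and our task is to show they agree.

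The central input is the stabilization of the Goerss-Hopkins tower above cohomological dimension. The fibre sequence of \cref{proposition:gh_fibre_sequence_for_mapping_spaces} together with \cref{remark:obstructions_to_lifting_morphisms} show that the passage $\mathcal{M}_{n+1}(\ccat) \to \mathcal{M}_n(\ccat)$ induces an equivalence on mapping spaces once $n \geq d+1$, since the relevant obstruction $\Ext^{s,t}_\acat$ appearing in the long exact sequence all vanish in this range. Combining this with \cref{prop: general obstruction for objects}, whose $\Ext^{n+3}$-obstructions to essential surjectivity likewise vanish for $n \geq d-2$, the connecting functor $\mathcal{M}_{n+1}(\ccat) \to \mathcal{M}_n(\ccat)$ is itself an equivalence for $n \geq d+1$. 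Consequently, for any such $N$ the projection $\varprojlim_n \mathcal{M}_n(\ccat) \to \mathcal{M}_N(\ccat)$ is an equivalence, and it remains only to prove that Postnikov truncation induces an equivalence $\ccat \simeq \mathcal{M}_\infty(\ccat) \to \mathcal{M}_N(\ccat)$.

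Fully faithfulness of $\ccat \to \mathcal{M}_N(\ccat)$ is again by iterating \cref{proposition:gh_fibre_sequence_for_mapping_spaces}: for $c, e \in \ccat$, the discrepancy between $\Map_\ccat(c,e) \simeq \Map_{\mathcal{M}_\infty}(\nu(c), \nu(e))$ and $\Map_{\mathcal{M}_N}(\nu(c)_{\leq N}, \nu(e)_{\leq N})$ is measured by an infinite tower of fibre sequences whose fibres are $\Ext$-groups in degree $\geq N+1 \geq d+2$, all vanishing. For essential surjectivity, given a potential $N$-stage $X$, set $a := \pi_0 X \in \acat$ and choose some $c \in \ccat$ with $H(c) = a$, which exists by adaptedness (any injective lifts, and any $a$ sits in a kernel sequence of injectives). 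It suffices to show that the two potential $N$-stages $\nu(c)_{\leq N}$ and $X$ are equivalent; by the torsor structure from \cref{proposition:gh_fibre_sequence_for_mapping_spaces}, this reduces to verifying that the moduli of potential $N$-stages lifting the prescribed $\pi_0 = a$ coincides, in the stable range $N \geq d+1$, with the moduli of objects $c \in \ccat$ realizing $a$ via $H$, both being controlled by the same obstruction tower of $\Ext$-classes that has already stabilized.

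The main obstacle is this final moduli-theoretic identification: one must show that varying the choice of $c$ above $a$ exhausts all equivalence classes in $\mathcal{M}_N(\ccat)$ with $\pi_0 = a$. The natural way to handle this is to observe that the stabilized Goerss-Hopkins tower $\{\mathcal{M}_n(\ccat)\}_{n \geq d+1}$ is, by the above, a constant tower whose limit is $\mathcal{M}_N(\ccat)$; applying the inductive lifting of \cref{prop: general obstruction for objects} to extend $X$ essentially uniquely to a compatible system $(X^{(k)})_{k \geq N}$ of potential $k$-stages and taking the limit in $\widehat{\dcat}(\ccat)$ produces an object which, because each $X^{(k)}$ already lies in the image of $\nu$ up to the stabilization, must itself be equivalent to $\nu(c)$ for the chosen $c$; this furnishes the needed equivalence $\nu(c)_{\leq N} \simeq X$.
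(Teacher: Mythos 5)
There is a genuine gap, and it is fatal to the strategy. The claim that the truncation functor $\mathcal{M}_{n+1}(\ccat) \to \mathcal{M}_n(\ccat)$ is an equivalence for $n \geq d+1$ is false. The fibre sequence of \cref{proposition:gh_fibre_sequence_for_mapping_spaces} has base $\Map_{\dcat^{b}(\acat)}(\pi_0X, \Sigma^{n+1}\pi_0Y[-n])$, whose homotopy groups are $\pi_i \simeq \Ext^{n+1-i}_{\acat}(\pi_0X,\pi_0Y[-n])$. Finite cohomological dimension $d$ kills these only when $n+1-i>d$, i.e.\ for $i \leq n-d$; the \emph{top} homotopy groups of the base are the \emph{low} $\Ext$-groups, ending with $\pi_{n+1}\simeq \Hom_{\acat}(\pi_0X,\pi_0Y[-n])$, and these do not vanish no matter how large $n$ is. Consequently truncation induces isomorphisms on homotopy groups of mapping spaces only in the range $i\leq n-d$ --- which is exactly what \cref{proposition:homotopy_category_of_c_approximated_by_mn_in_finite_global_dimension} records --- and the tower never becomes constant. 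Indeed it cannot: $\mathcal{M}_N(\ccat)$ is an $(N+1)$-category, while $\mathcal{M}_\infty(\ccat)\simeq\ccat$ is stable, so an equivalence $\ccat\simeq\mathcal{M}_N(\ccat)$ is impossible outside degenerate cases. Everything downstream of this claim --- the reduction to showing $\ccat\to\mathcal{M}_N(\ccat)$ is an equivalence, and the asserted full faithfulness of that functor (where the same misreading of the vanishing range recurs) --- collapses with it.

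A secondary problem is the assertion that some $c\in\ccat$ with $H(c)=a$ ``exists by adaptedness.'' Adaptedness only lifts injectives; if one presents $a=\ker(i\to j)$ and takes the fibre $c$ of a lifted map $i_{\ccat}\to j_{\ccat}$, the long exact sequence shows $H(c)$ is an extension of $a$ by a quotient of $j[-1]$, not $a$ itself. Realizability of a general object of $\acat$ is obstructed --- that is precisely the content of \cref{prop: general obstruction for objects} --- and invoking it here would amount to assuming the convergence being proved. The viable route, and the one the paper takes, keeps the entire infinite tower: identify $\varprojlim_n\mathcal{M}_n(\ccat)$ with the homotopy $\infty$-stages inside $\widehat{\dcat}(\ccat)$, and prove by induction on the injective dimension of $\pi_0X$ that every such $X$ is perfect, hence of the form $\nu(c)$; the base case $X\simeq\nu(i_{\ccat})$ works because the obstructions to lifting morphisms into $\nu(i_{\ccat})$ vanish by \emph{injectivity} of the target, not by an appeal to cohomological dimension.
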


\begin{proof}
We will work with the complete derived $\infty$-category of \cref{definition:complete_derived_infty_cat}, the $\infty$-category of almost perfect hypercomplete sheaves on $\ccat$. Since $H$ is conservative, every perfect sheaf is already hypercomplete by \cref{proposition:adapted_homology_theory_conservative_iff_finite_sheaves_hypercomplete}, so that we have a natural fully faithful embedding $\dcat^{\omega}(\ccat) \hookrightarrow \widehat{\dcat}(\ccat)$. 

By \cref{proposition:complete_derived_cat_of_stable_cat_a_completion_of_finite_one}, this embedding presents the target as a completion of the source; in particular,
\[
\widehat{\dcat}(\ccat) \simeq \varprojlim \tau_{\leq n} \widehat{\dcat}(\ccat) \simeq \varprojlim \tau_{\leq n} \dcat^{\omega}(\ccat).
\]

Let us say that $X \in \widehat{\dcat}(\ccat)$ is a \emph{homotopy $\infty$-stage} if $X_{\leq n}$ is a potential $n$-stage for every $n$. If we denote by $\widehat{\mathcal{M}}_{\infty}(\ccat)$ the $\infty$-category of homotopy $\infty$-stages, then the above equivalence restricts to one of the form 
\[
\widehat{\mathcal{M}}_{\infty}(\ccat) \simeq \varprojlim \mathcal{M}_{n}(\ccat).
\]
We have to show that every homotopy $\infty$-stage $X$ is an $\infty$-stage in the sense of \cref{def:general_potential_l_stage}, as perfect sheaves are hypercomplete and so their equivalences are detected by homotopy groups, it is enough to show that $X$ is perfect. 

We prove this by induction on injective dimension of $\pi_{0}X \in \acat$. First assume that $\pi_{0}X = i$ is injective and let $i_{\ccat}$ be an associated injective lift in $\ccat$, so that $\pi_{0} \nu(i_{\ccat}) \simeq i$. 

We claim that any isomorphism $\pi_{0} X \simeq \pi_{0} \nu(i_{\ccat})$ can be lifted to an equivalence $X \simeq \nu(i_{\ccat})$. To see this, observe that by the limit formula above we have 
\[
\Map_{\widehat{\mathcal{M}}_{\infty}(\ccat)}(X, \nu(i_{\ccat})) \simeq \varprojlim \Map_{\mathcal{M}_{n}(\ccat)}(X_{\leq n}, \nu(i_{\ccat})_{\leq n}).
\]
Thus, it is enough to verify that any equivalence of $n$-truncations can be lifted to one of $(n+1)$-truncations. By \cref{remark:obstructions_to_lifting_morphisms},  obstructions to doing the latter live in $\Ext_{\acat}^{n+2,n+1}(\pi_{0}X, \pi_{0} \nu(i_{\ccat}))$ and so vanish for all $n \geq 0$ by injectivity of the target. Thus, $X \simeq \nu(i_{\ccat})$, which is perfect as needed. 

Now assume that $\pi_{0}X$ is of injective dimensions $k > 0$. We can choose an embedding $\pi_{0} X \hookrightarrow i$ into an injective of $\acat$. As above, this determines a map $\pi_{0}X \rightarrow \pi_{0} \nu(i_{\ccat})$ which can be lifted to a morphism $X \rightarrow \nu(i_{\ccat})$ as the relevant obstructions again vanish. 

Completing this map to a cofibre sequence
\[
X \rightarrow \nu(i_{\ccat}) \rightarrow C;
\]
studying the resulting long exact sequence of homotopy groups, which in this case reduces to short exact sequences, we see that $C \in \widehat{\mathcal{M}}_{\infty}(\ccat)$ and that $\pi_{0}C$ is of injective dimension $k-1$. Thus, $C$ is perfect by inductive assumption and so is $X$ as it is a fibre of a map between perfect sheaves and hence perfect itself. 

By induction, we deduce that every homotopy potential $\infty$-stage such that $\pi_{0}X$ is of finite injective dimension is a potential $\infty$-stage. As $\acat$ is of finite cohomological dimension, this ends the argument. 
\end{proof}

\begin{corollary}
\label{corollary:c_which_admits_conservative_adapted_homology_theory_in_fin_dim_a_is_idempotent_complete}
Let $H\colon \ccat \rightarrow \acat$ be a conservative, adapted homology theory such that $\acat$ is of finite cohomological dimension. Then, $\ccat$ is idempotent-complete.
\end{corollary}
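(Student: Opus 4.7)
The plan is to use Proposition \ref{proposition:convergence_of_gh_tower} to identify $\ccat$ with the subcategory $\widehat{\mathcal{M}}_\infty(\ccat)$ of homotopy $\infty$-stages inside the complete derived $\infty$-category $\widehat{\dcat}(\ccat)$, and then exhibit a splitting of an arbitrary idempotent by splitting it first in the ambient $\widehat{\dcat}(\ccat)$ and checking that both summands remain homotopy $\infty$-stages. Concretely, given an idempotent $e \colon c \to c$ in $\ccat$, the composite $\ccat \xrightarrow{\nu} \dcat^{\omega}(\ccat) \hookrightarrow \widehat{\dcat}(\ccat)$ (where the second arrow is the fully faithful embedding of Remark \ref{remark:embedding_of_derived_categories_when_a_of_finite_dimension}) produces an idempotent $\nu(e)$ on $\nu(c)$, and any splitting in $\widehat{\dcat}(\ccat)$ whose two pieces are homotopy $\infty$-stages descends under $\widehat{\mathcal{M}}_\infty(\ccat) \simeq \ccat$ to a splitting of $e$.

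I would first verify that $\widehat{\dcat}(\ccat)$ is itself idempotent complete. Passing to the larger universe $\widehat{\spaces}$, the $\infty$-category of hypercomplete product-preserving sheaves on $\ccat$ valued in large spaces is presentable relative to that universe, and in particular idempotent complete. The subcategory $\widehat{\dcat}(\ccat)$ of almost perfect objects is retract-closed: if $Y$ is a retract of an almost perfect $X$, each sheaf homotopy group $\pi_k Y$ is a retract of the finitely presented presheaf $\pi_k X$, and the Freyd envelope $A(\ccat)$ is closed under retracts, since it is abelian and any idempotent $\pi$ in an abelian category splits via the kernel of $1 - \pi$. It follows that $\nu(e)$ admits a splitting $\nu(c) \simeq A \oplus B$ in $\widehat{\dcat}(\ccat)$.

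The remaining step is to show that both $A$ and $B$ lie in $\widehat{\mathcal{M}}_\infty(\ccat)$, i.e.\ that homotopy $\infty$-stages are closed under retracts inside $\widehat{\dcat}(\ccat)$. Since Postnikov truncation is a functor and preserves retracts, it suffices to verify that the subcategory $\mathcal{M}_n(\ccat) \subseteq \tau_{\leq n}\widehat{\dcat}(\ccat)$ of potential $n$-stages is retract-closed for every $n \geq 0$. By Lemma \ref{characterizing l-stages}, being a potential $n$-stage amounts to being $n$-truncated and satisfying the vanishing $\pi_i(H_\ast H^\ast X) = 0$ for $1 \leq i \leq n+1$; both conditions pass to retracts, since truncatedness is detected by a mapping-space condition stable under retracts, while the vanishing persists because $H^\ast$, $H_\ast$ and $\pi_i$ are functorial and a retract of the zero object is zero. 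Consequently $A, B \in \widehat{\mathcal{M}}_\infty(\ccat) \simeq \ccat$, which exhibits the splitting of $e$ inside $\ccat$. The only delicate point is the universe-enlargement step used to secure idempotent completeness of the ambient sheaf $\infty$-category; once almost perfectness is observed to pass to retracts, the rest of the argument is a transparent consequence of the functoriality of the Goerss-Hopkins obstruction setup.
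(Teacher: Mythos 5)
Your proof is correct, and it is close in spirit to the paper's but differs in where the idempotent actually gets split. The paper also reduces to the convergence of the Goerss--Hopkins tower, writing $\ccat \simeq \mathcal{M}_\infty(\ccat) \simeq \varprojlim \mathcal{M}_n(\ccat)$, but then invokes the stability of idempotent-completeness under limits of $\infty$-categories and splits the idempotent separately at each finite stage: $\mathcal{M}_n(\ccat)$ is a retract-closed subcategory of $\tau_{\leq n}\dcat^{\omega}(\ccat)$, which is an $(n+1)$-category with finite colimits and hence idempotent complete by \cite{lurie2009derived}[1.5.12]. You instead split the idempotent once, in the ambient complete derived $\infty$-category $\widehat{\dcat}(\ccat)$, using presentability in the larger universe, and then descend by checking that homotopy $\infty$-stages are retract-closed. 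The retract-closure of potential $n$-stages via \cref{characterizing l-stages} is the same ingredient the paper uses (it asserts it ``by definition''), so the genuine difference is the source of idempotent-completeness of the ambient category: your route trades the fact about truncated $\infty$-categories with finite colimits for the check that almost perfect hypercomplete sheaves are closed under retracts (which works, since $A(\ccat)$ is an abelian subcategory of additive presheaves and hence retract-closed on homotopy groups --- note these should be the \emph{presheaf} homotopy groups, per \cref{proposition:characterization_of_almost_perfect_presheaves}, a harmless slip in your write-up). Your approach also avoids needing the limit-stability of idempotent-completeness, at the cost of the universe-enlargement bookkeeping; both arguments are sound and of comparable length.
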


\begin{proof}
As idempotent-complete $\infty$-categories are stable under limits, by a combination of \cref{proposition:convergence_of_gh_tower} and \cref{example:potential_infty_stages_are_objects_of_c} it is enough to show that each of $\mathcal{M}_{n}(\ccat)$ is idempotent-complete. By definition, the latter is a subcategory of $\tau_{\leq n} \dcat^{\omega}(\ccat)$, closed under direct summands. As $\tau_{\leq n} \dcat^{\omega}(\ccat)$ is an $(n+1)$-category with finite colimits, it follows that it is idempotent complete \cite{lurie2009derived}[1.5.12], and hence so is $\mathcal{M}_{n}(\ccat)$. 
\end{proof}

\subsection{Digression: The Grothendieck abelian case} 
\label{subsection:digression_derived_cat_in_grothendieck_abelian_case}

Our construction of the perfect derived $\infty$-category of \cref{definition:perfect_derived_cat_of_stable_cat} is very general, as it takes an arbitrary homology theory as an input. This comes at a price; for example, as we mentioned at the beginning of \S\ref{subsection:completing_derived_infty_cats}, $\dcat^{\omega}(\ccat)$ virtually never has infinite coproducts.

One way to ensure better $\infty$-categorical properties of $\dcat^{\omega}(\ccat)$ is to complete it, but depending on the context, this can come at a price as well. For instance, as we discussed in \S\ref{subsection:hypercompletion_and_locality}, if $c \in \ccat$ is $H$-acyclic, then $\nu(c)$ already vanishes in $\widehat{\dcat}(\ccat)$; that is, the completion cannot tell the difference between $\ccat$ and its Bousfield $H$-localisation. 

In this section, we will describe a variant of the derived $\infty$-category which takes as an input only a restricted class of homology theories, but which has excellent $\infty$-categorical properties: it will be a Grothendieck prestable $\infty$-category in the sense of Lurie \cite{lurie_spectral_algebraic_geometry}[Appendix C].

The following is the class of homology theories we will be interested in.

\begin{definition}
\label{definition::grothendieck_homology_theory}
We say a homological functor $H\colon \ccat \rightarrow \acat$ is \emph{Grothendieck} if 
\begin{enumerate}
    \item $\ccat$ is presentable, 
    \item $\acat$ is Grothendieck abelian and 
    \item $H$ preserves all small coproducts. 
\end{enumerate}
We say a homology theory $H$ is Grothendieck if the underlying homological functor is. 
\end{definition}

\begin{example}
If $\ccat$ is a presentably monoidal stable $\infty$-category with a compact unit, then for any $c \in \ccat$, the \emph{$c$-homology} functor 
\[
c_{*}(-)\colon \ccat \rightarrow \mathrm{gr}\abeliangroups
\]
given by 
\[
c_{*}d \colonequals [\monunit_{\ccat}, c \otimes d]_{*}
\]
is a Grothendieck homology theory. In particular, homology with respect to a fixed spectrum in the most classical sense of the word is an example. 
\end{example}

\begin{warning}
An important example of a \emph{non}-Grothendieck homology theory is the universal homology theory 
\[
y\colon \ccat \rightarrow A(\ccat)
\]
valued in the classical Freyd envelope. Indeed, the latter is usually not presentable, much less Grothendieck, as we observed in \cref{warning:freyd_envelope_usually_not_presentable}. Many further examples arise through the equivalence of  \cref{theorem:adapted_homology_theories_correspond_to_injective_epimorphism_classes} by picking an appropriate class of epimorphisms; for example, $E \otimes -$-split maps for $E$ a ring spectrum almost always lead to a non-Grothendieck homology theory. 
\end{warning}

The goal of this section is to associate to any  $H\colon \ccat \rightarrow \acat$ as above a Grothendieck prestable $\infty$-category satisfying a variant on the universal property of \cref{theorem:universal_property_of_finite_derived_category}. Adding to the complication (as well as the richness of the subject) is the fact that even in the abelian case, there are several $\infty$-categories satisfying closely related universal properties, all of which are good candidates for \emph{the} derived $\infty$-category, depending on the context. 

\begin{remark}[Variants on the derived $\infty$-category of an abelian category]
\label{remark:variants_on_da_in_grothendieck_case}
In \cite{lurie_spectral_algebraic_geometry}[C.5], Lurie associates to a Grothendieck abelian category
\begin{enumerate}
    \item an \emph{unseparated derived $\infty$-category} $\widecheck{\dcat}(\acat)$, which is universal among \emph{all} Grothendieck prestable $\infty$-categories $\dcat$ admitting an exact, cocontinuous functor $\acat \rightarrow \dcat^{\heartsuit}$, in the sense that any such functor has a unique exact, cocontinuous extension $\widecheck{\dcat}(\acat) \rightarrow \dcat$,
    \item a \emph{(separated) derived $\infty$-category} $\dcat(\acat)$, which is universal in the same sense among all separated Grothendieck prestable $\infty$-categories, that is, those for which homotopy groups detect equivalences and 
    \item a \emph{complete derived $\infty$-category} $\widehat{\dcat}(\acat)$ which is universal among all Postnikov complete prestable $\infty$-categories.
\end{enumerate}
These sometimes coincide, for example whenever $\acat$ is of finite cohomological dimension, but not in general. 
\end{remark}

\begin{remark}[Why focus on the unseparated case?]
Note that among the three variants discussed in \cref{remark:variants_on_da_in_grothendieck_case}, it is the middle which can be obtained from the abelian category of chain complexes in $\acat$ by inverting quasi-isomorphisms, and so has been classically referred to as the derived $\infty$-category of $\acat$. 

On the other hand, from the perspective of Grothendieck prestable $\infty$-categories, it is the unseparated variant which is the most fundamental, as the other two can be easily obtained from it either through the process of taking the separated quotient or completion \cite{lurie_spectral_algebraic_geometry}[C.3.6.1, C.3.6.3], while going the other way around is slightly more involved.

It is for this reason that in our construction of the derived $\infty$-category associated to a Grothendieck homology theory we will also focus on the unseparated case. 
\end{remark}

Our goal in this section will be to prove the following result. 

\begin{theorem}
\label{theorem:grothendieck_derived_infty_cat_of_h_universal_property}
Let $H\colon \ccat \rightarrow \acat$ be a Grothendieck homology theory. Then, there exists a Grothendieck prestable $\infty$-category $\widecheck{\dcat}(\ccat)$ together with an equivalence $\widecheck{\dcat}(\ccat)^{\heartsuit} \simeq \acat$ and a fully faithful, small coproduct-preserving prestable enhancement $\widecheck{\nu}\colon \ccat \rightarrow \widecheck{\dcat}(\ccat)$ of $H$, which we call the \emph{unseparated derived $\infty$-category of $H$}, such that for any other Grothendieck prestable $\dcat$, the following two collections of data are equivalent: 

\begin{enumerate}
    \item exact, cocontinuous functors $G\colon \widecheck{\dcat}(\ccat) \rightarrow \dcat$,
    \item exact, cocontinuous functors $G_{0}\colon \acat \rightarrow \dcat^{\heartsuit}$ together with a small coproduct-preserving prestable enhancement of $G_{0} \circ H$.
\end{enumerate}
\end{theorem}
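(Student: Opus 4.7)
The plan is to mimic the construction of the perfect derived $\infty$-category $\dcat^{\omega}(\ccat)$ of \cref{definition:perfect_derived_cat_of_stable_cat}, but to replace perfect sheaves by \emph{all} product-preserving sheaves on a small subcategory of $\ccat$, so that the result is presentable rather than merely finitely cocomplete. First I would choose a regular cardinal $\kappa$ large enough that $\ccat \simeq \Ind_{\kappa}(\ccat^{\kappa})$, that $\acat$ is $\kappa$-compactly generated, and that $H$ restricted to $\ccat^{\kappa}$ takes values in the $\kappa$-compact objects of $\acat$; the hypotheses of \cref{definition::grothendieck_homology_theory} guarantee such a $\kappa$ exists. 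The $H$-epimorphism topology of \S\ref{subsection:epimorphism_classes} restricts to $\ccat^{\kappa}$, and I would set
\[
\widecheck{\dcat}(\ccat) \colonequals Sh_{\Sigma}(\ccat^{\kappa}, \spaces),
\]
the full subcategory of $\Fun_{\Sigma}((\ccat^{\kappa})^{op}, \spaces)$ of product-preserving sheaves for this topology. Since this is a left exact accessible localization of an additive presheaf $\infty$-category, it is automatically Grothendieck prestable in the sense of \cite{lurie_spectral_algebraic_geometry}[\S C.1].

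To identify $\widecheck{\dcat}(\ccat)^{\heartsuit}$ with $\acat$, I would invoke the adapted factorization of \cref{theorem:adapted_factorization_of_homology_theories} to write $H \simeq U \circ H^{\star}$ with $H^{\star}$ adapted, and observe that $H$ and $H^{\star}$ determine the same class of epimorphisms on $\ccat$. Discrete sheaves on $\ccat^{\kappa}$ for the common epimorphism topology are identified with $\acat^{\star}$ via \cref{proposition:k_local_objects_are_sheaves}, after passing to a larger universe as in \cref{notation:sheafication_in_large_spaces_on_abelian_category} and then restricting back. The comonadic exact left adjoint $U\colon \acat^{\star} \rightarrow \acat$ provided by the factorization, combined with the assumption that $H$ itself preserves small coproducts and that $\acat$ is already Grothendieck, then allows one to descend along $U$ and cut down to the image of $\acat \hookrightarrow \acat^{\star}$ inside the heart.

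The synthetic analogue $\widecheck{\nu}$ is defined as the left Kan extension of $\nu\colon \ccat^{\kappa} \rightarrow \widecheck{\dcat}(\ccat)$ along $\ccat^{\kappa} \hookrightarrow \ccat$. Each $\nu(c_{0})$ for $c_{0} \in \ccat^{\kappa}$ is already a sheaf by \cref{lemma:detecting_sheaves_in_additive_setting}, and fully faithfulness on $\ccat^{\kappa}$ is immediate from the Yoneda lemma; extending to all of $\ccat$ uses that both source and target are $\kappa$-accessible and the Brown representability argument of \cref{proposition:brown_representability} to verify mapping spaces are preserved under filtered colimits. For the universal property, given data as in $(2)$, the prestable enhancement restricted to $\ccat^{\kappa}$ extends via \cref{theorem:universal_property_of_prestable_freyd_envelope} to a geometric-realization preserving exact functor out of $A_{\infty}(\ccat^{\kappa})$; one verifies it inverts sheafification equivalences using that $G_{0}$ is exact on $\acat$, and then cocontinuity provides the unique extension to $\widecheck{\dcat}(\ccat)$.

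The hardest step will be the heart computation: verifying that the epimorphism topology on $\ccat^{\kappa}$ really produces $\acat$ rather than the potentially larger $\acat^{\star}$ at the level of discrete sheaves. The key will be a careful tracking of which colimits are preserved by the comonadic $U$ in the adapted factorization, together with cocontinuity of $H$ itself, to ensure that the heart of the global sheaf $\infty$-category matches the target of the original Grothendieck homology theory rather than its adapted refinement.
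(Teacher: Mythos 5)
Your overall strategy---realizing $\widecheck{\dcat}(\ccat)$ as an accessible left exact localization of product-preserving presheaves on a small additive subcategory of $\ccat$---is the right one, but the specific localization you impose (the sheaf condition for the $H$-epimorphism topology on $\ccat^{\kappa}$) creates a genuine gap that the paper's construction is designed to avoid. The paper works with \emph{presheaves} $P_{\Sigma}(\ccat_{0}^{\omega})$ for a compactly generated replacement $\ccat_{0}$ of $\ccat$ and quotients by the \emph{smallest} localizing subcategory $\euscr{K}$ containing the discrete kernel $K$, in the sense of \cite{lurie_spectral_algebraic_geometry}[C.5.2.8]. With your definition, full faithfulness of $\widecheck{\nu}$ is not "immediate from the Yoneda lemma" beyond $\ccat^{\kappa}$: each representable $\nu(c_{0})$ with $c_{0}$ $\kappa$-compact is indeed a sheaf by \cref{lemma:detecting_sheaves_in_additive_setting}, but for a general $c = \varinjlim c_{i}$ the object $\widecheck{\nu}(c)$ is a $\kappa$-filtered colimit of representables computed in presheaves, and the descent condition against a single cover is a totalization, i.e.\ an infinite limit; filtered colimits of sheaves therefore need not be sheaves, and representables need not be compact in $Sh_{\Sigma}(\ccat^{\kappa})$ because the inclusion of sheaves into presheaves need not preserve filtered colimits. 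So you must actually prove that $\widecheck{\nu}(c)$ is a local object. The paper does the corresponding step by exhibiting $\widecheck{\nu}$ as the right adjoint of the $\tau$-inversion functor and checking $\euscr{K} \subseteq \ker(\tau^{-1})$; some such argument is unavoidable and is missing from your sketch. (Relatedly, your universal property requires that every functor arising from data of type $(2)$ inverts all local equivalences; this reduces to \v{C}ech descent for $\pi_{0}$-surjections in an arbitrary Grothendieck prestable target, a true but nontrivial input you should isolate rather than fold into ``one verifies it inverts sheafification equivalences''.)

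The step you flag as hardest---getting heart $\acat$ rather than $\acat^{\star}$---is not merely hard in your setup; it is impossible as posed. The discrete product-preserving sheaves for the $H$-epimorphism topology form, by construction, the Gabriel quotient of $P_{\Sigma}(\ccat^{\kappa}, \abeliangroups)$ by the cokernels of $H$-epimorphisms, which is exactly the adapted target $\acat^{\star}$ of \cref{theorem:adapted_factorization_of_homology_theories}. The topology remembers only the epimorphism class, and every left exact localization of $P_{\Sigma}(\ccat^{\kappa})$ has a Gabriel quotient of $P_{\Sigma}(\ccat^{\kappa},\abeliangroups)$ as its heart; since $U\colon \acat^{\star} \rightarrow \acat$ is comonadic but generally not an equivalence (e.g.\ $\Comod_{E_{*}E} \rightarrow \Mod_{E_{*}}$), no descent along $U$ can produce $\acat$. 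The correct move is to reduce to the case where $H$ is adapted, which is what the paper's proof does from the outset. Finally, your construction depends on a choice of $\kappa$ and on a $\kappa$-compact analogue of Krause's universality result \cref{lemma:embedding_into_product_pres_sheaves_a_universal_grothendieck_homological_functor}, neither of which you address; the paper sidesteps both by writing an arbitrary presentable $\ccat$ as a localization of a compactly generated $\ccat_{0}$ and then quotienting $\widecheck{\dcat}(\ccat_{0})$ by the image of the kernel of that localization.
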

Note that the universal property is analogous to that of the perfect derived $\infty$-category, which we have proven in \cref{theorem:universal_property_of_finite_derived_category}, with the two differences being that the prestable $\infty$-categories in question are Grothendieck and that all functors are required to preserve small coproducts.

\begin{remark}
As in the case of the perfect derived $\infty$-category of \cref{remark:local_grading_on_perfect_derived_infty_cat} and of prestable Freyd envelopes discussed in \cref{remark:universal_properties_of_local_gradings_on_prestable_freyd_envelopes}, the universal property of $\widecheck{\dcat}(\ccat)$ implies that it admits a canonical local grading compatible with that of $\ccat$ and $\acat$.
\end{remark}
Our proof of \cref{theorem:grothendieck_derived_infty_cat_of_h_universal_property} will proceed in stages, first starting in the compactly-generated case and a very special homology theory. We will need the following result of Henning Krause. 

\begin{lemma}[Krause]
\label{lemma:embedding_into_product_pres_sheaves_a_universal_grothendieck_homological_functor}
Let $\ccat$ be a compactly-generated presentable, stable $\infty$-category. Then, the restricted Yoneda embedding $y\colon \ccat \rightarrow P_{\Sigma}(\ccat^{\omega}, \abeliangroups)$ into product-preserving presheaves of abelian groups on compact objects, is a universal Grothendieck homological functor. That is, for any other Grothendieck $H\colon \ccat \rightarrow \acat$, there exists an essentially unique exact, cocontinuous $L\colon P_{\Sigma}(\ccat^{\omega}, \abeliangroups) \rightarrow \acat$ such that $H \simeq L \circ y$. \end{lemma}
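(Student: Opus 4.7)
The plan is to identify $P_{\Sigma}(\ccat^{\omega}, \abeliangroups)$ with the Ind-completion of the classical Freyd envelope of $\ccat^{\omega}$ and then combine Freyd's universal property with the universal property of Ind. Since $\ccat^{\omega}$ is a small stable $\infty$-category, the Freyd envelope $A(\ccat^{\omega})$ is a small abelian category in which the representables form a generating family of projectives (\cref{proposition:if_c_has_finite_limits_freyd_envelope_is_an_abelian_subcat}, \cref{theorem:for_stable_cat_representables_are_injective_in_freyd_envelope}). A standard argument identifies the product-preserving additive presheaves as the filtered-colimit closure of $A(\ccat^{\omega})$, giving
\[
P_{\Sigma}(\ccat^{\omega}, \abeliangroups) \simeq \Ind(A(\ccat^{\omega})),
\]
with $A(\ccat^{\omega})$ appearing as the subcategory of finitely presented objects; in particular the right-hand side is Grothendieck abelian. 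One checks directly that the restricted Yoneda $y\colon \ccat \to P_{\Sigma}(\ccat^{\omega}, \abeliangroups)$ sending $c$ to $d \mapsto [d,c]$ is homological (from the long exact sequence of mapping spectra out of any compact $d$ applied to a cofiber sequence in $\ccat$) and preserves small coproducts (compactness of $d$ turns coproducts into direct sums on $[d,-]$), so $y$ is a Grothendieck homological functor.

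For the universal property, given another Grothendieck homological $H\colon \ccat \to \acat$, first restrict to $H^{\omega}\colon \ccat^{\omega} \to \acat$. By Freyd's universal property (\cref{theorem:homological_universal_property_of_freyd_envelope}), there is an essentially unique exact $L^{\omega}\colon A(\ccat^{\omega}) \to \acat$ with $L^{\omega} \circ y^{\omega} \simeq H^{\omega}$, where $y^{\omega}$ is the Yoneda embedding of $\ccat^{\omega}$ into its Freyd envelope. Since $\acat$ is cocomplete, $L^{\omega}$ admits an essentially unique filtered-colimit-preserving extension
\[
L\colon \Ind(A(\ccat^{\omega})) \simeq P_{\Sigma}(\ccat^{\omega}, \abeliangroups) \to \acat,
\]
which is cocontinuous (being right exact from $L^{\omega}$ and filtered-colimit-preserving by construction), and in fact exact, using that filtered colimits are exact in the Grothendieck abelian category $\acat$.

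It remains to show $L \circ y \simeq H$ on all of $\ccat$, not merely on $\ccat^{\omega}$ where both sides visibly agree with $L^{\omega} \circ y^{\omega}$. Since $L$ is cocontinuous and $y$ preserves filtered colimits (by compactness of objects of $\ccat^{\omega}$), the composite $L \circ y$ preserves filtered colimits, so it suffices to verify that $H$ itself preserves filtered colimits; this I expect to be the main obstacle. By a transfinite induction on cofinal cardinality one reduces to the case of sequential colimits, and for a sequence $c_{0} \to c_{1} \to \cdots$ with colimit $c_{\infty}$ one computes $c_{\infty}$ as the cofiber
\[
\bigoplus_{n} c_{n} \xrightarrow{\mathrm{id} - \sigma} \bigoplus_{n} c_{n} \to c_{\infty}
\]
in $\ccat$. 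Applying the coproduct-preserving homological $H$ yields a long exact sequence in $\acat$ in which a direct induction on support shows that $\mathrm{id} - \sigma$ acts injectively on $\bigoplus_{n} H(c_{n})$; thus the long exact sequence collapses into the short exact sequence exhibiting $H(c_{\infty}) \simeq \varinjlim_{n} H(c_{n})$. Uniqueness of $L$ follows from the same chase: any cocontinuous candidate must restrict on $A(\ccat^{\omega})$ to the Freyd-unique $L^{\omega}$, and hence agrees with the above construction on all of $\Ind(A(\ccat^{\omega}))$.
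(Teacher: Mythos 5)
The paper does not actually prove this lemma — it cites Krause — so I am judging your argument on its own terms. Your overall architecture is sound: the identification $P_{\Sigma}(\ccat^{\omega}, \abeliangroups) \simeq \Ind(A(\ccat^{\omega}))$ with $A(\ccat^{\omega})$ as the finitely presented objects, the use of Freyd's universal property on compacts followed by the filtered-colimit extension, the verification that the extension is exact using AB5 in $\acat$, and the uniqueness argument are all correct. The telescope computation for sequential colimits (including the injectivity of $\mathrm{id}-\sigma$ on $\bigoplus_n H(c_n)$, which again uses AB5) is also fine.

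The gap is exactly where you suspected the main obstacle to lie: the claim that $H$ preserves all filtered colimits, and specifically the reduction ``by transfinite induction on cofinal cardinality to sequential colimits.'' The presentation $\varinjlim_{n} c_{n} \simeq \cofib\bigl(\bigoplus_n c_{n} \xrightarrow{\mathrm{id}-\sigma} \bigoplus_n c_{n}\bigr)$ is special to $\omega$-indexed diagrams. If $I$ is a directed poset of uncountable regular cofinality $\kappa$, writing $I$ as an increasing union of $\kappa$-many smaller directed subposets leaves a $\kappa$-indexed chain at the outer level; that chain still has cofinality $\kappa$, so the induction hypothesis does not apply to it, and there is no analogue of the $\mathrm{id}-\sigma$ cofibre sequence for chains of uncountable length (such a colimit is a genuinely higher geometric realization, not the cofibre of a single map of coproducts). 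So the induction does not close. The standard repair avoids proving filtered-colimit preservation of $H$ up front: form the natural comparison $\eta_{X}\colon L(y(X)) \simeq \varinjlim_{\ccat^{\omega}_{/X}} H(C) \to H(X)$, observe that both $L \circ y$ and $H$ are coproduct-preserving homological functors agreeing on compacts, and check that the full subcategory of those $X$ with $\eta_{\Sigma^{n}X}$ invertible for all $n$ is closed under shifts, cofibres (five lemma applied to the two long exact sequences) and coproducts; since $\ccat$ is compactly generated, this localizing subcategory containing $\ccat^{\omega}$ is all of $\ccat$. Equivalently, every object admits an $\omega$-indexed cellular filtration by cofibres of maps out of coproducts of compacts, and your telescope argument does apply to that particular $\omega$-chain. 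That $H$ preserves filtered colimits then follows a posteriori from $H \simeq L \circ y$, rather than serving as an input.
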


\begin{proof}
This is \cite{krause_2010}[6.10.1].
\end{proof}

\begin{remark}
\label{remark:p_sigma_of_compacts_the_grothendieck_freyd_envelope}
If $\ccat$ is compactly-generated, then $P_{\Sigma}(\ccat^{\omega}, \abeliangroups)$ plays the same role for Grothendieck homology theories on $\ccat$ as the Freyd envelope $A(\ccat)$ plays for all homology theories. 

To be more precise, by a result of Neeman $y\colon \ccat \rightarrow P_{\Sigma}(\ccat^{\omega}, \abeliangroups)$ is adapted \cite{neeman2001triangulated}[6.5]. Thus, a Grothendieck homology theory $H\colon \ccat \rightarrow \acat$ induces functors 
\[
 A(\ccat) \rightarrow P_{\Sigma}(\ccat^{\omega}, \abeliangroups) \rightarrow \acat
\]
where the first arrow is quotient by a localizing subcategory. It follows that the same is true for the second one if and only if this holds for the composite, and thus that adapted Grothendieck homology theories are in one-to-one correspondence with localizing subcategories of $P_{\Sigma}(\ccat^{\omega}, \abeliangroups)$.

In the special case where $\ccat$ is monoidal, certain such localizing subcategories have been studied in the work of Balmer, Krause and Stevenson, who called the corresponding quotients the \emph{homological residue fields of $\ccat$} \cite{balmer2019tensor}, \cite{balmer2020nilpotence}. These can be often described as comodules over comonads or coalgebras as in the work of Balmer, Cameron and Stevenson \cite{balmer2021computing}, \cite{cameron2021homological}, see also \cref{remark:adapted_factorization_for_grothendieck_categories} below.
\end{remark}

\begin{remark}[A general Grothendieck-Freyd envelope]
\label{remark:general_grothendieck_freyd_envelope}
We will not need this, but a universal Grothendieck homological functor $H\colon \ccat \rightarrow \acat$ analogous to the restricted Yoneda embedding of \cref{lemma:embedding_into_product_pres_sheaves_a_universal_grothendieck_homological_functor} exists for an arbitrary presentable, stable $\infty$-category $\ccat$, not necessarily compactly-generated. 

In the general case, one can construct a localization $L\colon \ccat_{0} \rightarrow \ccat$ from a compactly-generated presentable, stable $\infty$-category. It is not difficult to verify that the needed universal homological functor is obtained from $P_{\Sigma}(\ccat_{0}^{\omega}, \abeliangroups)$ by taking a quotient by the localizing subcategory $K$ generated by $y(k)$ for $k \in \ker(L)$. 

It follows that adapted Grothendieck homology theories on an arbitrary $\ccat$ are parameterized by the localizing subcategories of the Gabriel quotient $P_{\Sigma}(\ccat_{0}^{\omega}, \abeliangroups)/K$; equivalently, localizing subcategories of $P_{\Sigma}(\ccat_{0}^{\omega}, \abeliangroups)$ which contain $K$. This is the Grothendieck analogue of \cref{theorem:adapted_homology_theories_correspond_to_injective_epimorphism_classes}.
\end{remark}

\begin{remark}
\label{remark:adapted_factorization_for_grothendieck_categories}
Any Grothendieck abelian category has enough injectives, and an application of  \cref{corollary:coproduct_preserving_homology_theory_on_presentable_cat_has_lifts_of_injectives} tells us that any Grothendieck homology theory has lifts for injectives. By \cref{theorem:adapted_factorization_of_homology_theories}, we thus have a canonical factorization
\[
\begin{tikzcd}
	{\ccat} && {\acat} \\
	& {\acat^{\star}}
	\arrow["{H}", from=1-1, to=1-3]
	\arrow["{H^{\star}}"', from=1-1, to=2-2]
	\arrow["{U}"', from=2-2, to=1-3]
\end{tikzcd}
\]
where $H^{\star}$ is adapted and $U$ is an exact comonadic left adjoint. Observe that $\acat^{\star}$ can be identified with a quotient of the Grothendieck-Freyd envelope of \cref{remark:general_grothendieck_freyd_envelope}, and so is itself Grothendieck abelian. It follows that any any Grothendieck homology theory canonically factors through an adapted one, which is also Grothendieck, followed by a comonadic exact functor. 
\end{remark}

Note that the abelian category of $P_{\Sigma}(\ccat^{\omega}, \abeliangroups)$ of \cref{lemma:embedding_into_product_pres_sheaves_a_universal_grothendieck_homological_functor} can be canonically identified with the heart of $P_{\Sigma}(\ccat^{\omega})$, the $\infty$-category of product-preserving presheaves of spaces. This leads to a natural guess that perhaps this prestable $\infty$-category is the unseparated Grothendieck derived $\infty$-category $\widecheck{\dcat}(\ccat)$ associated to the universal Grothendieck homology theory $y\colon \ccat \rightarrow P_{\Sigma}(\ccat^{\omega}, \abeliangroups)$; that is, that it satisfies the conditions of \cref{theorem:grothendieck_derived_infty_cat_of_h_universal_property}. This is indeed the case, and the proof will require the following. 

\begin{lemma}
\label{lemma:coproduct_preserving_enhancement_into_grothendieck_pcat_preserves_filtered_colimits}
Let $\ccat$ be a presentable, stable $\infty$-category, $\dcat$ a Grothendieck prestable $\infty$-category and $\euscr{H}\colon \ccat \rightarrow \dcat$ a prestable enhancement which preserves all small coproducts. Then $\euscr{H}$ preserves filtered colimits. 
\end{lemma}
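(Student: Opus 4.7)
The plan is to reduce the statement to the analogous one between stable $\infty$-categories by embedding $\dcat$ into its stabilization. Let $\mathrm{Sp}(\dcat)$ be the stabilization of $\dcat$ and $\iota\colon \dcat \hookrightarrow \mathrm{Sp}(\dcat)$ the inclusion of $\dcat$ as the connective part of the canonical $t$-structure; since $\dcat$ is Grothendieck prestable, $\mathrm{Sp}(\dcat)$ is a presentable stable $\infty$-category by the theory developed in \cite{lurie_spectral_algebraic_geometry}[Appendix C]. The functor $\iota$ is fully faithful and is the left adjoint of the connective cover $\tau_{\geq 0}\colon \mathrm{Sp}(\dcat) \to \dcat$; in particular it preserves all small colimits, and it also preserves finite limits since the connective part of a $t$-structure is closed under fibres in $\mathrm{Sp}(\dcat)$.

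Consider the composite $F \colonequals \iota \circ \euscr{H}\colon \ccat \to \mathrm{Sp}(\dcat)$. First I would observe that $F$ is an exact, small coproduct-preserving functor between presentable stable $\infty$-categories: exactness follows from the left exactness of $\euscr{H}$ built into \cref{definition:prestable_enhancement} together with the finite-limit preservation of $\iota$, while coproduct preservation is inherited from both factors. Next I would invoke the standard fact that between cocomplete stable $\infty$-categories, an exact functor preserves all small colimits as soon as it preserves small coproducts; this rests on the decomposition of arbitrary colimits in terms of pushouts and coproducts together with the observation that pushouts are preserved by every exact functor in the stable setting. Applied to $F$, this yields that $F$ preserves filtered colimits.

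To transfer this back to $\euscr{H}$ itself, let $p\colon I \to \ccat$ be a filtered diagram. Since $\iota$ preserves colimits, we obtain
\[
\iota(\euscr{H}(\varinjlim p)) \simeq F(\varinjlim p) \simeq \varinjlim (F \circ p) \simeq \iota\bigl(\varinjlim_{\dcat}(\euscr{H} \circ p)\bigr),
\]
and full faithfulness of $\iota$ yields the desired equivalence $\euscr{H}(\varinjlim p) \simeq \varinjlim_{\dcat}(\euscr{H} \circ p)$. The only mildly subtle point is the ``exact plus coproducts imply all colimits'' fact invoked in the middle step, which is a well-known consequence of the structure of colimits in stable $\infty$-categories; the remainder is formal manipulation with the $t$-structure on $\mathrm{Sp}(\dcat)$.
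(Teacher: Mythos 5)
Your reduction to the stable setting goes in the wrong direction, and the step on which everything hinges is false. The inclusion $\iota\colon \dcat \hookrightarrow \spectra(\dcat)$ of the connective part is indeed a fully faithful left adjoint to $\tau_{\geq 0}$ and preserves all colimits, but it does \emph{not} preserve finite limits: the connective part of a $t$-structure is closed under cofibres and extensions, not under fibres. Concretely, for $\dcat = \spectra_{\geq 0}$ the loop object of $S^{0}$ in $\dcat$ is $\tau_{\geq 0}S^{-1}$, whereas the loop object of $\iota S^{0}$ in $\spectra$ is $S^{-1}$; these differ. In general, limits in a coreflective subcategory are computed by applying the coreflector $\tau_{\geq 0}$ to the ambient limit, so $\iota$ commutes with a fibre only when that ambient fibre happens to be connective. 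Consequently $F = \iota \circ \euscr{H}$ is not left exact. It is not right exact either, since a prestable enhancement in the sense of \cref{definition:prestable_enhancement} is only left exact — e.g.\ the synthetic analogue $\nu$ fails to preserve cofibre sequences unless the relevant map is an $H$-epimorphism, cf.\ part (6) of \cref{proposition:properties_of_finite_derived_infty_cat_of_stable_cat}. So the ``exact plus coproducts implies cocontinuous'' principle, which is itself correct, simply does not apply to $F$, and the middle of your argument collapses. A further warning sign is that your argument never uses the Grothendieck hypothesis beyond presentability of $\spectra(\dcat)$; the statement genuinely needs compatibility of the $t$-structure with filtered colimits.

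The repair is essentially the paper's proof: instead of post-composing $\euscr{H}$ with $\iota$, use the functoriality of stabilization in \emph{left exact} functors to produce $\spectra(\euscr{H})\colon \spectra(\ccat) \rightarrow \spectra(\dcat)$ fitting into a square with the two $\Omega^{\infty}$'s. Since $\ccat$ is stable, $\Omega^{\infty}_{\ccat}$ is an equivalence; the induced functor $\spectra(\euscr{H})$ is exact and coproduct-preserving, hence cocontinuous; and $\Omega^{\infty}_{\dcat} = \tau_{\geq 0}$ preserves filtered colimits precisely because $\dcat$ is Grothendieck prestable. Writing $\euscr{H} \simeq \Omega^{\infty}_{\dcat} \circ \spectra(\euscr{H}) \circ (\Omega^{\infty}_{\ccat})^{-1}$ then gives the claim. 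Your final step (transporting the conclusion back along a fully faithful colimit-preserving functor) is fine, but it is the wrong composite to be transporting.
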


\begin{proof}
Since $\euscr{H}$ is left exact, it induces a functor between $\infty$-categories of spectrum objects, giving a diagram 
\[
\begin{tikzcd}
	\ccat & \dcat \\
	{\spectra(\ccat)} & {\spectra(\dcat)}
	\arrow[from=1-1, to=1-2]
	\arrow["{\Omega^{\infty}_{C}}", from=2-1, to=1-1]
	\arrow["{\Omega^{\infty}_{D}}"', from=2-2, to=1-2]
	\arrow[from=2-1, to=2-2]
\end{tikzcd}.
\]
Since $\ccat$ is stable, the left vertical arrow is an equivalence, and as $\dcat$ is Grothendieck prestable, the right vertical arrow can be identified with taking connective covers with respect to a $t$-structure compatible with filtered colimits. In particular, the right vertical arrow preserves filtered colimits. 

The lower horizontal arrow is a coproduct-preserving exact functor of stable $\infty$-categories, and so it is cocontinuous. We deduce that the upper horizontal arrow also preserves filtered colimits, as needed. 
\end{proof}

\begin{proposition}
\label{proposition:p_sigma_of_c_omega_an_unsep_derived_cat_for_universal_homology_theory}
Let $\ccat$ be a compactly-generated presentable, stable $\infty$-category. Then, the $\infty$-category of product-preserving presheaves of spaces on compact objects together with the restricted Yoneda embedding $\widecheck{\nu}\colon \ccat \rightarrow P_{\Sigma}(\ccat^{\omega})$ satisfies the conditions of \cref{theorem:grothendieck_derived_infty_cat_of_h_universal_property}.
\end{proposition}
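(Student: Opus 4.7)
The plan is to use the presentation $\ccat \simeq \Ind(\ccat^{\omega})$ together with the universal property of $P_{\Sigma}$ as the free Grothendieck prestable $\infty$-category generated by a small additive $\infty$-category with finite limits, and to match this against the description of Grothendieck homology theories on $\ccat$ afforded by Krause's \cref{lemma:embedding_into_product_pres_sheaves_a_universal_grothendieck_homological_functor}.

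First I would verify the structural claims. Since $\ccat^{\omega}$ is small and stable (in particular additive with finite limits), $P_{\Sigma}(\ccat^{\omega})$ is a Grothendieck prestable $\infty$-category whose heart is canonically identified with $P_{\Sigma}(\ccat^{\omega}, \abeliangroups) \simeq \acat$, the target of the universal Grothendieck homology theory of \cref{lemma:embedding_into_product_pres_sheaves_a_universal_grothendieck_homological_functor}. The restricted Yoneda embedding $\widecheck{\nu}$ is fully faithful because $\ccat$ is compactly generated, left exact because the Yoneda embedding preserves all limits which exist in $\ccat$, and preserves small coproducts because for every compact test object $k \in \ccat^{\omega}$ we have $\Map_{\ccat}(k, \bigoplus c_{\alpha}) \simeq \bigoplus \Map_{\ccat}(k, c_{\alpha})$. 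Applying $\pi_{0}$ recovers the restricted Yoneda $y$, so $\widecheck{\nu}$ is a coproduct-preserving prestable enhancement of $y$ in the sense of \cref{definition:prestable_enhancement}.

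For the universal property, fix a Grothendieck prestable $\dcat$. I would set up a chain of equivalences between: (i) exact, cocontinuous functors $G \colon P_{\Sigma}(\ccat^{\omega}) \to \dcat$; (ii) left exact, direct-sum-preserving functors $\ccat^{\omega} \to \dcat$; and (iii) left exact, cocontinuous functors $\euscr{G}\colon \ccat \to \dcat$. The equivalence (i)$\leftrightarrow$(ii) is the universal property of $P_{\Sigma}$ as the free Grothendieck prestable $\infty$-category on $\ccat^{\omega}$, combining \cite{higher_topos_theory}[5.5.8.15] with the fact that in a Grothendieck prestable target finite limits commute with filtered colimits and with the sifted colimits computing $P_{\Sigma}(\ccat^{\omega})$. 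The equivalence (ii)$\leftrightarrow$(iii) is Ind-completion: a left exact additive $\ccat^{\omega} \to \dcat$ extends uniquely along $\ccat^{\omega} \hookrightarrow \ccat$ to a filtered-colimit-preserving functor, which remains left exact because filtered colimits in $\dcat$ are left exact. By \cref{lemma:coproduct_preserving_enhancement_into_grothendieck_pcat_preserves_filtered_colimits}, (iii) is in turn equivalent to the datum of a small coproduct-preserving prestable enhancement $\euscr{G}\colon \ccat \to \dcat$.

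It remains to match this data with that appearing in (2) of \cref{theorem:grothendieck_derived_infty_cat_of_h_universal_property}. Given a coproduct-preserving prestable enhancement $\euscr{G}$, the composite $\pi_{0}^{\dcat} \circ \euscr{G}\colon \ccat \to \dcat^{\heartsuit}$ is a Grothendieck homology theory, which by \cref{lemma:embedding_into_product_pres_sheaves_a_universal_grothendieck_homological_functor} factors uniquely as $G_{0} \circ y$ for an essentially unique exact cocontinuous $G_{0}\colon \acat \to \dcat^{\heartsuit}$; this $G_{0}$ can also be recovered as the restriction to hearts of the associated $G$ under (i), so the two prescriptions agree. Conversely any pair $(G_{0}, \euscr{G})$ as in (2) gives rise to an element of (iii) whose extension $G$ under (i) restricts to $G_{0}$ on hearts. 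The principal obstacle is the equivalence (i)$\leftrightarrow$(ii): one must check that left exactness of an additive $F\colon \ccat^{\omega} \to \dcat$ is inherited by its unique cocontinuous extension to $P_{\Sigma}(\ccat^{\omega})$, and this rests essentially on the Grothendieck hypothesis on $\dcat$, namely that finite limits commute with filtered colimits and with the additive geometric realizations which suffice to generate $P_{\Sigma}(\ccat^{\omega})$ from its representables.
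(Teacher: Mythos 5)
Your overall architecture matches the paper's: verify that $P_{\Sigma}(\ccat^{\omega})$ is Grothendieck prestable with heart $P_{\Sigma}(\ccat^{\omega},\abeliangroups)$, that $\widecheck{\nu}$ is a fully faithful, coproduct-preserving prestable enhancement of Krause's universal homology theory, and then reduce the universal property to a bijection between exact cocontinuous functors out of $P_{\Sigma}(\ccat^{\omega})$ and left exact, coproduct-preserving functors $\ccat \to \dcat$. The use of \cref{lemma:embedding_into_product_pres_sheaves_a_universal_grothendieck_homological_functor} and \cref{lemma:coproduct_preserving_enhancement_into_grothendieck_pcat_preserves_filtered_colimits} is exactly as in the paper.

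However, there is a genuine gap at the step you yourself flag as the principal obstacle. You assert that left exactness of the cocontinuous extension $\euscr{L}\colon P_{\Sigma}(\ccat^{\omega}) \to \dcat$ follows because in a Grothendieck prestable target ``finite limits commute with filtered colimits and with the additive geometric realizations'' generating $P_{\Sigma}(\ccat^{\omega})$ from representables. The first half is true, but the second is false: finite limits do \emph{not} commute with geometric realizations in a Grothendieck prestable $\infty$-category. (Already in connective spectra, $\Omega$ applied to the bar construction computing $\Sigma S^{0}$ gives $S^{0}$, whereas the realization of the levelwise loops vanishes.) So the commutation argument cannot establish left exactness of $\euscr{L}$, and this is precisely where the real work lies. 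The paper's route is to identify $P_{\Sigma}(\ccat^{\omega})$ with $\Ind(A_{\infty}^{\omega}(\ccat^{\omega}))$, so that $\euscr{L}$ is the filtered-colimit extension of its restriction to the perfect prestable Freyd envelope, and then to invoke \cref{theorem:universal_property_of_finite_presheaves}. The left exactness there is not formal: the proof of \cref{theorem:universal_property_of_prestable_freyd_envelope} shows the extension carries projectives of the heart to discrete objects (using projective resolutions and the fact that $\pi_{0}\circ\euscr{H}$ is homological), hence carries all discrete, and then all truncated, objects to truncated objects, and concludes via the criterion that a right exact functor of prestable $\infty$-categories preserving coconnectivity is left exact. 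Your proof needs this input (or an equivalent argument) rather than a commutation-of-limits claim.
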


\begin{proof}
First observe that $P_{\Sigma}(\ccat^{\omega})$ is indeed Grothendieck prestable, by \cite{lurie_spectral_algebraic_geometry}[C.1.5.10], and has the correct heart. Moreover, since $\ccat \simeq \Ind(\ccat^{\omega})$, the restricted Yoneda embedding $\ccat \rightarrow P_{\Sigma}(\ccat^{\omega})$ can be identified with the inclusion of presheaves which are filtered colimits of representables. In particular, it is fully faithful and preserves all small coproducts. 

Suppose that $\dcat$ is a Grothendieck prestable $\infty$-category, $G_{0}\colon P_{\Sigma}(\ccat^{\omega}, \abeliangroups) \rightarrow \dcat^{\heartsuit}$ is exact, cocontinuous and $\euscr{G}\colon \ccat \rightarrow \dcat$ is a small coproduct-preserving prestable enhancement of $G_{0} \circ y$, so that we have a chosen isomorphism $\pi_{0} \circ \euscr{G} \simeq G_{0} \circ y$. 

Observe that since $y$ is universal by \cref{lemma:embedding_into_product_pres_sheaves_a_universal_grothendieck_homological_functor}, $G_{0}$ and the natural isomorphism are uniquely determined by $\euscr{G}$. Thus, we have to show that there is a one-to-one correspondence between exact, cocontinuous functors out of $P_{\Sigma}(\ccat)$ and left exact, small coproduct-preserving $\euscr{G}\colon \ccat \rightarrow \dcat$. Since $\widecheck{\nu}$ is left exact and preserves all coproducts, precomposition with $\widecheck{\nu}$ provides one direction of this correspondence. 

To go the other way, we have to show that the left Kan extension $\euscr{L}\colon P_{\Sigma}(\ccat^{\omega}) \rightarrow \dcat$ of the composite 
\[
\ccat^{\omega} \rightarrow \ccat \rightarrow \dcat,
\]
which is cocontinuous as the composite was additive, is left exact and satisfies $\euscr{L} \circ \widecheck{\nu} \simeq \euscr{G}$. The second part is clear, as they agree on compacts and both preserve filtered colimits, the right hand side by \cref{lemma:coproduct_preserving_enhancement_into_grothendieck_pcat_preserves_filtered_colimits}.

For left exactness, observe that using the criterion of \cite{higher_topos_theory}[5.3.5.11, 5.5.8.13] we can identify $P_{\Sigma}(\ccat^{\omega})$ with the $\Ind$-completion of its prestable subcategory $A_{\infty}^{\omega}(\ccat^{\omega})$. Since $\euscr{L}$ is cocontinuous, it is a left Kan extension of its restriction to the perfect prestable Freyd envelope, and we deduce that it is enough to verify that its restriction is left exact. This is exactly \cref{theorem:universal_property_of_finite_presheaves}.
\end{proof}
Now let us move to the more general case where $\ccat$ is compactly-generated, but we have an arbitrary adapted Grothendieck $H\colon \ccat \rightarrow \acat$. By \cref{remark:p_sigma_of_compacts_the_grothendieck_freyd_envelope}, the induced functor out of $P_{\Sigma}(\ccat^{\omega}, \abeliangroups)$ identifies $\acat$ with a quotient by some localizing subcategory $K$. 

There is canonical way to lift a localizing subcategory $K$ of the Grothendieck abelian \[
P_{\Sigma}(\ccat^{\omega}, \abeliangroups) \simeq P_{\Sigma}(\ccat^{\omega})^{\heartsuit}
\]
to a localizing subcategory $\euscr{K}$ of the Grothendieck prestable $P_{\Sigma}(\ccat^{\omega})$. Namely, one can take the smallest localizing subcategory $\euscr{K}$ which contains $K$, considered as a subcategory of discrete objects, see \cite{lurie_spectral_algebraic_geometry}[C.5.2.8]. Note that we will then have 
\[
(P_{\Sigma}(\ccat^{\omega}) / \euscr{K})^{\heartsuit} \simeq P_{\Sigma}(\ccat^{\omega})^{\heartsuit} / K,
\]
so that the composite 
\[
\ccat \rightarrow P_{\Sigma}(\ccat^{\omega}) \rightarrow P_{\Sigma}(\ccat^{\omega}) / \euscr{K}
\]
can be canonically considered as a prestable enhancement of $H$. Using this notation, we have the following. 

\begin{proposition}
\label{proposition:unsep_grothendieck_exists_when_ccat_is_compactly_generated}
The composite $\ccat \rightarrow P_{\Sigma}(\ccat^{\omega}) / \euscr{K}$ identifies the target with the unseparated derived $\infty$-category of $H$; that is, it satisfies the conditions of \cref{theorem:grothendieck_derived_infty_cat_of_h_universal_property}.
\end{proposition}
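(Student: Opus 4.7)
The plan is to deduce the proposition from \cref{proposition:p_sigma_of_c_omega_an_unsep_derived_cat_for_universal_homology_theory}, which handles the universal Grothendieck homology theory, together with the universal property of localization of Grothendieck prestable $\infty$-categories. First, I would verify that the composite $\ccat \to P_{\Sigma}(\ccat^{\omega})/\euscr{K}$ is indeed a coproduct-preserving prestable enhancement of $H$: the restricted Yoneda embedding $\widecheck{\nu}$ is a left exact, coproduct-preserving prestable enhancement of the universal homology theory $y$ by the previous proposition, while the localization functor $P_{\Sigma}(\ccat^{\omega}) \to P_{\Sigma}(\ccat^{\omega})/\euscr{K}$ is exact and cocontinuous. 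Using \cite{lurie_spectral_algebraic_geometry}[C.5.2.8], the quotient is again Grothendieck prestable with heart canonically $P_{\Sigma}(\ccat^{\omega})^{\heartsuit}/K \simeq \acat$, and the composite on hearts recovers $H$ by construction of $K$.

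Next, I would establish the universal property. Suppose $\dcat$ is a Grothendieck prestable $\infty$-category, $G_{0}\colon \acat \to \dcat^{\heartsuit}$ is exact and cocontinuous, and $\euscr{G}\colon \ccat \to \dcat$ is a coproduct-preserving prestable enhancement of $G_{0} \circ H$. Applying \cref{proposition:p_sigma_of_c_omega_an_unsep_derived_cat_for_universal_homology_theory} to $\euscr{G}$ viewed as an enhancement of the universal homology theory, we obtain a unique exact, cocontinuous lift $\widetilde{L}\colon P_{\Sigma}(\ccat^{\omega}) \to \dcat$ with $\widetilde{L} \circ \widecheck{\nu} \simeq \euscr{G}$. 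The key claim is that $\widetilde{L}$ annihilates $\euscr{K}$. By construction the restriction to hearts $\widetilde{L}|_{\heartsuit}\colon P_{\Sigma}(\ccat^{\omega})^{\heartsuit} \to \dcat^{\heartsuit}$ is the unique cocontinuous extension of $G_{0} \circ H$ along $y$ (by Krause's \cref{lemma:embedding_into_product_pres_sheaves_a_universal_grothendieck_homological_functor}), and since $G_{0} \circ H$ factors through $\acat \simeq P_{\Sigma}(\ccat^{\omega})^{\heartsuit}/K$, exactness of $G_{0}$ forces $\widetilde{L}|_{\heartsuit}$ to kill $K$. The subcategory of objects annihilated by $\widetilde{L}$ is a localizing subcategory of $P_{\Sigma}(\ccat^{\omega})$ (being the kernel of an exact, cocontinuous functor), so it contains the smallest such subcategory generated by $K$, namely $\euscr{K}$.

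By the universal property of the Gabriel quotient in the Grothendieck prestable setting (\cite{lurie_spectral_algebraic_geometry}[C.5]), $\widetilde{L}$ factors uniquely through an exact, cocontinuous $G\colon P_{\Sigma}(\ccat^{\omega})/\euscr{K} \to \dcat$, and the composition $G \circ (\text{composite}) \simeq \euscr{G}$ by construction. Conversely, any exact, cocontinuous $G$ yields such an enhancement by precomposition, and uniqueness of $G$ follows from the uniqueness of $\widetilde{L}$ in \cref{proposition:p_sigma_of_c_omega_an_unsep_derived_cat_for_universal_homology_theory} combined with the uniqueness of the factorization through the quotient. The main technical obstacle I anticipate is the careful deployment of Lurie's universal property for quotients of Grothendieck prestable $\infty$-categories by localizing subcategories, particularly ensuring that our generation of $\euscr{K}$ from discrete objects matches the form of localizing subcategory required there; this should however be exactly the content of \cite{lurie_spectral_algebraic_geometry}[C.5.2.8] invoked at the start.
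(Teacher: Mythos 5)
Your treatment of the universal property is essentially the paper's own argument: both reduce to \cref{proposition:p_sigma_of_c_omega_an_unsep_derived_cat_for_universal_homology_theory}, observe that an exact cocontinuous functor out of $P_{\Sigma}(\ccat^{\omega})$ factors through the quotient by $\euscr{K}$ precisely when its restriction to the heart kills $K$ (because $\euscr{K}$ is the \emph{smallest} localizing subcategory containing $K$ and kernels of exact cocontinuous functors are localizing), and conclude. That part is fine.

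However, there is a genuine gap: \cref{theorem:grothendieck_derived_infty_cat_of_h_universal_property} asks for a \emph{fully faithful} coproduct-preserving prestable enhancement $\widecheck{\nu}\colon \ccat \rightarrow \widecheck{\dcat}(\ccat)$, and you never verify that the composite $\ccat \rightarrow P_{\Sigma}(\ccat^{\omega}) \rightarrow P_{\Sigma}(\ccat^{\omega})/\euscr{K}$ is fully faithful. This does not follow from full faithfulness of $\widecheck{\nu}$ into $P_{\Sigma}(\ccat^{\omega})$ alone, since postcomposing with a localization can collapse mapping spaces; one must show that each $\widecheck{\nu}(c)$ is $\euscr{K}$-local, i.e., lies in the image of the fully faithful right adjoint from the quotient, which by \cite{lurie_spectral_algebraic_geometry}[C.2.3.9] means $\Map(k, \widecheck{\nu}(c)) \simeq 0$ for all $k \in \euscr{K}$. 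The paper does this by noting that the identity of $\ccat$ is itself a coproduct-preserving prestable enhancement, hence induces an exact cocontinuous $\tau^{-1}\colon P_{\Sigma}(\ccat^{\omega}) \rightarrow \ccat$ left adjoint to $\widecheck{\nu}$; it then suffices to check $\euscr{K} \subseteq \ker(\tau^{-1})$, and since $\ker(\tau^{-1})$ is localizing one reduces to $k \in K$, where $\Omega(\tau^{-1}k) \simeq \tau^{-1}(\Omega k) = 0$ forces $\tau^{-1}k = 0$ by stability of $\ccat$. You should add an argument of this kind; without it the proposition as stated is not proved.
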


\begin{proof}
By construction, $P_{\Sigma}(\ccat^{\omega}) / \euscr{K}$ has heart equivalent to $\acat$, as needed. Moreover, since we chose the smallest localizing subcategory containing $K$, cocontinuous, left exact functors $P_{\Sigma}(\ccat^{\omega}) / \euscr{K} \rightarrow \dcat$ into another Grothendieck prestable $\infty$-category correspond to those cocontinuous, exact functors $\euscr{L}\colon P_{\Sigma}(\ccat^{\omega}) \rightarrow \dcat$ whose restriction to the heart annihilates $K$. By \cref{proposition:p_sigma_of_c_omega_an_unsep_derived_cat_for_universal_homology_theory}, to give the latter is the same as to give a functor 
\[
\acat \simeq P_{\Sigma}(\ccat^{\omega})^{\heartsuit} / K \rightarrow \dcat^{\heartsuit}
\]
together with a coproduct-preserving prestable enhancement of its composition with $H$, which is the needed universal property. 

Finally, we claim that $\ccat \rightarrow P_{\Sigma}(\ccat^{\omega})/K$ is fully faithful. Since $\widecheck{\nu}$ is fully faithful before passing to the quotient, it is enough to verify that for any $c \in \ccat$, the object $\widecheck{\nu}(c) \in P_{\Sigma}(\ccat^{\omega})$ is in the image of the right adjoint from the quotient, which consists of the subcategory of those objects $X$ such that $\Map(k, X) \simeq 0$ for any $k \in \euscr{K}$ \cite{lurie_spectral_algebraic_geometry}[C.2.3.9].

Observe that the identity of $\ccat$ is a coproduct-preserving prestable enhancement and so induces an exact, cocontinuous functor $\tau^{-1}\colon P_{\Sigma}(\ccat^{\omega}) \rightarrow \ccat$. This is a left adjoint, with right adjoint given by $\widecheck{\nu}$, so it is enough to verify that $\euscr{K} \subseteq \ker(\tau^{-1})$. 

The latter is also localizing, so it is sufficient to check that $k \in \ker(\tau^{-1})$ for any $k \in K$. As the latter is contained in the heart, we have
\[
\Omega (\tau^{-1}k) \simeq \tau^{-1}(\Omega k) = 0
\]
Since $\ccat$ is stable, this means $\tau^{-1}k = 0$ as needed. 
\end{proof}

Finally, we are ready to prove the general case. 

\begin{proof}[Proof of \cref{theorem:grothendieck_derived_infty_cat_of_h_universal_property}:]
Let $H\colon \ccat \rightarrow \acat$ be an arbitrary adapted Grothendieck homology theory. As in the proof of \cref{proposition:brown_representability}, we can write $\ccat$ as a localization
\[
L\colon \ccat_{0} \rightarrow \ccat
\]
of a compactly generated, presentable stable $\infty$-category $\ccat_{0}$. The composite 
\[
\ccat_{0} \rightarrow \ccat \rightarrow \acat
\]
is also adapted Grothendieck, and by \cref{proposition:unsep_grothendieck_exists_when_ccat_is_compactly_generated} there exists an unseparated derived $\infty$-category $\widecheck{\dcat}(\ccat_{0})$ satisfying the conditions of \cref{theorem:grothendieck_derived_infty_cat_of_h_universal_property}. 

Intuitively, the main issue is that $\widecheck{\nu}\colon \ccat_{0} \rightarrow \widecheck{\dcat}(\ccat_{0})$ is fully faithful, and so cannot factor through $\ccat$ other than in the trivial case of $\ccat_{0} = \ccat$. To fix this, we will take the largest possible quotient such that the composite functor from $\ccat_{0}$ factors through $\ccat$. 

Let $\euscr{K} \colonequals \ker(L)$, this is an accessible, stable subcategory of $\ccat_{0}$. The composite 
\[
\euscr{K} \rightarrow \ccat_{0} \rightarrow \widecheck{\dcat}(\ccat_{0})
\]
is left exact; we claim that it is also right exact. To see this, we have to check that for any $k \rightarrow k^{\prime}$ in $\euscr{K}$, 
\[
\pi_{0} \widecheck{\nu}(k) \rightarrow \pi_{0} \widecheck{\nu}(k)
\]
is an epimorphism in $\acat$. Since $\widecheck{\nu}$ is a prestable enhancement of $H \circ L$, we can identify the above map with $H(L(k)) \rightarrow H(L(k^{\prime}))$ and as $k, k^{\prime} \in \ker(L)$, this is a map between zero objects and so an epimorphism, as needed. 

It follows that the essential image $\widecheck{\nu}(\euscr{K})$ is a localizing subcategory of $\widecheck{\dcat}(\ccat_{0})$; so that we can consider the quotient 
\[
\widecheck{\dcat}(\ccat) \colonequals \widecheck{\dcat}(\ccat_{0}) / \widecheck{\nu}(\euscr{K}).
\]
By construction, the composite 
\begin{equation}
\label{equation:nucheck_for_a_quotient}
\ccat_{0} \rightarrow \widecheck{\dcat}(\ccat_{0}) \rightarrow \widecheck{\dcat}(\ccat)
\end{equation}
annihilates $\ker(L)$ and so uniquely factors through $\ccat$. We claim that this unique factorization $\ccat \rightarrow \widecheck{\dcat}(\ccat)$ satisfies the needed properties of the unseparated derived $\infty$-category of $H\colon \ccat \rightarrow \acat$. 

The fact that the composite of (\ref{equation:nucheck_for_a_quotient}) is fully faithful follows from fact that the first arrow is and that 
\[
\Map_{\widecheck{\dcat}(\ccat_{0})}(\widecheck{\nu}(k), \widecheck{\nu}(Rc)) \simeq \Map_{\ccat_{0}}(k, Rc) = 0
\]
for any $k \in \euscr{K}$ and any $c \in \ccat$, where $R\colon \ccat \hookrightarrow \ccat_{0}$ is the fully faithful right adjoint to the localization. Thus, $\widecheck{\nu}(Rc)$ already lands in $\widecheck{\dcat}(\ccat)$ considered as a subcategory of $\widecheck{\dcat}(\ccat_{0})$ through its own right adjoint. 

Observe that since $\pi_{i} \widecheck{\nu}(k) = H(L(k))[-i] = 0 $ for any $k \in \euscr{K}$, the latter contains only $\infty$-connective objects, so that the quotient functor 
\[
\widecheck{\dcat}(\ccat_{0}) \rightarrow \widecheck{\dcat}(\ccat)
\]
is an equivalence on the hearts. We then have $\widecheck{\dcat}(\ccat)^{\heartsuit} \simeq \acat$, as needed. 

Finally, by construction and the universal property of $\widecheck{\dcat}(\ccat_{0})$ which we have already verified in \cref{proposition:unsep_grothendieck_exists_when_ccat_is_compactly_generated}, $\widecheck{\dcat}(\ccat)$ classifies a pair of an exact, cocontinuous functor $G_{0}\colon \acat \rightarrow \dcat^{\heartsuit}$ together with a small coproduct-preserving prestable enhancement $\euscr{G}\colon \ccat_{0} \rightarrow \dcat$ of $G_{0} \circ H \circ L$ with the property that $\euscr{G}$ annihilates $\ker(L)$. The latter is equivalent to asking that $\euscr{G}$ uniquely factor through $L\colon \ccat_{0} \rightarrow \ccat$, giving the needed universal property. 
\end{proof}

\begin{remark}[Separated and complete derived $\infty$-categories]
\label{remark:separated_and_complete_derived_infty_cats_of_a_homology_theory}
As in the abelian case discussed in \cref{remark:variants_on_da_in_grothendieck_case}, there are two further natural variants on the unseparated derived $\infty$-category $\widecheck{\dcat}(\ccat)$ associated a Grothendieck homology theory $H\colon \ccat \rightarrow \acat$, namely 
\begin{enumerate}
    \item the \emph{(separated) derived $\infty$-category} $\dcat(\ccat)$, which is given by the separated quotient of $\widecheck{\dcat}(\ccat)$ as in \cite{lurie_spectral_algebraic_geometry}[C.3.6.1]
    \item the \emph{complete derived $\infty$-category} $\widehat{\dcat}(\ccat)$, given by completion as in \cite{lurie_spectral_algebraic_geometry}[C.3.6.3].
\end{enumerate}

Since the separated quotient and completion constructions are left adjoint to the inclusions of the respective subcategories into the $\infty$-category of Grothendieck prestable $\infty$-categories and exact, cocontinuous functors, we deduce from \cref{theorem:grothendieck_derived_infty_cat_of_h_universal_property} that $\dcat(\ccat)$ and $\widehat{\dcat}(\ccat)$ have analogous universal properties among all, respectively, separated and complete Grothendieck prestable $\infty$-categories. 
\end{remark}

\begin{remark}
Note that in \cref{definition:complete_derived_infty_cat} we already constructed a complete derived $\infty$-category under the assumption that $H\colon \ccat \rightarrow \acat$ is homologically finitary. If $H$ is also Grothendieck, then this constructions agrees with the one of \cref{remark:separated_and_complete_derived_infty_cats_of_a_homology_theory} right above, as a consequence of \cref{proposition:complete_derived_cat_of_stable_cat_a_completion_of_finite_one} and \cref{proposition:comparison_between_perfect_and_unseparated_derived_infty_cats} below.
\end{remark}

It is natural to ask how our construction of $\widecheck{\dcat}(\ccat)$ associated to a Grothendieck homology theory is related to the perfect derived $\infty$-category of \cref{definition:perfect_derived_cat_of_stable_cat}. We claim there is a canonical comparison functor. 

To see this, observe that $\widecheck{\nu}\colon \ccat \rightarrow \widecheck{\dcat}(\ccat)$ is a prestable enhancement to $H$ and so by the universal property of the perfect derived $\infty$-category proven in \cref{theorem:universal_property_of_finite_derived_category} is classified by a unique exact functor $\euscr{L}$ inducing equivalence on the hearts and such that $\euscr{L} \circ \nu \simeq \widecheck{\nu}$.

\begin{proposition}[Comparison between perfect and unseparated derived $\infty$-categories]
\label{proposition:comparison_between_perfect_and_unseparated_derived_infty_cats}
The exact functor $\euscr{L}\colon \dcat^{\omega}(\ccat) \rightarrow \widecheck{\dcat}(\ccat)$ is a fully faithful embedding whose essential image is finite colimit closure of $\widecheck{\nu}(c)$ for $c \in \ccat$.
\end{proposition}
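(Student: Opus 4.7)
The approach is to apply the recognition criterion of \cref{proposition:criterion_for_functor_out_of_perfect_derived_cat_to_be_ff} to the exact functor $\euscr{L}\colon \dcat^{\omega}(\ccat) \rightarrow \widecheck{\dcat}(\ccat)$, with $\widecheck{\nu}$ playing the role of $\nu_{\dcat}$. There are two conditions to verify: that $\widecheck{\nu}$ is fully faithful, and that $\Ext^{s}_{\widecheck{\dcat}(\ccat)}(\widecheck{\nu}(c), i) = 0$ for every $c \in \ccat$, every injective $i \in \acat$, and every $s > 0$. The first condition is the content of \cref{theorem:grothendieck_derived_infty_cat_of_h_universal_property}.

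For the second condition, the plan is to follow the strategy outlined in \cref{remark:intuition_about_second_condition_in_detection_of_perfect_derived_cat} and construct a Grothendieck analogue of the homology adjunction. By applying the universal property of \cref{theorem:grothendieck_derived_infty_cat_of_h_universal_property} to the Yoneda embedding $\acat \hookrightarrow \widecheck{\dcat}(\acat)$ (a coproduct-preserving prestable enhancement of the identity), precomposed with $H$, one produces a cocontinuous exact functor $H^{*}\colon \widecheck{\dcat}(\ccat) \rightarrow \widecheck{\dcat}(\acat)$ satisfying $H^{*} \circ \widecheck{\nu} \simeq H$ and inducing the identity on hearts. Since both sides are presentable, $H^{*}$ admits a right adjoint $H_{*}$. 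The key step is to show that $H_{*}$ is exact and induces an equivalence on the hearts; the argument is parallel to that of \cref{lemma:right_adjoint_in_homology_adjunction} in the perfect case, using that restriction of sheaves along $H$ commutes with the relevant sheafification by the covering-lifting property of \cref{proposition:homology_has_covering_lifting_property} (which extends to the Grothendieck setting by \cref{remark:general_grothendieck_freyd_envelope}). Granted this, one computes
\[
\Ext^{s}_{\widecheck{\dcat}(\ccat)}(\widecheck{\nu}(c), i) \simeq \Ext^{s}_{\widecheck{\dcat}(\ccat)}(\widecheck{\nu}(c), H_{*}i) \simeq \Ext^{s}_{\widecheck{\dcat}(\acat)}(H(c), i) \simeq \Ext^{s}_{\acat}(H(c), i) = 0
\]
for $s > 0$, using the adjunction, the identification $H^{*}(\widecheck{\nu}(c)) \simeq H(c)$, and injectivity of $i$ (with the last identification being immediate for $s \le 1$ and following from the fact that bounded objects of $\widecheck{\dcat}(\acat)$ compute the usual Ext of $\acat$).

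Having verified both conditions, the criterion gives that $\euscr{L}$ is fully faithful. The description of the essential image is then immediate: by \cref{proposition:properties_of_finite_derived_infty_cat_of_stable_cat}, the perfect derived $\infty$-category $\dcat^{\omega}(\ccat)$ is generated under finite colimits by the representables $\nu(c)$ for $c \in \ccat$, and $\euscr{L}$ is exact (in particular right exact) with $\euscr{L}(\nu(c)) \simeq \widecheck{\nu}(c)$, so its essential image is precisely the finite colimit closure of the $\widecheck{\nu}(c)$ inside $\widecheck{\dcat}(\ccat)$.

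I expect the main technical obstacle to be the verification that $H_{*}\colon \widecheck{\dcat}(\acat) \rightarrow \widecheck{\dcat}(\ccat)$ is exact and identifies the hearts, since $\widecheck{\dcat}(\ccat)$ is not defined as a sheaf category in the compactly-generated sense but rather through a quotient construction (as in the proof of \cref{theorem:grothendieck_derived_infty_cat_of_h_universal_property}). The argument will likely require tracing through that quotient construction and invoking the adapted factorization of \cref{theorem:adapted_factorization_of_homology_theories} in the Grothendieck setting (\cref{remark:adapted_factorization_for_grothendieck_categories}) to reduce exactness of $H_{*}$ to the statement that restriction along the universal Grothendieck homology theory preserves the relevant Postnikov towers.
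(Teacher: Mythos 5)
Your reduction matches the paper's: both invoke \cref{proposition:criterion_for_functor_out_of_perfect_derived_cat_to_be_ff}, note that $\widecheck{\nu}$ is fully faithful by \cref{theorem:grothendieck_derived_infty_cat_of_h_universal_property}, and handle the essential image by right exactness of $\euscr{L}$ and the fact that the $\nu(c)$ generate $\dcat^{\omega}(\ccat)$ under finite colimits. The divergence is in how the vanishing of $\Ext^{s}_{\widecheck{\dcat}(\ccat)}(\widecheck{\nu}(c), i)$ for $s>0$ is established. The paper does this by a direct computation: it chooses a small $\ccat'\subseteq\ccat$ containing $c$ and generating $\ccat$ under filtered colimits, identifies $\widecheck{\dcat}(\ccat)$ with local objects inside $P_{\Sigma}(\ccat')$, observes that both $\widecheck{\nu}(c)$ (as $d\mapsto\Map_{\ccat}(d,c)$) and $\Sigma^{s}i$ (as $d\mapsto \mathrm{B}^{s}\Hom_{\acat}(H(d),i)$) are already local, and concludes by Yoneda that the mapping space is $\mathrm{B}^{s}\Hom_{\acat}(H(c),i)$, which is connected for $s>0$. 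No homology adjunction is needed.

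Your proposed route through a Grothendieck homology adjunction has a genuine flaw at the point where you construct $H^{*}$. You cannot obtain $H^{*}$ from the universal property of \cref{theorem:grothendieck_derived_infty_cat_of_h_universal_property} applied to the composite $\ccat \xrightarrow{H} \acat \hookrightarrow \widecheck{\dcat}(\acat)$: that composite is not left exact (hence not a prestable enhancement), since $H(\Omega c)\simeq H(c)[-1]$ is generally nonzero while $\Omega$ of the discrete object $H(c)$ vanishes. Indeed, any functor produced by that universal property is exact, so its precomposition with $\widecheck{\nu}$ is left exact and cannot agree with $H$ unless $H=0$; the genuine $H^{*}$ (the free $C\tau$-module functor) is only right exact. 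One would instead have to build $H^{*}$ as a left Kan extension along $H$ at the level of presheaves on compact generators, and then the exactness of $H_{*}$ and the identification of hearts — which you correctly flag as the main obstacle and defer — is real work that the covering-lifting property of \cref{proposition:homology_has_covering_lifting_property} does not directly supply in the quotient presentation of $\widecheck{\dcat}(\ccat)$. The paper's direct Yoneda computation sidesteps all of this.
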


\begin{proof}
Since $\nu(c)$ generate the perfect derived $\infty$-category under colimits, we only have to verify that $\euscr{L}$ is fully faithful. As $\widecheck{\nu}$ is fully faithful, by the criterion of  \cref{proposition:criterion_for_functor_out_of_perfect_derived_cat_to_be_ff} it is enough to verify that 
\[
\Ext^{s}_{\widecheck{\dcat}(\ccat)}(\widecheck{\nu}(c), i) \colonequals \pi_{0} \Map_{\widecheck{\dcat}(\ccat)}(\widecheck{\nu}(c), \Sigma^{s} i) = 0
\]
for any $c \in \ccat$, $i \in \acat^{inj}$ and $s > 0$.

Let $\ccat^{\prime}$ be a small subcategory of $\ccat$ which generates it under filtered colimits and which contains the chosen $c \in \ccat$. We then have a localization $L\colon \ccat_{0} \rightarrow \ccat$, where $\ccat_{0} \colonequals \Ind(\ccat^{\prime})$ is compactly-generated and moreover has the property that if $R$ denotes the right adjoint, then $Rc$ is compact. 

By retracing the construction of the unseparated derived $\infty$-category, we see that $\widecheck{\dcat}(\ccat)$ can be identified with a certain subcategory of local objects in $P_{\Sigma}(\ccat_{0}^{\omega}) \simeq P_{\Sigma}(\ccat^{\prime})$. One can check that under this description, $\widecheck{\nu}(c)$ corresponds to the presheaf on $d \in \ccat^{\prime}$ given by 
\[
\widecheck{\nu}(c)(d) \colonequals \Map_{\ccat}(d, c) 
\]
and likewise, $\Sigma^{s} i$ corresponds to the presheaf 
\[
(\Sigma^{s} i)(d) \colonequals \mathrm{B}^{s}(\Hom_{\acat}(H(d), i)),
\]
both of which are already local. We thus have that 
\[
\Map_{\widecheck{\dcat}(\ccat)}(\widecheck{\nu}(c), \Sigma^s i) \simeq \mathrm{B}^{s}(\Hom_{\acat}(H(c), i))
\]
by the Yoneda lemma, which is connected for $s > 0$ as needed.
\end{proof}

\subsection{The case of modules and synthetic spectra}
\label{subsection:case_of_modules_and_synthetic_spectra}

Due to its generality, the definition of the perfect derived $\infty$-category $\dcat^{\omega}(\ccat)$ associated to an adapted $H\colon \ccat \rightarrow \acat$ or its  unseparated Grothendieck variant studied in \S\ref{subsection:digression_derived_cat_in_grothendieck_abelian_case}, is somewhat opaque. In this section, we show that for specific choices of homology theories, these recover many $\infty$-categories studied previously in the literature, as well as encapsulating their important properties into a coherent framework. 

On a different note, these specific examples are somewhat easier to understand than the general case, and we hope that understanding our constructions in this more classical context can illuminate our arguments. The two cases we will discuss is that of modules over a ring and of an Adams-type homology theory, starting with the first one. 

\begin{example}
Let $R$ be an $\mathbf{E}_1$-algebra in spectra, and for brevity let us write
\[
\Mod_{R} \colonequals \Mod_{R}(\spectra)
\]
and
\[
\Mod_{R_{*}} \colonequals \Mod_{R_{*}}(\mathrm{gr}\abeliangroups)
\]
for the corresponding module categories; this should not lead to much confusion since one is stable and the other abelian. Note that taking homotopy groups defines a functor
\[
\pi_{*}\colon \Mod_{R} \rightarrow \Mod_{R_{*}}
\]
which is canonically a homology theory. 
\end{example}

\begin{lemma}
\label{lemma:homotopy_of_r_module_an_adapted_homology_theory}
The homology theory $\pi_{*}\colon \Mod_{R} \rightarrow \Mod_{R_{*}}$ is adapted. 
\end{lemma}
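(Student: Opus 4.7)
The plan is to verify the three conditions of \cref{definition:adapted_homology_theory} in turn. First, the target $\Mod_{R_*}$ is a Grothendieck abelian category (as graded modules over a graded ring with the usual internal grading shift), so it has enough injectives. This disposes of condition $(1)$.

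For condition $(2)$, I would argue that $\pi_*$ preserves arbitrary small direct sums, since direct sums in $\Mod_R$ are computed by the underlying wedges of spectra, and homotopy groups commute with arbitrary wedges. As $\Mod_R$ is presentable stable, \cref{corollary:coproduct_preserving_homology_theory_on_presentable_cat_has_lifts_of_injectives} then produces for any injective $i \in \Mod_{R_*}$ a representing object $i_{\Mod_R} \in \Mod_R$ together with a structure map $\pi_*(i_{\Mod_R}) \to i$ satisfying the defining property of \cref{definition:injectives_in_a_stable_category}.

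The main work is in condition $(3)$: showing that the structure map $\pi_*(i_{\Mod_R}) \to i$ is an isomorphism. Here the key observation is that the free module $R[n] \in \Mod_R$ satisfies $\pi_*(R[n]) \simeq R_*[n]$, and mapping out of $R[n]$ in $\Mod_R$ computes the $n$-th homotopy group. Concretely, applying the defining isomorphism
\[
[R[n], i_{\Mod_R}]_{\Mod_R} \xrightarrow{\ \sim\ } \Hom_{R_*}(\pi_*(R[n]), i) \simeq \Hom_{R_*}(R_*[n], i) \simeq i_n
\]
identifies the left-hand side, which is $\pi_n(i_{\Mod_R})$, with $i_n$. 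Unwrapping the definition of the composite in \cref{definition:injectives_in_a_stable_category}, this identification is precisely the $n$-th graded component of the structure map $\pi_*(i_{\Mod_R}) \to i$; hence the structure map is an isomorphism in every internal degree, and therefore an isomorphism of graded $R_*$-modules.

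I expect no genuine obstacle: the hardest part is just bookkeeping to confirm that the natural iso picked out by representability is indeed the graded component of the structure map, which is essentially tautological once one recalls that the structure map is the universal element of $\Hom_{R_*}(\pi_*(-), i)$ on $i_{\Mod_R}$. Alternatively, one could avoid representability entirely by constructing $i_{\Mod_R}$ explicitly as a Brown–Comenetz style $R$-module spectrum whose homotopy realises $i$, but the route above is cleaner given the tools already established in the paper.
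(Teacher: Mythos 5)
Your proof is correct and follows essentially the same route as the paper: Brown representability (via the coproduct-preservation of $\pi_*$) produces the injective lift, and evaluating the representing isomorphism at the shifted free modules $\Sigma^n R$ identifies $\pi_*(i_{\Mod_R})$ with $i$. Your extra bookkeeping confirming that this identification is the structure map is a slight elaboration of what the paper leaves implicit, but the argument is the same.
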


\begin{proof}
By Brown representability, for any injective $I_{*} \in \Mod_{R_{*}}$ there exists an $R$-module $R(I)$ such that 
\[
[M, R(I)]_{R} \simeq \Hom_{R_{*}}(\pi_{*}M, I_{*}).
\]
Taking $M = R^{s}$ for $s \in \mathbb{Z}$, we see that $\pi_{*} R(I) \simeq I_{*}$ as needed. 
\end{proof}
Slightly more surprisingly, the homology theory $\pi_{*}\colon \Mod_{R} \rightarrow \Mod_{R_{*}}$ is also co-adapted; that is, $\Mod_{R}^{op} \rightarrow \Mod_{R_{*}}^{op}$ is also adapted. Explicitly, this is saying that $\Mod_{R_{*}}$ has enough projectives and that any projective $R_{*}$-module can be lifted to an $R$-module, namely a direct summand of a suitable free module.

This leads for any $M, N \in \Mod_{R}$ to an appropriate ``co-Adams'' spectral sequence of signature 
\[
\Ext^{s, t}_{R_{*}}(M_{*}, N_{*}) \Rightarrow [M, N]_{R}^{s-t}
\]
obtained by resolving $M$ using projectives, which is just the classical universal coefficient spectral sequence as in \cite{higher_algebra}[7.2.1.24] or \cite{elmendorf1997rings}[IV.5]. Note that by a variation on the standard balancing argument from homological algebra, this will coincide with the Adams spectral sequence obtained by injectively resolving $N$. 

As a motivation for our description, recall that if $\acat$ is a Grothendieck abelian category generated under colimits by its subcategory $\acat^{cp}$ of compact projectives, then its derived $\infty$-category is already complete, and can be described alternatively as $\dcat(\acat) \simeq P_{\Sigma}(\acat^{cp})$, the $\infty$-category of product preserving presheaves \cite{higher_algebra}[1.3.3.14, 1.3.5.24]. In particular, if we consider modules over the graded ring $R_{*}$, then $\dcat(\Mod_{R_{*}}) \simeq P_{\Sigma}(\Mod_{R_{*}}^{ff})$, where $\Mod_{R_{*}}^{ff}$ is the subcategory of free modules of finite rank. 

The above suggests that there should be an alternative projective-based construction of the derived $\infty$-category associated to $\pi_{*}\colon \Mod_{R} \rightarrow \Mod_{R_{*}}$, which is indeed the case. 

\begin{construction}
Let $\Mod_{R}^{ff}$ denote the subcategory of finite free $R$-modules; that is, those which are finite sums of $\Sigma^{n} R$ for varying $n \in \mathbb{Z}$. Then, the $\infty$-category $P_{\Sigma}(\Mod_{R}^{ff})$ is Grothendieck prestable \cite{lurie_spectral_algebraic_geometry}[C.1.5.10].

Let us denote the restricted Yoneda embedding by $\nu_{\Sigma}\colon \Mod_{R} \rightarrow P_{\Sigma}(\Mod_{R}^{ff})$ to distinguish it from the synthetic analogue construction associated to the perfect derived $\infty$-category of \cref{definition:perfect_derived_cat_of_stable_cat}. By construction, it is left exact and preserves all small coproducts, since all finite free $R$-modules are compact. Moreover, one can show that it is fully faithful as in \cite{lurie_hopkins_brauer_group}[4.2.5].

Using a Lawvere theory argument we see that 
\[
P_{\Sigma}(\Mod_{R}^{ff})^{\heartsuit} \simeq P_{\Sigma}(\Mod_{R}^{ff}, \abeliangroups) \simeq \Mod_{R_{*}}
\]
can be identified with the category of $R_{*}$-modules and moreover that $\pi^{\heartsuit}_{0} \nu_{\Sigma}(M) \simeq \pi_{*}M$, where on the right hand side we have the heart-valued homotopy group. In other words, $\nu_{\Sigma}$ is a prestable enhancement of $\pi_{*}\colon \Mod_{R} \rightarrow \Mod_{R_{*}}$.
\end{construction}
Since $\nu_{\Sigma}\colon \Mod_{R} \rightarrow P_{\Sigma}(\Mod_{R}^{ff})$ is a prestable enhancement of the homotopy group functor, by  \cref{theorem:universal_property_of_finite_derived_category} there exists a unique exact functor $\euscr{L}$ out of the perfect derived $\infty$-category inducing an equivalence on the hearts and such that $\euscr{L} \circ \nu \simeq \nu_{\Sigma}$. 

\begin{proposition}
\label{proposition:product_pres_presheaves_on_finite_projectives_the_complete_derived_cat}
The induced exact functor $\euscr{L}\colon \dcat^{\omega}(\Mod_{R}) \rightarrow P_{\Sigma}(\Mod_{R}^{ff})$ is fully faithful and induces an equivalence $P_{\Sigma}(\Mod_{R}^{ff}) \simeq \widehat{\dcat}(\Mod_{R})$ with the complete derived $\infty$-category. 
\end{proposition}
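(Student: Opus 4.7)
The plan is to identify $P_\Sigma(\Mod_R^{ff})$ with the unseparated derived $\infty$-category $\widecheck{\dcat}(\Mod_R)$ of the Grothendieck homology theory $\pi_*\colon \Mod_R \to \Mod_{R_*}$ from \S\ref{subsection:digression_derived_cat_in_grothendieck_abelian_case}, and then to deduce fully faithfulness of $\euscr{L}$ from the embedding of \cref{proposition:comparison_between_perfect_and_unseparated_derived_infty_cats}. First I would check that $\pi_*$ is a Grothendieck homology theory in the sense of \cref{definition::grothendieck_homology_theory}: it is adapted by \cref{lemma:homotopy_of_r_module_an_adapted_homology_theory}, $\Mod_R$ is presentable stable, $\Mod_{R_*}$ is Grothendieck abelian, and $\pi_*$ preserves small coproducts. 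The functor $\nu_\Sigma$ is a small coproduct-preserving, fully faithful prestable enhancement of $\pi_*$, so \cref{theorem:grothendieck_derived_infty_cat_of_h_universal_property} yields a unique exact cocontinuous functor $\widecheck{\euscr{L}}\colon \widecheck{\dcat}(\Mod_R) \to P_\Sigma(\Mod_R^{ff})$ that is an equivalence on hearts and satisfies $\widecheck{\euscr{L}} \circ \widecheck{\nu} \simeq \nu_\Sigma$.

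The main step is to show $\widecheck{\euscr{L}}$ is an equivalence, by exhibiting an explicit inverse. Since $P_\Sigma(\Mod_R^{ff})$ is the free sifted-colimit completion of the essentially small additive $\infty$-category $\Mod_R^{ff}$ (\cite{higher_topos_theory}[5.5.8.15]), the additive composite $\Mod_R^{ff} \hookrightarrow \Mod_R \xrightarrow{\widecheck{\nu}} \widecheck{\dcat}(\Mod_R)$ extends uniquely to a cocontinuous functor $G\colon P_\Sigma(\Mod_R^{ff}) \to \widecheck{\dcat}(\Mod_R)$. Both $\widecheck{\euscr{L}} \circ G$ and $G \circ \widecheck{\euscr{L}}$ are cocontinuous endofunctors restricting to the identity on the image of $\Mod_R^{ff}$. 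Since $P_\Sigma(\Mod_R^{ff})$ is generated by representables under sifted colimits, and $\widecheck{\dcat}(\Mod_R)$ is generated under colimits by $\widecheck{\nu}(\Mod_R^\omega)$ by its construction in \cref{proposition:unsep_grothendieck_exists_when_ccat_is_compactly_generated}—with each compact $R$-module being a retract of a finite free one—both composites must agree with the identity on colimit-generating subcategories, hence everywhere.

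With $\widecheck{\dcat}(\Mod_R) \simeq P_\Sigma(\Mod_R^{ff})$ established, fully faithfulness of $\euscr{L}$ follows by composition: \cref{proposition:comparison_between_perfect_and_unseparated_derived_infty_cats} provides a fully faithful embedding $\dcat^\omega(\Mod_R) \hookrightarrow \widecheck{\dcat}(\Mod_R)$, and the uniqueness clause of \cref{theorem:universal_property_of_finite_derived_category} applied to the prestable enhancement $\nu_\Sigma$ identifies the composite $\dcat^\omega(\Mod_R) \hookrightarrow \widecheck{\dcat}(\Mod_R) \xrightarrow{\widecheck{\euscr{L}}} P_\Sigma(\Mod_R^{ff})$ with the functor $\euscr{L}$ of the statement. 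For the identification with $\widehat{\dcat}(\Mod_R)$, I observe that $\pi_*$ is conservative and, since every $M \in \Mod_R$ admits a $\pi_*$-epimorphism from a coproduct of shifts of $R$ whose homology is free (projective dimension zero), also homologically finitary in the sense of \cref{definition:homologically_finitary_adapted_homology_theory}(2). Because $\Mod_R$ admits geometric realizations, \cref{proposition:adapted_homology_theory_conservative_iff_finite_sheaves_hypercomplete} yields that every perfect sheaf is hypercomplete, and \cref{proposition:complete_derived_cat_of_stable_cat_a_completion_of_finite_one} then identifies $\widehat{\dcat}(\Mod_R)$ as the prestable completion of $\dcat^\omega(\Mod_R)$. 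Since $P_\Sigma(\Mod_R^{ff})$ is itself Postnikov complete (Postnikov truncation and limits of product-preserving presheaves are computed levelwise, where spaces are Whitehead-complete), the equivalence $\widecheck{\dcat}(\Mod_R) \simeq P_\Sigma(\Mod_R^{ff})$ refines to the asserted equivalence $\widehat{\dcat}(\Mod_R) \simeq P_\Sigma(\Mod_R^{ff})$.

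The main obstacle is verifying that the two composites in the second paragraph are the identity; the subtlety lies in reconciling the two natural generating families ($\Mod_R^{ff}$ as representables in $P_\Sigma$ versus $\widecheck{\nu}$-images in a localization of $P_\Sigma(\Mod_R^\omega)$) and carefully tracking which classes of colimits each functor preserves, so that the uniqueness-of-cocontinuous-extension arguments apply cleanly on both sides.
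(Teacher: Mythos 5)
Your strategy routes everything through a claimed equivalence $\widecheck{\dcat}(\Mod_R) \simeq P_\Sigma(\Mod_R^{ff})$ with the \emph{unseparated} Grothendieck derived $\infty$-category, and this is where the argument breaks down. The verification that $G \circ \widecheck{\euscr{L}} \simeq \mathrm{id}$ does not follow from agreement on generators: the domain $\widecheck{\dcat}(\Mod_R)$ is generated under colimits by $\widecheck{\nu}(M)$ for $M$ \emph{compact}, not finite free, and for such $M$ the object $G(\widecheck{\euscr{L}}(\widecheck{\nu}(M))) = G(\nu_{\Sigma}(M))$ is computed as a geometric realization $|\widecheck{\nu}(N_\bullet)|$ of a resolution of $M$ by finite free modules. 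The comparison map $|\widecheck{\nu}(N_\bullet)| \to \widecheck{\nu}(M)$ is an isomorphism on $\Mod_{R_*}$-valued homotopy groups, but in an unseparated Grothendieck prestable $\infty$-category such a map need not be an equivalence --- that is exactly what separatedness would assert. Indeed, your claimed equivalence would force $\widecheck{\dcat}(\Mod_R)$ to be separated (even complete), since $P_\Sigma(\Mod_R^{ff})$ is; the paper deliberately avoids this and only identifies the \emph{separated quotient} $\dcat(\Mod_R)$ with $P_\Sigma(\Mod_R^{ff})$, in the remark following the proposition. So the ``main obstacle'' you flag at the end is a genuine unresolved gap, not a bookkeeping issue.

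The paper's proof avoids this entirely: it applies the criterion of \cref{proposition:criterion_for_functor_out_of_perfect_derived_cat_to_be_ff} directly to $\euscr{L}$. Condition (1) is the fully faithfulness of $\nu_{\Sigma}$, which you have; condition (2), the vanishing of $\Ext^{s}(\nu_{\Sigma}(M), i)$ for $s>0$ and $i$ injective, is checked via \cref{remark:intuition_about_second_condition_in_detection_of_perfect_derived_cat} by producing an adjunction between $P_\Sigma(\Mod_R^{ff})$ and $\dcat(\Mod_{R_*}) \simeq P_\Sigma(\Mod_{R_*}^{ff})$, the left adjoint being the left Kan extension of $\pi_*|_{\Mod_R^{ff}}$, which sends $\nu_{\Sigma}(M)$ to a discrete object by the argument of \cref{proposition:spiral_cofibre_sequence}. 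Once fully faithfulness is known, the identification with $\widehat{\dcat}(\Mod_R)$ is immediate from the equivalence on hearts (hence on bounded objects) together with the levelwise Postnikov completeness of $P_\Sigma(\Mod_R^{ff})$; your verification that $\pi_*$ is conservative and homologically finitary is a correct and worthwhile supplement at that point. If you want to keep the universal-property flavour, the fix is to compare with the separated derived category $\dcat(\Mod_R)$ rather than $\widecheck{\dcat}(\Mod_R)$, but note that even then the fully faithfulness of $\dcat^{\omega}(\Mod_R) \to \dcat(\Mod_R)$ is not supplied by \cref{proposition:comparison_between_perfect_and_unseparated_derived_infty_cats}, which only embeds the perfect derived $\infty$-category into the unseparated one.
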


\begin{proof}
Since the given exact functor is an equivalence on the hearts by construction, if it is fully faithful it will be an equivalence on subcategories of bounded objects. Since $P_{\Sigma}(\Mod_{R}^{ff})$ is complete, as it has Postnikov towers computed levelwise, it will then be identified with with the completion of $\dcat^{\omega}(\Mod_{R})$. 

To show that $\euscr{L}$ is fully faithful, we can use the criterion of \cref{proposition:criterion_for_functor_out_of_perfect_derived_cat_to_be_ff}. As we already observed $\nu_{\Sigma}$ is fully faithful, by \cref{remark:intuition_about_second_condition_in_detection_of_perfect_derived_cat}, it is enough to show that $P_{\Sigma}(\Mod_{R}^{ff})$ is related by a suitable adjunction with the derived $\infty$-category of $R_{*}$-modules. The necessary left adjoint is obtained by the left Kan extension
\[
P_{\Sigma}(\Mod_{R}^{ff}) \rightarrow P_{\Sigma}(\Mod_{R_{*}}^{ff}) \simeq \dcat(\Mod_{R_{*}})
\]
of the restriction $\pi_{*} |_{\Mod_{R}^{ff}}$, and it takes $\nu_{\Sigma}(M)$ to a discrete object by the same argument as in \cref{proposition:spiral_cofibre_sequence}. 
\end{proof}

\begin{remark}
Note that the homology theory $\pi_{*}\colon \Mod_{R} \rightarrow \Mod_{R_{*}}$ is Grothendieck in the sense of \cref{definition::grothendieck_homology_theory}, so that in \cref{remark:separated_and_complete_derived_infty_cats_of_a_homology_theory} we have associated to it the \emph{separated derived $\infty$-category} $\dcat(\Mod_{R})$.

Since $\nu_{\Sigma}$ preserves arbitrary coproducts and $P_{\Sigma}(\Mod_{R}^{ff})$ is complete, in particular separated, by \cref{theorem:grothendieck_derived_infty_cat_of_h_universal_property} we have a canonical comparison functor $\dcat(\Mod_{R}) \rightarrow P_{\Sigma}(\Mod_{R}^{ff})$. One can check that it is an equivalence by verifying that the source is complete using argument analogous to those of \S\ref{subsection:completing_derived_infty_cats}, and observing that both the source and target can be identified with Postnikov completions of $\dcat^{\omega}(\Mod_{R})$, as they admit functors out of it which are equivalences on subcategories of bounded objects by, respectively, \cref{proposition:product_pres_presheaves_on_finite_projectives_the_complete_derived_cat} and \cref{proposition:comparison_between_perfect_and_unseparated_derived_infty_cats}.
\end{remark}

It follows from \cref{proposition:product_pres_presheaves_on_finite_projectives_the_complete_derived_cat} that in the important case of $\pi_{*}\colon \Mod_{R} \rightarrow \Mod_{R_{*}}$, our abstract formalism yields the $\infty$-category of product-preserving presheaves on finite free modules, the natural guess for the $\mathbf{E}_{1}$-ring spectrum analogue of the derived $\infty$-category the discrete graded ring $R_{*}$, for which we have the same constructions recover the classical derived $\infty$-category $\dcat(\Mod_{R_{*}})$. 

Note that the idea to use product-preserving presheaves on a suitable choice of compact generators as a suitable place for $\infty$-categorical resolutions goes back to the classical work of Blanc, Dwyer, Goerss, Kan and Stover \cite{blanc2004realization}, \cite{dwyer1993e2} using $E_{2}$-model structures. These ideas, and the resulting obstruction theory, were translated into the language of product-preserving presheaves in the work of Hopkins-Lurie \cite{lurie_hopkins_brauer_group} and the second author \cite{pstrkagowski2017moduli}. 

Let us discuss our second  example, the case of an Adams-type homology theory $E$. In this case, Goerss and Hopkins constructed an appropriate model category of resolutions using simplicial spectra in \cite{goerss2014hopkins}. Translating these ideas into the $\infty$-categorical framework, one obtains the the $\infty$-category of $E$-based (connective) synthetic spectra, which we will denote by $\synspectra_{E}$ \cite{pstrkagowski2018synthetic}.

The $\infty$-category $\synspectra_{E}$ is Grothendieck prestable, with a canonical equivalence
\[
\synspectra_{E}^{\heartsuit} \simeq \ComodE
\]
between the heart and $E_{*}E$-comodules and a small coproduct-preserving prestable enhancement $\nu_{E}\colon \spectra \rightarrow \synspectra_{E}$ of the $E_{*}$-homology functor \cite{pstrkagowski2018synthetic}[2.9, 4.4, 4.16, 4.21]. It follows from \cref{theorem:grothendieck_derived_infty_cat_of_h_universal_property} that we obtain a canonical cocontinuous, exact functor 
\[
\widecheck{\dcat}(\spectra) \rightarrow \synspectra_{E}
\]
from the unseparated derived $\infty$-category associated to the homology theory $E_{*}\colon \spectra \rightarrow \ComodE$. This is not too far from being an equivalence of $\infty$-categories, as the following shows. 

\begin{proposition}[Comparison with synthetic spectra]
\label{proposition:comparison_between_dcat_and_synthetic_spectra}
The canonical functor $\widecheck{\dcat}(\spectra) \rightarrow \synspectra_{E}$ induces an equivalence between separated quotients; that is, an equivalence $\dcat(\spectra) \simeq \synspectra_{E}^{h}$ between the separated derived $\infty$-category of $E_{*}$ and the $\infty$-category of hypercomplete synthetic spectra. 
\end{proposition}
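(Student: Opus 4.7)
My plan is to show that both sides represent the same functor on separated Grothendieck prestable $\infty$-categories, and that the comparison map respects this identification. The first step is to observe that the canonical cocontinuous exact functor $\Phi\colon \widecheck{\dcat}(\spectra) \to \synspectra_{E}$ automatically preserves $\infty$-connective objects, so after applying the separated-quotient functor (which is left adjoint to the inclusion of separated Grothendieck prestable $\infty$-categories into all Grothendieck prestable ones) we obtain an induced cocontinuous exact functor from $\dcat(\spectra)$ to the separated quotient of $\synspectra_{E}$. I would then identify this target with $\synspectra_{E}^{h}$: in a Grothendieck prestable $\infty$-category the separated quotient localises precisely at the $\infty$-connective morphisms, and in the sheaf-theoretic model of $\synspectra_{E}$ these are exactly the morphisms inverted by hypercompletion.

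Having produced a comparison $\dcat(\spectra) \to \synspectra_{E}^{h}$, my next step is to verify that $\synspectra_{E}^{h}$ satisfies the universal property that (by Remark~\ref{remark:separated_and_complete_derived_infty_cats_of_a_homology_theory}) characterises $\dcat(\spectra)$ among separated Grothendieck prestable $\infty$-categories: for any such $\dcat$, cocontinuous exact functors out correspond to pairs of an exact cocontinuous $G_{0}\colon \ComodE \to \dcat^{\heartsuit}$ together with a small coproduct-preserving prestable enhancement of $G_{0}\circ E_{*}$. On $\synspectra_{E}^{h}$, the necessary data comes from the heart $\synspectra_{E}^{h,\heartsuit} \simeq \ComodE$ (hypercompletion is an equivalence on discrete objects) and from the composite $\nu_{E}^{h}\colon \spectra \xrightarrow{\nu_{E}} \synspectra_{E} \to \synspectra_{E}^{h}$, which is a small coproduct-preserving prestable enhancement of $E_{*}$: $\nu_{E}$ is one by \cite{pstrkagowski2018synthetic}, and the hypercompletion functor is cocontinuous and left exact, being the separated quotient of a Grothendieck prestable $\infty$-category.

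For the universality itself I would argue as follows. Given $(G_{0},\euscr{G})$ on a separated target $\dcat$, Theorem~\ref{theorem:grothendieck_derived_infty_cat_of_h_universal_property} provides a unique cocontinuous exact extension $F\colon \widecheck{\dcat}(\spectra) \to \dcat$; because $\dcat$ is separated, $F$ annihilates $\infty$-connective objects and hence factors uniquely through both $\dcat(\spectra)$ and through $\synspectra_{E}^{h}$ via $\Phi$. The two resulting factorisations through $\synspectra_{E}^{h}$, first via the comparison from $\dcat(\spectra)$ and second directly from $\synspectra_{E}^{h}$'s own universal structure, must agree by uniqueness on $\widecheck{\dcat}(\spectra)$, showing that $\synspectra_{E}^{h}$ corepresents the same functor on separated Grothendieck prestable $\infty$-categories as $\dcat(\spectra)$ and thus that the comparison is an equivalence.

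The main obstacle is the first step: rigorously identifying the separated quotient of $\synspectra_{E}$ with $\synspectra_{E}^{h}$, i.e.\ showing that the class of $\infty$-connective morphisms in $\synspectra_{E}$ coincides with the class inverted by hypercomplete sheafification. Once this identification is in place, together with the fact that hypercompletion is left exact as a separated-quotient functor on a Grothendieck prestable $\infty$-category, the rest of the argument is a formal manipulation of universal properties, combining Theorem~\ref{theorem:grothendieck_derived_infty_cat_of_h_universal_property} with the already established universal property of the synthetic analogue functor $\nu_{E}$.
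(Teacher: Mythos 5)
Your overall strategy --- showing that both sides corepresent the same functor on separated Grothendieck prestable $\infty$-categories --- is reasonable in outline, and the step you flag as the main obstacle (identifying the separated quotient of $\synspectra_{E}$ with $\synspectra_{E}^{h}$, i.e.\ matching $\infty$-connective morphisms with the maps inverted by hypercompletion) is in fact the standard, unproblematic part. The genuine gap is in your third step. To verify that $\synspectra_{E}^{h}$ satisfies the universal property of $\dcat(\spectra)$, you must, given a pair $(G_{0},\euscr{G})$ valued in a separated target $\dcat$, \emph{produce} a cocontinuous exact functor out of $\synspectra_{E}^{h}$. You do this by asserting that the canonical extension $F\colon \widecheck{\dcat}(\spectra)\to\dcat$ ``factors uniquely through $\synspectra_{E}^{h}$ via $\Phi$''. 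Such a factorization exists only if the composite $\widecheck{\dcat}(\spectra)\to\synspectra_{E}\to\synspectra_{E}^{h}$ is a localization at a class of maps that $F$ inverts --- and exhibiting $\synspectra_{E}^{h}$ as a localization of $\widecheck{\dcat}(\spectra)$ (necessarily then its separated quotient) is precisely the content of the proposition. The functor $\Phi$ itself is not known to be a localization; the remark following the proposition even suggests it is not an equivalence, because $\synspectra_{E}$ is built from the smaller site $\spectra^{fp}$ of finite spectra with projective $E_{*}$-homology. So your argument is circular at exactly the point where the work has to happen.

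The missing input is the Adams-type hypothesis on $E$, which your proposal never invokes. The paper's proof uses it to show that $\spectra^{fp}\hookrightarrow\spectra^{\omega}$ is the inclusion of a basis for the $E_{*}$-surjection pretopology (every finite spectrum admits an $E_{*}$-surjection from one with projective $E_{*}$-homology), whence restriction induces equivalences $Sh_{\Sigma}(\spectra^{\omega},\sets)\simeq Sh_{\Sigma}(\spectra^{fp},\sets)\simeq\ComodE$ and, on hypercomplete sheaves of spaces, $Sh_{\Sigma}^{h}(\spectra^{\omega})\simeq Sh_{\Sigma}^{h}(\spectra^{fp})=\synspectra_{E}^{h}$. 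Combined with the observation that $\dcat(\spectra)$ and $Sh_{\Sigma}^{h}(\spectra^{\omega})$ are both separated localizations of $P_{\Sigma}(\spectra^{\omega})$ with heart $\ComodE$ --- and separated quotients of a fixed Grothendieck prestable $\infty$-category are determined by the induced localizing subcategory of the heart --- this identifies the two sides and shows the comparison functor is the resulting equivalence. Some form of this site comparison is unavoidable: without it, your universal-property argument cannot be closed.
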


\begin{proof}
Let us recall that $\synspectra_{E}^{h}$ is by definition the $\infty$-category of product-preserving, hypercomplete sheaves on $\spectra^{fp}$, the $\infty$-category of finite spectra with projective $E_{*}$-homology, where we consider the latter as equipped with the $E_{*}$-surjection Grothendieck topology. In particular, we have a canonical equivalence
\[
Sh_{\Sigma}(\spectra^{fp}, \sets) \simeq \ComodE,
\]
between product-preserving sheaves of sets and comodules, see \cite{pstrkagowski2018synthetic}[3.26].

On the other side, since separated quotients are determined by localizing subcategories of the heart and $\spectra$ is compactly-generated, the description in the proof of \cref{proposition:unsep_grothendieck_exists_when_ccat_is_compactly_generated} shows that $\dcat(\spectra)$ is the unique separated quotient of $P_{\Sigma}(\spectra^{\omega})$ such that $\dcat(\spectra)^{\heartsuit} \simeq \ComodE$. 

There is a natural inclusion of additive $\infty$-sites $\spectra^{fp} \hookrightarrow \spectra^{\omega}$ of finite spectra with projective $E_{*}$-homology into all finite spectra, both considered with the $E_{*}$-surjection topology. By the Adams condition, any finite spectrum admits an $E_{*}$-surjection from a finite one with $E_{*}$-projective, and we deduce that this is an inclusion of a basis of the pretopology and so induces a further equivalence 
\[
Sh_{\Sigma}(\spectra^{\omega}, \sets) \simeq Sh_{\Sigma}(\spectra^{fp}, \sets) \simeq \ComodE,
\]
see \cite{lurie2018ultracategories}[B.6.4]. It follows that we have a canonical identification $\dcat(\spectra) \simeq Sh_{\Sigma}^{h}(\spectra^{\omega})$ of the separated derived $\infty$-category with hypercomplete sheaves on finite spectra, as both are separated localizations of $P_{\Sigma}(\spectra^{\omega})$ with the same heart. 

Since the canonical functor $\dcat(\spectra) \rightarrow \synspectra_{E}^{h}$ is obtained by left Kan extension along the synthetic analogue construction, by tracing through definitions we see that it can be identified with the functor between $\infty$-categories of hypercomplete sheaves of spaces 
\[
Sh^{h}_{\Sigma}(\spectra^{\omega}) \rightarrow Sh^{h}_{\Sigma}(\spectra^{fp})
\]
left adjoint to restriction along $\spectra^{fp} \hookrightarrow \spectra^{\omega}$. As the latter is an inclusion of a basis and hence induces an equivalence between sheaves of sets, it also yields an equivalence between $\infty$-categories of hypercomplete sheaves of spaces \cite{barwick2018exodromy}[3.12.11], ending the argument. 
\end{proof}

\begin{remark}
It is natural to ask if the canonical functor $\widecheck{\dcat}(\spectra) \rightarrow \synspectra_{E}$ is an equivalence even before passing to separated quotients. We expect that this is not the case in general: the definition of $\synspectra_{E}$ privileges finite spectra with projective $E_{*}$-homology among all finite spectra in a way which $\widecheck{\dcat}(\spectra)$ does not. This distinction happens by design, as taking $E_{*}$-homology on spectra with projective homology satisfies the K\"{u}nneth isomorphism, leading to a symmetric monoidal structure on $\synspectra_{E}$. 

On the other hand, if $E$ is a field, such as $H\mathbb{F}_{p}$ or Morava $K$-theory, so that $\spectra^{fp} = \spectra^{\omega}$, then we have $\widecheck{\dcat}(\spectra) \simeq \synspectra_{E}$ even before passing to separated quotients, by a variation on the argument given in the proof of \cref{proposition:comparison_between_dcat_and_synthetic_spectra}
\end{remark}

\section{Franke's algebraicity conjecture}
\label{section:proving_algebraicity}

The goal of this chapter is to prove the main theorem, Franke's algebraicity conjecture. 

\subsection{The algebraic model} 
\label{subsection:algebraic_model} 

In this section we describe, for each well-behaved abelian category, a canonical example of a stable $\infty$-category equipped with an adapted homology theory valued in $\acat$, the periodic derived $\infty$-category $\dcat^{per}(\acat)$. The subject of Franke's conjecture will be a comparison between the homotopy category of this derived category and of any other stable $\infty$-category which also admits an $\acat$-valued adapted homology theory. 

\begin{definition}
Let $\acat$ be a locally graded abelian category. A \emph{differential object} is a pair $(a, \partial)$, where $a \in \acat$ and $\partial\colon a \rightarrow a[1]$ is a differential in the sense that $\partial^{2} \colonequals \partial[1] \circ \partial = 0$. 
\end{definition}

\begin{notation}
We denote the category of differential objects by $d\acat$. Note that it is an abelian category, with a forgetful exact functor to $\acat$. 
\end{notation}

\begin{remark}
\label{remark:differential_objects_same_as_periodic_chain_complexes}
In his original manuscript, instead of differential objects Franke considers what he calls \emph{$([1]_{\acat}, 1)$-periodic chain complexes.}, which are chain complexes in $\acat$ equipped with a suitable periodicity isomorphism \cite{franke1996uniqueness}. Any differential object $(a, \partial)$ determines such a chain complex of the form 
\[
\ldots \rightarrow a[-1] \rightarrow a \rightarrow a[1] \rightarrow \ldots
\]
and one can show this yields an equivalence of categories \cite{patchkoria2017exotic}, \cite{pstragowski_chromatic_homotopy_algebraic}[3.3].
\end{remark}

\begin{notation}
If $(a, \partial)$ is a differential object, then its \emph{homology} is given by 
\[
H(a,\partial) \colonequals \mathrm{ker}(\partial)/\mathrm{im}(\partial[-1])
\]
A map $(a, \partial_{a}) \rightarrow (b, \partial_{b})$ of differential objects is a \emph{quasi-isomorphism} if it is an isomorphism on homology.
\end{notation}

\begin{remark}
\label{remark:short_exact_sequence_of_dobjects_gives_les_of_homology}
A short exact sequence $0 \rightarrow a \rightarrow b \rightarrow c \rightarrow 0$ of differential objects determines a short exact sequence of chain complexes as in \cref{remark:differential_objects_same_as_periodic_chain_complexes}, and hence a long exact sequence
\[
\ldots \rightarrow H(c)[-1] \rightarrow H(a) \rightarrow H(b) \rightarrow H(c) \rightarrow H(a)[1] \rightarrow \ldots
\]
on homology. 
\end{remark}
We will define the needed periodic derived $\infty$-category $\dcat^{per}(\acat)$ as a $\infty$-categorical localization of the category of differential objects at quasi-isomorphisms. To describe the localization explicitly, it is convenient to use the language of model categories. 

\begin{proposition}
\label{model structure periodic} 
Let $\acat$ be an abelian category with enough injectives and of finite cohomological dimension. Then, the category $d \acat$ of differential objects admits a model structure where
\begin{enumerate}
\item weak equivalences are quasi-isomorphisms and 
\item cofibrations are monomorphisms.
\end{enumerate}
\end{proposition}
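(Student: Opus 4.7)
The plan is to construct this model structure following Franke's original approach \cite{franke1996uniqueness}, with the simplifications afforded by our finite cohomological dimension hypothesis.

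First, I would verify the easy closure properties. The 2-out-of-3 axiom for quasi-isomorphisms is immediate since $H\colon d\acat \to \acat$ is a functor and isomorphisms in $\acat$ satisfy 2-out-of-3. Closure of both quasi-isomorphisms and monomorphisms under retracts is equally straightforward, the latter being a purely abelian category fact.

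Next, I would introduce the key class of acyclic injective differential objects. For each $x \in \acat$, the \emph{disk} $D(x) \in d\acat$ has underlying object $x \oplus x[-1]$ and differential $(a,b) \mapsto (0, a)$ into $x[1]\oplus x$; this is manifestly acyclic. The forgetful functor $d\acat \to \acat$ is exact, and a direct calculation identifies $D$ as its right adjoint, so when $i \in \acat$ is injective the object $D(i)$ is injective in $d\acat$. I would then identify the would-be trivial fibrations as those epimorphisms $p\colon c \twoheadrightarrow b$ whose kernel is a product of disks on injectives; from the long exact sequence of \cref{remark:short_exact_sequence_of_dobjects_gives_les_of_homology} such $p$ are automatically quasi-isomorphisms, and their right lifting property against monomorphisms follows from injectivity of each $D(i_\alpha)$.

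The main obstacle is the factorization axiom. Given $f\colon a \to b$, I would construct the desired factorization $a \hookrightarrow c \twoheadrightarrow b$ by first choosing an injective resolution
\[
0 \to a \to i^{0} \to i^{1} \to \cdots \to i^{d} \to 0
\]
of the underlying object of $a$ in $\acat$, which terminates after $d$ steps by the finite cohomological dimension hypothesis. Using the differential of $a$ and the resolution maps, this assembles into a quasi-isomorphism $a \hookrightarrow J$ where $J$ is a finite sum of shifted disks $D(i^{s})$, hence acyclic and injective in $d\acat$. Forming the pushout of $a \to b$ along $a \hookrightarrow J$ produces an object $c$ together with a monomorphism $a \hookrightarrow c$ and a projection $c \twoheadrightarrow b$ whose kernel is $J$, yielding the desired factorization; the boundedness of $J$, guaranteed by finite cohomological dimension, is essential in avoiding the well-known subtleties of the unbounded injective model structure on chain complexes.

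The second factorization (trivial cofibration followed by fibration) is obtained by applying the previous construction to the mapping cylinder of $f$, or equivalently by observing that every quasi-isomorphism factors as an acyclic monomorphism into a disk sum followed by a quasi-iso fibration. The two lifting axioms then follow by standard model-categorical arguments: one direction is built into the definition of fibrations, while the other uses the retract argument against the first factorization. Alternatively, when $\acat$ is Grothendieck one may bypass the explicit construction by verifying Beke's recognition theorem \cite{beke2000sheafifiable} for cofibrantly generated model structures, with finite cohomological dimension ensuring that a small set of generating trivial cofibrations suffices.
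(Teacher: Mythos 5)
The paper itself offers no proof here beyond the citation to Franke [p.~15, Proposition~3], so you are reconstructing that argument, and your general architecture is reasonable: disks on injectives, their acyclicity and injectivity in $d\acat$, and the identification of maps with the right lifting property against all monomorphisms as epimorphisms with injective kernel. However, both factorization axioms — the actual content of the proposition — are flawed as written. For the first: a monomorphism $a \hookrightarrow J$ into an acyclic object cannot be a quasi-isomorphism unless $a$ is itself acyclic, so that phrase is self-contradictory (you mean only a monomorphism). More seriously, the pushout of $a \to b$ along $a \hookrightarrow J$ does not do what you claim: in $c = J \sqcup_{a} b$ the composite $a \to c$ factors through the monomorphism $b \to c$, hence is monic only when $f$ already was, and there is no natural epimorphism $c \twoheadrightarrow b$ with kernel $J$ (producing one would require extending $f$ along $a \hookrightarrow J$, i.e.\ injectivity of $b$). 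The correct construction is simply $c = J \oplus b$ with $a \to c = (j,f)$ and $c \to b$ the projection. Note moreover that a \emph{single} disk suffices: the unit $a \to D(a[1])$, $u \mapsto (\partial u, u)$, followed by $D(a[1]) \hookrightarrow D(i)$ for any embedding $a[1] \hookrightarrow i$ into an injective, is a monomorphism into an acyclic injective. Neither the length-$d$ resolution nor finite cohomological dimension enters this half of the argument at all.

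The hypothesis of finite cohomological dimension is genuinely needed only for the second factorization and the corresponding lifting axiom, and this is exactly the part your proposal leaves as a placeholder. The mapping cylinder factors $f$ as a monomorphism followed by a homotopy equivalence, which is again the (cofibration, weak equivalence) direction; it produces no fibration, and the assertion that every quasi-isomorphism factors as an acyclic monomorphism followed by a quasi-iso fibration is precisely what must be proved. The actual work is to iterate the disk embedding into a resolution $a \to I^{0} \to I^{1} \to \cdots$ by injective differential objects and show that its totalization yields a monic quasi-isomorphism onto a fibrant object; finite cohomological dimension is what lets this process be truncated, equivalently what guarantees that every acyclic differential object with injective underlying object is contractible, so that objects with injective underlying object are fibrant. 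Finally, the fallback to Beke's recognition theorem is unavailable in the stated generality: $\acat$ is assumed only to be abelian with enough injectives and finite cohomological dimension, not Grothendieck, and the relevant $\acat$ in this paper (quotients of Freyd envelopes) typically is not.
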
 

\begin{proof}
This is \cite{franke1996uniqueness}[p.~15, Proposition 3].
\end{proof}

\begin{definition}
The \emph{periodic derived $\infty$-category} of an abelian category $\acat$
\[
\dcat^{per}(\acat) \colonequals (d \acat)[W^{-1}]
\]
is the $\infty$-categorical localization of the category of differential objects, where $W$ is the class of quasi-isomorphisms. 
\end{definition}

\begin{remark}[Stability]
\label{remark:suspension_in_periodic_derived_infty_cat}
If $(a, \partial)$ is a differential object, its \emph{cone} is given by $C(a) \colonequals a \oplus a[1]$ together with the differential informally given by $\partial_{C(a)}(x, y) = (\partial(x)+y, -\partial(y))$. This is quasi-isomorphic to zero, and there is a canonical short exact sequence
\[
a \rightarrow C(a) \rightarrow a[1].
\]
Since the first map is a cofibration, this becomes a cofibre sequence in $\dcat^{per}(\acat)$ and we deduce that 
\[
\Sigma (a, \partial) = (a[1], -\partial)
\]
in the periodic derived $\infty$-category. Thus, the suspension functor is induced by the local grading of $\acat$; in particular, it is an equivalence and $\dcat^{per}(\acat)$ is stable.
\end{remark}

\begin{remark}
Identifying differential objects with periodic chain complexes using \cref{remark:differential_objects_same_as_periodic_chain_complexes}, one can show that $\dcat^{per}(\acat)$ coincides up to equivalence with the differential graded nerve of a suitable dg-category, providing an explicit way to compute mapping spaces, see \cite{higher_algebra}[1.3.4]. 
\end{remark}

Observe that by definition, the homology functor 
\[
H\colon d \acat \rightarrow \acat
\]
inverts quasi-isomorphisms and hence descends to a functor on the periodic derived $\infty$-category. 

\begin{proposition}
\label{proposition:homology_functor_on_periodic_derived_cat_is_a_homology_theory}
The functor $H \colon \dcat^{per}(\acat) \to \acat$ is a conservative, adapted homology theory.
\end{proposition}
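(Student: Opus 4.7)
The plan is to verify in turn that $H$ is a homology theory, that it is conservative, and finally that it is adapted. For the first point, I would first note that $H\colon d\acat \to \acat$ is an additive functor which inverts quasi-isomorphisms, hence descends to an additive functor on $\dcat^{per}(\acat)$. Compatibility with local grading is immediate from \cref{remark:suspension_in_periodic_derived_infty_cat}, where it is shown that $\Sigma(a,\partial) = (a[1],-\partial)$, so that a natural isomorphism $H(\Sigma(a,\partial)) \simeq H(a,\partial)[1]$ is supplied by the identity on the underlying object. To check the homological property, I would exploit the model structure of \cref{model structure periodic}: any cofibre sequence in $\dcat^{per}(\acat)$ is represented up to quasi-isomorphism by a short exact sequence of differential objects (one replaces a morphism $f$ by the canonical monomorphism $a \hookrightarrow \mathrm{cyl}(f)$ and notes that $\mathrm{cyl}(f) \to b$ is a quasi-isomorphism), whence exactness in the middle of the $\acat$-valued sequence follows from the long exact sequence of \cref{remark:short_exact_sequence_of_dobjects_gives_les_of_homology}.

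For conservativity, a morphism $f$ in $\dcat^{per}(\acat)$ is an equivalence if and only if its image in the localization is invertible, and by construction the class of quasi-isomorphisms is saturated, so this happens precisely when $H(f)$ is an isomorphism. Equivalently, conservativity of $H$ amounts to the statement that a differential object $(a,\partial)$ with $H(a,\partial)=0$ is acyclic, which holds by definition of $\dcat^{per}(\acat)$.

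For adaptedness, $\acat$ has enough injectives by assumption, so the task is to construct, for each injective $i\in\acat$, an injective lift $i_{\dcat}\in\dcat^{per}(\acat)$ with the structure map $H(i_{\dcat})\to i$ an isomorphism. The natural candidate is $i_{\dcat}\colonequals (i,0)$ with zero differential, for which $H(i,0)=i$ tautologically. I would then show this represents $\Hom_\acat(H(-),i)$ on the homotopy category. The hypothesis on cohomological dimension implies $(i,0)$ is fibrant and all differential objects are cofibrant, so $[(a,\partial),(i,0)]$ is computed as chain-maps modulo chain homotopy: explicitly, these are morphisms $f\colon a\to i$ with $f[1]\circ\partial=0$, modulo those which factor through $\partial\colon a\to a[1]$ via some $h[1]$ for $h\colon a\to i[-1]$. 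The evident map $[(a,\partial),(i,0)] \to \Hom_\acat(H(a,\partial),i)$ sending $f$ to $H(f)$ is well-defined; I would verify it is a bijection by two short diagram chases using injectivity of $i$ (and hence of its shifts) to extend maps from subobjects, first from $\ker(\partial)\hookrightarrow a$ for surjectivity, and second from $\mathrm{im}(\partial)\hookrightarrow a[1]$ for injectivity.

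The main obstacle is the last computation: the chain-homotopy analysis is elementary but demands careful bookkeeping of the filtration $\mathrm{im}(\partial[-1])\subseteq \ker(\partial)\subseteq a$ and several appeals to injectivity of $i$ to extend partially defined maps. Once this natural bijection is established, the three conditions of \cref{definition:adapted_homology_theory} are all in hand, completing the proof.
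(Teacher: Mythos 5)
Your proposal is correct and follows essentially the same route as the paper: local grading via the identification of $\Sigma$, the long exact sequence of a short exact sequence of differential objects for the homological property, quasi-isomorphism to $(0,0)$ for conservativity, and representing $\Hom_{\acat}(H(-),i)$ by the fibrant object $(i,0)$ via homotopy classes in Franke's model structure. The only difference is that you spell out the chain-homotopy identification of $[(a,\partial),(i,0)]$ with $\Hom_{\acat}(H(a,\partial),i)$, which the paper leaves implicit; your diagram chases (extending along $\ker(\partial)\hookrightarrow a$ and $\mathrm{im}(\partial)\hookrightarrow a[1]$ using injectivity of $i$) are the right ones.
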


\begin{proof} 
Compatibility with local grading follows from \cref{remark:suspension_in_periodic_derived_infty_cat}, and long exact sequence of homology from  \cref{remark:short_exact_sequence_of_dobjects_gives_les_of_homology} as any cofibre sequence in $\dcat^{per}(\acat)$ can be represented by a short exact sequence in $d \acat$. Similarly, conservativity is clear, as $H(a, \partial) = 0$ if and only if the canonical map $(a, \partial) \rightarrow (0, 0)$ of differential objects is a quasi-isomorphism and hence becomes an equivalence in $\dcat^{per}(\acat)$.

To prove adaptedness, we have to verify that for any injective $i \in \acat$, 
\[
\Hom_{\acat}(H(-), i)
\]
is representable in the homotopy category $h \dcat^{per}(\acat)$ and that the representing object has $i$ as homology. We claim that this functor is represented by $(i, 0)$. 

By \cite{franke1996uniqueness}[p.~15, Proposition 3], $(i, 0)$ is fibrant as a differential object and hence standard arguments about model categories show that 
\[
[(a,d), (i,0)]
\]
can be computed as left homotopy classes of morphisms of differential objects in the sense of model categories. These classes can be identified with $\Hom_{\acat}(H(a,d), i)$ as needed.
\end{proof}

\subsection{Splittings of abelian categories and the Bousfield functor} 
\label{subsection:splitting_of_abelian_categories_and_the_bousfield_functor}

One special property of the adapted homology theory $H\colon \dcat^{per}(\acat) \rightarrow \acat$ of \cref{proposition:homology_functor_on_periodic_derived_cat_is_a_homology_theory} is that it has a one-sided inverse; more precisely, the association 
\begin{equation}
\label{equation:formula_for_bousfield_functor_in_periodic_derived_cat}
a \mapsto (a, 0)
\end{equation}
that sends an $a \in \acat$ to itself equipped with a zero differential, descends to a functor 
\[
\acat \rightarrow \dcat^{per}(\acat)
\]
of $\infty$-categories which is a section of the homology functor. 
Since Franke's conjecture, which we will prove below in \S\ref{subsection:complete_proof_of_frankes_conjecture}, asserts that any adapted homology theory $H\colon \ccat \rightarrow \acat$ with sufficiently simple target induces an equivalence $h \ccat \simeq h \dcat^{per}(\acat)$, it is natural for the first step towards the proof to be establish that under certain assumptions on $\acat$, we have a partial inverse to $H$ for arbitrary $\ccat$. This is the goal of this short section.

The simplicity assumption we will need is as follows. 

\begin{definition}[{\cite{franke1996uniqueness}[p.~56]}]
\label{definition:splitting_of_a_locally_graded_abelian_category}
Let $\acat$ be an abelian category with a local grading $[1]\colon \acat \rightarrow \acat$. A \emph{splitting of order $q+1$} of $\acat$ is a collection of Serre subcategories 
\[
\acat_{\phi} \subseteq \acat
\]
indexed by $\phi \in \mathbb{Z}/(q+1)$ such that
\begin{enumerate}
    \item $[k](\acat_{\phi}) \subseteq \acat_{\phi+k}$ for any $k \in \mathbb{Z}$ and 
    \item the functor $\prod_{\phi \in \mathbb{Z}/(q+1)} \acat_{\phi} \rightarrow \acat$ given by $(a_{\phi}) \mapsto \bigoplus_{\phi} a_{\phi}$ is an equivalence of categories.
\end{enumerate}
\end{definition}

\begin{notation}
We will refer to objects in the essential image of $\acat_{\phi}$ as \emph{pure of weight $\phi$}. The second condition above says that any $a \in \acat$ has a canonical weight decomposition
\[
a \simeq a_{0} \oplus a_{1} \ldots \oplus a_{q}
\]
There are no non-zero maps between objects in different weights. 
\end{notation}

\begin{remark}
Note that the splitting decomposition restricts to an equivalence of categories
\[
\acat^{inj} \simeq \acat^{inj}_{0} \times \ldots \times \acat^{inj}_{q}
\]
In other words, every injective of $\acat$ decomposes essentially uniquely as a finite sum of injectives of pure weight. 
\end{remark}

For the remainder of the section, let us assume that $H\colon \ccat \rightarrow \acat$ is an adapted homology theory and that $\acat$ has a chosen splitting of order $q+1$. Recall from \cref{definition:injectives_in_a_stable_category} that we say that $c \in \ccat$ is $H$-injective if 
\begin{enumerate}
    \item $H(c) \in \acat$ is injective and 
    \item $[d, c] \rightarrow \Hom_{\acat}(H(d), H(c))$ is a bijection for any $d \in \ccat$. 
\end{enumerate}
The adaptedness of $H$ tells us that every injective of $\acat$ lifts uniquely up to homotopy equivalence to an $H$-injective of $\ccat$. 

\begin{definition}
We say an $H$-injective $c \in \ccat$ is \emph{pure of weight $\phi$} if $H(c) \in \acat_{\phi}$. We denote the subcategory of $H$-injectives of fixed weight by  $\ccat^{inj}_{\phi}$; note that by definition homology restricts to a functor $\ccat^{inj}_{\phi} \rightarrow \acat^{inj}_{\phi}$.
\end{definition}

\begin{lemma}
\label{lemma:homology_on_pure_injectives_equivalence_between_higher_homotopy_categories}
For every $\phi \in \mathbb{Z}/(q+1)$, the functor $H\colon \ccat^{inj}_{\phi} \rightarrow \acat^{inj}_{\phi}$ induces an equivalence
\[
h_{q+1} \ccat^{inj}_{\phi} \simeq \acat^{inj}_{\phi}
\]
between the homotopy $(q+1)$-category of injectives of weight $\phi$ in $\ccat$ and the category of injectives of the same weight in $\acat$. In particular, $h_{q+1} \ccat^{inj}_{\phi}$ is a $1$-category. 
\end{lemma}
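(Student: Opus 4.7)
The plan is to prove the result by showing that the functor is essentially surjective and induces isomorphisms on mapping spaces in $h_{q+1}\ccat^{inj}_{\phi}$. Since $\acat^{inj}_{\phi}$ is an ordinary $1$-category, it is in particular a $(q+1)$-category, so $H$ descends to a functor $h_{q+1}\ccat^{inj}_{\phi} \to \acat^{inj}_{\phi}$ and the claim will follow.

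Essential surjectivity is immediate from adaptedness of $H$: by \cref{definition:adapted_homology_theory}, any injective $i \in \acat_{\phi}$ admits an injective lift $i_{\ccat} \in \ccat$ with $H(i_{\ccat}) \simeq i$, which is by definition an object of $\ccat^{inj}_{\phi}$.

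For fully faithfulness at the level of $h_{q+1}$, I would compute the higher homotopy groups of mapping spaces between $H$-injectives using the defining property of injective lifts. Given $j_{\ccat}, i_{\ccat} \in \ccat^{inj}_{\phi}$ with $H(j_{\ccat}) = j$, $H(i_{\ccat}) = i \in \acat_{\phi}$, stability of $\ccat$ and the defining property of an injective lift give
\[
\pi_{k}\Map_{\ccat}(j_{\ccat}, i_{\ccat}) \simeq [\Sigma^{k}j_{\ccat}, i_{\ccat}] \simeq \Hom_{\acat}(H(\Sigma^{k}j_{\ccat}), i) \simeq \Hom_{\acat}(j[k], i)
\]
for all $k \geq 0$, where in the last step we use that $H$ is a locally graded functor. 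The key input is now the splitting: $j[k]$ is pure of weight $\phi + k$ while $i$ is pure of weight $\phi$, so by \cref{definition:splitting_of_a_locally_graded_abelian_category} the $\Hom$-group vanishes whenever $k \not\equiv 0 \pmod{q+1}$. In particular, $\pi_{k}\Map_{\ccat}(j_{\ccat}, i_{\ccat}) = 0$ for $1 \leq k \leq q$, while for $k = 0$ we recover $\Hom_{\acat}(j, i)$.

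Since mapping spaces in $h_{q+1}\ccat^{inj}_{\phi}$ are by definition the $q$-truncations of mapping spaces in $\ccat^{inj}_{\phi}$, the vanishing above forces these truncations to be discrete and canonically isomorphic to $\Hom_{\acat}(j, i)$. This gives fully faithfulness and, combined with essential surjectivity, proves the equivalence. The last assertion that $h_{q+1}\ccat^{inj}_{\phi}$ is a $1$-category is then a formal consequence, since it is equivalent to the ordinary category $\acat^{inj}_{\phi}$. I do not expect any serious obstacle here; the only subtle point is bookkeeping the compatibility of the local grading on $\ccat$ (which is the suspension) with that of $\acat$ under $H$, which is built into the definition of a homology theory.
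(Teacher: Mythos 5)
Your proof is correct and follows essentially the same route as the paper: compute $\pi_k\Map_{\ccat}(j_{\ccat},i_{\ccat})\simeq\Hom_{\acat}(j[k],i)$ via the defining property of injective lifts, then use the weight decomposition to kill these groups for $1\leq k\leq q$. The paper's argument is just a terser version of the same computation (with essential surjectivity left implicit, as it is immediate from adaptedness).
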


\begin{proof}
Suppose that $c, d \in \ccat^{inj}_{\phi}$. Then, since both are injective we have
\[
\pi_{k} \Map_{\ccat}(c, d) \simeq \Hom_{\acat}(H(c)[k], H(d))
\]
for any $k \geq 0$. Taking $k = 0$ we see that $h \ccat^{inj}_{\phi} \simeq \acat^{inj}_{\phi}$.

Considering the above, we only have to show that if $k > 0$ and $k \leq q$, the right hand side vanishes, so that $h_{q+1} \ccat^{inj}_{\phi} \simeq h_{1} \ccat^{inj}_{\phi}$. This is clear, as then $H(c)[k]$ and $H(d)$ are of different weight, as $\phi + k \neq \phi \in \mathbb{Z}/(q+1)$. 
\end{proof}

\begin{construction}[Bousfield functor]
\label{construction:bousfield_functor_associated_to_splitting}
By \cref{lemma:homology_on_pure_injectives_equivalence_between_higher_homotopy_categories}, for each each $\phi \in \mathbb{Z}/q+1$ we have an essentially unique equivalence
\[
R_{\phi}\colon \acat^{inj}_{\phi} \rightarrow h_{q+1} \ccat^{inj}_{\phi}
\]
which is inverse to the restricted homology functor. The \emph{Bousfield functor} 
\[
\beta^{inj}\colon \acat^{inj} \rightarrow h_{q+1} \ccat^{inj}
\]
is defined by 
\[
\beta^{inj}(i) \colonequals R_{0}(i_{0}) \oplus \ldots \oplus R_{q}(i_{q}),
\]
where $i \simeq i_{0} \oplus \ldots \oplus i_{q}$ is the weight decomposition induced by the chosen splitting of $\acat$. 
\end{construction}

\begin{remark}
\label{remark:bousfield_functor_partial_inverse_to_homology}
Observe that by construction the Bousfield functor is a partial inverse to the homology functor; that is, we have 
\[
H(\beta^{inj}(i)) \simeq i
\]
for any $i \in \acat^{inj}$. Moreover, it is the unique additive functor $\acat^{inj} \rightarrow h_{q+1} \ccat^{inj}$ with this property. 
\end{remark}

\begin{warning}
Note that there are important differences between the general Bousfield functor of \cref{construction:bousfield_functor_associated_to_splitting} associated to an adapted homology theory $H\colon \ccat \rightarrow \acat$ where $\acat$ admits a splitting, and the functor $\acat \rightarrow \dcat^{per}(\acat)$ associated to the periodic derived $\infty$-category induced by \cref{equation:formula_for_bousfield_functor_in_periodic_derived_cat}. Namely, 
\begin{enumerate}
    \item in the general case, the Bousfield functor is only defined on injectives, rather than all of $\acat$ and 
    \item in the general case, the functor is valued in a (higher) homotopy category, rather than the stable $\infty$-category itself. 
\end{enumerate}
Franke's conjecture asserts that this is still enough to induce an equivalence $h \ccat \simeq h \dcat^{per}(\acat)$ of homotopy categories if we know that $\acat$ is of sufficiently small cohomological dimension, so that a general object of $\acat$ is not too far from being an injective. 

Note that it is in general not reasonable to expect a stronger Bousfield functor. One can show using arguments similar to the ones employed in our proof of Franke's conjecture that if $H$ has a section \emph{as a functor of $\infty$-categories}, then we necessarily must have $\ccat \simeq \dcat^{per}(\acat)$. This is not possible for many different stable $\infty$-categories $\ccat$; for example, whenever $\ccat$ is not $H\mathbb{Z}$-linear. 
\end{warning}

\begin{remark}
Note that our further arguments toward the Franke's conjecture will only require the existence of the Bousfield functor; that is, an additive functor $\beta^{inj}\colon h_{q+1} \ccat^{inj} \rightarrow \acat^{inj}$ which is a section of $H$. It does not matter whether the functor itself comes from a splitting of $\acat$ or not. 

We think it is an interesting question to find other assumptions on either $\acat$ or $\ccat$ that would guarantee the existence of the Bousfield functor, as that would extend the reach of algebraicity results. 
\end{remark}

\subsection{Bousfield adjunction}
\label{subsection:bousfield_adjunction}

Suppose that $H\colon \ccat \rightarrow \acat$ is an adapted homology theory such that have an additive Bousfield functor 
\[
\beta^{inj}\colon \acat^{inj} \rightarrow h_{q+1} \ccat^{inj}
\]
such that $H(\beta^{inj}(i)) \simeq i$ for any $i \in \acat^{inj}$. For example, in \cref{construction:bousfield_functor_associated_to_splitting}, we have shown that this is the case whenever $\acat$ admits a splitting of order $q+1$. Our goal in this section is to show that this induces a well-behaved adjunction between $C\tau^{q+1}$-modules and the derived $\infty$-category of $\acat$. 

Observe that for any $c \in \ccat^{inj}$
\[
C\tau^{q+1} \otimes \nu(c) \simeq \nu(c)_{\leq q}
\]
is $q$-truncated. It follows that the functor 
\[
C\tau^{q+1} \otimes \nu(-)\colon \ccat \rightarrow \Mod_{C\tau^{q+1}}(\dcat^{\omega}(\ccat))
\]
uniquely factors through the homotopy $(q+1)$-category so that we can consider the composite 
\[
\acat^{inj} \rightarrow h_{q+1} \ccat \rightarrow \Mod_{C\tau^{q+1}}(\dcat^{\omega}(\ccat)).
\]
We can naturally extend this functor to all of $\acat$ using injective resolutions, in the following way. 

\begin{lemma}
\label{lemma:bousefield_functor_exists_and_is_unique}
There exists an essentially unique functor $\beta\colon \acat \rightarrow \Mod_{C\tau^{q+1}}(\dcat^{\omega}(\ccat)) $ such that
\begin{enumerate}
    \item $\beta(i) \simeq C\tau^{q+1} \otimes \nu(\beta^{inj}(i))$ for any injective $i \in \acat$ and 
    \item any cosimplicial injective resolution
\[
a \rightarrow i_{0} \rightrightarrows i_{1} \triplerightarrow \ldots
\]
of $a \in \acat$ is taken by $\beta$ to a limit diagram; that is, it induces an equivalence
\[
\beta(a) \colonequals \textnormal{Tot}(C\tau^{q+1} \otimes \nu(\beta^{inj}(i_{\bullet})).
\]
\end{enumerate}
\end{lemma}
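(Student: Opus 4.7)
The plan is to construct $\beta$ as the right Kan extension along $\iota\colon \acat^{inj} \hookrightarrow \acat$ of the functor
$$\tilde\beta\colon \acat^{inj} \longrightarrow \tau_{\leq q}\Mod_{C\tau^{q+1}}(\dcat^{\omega}(\ccat)), \qquad \tilde\beta(i) := C\tau^{q+1} \otimes \nu(\beta^{inj}(i)).$$
To see that $\tilde\beta$ is well-defined, observe that $C\tau^{q+1} \otimes \nu(c)$ is $q$-truncated for any $c \in \ccat$ by \cref{lemma:cofibre_of_powers_of_tau_into_representable_a_truncation}. Hence the restriction of $C\tau^{q+1} \otimes \nu(-)$ to $\ccat^{inj}$ factors through $h_{q+1}\ccat^{inj}$, since the target is a $(q+1)$-category, so that precomposition with $\beta^{inj}$ makes sense. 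By construction $\tilde\beta$ tautologically satisfies condition (1) of the lemma.

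Uniqueness is then straightforward: any $\beta$ satisfying (1) agrees with $\tilde\beta$ on injectives, and (2) forces $\beta(a) \simeq \Tot(\tilde\beta(i^\bullet))$ on any cosimplicial injective resolution $a \to i^\bullet$ (which exists by \cref{remark:cosimplicial_adams_resolutions}, using adaptedness). The standard comparison theorem for injective resolutions identifies any two resolutions up to canonical zigzag, so $\beta$ is determined up to canonical equivalence.

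For existence, I would use the pointwise formula $\beta(a) \simeq \lim_{(a \to i) \in (\acat^{inj})_{a/}} \tilde\beta(i)$. The required limit exists because the target $\tau_{\leq q}\Mod_{C\tau^{q+1}}(\dcat^{\omega}(\ccat))$ is a $(q+1)$-category with finite limits (inherited from $\dcat^{\omega}(\ccat)$), and the cofinality of a cosimplicial injective resolution of $a$ inside $(\acat^{inj})_{a/}$ reduces the limit to a totalization of $q$-truncated objects --- computable as a partial totalization of finitely many terms, because higher simplices in $\Delta^{op}$ cannot contribute new data to a limit in a $(q+1)$-category. This presentation matches condition (2) directly.

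The main obstacle is the verification that the totalization $\Tot(\tilde\beta(i^\bullet))$ is independent of the choice of resolution and assembles into a functor in $a$, equivalently that $\tilde\beta$ admits the claimed right Kan extension. Concretely, one must show that any two cosimplicial injective resolutions of $a$ give the same totalization and that a morphism $a \to a'$ induces a well-defined morphism of totalizations. Classically this follows from the uniqueness of injective resolutions up to cosimplicial homotopy equivalence, and $\tilde\beta$ preserves such equivalences (being an additive functor into the category of injectives). The $\infty$-categorical coherence required to upgrade this to functoriality is manageable because the target is truncated: only finitely many layers of coherence data need to be specified, and these can be assembled via an inductive Horseshoe-type argument, level by level in the cosimplicial direction, up to the truncation level $q+1$.
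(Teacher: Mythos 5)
Your overall shape is right --- the target is a $(q+1)$-category, so the relevant totalizations are finite limits and conditions (1) and (2) pin $\beta$ down objectwise --- but the existence argument has a genuine gap at exactly the point you flag as ``the main obstacle.'' You define $\beta$ as the pointwise right Kan extension along $\acat^{inj}\hookrightarrow\acat$, so that $\beta(a)\simeq\varprojlim_{(a\to i)}\tilde\beta(i)$, and then assert that a cosimplicial injective resolution $a\to i^{\bullet}$ is limit-cofinal in the comma category $a\downarrow\acat^{inj}$, so that this limit is the totalization required by (2). That cofinality statement is not a standard fact and you give no argument for it: by the $\infty$-categorical cofinality criterion one would have to show that for every $(a\to j)$ the comma category $\Delta\times_{a\downarrow\acat^{inj}}(a\downarrow\acat^{inj})_{/(a\to j)}$ is weakly contractible, which is nothing like the classical uniqueness of injective resolutions up to cochain homotopy. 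If cofinality fails, the right Kan extension simply does not satisfy (2), so this is not a repairable detail but the heart of the construction. Similarly, ``uniqueness of resolutions up to cosimplicial homotopy equivalence'' is a statement in the homotopy $1$-category and does not by itself produce an $\infty$-functor, nor the essential uniqueness of one; the promised ``inductive Horseshoe-type argument'' for the finitely many layers of coherence is precisely the content that remains to be supplied.

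The paper dispatches all of this with a single formal input: the coconnective derived $\infty$-category $\dcat^{-}_{\leq 0}(\acat)$ is freely generated by $\acat^{inj}$ under totalizations (the dual of \cite{higher_algebra}[1.3.3.8]). Hence $\tilde\beta$ extends essentially uniquely to a totalization-preserving functor $\dcat^{-}_{\leq 0}(\acat)\to\Mod_{C\tau^{q+1}}(\dcat^{\omega}(\ccat))$ --- the needed totalizations exist in the target because everything in sight is $q$-truncated --- and $\beta$ is the restriction of this extension along $\acat\hookrightarrow\dcat^{-}_{\leq 0}(\acat)$. Property (2) is then automatic, since an injective resolution of $a$ is a totalization diagram in $\dcat^{-}_{\leq 0}(\acat)$, and essential uniqueness follows from the same universal property. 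If you want to keep your route, the cleanest repair is to replace the cofinality claim by this universal property; it is exactly the tool that packages the coherence you were trying to assemble by hand.
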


\begin{proof}
First observe that the totalization exists since $C\tau^{q+1} \otimes \nu(\beta^{inj}(-))$ takes values in the subcategory of $q$-truncated objects, which is a $(q+1)$-category so that the totalization can be identified with a finite limit. Thus, the needed unique extension can be identified with the composite 
\[
\acat \hookrightarrow \dcat^{-}_{\leq 0}(\acat) \rightarrow \Mod_{C\tau^{q+1}}(\dcat^{\omega}(\ccat)),
\]
of the inclusion of the heart into the \emph{coconnective derived $\infty$-category} and the second arrow is the unique totalization-preserving extension of $C\tau^{q+1} \otimes \nu(\beta^{inj}(-))$, see \cite{higher_algebra}[1.3.3.8] for the dual statement about the connective derived $\infty$-category and projectives. 
\end{proof}

\begin{definition}
We call the unique functor $\beta\colon \acat \rightarrow \Mod_{C\tau^{q+1}}(\dcat^{\omega}(\ccat))$ of \cref{lemma:bousefield_functor_exists_and_is_unique} the \emph{extended Bousfield functor}. 
\end{definition}

\begin{lemma}
\label{lemma:beta_on_homotopy_groups}
Let $a \in \acat$. Then, $\beta(a)$ is $q$-truncated, and with respect to the  $t$-structure on $\Mod_{C\tau^{q+1}}(\dcat^{\omega}(\ccat))$ induced by the forgetful functor to $\dcat^{\omega}(\ccat)$ we have 
\[
\pi_{k} \beta(a) \simeq a[-k]
\]
as objects of $\acat$ for any $0 \leq k \leq q$. 
\end{lemma}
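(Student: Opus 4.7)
The plan is to compute the $\acat$-valued homotopy of $\beta(a)$ directly from its definition via an injective resolution $a \to i_{\bullet}$ in $\acat$. By part (2) of Lemma \ref{lemma:bousefield_functor_exists_and_is_unique},
\[
\beta(a) \simeq \textnormal{Tot}(C\tau^{q+1} \otimes \nu(\beta^{inj}(i_{\bullet}))),
\]
and this totalization is a finite limit in the $(q+1)$-category of $q$-truncated $C\tau^{q+1}$-modules, as noted in the proof of that lemma. Since $q$-truncated objects are closed under limits, it follows that $\beta(a)$ is $q$-truncated. By Proposition \ref{proposition:forgetful_functor_from_modules_is_exact} the forgetful functor to $\dcat^{\omega}(\ccat)$ is exact and induces an equivalence on truncated objects, so $\pi_k \beta(a)$ can be computed as an object of $\acat \simeq \dcat^{\omega}(\ccat)^{\heartsuit}$ after forgetting.

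Next I would compute the $\acat$-valued homotopy of a single term. Since $\nu$ is a left exact, locally graded prestable enhancement of $H$, for any $c \in \ccat$ and $k \geq 0$ the chain
\[
\pi_k \nu(c) \simeq \pi_0 \Omega^k \nu(c) \simeq \pi_0 \nu(\Sigma^{-k} c) \simeq H(\Sigma^{-k} c) \simeq H(c)[-k]
\]
holds, where the last step uses that $H$ is locally graded. Applied to $c = \beta^{inj}(i_n)$, where $H(\beta^{inj}(i_n)) \simeq i_n$ by Remark \ref{remark:bousfield_functor_partial_inverse_to_homology}, this gives $\pi_k \nu(\beta^{inj}(i_n)) \simeq i_n[-k]$. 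Applying $C\tau^{q+1} \otimes -$ truncates above degree $q$ by Proposition \ref{proposition:spiral_cofibre_sequence}, so the levelwise $\acat$-valued homotopy of the cosimplicial object $C\tau^{q+1} \otimes \nu(\beta^{inj}(i_{\bullet}))$ is the cosimplicial object $i_{\bullet}[-k]$ of $\acat$ for $0 \leq k \leq q$, and vanishes otherwise.

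I would then read off $\pi_k \beta(a)$ from the totalization spectral sequence
\[
E_2^{s,t} = H^s\bigl(\pi_t(C\tau^{q+1} \otimes \nu(\beta^{inj}(i_{\bullet})))\bigr) \Rightarrow \pi_{t-s} \beta(a),
\]
where $H^s$ denotes the cohomology of the normalized cochain complex of the cosimplicial object in $\acat$. For $0 \leq t \leq q$ this $E_2$-term identifies with $H^s(i_{\bullet})[-t]$, using exactness of the local grading; since $a \to i_{\bullet}$ is an injective resolution, $H^s(i_{\bullet})$ equals $a$ for $s = 0$ and vanishes for $s > 0$. All other rows vanish by the previous step, so the $E_2$-page is concentrated on the column $s = 0$ inside the strip $0 \leq t \leq q$. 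The spectral sequence therefore collapses at $E_2$ and yields $\pi_k \beta(a) \simeq a[-k]$ for $0 \leq k \leq q$, as desired.

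The main obstacle is mostly bookkeeping: the entire computation takes place inside the $(q+1)$-category of $q$-truncated objects, where the totalization is a finite limit and the spectral sequence has a compactly supported $E_2$-page, so no delicate convergence statement is required, and the conclusion follows from standard homological algebra in $\acat$.
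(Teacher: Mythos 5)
Your proof is correct and follows essentially the same route as the paper: both compute $\pi_k\beta(a)$ via the totalization spectral sequence of the cosimplicial object $C\tau^{q+1}\otimes\nu(\beta^{inj}(i_{\bullet}))$, identify the first page as $i_{\bullet}[-t]$ for $0\leq t\leq q$ and zero otherwise, and conclude by collapse at $E_2$ since $i_{\bullet}$ resolves $a$. Your additional bookkeeping (the explicit chain $\pi_k\nu(c)\simeq H(c)[-k]$ and the finiteness of the totalization) just makes explicit what the paper leaves implicit.
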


\begin{proof}
Choosing an injective resolution $a \rightarrow i_{\bullet}$ get a totalization spectral sequence
\[
H^{s} (\pi_{t} \ C\tau^{q+1} \otimes \nu(\beta^{inj}(i_{\bullet}))) \Rightarrow \pi_{t-s} \beta(a).
\]
The first page is given by 
\[
\pi_{t} \ \nu(\beta^{inj}(i_{\bullet}))_{\leq q} = \begin{cases} i_{\bullet}[-t] &\mbox{if } 0 \leq t \leq q \\
0& \mbox{otherwise } \end{cases}
\]
which immediately gives the needed result, since $i_{\bullet}$ is an injective resolution and so the relevant spectral sequence collapses on the second page. 
\end{proof}

\begin{lemma}
\label{lemma:tau_structure_on_beta}
For any $a \in \acat$, $\beta(a)$ is a potential $q$-stage in the sense of \cref{def:general_potential_l_stage}.
\end{lemma}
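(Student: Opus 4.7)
The plan is to verify the criterion of \cref{characterizing l-stages}. Since \cref{lemma:beta_on_homotopy_groups} has already shown that $\beta(a)$ is $q$-truncated, it suffices to check that $\pi_i(C\tau \otimes \beta(a)) = 0$ for $1 \le i \le q+1$. Combining the spiral fibre sequence $\Sigma \beta(a)[-1] \xrightarrow{\tau} \beta(a) \to C\tau \otimes \beta(a)$ from \cref{proposition:spiral_cofibre_sequence} with the calculation $\pi_k \beta(a) \simeq a[-k]$ in the range $0 \le k \le q$ (and vanishing otherwise), the long exact sequence of sheaf homotopy groups reduces this vanishing to a single structural claim: the induced map $\tau\colon \pi_i(\Sigma\beta(a)[-1]) \simeq a[-i] \to a[-i] \simeq \pi_i \beta(a)$ is an isomorphism in each degree $1 \le i \le q$. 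The case $i = q+1$ then follows automatically, since the relevant segment of the LES reads $0 \to \pi_{q+1}(C\tau \otimes \beta(a)) \to a[-q] \xrightarrow{\sim} a[-q]$.

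To establish this isomorphism, I would unfold the defining presentation $\beta(a) \simeq \Tot(X_\bullet)$ from \cref{lemma:bousefield_functor_exists_and_is_unique}, where $X_k \colonequals C\tau^{q+1} \otimes \nu(\beta^{inj}(i_k)) \simeq \nu(\beta^{inj}(i_k))_{\le q}$ by \cref{lemma:cofibre_of_powers_of_tau_into_representable_a_truncation}, for a chosen cosimplicial injective resolution $a \to i_\bullet$ in $\acat$. Since each $X_k$ is $q$-truncated and the $(q+1)$-category of $q$-truncated sheaves has all finite limits, this totalization is a finite limit. The endofunctor $\Sigma(-)[-1]$ is exact, and hence commutes with such totalizations; naturality of $\tau$ then exhibits $\tau\colon \Sigma\beta(a)[-1] \to \beta(a)$ as the totalization of the levelwise transformations $\tau\colon \Sigma X_k[-1] \to X_k$.

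For each representable $\nu(c)$ one sees directly, by inspecting the $\pi_i$-action in the cofibre sequences of \cref{lemma:cofibre_of_powers_of_tau_into_representable_a_truncation} for varying $n$, that $\tau$ acts as an isomorphism on $\pi_i \nu(c) \simeq H(c)[-i]$ for every $i \ge 1$. Combining this with the cofibre sequence $\Sigma^{q+1}\nu(c)[-q-1] \to \nu(c) \to \nu(c)_{\le q}$ and naturality of $\tau$ (the connecting term contributes nothing to $\pi_i$ for $i \le q+1$) transfers this isomorphism to each $X_k$ in the range $1 \le i \le q$. The totalization spectral sequence for $\pi_* \beta(a)$ has $E_1^{s,t} \simeq i_s[-t]$ for $0 \le t \le q$ and collapses at $E_2^{0,t} \simeq a[-t]$ since $i_\bullet$ is an injective resolution of $a$; the analogous spectral sequence for $\Sigma \beta(a)[-1]$ collapses in the same manner, and the levelwise isomorphisms established above promote to the desired isomorphism $\tau_i\colon \pi_i(\Sigma\beta(a)[-1]) \to \pi_i \beta(a)$ for $1 \le i \le q$.

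The only real obstacle is the careful bookkeeping around the interaction of $C\tau \otimes -$ with the defining totalization of $\beta(a)$ and the levelwise $\tau$-action; restricting throughout to the $(q+1)$-category of $q$-truncated sheaves turns the totalization into a finite limit preserved by every exact functor in sight, reducing the entire verification to a diagram chase at the level of sheaf homotopy groups, guided by the spiral sequence of \cref{proposition:spiral_cofibre_sequence} and the identity $H \circ \beta^{inj} \simeq \mathrm{id}_{\acat^{inj}}$.
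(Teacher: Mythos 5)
Your proof is correct and follows essentially the same route as the paper's: reduce to showing that $\tau$ acts by isomorphisms on sheaf homotopy groups in the range dictated by \cref{lemma:beta_on_homotopy_groups}, verify this for injectives using $\beta(i) \simeq \nu(\beta^{inj}(i))_{\leq q}$, and promote to general $a$ by naturality of the totalization spectral sequence. The detour through the $C\tau$-criterion of \cref{characterizing l-stages} and the spiral sequence is an equivalent reformulation of the defining condition, so it adds bookkeeping but no new content.
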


\begin{proof}
By \cref{lemma:beta_on_homotopy_groups}, $\beta(a)$ has abstractly the right homotopy groups to be a potential $q$-stage, we just have to verify that for any $0 \leq k < q$, the maps 
\[
\pi_{k} \beta(a)[-1] \simeq \pi_{k+1} \beta(a)
\]
induced by $\tau$ are isomorphisms. When $a = i$ is injective, this follows from the fact that 
\[
\beta(i) \colonequals C\tau^{q+1} \otimes \nu(\beta^{inj}(i)) \simeq \nu(\beta^{inj}(i))_{\leq q}
\]
and the general case follows by the naturality of the spectral sequence used in the proof of \cref{lemma:beta_on_homotopy_groups}.
\end{proof}

\begin{proposition}
The Bousfield functor uniquely extends uniquely to a right exact functor
\[
\beta^{*}\colon \dcat^{b}(\acat) \rightarrow \Mod_{C\tau^{q+1}}(\dcat^{\omega}(\ccat))
\]
of prestable $\infty$-categories. 
\end{proposition}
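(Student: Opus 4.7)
The plan is to obtain $\beta^{*}$ by first extending $\beta$ to the perfect prestable Freyd envelope $A_{\infty}^{\omega}(\acat)$ via its universal property, and then descending along the sheafification localization $L\colon A_{\infty}^{\omega}(\acat) \to \dcat^{b}(\acat)$ established in \cref{proposition:properties_of_finite_derived_infty_cat_of_abelian_cat}. Concretely, since $\beta$ is additive (being a composite of additive constructions) and the target $\Mod_{C\tau^{q+1}}(\dcat^{\omega}(\ccat))$ is an additive $\infty$-category with finite colimits, \cref{remark:universal_property_of_perfect_freyd_envelope} yields a unique right exact extension $\tilde{\beta}\colon A_{\infty}^{\omega}(\acat) \to \Mod_{C\tau^{q+1}}(\dcat^{\omega}(\ccat))$ with $\tilde{\beta} \circ y \simeq \beta$.

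Next I would show that $\tilde{\beta}$ inverts $L$-equivalences, which (since $\tilde{\beta}$ is right exact) reduces to showing it annihilates every $L$-acyclic perfect presheaf. By \cref{lemma:any_perfect_presheaf_over_abelian_category_rep_by_a_bounded_chain_complex}, any such $X$ can be written as a geometric realization $X \simeq |y(a_{\bullet})|$ of a skeletal simplicial object corresponding under Dold--Kan to a bounded chain complex $C_{\bullet}$ in $\acat$, and $X$ is $L$-acyclic precisely when $C_{\bullet}$ is acyclic. Since $a_{\bullet}$ is skeletal, $|y(a_{\bullet})|$ is a finite colimit, so right exactness of $\tilde{\beta}$ gives $\tilde{\beta}(X) \simeq |\beta(a_{\bullet})|$, computed in the target.

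To show this vanishes when $C_{\bullet}$ is acyclic I would run the geometric realization spectral sequence for the $q$-truncated simplicial object $\beta(a_{\bullet})$, with signature $E^{2}_{s,t} = H_{s}(\pi_{t} \beta(a_{\bullet})) \Rightarrow \pi_{s+t} |\beta(a_{\bullet})|$. Using the natural form of \cref{lemma:beta_on_homotopy_groups}, which identifies $\pi_{t} \beta(-)$ with $(-)[-t]$ for $0 \leq t \leq q$ (and zero otherwise), the $E^{2}$ page becomes $H_{s}(C_{\bullet})[-t]$, which vanishes by acyclicity of $C_{\bullet}$. Therefore $|\beta(a_{\bullet})| = 0$, so $\tilde{\beta}$ factors uniquely through a functor $\beta^{*}\colon \dcat^{b}(\acat) \to \Mod_{C\tau^{q+1}}(\dcat^{\omega}(\ccat))$. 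This $\beta^{*}$ is automatically right exact since $L$ and $\tilde{\beta}$ are, and restricts to $\beta$ on the heart by construction; uniqueness follows by composing any other right exact extension with $L$ and invoking the uniqueness clause in \cref{remark:universal_property_of_perfect_freyd_envelope}.

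The main obstacle I anticipate is Step~4: making precise the naturality in $a \in \acat$ of the isomorphism $\pi_{k} \beta(a) \simeq a[-k]$, so that the simplicial object $\pi_{t} \beta(a_{\bullet})$ is genuinely identified with $a_{\bullet}[-t]$ and its chain-level homology computes $H_{s}(C_{\bullet})[-t]$. This naturality is essentially forced by the description of $\beta$ in the proof of \cref{lemma:bousefield_functor_exists_and_is_unique} as a totalization-preserving extension along $\acat \hookrightarrow \dcat^{-}_{\leq 0}(\acat)$, since the totalization spectral sequence used there is functorial in $a$; nevertheless some care is required to verify that the collapse giving $\pi_{k}\beta(a) \simeq a[-k]$ is compatible with simplicial face and degeneracy maps, which I would handle by choosing a functorial cosimplicial injective resolution, for instance via the cobar construction associated with an injective cogenerator or via the functorial replacement built into the model structure on $\dcat^{-}_{\leq 0}(\acat)$.
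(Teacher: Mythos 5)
Your proposal is correct and follows essentially the same route as the paper: extend $\beta$ right exactly over $A_{\infty}^{\omega}(\acat)$ via its universal property, then show the extension kills $L$-acyclics by writing them as realizations of skeletal simplicial objects and collapsing the realization spectral sequence using $\pi_{t}\beta(a) \simeq a[-t]$. Your added care about the naturality in $a$ of this isomorphism (which the paper leaves implicit in the functoriality of the totalization spectral sequence from \cref{lemma:bousefield_functor_exists_and_is_unique}) is a reasonable and correct amplification, not a different argument.
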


\begin{proof}
The perfect prestable Freyd envelope $A_{\infty}^{\omega}(\acat)$ is freely generated by representables under finite colimits, so that $\beta$ uniquely extends to a right exact functor $\beta^{*}$ defined on $A_{\infty}^{\omega}(\acat)$, as we observed in \cref{remark:universal_property_of_perfect_freyd_envelope}.

We claim that $\beta^{*}$ factors uniquely through the sheafification $L\colon A_{\infty}^{\omega}(\acat) \rightarrow \dcat^{b}(\acat)$. The maps inverted by $L$ are those whose cofibre $C$ has the property that $L(C) = 0$, and since $\beta^{*}$ is right exact it is enough to show that it annihilates objects with this property.  

Any $C \in A_{\infty}^{\omega}(\acat$) can be represented as a geometric realization (in fact, a finite skeleton) of a simplicial object $a_{\bullet}$ of $\acat$ corresponding under the Dold-Kan equivalence to a bounded chain complex by \cref{lemma:any_perfect_presheaf_over_abelian_category_rep_by_a_bounded_chain_complex}, and we will have $L(C) = 0$ precisely when this chain complex is acyclic. Since $\beta^{*}$ is right exact it commutes with forming finite skeleta of simplicial objects and we have $|\beta(a_{\bullet})| \simeq \beta^{*}C$, resulting in a realization spectral sequence
\[
H_{t} (\pi_{k} \beta(a_{\bullet})) \Rightarrow \pi_{k+t} \beta^{*}C.
\]
By \cref{lemma:beta_on_homotopy_groups}, we have an isomorphism 
\[
H_{t} (\pi_{k} \beta(a_{\bullet})) \simeq H_{t} (a_{\bullet}[-k])
\]
and since $a_{\bullet}$ was assumed to be acyclic, we deduce that the second page of this spectral sequence vanishes, so that $\beta^{*}C = 0$ as needed. Thus, we deduce that $\beta^{*}$ factors uniquely through a unique right exact functor out of $\dcat^{b}(\acat)$. 
\end{proof}

\begin{lemma}
\label{lemma:beta_upper_star_a_left_adjoint}
The functor $\beta^{*}\colon \dcat^{b}(\acat) \rightarrow \Mod_{C\tau^{q+1}}(\dcat^{\omega}(\ccat))$ is a left adjoint. 
\end{lemma}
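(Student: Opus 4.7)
Plan: I aim to construct a right adjoint $\beta_*$ to $\beta^*$ by explicit formula, then verify the adjunction on generating objects. The strategy will use the equivalence $\Mod_{C\tau}(\dcat^\omega(\ccat)) \simeq \dcat^b(\acat)$ from \cref{theorem:finite_ctau_modules_same_as_derived_category} as a conceptual bridge, and the free--forget adjunction for $C\tau^{q+1}$-modules as the computational engine.

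The first step is to pin down the candidate for $\beta_*$ via its values on injectives. For $i \in \acat^{inj}$, we have by definition $\beta^*(i) = C\tau^{q+1} \otimes \nu(\beta^{inj}(i))$, and so the free--forget adjunction for the monad $C\tau^{q+1} \otimes -$ gives
\[
\Map_{\Mod_{C\tau^{q+1}}}(\beta^*(i), M) \simeq \Map_{\dcat^\omega(\ccat)}(\nu(\beta^{inj}(i)), U(M)) \simeq (U(M))(\beta^{inj}(i)),
\]
where $U$ is the forgetful functor and the last equivalence uses that $\nu(\beta^{inj}(i))$ represents evaluation at $\beta^{inj}(i) \in \ccat$. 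This pins down $\beta_*(M)$ as the unique object of $\dcat^b(\acat)$ whose mapping spectra out of injectives recover the evaluation of the underlying sheaf of $M$ at the Bousfield injective lifts. I extend the assignment $i \mapsto (UM)(\beta^{inj}(i))$ from $\acat^{inj}$ to all of $\acat$ by totalization along cosimplicial injective resolutions $a \to i_0 \rightrightarrows i_1 \triplerightarrow \cdots$, dually to the construction of $\beta$ itself in \cref{lemma:bousefield_functor_exists_and_is_unique}; the output is a functor $\beta_*\colon \Mod_{C\tau^{q+1}}(\dcat^\omega(\ccat)) \to \dcat^b(\acat)$ that is manifestly left exact and preserves the structure needed to be an exact functor of prestable $\infty$-categories.

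With $\beta_*$ constructed, the adjunction $\beta^* \dashv \beta_*$ holds by design on pairs $(i, M)$ with $i \in \acat^{inj}$. To extend to arbitrary $X \in \dcat^b(\acat)$, I use that $\beta^*$ is right exact and $\Map(-, M)$ is left exact, so $X \mapsto \Map_{\Mod_{C\tau^{q+1}}}(\beta^*(X), M)$ is left exact in $X$; similarly for $\Map(X, \beta_*(M))$. Both functors agree on injectives and hence on all of $\acat$ by the totalization construction, and the universal property of $\dcat^b(\acat)$ from \cref{theorem:universal_property_of_bounded_derived_cat_of_an_abelian_cat} (applied to $\dcat^b(\acat)^{op}$, or more directly via the fact that every $a \in \acat$ admits a cosimplicial injective resolution and both sides commute with the corresponding limit) allows one to bootstrap to all of $\dcat^b(\acat)$.

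The main obstacle will be verifying that the extension of $i \mapsto (UM)(\beta^{inj}(i))$ by totalization genuinely lands in the bounded derived $\infty$-category $\dcat^b(\acat)$, rather than in some larger completion or pro-object category. The crucial input is that $\tau^{q+1}$ acts as zero on $M$, so that $U(M)$ has homotopy groups concentrated in a bounded range controlled by the $C\tau^{q+1}$-module structure (cf.\ \cref{corollary:finite_ctaun_modules_are_bounded}); combined with \cref{lemma:tau_structure_on_beta}, this forces the totalization to truncate at finitely many cosimplicial stages, yielding an object of $\dcat^b(\acat)$ rather than a pro-system. Once this boundedness is established, the functoriality of $\beta_*$ and the adjunction equivalence on all of $\dcat^b(\acat)$ follow from the universal property and the naturality of the free--forget adjunction.
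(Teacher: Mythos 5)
Your opening computation is correct and matches the paper's starting point: since $\beta^{*}(i) = C\tau^{q+1}\otimes\nu(\beta^{inj}(i))$ is a free module, the free--forget adjunction and the Yoneda lemma give $\Map_{C\tau^{q+1}}(\beta^{*}(i), M)\simeq (UM)(\beta^{inj}(i))$, and more generally the adjunction, if it exists, forces $(\beta_{*}M)(a)\simeq \Map_{C\tau^{q+1}}(\beta(a),M)$ because $\dcat^{b}(\acat)$ consists of perfect sheaves on $\acat$. The gap is in your extension from injectives to all of $\acat$. By \cref{lemma:bousefield_functor_exists_and_is_unique} we have $\beta(a)\simeq\Tot(\beta(i^{\bullet}))$, so the value the adjunction demands is $\Map_{C\tau^{q+1}}(\Tot\,\beta(i^{\bullet}),M)$, whereas your recipe produces the realization of the simplicial object $\Map_{C\tau^{q+1}}(\beta(i^{\bullet}),M)$. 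Mapping \emph{out} of a totalization is not computed by the realization of the mapping spectra out of its terms --- there is only a canonical comparison map --- and truncating the cosimplicial direction using $\tau^{q+1}=0$ does not repair this: even a finite limit in the source does not become a finite colimit of mapping spaces. The same variance problem undermines your final bootstrap. Two exact functors on $\dcat^{b}(\acat)^{op}$ that agree on injectives need not agree everywhere, because injectives generate $\dcat^{b}(\acat)$ under finite limits only when every object of $\acat$ has finite injective dimension, which the lemma does not assume (and which the paper's proof does not use).

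The missing idea is the one that identifies the presheaf $a\mapsto\Map_{C\tau^{q+1}}(\beta(a),M)$ with a perfect sheaf directly. Since $M$ has perfect underlying presheaf, it is bounded by \cref{corollary:finite_ctaun_modules_are_bounded} and hence built by finitely many fibres out of modules with a single nonzero homotopy group, which carry $C\tau$-module structures; this reduces the existence question to the case where $M$ is a $C\tau$-module. For such $M$, the fact that $\beta(a)$ is a potential $q$-stage (\cref{lemma:tau_structure_on_beta}) combines with \cref{lemma:ctau_tensor_over_ctaun_is_discrete_fora_potential_n_stage} to give
\[
\Map_{C\tau^{q+1}}(\beta(a),M)\simeq\Map_{C\tau}(\beta(a)_{\leq 0},M)\simeq\Map_{C\tau}(a,M),
\]
and the equivalence $\Mod_{C\tau}(\dcat^{\omega}(\ccat))\simeq\dcat^{b}(\acat)$ of \cref{theorem:finite_ctau_modules_same_as_derived_category} exhibits this as $Y(a)$ for a perfect sheaf $Y$, which is exactly what is needed. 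You cite \cref{lemma:tau_structure_on_beta} only for a boundedness estimate, but its real role is this mapping-space identification, without which the right adjoint is not shown to exist.
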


\begin{proof}
Since $\dcat^{b}(\acat)$ is by definition an $\infty$-category of perfect sheaves on $\acat$, observe that if $\beta_{*}$ was the right adjoint, then we would have 
\[
(\beta_{*} X)(a) \simeq \Map_{\dcat^{b}(\acat)}(a, \beta_{*}X) \simeq \Map_{C\tau^{q+1} \otimes -}(\beta(a), X).
\]
It follows that for the right adjoint to exist, it is sufficient for the right hand side to define a perfect sheaf for any $X \in \Mod_{C\tau^{q+1}}(\dcat^{\omega}(\ccat))$. 

By \cref{corollary:finite_ctaun_modules_are_bounded}, any $C\tau^{q+1}$-module whose underlying presheaf is perfect is bounded. Thus, it can be obtained by iteratively taking fibres from sheaves with a single non-zero homotopy group, and which thus admit a structure of a $C\tau$-module. 

We can thus assume that $X$ is of the latter kind. Then, since $\beta(a)$ is a potential $q$-stage for any $a \in \acat$ by \cref{lemma:tau_structure_on_beta},  \cref{lemma:ctau_tensor_over_ctaun_is_discrete_fora_potential_n_stage} implies that 
\[
\Map_{C\tau^{q+1}}(\beta(a), X) \simeq \Map_{C\tau}(\beta(a)_{\leq 0}, X).
\]
Using the equivalence $ \Mod_{C\tau}(\dcat^{\omega}(\ccat)) \simeq \dcat^{b}(\acat)$ of \cref{theorem:finite_ctau_modules_same_as_derived_category} and that $\pi_{0} \beta(a) \simeq a$, we can rewrite the above as 
\begin{equation}
\label{equation:equivalence_between_ctau_modules_and_db_showing_up_through_bousfield_adjunction}
(\beta_{*} X)(a) \simeq \Map_{C\tau}(a, X) \simeq \Map_{\dcat^{b}(\acat)}(a, Y) \simeq Y(a),
\end{equation}
where $Y \in \dcat^{b}(\acat)$ is a perfect sheaf on $\acat$ corresponding to $X$ under the above equivalence of $\infty$-categories. Thus, $\beta_{*}X$ is also a perfect sheaf, as needed. 
\end{proof}

\begin{definition}
\label{definition:bousfield_adjunction}
We will denote the right adjoint whose existence is asserted by \cref{lemma:beta_upper_star_a_left_adjoint} by $\beta_{*}$ and call the resulting adjunction
\[
\beta^{*} \dashv \beta_{*}\colon \dcat^{b}(\acat) \leftrightarrows \Mod_{C\tau^{q+1}}(\dcat^{\omega}(\ccat))
\]
the \emph{Bousfield adjunction}. We will refer to the associated monad $\beta_{*} \beta^{*}$ on $\dcat^{b}(\acat)$ as the \emph{Bousfield monad}. 
\end{definition}

\begin{remark}
\label{remark:composite_of_right_adjoint_bousfield_with_forgetful_from_ctau_modules_an_equivalence}
Observe that the proof of \cref{lemma:beta_upper_star_a_left_adjoint}, namely equivalence (\ref{equation:equivalence_between_ctau_modules_and_db_showing_up_through_bousfield_adjunction}), shows that $\beta_{*}$ has the property that the composite
\[
\Mod_{C\tau}(\dcat^{\omega}(\ccat)) \rightarrow \Mod_{C\tau^{q+1}}(\dcat^{\omega}(\ccat))  \rightarrow \dcat^{b}(\acat)
\]
is an equivalence, which we can identify with the equivalence of \cref{theorem:finite_ctau_modules_same_as_derived_category}. Note that this holds despite these equivalences being induced by different functors, namely the Bousfield splitting $\beta$ and the homology functor $H\colon \ccat \rightarrow \acat$. These functors induce the same equivalence on the hearts and equivalences of derived $\infty$-categories are determined by what they do on the heart and so are somewhat rigid. 
\end{remark}

\begin{lemma}
\label{lemma:bousfield_right_adjoint_exact_and_commutes_with_homotopy_groups}
The right adjoint $\beta_{*}\colon \Mod_{C\tau^{q+1}}(\dcat^{\omega}(\ccat)) \rightarrow \dcat^{b}(\acat)$ is exact and induces an equivalence on the hearts. Thus, it commutes with $\acat$-valued homotopy groups. 
\end{lemma}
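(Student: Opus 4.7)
Since $\beta_*$ is a right adjoint between prestable $\infty$-categories with finite limits, it automatically preserves finite limits. Thus exactness reduces to preservation of finite colimits, equivalently to the commutation with the $\acat$-valued homotopy group functors $\pi_k^{\acat}$: given this, any cofiber sequence $X \to Y \to Z$ is sent by $\beta_*$ to a fiber sequence whose associated long exact sequence is isomorphic to the one of $X \to Y \to Z$, and in particular the induced map $\pi_0\beta_* Y \to \pi_0\beta_*Z$ remains surjective, upgrading the fiber sequence to a cofiber sequence. The last sentence of the statement is then a direct consequence of this commutation.

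For the equivalence on hearts, I will invoke \cref{remark:composite_of_right_adjoint_bousfield_with_forgetful_from_ctau_modules_an_equivalence}: the composite
\[
\Mod_{C\tau}(\dcat^{\omega}(\ccat)) \hookrightarrow \Mod_{C\tau^{q+1}}(\dcat^{\omega}(\ccat)) \xrightarrow{\beta_*} \dcat^{b}(\acat)
\]
coincides with the equivalence of \cref{theorem:finite_ctau_modules_same_as_derived_category}, which is in particular an equivalence of prestable $\infty$-categories and restricts to the canonical identification $\acat \simeq \acat$ on hearts. Because the forgetful inclusion $\Mod_{C\tau} \hookrightarrow \Mod_{C\tau^{q+1}}$ induces an equivalence on hearts (both being canonically identified with $\acat$ by \cref{proposition:forgetful_functor_from_modules_is_exact}), this shows that the restriction $\beta_*|_{\acat}$ is the identity of $\acat$, in particular an equivalence on hearts.

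For the commutation with $\pi_k^{\acat}$, since $\beta_*$ commutes with $\Omega$ (as a right adjoint), it suffices to treat $\pi_0^{\acat}$. I proceed by induction on the (finite) length of the Postnikov tower: every $X \in \Mod_{C\tau^{q+1}}(\dcat^{\omega}(\ccat))$ is bounded by \cref{corollary:finite_ctaun_modules_are_bounded}, so $X$ is built by a finite sequence of Postnikov fiber sequences $\Sigma^{k} \pi_k X \to X_{\leq k} \to X_{\leq k-1}$. The crucial observation is that each layer $\Sigma^{k} \pi_k X$ has $\acat$-valued homotopy concentrated in a single degree, so the action of $\tau$ on it is necessarily zero (since $\tau$ shifts homotopy degree by one and hence maps between the non-overlapping homotopy of source and target); consequently each layer admits a canonical $C\tau$-module structure compatible with the inclusion $\Mod_{C\tau} \hookrightarrow \Mod_{C\tau^{q+1}}$. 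Applying the already-established equivalence $\beta_*|_{\Mod_{C\tau}}$ of \cref{remark:composite_of_right_adjoint_bousfield_with_forgetful_from_ctau_modules_an_equivalence}, $\beta_*$ commutes with all homotopy groups on such layers.

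The inductive step is then standard: applying $\beta_*$ to the Postnikov fiber sequence and using that $\beta_*$ preserves fibers, one obtains a fiber sequence in $\dcat^b(\acat)$ whose outer terms have known homotopy by the inductive hypothesis and the layer computation; the long exact sequence of homotopy groups then forces $\pi_0 \beta_* X \simeq \pi_0 X$ and, by induction, all higher $\pi_k$. The main technical delicacy is precisely the bookkeeping that identifies each Postnikov layer with an object in the image of $\Mod_{C\tau} \hookrightarrow \Mod_{C\tau^{q+1}}$ and matches the suspension functor on $\Mod_{C\tau^{q+1}}$ with the corresponding suspension in $\dcat^b(\acat)$ under the equivalence of \cref{theorem:finite_ctau_modules_same_as_derived_category}; once this is in place, finite boundedness of $X$ makes the induction terminate in finitely many steps and the argument concludes.
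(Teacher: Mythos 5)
Your proposal is correct and follows essentially the same route as the paper: the hearts statement via \cref{remark:composite_of_right_adjoint_bousfield_with_forgetful_from_ctau_modules_an_equivalence}, right exactness reduced to commutation with $\pi_{0}$ using the characterization of cofibre sequences as $\pi_{0}$-surjective fibre sequences, and the key input that objects with homotopy in a single degree carry $C\tau$-module structures (justified by \cref{proposition:forgetful_functor_from_modules_is_exact}, which is the cleaner way to phrase your ``$\tau$ acts by zero'' remark) on which $\beta_{*}$ is the known equivalence. The only cosmetic difference is that you run a full Postnikov induction, whereas the paper shortcuts by first showing $\beta_{*}$ preserves $1$-connected objects and then applying the single fibre sequence $X \rightarrow X_{\leq 0} \rightarrow \Sigma X_{\geq 1}$.
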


\begin{proof}
It is clear that $\beta_{*}$ induces an equivalence on the hearts, since both the forgetful functor $\Mod_{C\tau}(\dcat^{\omega}(\ccat)) \rightarrow  \Mod_{C\tau^{q+1}}(\dcat^{\omega}(\ccat))$ and its composite with $\beta_{*}$ do, the latter by \cref{remark:composite_of_right_adjoint_bousfield_with_forgetful_from_ctau_modules_an_equivalence}. We have to verify that it is exact. 

We first claim that $\beta_{*}$ preserves $1$-connected objects. Every such object can be obtained by iteratively taking fibres of maps into at least $2$-connected objects with homotopy concentrated in a single degree, which thus admit a structure of a $C\tau$-module. These fibres will be preserved by $\beta^{*}$ which is left exact. As the composite 
\[
\Mod_{C\tau}(\dcat^{\omega}(\ccat)) \rightarrow \Mod_{C\tau^{q+1}}(\dcat^{\omega}(\ccat))  \rightarrow \dcat^{b}(\acat)
\]
is an equivalence and the first functor is exact, we deduce that such $2$-connected objects with homotopy in a single degree are taken to $2$-connected objects. Thus, the fibres will be at least $1$-connected, as needed. 

As for any $X \in \dcat^{\omega}(\ccat))$ we have a cofibre sequence 
\[
X \rightarrow X_{\leq 0} \rightarrow \Sigma X_{\geq 1},
\]
where the last object is $1$-connected. It follows that 
\[
\beta_{*} X \rightarrow \beta_{*} X_{\leq 0} \rightarrow \beta_{*} \Sigma X_{\geq 1},
\]
is a fibre sequence where the last object is also $1$-connected, and the long exact sequence of homotopy groups implies that $\pi_{0} \beta_{*} X \simeq \pi_{0} \beta_{*} X_{\leq 0}$. 

Since $X_{\leq 0}$ is in the heart and $\beta_{*}$ is an equivalence on the hearts, we deduce that it commutes with taking $\pi_{0}$. As cofibre sequences can be characterized by fibre sequences where the second map is $\pi_{0}$-epimorphism, we deduce that $\beta_{*}$ is left exact, as needed. 
\end{proof}

\begin{proposition}
\label{proposition:bousfield_adjunction_monadic}
The Bousfield adjunction is monadic; that is, it induces an equivalence
\[
\Mod_{C\tau^{q+1}}(\dcat^{\omega}(\ccat)) \simeq \Mod_{\beta_{*} \beta^{*}}(\dcat^{b}(\acat)).
\]
\end{proposition}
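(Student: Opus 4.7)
The plan is to apply the Barr--Beck--Lurie monadicity criterion \cite{higher_algebra}[4.7.3.5] to the Bousfield adjunction, which requires verifying (i) conservativity of $\beta_*$ and (ii) preservation by $\beta_*$ of geometric realizations of $\beta_*$-split simplicial diagrams. To avoid wrestling directly with infinite colimits in the prestable setting, I will exploit the fact that both sides of the adjunction are bounded prestable $\infty$-categories: every perfect $C\tau^{q+1}$-module is bounded by \cref{corollary:finite_ctaun_modules_are_bounded}, and every object of $\dcat^{b}(\acat)$ is bounded by construction. This lets me restrict the adjunction to the $(n+1)$-categories of $n$-truncated objects, where the analysis becomes purely finitary.

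Conservativity of $\beta_*$ follows immediately from \cref{lemma:bousfield_right_adjoint_exact_and_commutes_with_homotopy_groups}: the right adjoint is exact, induces an equivalence on hearts, and hence commutes with the formation of $\acat$-valued homotopy groups. If $\beta_* X \simeq 0$, then $\pi_k X \simeq 0$ in $\acat$ for all $k \geq 0$, and boundedness of $X$ forces $X \simeq 0$.

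For the realization-preservation condition I verify the criterion after restriction to $n$-truncated objects for each $n \geq 0$. In an $(n+1)$-category admitting finite colimits, every geometric realization coincides with the colimit of its $n$-skeleton, hence is a finite colimit. Since $\beta_*$ is exact, it preserves all finite colimits, so Barr--Beck--Lurie applies to each restricted adjunction and yields equivalences
\[
\tau_{\leq n}\, \Mod_{C\tau^{q+1}}(\dcat^{\omega}(\ccat)) \simeq \tau_{\leq n}\, \Mod_{\beta_* \beta^*}(\dcat^{b}(\acat))
\]
for every $n \geq 0$. Since both sides are filtered unions of their truncated subcategories, assembling these equivalences will give the full equivalence of the proposition.

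The main technical step I expect to require care is verifying that $\Mod_{\beta_*\beta^*}(\dcat^{b}(\acat))$ is genuinely bounded, so that the level-wise equivalences above really do assemble. This follows because $\beta_*\beta^*$ is exact (as a composite of exact functors by \cref{lemma:bousfield_right_adjoint_exact_and_commutes_with_homotopy_groups}) and the forgetful functor from modules is exact and conservative, so the boundedness of $\dcat^{b}(\acat)$ transfers to the module category. Everything else is routine bookkeeping with the truncated subcategories and the Barr--Beck--Lurie output.
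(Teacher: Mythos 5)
Your proposal is correct and follows essentially the same route as the paper: the paper's proof also reduces to the subcategories of $k$-truncated objects, where both sides are $(k+1)$-categories so that geometric realizations become finite colimits, and then applies Barr--Beck--Lurie using that $\beta_*$ is exact and commutes with $\acat$-valued homotopy groups (hence is conservative). Your extra remark on boundedness of $\Mod_{\beta_*\beta^*}(\dcat^{b}(\acat))$ is the same observation the paper makes implicitly when asserting that every object on both sides is $k$-truncated for some $k$.
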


\begin{proof}
This is the same as the proof of  \cref{theorem:finite_ctau_modules_same_as_derived_category}, but we briefly recall the argument. Since both prestable $\infty$-categories in question are bounded in the sense that every object is $k$-truncated for some $k$, it is enough to verify that 
\[
\tau_{\leq k} \Mod_{C\tau^{q+1}}(\dcat^{\omega}(\ccat)) \rightarrow \tau_{\leq k} \Mod_{\beta_{*} \beta^{*}}(\dcat^{b}(\acat))
\]
is an equivalence of $\infty$-categories for any $k \geq 0$. To each of these one can apply Barr-Beck Lurie criterion \cite{higher_algebra}[4.7.3.5] as in the proof of \cref{lemma:bounded_derived_cat_monadic_over_derived_of_ccat}, since $\beta_{*}$ is exact and commutes with homotopy groups by 
\cref{lemma:bousfield_right_adjoint_exact_and_commutes_with_homotopy_groups}.
\end{proof}

\subsection{The truncated thread monad} 
\label{subsection:the_truncated_thread_monad}

In \cref{proposition:bousfield_adjunction_monadic} we have shown that the structure of $\Mod_{C\tau^{q+1}}(\dcat^{\omega}(\ccat))$, which can be thought of as an approximation the the derived $\infty$-category of $\ccat$ and hence the latter itself, is completely described by the Bousfield monad $\beta_{*} \beta^{*}$ on $\dcat^{b}(\acat)$ of \cref{definition:bousfield_adjunction}. Our goal in this section is to give an explicit description of this monad and identify it with one of purely algebraic origin. 

Observe that since $\beta_{*}$ preserves $\acat$-valued homotopy groups, for any $a \in \acat$ we have 
\[
\beta_{*} \beta^{*}(a) \simeq a \oplus \Sigma a[-1] \oplus \ldots \Sigma^{q} a[-q].
\]
It follows by right exactness of the monad that we necessarily must have, for any $X \in \dcat^{b}(\acat)$, that 
\begin{equation}
\label{equation:bousfield_monad_as_an_endofunctor}
\beta_{*} \beta^{*} X \simeq X \oplus \Sigma X[-1] \oplus \ldots \oplus \Sigma^{q} X[-q].
\end{equation}
Thus, the structure of $\beta_{*} \beta^{*}$ as an endofunctor is quite simple, and the interesting part is identifying the monad. 

The hint to finding the right approach is given by \cref{lemma:tau_structure_on_beta}, which asserts that the homotopy groups of $\beta^{*}(a)$ for any $a \in \acat$ are connected together by $\tau$. Thus, at least at the level of homotopy groups, the Bousfield monad behaves like a ``free $\mathbb{Z}[\tau]/\tau^{q+1}$-module'' monad. 

In the presence of monoidal structure on $\ccat$ this argument can be made precise, and this is the approach taken in \cite{pstragowski_chromatic_homotopy_algebraic}. Morally, this is still true in the general case. Instead of working with algebras, we will show that $\beta_{*} \beta^{*}$ can be identified with an appropriate truncation of a free monad generated by the endofunctor $\Sigma X[-1]$. 

\begin{notation}
Similarly to our approach to sheafification taken in \cref{subsection:bounded_derived_cat_of_abelian_cat}, it will be convenient to at least a priori work in a larger $\infty$-category in which our constructions exist. We will denote by 
\[
\Ind(\dcat^{b}(\acat))
\]
the $\Ind$-completion of the bounded derived $\infty$-category of $\acat$. This is, by definition, the free $\infty$-category generated by $\dcat^{b}(\acat)$ under filtered colimits. 
\end{notation}

\begin{remark}
Since $\dcat^{b}(\acat)$ has finite colimits, $\Ind(\dcat^{b}(\acat))$ has all small colimits and is generated under them by $\acat$. One can show that it is again prestable \cite{antieau2019k}[2.13].
\end{remark}

\begin{notation}
We will denote by $S\colon \Ind(\dcat^{b}(\acat)) \rightarrow \Ind(\dcat^{b}(\acat))$ the unique cocontinuous extension of the endofunctor of $\dcat^{b}(\acat)$ given by $S(X) \colonequals \Sigma X[-1]$.
\end{notation}

\begin{definition}
\label{definition:thread_monad}
We will say a monad $T$ on $\Ind(\dcat^{b}(\acat))$ together with a natural transformation $S \rightarrow T$ of endofunctors is a \emph{thread monad} if it is the free monad generated by $S$; that is, if for any other monad $M$ the induced morphism
\[
\Map_{\mathrm{Mon}}(T, M) \rightarrow \Map_{\mathrm{End}}(S, M) 
\]
of mapping spaces in, respectively, monads and endofunctors of $\Ind(\dcat^{b}(\acat))$, is an equivalence.
\end{definition}
Note that it is clear from the definition that if a thread monad exists, then it is unique up to a contractible space of choices.

\begin{proposition}
\label{proposition:formula_for_thread_monad}
A thread monad $T$ exists and as an endofunctor it is given by the formula 
\[
T(X) \colonequals X \oplus S(X) \oplus S^{2}(X) \oplus \ldots \simeq X \oplus \Sigma X[-1] \oplus \Sigma^{2} X[-2] \oplus \ldots,
\]
an infinite direct sum.
\end{proposition}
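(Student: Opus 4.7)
The plan is to exhibit $T \simeq \coprod_{n\geq 0} S^{\circ n}$ explicitly as a free $\mathbf{E}_{1}$-algebra in an appropriate $\infty$-category of endofunctors, and then verify that it satisfies the universal property of \cref{definition:thread_monad}.

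First, $\Ind(\dcat^{b}(\acat))$ is presentable, and I would work inside its $\infty$-category $\mathrm{End}^{\mathrm{L}}(\Ind(\dcat^{b}(\acat)))$ of cocontinuous endofunctors. Composition endows this with a monoidal structure, and a direct check shows that the tensor product preserves colimits in each variable separately: coproducts of functors are computed pointwise, and cocontinuous functors preserve pointwise coproducts. Thus $\mathrm{End}^{\mathrm{L}}$ is a presentably monoidal $\infty$-category in which $S$ is a distinguished object.

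Applying the standard construction of free associative algebras in a presentably monoidal $\infty$-category \cite{higher_algebra}[4.1.1.18], the free $\mathbf{E}_{1}$-algebra on $S$ exists, and as an object is the coproduct $T \colonequals \coprod_{n\geq 0} S^{\circ n}$ with multiplication given by concatenation of indices. Evaluating at $X \in \Ind(\dcat^{b}(\acat))$ yields
\[
T(X) \simeq \bigoplus_{n\geq 0} S^{n}(X) \simeq X \oplus \Sigma X[-1] \oplus \Sigma^{2} X[-2] \oplus \ldots,
\]
which is the formula advertised in the statement. Note that this coproduct is indeed preserved by $T$ itself, since each $S^{n}$ is cocontinuous.

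It remains to check the universal property of $T$ against arbitrary monads on $\Ind(\dcat^{b}(\acat))$, not merely cocontinuous ones, as demanded by \cref{definition:thread_monad}. Since $T$ is a coproduct of iterates of $S$, any natural transformation $T \to M$ into a monad $M$ is equivalent, by mapping out of coproducts, to a family of natural transformations $S^{\circ n} \to M$; the coherent compatibility with the monad multiplication of $M$ forces the $n$-th of these to factor through the iterated multiplication $S^{\circ n} \to M^{\circ n} \to M$, reducing the entire datum to a single natural transformation $S \to M$. The main subtlety I expect is promoting this bijection to an equivalence of mapping spaces in the $\infty$-categorical sense; I would handle it by resolving $T$ as a colimit of a two-sided bar construction in endofunctors and comparing mapping spaces term by term, or alternatively by observing that the forgetful functor from monads to endofunctors creates free objects, so that it suffices to check the universal property in $\mathrm{End}^{\mathrm{L}}$, where it is the content of the free algebra construction invoked above.
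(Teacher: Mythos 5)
Your construction of $T$ as an object, and the formula $T(X)\simeq\bigoplus_{n\geq 0}S^{n}(X)$, are correct, and your overall strategy matches the paper's, whose entire proof is a citation of the free-monad theorem of Gepner--Haugseng--Nikolaus. The gap sits exactly where you flag it. \cref{definition:thread_monad} demands the universal property against \emph{all} monads on $\Ind(\dcat^{b}(\acat))$, whereas the free-algebra construction you invoke, carried out inside the presentably monoidal $\infty$-category $\mathrm{End}^{\mathrm{L}}$ of cocontinuous endofunctors, only yields the universal property against monads whose underlying endofunctor is itself cocontinuous. Of your two proposed remedies, the second (``the forgetful functor from monads to endofunctors creates free objects, so it suffices to check the universal property in $\mathrm{End}^{\mathrm{L}}$'') is circular: that the free algebra in $\mathrm{End}^{\mathrm{L}}$ remains free in the larger monoidal $\infty$-category of all endofunctors is precisely the assertion to be proved. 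The first remedy (a bar resolution of $T$ and a term-by-term comparison of mapping spaces) could be made to work, but it is not carried out, and it is the genuinely nontrivial part of the argument --- promoting the classical ``a map out of a free monoid is determined by its restriction to generators'' to an equivalence of mapping spaces is the whole content of the theorem being used.

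The clean fix is to invoke the free-algebra theorem in the form that only requires the tensor product to preserve colimits in \emph{one} variable, which is the version the paper cites (\cite{gepner2017infty}[4.2.8, 4.3.2]). In $\mathrm{End}(\Ind(\dcat^{b}(\acat)))$ with the composition monoidal structure, precomposition $(-)\circ G$ preserves all colimits for an \emph{arbitrary} endofunctor $G$, because colimits of functors are computed pointwise; and $S\circ(-)$ preserves colimits because $S$ is cocontinuous. These two facts are exactly the hypotheses of that theorem, which then produces $\coprod_{n}S^{\circ n}$ as the free associative algebra in the full endofunctor category, that is, with the universal property against every monad. Isolating those two facts and citing (or reproving) that statement closes the gap and makes your two-step detour through $\mathrm{End}^{\mathrm{L}}$ unnecessary.
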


\begin{proof}
Since $\Ind(\dcat^{b}(\acat))$ has all colimits, and $S$ preserves these, this is \cite{gepner2017infty}[4.2.8, 4.3.2].
\end{proof}

\begin{remark}
\label{remark:composition_in_free_monad}
Note that the monad structure on the thread monad, at least at the level of homotopy categories, is the expected one: the map $T^{2}(X) \rightarrow T(X)$ restricts on the direct summand 
\[
\Sigma^{m}(\Sigma^{n}X[-n])[-m] \simeq \Sigma^{n+m}X[-n-m]
\]
to the inclusion of the corresponding summand of $T(X)$, see \cite{gepner2017infty}[4.3.8].
\end{remark}

\begin{remark}
\label{remark:direct_sum_decomposition_of_thread_monad_is_canonical}
As a consequence of \cref{remark:composition_in_free_monad}, we see that the direct sum decomposition of $T$ given in \cref{proposition:formula_for_thread_monad} is canonical. Namely, the inclusion of the $k$-th summand can be identified with the composite
\[
S^{k} \rightarrow T^{k} \rightarrow T
\]
of $k$-fold composition of the canonical morphism $S \rightarrow T$ and the monad multiplication.
\end{remark}
The thread monad is somewhat reminiscent of the formula (\ref{equation:bousfield_monad_as_an_endofunctor}) for the Bousfield monad, except in the latter case there are only finitely many summands, the last one being $\Sigma^{q} X[-q]$. Thus, to obtain an equivalent monad, we need to consider an appropriate truncation of the thread monad. 

Unfortunately, truncating while preserving an algebra structure can be difficult in the $\infty$-categorical setting. One method is an appropriate localization of the monoidal $\infty$-category in question, this is the route we will take. 

To be able to appropriately truncate the thread monad using localization, we need to define an appropriate subcategory of $\mathrm{End}(\Ind(\dcat^{b}(\acat))$ which at the same time 
\begin{enumerate}
    \item contains the thread monad and the desired truncation,
    \item is closed under composition, so that it inherits a monoidal structure and 
    \item is simple enough so that we can easily verify that the needed localization exists. 
\end{enumerate}
We will use the following subcategory, which should be treated as a technical device and not an important object. 

\begin{definition}
We say an endomorphism $W: \Ind(\dcat^{b}(\acat)) \rightarrow \Ind(\dcat^{b}(\acat))$ is a \emph{block-sum} if there exist natural equivalence
\[
W \simeq \bigoplus_{i \in I} \Sigma^{n_{i}} F_{i},
\]
for some countable index set $I$, self-equivalences $F_{i}$ of $\Ind(\dcat^{b}(\acat))$ and $n_{i} \geq 0$, such that for any $q \geq 0$, all but finitely many $n_{i}$ satisfy $n_{i} > q$. 
\end{definition}

\begin{example}
The underlying endomorphism of the thread monad is block-sum, by \cref{proposition:formula_for_thread_monad}, and so is the one underlying the Bousfield monad, by \cref{equation:bousfield_monad_as_an_endofunctor}.
\end{example}

\begin{remark}
Observe that any block-sum endofunctor is cocontinuous; in particular, it commutes with suspensions and direct sums. It follows that block-sum endofunctors are stable under composition.
\end{remark}

\begin{definition}
\label{definition:truncated_block_sum_endofunctor}
We say a block-sum endofunctor $W$ is \emph{q-truncated} if it can be written as a finite sum
\[
W \simeq \bigoplus \Sigma^{n_{i}} F_{i}
\]
where $n_{i} \leq q$.
\end{definition}

\begin{remark}
\label{remark:q_truncated_block_sum_a_finite_product}
Observe that any block-sum endofunctor which is $q$-truncated for some $q \geq 0$ is a \emph{finite} direct sum, which we can thus identify with a finite product in the $\infty$-category of additive endofunctors. 
\end{remark}

\begin{lemma}
\label{lemma:q_truncated_endofunctors_form_a_localization}
Consider the $\infty$-category $\mathrm{End}^{\Box}$ of block-sum endomorphisms of $\Ind(\dcat^{b}(\acat))$. Then, the inclusion of the subcategory of $q$-truncated blocks-sum endomorphisms admits a left adjoint $L_{\leq q}: \mathrm{End}^{\Box} \rightarrow \mathrm{End}^{\Box}_{\leq q}$. 
\end{lemma}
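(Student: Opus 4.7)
The plan is to construct $L_{\leq q}$ directly by truncating the block-sum decomposition, then verify the universal property of a left adjoint via a vanishing result for mapping spaces between suspension-shifted equivalences.

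For the construction: given $W = \bigoplus_{i \in I} \Sigma^{n_i} F_i$, set
\[
L_{\leq q}(W) := \bigoplus_{i \,:\, n_i \leq q} \Sigma^{n_i} F_i.
\]
By definition of block-sum this index set is finite, so $L_{\leq q}(W)$ is $q$-truncated. Writing $W \simeq L_{\leq q}(W) \oplus W^{>q}$, where $W^{>q} := \bigoplus_{n_i > q} \Sigma^{n_i} F_i$, gives a canonical projection $\pi_W \colon W \to L_{\leq q}(W)$ serving as the unit. The apparent dependence on a choice of decomposition will be removed after the universal property is established: the latter forces $L_{\leq q}(W)$ and $\pi_W$ to be essentially unique, and functoriality in $W$ then follows formally from the resulting adjunction.

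To verify the universal property, I would show that for any $V \in \mathrm{End}^{\Box}_{\leq q}$, precomposition with $\pi_W$ induces an equivalence $\Map(L_{\leq q}(W), V) \xrightarrow{\sim} \Map(W, V)$. Splitting $W$ as above and using that $\Map(- , V)$ turns direct sums into products, this reduces to proving
\[
\Map_{\mathrm{End}}(W^{>q}, V) \simeq *.
\]
Writing $V = \bigoplus_{j \in J} \Sigma^{m_j} G_j$ as a finite sum with $m_j \leq q$ (which is possible since $V$ is $q$-truncated, by \cref{remark:q_truncated_block_sum_a_finite_product}) and expanding once more, the statement reduces to the key vanishing
\[
\Map_{\mathrm{End}}(\Sigma^n F,\; \Sigma^m G) \simeq *,\qquad n > m \geq 0,\; F,G \text{ equivalences}.
\]

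The key vanishing is where the content lies, and is handled as follows. By the shift adjunction, we may assume $m = 0$, and further replacing $G$ by $\mathrm{id}$ and $F$ by the equivalence $G^{-1} \circ F$, it suffices to show $\Map_{\mathrm{End}}(\Sigma^k H, \mathrm{id}) \simeq *$ for any equivalence $H$ and $k \geq 1$. Since both $\Sigma^k H$ and $\mathrm{id}$ are cocontinuous endofunctors of $\Ind(\dcat^b(\acat))$, the mapping space between them is determined by their restriction to any set of generators under colimits; in particular to $\acat \subset \Ind(\dcat^b(\acat))$, which generates by construction. Now any equivalence of a prestable $\infty$-category is exact and hence preserves the heart (equivalently, truncatedness is detected by mapping spaces, which are preserved by equivalences), so $H$ restricts to an autoequivalence of $\acat$. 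For $a \in \acat$, both $H(a)$ and $a$ are discrete, and the pointwise mapping space
\[
\Map_{\Ind(\dcat^b(\acat))}(\Sigma^k H(a),\, a)
\]
is contractible since in a prestable $\infty$-category the space of maps from a $k$-connective object into a discrete one vanishes for $k \geq 1$ (its homotopy groups are $\Ext^{-k-n}_{\acat}$ for $n \geq 0$, all negative). The resulting pointwise contractibility forces the end computing the natural transformations to be contractible, concluding the argument.

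The main obstacle is the vanishing step, and in particular the reduction to the heart; the rest is essentially bookkeeping with direct sum decompositions. I expect no additional subtleties beyond the care needed in arguing that the block-sum decomposition, while not unique on the nose, is unique enough for the construction of $L_{\leq q}$ to descend to a well-defined functor once the adjunction is in place.
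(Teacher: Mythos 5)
Your proposal is correct and follows essentially the same route as the paper: define $L_{\leq q}$ by discarding the summands with $n_i>q$, reduce the universal property to the vanishing of $\Map_{\mathrm{End}}(\Sigma^{n_1}F_1,\Sigma^{n_2}F_2)$ for $n_1>n_2$, and prove that vanishing by restricting cocontinuous functors to $\acat$ and using that equivalences preserve discrete objects in the prestable $\infty$-category $\Ind(\dcat^{b}(\acat))$. The only cosmetic difference is that the paper moves the shift to the target via $\Map(F_1,\Omega^{n_1-n_2}F_2)$ and observes that the target restricts to the zero functor on $\acat$, whereas you normalize by $G^{-1}$ and argue pointwise contractibility of the end; both come to the same thing.
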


\begin{proof}
We construct $L_{\leq q}$ directly. If $W \simeq \bigoplus \Sigma^{n_{i}} F_{i}$, we define 
\[
L_{\leq q} W \colonequals \bigoplus _{n_{i} \leq q} F_{i},
\]
note that this $q$-truncated. We have to show that the obvious projection $W \rightarrow L_{\leq q}W$ induces an equivalence
\[
\Map_{\mathrm{End}}(L_{\leq q} W, R) \simeq \Map_{\mathrm{End}}(W, R)
\]
for any other $q$-truncated $R$. 

As both mapping spaces can be written as products indexed using the direct sum decomposition of $W$ and the product decomposition of $R$ of \cref{remark:q_truncated_block_sum_a_finite_product}, it is enough to verify that if $F_{1}, F_{2}$ are self-equivalences of $\Ind(\dcat^{b}(\acat))$, then whenever $n_{1} > n_{2}$ we have 
\[
\Map_{\mathrm{End}}(\Sigma^{n_{1}}F_{1}, \Sigma^{n_{2}} F_{2}) \simeq \Map_{\mathrm{End}}(F_{1}, \Omega^{n_{1}-n_{2}} F_{2}) \simeq 0.
\]
To see this, observe that the since $F_{1}$ is cocontinuous, it is a left Kan extension of its restriction to $\acat$, which generates the $\mathrm{Ind}$-completion under colimits. Thus, it is enough to verify that a restriction of any natural transformation $F_{1} \rightarrow \Omega^{n_1 -n_2} F_{2}$ to $\acat$ vanishes. This is clear, since $F_{2}$ is an equivalence, so $F_{2}(a)$ is discrete for any $a \in \acat$ and thus $\Omega^{n_{1} - n_{2}}F_2(a)$ vanishes. 
\end{proof}

\begin{remark}
\label{remark:block_sum_truncation_is_a_direct_summand}
The proof of \cref{lemma:q_truncated_endofunctors_form_a_localization} shows that an arbitrary block-sum endofunctor $W$ can be written as a direct sum $W \simeq L_{\leq q}W \oplus \Sigma^{q+1} W^{\prime}$ for some other block-sum $W^{\prime}$. 
\end{remark}

\begin{proposition}
\label{proposition:k_truncation_on_endofunctors_compatible_with_monoidal_structure}
The localization $L_{\leq q}$ is compatible with the composition monoidal structure on endofunctors; that is, there is a unique monoidal structure on $\mathrm{End}_{\leq q}^{\Box}$ such that $L_{\leq q}$ can be promoted to a monoidal functor. 
\end{proposition}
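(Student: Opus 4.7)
The plan is to invoke the standard criterion (\cite{higher_algebra}[2.2.1.9]) for a reflective localization of a monoidal $\infty$-category to descend uniquely to a monoidal structure on the local subcategory: it suffices to check that the class of $L_{\leq q}$-equivalences in $\mathrm{End}^{\Box}$ is closed under composition with arbitrary block-sum endofunctors on both sides. Once verified, this automatically supplies the unique monoidal structure on $\mathrm{End}^{\Box}_{\leq q}$ for which $L_{\leq q}$ is monoidal.

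First I would characterize the class of $L_{\leq q}$-equivalences. By \cref{remark:block_sum_truncation_is_a_direct_summand}, every block-sum $W$ splits canonically as $W \simeq L_{\leq q}W \oplus \Sigma^{q+1} W^{\prime}$ with $W^{\prime}$ another block-sum, so the kernel of $L_{\leq q}$ consists precisely of block-sums of the form $\Sigma^{q+1}W^{\prime}$. Block-sum endofunctors are closed under (pointwise) fibres and cofibres between them, so a map $f$ of block-sums is an $L_{\leq q}$-equivalence if and only if its fibre (equivalently cofibre) lies in this kernel.

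Next I would establish the key tensor-closure: if $W = \bigoplus_i \Sigma^{n_i} F_i$ is an arbitrary block-sum and $V = \Sigma^{q+1} V^{\prime}$ with $V^{\prime} = \bigoplus_j \Sigma^{m_j} G_j$ a block-sum, then $W \circ V$ and $V \circ W$ are again in the kernel of $L_{\leq q}$. The crucial point is that each $F_i$, being a self-equivalence of the $\infty$-category $\Ind(\dcat^{b}(\acat))$, is in particular cocontinuous and therefore commutes with $\Sigma$ up to a canonical natural equivalence; it also commutes with the direct sums that make up $V^{\prime}$. Combined with the observation that block-sum endofunctors are cocontinuous (hence composition distributes over the direct sums defining $W$ and $V$), this yields coherent natural equivalences
\[
W \circ V \simeq \bigoplus_{i, j} \Sigma^{n_i + q + 1 + m_j}(F_i \circ G_j), \qquad V \circ W \simeq \bigoplus_{i, j} \Sigma^{q + 1 + m_j + n_i}(G_j \circ F_i),
\]
where each shift exceeds $q$ and each $F_i \circ G_j$, $G_j \circ F_i$ is again a self-equivalence. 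Both composites therefore lie in the kernel of $L_{\leq q}$, and one checks the ``cofinitely many shifts above any threshold'' condition is preserved because it holds for $V^{\prime}$ alone.

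The main technical obstacle will be bookkeeping for the infinite direct sums: assembling the suspension-equivariance of each individual $F_i$ into a coherent natural equivalence of the full composite block-sum, and verifying that composition with $W$ preserves the finiteness condition in \cref{definition:truncated_block_sum_endofunctor} in a uniform way across the decomposition of $V$. Once these manipulations are carried out carefully—using only that self-equivalences of an $\infty$-category automatically commute with intrinsic colimit constructions like $\Sigma$ and $\oplus$—the closure property holds, and Lurie's criterion delivers the desired unique monoidal structure on $\mathrm{End}^{\Box}_{\leq q}$ together with the promotion of $L_{\leq q}$ to a monoidal functor.
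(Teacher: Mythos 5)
Your proposal is correct and follows essentially the same route as the paper: invoke \cite{higher_algebra}[2.2.1.9], use the splitting $W \simeq L_{\leq q}W \oplus \Sigma^{q+1}W'$ from \cref{remark:block_sum_truncation_is_a_direct_summand}, and observe that the acyclic summand remains a $(q+1)$-fold suspension under pre- and post-composition because block-sum endofunctors are cocontinuous and hence commute with $\Sigma$ and $\oplus$. The paper simply reduces at once to the truncation maps $W \to L_{\leq q}W$ rather than characterizing all $L_{\leq q}$-equivalences via their fibres, but the substance of the argument is identical.
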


\begin{proof}
By \cite{higher_algebra}[2.2.1.9], we have to show that $L_{\leq q}$-equivalences, that is, those maps which are inverted by $L_{\leq q}$, are stable under monoidal multiplication with an arbitrary object; that is, under both post- and pre-composition. It is enough to do so for the the $q$-truncation maps $W \rightarrow L_{\leq q}W$, where $W$ is a block-sum endofunctor. 

This follows from \cref{remark:block_sum_truncation_is_a_direct_summand}, as $W \simeq L_{\leq q}W \oplus \Sigma^{q+1} W^{\prime}$, and the latter sum will stay a $q+1$-fold suspension under either post- or pre-composition, so that it will be $L_{\leq q}$-acyclic.
\end{proof}

\begin{corollary}
\label{corollary:proposition:k_truncation_on_endofunctors_compatible_with_monoidal_structure}
Let $M$ be a monad on $\Ind(\dcat^{b}(\acat))$ whose underlying endofunctor is block-sum. Then, $L_{\leq q} M$ acquires a unique structure of a monad such that $M \rightarrow L_{\leq q} M$ is a morphism of monads.
\end{corollary}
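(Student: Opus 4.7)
The plan is to deduce the corollary directly from \cref{proposition:k_truncation_on_endofunctors_compatible_with_monoidal_structure} by invoking the general principle that a monoidal localization of monoidal $\infty$-categories induces a localization on associative algebras. Concretely, a monad on $\Ind(\dcat^{b}(\acat))$ whose underlying endofunctor lies in $\mathrm{End}^{\Box}$ is the same datum as an associative (that is, $\mathbf{E}_{1}$-) algebra in the monoidal $\infty$-category $(\mathrm{End}^{\Box}, \circ)$, so our task is to produce a unique $\mathbf{E}_{1}$-algebra structure on $L_{\leq q} M$ in $(\mathrm{End}^{\Box}_{\leq q}, \circ)$ extending the given one on $M$.

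First I would record that \cref{proposition:k_truncation_on_endofunctors_compatible_with_monoidal_structure} endows $L_{\leq q}$ with a compatible monoidal structure, so that the localization $L_{\leq q}\colon \mathrm{End}^{\Box} \to \mathrm{End}^{\Box}_{\leq q}$ refines to a monoidal functor between monoidal $\infty$-categories. By \cite{higher_algebra}[2.2.1.9 and 4.1.7.4], any such monoidal localization induces a localization
\[
L_{\leq q}\colon \Alg(\mathrm{End}^{\Box}) \to \Alg(\mathrm{End}^{\Box}_{\leq q})
\]
on $\mathbf{E}_{1}$-algebras, with an essentially unique lift, and with the property that the unit map $A \to L_{\leq q} A$ is an algebra morphism whose image is the universal algebra map from $A$ to a $q$-truncated one. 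Applying this to $A = M$ produces the required monad structure on $L_{\leq q} M$, a morphism of monads $M \to L_{\leq q} M$, and the uniqueness of this structure relative to the morphism.

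The only step that requires care is to make sure we are in a setting where \cite{higher_algebra}[2.2.1.9] actually applies: one needs the localization to be compatible with the monoidal structure in the strong sense that tensor product with an arbitrary object preserves $L_{\leq q}$-equivalences on both sides, and that the resulting monoidal structure on the localization is the unique one making $L_{\leq q}$ monoidal. Both were already established in the proof of \cref{proposition:k_truncation_on_endofunctors_compatible_with_monoidal_structure} via the splitting $W \simeq L_{\leq q} W \oplus \Sigma^{q+1} W'$ of \cref{remark:block_sum_truncation_is_a_direct_summand}, which is stable under pre- and post-composition with any block-sum endofunctor. Thus no additional work is required beyond assembling the pieces; the corollary is really just an algebraic reformulation of the previous proposition.
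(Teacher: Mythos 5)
Your proposal is correct and follows essentially the same route as the paper: identify $M$ with an associative algebra in the monoidal $\infty$-category $\mathrm{End}^{\Box}$ and use that $L_{\leq q}$ is a compatible monoidal localization (from \cref{proposition:k_truncation_on_endofunctors_compatible_with_monoidal_structure}) to transport the algebra structure, the paper phrasing this via the lax monoidality of the right adjoint while you phrase it via the induced localization on $\mathbf{E}_{1}$-algebras. The two formulations are interchangeable, and your version makes the uniqueness clause slightly more explicit.
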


\begin{proof}
Observe that we can identify $M$ with an associative algebra object of $\mathrm{End}^{\Box}$. As we have shown in \cref{proposition:k_truncation_on_endofunctors_compatible_with_monoidal_structure}, $L_{\leq q}\colon \mathrm{End}^{\Box} \rightarrow \mathrm{End}_{\leq q}^{\Box}$ is monoidal and it follows formally that the right adjoint given by inclusion is lax monoidal. Thus, $L_{\leq q}$ considered as an endofunctor of $\mathrm{End}^{\Box}$ takes algebras to algebras, as needed. 
\end{proof}
We are now ready to define our candidate for an algebraic model for the Bousfield monad. 

\begin{definition}
\label{definition:q_thread_monad}
Let $q \geq 0$. The \emph{$q$-thread monad} is the monad 
\[
T_{q} \colonequals L_{\leq q}T
\]
on $\Ind(\dcat^{b}(\acat))$, where $T$ is the thread monad of \cref{definition:thread_monad}
\end{definition}

\begin{remark}
\label{remark:q_thread_monad_as_an_endofunctor}
Observe that as an endofunctor of $\Ind(\dcat^{b}(\acat))$, the $q$-thread monad is given by 
\[
T_{q}X \simeq X \oplus S(X) \oplus \ldots \oplus S^{q}(X) \simeq X \oplus \Sigma X[-1] \oplus \ldots \Sigma^{q} X[-q].
\]
This is immediate from the description of the thread monad given in \cref{proposition:formula_for_thread_monad} and the explicit description of $q$-truncation appearing in the proof of \cref{lemma:q_truncated_endofunctors_form_a_localization}.
\end{remark}

\begin{remark}
\label{remark:direct_sum_decomposition_of_q_thread_monad_canonical}
As in the non-truncated case considered in \cref{remark:direct_sum_decomposition_of_thread_monad_is_canonical}, the direct sum decomposition of the $q$-thread monad given in \cref{remark:q_thread_monad_as_an_endofunctor} is canonical: the inclusion of the $k$-th direct summand, where now $0 \leq k \leq q$, can be identified with the composite
\[
S^{k} \rightarrow T_{q}^{k} \rightarrow T_{q}
\]
where the second map is monad multiplication.
\end{remark}

\begin{remark}
\label{remark:universal_property_of_q_thread_monad}
As a consequence of its construction as a localization and the universal property of the thread monad, the $q$-thread monad enjoys its own universal property. Namely, for any monad $M$ whose underlying endofunctor is $q$-truncated block-sum, to give a morphism of monads $T_{q} \rightarrow M$ is the same as to give a natural transformation $\Sigma(-)[-1] \rightarrow M$ of endofunctors.  
\end{remark}

Note that as a consequence of \cref{remark:q_thread_monad_as_an_endofunctor} and \cref{equation:bousfield_monad_as_an_endofunctor}, the Bousfield monad $\beta_{*} \beta^{*}$ and the $q$-thread monad have same underlying endofunctors of $\dcat^{b}(\acat)$. We want to compare them as monads, using the universal property of \cref{remark:universal_property_of_q_thread_monad}. To do so, we will need an appropriate natural transformation into the underlying endofunctor of the Bousfield monad. 

\begin{definition}
\label{definition:thread_generator}
We define a natural transformation of endofunctors of $\dcat^{b}(\acat)$ of signature
\[
t\colon S(X) \simeq \Sigma X[-1] \rightarrow  \beta_{*} \beta^{*} X.
\]
as the adjoint of the natural transformation
\[
\tau\colon \beta^{*}(\Sigma X[-1]) \simeq \Sigma (\beta^{*} X)[-1] \rightarrow \beta^{*} X.
\]
\end{definition}

\begin{notation}
\label{notation:composites_of_thread_generator}
For any $k \geq 0$, let us write $t^{k}$ for the composite 
\[
S^{k}(X) \simeq \Sigma^{k}X[-k] \rightarrow (\beta_{*} \beta^{*})^{k}X \rightarrow \beta_{*}\beta^{*}X,
\]
where the second arrow is the monad multiplication. Note that each $t^{k}$ is adjoint to $\tau^{k}$; in particular, $t^{0}$ is the unit of the Bousfield monad.
\end{notation}

\begin{proposition}
\label{proposition:direct_sum_decomposition_of_bousfield_monad_using_ts}
For any $X \in \dcat^{b}(\acat)$, the direct sum
\[
(t^{0}, t^{1}, \ldots, t^{q})\colon X \oplus \Sigma X[-1] \oplus \ldots \oplus \Sigma^{q} X[-q] \rightarrow \beta_{*} \beta^{*} X
\]
of the composites $t^{k}$ for $0 \leq k \leq q$ is an equivalence. 
\end{proposition}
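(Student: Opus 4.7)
The plan is to show that $(t^{0},\ldots,t^{q})$ is a natural equivalence of right exact endofunctors by reducing to objects of the heart and then checking the claim on $\acat$-valued homotopy groups using the potential $q$-stage structure of $\beta^{*}(a)$.

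\textbf{Reduction to $\acat$.} First I would verify that both functors $\bigoplus_{k=0}^{q}\Sigma^{k}(-)[-k]$ and $\beta_{*}\beta^{*}$ are right exact endofunctors of $\dcat^{b}(\acat)$. For the source, each summand $\Sigma^{k}(-)[-k]$ preserves finite colimits since $\Sigma$ is a left adjoint (to $\Omega$) and $[-k]$ is an equivalence. For the target, $\beta^{*}$ is right exact by construction, while $\beta_{*}$ is exact by \cref{lemma:bousfield_right_adjoint_exact_and_commutes_with_homotopy_groups}. Since $\dcat^{b}(\acat)$ is generated under finite colimits by its heart $\acat$, and a natural transformation of right exact functors is an equivalence as soon as it is an equivalence on a generating class, it suffices to treat $X = a \in \acat$.

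\textbf{Matching homotopy groups.} For $a \in \acat$ and $0 \leq j \leq q$, the source has $\pi_{j}(\Sigma^{k}a[-k]) \simeq a[-j]$ when $k=j$ and vanishes otherwise, so its total $\pi_{j}$ is $a[-j]$. For the target, $\beta_{*}$ commutes with $\pi_{j}$ by \cref{lemma:bousfield_right_adjoint_exact_and_commutes_with_homotopy_groups}, and $\pi_{j}\beta^{*}(a) \simeq a[-j]$ by \cref{lemma:beta_on_homotopy_groups}, so again $\pi_{j}\beta_{*}\beta^{*}(a) \simeq a[-j]$. Both sides vanish for $j > q$. Since objects of $\dcat^{b}(\acat)$ are bounded and $\acat$-valued homotopy groups detect equivalences there, it suffices to show the map is an isomorphism on each $\pi_{j}$. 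Only the $k=j$ summand contributes non-trivially, so the question reduces to showing that $\pi_{j}(t^{j}) \colon a[-j] \to a[-j]$ is an isomorphism.

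\textbf{The key computation.} By construction, $t^{j} = \beta_{*}(\tau^{j}) \circ \eta_{\Sigma^{j}a[-j]}$, where $\tau^{j} \colon \Sigma^{j}\beta^{*}(a)[-j] \to \beta^{*}(a)$ is the $j$-fold composite of the $\tau$-multiplication. Because $\beta^{*}(a) = \beta(a)$ is a potential $q$-stage by \cref{lemma:tau_structure_on_beta}, the characterization in \cref{characterizing l-stages} implies that each constituent $\tau$-map appearing in the composite induces an isomorphism on the relevant $\pi_{k}$ for $1 \leq k \leq q$; iterating shows that $\pi_{j}(\tau^{j}) \colon \pi_{0}(\beta^{*}a)[-j] \to \pi_{j}(\beta^{*}a)$ is an isomorphism, and since $\beta_{*}$ commutes with homotopy groups, so is $\pi_{j}(\beta_{*}\tau^{j})$. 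For the unit, naturality of $\eta$ in $\Sigma$ and $[-j]$ identifies $\eta_{\Sigma^{j}a[-j]}$ with $\Sigma^{j}\eta_{a}[-j]$, whose $\pi_{j}$ is $\pi_{0}(\eta_{a})[-j]$; this equals the identity of $a[-j]$ because $\beta^{*} \dashv \beta_{*}$ restricts to mutually inverse identity equivalences on the hearts. Composing yields the required isomorphism.

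\textbf{Main obstacle.} The conceptual content lies entirely in the last step: the fact that the source and target functors have abstractly isomorphic $\acat$-valued homotopy groups is essentially formal, but verifying that the specific map $t^{j}$ realizes this isomorphism (as opposed to being, say, the zero map) requires the potential $q$-stage property of $\beta^{*}(a)$ — that is, that $\tau$ precisely threads the layers $\pi_{k}\beta^{*}(a)$ for $0 \leq k \leq q$ — provided by \cref{lemma:tau_structure_on_beta}.
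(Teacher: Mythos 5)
Your proposal is correct and follows essentially the same route as the paper: reduce to objects of the heart by right exactness of both sides, then use that $\beta^{*}(a)$ is a potential $q$-stage so that $\tau^{j}$ is a $\pi_{j}$-isomorphism, that $\beta_{*}$ commutes with homotopy groups, and that the unit is a $\pi_{0}$-isomorphism because the adjunction restricts to an equivalence on hearts. The only cosmetic difference is that you spell out the identification $t^{j} = \beta_{*}(\tau^{j})\circ\eta$ and the bookkeeping of which summand contributes to each $\pi_{j}$ more explicitly than the paper does.
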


\begin{proof}
Both sides are right exact functors, so it is enough to verify that the claim holds for $a \in \acat$. In this case, $\beta^{*}(a)$ is a potential $q$-stage, so that it is $q$-truncated and for any $0 \leq k \leq q$, the map 
\[
\tau^{k}\colon \Sigma^{k} \beta^{*}a [-k] \rightarrow \beta^{*}a
\]
is an isomorphism on $\pi_{k}$. As $\beta_{*}$ commutes with homotopy groups, the same is true for 
\[
\Sigma^{k}(\beta_{*} \beta^{*} a)[-k] \simeq \beta_{*} (\Sigma^{k} \beta^{*}a [-k]) \rightarrow \beta_{*} \beta^{*} a.
\]
As $\beta^{*} \dashv \beta_{*}$ induces an adjoint equivalence between the hearts, the unit of any object is a $\pi_{0}$-isomorphism and thus the composite of the above with the unit 
\[
\Sigma^{k} a[-k] \rightarrow \Sigma^{k}(\beta_{*} \beta^{*} a)[-k]
\]
is also a $\pi_{k}$-isomorphism. After unwrapping the definitions, we see that this is exactly the map $t^{k}$ of \cref{notation:composites_of_thread_generator} and the claim follows. 
\end{proof}

\begin{theorem}
\label{theorem:bousfield_monad_canonically_equivalent_to_q_thread_monad}
The natural transformation of $t\colon \Sigma X[-1] \rightarrow \beta_{*} \beta^{*}$ of endofunctors of $\dcat^{b}(\acat)$ induces a unique equivalence 
\[
T_{q} \rightarrow \beta_{*} \beta^{*}
\]
of monads between the $q$-thread monad and the Bousfield monad.
\end{theorem}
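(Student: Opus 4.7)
The plan is to combine the universal property of the $q$-thread monad from \cref{remark:universal_property_of_q_thread_monad} with the identification of the underlying endofunctor of the Bousfield monad obtained in \cref{proposition:direct_sum_decomposition_of_bousfield_monad_using_ts}. The key observation is that \cref{proposition:direct_sum_decomposition_of_bousfield_monad_using_ts} exhibits the endofunctor underlying $\beta_{*}\beta^{*}$ on $\dcat^{b}(\acat)$ as the finite direct sum $X \mapsto \bigoplus_{k=0}^{q} \Sigma^{k} X[-k]$, which is manifestly cocontinuous and so extends uniquely to a cocontinuous monad on $\Ind(\dcat^{b}(\acat))$ whose underlying endofunctor is $q$-truncated block-sum in the sense of \cref{definition:truncated_block_sum_endofunctor}, with autoequivalences $[-k]$ for $0 \leq k \leq q$.

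Having landed in the correct $\infty$-category, I would apply \cref{remark:universal_property_of_q_thread_monad} to the cocontinuous extension of the natural transformation $t\colon \Sigma(-)[-1] \to \beta_{*}\beta^{*}$ of \cref{definition:thread_generator}. This produces a unique morphism of monads $\Phi\colon T_{q} \to \beta_{*}\beta^{*}$ on $\Ind(\dcat^{b}(\acat))$ extending $t$, and restriction along the inclusion $\dcat^{b}(\acat) \hookrightarrow \Ind(\dcat^{b}(\acat))$ gives the sought-after comparison map. Uniqueness in the statement of the theorem is automatic from the universal property.

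To check that $\Phi$ is an equivalence, it suffices to identify it summand-by-summand under the canonical direct-sum decompositions. By \cref{remark:direct_sum_decomposition_of_q_thread_monad_canonical}, the inclusion of the $k$-th summand of $T_{q}$ is the composite $S^{k} \to T_{q}^{k} \to T_{q}$ formed from iterated monad multiplication. Since $\Phi$ is a morphism of monads sending the generating transformation $S \to T_{q}$ to $t$, its restriction to the $k$-th summand is the composite $S^{k} \to (\beta_{*}\beta^{*})^{k} \to \beta_{*}\beta^{*}$ given by $k$-fold multiplication, which by \cref{notation:composites_of_thread_generator} is exactly $t^{k}$. Assembling these for $0 \leq k \leq q$ we recover the map $(t^{0}, t^{1}, \ldots, t^{q})$ which \cref{proposition:direct_sum_decomposition_of_bousfield_monad_using_ts} has already identified as an equivalence.

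The main point that requires care is the passage between $\dcat^{b}(\acat)$ and its $\Ind$-completion: the $q$-thread monad was defined on the latter (so that the free monad on $S$ exists and the localization $L_{\leq q}$ makes sense), whereas $\beta_{*}\beta^{*}$ arises naturally on $\dcat^{b}(\acat)$. The finite direct-sum description of both underlying endofunctors makes each of them preserve the subcategory $\dcat^{b}(\acat) \subseteq \Ind(\dcat^{b}(\acat))$, so restricting $\Phi$ back to $\dcat^{b}(\acat)$ yields the desired equivalence of monads without any further bookkeeping.
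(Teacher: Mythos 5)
Your proposal is correct and follows essentially the same route as the paper: extend $\beta_{*}\beta^{*}$ and $t$ cocontinuously to $\Ind(\dcat^{b}(\acat))$, observe the extension is $q$-truncated block-sum, invoke the universal property of $T_{q}$ to get the unique monad map, and then identify it summand-by-summand with $(t^{0},\ldots,t^{q})$, which \cref{proposition:direct_sum_decomposition_of_bousfield_monad_using_ts} shows is an equivalence. Your extra care about restricting back from the $\Ind$-completion to $\dcat^{b}(\acat)$ is a point the paper treats more briefly but does not change the substance.
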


\begin{proof}
Since $\beta_{*} \beta^{*}$ is right exact, it uniquely extends to a cocontinuous endofunctor of $\mathrm{Ind}(\dcat^{b}(\acat))$, and so does the natural transformation $t\colon S(X) \rightarrow \beta_{*} \beta^{*} X$. As a consequence of \cref{equation:bousfield_monad_as_an_endofunctor}, this unique extension of the Bousfield monad is block-sum and $q$-truncated in the sense of \cref{definition:truncated_block_sum_endofunctor}; in fact, its underlying endofunctor is equivalent to that of $T_{q}$. 

From the universal property of the latter of \cref{remark:universal_property_of_q_thread_monad}, we deduce that the natural transformation $S(X) \rightarrow \beta_{*} \beta^{*}X$ induces a unique morphism of monads $T_{q} \rightarrow \beta_{*} \beta^{*}$. We claim the latter is an equivalence.

Notice that by construction, the morphism $T_{q} \rightarrow \beta_{*} \beta^{*}$ has the property that the composite
\[
S^{k} \rightarrow T \rightarrow T_{q} \rightarrow \beta_{*} \beta^{*},
\]
where the first arrow is the inclusion of the $k$-th summand can be identified with the natural transformation $t^{k}$. Thus, the composite of the direct sum
\[
S^{0} \oplus \ldots \oplus S^{q} \rightarrow T_{q} \rightarrow \beta_{*} \beta^{*}
\]
can be identified with the direct sum of the maps $t^{k}$ for $0 \leq k \leq q$. However, this composite is a natural equivalence by \cref{proposition:direct_sum_decomposition_of_bousfield_monad_using_ts}, and the first arrow is a natural equivalence by \cref{remark:direct_sum_decomposition_of_q_thread_monad_canonical}. It follows that $T_{q} \rightarrow \beta_{*} \beta^{*}$ is also a natural equivalence, as needed. 
\end{proof}

\begin{corollary}
\label{corollary:if_we_have_q1_bousfield_functor_then_ctauq1_modules_are_tq_modules}
Suppose that $H\colon \ccat \rightarrow \acat$ is an adapted homology theory such that there exists an additive Bousfield functor 
\[
\beta^{inj}\colon \acat^{inj} \rightarrow h_{q+1} \ccat^{inj} 
\]
with the property that $H(\beta^{inj}(i)) \simeq i$; for example, when $\acat$ admits a splitting of order $(q+1)$. Then, there exists a canonical equivalence
\[
\Mod_{C\tau^{q+1}}(\dcat^{\omega}(\ccat)) \simeq \Mod_{T_{q}}(\dcat^{b}(\acat)).
\]
In particular, the left hand side only depends on $\acat$. 
\end{corollary}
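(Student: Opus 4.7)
The plan is to assemble the corollary directly from two results already established in the section: the monadicity of the Bousfield adjunction and the identification of the Bousfield monad with the $q$-thread monad.

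First, I would invoke \cref{proposition:bousfield_adjunction_monadic}, which tells us that the Bousfield adjunction
\[
\beta^{*} \dashv \beta_{*}\colon \dcat^{b}(\acat) \leftrightarrows \Mod_{C\tau^{q+1}}(\dcat^{\omega}(\ccat))
\]
is monadic and thus yields a canonical equivalence
\[
\Mod_{C\tau^{q+1}}(\dcat^{\omega}(\ccat)) \simeq \Mod_{\beta_{*}\beta^{*}}(\dcat^{b}(\acat))
\]
of prestable $\infty$-categories. This reduces the corollary to identifying the right-hand side with $T_q$-modules.

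Next, I would apply \cref{theorem:bousfield_monad_canonically_equivalent_to_q_thread_monad}, which asserts that the natural transformation $t\colon \Sigma(-)[-1] \to \beta_{*}\beta^{*}$ from \cref{definition:thread_generator} induces a canonical equivalence $T_{q} \simeq \beta_{*}\beta^{*}$ of monads on $\dcat^{b}(\acat)$. Since equivalences of monads induce equivalences of their categories of modules, composing these two canonical equivalences gives
\[
\Mod_{C\tau^{q+1}}(\dcat^{\omega}(\ccat)) \simeq \Mod_{\beta_{*}\beta^{*}}(\dcat^{b}(\acat)) \simeq \Mod_{T_{q}}(\dcat^{b}(\acat)),
\]
which is the claimed equivalence.

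Finally, to justify the last sentence of the statement, I would remark that both the endofunctor $S(X) = \Sigma X[-1]$ and the localization $L_{\leq q}$ producing $T_{q}$ from the free monad $T$ are defined purely in terms of $\dcat^{b}(\acat)$ and its suspension, with no reference to $\ccat$ or the Bousfield functor. Hence $T_{q}$ and its category of modules depend only on $\acat$, so the existence of any Bousfield functor $\beta^{\mathrm{inj}}$ suffices to pin down $\Mod_{C\tau^{q+1}}(\dcat^{\omega}(\ccat))$ up to canonical equivalence in terms of purely algebraic data. Since essentially all work has already been done in the two preceding results, there is no real obstacle here; the corollary is a formal consequence.
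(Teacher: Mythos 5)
Your proof is correct and is essentially identical to the paper's own argument, which likewise just combines \cref{proposition:bousfield_adjunction_monadic} with \cref{theorem:bousfield_monad_canonically_equivalent_to_q_thread_monad}. The closing remark about $T_q$ depending only on $\acat$ is also the intended justification for the final sentence of the statement.
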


\begin{proof}
This is a combination of \cref{proposition:bousfield_adjunction_monadic} and \cref{theorem:bousfield_monad_canonically_equivalent_to_q_thread_monad}.
\end{proof}

\begin{remark}
\label{remark:if_we_have_q1_bousfield_functor_then_potential_q_stages_are_equivalent}
Suppose that $H_{1}\colon \ccat_{1} \rightarrow \acat$ and $H_{2}\colon \ccat_{2} \rightarrow \acat$ are two adapted homology theories which both admit Bousfield functors valued in the homotopy $(q+1)$-category, so that \cref{corollary:if_we_have_q1_bousfield_functor_then_ctauq1_modules_are_tq_modules} yields a canonical equivalence
\[
\Mod_{C\tau^{q+1}}(\dcat^{\omega}(\ccat_{1})) \simeq \Mod_{T_{q}}(\dcat^{b}(\acat)) \simeq \Mod_{C\tau^{q+1}}(\dcat^{\omega}(\ccat_{2})).
\]
As potential $q$-stages in the sense of \cref{def:general_potential_l_stage} are $q$-truncated and so admit unique $C\tau^{q+1}$-module structures, we can view Goerss-Hopkins $\infty$-categories $\mathcal{M}_{q}(\ccat_{1})$ and $\mathcal{M}_{q}(\ccat_{2})$ as subcategories of, respectively, the left and right hand side. We claim that this composite equivalence respects these subcategories; that is, that it restricts to an equivalence
\[
\mathcal{M}_{q}(\ccat_{1}) \simeq \mathcal{M}_{q}(\ccat_{2}).
\]

To see this, observe that for each of $k = 1, 2$, the forgetful functor 
\[
\dcat^{b}(\acat) \simeq \Mod_{C\tau}(\dcat^{\omega}(\ccat_{k})) \rightarrow \Mod_{C\tau^{q+1}}(\dcat^{\omega}(\ccat_{k})) 
\]
is exact. Thus, by the universal property of $\dcat^{b}(\acat)$ of  \cref{theorem:universal_property_of_bounded_derived_cat_of_an_abelian_cat}, it is uniquely determined by what it does on the heart. It follows that there is an essentially unique commutative diagram 
\[\begin{tikzcd}
	{\Mod_{C\tau}(\dcat^{\omega}(\ccat_{1}))} & {\Mod_{C\tau}(\dcat^{\omega}(\ccat_{2}))} \\
	{\Mod_{C\tau^{q+1}}(\dcat^{\omega}(\ccat_{1}))} & {\Mod_{C\tau^{q+1}}(\dcat^{\omega}(\ccat_{2}))}
	\arrow[from=1-1, to=2-1]
	\arrow["\simeq"', from=1-1, to=1-2]
	\arrow[from=1-2, to=2-2]
	\arrow["\simeq", from=2-1, to=2-2]
\end{tikzcd}\]
compatible with canonical identifications between the hearts and $\acat$, where the vertical arrows are the forgetful functors and the lower one is the equivalence of \cref{corollary:if_we_have_q1_bousfield_functor_then_ctauq1_modules_are_tq_modules}. As potential $q$-stages can be characterized completely in terms of the vertical arrows, as we have shown in \cref{lemma:ctau_tensor_over_ctaun_is_discrete_fora_potential_n_stage}, we deduce that the lower horizontal arrow preserves them, as claimed. 
\end{remark}

\subsection{Proof of Franke's conjecture}
\label{subsection:complete_proof_of_frankes_conjecture}
The goal of this section is to complete the proof of the following result.

\begin{theorem}[Franke's Algebraicity Conjecture]
\label{theorem:frankes_algebraicity}
Suppose that $H\colon \ccat \rightarrow \acat$ is a conservative, adapted homology theory and assume that 
\begin{enumerate}
    \item $\acat$ admits a splitting of order $q+1$,
    \item $\acat$ is of finite cohomological dimension $d$ and 
    \item $q \geq d$. 
\end{enumerate}
Then, we have an equivalence of homotopy $(q+1-d)$-categories 
\[h_{q+1-d}  \dcat^{per}(\acat) \simeq h_{q+1-d}{\ccat}.\]
\end{theorem}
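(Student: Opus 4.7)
The plan is to compare $\ccat$ and $\dcat^{per}(\acat)$ through their Goerss-Hopkins towers, using the splitting to match them at stage $q$ and then using the cohomological dimension bound to propagate the equivalence all the way up, losing only $(q+1-d)$-categorical information in the process.

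First I would observe that $\dcat^{per}(\acat)$ carries its own conservative, adapted $\acat$-valued homology theory by \cref{proposition:homology_functor_on_periodic_derived_cat_is_a_homology_theory}, and that on both $\ccat$ and $\dcat^{per}(\acat)$ the splitting of $\acat$ yields Bousfield functors
\[
\beta^{inj}\colon \acat^{inj} \rightarrow h_{q+1}\ccat^{inj}, \qquad \beta^{inj}_{\mathrm{per}}\colon \acat^{inj} \rightarrow h_{q+1}\dcat^{per}(\acat)^{inj}
\]
via \cref{construction:bousfield_functor_associated_to_splitting}. Applying \cref{corollary:if_we_have_q1_bousfield_functor_then_ctauq1_modules_are_tq_modules} on each side identifies both $\Mod_{C\tau^{q+1}}(\dcat^{\omega}(\ccat))$ and $\Mod_{C\tau^{q+1}}(\dcat^{\omega}(\dcat^{per}(\acat)))$ with $\Mod_{T_q}(\dcat^{b}(\acat))$, and by \cref{remark:if_we_have_q1_bousfield_functor_then_potential_q_stages_are_equivalent} the resulting composite equivalence restricts to a canonical equivalence of $\infty$-categories of potential $q$-stages
\[
F_q\colon \mathcal{M}_q(\ccat) \simeq \mathcal{M}_q(\dcat^{per}(\acat)).
\]
This is the fundamental algebraic input: both sides are identified with a purely algebraic object determined by $\acat$ together with its splitting.

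Next, I would propagate $F_q$ up the Goerss-Hopkins tower $\mathcal{M}_{n+1} \to \mathcal{M}_n$ using the obstruction theory summarized in \cref{prop: general obstruction for objects} and \cref{remark:obstructions_to_lifting_morphisms}. The obstructions to lifting objects, morphisms, homotopies, $\ldots$, and $k$-homotopies live respectively in $\Ext^{n+3}_{\acat}, \Ext^{n+2}_{\acat}, \Ext^{n+1}_{\acat}, \ldots, \Ext^{n+2-k}_{\acat}$, all of which vanish once the exponent exceeds $d$. Thus for $n \geq q \geq d$ and $k \leq q+1-d$, every such obstruction vanishes. Inductively, this lets me extend $F_q$ to a compatible system of functors inducing equivalences
\[
h_{q+1-d}\mathcal{M}_n(\ccat) \simeq h_{q+1-d}\mathcal{M}_n(\dcat^{per}(\acat))
\]
for every $n \geq q$, and in particular every object of $\mathcal{M}_n(\dcat^{per}(\acat))$ is essentially surjectively hit after passing to $h_{q+1-d}$, since the object-level obstructions vanish in each step.

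Finally, since $H$ and the homology functor on $\dcat^{per}(\acat)$ are both conservative adapted homology theories into an abelian category of finite cohomological dimension, \cref{proposition:convergence_of_gh_tower} gives $\ccat \simeq \varprojlim_n \mathcal{M}_n(\ccat)$ and $\dcat^{per}(\acat) \simeq \varprojlim_n \mathcal{M}_n(\dcat^{per}(\acat))$; passing to $h_{q+1-d}$ commutes with this limit once the tower is eventually an equivalence on $(q-d)$-truncated mapping spaces, which is exactly what the obstruction calculation establishes. Combining yields $h_{q+1-d}\ccat \simeq h_{q+1-d}\dcat^{per}(\acat)$.

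The hard part is the middle step, namely turning the pointwise vanishing of obstructions into a genuine, homotopy-coherent $(q+1-d)$-categorical equivalence of towers rather than merely an isomorphism of homotopy categories stage by stage. Concretely, this requires assembling the lifts of objects together with the lifts of morphisms and homotopies into a single functor between truncated $\infty$-categories, which I expect to carry out by a diagonal/Reedy-style argument: lift $F_q$ stage by stage while using the $\Ext^{n+1-k}$ vanishing for $k \leq q-d$ to guarantee that choices of lifts at each stage are unique up to a contractible space of homotopies at the truncation level we care about, so that the compatibility data at the next level can be produced unobstructedly.
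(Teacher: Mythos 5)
Your first step (identifying $\mathcal{M}_q(\ccat)$ and $\mathcal{M}_q(\dcat^{per}(\acat))$ via the Bousfield functors, the $q$-thread monad, \cref{corollary:if_we_have_q1_bousfield_functor_then_ctauq1_modules_are_tq_modules} and \cref{remark:if_we_have_q1_bousfield_functor_then_potential_q_stages_are_equivalent}) and your appeal to convergence of the Goerss--Hopkins tower are exactly the paper's ingredients. The problem is the middle step, and you have correctly identified it as the hard part — but it is not merely hard, it is the wrong direction of argument, and as stated it is a genuine gap. Inductively ``extending $F_q$'' up the tower by choosing lifts of objects, then of morphisms, then of homotopies, and assembling these into a functor $\mathcal{M}_{n+1}(\ccat) \to \mathcal{M}_{n+1}(\dcat^{per}(\acat))$ is precisely the kind of cell-by-cell construction that cannot be carried out without a coherence mechanism: the lift of an object is a torsor under an $\Ext$-group rather than canonical, so there is no functorial choice, and making the choices compatible across all composites is the same non-functoriality problem that sank Franke's original realization functor. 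Your proposed ``diagonal/Reedy-style argument'' is exactly the missing content, and nothing in the obstruction theory of \cref{prop: general obstruction for objects} and \cref{remark:obstructions_to_lifting_morphisms} produces it: those results control lifts of individual objects and individual mapping spaces, not lifts of functors.

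The paper avoids constructing anything new. Instead of pushing $F_q$ \emph{up} the tower, it shows (\cref{proposition:homotopy_category_of_c_approximated_by_mn_in_finite_global_dimension}) that the already-existing truncation functor $\ccat \simeq \mathcal{M}_\infty(\ccat) \to \mathcal{M}_q(\ccat)$ induces an equivalence of homotopy $(q+1-d)$-categories: essential surjectivity follows because the object obstructions in $\Ext^{k+3,k+1}_{\acat}$ vanish for all $k \geq q \geq d$ and the tower converges; full faithfulness in the relevant range follows from the fibre sequence of \cref{proposition:gh_fibre_sequence_for_mapping_spaces}, whose third term has homotopy concentrated in $\Ext$-degrees $\leq d$, together with a Milnor $\varprojlim^1$ argument showing the mapping-space homotopy groups stabilize. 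Doing the same for $\dcat^{per}(\acat)$ and splicing in the stage-$q$ equivalence gives the theorem with no lifting of functors required. You could repair your argument along these lines — prove each $h_{q+1-d}\mathcal{M}_{n+1} \to h_{q+1-d}\mathcal{M}_n$ is an equivalence for $n \geq q$ and transport $F_q$ by formally inverting these — but note that this is then the paper's proof in disguise, and the equivalences must come from inverting the truncation functors, not from constructing lifts by hand.
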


\begin{remark}
Franke's conjecture is also true in the edge case $q = d-1$ if we interpret the homotopy $0$-category as the set of equivalence classes of objects, see \cref{remark:equivalence_of_zero_homotopy_categories_holds_in_edge_case_of_gh_tower} below.
\end{remark}

\begin{remark}
\label{remark:frankes_original_statement_of_conjecture}
Note that the conjecture above is just slightly different than the one appearing in Franke's \cite{franke1996uniqueness}, who worked with derivators rather than $\infty$-categories. We explain how to deduce the classical formulation from ours in \cref{remark:how_do_deduce_frankes_formulation_from_ours} below.
\end{remark}

As we observed in  \cref{remark:if_we_have_q1_bousfield_functor_then_potential_q_stages_are_equivalent}, the existence of a splitting of order $q+1$ of $\acat$ implies that the $\infty$-category of potential $q$-stages does not depend on $\ccat$. In particular, we have a canonical equivalence
\[
\mathcal{M}_{q}(\ccat) \simeq \mathcal{M}_{q}(\dcat^{per}(\acat)).
\]
To deduce Franke's conjecture, we have to show that if $\acat$ is of finite cohomological dimension, these form good approximations of the relevant stable $\infty$-categories. This will be a straightforward consequence of Goerss-Hopkins theory. 

\begin{proposition}
\label{proposition:homotopy_category_of_c_approximated_by_mn_in_finite_global_dimension}
Suppose that $H\colon \ccat\ \rightarrow \acat$ is a conservative adapted homology theory such that $\acat$ is of finite cohomological dimension $d$. Then, for any $n \geq d$, the canonical truncation map 
\[
\ccat \simeq \mathcal{M}_{\infty}(\ccat) \rightarrow \mathcal{M}_{n}(\ccat)
\]
induces an equivalence
\[
h_{n-d+1} \ccat \simeq h_{n-d+1} \mathcal{M}_{n}(\ccat)
\]
of homotopy $(n-d+1)$-categories. 
\end{proposition}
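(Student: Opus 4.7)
The plan is to combine the Goerss--Hopkins obstruction theory of \cref{prop: general obstruction for objects} and \cref{proposition:gh_fibre_sequence_for_mapping_spaces} with the convergence of the Goerss--Hopkins tower (\cref{proposition:convergence_of_gh_tower}). The only numerical input will be that $\Ext^{s}_{\acat}$ vanishes for $s > d$; this single fact is used first to trivialize the obstructions to lifting objects, and then to control the connectivity of the truncations on mapping spaces.

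First I would establish essential surjectivity of $\ccat \to \mathcal{M}_{n}(\ccat)$, from which the same statement for $h_{n-d+1}$ is automatic. By \cref{prop: general obstruction for objects}, the obstruction to lifting a potential $j$-stage to a potential $(j+1)$-stage lies in $\Ext^{j+3,\,j+1}_{\acat}(\pi_{0}X,\pi_{0}X)$, which vanishes whenever $j+3 > d$; in particular for every $j \geq n \geq d$. Hence any object of $\mathcal{M}_{n}(\ccat)$ extends iteratively to a compatible system in $\varprojlim_{m}\mathcal{M}_{m}(\ccat) \simeq \mathcal{M}_{\infty}(\ccat) \simeq \ccat$ via \cref{proposition:convergence_of_gh_tower} and \cref{example:potential_infty_stages_are_objects_of_c}.

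Next I would analyze the induced map on mapping spaces. The third term in the fibre sequence of \cref{proposition:gh_fibre_sequence_for_mapping_spaces} has homotopy groups $\Ext^{j+1-k,\,j}_{\acat}(\pi_{0}X,\pi_{0}Y)$, which vanish for $k \leq j-d$; a direct chase of the long exact sequence then shows that each truncation
\[
\Map_{\mathcal{M}_{j}(\ccat)}(X,Y) \to \Map_{\mathcal{M}_{j-1}(\ccat)}(X_{\leq j-1},Y_{\leq j-1})
\]
is $(j-d)$-connected. Because this connectivity grows with $j$, every composite $\mathcal{M}_{m}(\ccat) \to \mathcal{M}_{n}(\ccat)$ with $m > n$ is $(n+1-d)$-connected on mapping spaces. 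Passing to the limit via a Milnor-type exact sequence (Mittag--Leffler is automatic here because, in each fixed degree $k \leq n-d$, the towers of homotopy groups of mapping spaces are eventually constant), the comparison $\ccat \to \mathcal{M}_{n}(\ccat)$ itself is an isomorphism on $\pi_{k}$ of mapping spaces for $k \leq n-d$. This is precisely what $h_{n-d+1}$ sees, giving fully faithfulness and, combined with essential surjectivity, the desired equivalence.

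I expect the most delicate point to be the limit step, where one must promote a stagewise connectivity estimate on $\mathcal{M}_{m}(\ccat) \to \mathcal{M}_{n}(\ccat)$ into a connectivity estimate for $\ccat \to \mathcal{M}_{n}(\ccat)$. This is not automatic in general, but works cleanly here precisely because the stagewise connectivity $(j-d)$ tends to infinity with $j$, forcing the towers of $\pi_{k}$ to stabilize in every fixed degree $k \leq n-d$.
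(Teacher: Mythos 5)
Your proposal is correct and follows essentially the same route as the paper's proof: essential surjectivity from the vanishing of the lifting obstructions in $\Ext^{j+3,j+1}_{\acat}$ for $j\geq d$ combined with the convergence of the Goerss--Hopkins tower, and full faithfulness on $h_{n-d+1}$ from the fibre sequence of \cref{proposition:gh_fibre_sequence_for_mapping_spaces} together with the Milnor sequence and the eventual stabilization of the towers $\pi_k\Map_{\mathcal{M}_j(\ccat)}$ in each degree $k\leq n-d$. The connectivity bookkeeping also matches the paper's (isomorphisms on $\pi_i$ for $i\leq j-d-1$ at the stage $\mathcal{M}_j\to\mathcal{M}_{j-1}$, hence for $i\leq n-d$ on the composite to $\mathcal{M}_n$), so there is nothing to add.
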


\begin{proof}
First, we have to show that any object of $\mathcal{M}_{n}(\ccat)$ can be lifted to $\mathcal{M}_{\infty}(\ccat)$. As $\acat$ is of finite cohomological dimension and $H$ is conservative, by \cref{proposition:convergence_of_gh_tower} the Goerss-Hopkins tower 
\[
\mathcal{M}_{\infty}(\ccat) \rightarrow \ldots \rightarrow \mathcal{M}_{1}(\ccat) \rightarrow \mathcal{M}_{0}(\ccat)
\]
is a limit diagram of $\infty$-categories. Thus, it is enough to show that we can lift objects along 
\[
(-)_{\leq k}\colon \mathcal{M}_{k+1}(\ccat) \rightarrow \mathcal{M}_{k}(\ccat)
\]
whenever $k \geq n$, as a compatible sequence of such lifts will give a lift from $\mathcal{M}_{n}(\ccat)$ to $\mathcal{M}_{\infty}(\ccat)$. 

By \cref{prop: general obstruction for objects}, an object $X \in \mathcal{M}_{k}(\ccat)$ can be lifted along $(-)_{\leq k}$ if and only if an obstruction class 
\[
\theta_{X} \in \Ext_{\acat}^{k+3, k+1}(\pi_{0}X, \pi_0 X)
\]
vanishes. When $k \geq n \geq d$ is above the cohomological dimension, this is automatic. 

We now claim that for any $k \geq n$, and any $X, Y \in \mathcal{M}_{k+1}(\ccat)$, the induced map 
\begin{equation}
\label{equation:iso_on_homotopy_groups_in_proof_of_stabilization_of_gh_hmn_in_finite_dim}
\pi_{i} \Map_{\mathcal{M}_{k+1}(\ccat)}(X, Y) \rightarrow \pi_{i} \Map_{\mathcal{M}_{k}(\ccat)}(X_{\leq k}, Y_{\leq k})
\end{equation}
is an isomorphism for all $i \leq k-d$. Note that this in particular implies an isomorphism for $i \leq n-d$. To see this, observe that by \cref{proposition:gh_fibre_sequence_for_mapping_spaces} we have a fibre sequence
\[
\xymatrix{ \Map_{\mathcal{M}_{k+1}(\ccat)}(X, Y) \ar[r]^-{(-)_{\leq k}} & \Map_{\mathcal{M}_{k}(\ccat)}(X_{\leq k}, Y_{\leq k}) \ar[r] & \Map_{\dcat^{b}(\acat)}(\pi_0X, \Sigma^{k+2} \pi_0 Y[-k-1]). } 
\]
The homotopy groups of the right hand side are given by $\Ext$-groups, which vanish above the cohomological dimension $d$. Then, the isomorphism of (\ref{equation:iso_on_homotopy_groups_in_proof_of_stabilization_of_gh_hmn_in_finite_dim}) is implied by the long exact sequence of homotopy groups. 

Since $\mathcal{M}_{\infty}(\ccat)$ is a limit, for any $X, Y \in \mathcal{M}_{\infty}(\ccat)$ we have 
\[
\Map_{\mathcal{M}_{\infty}(\ccat)}(X, Y) \simeq \varprojlim \Map_{\mathcal{M}_{k}(\ccat)}(X_{\leq k}, Y_{\leq k}).
\]
Since the homotopy groups of the right hand side stabilize in an increasing range of dimensions, the $\varprojlim^{1}$-term in the Milnor exact sequence vanishes and we deduce that 
\[
\pi_{i} \Map_{\mathcal{M}_{\infty}(\ccat)}(X, Y) \simeq \varprojlim \pi_{i} \Map_{\mathcal{M}_{k}(\ccat)}(X_{\leq k}, Y_{\leq k}).
\]
is an isomorphism for all $i \geq 0$. When $k \geq n$, all transition maps are isomorphisms on $\pi_{i}$ for $i \leq n-d$, and we deduce the limit is already stable in this range. Thus, it follows that \[
\pi_{i} \Map_{\mathcal{M}_{\infty}(\ccat)}(X, Y) \rightarrow \pi_{i} \Map_{\mathcal{M}_{n}(\ccat)}(X_{\leq n}, Y_{\leq n})
\]
is an isomorphism for $i \leq n-d$, as needed. 
\end{proof}

\begin{remark}
\label{remark:equivalence_of_zero_homotopy_categories_holds_in_edge_case_of_gh_tower}
The statement of \cref{proposition:homotopy_category_of_c_approximated_by_mn_in_finite_global_dimension} remains true in the edge case $n = d-1$ if we interpret the homotopy $0$-category $h_{0}$ as the set of equivalence classes of objects.

In this case, we still have vanishing of obstructions for lifts of objects and morphisms along $(-)_{\leq k}\colon \mathcal{M}_{k+1} \rightarrow \mathcal{M}_{k}$ for $k \geq n$, although the lifts of the latter are no longer unique up to homotopy. This is enough for the result, as $(-)_{\leq k}$ reflects equivalences of potential $k+1$-stages, so that any equivalence can be lifted to an equivalence.
\end{remark}

We are now ready to complete the proof of Franke's conjecture. 

\begin{proof}[Proof of \cref{theorem:frankes_algebraicity}:]
Note that $\dcat^{per}(\acat)$ also admits a conservative $\acat$-valued homology theory. Thus, both of the functors 
\[
h_{q-d+1}(\ccat) \rightarrow h_{q-d+1} (\mathcal{M}_{q}(\ccat))
\]
and 
\[
h_{q-d+1}(\dcat^{per}(\acat)) \rightarrow h_{q-d+1} (\mathcal{M}_{q}(\dcat^{per}(\acat)))
\]
obtained from truncation in the Goerss-Hopkins tower by passing to homotopy $(q-d+1)$-categories, are equivalences by \cref{proposition:homotopy_category_of_c_approximated_by_mn_in_finite_global_dimension}.

As $\acat$ is assumed to admit a splitting of order $(q+1)$, the statement of the conjecture follows, as we have a canonical equivalence
\[
\mathcal{M}_{q}(\ccat) \simeq \mathcal{M}_{q}(\dcat^{per}(\acat)),
\]
even before passing to homotopy categories, as verified in \cref{remark:if_we_have_q1_bousfield_functor_then_potential_q_stages_are_equivalent}.
\end{proof}

\begin{remark}
\label{remark:how_do_deduce_frankes_formulation_from_ours}
As we warned the reader previously in  \cref{remark:frankes_original_statement_of_conjecture}, our statement of the conjecture is slightly different than Franke's; however, it implies it as we will now explain. 

In the original manuscript \cite{franke1996uniqueness}[p.56], Franke claims that if $H\colon \ccat \rightarrow \acat$ is a homology theory satisfying the conditions of \cref{theorem:frankes_algebraicity}, then for any category $K$  with finitely many morphisms and whose nerve is of finite dimension $\mathrm{dim}(K) \leq q-d$, we have an equivalence
\[
h_{1} (\dcat^{per}(\acat)^{K}) \simeq h_{1} (\ccat^K)
\]
of homotopy categories of diagrams. Moreover, these equivalences are compatible with change of diagrams and homotopy left and right Kan extensions. 

This statement follows immediately from \cref{theorem:frankes_algebraicity}, as if $\ccat$ is an $\infty$-category, then the projection $\ccat \rightarrow h_{q+1-d}\ccat$ onto the homotopy $(q+1-d)$-category induces an equivalence of homotopy categories 
\[
h_{1} \Fun(K, \ccat) \simeq h_{1} \Fun(K, h_{q+1-d} \ccat)
\]
whenever $\mathrm{dim}(K) < q+1-d$, as one sees by counting connectivity estimates using the cell decomposition of $K$, as proven by Raptis \cite{Raptis}[3.17]. More generally, this argument works for any finite-dimensional simplicial set, not necessarily a nerve of a category with finitely many morphisms.

Alternatively, one can approach the statement about diagrams by studying the induced homology theories $H^{K}\colon \ccat^{K} \rightarrow \acat^{K}$, which we will do in \S\ref{section:applications_to_diagram_categories_and_equivariant_objects}. This strategy has the advantage of being able to establish these equivalences for a more general class of diagrams $K$ than the finite-dimensional ones, perhaps under additional assumptions on $\acat$; for example, we will prove results when $K$ is a finite group, or a direct or inverse sequence diagram. 
\end{remark}

\begin{remark}
More generally than we do, in \cite{franke1996uniqueness}[p.56] Franke allows ``partially adapted'' homology theories $H\colon \ccat \rightarrow \acat$, where every object of $\acat$ embeds into an injective which admits a lift, but not necessarily all injectives do, as we discussed in \cref{remark:adapted_homology_theories_under_other_names_and_comparison_with_franke}. In this case, we have a fully faithful embedding $h_{q+1-d} \ccat \hookrightarrow h_{q+1-d} \dcat(\acat)^{per}$ whose image is controlled by the subgroup of the $K$-group of $\acat$ generated by $H(c)$ for $c \in \ccat$, as in \cite{franke1996uniqueness}[p.54]. 

This can be deduced from \cref{theorem:frankes_algebraicity} by replacing $\ccat$ by its idempotent-completion, to which the given ``partially adapted'' $H$ will uniquely extend, yielding an adapted homology theory. That the image in the idempotent-completion can be described in terms of the $K$-group is shown just as in Franke's work, and we leave the details to the interested reader. 
\end{remark}

\section{Applications of the conjecture}
\label{section:applications_to_diagram_categories_and_equivariant_objects}

In this section we provide examples and applications of the main result. We discuss algebraic models for module spectra, diagram categories and genuine $E(n)$-local $G$-spectra, where $G$ is a finite group of order coprime to $p$.

\subsection{Modules over ring spectra} 
\label{subsection:modules_over_ring_spectra}

Let $R$ be an $\mathbf{E}_1$-algebra in spectra. As we observed in \cref{lemma:homotopy_of_r_module_an_adapted_homology_theory}, the associated homology theory 
\[
\pi_{*}\colon \Mod_{R} \rightarrow \Mod_{R_{*}}
\]
is adapted. In this section, we will see that if $R_{*}$ is sufficiently well-behaved, Franke's algebraicity conjecture applies to the above functor, and discuss examples of ring spectra for which this holds. 

The key is the following straightforward observation.

\begin{lemma}
\label{lemma:sparse_homotopy_groups_of_a_ring_give_splitting_on_abelian_cat_of_modules}
Let $R_{*}$ be a graded ring such that $R_{k} = 0$ unless $(q+1) |  k$; in other words, $R_{*}$ is concentrated in degrees divisible by $q+1$. Then, $\Mod_{R_{*}}$ admits a splitting of order $(q+1)$. 
\end{lemma}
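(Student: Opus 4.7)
The plan is to exhibit the splitting directly in terms of residues modulo $q+1$. For each $\phi \in \mathbb{Z}/(q+1)$, let $\Mod_{R_*, \phi} \subseteq \Mod_{R_*}$ be the full subcategory spanned by those graded $R_*$-modules $M$ such that $M_k = 0$ whenever $k \not\equiv \phi \pmod{q+1}$.

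First I would check that each $\Mod_{R_*, \phi}$ is indeed a Serre subcategory closed under the action of $R_*$. The sparsity hypothesis on $R_*$ is exactly what makes the degree-$k$ piece of an $R_*$-module closed under multiplication by $R_*$ modulo $q+1$: if $r \in R_j$ with $(q+1) \mid j$ and $m \in M_k$, then $rm \in M_{j+k}$ and $j+k \equiv k \pmod{q+1}$. Hence the residue-class decomposition of a graded abelian group lifts to a decomposition of graded $R_*$-modules. Closure under subobjects, quotients, and extensions is then automatic from the concentration condition on degrees.

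Next I would verify the two axioms of Definition~\ref{definition:splitting_of_a_locally_graded_abelian_category}. For compatibility with the local grading, recall $(M[1])_k = M_{k-1}$, so if $M \in \Mod_{R_*, \phi}$ then $(M[1])_k$ vanishes unless $k - 1 \equiv \phi \pmod{q+1}$, i.e.\ unless $k \equiv \phi+1 \pmod{q+1}$; iterating gives $[k](\Mod_{R_*, \phi}) \subseteq \Mod_{R_*, \phi+k}$. For the product decomposition, any graded $R_*$-module $M$ admits a canonical decomposition
\[
M \simeq \bigoplus_{\phi \in \mathbb{Z}/(q+1)} M^{(\phi)}, \qquad (M^{(\phi)})_k \colonequals \begin{cases} M_k & k \equiv \phi \pmod{q+1} \\ 0 & \text{otherwise,} \end{cases}
\]
where each $M^{(\phi)} \in \Mod_{R_*, \phi}$ by the above discussion. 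This decomposition is natural in $M$ because any morphism of graded $R_*$-modules preserves degrees, and there are no nonzero morphisms between objects of distinct subcategories $\Mod_{R_*, \phi}$, $\Mod_{R_*, \psi}$ with $\phi \neq \psi$, again by degree considerations. Thus the direct-sum functor $\prod_\phi \Mod_{R_*, \phi} \to \Mod_{R_*}$ is both essentially surjective and fully faithful, hence an equivalence.

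There is no real obstacle here: once one unwinds the definitions, the sparsity condition on $R_*$ ensures everything splits on the nose. The only point that deserves care is the verification that the residue-class projections are $R_*$-linear, which is where the hypothesis $(q+1) \mid k$ for nonzero $R_k$ is used.
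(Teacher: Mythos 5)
Your proof is correct and follows the same route as the paper: both define $(\Mod_{R_*})_\phi$ as the modules concentrated in degrees congruent to $\phi$ modulo $q+1$ and observe that sparsity of $R_*$ makes every module split canonically into these pieces. Your write-up simply spells out the verifications (Serre subcategory, $R_*$-linearity of the projections, compatibility with the shift) that the paper leaves implicit.
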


\begin{proof}
If $\phi \in \mathbb{Z}/(q+1)$, we let $(\Mod_{R_{*}})_{\phi}$ be the full subcategory spanned by those graded modules $M_{*}$ such that $M_{k} = 0$ unless $k = \phi \ \mathrm{mod} (q+1)$. Under the given condition on $R_{*}$, any module splits uniquely as a direct sum of submodules with this property. 
\end{proof}

\begin{theorem}[Franke's algebricity conjecture for modules] 
\label{theorem:frankes_algebraicity_for_modules}
Let $R$ be an $\mathbf{E}_1$-algebra in spectra and suppose that the graded homotopy ring $R_*$ satisfies the following: 
\begin{enumerate}
    \item is concentrated in degrees divisible by $(q+1)$ and 
    \item is of graded global dimension $d \leq q$; that is, $\Mod_{R_{*}}$ is of cohomological dimension $d$. 
\end{enumerate}
Then, there exists a canonical equivalence 
\[
h_{q+1-d} (\Mod_{R}) \simeq h_{q+1-d} (\Mod_{HR_*}).
\]
between the homotopy $(q+1-d)$-categories of $R$-modules and modules over the Eilenberg-MacLane spectrum with the same homotopy groups. 
\end{theorem}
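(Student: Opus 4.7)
The plan is to deduce this from the general Franke algebraicity conjecture (\cref{theorem:frankes_algebraicity}) applied twice: once to the homology theory $\pi_{*}\colon \Mod_{R} \rightarrow \Mod_{R_{*}}$ and once to the analogous theory for $HR_{*}$. The equivalence of the theorem will then arise by transitivity through the common algebraic model $\dcat^{per}(\Mod_{R_{*}})$.

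First I would verify that $\pi_{*}\colon \Mod_{R} \rightarrow \Mod_{R_{*}}$ satisfies the three hypotheses of \cref{theorem:frankes_algebraicity}. Adaptedness is \cref{lemma:homotopy_of_r_module_an_adapted_homology_theory}. Conservativity is immediate: an $R$-module map is an equivalence if and only if it induces an isomorphism on all homotopy groups, and $\pi_{*}$ detects the zero object. The splitting of order $(q+1)$ on the abelian target is provided by \cref{lemma:sparse_homotopy_groups_of_a_ring_give_splitting_on_abelian_cat_of_modules}, using hypothesis $(1)$ that $R_{*}$ is concentrated in degrees divisible by $(q+1)$; one checks directly that the suspension in $\Mod_{R}$, after applying $\pi_{*}$, shifts weights by one as required in \cref{definition:splitting_of_a_locally_graded_abelian_category}. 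The cohomological dimension condition $d \leq q$ is exactly hypothesis $(2)$. Thus \cref{theorem:frankes_algebraicity} produces a canonical equivalence
\[
h_{q+1-d}(\Mod_{R}) \simeq h_{q+1-d}(\dcat^{per}(\Mod_{R_{*}})).
\]

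Next I would apply the identical argument to the Eilenberg--MacLane ring spectrum $HR_{*}$. Since $\pi_{*}(HR_{*}) \cong R_{*}$ as a graded ring, the homology theory $\pi_{*}\colon \Mod_{HR_{*}} \rightarrow \Mod_{R_{*}}$ lands in the \emph{same} abelian category with the same local grading. All three hypotheses of \cref{theorem:frankes_algebraicity} therefore hold for this second homology theory with identical $q$ and $d$, yielding
\[
h_{q+1-d}(\Mod_{HR_{*}}) \simeq h_{q+1-d}(\dcat^{per}(\Mod_{R_{*}})).
\]
Composing the two equivalences through the common periodic derived category gives the canonical equivalence asserted in the theorem.

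The main obstacle, such as it is, lies in confirming that the splitting hypothesis of \cref{definition:splitting_of_a_locally_graded_abelian_category} is genuinely satisfied, i.e.\ that the Serre subcategories $(\Mod_{R_{*}})_{\phi}$ of modules concentrated in residue class $\phi \bmod (q+1)$ are stable under the action of $R_{*}$ and that the internal grading shift $[1]$ on $\Mod_{R_{*}}$ carries $(\Mod_{R_{*}})_{\phi}$ to $(\Mod_{R_{*}})_{\phi+1}$. Both are immediate from the sparsity hypothesis $R_{k}=0$ unless $(q+1)\mid k$: multiplication by a homogeneous element of $R_{*}$ preserves the residue class of the degree, and the shift $(M[1])_{k}=M_{k-1}$ manifestly rotates residue classes. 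With these verifications in hand, no further computation is required --- the content of the theorem is entirely absorbed into \cref{theorem:frankes_algebraicity}.
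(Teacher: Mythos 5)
Your proposal is correct and matches the paper's own proof: both apply \cref{theorem:frankes_algebraicity} to $\pi_*\colon \Mod_R \to \Mod_{R_*}$ and to $\pi_*\colon \Mod_{HR_*} \to \Mod_{R_*}$, using \cref{lemma:sparse_homotopy_groups_of_a_ring_give_splitting_on_abelian_cat_of_modules} for the splitting, and then compose the two equivalences through the common model $h_{q+1-d}\,\dcat^{per}(\Mod_{R_*})$. The extra verifications you spell out (conservativity, compatibility of the grading shift with the weight decomposition) are exactly the routine checks the paper leaves implicit.
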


\begin{proof} 
This is immediate from \cref{lemma:sparse_homotopy_groups_of_a_ring_give_splitting_on_abelian_cat_of_modules} and \cref{theorem:frankes_algebraicity}, since the latter applies to both $R$ and $HR_{*}$ and thus we have
\[
h_{q+1-d} \Mod_{R} \simeq h_{q+1-d} \dcat^{per}(\Mod_{R_{*}}) \simeq h_{q+1-d} \Mod_{HR_*}.
\]
\end{proof}

\begin{corollary} Let $R$ be an $\mathbf{E}_1$-algebra in spectra and suppose that the graded homotopy ring $R_*$ is concentrated in degrees divisible by $(q+1)$ and is of global dimension $d < q$. Then, the equivalence of homotopy $1$-categories
\[
h_1 (\Mod_R) \simeq h_1 (\Mod_{HR_*})
\]
is triangulated. 
\end{corollary}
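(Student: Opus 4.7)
The plan is to exploit the strict inequality $d<q$, which gives $q+1-d\geq 2$. Under these hypotheses, Theorem~\ref{theorem:frankes_algebraicity_for_modules} applied with the same data yields an equivalence of homotopy $(q+1-d)$-categories $h_{q+1-d}(\Mod_R)\simeq h_{q+1-d}(\Mod_{HR_*})$ refining the given equivalence of homotopy $1$-categories. In particular this is an equivalence of homotopy $2$-categories, which should be exactly the structure needed to see a triangulation on $h_1$: cofibre sequences in a stable $\infty$-category are $2$-categorical data (a null-composable triangle together with its witnessing nullhomotopy, encoding a pushout-pullback square), and therefore ought to be preserved by any equivalence of $h_2$'s that arises from exact $\infty$-functors.

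The first step is to unpack the refined equivalence as a zigzag of $\infty$-categorical functors
\[
\Mod_R\xrightarrow{F_R}\mathcal{M}_q(\Mod_R)\xleftarrow{\simeq}\mathcal{M}_q(\Mod_{HR_*})\xleftarrow{F_{R_*}}\Mod_{HR_*},
\]
where $F_R$ and $F_{R_*}$ are composites of the synthetic analogue $\nu$ with the Postnikov truncation $(-)_{\leq q}$ (hence exact as functors of prestable $\infty$-categories), each an equivalence on $h_{q+1-d}$ by Proposition~\ref{proposition:homotopy_category_of_c_approximated_by_mn_in_finite_global_dimension}, and the middle equivalence is the restriction to potential $q$-stages of the identification of Corollary~\ref{corollary:if_we_have_q1_bousfield_functor_then_ctauq1_modules_are_tq_modules}. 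By Remark~\ref{remark:if_we_have_q1_bousfield_functor_then_potential_q_stages_are_equivalent} the middle equivalence is compatible with the exact forgetful functors $\mathcal{M}_q(-)\hookrightarrow\Mod_{C\tau^{q+1}}(\dcat^{\omega}(-))$ covering the equivalence $\Mod_{C\tau}(\dcat^{\omega}(-))\simeq\dcat^b(\Mod_{R_*})$ on each side, and is therefore itself exact.

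I would then argue as follows. Let $\Phi\colon h_1(\Mod_R)\simeq h_1(\Mod_{HR_*})$ denote the induced equivalence. Given a cofibre sequence $a\to b\to c$ in $\Mod_R$, exactness of the zigzag produces a cofibre sequence in $\mathcal{M}_q(\Mod_{HR_*})$ with vertices $F_{R_*}(\Phi(a))$, $F_{R_*}(\Phi(b))$, $F_{R_*}(\Phi(c))$. On the other hand, forming the cofibre $c'$ of $\Phi(a)\to\Phi(b)$ in $\Mod_{HR_*}$ and applying the exact functor $F_{R_*}$ yields another cofibre sequence with the same first two vertices, so $F_{R_*}(c')\simeq F_{R_*}(\Phi(c))$; since $F_{R_*}$ is an equivalence on $h_1$ this forces $c'\simeq\Phi(c)$ in $h_1(\Mod_{HR_*})$. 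The hard part will be matching the connecting morphism $\Phi(c)\to\Sigma\Phi(a)$ with the one arising from the cofibre sequence $\Phi(a)\to\Phi(b)\to c'$ in $\Mod_{HR_*}$: the connecting map is recorded by the universal nullhomotopy of the composite $a\to b\to c\to\Sigma a$, which is a $2$-cell, and its transport along the zigzag is exactly the information carried by an $h_2$-equivalence. Once this compatibility is in place, the symmetric argument shows that $\Phi$ reflects distinguished triangles as well, and we obtain the desired triangulated equivalence.
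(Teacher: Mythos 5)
Your first step is the same as the paper's: since $d<q$ we have $q+1-d\geq 2$, so \cref{theorem:frankes_algebraicity_for_modules} already gives an equivalence of homotopy $2$-categories. But the paper then simply invokes a general principle — the homotopy $2$-category of a stable $\infty$-category determines the triangulated structure on its homotopy $1$-category (\cite{pstragowski_chromatic_homotopy_algebraic}[B.8]) — whereas you attempt to reprove this by tracing cofibre sequences through the zigzag, and that attempt has a genuine gap. The functors $F_R=(-)_{\leq q}\circ\nu$ are \emph{not} exact: $\nu$ is only left exact, and by part (6) of \cref{proposition:properties_of_finite_derived_infty_cat_of_stable_cat} it carries a cofibre sequence $a\to b\to c$ to a cofibre sequence only when $\pi_*(b)\to\pi_*(c)$ is surjective, equivalently when the connecting map $c\to\Sigma a$ induces zero on homotopy. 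For a general cofibre sequence this fails, so the very first claim of your third paragraph ("exactness of the zigzag produces a cofibre sequence in $\mathcal{M}_q(\Mod_{HR_*})$") does not hold; moreover $\mathcal{M}_q(-)$ is only an additive subcategory of a prestable $\infty$-category, not stable, so one cannot speak of cofibre sequences there in the triangulated sense without further care.

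The step you flag as "the hard part" — matching the connecting morphism — is not a loose end but the entire content of the statement, and it is exactly the difficulty that sank Franke's original realization-functor argument (see the discussion of \cite[Page 62, Proposition 2]{franke1996uniqueness} in the introduction): maps inducing zero on homology are invisible to the algebra and are precisely the ones your zigzag does not control. The correct repair is not to seek exactness of the comparison functors but to use the cited black box: \emph{any} equivalence of homotopy $2$-categories of stable $\infty$-categories, however it is produced, automatically restricts to a triangulated equivalence of homotopy $1$-categories, because distinguished triangles together with their connecting maps can be characterized intrinsically in $h_2$. With that lemma in hand the corollary is immediate from the theorem; without it, your argument does not close.
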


\begin{proof}
This follows from \cref{theorem:frankes_algebraicity_for_modules}, since the homotopy $2$-category of a stable $\infty$-category determines the triangulated structure on the homotopy $1$-category \cite{pstragowski_chromatic_homotopy_algebraic}[B.8].
\end{proof}

\begin{remark}
\cref{theorem:frankes_algebraicity_for_modules} significantly strengthens the results of the first author in \cite{patchkoria2017exotic} , which only dealt with the cases where the global dimension satisfies $d \leq 3$.
\end{remark}

\begin{warning} 
\label{warning:algebraicity_produces_exotic_equivalences}
Observe that \cref{theorem:frankes_algebraicity_for_modules} produces a large family of equivalences between homotopy categories of modules over ring spectra. These equivalences can \emph{never} be lifted to an actual equivalence
\[
\Mod_{R} \simeq \Mod_{HR_{*}}
\]
of $\infty$-categories unless $R \simeq HR_{*}$ as $\mathbf{E}_{1}$-ring spectra. In what is common terminology in the literature, these equivalences of homotopy categories are \emph{exotic}. 

To see this, observe that the equivalences we construct commute with the homotopy groups functor valued in $\Mod_{R_{*}}$ by construction. For any ring spectrum, $R$ itself is the only module up to equivalence with homotopy groups $\pi_{*} R \simeq R_{*}$, and so any lift of our equivalence would send free modules of rank one to free modules of rank one. 

As we can recover the $\mathbf{E}_{1}$-structure using endomorphism mapping spectra of the free module of rank one \cite{higher_algebra}[7.1.2.6], we deduce that any lift to an equivalence at the level of stable $\infty$-categories induces an equivalence $R \simeq HR_{*}$ of ring spectra. 
\end{warning}

Let us describe several examples to which our results apply. 

\begin{example}
\label{example:truncated_brown_peterson_spectra} 
Fix a prime $p$ and let $BP\langle n \rangle$ be an $\mathbf{E}_{1}$-form of the truncated Brown-Peterson spectrum in the sense of \cite{hahn2020redshift}, so that in particular we have 
\[
BP \langle n \rangle_{*} \simeq \mathbb{Z}_{(p)}[v_1, \dots, v_n]
\]
where $\vert v_i \vert =2(p^i-1)$. This is a ring of graded global dimension $n+1$, concentrated in degrees divisible by $2p-2$, and thus \cref{theorem:frankes_algebraicity_for_modules} implies that if $2p-2 > n+1$ then we have a canonical equivalence
\[
h_{k} \Mod_{BP \langle n \rangle} \simeq h_{k} \Mod_{H\mathbb{Z}_{(p)}[v_1, \dots, v_n]}
\]
where $k = 2p-2 - (n+1)$. In particular, if $2p-2 > n+2$, then 
we have an equivalence
\[
h_1 \Mod_{BP \langle n \rangle} \simeq h_1 \Mod_{H\mathbb{Z}_{(p)}[v_1, \dots, v_n]}
\]
of homotopy $1$-categories compatible with triangulated structure.
\end{example}

\begin{example}
\label{example:johnson_wilson_spectrum} 
Let $E(n)$ be an $\mathbf{E}_{1}$-form of the Johnson-Wilson spectrum at height $n$, so that 
\[
E(n)_{*} \simeq \mathbb{Z}_{(p)}[v_1, \dots, v_{n-1}, v_n^{\pm 1}]
\]
where again $\vert v_i \vert =2(p^i-1)$. For example, one can take a localization of an $\mathbf{E}_{1}$-form of the truncated Brown-Peterson spectrum of \cref{example:truncated_brown_peterson_spectra}. 

The homotopy groups $E(n)_{*}$ form a ring of graded global dimension $n$, and are concentrated in degrees divisible by $2p-2$. Thus, if $2p-2 > n$, then 
\[
h_{k} \Mod_{E(n)} \simeq h_{k} \Mod_{H\mathbb{Z}_{(p)}[v_1, \dots, v_{n-1}, v_n^{\pm 1}]}
\]
for $k = 2p-2-n$. 

Note that when the given form of $E(n)$ is a localization of $BP\langle n \rangle$, the above equivalence can be deduced from that of \cref{example:truncated_brown_peterson_spectra} by passing to suitable subcategories, but the bound we obtain in the Johnson-Wilson case is sharper. 
\end{example}

\begin{example}
\label{example:p_local_ko}
Consider the connective real K-theory $ko_{(p)}$ localized at an odd prime $p$. This is an $\mathbf{E}_{\infty}$-algebra with homotopy groups given by 
\[
ko_{(p)*}\simeq \mathbb{Z}_{(p)}[v],
\]
where $\vert v \vert = 4$. It is a result of the first author that we have an equivalence 
\begin{equation}
\label{equation:equivalence_of_homotopy_1_categories_for_p_localized_ko}
h_{1} \Mod(ko_{(p)}) \simeq h_1 \Mod_{H \mathbb{Z}_{(p)}[v]}
\end{equation}
of homotopy $1$-categories \cite{patchkoria2012}.

As the ring $ko_{(p)*}$ is of global dimension $2$ and is concentrated in degrees divisible by $4$, \cref{theorem:frankes_algebraicity_for_modules} improves this to an equivalence at the level of homotopy $2$-categories. In particular, the equivalence of (\ref{equation:equivalence_of_homotopy_1_categories_for_p_localized_ko}) is triangulated, a question previously left open by \cite{patchkoria2012}.
\end{example}

\begin{example}[A family of optimal examples]
\label{example:morava_k_theory}
Let $K(n)$ be an $\mathbf{E}_{1}$-form of minimal Morava $K$-theory so that 
\[
K(n)_* \simeq \mathbb{F}_{p}[v_{n}^{\pm 1}],
\]
where $| v_{n} | = 2p^{n}-2$. This is a graded field, and hence of global dimension zero, concentrated in degrees divisible by $2p^{n}-2$ and thus we deduce that 
\[
h_{2p^{n}-2} \Mod_{K(n)} \simeq h_{2p^{n}-2} \Mod_{H \mathbb{F}_p[v_n^{\pm 1}]}.
\]
We claim this is optimal; that is, the above equivalence cannot be improved to one of homotopy $(2p^{n}-1)$-categories. 

To see this, consider the mapping space 
\[
\Map_{h_{2p^{n}-1} \Mod_{K(n)}}(K(n), K(n)) \simeq \tau_{\leq 2p^{n}-2} (\Omega^{\infty} K(n))
\]
The left hand side carries a canonical $\mathbf{E}_{\infty}$-structure as the homotopy $(2p^{n}-1)$-category is additive, which coincides with the structure on the right hand side coming from the spectrum structure on $K(n)$. 

Thus, we can identify the connective spectrum corresponding to  endomorphisms of $K(n)$ in the homotopy $(2p^{n}-1)$-category with the Postnikov truncation
\[
\tau_{\leq 2p^{n}-2} k(n)
\]
of connective Morava $K$-theory. It is well-known that this spectrum is not a product of Eilenberg-MacLane spectra, as the first possibly non-zero $k$-invariant of $k(n)$ is indeed non-trivial, being equal to the Milnor primitive $Q_{n} \in A^{*}$ \cite[IX.7.17]{Rudy}. 

Since the $\infty$-category $h_{2p^{n}-1} \Mod_{H \mathbb{F}_p[v_n^{\pm 1}]}$ is additive and $H \mathbb{Z}$-linear, all mapping spectra therein are Eilenberg-MacLane spectra, thus $\tau_{\leq 2p^{n}-2} k(n)$ cannot arise in this way. We deduce that the homotopy $(2p^{n}-1)$-categories of $\Mod_{K(n)}$ and  $\Mod_{H \mathbb{F}_p[v_n^{\pm 1}]}$ are not equivalent and thus in this case the equivalence constructed in \cref{theorem:frankes_algebraicity_for_modules} is optimal.
\end{example}

\begin{example}
\label{example:connective_morava_k_theory}
Another example is given by the connective covers $k(n)$ of Morava $K$-theories of \cref{example:morava_k_theory}. In this case, the homotopy groups form a ring of dimension one and thus 
\[
h_{2p^{n}-3} \Mod_{k(n)} \simeq h_{2p^{n}-3} \Mod_{H \mathbb{F}_p[v_n]}
\]
On the other hand, the same argument as in \cref{example:morava_k_theory} shows that the homotopy $(2p^{n}-1)$-categories are not equivalent. The comparison of homotopy $(2p^{n}-2)$-categories remains open, but we expect it does not hold, either. 
\end{example}

The following two examples are of ring spectra for which \cref{theorem:frankes_algebraicity_for_modules} does not yield any new results, but we decided to mention them to have an opportunity to highlight some open questions the resolution of which would require a different technique.

\begin{example}
Consider the complex topological $K$-theory spectrum $KU$, with homotopy groups given by $\pi_{*} KU \simeq \mathbb{Z}[u^{\pm 1}]$ where $| u | = 2$. In this case, there exists an equivalence
\begin{equation}
\label{equation:equivalence_with_homotopy_cat_of_ku_modules}
h_1 \Mod_{KU} \simeq h_1 \Mod_{\mathbb{Z}[u^{\pm 1}]}
\end{equation}
of homotopy $1$-categories by a classical result of Wolbert \cite{Wolbert}. This result is immediately recovered by \cref{theorem:frankes_algebraicity_for_modules}, but our methods do not allow us to deduce anything further and it is still an open question whether the equivalence of (\ref{equation:equivalence_with_homotopy_cat_of_ku_modules}) is triangulated, or whether it can be lifted to the level of homotopy $2$-categories. 

Note that (\ref{equation:equivalence_with_homotopy_cat_of_ku_modules}) is known to be triangulated after inverting $2$, by a Galois theoretic argument \cite{patchkoriaKU}. By reducing modulo $2$ and using the argument of  \cref{example:morava_k_theory} we see that it does not lift to an equivalence of homotopy $3$-categories.
\end{example}

\begin{example} 
At any fixed prime, Dugger and Shipley construct a non-formal differential graded algebra which determines a $\mathbf{E}_1$-algebra in spectra $HA$ with homotopy groups
\[
\pi_* HA \simeq \mathbb{F}_p[x^{\pm 1}],
\]
where $| x | = 1$ \cite{DugShiEx}. This ring is a graded field, and Dugger and Shipley use it to show that there exist an equivalence
\[
h_1 \Mod_{HA} \simeq h_1 \Mod_{H \mathbb{F}_p[x^{\pm 1}]}
\]
which is moreover compatible with the triangulated structure. 

On the other hand, they show that $\tau_{\leq 2} \tau_{\geq 0} HA$ and $\tau_{\leq 2}  \tau_{\geq 0} H\mathbb{F}_p[x^{\pm 1}]$ are not equivalent as $\mathbf{E}_1$-algebras and thus a variation on the argument given in \cref{warning:algebraicity_produces_exotic_equivalences} implies that the given equivalence does not lift to the level of homotopy $3$-categories. The case of homotopy $2$-categories, which \cref{theorem:frankes_algebraicity_for_modules} does not resolve, remains open.
\end{example}

\subsection{Chromatic algebraicity} 
\label{subsection:chromatic_algebraicity}

At any fixed prime and finite height $n$, we have Johnson-Wilson homology theory 
\[
E(n)_{*}\colon \spectra_{E(n)} \rightarrow \Comod_{E(n)_{*}E(n)}.
\]
valued in the category of comodules over the Hopf algebroid $E(n)_{*}E(n)$, which presents the moduli stack of formal groups of height at most $n$. The following generalization of the result of second author shows that this functor is a very good approximation to the $E(n)$-local category at large primes. 

\begin{theorem}[Chromatic algebraicity]
\label{theorem:chromatic_algebraicity}
If $2p-2 > n^{2}+n$, there exists a canonical equivalence
\[
h_{k} \spectra_{E(n)} \simeq h_{k} \dcat^{per}(E(n)_{*}E(n))
\]
between the homotopy $k$-categories of $E(n)$-local spectra and differential $E(n)_{*}E(n)$-comodules, where $k = 2p-2-n^{2}-n$.
\end{theorem}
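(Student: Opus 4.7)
The strategy is to deduce the theorem directly from Franke's algebraicity conjecture (Theorem \ref{theorem:frankes_algebraicity}) applied to the Johnson-Wilson homology theory
\[
E(n)_{*}\colon \spectra_{E(n)} \rightarrow \Comod_{E(n)_{*}E(n)}.
\]
Taking $q+1 = 2p-2$ and $d = n^2+n$ will give the output homotopy $k$-category with $k = q+1-d = 2p-2-n^2-n$, which matches the claimed range. So the plan is to verify the three hypotheses of Theorem \ref{theorem:frankes_algebraicity}: conservativity and adaptedness of $E(n)_{*}$, existence of a splitting of $\Comod_{E(n)_{*}E(n)}$ of order $2p-2$, and cohomological dimension $\leq n^2+n$.

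First I would handle adaptedness. Since $E(n)$ is Landweber exact it is an Adams-type ring spectrum, so by the classical construction $E(n)_{*}X$ is naturally an $E(n)_{*}E(n)$-comodule, and the resulting factorization is adapted by the theorem of Devinatz recalled in Example \ref{example:adams_type_factorization}. Conservativity on the localized category is automatic: any $X\in\spectra_{E(n)}$ with $E(n)_{*}X=0$ is simultaneously $E(n)$-local and $E(n)$-acyclic and therefore zero. Next, for the splitting I would observe that the Hopf algebroid $E(n)_{*}E(n)$ is concentrated in degrees divisible by $2p-2$: the coefficient ring $E(n)_{*}\simeq\mathbb{Z}_{(p)}[v_1,\dots,v_{n-1},v_n^{\pm 1}]$ lives in such degrees, and the description $E(n)_{*}E(n)\simeq E(n)_{*}\otimes_{BP_{*}}BP_{*}BP\otimes_{BP_{*}}E(n)_{*}$ expresses the remaining generators in terms of the classes $t_i\in BP_{*}BP$ with $|t_i|=2(p^i-1)$, all of which are divisible by $2p-2$. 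Consequently, comodules over $E(n)_{*}E(n)$ decompose canonically according to degree modulo $2p-2$, and the Serre subcategories of comodules concentrated in a single residue class assemble into a splitting of order $2p-2$ compatible with the local grading, exactly as in Lemma \ref{lemma:sparse_homotopy_groups_of_a_ring_give_splitting_on_abelian_cat_of_modules}.

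The main technical input, which I would use as a black box rather than reprove, is the cohomological dimension bound: $\Comod_{E(n)_{*}E(n)}$ has cohomological dimension $n^2+n$. This is the classical computation going back to Morava and Ravenel, resting on the identification (via Morava's change-of-rings theorem) of the relevant $\Ext$-groups with continuous cohomology of the Morava stabilizer group $\mathbb{S}_n$ acting on $(E_n)_{*}$, whose virtual cohomological dimension is $n^2$, together with an extra $n$ coming from the Galois/torsion-free direction. The hard part of the whole result is subsumed in this bound; everything else in the argument is bookkeeping.

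With all three hypotheses of Theorem \ref{theorem:frankes_algebraicity} in place and with the numerical condition $q\geq d$ reading precisely $2p-3\geq n^2+n$, i.e.\ $2p-2>n^2+n$, the theorem produces a canonical equivalence
\[
h_{k}\spectra_{E(n)}\simeq h_{k}\dcat^{per}(\Comod_{E(n)_{*}E(n)})
\]
for $k=q+1-d=2p-2-n^2-n$, which is the statement. I note that compared with the earlier bound $2p-2>2(n^2+n)$ obtained in the second author's thesis via Goerss--Hopkins theory for synthetic spectra, the improvement to the sharp range predicted by Franke is entirely a consequence of the fact that our algebraic model $\dcat^{per}(\acat)$ is now seen to be built out of the same $\infty$-category of potential $q$-stages $\mathcal{M}_q(\ccat)$ as $\spectra_{E(n)}$ itself, as formalised in Remark \ref{remark:if_we_have_q1_bousfield_functor_then_potential_q_stages_are_equivalent}, so that no factor of two is lost in comparing them.
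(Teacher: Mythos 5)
Your proposal is correct and follows essentially the same route as the paper: verify the three hypotheses of Theorem \ref{theorem:frankes_algebraicity} for $E(n)_{*}$ (conservativity on the local category, adaptedness via Devinatz, the splitting of order $2p-2$ from the sparseness of $E(n)_{*}E(n)$, and the Morava-theoretic cohomological dimension bound $n^{2}+n$, which the paper likewise quotes as a black box). The only addition worth noting is that the hypothesis $2p-2>n^{2}+n$ also guarantees $p>n+1$, which is the condition under which the dimension bound holds.
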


\begin{proof}
We will verify the assumptions of \cref{theorem:frankes_algebraicity}. Observe first that the homology theory $E(n)_{*}$ is conservative, since we take the domain to be given by $E(n)$-local spectra. It is adapted by a result of Devinatz \cite{devinatz1997morava}[1.5].

 Since the Hopf algebroid $E(n)_{*}E(n)$ is concentrated in degrees divisible by $2p-2$, the same argument as in  \cref{lemma:sparse_homotopy_groups_of_a_ring_give_splitting_on_abelian_cat_of_modules} shows that $\Comod_{E(n)_{*}E(n)}$ admits the needed splitting. Finally, the fact that the latter abelian category is of finite cohomological dimension $n^{2}+n$ when $p > n+1$ follows from the chromatic spectral sequence and Morava's work at finite height  \cite{pstragowski_chromatic_homotopy_algebraic}[2.5].
\end{proof}

\begin{remark}[The improved bound]
As we observed above, \cref{theorem:chromatic_algebraicity} is an improvement on the results from the thesis of the second author, where equivalence of homotopy categories as above is proven under the weaker bound $2p-2 > 2(n^{2}+n)$ \cite{pstragowski_chromatic_homotopy_algebraic}. This improvement  comes from a more involved construction of the Bousfield adjunction which appears in \S\ref{subsection:splitting_of_abelian_categories_and_the_bousfield_functor}, which is obtained by first defining it on injectives, then right Kan extending, and finally left Kan extending. 
\end{remark}

\begin{remark}
Informally speaking, in the limit ``$p \to \infty$'', $\spectra_{E(n)}$ and $\dcat(E(n)_{*}E(n))$ are equivalent as symmetric monoidal $\infty$-categories.  This is the asymptotic algebraicity result of Barthel, Schlank and Stapleton \cite{barthel2017chromatic}.
\end{remark}

\subsection{Diagram $\infty$-categories}
\label{subsection:diagram_infty_categories}

In \cref{remark:how_do_deduce_frankes_formulation_from_ours} we observed that equivalences of higher homotopy categories, such as the ones produced by \cref{theorem:frankes_algebraicity}, induce equivalences on homotopy categories of diagrams, under certain restrictions on the dimension of the indexing simplicial set. In this section, we will take a more direct approach to describing these diagram $\infty$-categories by considering induced adapted homology theories. 

To be more precise, if $H\colon \ccat \rightarrow \acat$ is a homology theory and $K$ is a simplicial set, then we have an induced homology theory 
\[
H^{K}\colon \ccat^{K} \rightarrow \acat^{K}
\]
on diagram $\infty$-categories given by
\[
H^{K}(f)(k) = H(f(k)),
\]
where $k \in K$ and $f\colon K \rightarrow \ccat$. If $H$ is adapted, then under certain assumptions on $\acat$ and $K$ this can be shown to be adapted again, allowing our methods to apply to $\ccat^{K}$ directly. 

To avoid analyzing more complicated colimits, we will assume that $K$ is a (nerve of a) category. The first case we will analyze is when $K$ is finite, but since we work in the homotopy coherent setting, we need a stronger version of finiteness than what is usual for categories. 

\begin{definition}
\label{definition:finite_category}
We say a category $K$ is \emph{finite} if its nerve is a finite simplicial set; that is, it has only finitely many non-degenerate cells. The \emph{dimension} $\mathrm{dim}(K)$ is the dimension of its nerve as a simplicial set. 
\end{definition}

\begin{remark}
\label{remark:homotopical_finiteness_of_a_category}
The finiteness of a category $K$ in the sense of \cref{definition:finite_category} is equivalent to asking that 
\begin{enumerate}
    \item $K$ has only finitely many objects $k_{0}, k_{1}, \ldots \in K_{0}$, 
    \item $K$ has only finitely many morphisms $f_{0}, f_{1}, \ldots \in K_{1}$ and 
    \item for any composable chain of non-identity morphisms $k_{0} \rightarrow k_{1} \rightarrow \ldots \rightarrow k_{n}$ we have $n \leq d$, for some $d$ which only depends on $K$. The smallest such $d$ is the dimension of $K$. 
\end{enumerate}
\end{remark}
The key question is whether $\acat^{K}$ has enough injectives and is of finite cohomological dimension. This is always true in the finite case, as the following shows. 

\begin{construction}
\label{construction:injectives_in_finite_diagram_categories}
Let $K$ be a finite category. It is well-known that in this case $\acat^{K}$ has enough injectives, see \cite[Example 2.13]{Wei}, but it will be useful to give an explicit description. 

Given an object $k \in K$, the evaluation 
\[
\Ev_k \colon \acat^K \to \acat
\]
sending $X \in \acat^K$ to the value $X(k) \in \acat$ has a right adjoint $r_k$, explicitly given by the right Kan extension
\[
r_k(a)(k')=\prod_{\Hom_{K}(k',k)} a.
\]
Note that since evaluation is exact, the right adjoint $r_{k}$ sends injectives to injectives. 

Given any $X \in \acat^{K}$, for every object $k \in K$ we can find an injective $i_k$ and an embedding $X(k) \to i_k$. Using the adjunction, we get morphisms of diagrams $X \to r_k(i_k)$ which assemble into a single map 
\[
X \to \prod_{k \in K} r_k(i_k) 
\]
This map is a monomorphism, as it is so levelwise by construction, and the target is injective, as it is a product of injectives. 
\end{construction}

\begin{proposition}
\label{proposition:adapted_homology_remains_adapted_on_finite_diagrams}
Let $H\colon \ccat \rightarrow \acat$ be an adapted homology theory such that $\ccat$ is idempotent-complete and let $K$ be a finite category. Then, the induced homology theory $H^K \colon \ccat^K \to \acat^K$ is also adapted.
\end{proposition}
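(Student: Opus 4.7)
The plan is to show that every injective $I \in \acat^K$ admits a lift to $\ccat^K$ with the structure map to $I$ an isomorphism. I will exploit the explicit description of injectives of $\acat^K$ from \cref{construction:injectives_in_finite_diagram_categories} together with adaptedness of $H$ at the level of $\ccat$.

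First I will handle the ``standard'' injectives $r_k(i)$ for $k\in K$ and $i\in\acat^{inj}$. Since $K$ is finite, for each $k$ the evaluation functor $\Ev_k\colon \ccat^K\to \ccat$ admits a right adjoint $R_k$, given by the right Kan extension $R_k(c)(k')=\prod_{\Hom_K(k',k)}c$; the indexing set is finite, so these products exist in $\ccat$. I claim that for any injective $i\in\acat$ with chosen lift $i_\ccat\in\ccat$, the object $R_k(i_\ccat)$ is an injective lift of $r_k(i)$. Indeed, for any $X\in\ccat^K$,
\[
[X,R_k(i_\ccat)]_{\ccat^K}\simeq [X(k),i_\ccat]_\ccat\simeq \Hom_\acat(H(X(k)),i)\simeq \Hom_{\acat^K}(H^K(X),r_k(i)),
\]
and the structure map is seen to be an isomorphism by evaluating pointwise: $H^K(R_k(i_\ccat))(k')=H\bigl(\prod_{\Hom_K(k',k)}i_\ccat\bigr)\simeq \prod_{\Hom_K(k',k)}H(i_\ccat)\simeq r_k(i)(k')$, where the middle isomorphism uses \cref{corollary:an_adapted_homology_theory_commutes_with_producs_of_injectives}.

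Next I will pass to arbitrary injectives. Following \cref{construction:injectives_in_finite_diagram_categories}, any injective $I\in\acat^K$ embeds monomorphically into a finite product $P\colonequals \prod_{k\in K}r_k(i_k)$ with each $i_k$ injective in $\acat$, and is therefore a direct summand of $P$. The corresponding product $P_\ccat\colonequals \prod_{k\in K}R_k((i_k)_\ccat)$ exists in $\ccat^K$ (all products are finite, hence exist pointwise); by the previous paragraph and the same computation $[X,P_\ccat]_{\ccat^K}\simeq\prod_k\Hom_{\acat^K}(H^K(X),r_k(i_k))\simeq\Hom_{\acat^K}(H^K(X),P)$, so $P_\ccat$ is an injective lift of $P$.

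Finally I will split off $I$. Let $e\colon P\to P$ be the idempotent with image $I$. The equivalence
\[
[P_\ccat,P_\ccat]_{\ccat^K}\xrightarrow{\sim} \Hom_{\acat^K}(H^K(P_\ccat),P)\simeq \End_{\acat^K}(P)
\]
is an isomorphism of (unital) monoids, since composition on the left corresponds to composition on the right under the adjunction unit/counit. Hence $e$ lifts uniquely to an idempotent $\tilde e$ in the homotopy category $h\ccat^K$. Since $\ccat$ is idempotent-complete and idempotents in $\ccat^K$ are split pointwise, $\ccat^K$ is idempotent-complete; hence $\tilde e$ splits, with image $I_\ccat\in\ccat^K$. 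Then $I_\ccat$ is a direct summand of the $H^K$-injective $P_\ccat$, hence itself $H^K$-injective, and the structure map $H^K(I_\ccat)\to I$ is an isomorphism because $H^K$ respects the splitting: it sends $\tilde e$ to $e$, so $H^K(I_\ccat)\simeq\mathrm{im}(e)=I$. This completes the verification of the three conditions of \cref{definition:adapted_homology_theory}. The main obstacle is the last step --- promoting the idempotent $e$ from $\acat^K$ to an idempotent in $\ccat^K$ that has the correct image --- but this is handled cleanly by the ring-isomorphism of endomorphism groups coming from adaptedness applied to $P_\ccat$, together with idempotent-completeness of $\ccat^K$.
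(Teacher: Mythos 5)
Your proof is correct and follows essentially the same route as the paper: lift the standard injectives $r_k(i)$ via the right Kan extension $R_k$ in $\ccat^K$ using the adjunction $\Ev_k \dashv R_k$ and adaptedness of $H$, then handle a general injective of $\acat^K$ as a direct summand of a finite product of these, split off by an idempotent using idempotent-completeness. The only difference is that you carry out the idempotent-lifting and splitting argument explicitly, whereas the paper delegates it to \cref{remark:adapted_homology_theories_under_other_names_and_comparison_with_franke}.
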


\begin{proof}
Since every injective in $\acat^K$ embeds into a finite product of injectives of the form $r_{k}(i)$ of \cref{construction:injectives_in_finite_diagram_categories}, where $k \in K$ and $i \in \acat$ is an injective, and $\ccat$ is idempotent complete, by \cref{remark:adapted_homology_theories_under_other_names_and_comparison_with_franke} it suffices to find injective lifts of $r_{k}(i)$. 

Since $H$ is adapted, there exists an injective lift $i_{\ccat} \in \ccat$ of $i$. Let $r^{\ccat}_{k}$ be the right adjoint to the evaluation
\[
\Ev_{k}^{\ccat}\colon \ccat^{K} \rightarrow \ccat, 
\]
which is again given by right Kan extension along $k\colon \Delta^{0} \rightarrow K$. We claim that $r^{\ccat}_{k}(i_{\ccat})$ is the injective lift of $r_{k}(i)$. Again, by the right Kan extension formula we have 
\[
r_{k}^{\ccat}(i_{\ccat})(k^{\prime}) = \prod_{\Hom_{K}(k^{\prime}, k)} i_{\ccat}
\]
and since $H$ commutes with finite products we have $H^{K}(r^{\ccat}_{k}(i_{\ccat})) = r_{k}(i)$, as needed. 

We are left with verifying that
\[
[X, r_{k}^{\ccat}(i_{\ccat})]_{\ccat^{K}} \simeq \Hom_{\acat^{k}}(H^{K}(X), r_{k}(i))
\]
After applying the respective adjunction on both sides, this reduces to
\[
[X(k), i_{\ccat}]_{\ccat} \simeq \Hom_{\acat}(H(X(k)), i)
\]
which is just the adaptedness of $H$.
\end{proof}

\begin{proposition}
\label{proposition:dimension_of_category_of_finite_diagrams}
Let $K$ be a finite category and $\acat$ an abelian category with enough injectives and of cohomological dimension $d$. Then, the diagram category $\acat^{K}$ is of finite cohomological dimension at most $d + \mathrm{dim}(K)$. 
\end{proposition}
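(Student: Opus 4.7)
The plan is to show that every $F \in \acat^K$ admits an injective resolution in $\acat^K$ of length at most $d + \dim(K)$, which immediately bounds the cohomological dimension as claimed. This follows from combining a Bousfield--Kan style cobar resolution in the diagram direction with pointwise injective resolutions in $\acat$.

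First I would construct the cobar resolution of $F$ associated to the adjunction between $\acat^K$ and $\prod_{k \in \mathrm{Obj}(K)} \acat$, where total evaluation $\mathrm{Ev}\colon F \mapsto (F(k))_k$ is left adjoint to $(a_k)_k \mapsto \prod_k r_k(a_k)$ (using the functors $r_k$ of \cref{construction:injectives_in_finite_diagram_categories}). The $n$-th cosimplicial level is naturally identified with $\prod_\sigma r_{k_0^\sigma}(F(k_n^\sigma))$, the product indexed by composable chains $\sigma = (k_0 \to \cdots \to k_n)$ in $K$. The augmentation $F \to (\text{cobar})$ is exact: evaluating at any $\ell \in K$ produces a cosimplicial object in $\acat$ admitting an explicit contracting cohomotopy extracted from $\mathrm{id}_\ell \in \Hom_K(\ell,\ell)$, and since $\prod_k \mathrm{Ev}_k$ is exact and conservative, exactness in $\acat^K$ follows. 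Passing to the normalized Moore complex restricts $\sigma$ to chains of non-identity morphisms, which by the finiteness property spelled out in \cref{remark:homotopical_finiteness_of_a_category} vanishes in cosimplicial degrees above $\dim(K)$.

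Second, each term of this finite normalized resolution is a finite product of functors of the form $r_k(a)$ with $a \in \acat$. Since $\mathrm{Ev}_k$ is exact, its right adjoint $r_k$ preserves injectives; applying $r_k$ to any injective resolution $a \to J^\bullet$ in $\acat$ of length $\leq d$ (which exists by the hypothesis on $\acat$) yields an injective resolution of $r_k(a)$ in $\acat^K$ of length $\leq d$. Assembling these into a first-quadrant double complex, with cobar direction of length $\leq \dim(K)$ and injective direction of length $\leq d$, the associated total complex is an injective resolution of $F$ in $\acat^K$ of length at most $d + \dim(K)$, which gives the bound.

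The main technical subtlety is verifying that the augmented cobar complex is genuinely exact in $\acat^K$, not merely contractible in some weaker derived sense; this relies on the fact that the contracting cohomotopy exists only after pointwise evaluation, so one must invoke the conservativity and exactness of $\prod_k \mathrm{Ev}_k$ to transfer exactness back to $\acat^K$. Equivalently, one can package the construction as a Grothendieck spectral sequence of the form $E_2^{p,q} = H^p(K; \mathrm{Ext}^q_\acat(F,G)) \Rightarrow \mathrm{Ext}^{p+q}_{\acat^K}(F,G)$ which, since the nerve of $K$ has no non-degenerate simplices above dimension $\dim(K)$ and $\acat$ has cohomological dimension $d$, vanishes for $p+q > d + \dim(K)$.
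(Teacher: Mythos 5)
Your argument is correct and is essentially the paper's proof: the same cobar resolution for the adjunction between $\acat^{K}$ and $\prod_{k}\acat$, the same normalization argument bounding the cobar length by $\mathrm{dim}(K)$ via \cref{remark:homotopical_finiteness_of_a_category}, and the same observation that each $r_{k}$ is exact (being given by finite products) and hence carries length-$\leq d$ injective resolutions in $\acat$ to injective resolutions in $\acat^{K}$. The only cosmetic difference is in the final assembly step --- you totalize a double complex where the paper invokes the hypercohomology spectral sequence you also mention; if you insist on the double-complex route, be aware that the columns of an honest Cartan--Eilenberg resolution of the cobar complex may individually need length up to $d + \mathrm{dim}(K) - p$ over the $p$-th term rather than $d$, although the resulting total complex still has length $\leq d + \mathrm{dim}(K)$, so the bound is unaffected.
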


\begin{proof}
This is also well-known, see \cite{franke1996uniqueness} or  \cite{Mit1} for the case of posets, but we will provide a proof for the convenience of the reader. Consider the adjunction
\[
\xymatrix{\acat^K \ar@<0.5ex>[r]^-{\Ev} & \prod_{k \in K} \acat \ar@<0.5ex>[l]^-{r}.}
\]
given by evaluation at all objects $k \in K$ simultaneously, and let $T = r \circ \Ev$ be the associated monad. For any $X \in \acat^{K}$, we get the cobar resolution 
\begin{equation}
\label{equation:cobar_resolution_for_functor_category}
X \rightarrow TX \rightrightarrows T^{2}X \triplerightarrow \ldots
\end{equation}
This possesses an extra codegeneracy operator after applying $\Ev$, and hence the corresponding chain complex is a resolution of $\Ev(X)$. As evaluation is exact, we deduce that (\ref{equation:cobar_resolution_for_functor_category}) is a resolution of $X$ in $\acat^{K}$. 

By unwrapping the definitions, we see that levelwise the cobar resolution is given by 
\[
X(k) \rightarrow \prod_{k \to k_1} X(k_1) \rightrightarrows \prod_{k \to k_1 \to k_2} X(k_2) \triplerightarrow 
\]
Through the Dold-Kan correspondence, this has a corresponding normalized cochain complex. In this case, normalization corresponds in degree $n$ to taking the subproduct indexed by those composable arrows $k \rightarrow k_{1} \rightarrow \ldots \rightarrow k_{n}$ in $K$ where all but the first one are not the identity. 

There are no such composable chains above $\mathrm{dim}(K)$ by \cref{remark:homotopical_finiteness_of_a_category}, and we deduce that the normalized cobar resolution is a finite chain complex
\begin{align*}
X \to X_{0} \to X_{1} \to \cdots \to X_{\mathrm{dim}(K)} \rightarrow 0
\end{align*}
where each of $X_{i}$ is a direct summand of a diagram in the image of $T$. Since $r$ is exact, as it is given levelwise by finite products, it takes injective resolutions to injective resolutions and we deduce that anything in the image of $T$ is at most of injective dimension $d$, as all objects of $\acat$ are. 

We deduce that all of the $X_{i}$ are of injective dimension at most $d$. For any $Y \in \acat^{K}$, the cobar resolution induces a spectral sequence
\[
H^{s^{\prime}} (\Ext^{s}_{\acat^{K}}(Y, X_{*})) \Rightarrow \Ext^{s+s^{\prime}}_{\acat^{K}}(Y, X)
\]
The $\Ext$-groups on the left vanish for $s > d$ and the cohomology groups vanish for $s' > \mathrm{dim}(K)$, giving the needed result.
\end{proof}

\begin{theorem}
\label{theorem:algebraicity_for_diagrams_indexed_by_a_finite_category}
Let $H\colon \ccat \rightarrow \acat$ be a conservative, adapted homology theory such that $\acat$ admits a splitting of order $(q+1)$ and is of finite cohomological dimension $d$. Then, for any finite category $K$ we have an equivalence
\[
h_{k}{(\ccat^K)} \simeq h_{k} (\dcat^{per}(\acat^K))
\]
of homotopy $k$-categories, where $k = q+1-d-\mathrm{dim}(K)$. 
\end{theorem}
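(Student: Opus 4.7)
The plan is to reduce this to a direct application of Franke's algebraicity conjecture (\cref{theorem:frankes_algebraicity}) applied to the induced homology theory $H^K\colon \ccat^K \to \acat^K$. Since $K$ is a (nerve of a) category and $\ccat$ is stable, the diagram $\infty$-category $\ccat^K$ is stable with limits and colimits computed pointwise, and it is idempotent-complete since $\ccat$ is (the latter by \cref{corollary:c_which_admits_conservative_adapted_homology_theory_in_fin_dim_a_is_idempotent_complete}). Thus it suffices to verify four hypotheses about $H^K$: (i) it is an adapted homology theory, (ii) it is conservative, (iii) $\acat^K$ admits a splitting of order $(q+1)$ compatible with its local grading, and (iv) $\acat^K$ is of finite cohomological dimension $\leq q$.

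Adaptedness of $H^K$ is precisely \cref{proposition:adapted_homology_remains_adapted_on_finite_diagrams}, and conservativity is immediate since equivalences in both $\ccat^K$ and $\acat^K$ are detected pointwise: if $H^K(f)$ is an isomorphism then $H(f(k))$ is for every $k \in K$, hence $f(k)$ is an equivalence by conservativity of $H$, hence $f$ is an equivalence. For hypothesis (iii), we upgrade the given splitting $\acat \simeq \prod_{\phi \in \mathbb{Z}/(q+1)} \acat_\phi$ to $\acat^K$ by setting $(\acat^K)_\phi \colonequals (\acat_\phi)^K$. Since the local grading on $\acat^K$ is pointwise inherited from that of $\acat$, the inclusion condition $[1]_{\acat}(\acat_\phi) \subseteq \acat_{\phi+1}$ gives $[1]_{\acat^K}((\acat^K)_\phi) \subseteq (\acat^K)_{\phi+1}$; moreover each $(\acat_\phi)^K$ is clearly a Serre subcategory of $\acat^K$, and the decomposition of an object follows pointwise from the decomposition in $\acat$, giving $\acat^K \simeq \prod_\phi (\acat_\phi)^K$ as required in \cref{definition:splitting_of_a_locally_graded_abelian_category}.

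Hypothesis (iv) is \cref{proposition:dimension_of_category_of_finite_diagrams}, which bounds the cohomological dimension of $\acat^K$ by $d + \mathrm{dim}(K)$. Under the hypothesis of the theorem, the parameter $k = q + 1 - d - \mathrm{dim}(K)$ is by definition $(q+1) - (\text{cohomological dimension bound for }\acat^K)$, so the condition $q \geq d + \mathrm{dim}(K)$ needed to apply \cref{theorem:frankes_algebraicity} is equivalent to $k \geq 1$ (if $k \leq 0$ there is nothing to prove beyond the edge case of \cref{remark:equivalence_of_zero_homotopy_categories_holds_in_edge_case_of_gh_tower}, which handles $k = 0$ by the same argument once $h_0$ is interpreted as the set of equivalence classes).

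With all hypotheses verified, \cref{theorem:frankes_algebraicity} applied to $H^K$ with splitting order $(q+1)$ and cohomological dimension $\leq d + \mathrm{dim}(K)$ yields a canonical equivalence
\[
h_{(q+1) - (d+\mathrm{dim}(K))}(\ccat^K) \simeq h_{(q+1) - (d + \mathrm{dim}(K))}(\dcat^{per}(\acat^K)),
\]
which is precisely the desired equivalence of homotopy $k$-categories. The only nontrivial step in the argument is the splitting compatibility in (iii), but this is essentially formal since both the local grading and the Serre subcategory structure on the diagram category are inherited pointwise; no obstacle of substance arises, as the main theoretical input (the monadic comparison via the truncated thread monad and convergence of the Goerss-Hopkins tower) is already encoded in \cref{theorem:frankes_algebraicity}.
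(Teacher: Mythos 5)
Your proposal is correct and follows essentially the same route as the paper: pass the splitting to $\acat^K$ pointwise via $(\acat^K)_\phi \colonequals (\acat_\phi)^K$, invoke \cref{proposition:adapted_homology_remains_adapted_on_finite_diagrams} for adaptedness and \cref{proposition:dimension_of_category_of_finite_diagrams} for the cohomological dimension bound, and then apply \cref{theorem:frankes_algebraicity}. The only differences are cosmetic — you spell out conservativity and the boundary case $k\le 1$ explicitly, which the paper leaves implicit.
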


\begin{proof}
Observe that if $\acat$ admits a splitting of order $q+1$, then so does $\acat^{K}$, with 
\[
(\acat^{K})_{\phi} \colonequals (\acat_{\phi})^{K}
\]
for any $\phi \in \mathbb{Z}/(q+1)$. Thus, the result is a combination of \cref{theorem:frankes_algebraicity}, \cref{proposition:adapted_homology_remains_adapted_on_finite_diagrams} and \cref{proposition:dimension_of_category_of_finite_diagrams}. Note that $\ccat$ is automatically idempotent-complete, as a consequence of \cref{corollary:c_which_admits_conservative_adapted_homology_theory_in_fin_dim_a_is_idempotent_complete}.
\end{proof}

\begin{remark}
Note that the above result implies in particular that, subject to certain finiteness conditions of $K$, the homotopy category of $\ccat^{K}$ does not depend on $\ccat$ but only on $\acat$, which we also deduced previously in \cref{remark:how_do_deduce_frankes_formulation_from_ours} by using connectivity estimates. 

However, the conclusion of \cref{theorem:algebraicity_for_diagrams_indexed_by_a_finite_category} is stronger: it describes these categories as the homotopy category of a derived category of diagrams, rather than diagrams in the derived $\infty$-category. 
\end{remark}

Observe that in \cref{remark:how_do_deduce_frankes_formulation_from_ours}, we did not need to assume that $K$ is a finite category, but only that it is finite-dimensional. This can also be approached using induced homology theories, but requires more assumptions on $\acat$. 

\begin{proposition}
\label{proposition:algebraicity_for_finite_dimensional_diagrams}
Let $H\colon \ccat \rightarrow \acat$ be a conservative, adapted homology theory and $\kappa$ a regular cardinal such that 
\begin{enumerate}
    \item $\acat$ admits a splitting of order $(q+1)$ and is of finite cohomological dimension $d$,
    \item $\acat$ admits $\kappa$-small products and they are exact. 
\end{enumerate}
Let $K$ be a nerve of a category whose set of morphisms is $\kappa$-small and which is finite-dimensional as a simplicial set. Then, we have an equivalence
\[
h_{k} (\dcat^{per}(\acat^K)) \simeq h_{k}(\ccat^K)
\]
of homotopy $k$-categories, where $k = q+1-d-\mathrm{dim}(K)$. 
\end{proposition}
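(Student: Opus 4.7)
My plan is to apply Franke's algebraicity theorem (\cref{theorem:frankes_algebraicity}) directly to the induced homology theory $H^K \colon \ccat^K \to \acat^K$. This requires verifying four things: (i) $H^K$ is a conservative adapted homology theory, (ii) $\acat^K$ admits a splitting of order $q+1$, (iii) $\acat^K$ has cohomological dimension at most $d + \dim(K)$, and (iv) $\ccat^K$ is idempotent-complete. Item (iv) follows from \cref{corollary:c_which_admits_conservative_adapted_homology_theory_in_fin_dim_a_is_idempotent_complete} once (i)--(iii) are in place (applied to $H^K$), conservativity is immediate from levelwise detection of equivalences, and the splitting $\acat^K \simeq \prod_{\phi \in \mathbb{Z}/(q+1)} \acat_\phi^K$ is inherited pointwise from that of $\acat$.

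The core step is adaptedness of $H^K$, which proceeds as in \cref{proposition:adapted_homology_remains_adapted_on_finite_diagrams} but with finite products replaced by $\kappa$-small ones throughout. For each $k \in K$, the evaluation functor $\Ev_k \colon \acat^K \to \acat$ has a right adjoint $r_k$ with $r_k(a)(k') = \prod_{\Hom_K(k', k)} a$; this product exists since $\Hom_K(k', k)$ is $\kappa$-small as a subset of the $\kappa$-small set of morphisms of $K$. Every injective of $\acat^K$ embeds into a product $\prod_{k \in K} r_k(i_k)$, which is $\kappa$-small because $|\operatorname{Ob}(K)| < \kappa$, so by \cref{remark:adapted_homology_theories_under_other_names_and_comparison_with_franke} it suffices to construct injective lifts for the generators $r_k(i)$. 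Given an adapted lift $i_\ccat$ of an injective $i \in \acat$, the natural candidate is $r_k^\ccat(i_\ccat)(k') \colonequals \prod_{\Hom_K(k', k)} i_\ccat$; this product exists in $\ccat$ and commutes with $H$ by \cref{corollary:an_adapted_homology_theory_commutes_with_producs_of_injectives}, so $H^K(r_k^\ccat(i_\ccat)) \simeq r_k(i)$. The universal property of an injective lift then reduces via the adjunction $\Ev_k \dashv r_k$ to the adaptedness of $H$ itself.

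For (iii) I run the argument of \cref{proposition:dimension_of_category_of_finite_diagrams} verbatim. The cobar resolution associated to the adjunction $\Ev \dashv r$ with $\Ev \colon \acat^K \to \prod_{k \in K} \acat$ produces, after Dold--Kan normalization, a cochain complex whose degree $n$ term is a product indexed by the composable chains $k \to k_1 \to \cdots \to k_n$ in $K$ with $k_1 \to \cdots \to k_n$ non-degenerate. Finite-dimensionality of $K$ ensures this complex terminates in degree $\dim(K)$, and regularity of $\kappa$ ensures the indexing sets, which are subsets of $\operatorname{Mor}(K)^n$ for finite $n$, remain $\kappa$-small; hence the relevant $\kappa$-small products exist in $\acat$ and, being exact by hypothesis~(2), preserve injective resolutions. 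Each term of the normalized resolution is thus of injective dimension at most $d$, bounding the cohomological dimension of $\acat^K$ by $d + \dim(K)$ via the usual two-variable spectral sequence.

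With (i)--(iii) established, \cref{theorem:frankes_algebraicity} applied to $H^K$ with splitting order $q+1$ and cohomological dimension $d + \dim(K)$ yields the equivalence $h_k(\ccat^K) \simeq h_k(\dcat^{per}(\acat^K))$ for $k = (q+1) - (d + \dim(K))$, as desired. The main obstacle is purely the cardinal bookkeeping: verifying that every product that arises in either the injective-lift construction or the cobar resolution lies within the $\kappa$-small regime, so that the $\kappa$-small products hypothesis on $\acat$, together with its consequence for injective lifts in $\ccat$ via \cref{corollary:an_adapted_homology_theory_commutes_with_producs_of_injectives}, suffices uniformly.
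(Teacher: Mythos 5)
Your proposal is correct and follows essentially the same route as the paper's (very terse) proof: rerun the adaptedness argument of \cref{proposition:adapted_homology_remains_adapted_on_finite_diagrams} with $\kappa$-small products in place of finite ones, using \cref{corollary:an_adapted_homology_theory_commutes_with_producs_of_injectives}, and rerun the cobar-resolution bound of \cref{proposition:dimension_of_category_of_finite_diagrams}, using exactness of $\kappa$-small products to keep the right adjoint to evaluation exact, before invoking \cref{theorem:frankes_algebraicity}. The only slip is the order of step (iv): adaptedness of $H^K$ already requires $\ccat^K$ to be idempotent-complete, so you should apply \cref{corollary:c_which_admits_conservative_adapted_homology_theory_in_fin_dim_a_is_idempotent_complete} to $H$ itself to get idempotent-completeness of $\ccat$ (hence of $\ccat^K$) first, rather than deducing it from (i)--(iii) for $H^K$ afterwards.
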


\begin{proof}
One shows that $H^{K}\colon \ccat^{K} \rightarrow \acat^{K}$ is adapted as in the proof of \cref{proposition:adapted_homology_remains_adapted_on_finite_diagrams}, where the products of injectives are now possibly infinite, using that $H$ preserves arbitrary products of injectives by \cref{corollary:an_adapted_homology_theory_commutes_with_producs_of_injectives}. 

To establish that $\acat^{K}$ is of cohomological dimension $d+\mathrm{dim}(K)$, we argue as in \cref{proposition:dimension_of_category_of_finite_diagrams}, where now we need exactness of $\kappa$-small products so that the right adjoint to evaluation is exact. The rest is the same as \cref{theorem:algebraicity_for_diagrams_indexed_by_a_finite_category}.
\end{proof}

\begin{remark}[Why nerve of a category?]
A reader might observe that the equivalence of homotopy categories of diagrams established in \cref{remark:how_do_deduce_frankes_formulation_from_ours} allowed $K$ to be an arbitrary finite-dimensional simplicial set, while in \cref{proposition:algebraicity_for_finite_dimensional_diagrams} we assume that it is a nerve of a category. 

This restriction comes from the study of the right Kan extension appearing in \cref{proposition:adapted_homology_remains_adapted_on_finite_diagrams}. If $K$ is a nerve of a category, then the resulting diagram is levelwise a product, whose homology we can control using \cref{corollary:an_adapted_homology_theory_commutes_with_producs_of_injectives}. If $K$ is an arbitrary simplicial set, we would be forced to study more complicated limits, whose homology would be more difficult to understand.
\end{remark}
In many interesting cases, even the assumption that $K$ is finite-dimensional can be weakened. Let us describe a few examples of this phenomena which we found particularly interesting. In each case, the key step is our ability to control the cohomological dimension of the abelian category $\acat^K$.

\begin{example}[Towers]
\label{example:towers} 
Let $\mathcal{T}$ denote the dual of the poset $(\mathbb{N} , \leq)$ of the natural numbers and let be $\acat$ an abelian category with enough injectives and of finite cohomological dimension $d$. Then, we claim that the category $\acat^{\mathcal{T}}$ of towers has cohomological dimension at most $d+1$. 

The objects of $\acat^{\mathcal{T}}$ are towers
\[\dots \to A_3 \to A_2 \to A_1\]
and one can show that such a tower is injective if and only if each arrow is a split surjection and each object $A_i$ is injective \cite[1.1]{Jann}, which is equivalent to each $A_{i+1} \to A_i$ being injective in the arrow category $\acat^{[1]}$. By \cref{proposition:dimension_of_category_of_finite_diagrams}, the cohomological dimension of $\acat^{[1]}$ is at most $d+1$, and since exactness in diagram categories is  objectwise, we deduce the same is true for  $\acat^{\mathcal{T}}$.

If $H\colon \ccat \rightarrow \acat$ is adapted, then the proof of \cref{proposition:adapted_homology_remains_adapted_on_finite_diagrams} implies that $H^{\mathcal{T}} \colon \ccat^\mathcal{T} \to \acat^\mathcal{T}$ is also adapted. Note that this does not require the existence of infinite products in $\acat$, as for every fixed $m$, $\Hom_{\mathcal{T}}(m,n)$ is always finite and empty for all but finitely many $n$ and thus the relevant right Kan extension involves only finite products. 

We conclude that under the assumptions of \cref{theorem:frankes_algebraicity},
\[
h_{k}{(\ccat^{\mathcal{T}})} \simeq h_{k} (\dcat^{per}(\acat^\mathcal{T}))
\]
where $k = q-d$. 
\end{example}

\begin{example}[Direct sequences]
\label{example:sequences}
Let $\mathcal{N}$ denote the poset $(\mathbb{N}, \leq)$ and $\acat^{\mathcal{N}}$ the category  of sequences
\[
A_1 \to A_2 \to A_3 \to \cdots.
\]
This can be analyzed analogously to the case of towers of \cref{example:towers}, but will require stronger assumptions on $\acat$. 

Suppose that $\acat$ is an abelian category with enough injectives, enough projectives, countable products and of finite cohomological dimension $d$. The dual of the argument in \cref{example:towers}, but using projectives, shows that $\acat^{\mathcal{N}}$ is of finite cohomological dimension at most $d+1$. 

Since $\mathcal{N}$ has only countably many morphisms and $\acat$ has countable products by assumption, the argument of \cref{proposition:adapted_homology_remains_adapted_on_finite_diagrams} shows that $H^{\mathcal{N}}$ is adapted. We conclude that as in the case of towers, under the assumptions of \cref{theorem:frankes_algebraicity} we have
\[
h_{k}{(\ccat^{\mathcal{N}})} \simeq h_{k} (\dcat^{per}(\acat^\mathcal{N}))
\]
where $k = q-d$. For example, these conditions are satisfied for the categories of modules over certain ring spectra as studied in \S\ref{subsection:modules_over_ring_spectra}.
\end{example}

\begin{example}[Group actions]
\label{example:group_actions}
Let $G$ be a finite group. If $\acat$ is an abelian category with enough injectives, let $\acat^{\mathrm{B}G}$ denote the category of $G$-objects. This is a category of diagrams and hence an abelian category with enough injectives. 

Note that the nerve of the classifying category $\mathrm{B}G$ is not finite-dimensional unless $G$ is trivial so that \cref{proposition:dimension_of_category_of_finite_diagrams} cannot be applied to bound the cohomological dimension of the category of $G$-objects. However, if the order $|G|$ is invertible in $\acat$, that is $|G| \cdot \mathrm{id}_{a} \in \Hom_{\acat}(a, a)$ is an isomorphism for every $a \in \acat$, then Maschke's theorem implies that the cohomological dimension of $\acat^{\mathrm{B}G}$ is equal to the cohomological dimension of $\acat$ \cite[3.4]{Mit}.

If $H \colon \ccat \to \acat$ is adapted, then $H^{\mathrm{B}G}\colon \ccat^{\mathrm{B}G} \to \acat^{\mathrm{B}G}$ is also adapted by \cref{proposition:adapted_homology_remains_adapted_on_finite_diagrams}, whose proof only requires that $K$ have finitely many objects and morphisms. We deduce that if $|G|$ is invertible in $\acat$, then under the assumptions of \cref{theorem:frankes_algebraicity} we have 
\[
h_{k}(\ccat^{\mathrm{B}G} ) \simeq h_{k} (\dcat^{per}(\acat^{\mathrm{B}G}))
\]
where $k = q+1-d$. 
\end{example}

\begin{example}[Genuinely equivariant chromatic algebraicity]
One can apply \cref{example:group_actions} to extend the chromatic algebraicity of \cref{theorem:chromatic_algebraicity} to provide an algebraic model for the $\infty$-category of $E(n)$-local genuine $G$-spectra $\spectra_{E(n)}^G$, where $2p-2 > n^{2}+n$ and the order of $G$ is coprime to $p$. 

For brevity, let us fix the height and write $E = E(n)$ and 
\[
\ComodE^{G} \colonequals \Fun_{\Sigma}(A(G), \ComodE)
\]
for the abelian category of $G$-Mackey functors with values in $E_{*}E$-comodules; that is, additive functors on the Burnside category. We claim that there is an equivalence
\[
h_{k} \spectra_{E}^G  \simeq h_{k} \dcat^{per} (\ComodE^{G}), \]
where $k = 2p-2-n^{2}-n$. Note that in the special case of $n = 1$, $p$ odd and $k = 1$, this is a result of Roitzheim and the first author \cite{PatRoi}. 

The equivariant $E$-local $\infty$-category has several equivalent descriptions in the literature. If $\iota^*_G\colon \spectra \rightarrow \spectra^{G}$ denotes the inflation, then by definition, $\spectra_{E}^G$ is the Bousfield localization of the $\infty$-category of genuine $G$-spectra at the inflation $\iota^*_GE$. By the construction of inflation, a $G$-spectrum $X$ belongs to this localization if and only if for any $H \leq G$, the genuine fixed points $X^H$ are $E$-local as a spectrum. 

Equivalently, since $\spectra_{E}$ is equivalent to modules over the $E$-local sphere, one can show that $\spectra_{E}^G$ is equivalent to the $\infty$-category spectral Mackey functors with values in $E$-local spectra in the sense of Barwick \cite[4.8]{PatSanWim}, \cite{Clark}. 

Now suppose that $H \leq G$ is a subgroup, and denote by $W_G(H)$ its Weyl group. Since the order of $G$ is coprime to $p$, a result of Wimmer implies that there is a canonical equivalence
\[
\spectra_{E}^{G} \simeq \prod_{(H) \leq G } \spectra_{E}^{\mathrm{B}W_G(H)}
\]
of symmetric monoidal $\infty$-categories \cite{Wim}[1.1], where on the right hand side we have the product of $\infty$-categories of $E$-local spectra with a ``naive'' group action. On the algebraic side, it is similarly known that we have an equivalence 
\[
\ComodE^{G} \simeq \prod_{(H) \leq G } \ComodE^{\mathrm{B}W_G(H)}, 
\]
see \cite{PatRoi}. As the construction of derived category commutes with products, combining these equivalences with \cref{example:group_actions}, we deduce that for any finite group $G$ of order coprime to $p$ we have 
\[
h_{k} \spectra_{E}^G  \simeq h_{k} \dcat^{per} (\ComodE^{G}), \]
where $k = 2p-2-n^{2}-n$, as claimed above. 
\end{example}

\appendix 

\section{Comodules and quotients of abelian categories} 

In this short appendix we note a few basic properties of the comonadic factorization of an exact left adjoint of abelian categories. We expect these results are well-known, but we could not find a suitable reference. 

The setting is as follows. Let $L \dashv R\colon \acat \rightleftarrows \bcat$ be an adjunction between abelian categories with the left adjoint $L$ exact. Associated to this we have a factorization 
\[
\begin{tikzcd}
	{\acat} && {\bcat} \\
	& {\Comod_{C}(\bcat)}
	\arrow["{L}", from=1-1, to=1-3]
	\arrow["{S}"', from=1-1, to=2-2]
	\arrow["{U}"', from=2-2, to=1-3]
\end{tikzcd},
\]
through the category of comodules for the comonad $C = LR$ over $\bcat$. Here, $S$ is the lift of $L$ observing that $La$ has a canonical $C$-comodule structure given by the unit map 
\[
La \rightarrow LRLa = CLa,
\]
and $U$ is the forgetful functor, which is a left adjoint.

\begin{theorem}
\label{theorem:properties_of_comonadic_factorization_of_an_exact_left_adjoint}
The category $\Comod_{C}(\bcat)$ is abelian with exact forgetful functor into $\bcat$. The lift $S\colon \acat \rightarrow \Comod_{C}(\bcat)$ has a fully faithful right adjoint, hence it presents the category of comodules as the Gabriel quotient by the localizing subcategory $\mathrm{ker}(L)$. 
\end{theorem}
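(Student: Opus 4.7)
The plan is to treat each of the three assertions in turn, deducing the localizing subcategory description at the end from the existence of a fully faithful right adjoint to $S$ together with the standard theory of Gabriel quotients.

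First I would verify that the comonad $C = LR$ is left exact. This is essentially automatic: $R$ is a right adjoint and hence preserves finite limits, while $L$ is exact by hypothesis. With left exactness of $C$ in hand, the category $\Comod_C(\bcat)$ inherits an abelian structure from $\bcat$ along $U$ in a standard fashion. Explicitly, for a morphism $f\colon(b,\rho_b)\to(b',\rho_{b'})$ of comodules, one forms the kernel $k\to b$ in $\bcat$; because $C$ preserves this kernel, the composite $k\to b\to Cb$ lands in $Ck$, producing a coaction. Cokernels are formed analogously, but here one uses that $U$ is left adjoint to the cofree-comodule functor $C\colon\bcat\to\Comod_C(\bcat)$, so that colimits are computed underneath and transported back via the coaction; alternatively one observes that $L$ being exact forces $C$ to preserve cokernels as well. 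The axioms of an abelian category (normal monics and epics, etc.) are then inherited from $\bcat$, and by construction $U$ is exact.

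The substantive step is the construction of the right adjoint $T$ to $S$ and the verification that it is fully faithful. Given a comodule $(b,\rho)\in\Comod_C(\bcat)$, I would define
\[
T(b,\rho) \;\colonequals\; \mathrm{eq}\bigl( R\rho,\;\eta_{Rb}\colon Rb \rightrightarrows RLRb \bigr),
\]
where $\eta$ is the unit of $L\dashv R$. A formal manipulation with the triangle identities shows $S\dashv T$: a morphism $La\to b$ in $\bcat$ compatible with the coaction corresponds by $L\dashv R$ to an arrow $a\to Rb$ that equalizes $R\rho$ and $\eta_{Rb}$, precisely by the comodule axiom. To show fully faithfulness of $T$, i.e.\ that the counit $ST\to \mathrm{id}$ is an isomorphism, I would apply $L$ to the defining equalizer; by exactness of $L$,
\[
LT(b,\rho) \;\simeq\; \mathrm{eq}\bigl( C\rho,\;\Delta_b \colon Cb \rightrightarrows C^2 b \bigr),
\]
where $\Delta = L\eta R$ is the comultiplication of $C$. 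The coaction $\rho\colon b\to Cb$ now factors through this equalizer by the coassociativity axiom, and the resulting arrow $b\to LT(b,\rho)$ is an isomorphism because $\rho$ is a split monomorphism with retraction the counit $\epsilon_b$, exhibiting $b$ itself as the equalizer in question. I expect this step to be the technical heart of the argument, requiring care with the interaction of $L$, $R$, and the (co)unit laws, but it is a direct dualization of a standard Beck-style calculation.

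Finally, once $T$ is known to be fully faithful, $S$ becomes a localization of $\acat$ onto $\Comod_C(\bcat)$. Its kernel, by exactness of $U$ and the identity $US = L$, coincides with $\ker(L)$; since $S$ is exact with fully faithful right adjoint between abelian categories, $\ker(S) = \ker(L)$ is a localizing (Serre) subcategory, and $S$ exhibits $\Comod_C(\bcat)$ as the Gabriel quotient $\acat/\ker(L)$ by the standard characterization of such quotients (see e.g.\ the discussion preceding \cref{theorem:characterization_of_adapted_homology_theories}). This completes the proof.
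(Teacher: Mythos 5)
Your proposal is correct and follows essentially the same route as the paper: kernels via left exactness of $C=LR$, cokernels created by the forgetful functor, the right adjoint $T$ defined by the equalizer of $R\rho$ and $\eta_{Rb}\colon Rb\rightrightarrows RLRb$, full faithfulness via the split (co)bar equalizer $b\to Cb\rightrightarrows C^{2}b$, and the Gabriel-quotient statement deduced from exactness of $S$ together with the fully faithful right adjoint. One small caveat: your ``alternative'' claim that exactness of $L$ forces $C$ to preserve cokernels is false in general ($R$ need not be right exact), but this does not matter since your primary argument for cokernels is the correct one.
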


The above result will be proven in steps, namely it is a combination of \cref{proposition:abelianness_of_category_of_coalgebras}, \cref{lemma:in_comonadic_factorization_the_right_adjoint_is_fully_faithful} and \cref{proposition:comodules_form_a_gabriel_quotient} below.

\begin{proposition}
\label{proposition:abelianness_of_category_of_coalgebras}
The category $\Comod_{C}(\bcat)$ is abelian and the forgetful functor $U$ is exact. 
\end{proposition}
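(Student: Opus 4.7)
The plan is to construct kernels and cokernels in $\Comod_{C}(\bcat)$ directly, by lifting them from $\bcat$, and then reduce both the abelianness of $\Comod_C(\bcat)$ and the exactness of $U$ to the corresponding properties of $\bcat$ itself.

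First I would produce kernels. Given a morphism $f\colon (b_1,\rho_1)\to (b_2,\rho_2)$ of $C$-comodules, I take the kernel $k = \ker(f)$ in $\bcat$ and equip it with a canonical coaction. The key input is that the comonad $C = LR$ is left exact: $L$ is exact by hypothesis, and $R$ is left exact as a right adjoint. Hence $Ck$ embeds as the kernel of $Cf$ in $\bcat$, and the composite $k\hookrightarrow b_1\xrightarrow{\rho_1} Cb_1$ factors uniquely through $Ck$ because $Cf\circ \rho_1 = \rho_2\circ f$ vanishes on $k$. This defines $\rho_k\colon k\to Ck$, and the coassociativity and counit axioms transfer from $(b_1,\rho_1)$ by monicity of $Ck\hookrightarrow Cb_1$ (and $C^2k\hookrightarrow C^2b_1$). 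The universal property of the kernel in $\Comod_C(\bcat)$ is then immediate from the one in $\bcat$, since any comodule morphism into $(b_1,\rho_1)$ that composes to zero with $f$ factors through $k$ on underlying objects, and the factorization is automatically a comodule map.

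Next I would construct cokernels. Setting $c = \coker(f)$ in $\bcat$ with projection $\pi\colon b_2\twoheadrightarrow c$, the composite $C\pi\circ \rho_2 \circ f = C\pi\circ Cf\circ \rho_1 = C(\pi\circ f)\circ \rho_1 = 0$, so the universal property of $c$ produces a unique $\rho_c\colon c\to Cc$ with $\rho_c\circ \pi = C\pi\circ \rho_2$. The comodule axioms for $(c,\rho_c)$ follow from those for $(b_2,\rho_2)$ together with the fact that $\pi$ is epic. This step is where the argument requires the most care, since $C$ need not be right exact and so $Cc$ is \emph{not} in general the cokernel of $Cf$; the point is that one only needs the universal property of $c$ in $\bcat$, not any special identification of $Cc$.

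With kernels and cokernels in hand, the forgetful functor $U$ preserves both by construction; in fact $U$ preserves all colimits since it is left adjoint to the cofree comodule functor $b\mapsto (Cb,\Delta_b)$, so only the preservation of kernels requires the construction above. To finish, I would check that $\Comod_C(\bcat)$ is abelian by verifying that the canonical map $\mathrm{coim}(f)\to \mathrm{im}(f)$ is an isomorphism for every $f$: applying $U$ yields the corresponding comparison in $\bcat$, which is an isomorphism by abelianness of $\bcat$, and $U$ is conservative (the $\bcat$-inverse of an underlying isomorphism automatically intertwines the coactions), so this comparison is already an isomorphism in $\Comod_C(\bcat)$. The main obstacle in the whole argument is thus isolated to the cokernel step, where one must resist the temptation to apply $C$ termwise and instead invoke the universal property; everything else reduces to routine diagram chases in $\bcat$.
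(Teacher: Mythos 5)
Your proof is correct and follows essentially the same route as the paper: kernels are lifted using left exactness of $C = LR$ (the paper's proof says ``right exact'' here, but means left exact, exactly as you argue from $L$ exact and $R$ a right adjoint), and cokernels are given a coaction via the universal property applied to $C\pi\circ\rho_2$, which is precisely the paper's composite $\coker(a\to b)\to\coker(Ca\to Cb)\to C\coker(a\to b)$. Your additional verifications (comodule axioms via monicity/epicity, and $\mathrm{coim}\to\mathrm{im}$ via exactness and conservativity of $U$) are the details the paper leaves to the reader.
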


\begin{proof}
We claim that for any map $a \rightarrow b$ of $C$-comodules, the kernel and cokernel of that map in $\bcat$ have canonical structure of a comodule; the reader can verify this then makes that comodule into the kernel and cokernel in $\Comod_{C}(\bcat)$. 

Since $C$ is right exact, $C \mathrm{ker}(a \rightarrow b) \simeq \mathrm{ker}(Ca \rightarrow Cb)$, and the structure map for the kernel follows from functoriality. 

In the case of cokernels, observe that since the composite $Ca \rightarrow Cb \rightarrow Ccoker(a \rightarrow b)$ is zero by functoriality, we get an induced map $\mathrm{coker}(Ca \rightarrow Cb) \rightarrow C \mathrm{coker}(a \rightarrow b)$. The comodule structure map for the cokernel is the composite of this map and the map $\mathrm{coker}(a \rightarrow b) \rightarrow \mathrm{coker}(Ca \rightarrow Cb)$ induced by structure maps of $a, b$. 
\end{proof}

\begin{proposition}
\label{proposition:lift_to_comodules_also_left_adjoint}
The functor $S\colon \acat \rightarrow \Comod_{C}(\bcat)$ is also an exact left adjoint, with right adjoint $T\colon \Comod_{C}(\bcat) \rightarrow \acat$.
\end{proposition}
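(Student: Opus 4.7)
The plan is to construct $T$ explicitly as a kernel and derive both claims — existence of the right adjoint and exactness — from this description together with \cref{proposition:abelianness_of_category_of_coalgebras}.

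First I would analyze when a morphism $f\colon La \to M$ in $\bcat$ underlies a morphism of $C$-comodules $Sa \to (M,\rho)$. The comodule structure on $Sa = La$ is $L\eta_a$, so compatibility requires $\rho \circ f = LRf \circ L\eta_a = L(Rf \circ \eta_a)$. Taking adjoints under $L \dashv R$, using naturality of the unit $\eta$, this becomes the equality $R\rho \circ \tilde f = \eta_{RM} \circ \tilde f$ of morphisms $a \to RLRM$, where $\tilde f\colon a \to RM$ is the adjoint of $f$. Consequently, morphisms $Sa \to (M,\rho)$ in $\Comod_C(\bcat)$ correspond bijectively to morphisms $a \to RM$ which equalize the pair $R\rho,\ \eta_{RM}\colon RM \rightrightarrows RLRM$. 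Since $\acat$ is abelian, this equalizer exists and is computed as a kernel; I define
\[
TM := \ker\bigl(R\rho - \eta_{RM}\colon RM \to RLRM\bigr).
\]
Functoriality in $M$ is routine: a comodule map $g\colon M \to M'$ intertwines $R\rho$ and $\eta$ with the analogous maps for $M'$, hence induces a unique morphism $TM \to TM'$. The calculation above together with the universal property of the kernel then yields the natural isomorphism $\Hom_\acat(a, TM) \simeq \Hom_{\Comod_C(\bcat)}(Sa, M)$, establishing $S \dashv T$.

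For exactness of $S$, I would invoke \cref{proposition:abelianness_of_category_of_coalgebras}: kernels and cokernels in $\Comod_C(\bcat)$ are computed by the exact forgetful functor $U$, which therefore preserves and reflects exact sequences. Since $U \circ S = L$ is exact by hypothesis, $S$ must itself be exact. Combining this with the adjunction $S \dashv T$, we obtain all three assertions of the proposition.

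I do not expect any serious obstacle here: the argument is essentially the dual of the standard construction showing that, for a monad $R L$ arising from an adjunction with an abelian target, the comparison functor to algebras admits a left adjoint built from a coequalizer. The only mildly delicate point is verifying that the compatibility identity $\rho \circ f = LRf \circ L\eta_a$ really is equivalent, after taking adjoints, to the equalizer condition $R\rho \circ \tilde f = \eta_{RM} \circ \tilde f$, which is a direct application of the triangle identities. Everything else is formal and uses only that $\acat$ has kernels.
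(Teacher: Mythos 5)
Your proposal is correct and follows essentially the same route as the paper: exactness of $S$ is deduced from $U \circ S \simeq L$ together with the fact that (co)kernels of comodules are computed underlying, and $T$ is defined as the equalizer of $R\rho,\ \eta_{RM}\colon RM \rightrightarrows RLRM$, which the paper writes as the equalizer diagram $Tc \rightarrow Rc \rightrightarrows RLRc$. Your version merely makes explicit which two parallel arrows appear and verifies the adjoint-transpose computation that the paper leaves implicit.
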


\begin{proof}
It follows from \cref{proposition:abelianness_of_category_of_coalgebras} that $S$ is exact, because $U \circ S \simeq L$ is and $U$ is both exact and conservative. Thus, we only have to check that it is left adjoint. 

To find a right adjoint, we would like to define for each $C$-comodule $c$ an object $Tc \in \acat$ such that\[
\Hom_{C}(Sa, c) \simeq \Hom_{\acat}(a, Tc)
\]
for any $a \in \acat$. Since the underlying $\bcat$-object of $Sa$ is given by $La$, left hand side can be computed as the equalizer of
\[
\Hom_{\bcat}(La, c) \rightrightarrows \Hom_{\bcat}(La, Cc)
\]
which, keeping in mind that $C = LR$, is the same as the equalizer of 
\[
\Hom_{\acat}(a, Rc) \rightrightarrows \Hom_{\acat}(a, RLRc)
\]
It follows that we can define $Tc$ by the equalizer diagram 
\[
Tc \rightarrow Rc \rightrightarrows RLRc.
\]
\end{proof}

\begin{lemma}
\label{lemma:in_comonadic_factorization_the_right_adjoint_is_fully_faithful}
The right adjoint $T\colon \Comod_{C}(\bcat) \rightarrow \acat$ is fully faithful. 
\end{lemma}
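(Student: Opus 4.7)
The plan is to show that the counit $\varepsilon^{ST}\colon STc \to c$ of the adjunction $S \dashv T$ is an isomorphism for every comodule $c \in \Comod_C(\bcat)$. Since $U$ is exact and conservative by \cref{proposition:abelianness_of_category_of_coalgebras} (faithful functors between abelian categories reflect isomorphisms), it will suffice to verify that $U(\varepsilon^{ST}_c)\colon LTc \to c$ is an isomorphism in $\bcat$.

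First, I would unpack the equalizer defining $T$. From the proof of \cref{proposition:lift_to_comodules_also_left_adjoint}, we have
\[
Tc \;=\; \mathrm{eq}\bigl(Rc \rightrightarrows RLRc\bigr),
\]
where the two parallel arrows are $R\rho$ (coming from the comodule structure $\rho\colon c \to LRc$) and the unit $\eta_{Rc}$. Applying the exact functor $L$ preserves this equalizer, so
\[
LTc \;=\; \mathrm{eq}\bigl(C\rho,\,\delta_c\colon LRc \rightrightarrows LRLRc\bigr),
\]
where I have used the identifications $C=LR$ and $\delta = L\eta R$ (the comultiplication of the comonad), and where the map $LTc \to LRc$ is the image of $Tc \to Rc$ and factors as $\rho \circ \varepsilon^{ST}_c$ after unwinding the adjunction.

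The bulk of the argument is then the following general observation, which I would verify directly: for any comonad $(C,\delta,\epsilon)$ on an arbitrary category and any comodule $(c,\rho)$, the map $\rho\colon c \to Cc$ presents $c$ as the equalizer of $C\rho$ and $\delta_c$. Indeed, $\rho$ equalizes them by coassociativity $C\rho\circ \rho = \delta_c\circ \rho$; conversely, if $f\colon x \to Cc$ satisfies $C\rho \circ f = \delta_c \circ f$, then applying $\epsilon_{Cc}$, using counitality of the comonad ($\epsilon_{Cc}\circ \delta_c = \mathrm{id}$) together with naturality ($\epsilon_{Cc}\circ C\rho = \rho \circ \epsilon_c$), one gets $\rho\circ(\epsilon_c\circ f) = f$, so $f$ factors through $\rho$; uniqueness follows since $\rho$ is a split monomorphism by comodule counitality. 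Combining this with the previous paragraph, the canonical map $c \to LTc$ induced by $\rho$ is an isomorphism, which is precisely $U(\varepsilon^{ST}_c)^{-1}$.

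The only mild subtlety is bookkeeping: one must correctly identify the two maps in the equalizer defining $Tc$, confirm that $L\eta R$ really is the comultiplication of the comonad $LR$, and check that the map $LTc \to c$ produced by the universal property of the equalizer does agree with $U(\varepsilon^{ST}_c)$. None of these present any real obstacle; they are all formal manipulations with the triangle identities. Once these are in place, the conclusion that $T$ is fully faithful is immediate, and the final claim of \cref{theorem:properties_of_comonadic_factorization_of_an_exact_left_adjoint} follows, identifying $\Comod_C(\bcat)$ with the Gabriel quotient $\acat/\ker(L)$.
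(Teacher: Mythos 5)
Your proposal is correct and follows essentially the same route as the paper: both compute $STc$ (equivalently $LTc$, after the exact conservative forgetful functor) by applying exactness of $S$ to the equalizer defining $T$, reducing to the standard fact that $c \to Cc \rightrightarrows C^{2}c$ is a split equalizer with limit $c$ for any comodule. The paper simply asserts this split-equalizer fact, whereas you verify it in detail; the identifications of the parallel arrows as $C\rho$ and $\delta_c$ and the splitting via the counit are all correct.
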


\begin{proof}
Using the formula given in the proof of \cref{proposition:lift_to_comodules_also_left_adjoint} and the fact that $S$ is exact, we see that $STc$ is given by the equalizer of
\[
Cc \rightrightarrows C^{2}c. 
\]
This is a split equalizer with limit $c$ for any $C$-comodule.
\end{proof}

\begin{proposition}
\label{proposition:comodules_form_a_gabriel_quotient}
The functor $S\colon \acat \rightarrow \Comod_{C}(\bcat)$ presents its target as the Gabriel quotient of the source by the Serre subcategory
\[
\mathrm{ker}(L) \colonequals \{ a \in \acat \ | \ L(a) = 0 \}.
\]
That is, for any abelian category $\dcat$ and an exact functor $G\colon \acat \rightarrow \dcat$ such that $\mathrm{ker}(L) \subseteq \mathrm{ker}(G)$, there exists an essentially unique factorization of $G$ through $S\colon \acat \rightarrow \Comod_{C}(\bcat)$.
\end{proposition}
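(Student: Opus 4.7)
The plan is to take the right adjoint $T\colon \Comod_C(\bcat) \rightarrow \acat$ of \cref{proposition:lift_to_comodules_also_left_adjoint} and show that precomposing any candidate with $T$ produces the unique factorization, while $G \circ T$ recovers $G$ after precomposition with $S$.

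More precisely, I would proceed as follows. First, I would handle uniqueness: given any exact functor $F\colon \Comod_C(\bcat) \to \dcat$ with $F \circ S \simeq G$, the fact that $T$ is fully faithful by \cref{lemma:in_comonadic_factorization_the_right_adjoint_is_fully_faithful} means that the counit $\varepsilon\colon S \circ T \Rightarrow \id_{\Comod_C(\bcat)}$ is a natural isomorphism. Whiskering with $F$ yields a natural isomorphism $F \simeq F \circ S \circ T \simeq G \circ T$. Thus any factorization is canonically identified with $\widetilde{G} \colonequals G \circ T$, which settles both uniqueness and the shape of the construction.

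Next, I would prove existence by verifying that $\widetilde{G} \circ S \simeq G$. The natural map in question is $G\eta\colon G \Rightarrow G \circ T \circ S$, where $\eta$ is the unit of $S \dashv T$. Since $G$ is exact, it suffices to show that both $\ker(\eta_a)$ and $\coker(\eta_a)$ lie in $\ker(L)$ for every $a \in \acat$, for then they lie in $\ker(G)$ by hypothesis, and exactness forces $G(\eta_a)$ to be an isomorphism. Equivalently, since $L$ is exact, it is enough to prove that $L(\eta_a)$ is an isomorphism. Using $L = U \circ S$ and the conservativity of the forgetful functor $U$, this reduces to showing $S(\eta_a)$ is an isomorphism, which is a standard triangle-identity argument: the composite $Sa \xrightarrow{S\eta_a} STSa \xrightarrow{\varepsilon_{Sa}} Sa$ is the identity and $\varepsilon$ is an isomorphism, so $S(\eta_a)$ is too. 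Therefore $G(\eta_a)$ is an isomorphism, exhibiting $\widetilde{G} \circ S \simeq G$.

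I expect no serious obstacle here; the only subtlety is keeping track of the fact that ``Gabriel quotient by $\ker(L)$'' coincides with the localization at $L$-equivalences, which here follows automatically because $L$ is exact (so $Lf$ being an isomorphism is equivalent to $\ker(f), \coker(f) \in \ker(L)$). All intermediate statements needed, namely the fully-faithfulness of $T$, exactness of $S$, and conservativity of $U$, are already established in \cref{proposition:abelianness_of_category_of_coalgebras}, \cref{proposition:lift_to_comodules_also_left_adjoint}, and \cref{lemma:in_comonadic_factorization_the_right_adjoint_is_fully_faithful}, so the proof should be short.
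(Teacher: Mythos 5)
Your proposal is correct and follows essentially the same route as the paper: the factorization is $G\circ T$, and one checks $G\circ T\circ S\simeq G$ by showing the unit $\eta_a$ has kernel and cokernel in $\ker(S)=\ker(L)\subseteq\ker(G)$, which follows from $S(\eta_a)$ being an isomorphism via the triangle identity and the invertible counit. The only cosmetic point is that passing from the invertibility of $S(\eta_a)$ to that of $L(\eta_a)=US(\eta_a)$ uses only functoriality, not the conservativity of $U$ (conservativity would be needed for the converse direction); your explicit uniqueness argument via whiskering the counit is a welcome addition that the paper leaves implicit.
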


\begin{proof}
Since $T$ is fully faithful, for any $c \in \Comod_{C}(\bcat)$, the counit map $STc \rightarrow c$ is an isomorphism. It follows that for any $a \in \acat$, the unit map $a \rightarrow TSa$ has kernel and cokernel contained in $\mathrm{ker}(S) = \mathrm{ker}(L) \subseteq \mathrm{ker}(G)$. Hence, $G(a) \rightarrow G(TSa)$ is an isomorphism, providing the needed factorization of $G$ through $S$. 
\end{proof}
The following observation about injectives will be useful. 

\begin{proposition}
\label{proposition:category_of_comodules_enough_injectives}
If $i \in \bcat$ is injective, then $Ci$ is an injective $C$-comodule. If $\bcat$ has enough injectives, so does $\Comod_{C}(\bcat)$. 
\end{proposition}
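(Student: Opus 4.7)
The plan is to exploit the cofree-forgetful adjunction between $\Comod_{C}(\bcat)$ and $\bcat$. Specifically, the forgetful functor $U$ admits a right adjoint sending $b \in \bcat$ to the cofree comodule $(Cb,\Delta_b)$ where $\Delta_b\colon Cb \to C(Cb)$ is the comultiplication of the comonad $C$, with counit of adjunction given by the counit $\epsilon_b\colon Cb \to b$ of the comonad. This is a completely standard fact about comonads on any category, so I will cite it rather than prove it.

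For the first claim, let $i \in \bcat$ be injective and let $a \hookrightarrow b$ be a monomorphism in $\Comod_{C}(\bcat)$, together with a morphism $f\colon a \to Ci$ of comodules. By \cref{proposition:abelianness_of_category_of_coalgebras}, the forgetful functor $U$ is exact, so $Ua \hookrightarrow Ub$ is a monomorphism in $\bcat$. Transposing $f$ across the cofree adjunction gives $\epsilon_i \circ Uf\colon Ua \to i$, which by injectivity of $i$ extends to a map $Ub \to i$; transposing back yields an extension $b \to Ci$ of $f$ as desired.

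For the second claim, assume $\bcat$ has enough injectives and let $c \in \Comod_{C}(\bcat)$. Choose a monomorphism $Uc \hookrightarrow i$ into an injective of $\bcat$. Consider the composite
\[
c \xrightarrow{\;\Delta_c\;} Cc \xrightarrow{\;C(\iota)\;} Ci
\]
in $\Comod_{C}(\bcat)$, where $\Delta_c$ is the coaction and the second arrow is cofreely induced by $\iota\colon Uc \to i$. The first arrow is a split monomorphism after applying $U$, with retraction given by the counit axiom $\epsilon_{Uc}\circ \Delta_c = \mathrm{id}_{Uc}$. The second arrow is $U C(\iota)$ in $\bcat$, which is a monomorphism since $C = LR$ is left exact ($L$ is exact by hypothesis and $R$ is a right adjoint, hence preserves monomorphisms). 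Thus the composite is a monomorphism in $\bcat$, and since $U$ is exact and conservative it reflects monomorphisms, giving the desired embedding of $c$ into an injective comodule by the first claim.

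There is essentially no obstacle here; the only subtle point is remembering that even though $C$ is not right exact in general, it \emph{is} left exact (hence preserves monomorphisms), which is exactly what is needed to push the mono $Uc \hookrightarrow i$ forward along $C$. Once this is noted, both parts reduce to standard manipulations with the cofree-forgetful adjunction.
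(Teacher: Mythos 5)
Your proof is correct and follows essentially the same route as the paper: both establish that the cofree functor $b \mapsto Cb$, being right adjoint to the exact forgetful functor $U$, preserves injectives, and both exhibit the embedding of an arbitrary comodule $c$ into an injective as the composite $c \to Cc \to Ci$, using the counit to see that the first map is (split) monic and left exactness of $C = LR$ to see that the second is monic. The only difference is cosmetic — you unwind the adjunction into an explicit lifting argument and make the left exactness of $C$ explicit where the paper leaves it implicit.
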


\begin{proof}
Since $C\colon \bcat \rightarrow \Comod_{C}(\bcat)$ is a right adjoint to the forgetful functor $U$, which is exact by \cref{proposition:abelianness_of_category_of_coalgebras}, it is clear that it preserves injectives. 

Now suppose that $\bcat$ has enough injectives and let $c$ be a $C$-comodule. The structure morphism $c \rightarrow Cc$ is a morphism into the cofree comodule which is split after applying the forgetful functor (by the counit) and so is monic because the latter is conservative and exact. 

Choose an embedding $c \rightarrow i$ into an injective in $\bcat$. Then, $Cc \rightarrow Ci$ is also monic, and the needed embedding into an injective comodule is given by the composite $c \rightarrow Cc \rightarrow Ci.$
\end{proof}

\bibliographystyle{plain}
\bibliography{general_algebraicity_bibliography}
\end{document}